\title{\textbf{\HUGE Classifying Types}}
\author{{\large Egbert Rijke}}
\date{{\large July 2018}}
\begin{document}

\frontmatter

\newgeometry{letterpaper,hmargin=1in,vmargin=1in} 
\pagenumbering{Roman}
\begin{titlingpage}
\maketitle
\end{titlingpage}
\restoregeometry
\pagenumbering{roman}

\tableofcontents*

\chapter{Introduction}

The study of homotopy theoretic phenomena in the language of type theory \cite{hottbook} is 
sometimes loosely called `synthetic homotopy theory' \cite{Brunerie16}. 
Homotopy theory in type theory \cite{Awodey12} is only one of the
many aspects of homotopy type theory, which also includes the study of the
set theoretic semantics (models of homotopy type theory and univalence in a
meta-theory of sets or categories \cite{Awodey14,AwodeyWarren,BezemCoquandHuber,KapulkinLeFanuLumsdaine,Shulman15,Voevodsky15}), type theoretic semantics (internal models of homotopy type
theory), and computational semantics \cite{AngiuliHarperWilson}, as well as the study of various questions
in the internal language of homotopy type theory which are not necessarily 
motivated by homotopy theory, or questions related to the development of
formalized libraries of mathematics based on homotopy type theory.
This thesis concerns the development of synthetic homotopy theory.

Homotopy type theory is based on Martin-L\"of's theory of dependent types \cite{MartinLof1984}, which
was developed during the 1970's and 1980's.
The novel additions of homotopy type theory are Voevodsky's univalence axiom \cite{Voevodsky06,Voevodsky10},
and higher inductive types \cite{Lumsdaine11Blog,Shulman11Blog,hottbook}. The univalence axiom characterizes the identity
type on the universe, and establishes the universe as an object classifier \cite{RijkeSpitters}.
Higher inductive types are a generalization of inductive types, in which both
point constructors (generators) and path constructors (relations) may be specified.
A simple class of higher inductive types, which includes most known higher inductive
types, are the homotopy pushouts. When the universe is assumed to be closed
under homotopy pushouts, it is also closed under homotopy coequalizers \cite{hottbook}, 
sequential colimits \cite{hottbook}, and propositional truncation \cite{vanDoorn2016}.
For instance, one way of obtaining the $n$-spheres \cite{Lumsdaine12Blog} using homotopy pushouts is by setting $\sphere{-1}\defeq\emptyt$ and
by inductively defining the $(n+1)$-sphere $\sphere{n+1}$ to be the pushout
of the span $\unit \leftarrow\sphere{n}\rightarrow\unit$. 
Then we can attach $(n+1)$-cells to a type $X$.
Let $f:(\sm{a:A}B(a))\to X$ be a family of attaching maps
for some $\sphere{n}$-bundle $B:A\to\mathrm{BAut}(\sphere{n})$, where $\mathrm{BAut}(\sphere{n})$ is the type $\sm{X:\UU}\brck{\eqv{\sphere{n}}{X}}$ of types merely equivalent to the $n$-sphere. We
attach the $(n+1)$-disks indexed by $A$ to $X$ by taking the homotopy pushout
\begin{equation*}
\begin{tikzcd}
\sm{a:A}B(a) \arrow[r,"f"] \arrow[d,swap,"\pi_1"]  & X \arrow[d,"\inr"] \\
A \arrow[r,swap,"\inl"] & P. \arrow[ul,phantom,very near start,"\ulcorner"]
\end{tikzcd}
\end{equation*}
In this thesis we study dependent type theory with univalent universes that are closed under homotopy pushouts, further developing the program set out in \cite{hottbook} on synthetic homotopy theory. We will assume in this dissertation that every type family is classified by a univalent universe, and that all universes are closed under homotopy pushouts, as well as under the usual type constructors including identity types, $\Pi$- and $\Sigma$-types, and a natural numbers object. The model in cubical sets by Coquand et al. \cite{BezemCoquandHuber} is a constructive model for this setup, although the fact that it is closed under homotopy pushouts is currently unpublished. Furthermore, we will exclusively work with objects that can be described either as (dependent) types, or as their terms. In other words, the objects of our study are all `formalizable' in the sense that they can be encoded in computer implementations of our setup of dependent type theory. Existing implementations of dependent type theory, supporting univalent universes and homotopy pushouts, with libraries developing synthetic homotopy theory, include the proof assistants Agda \cite{agda}, Coq \cite{Coq,hcoq}, and Lean \cite{vDvRB2017HoTTLean}, and parts of the material in this theses are formalized in each of them.

\section{Overview per chapter}
In \cref{chap:univalent} we will first establish notation, although we will mostly follow the notation from \cite{hottbook}\footnote{A difference of practice between \cite{hottbook} and this thesis is that \cite{hottbook} doesn't use the label `Proposition' for any results at all, whereas we use propositions for statements that might be of independent interest, but are not our main theorem. In this thesis, the label `Theorem' is reserved for the main results that were originally established in my thesis research, and is therefore used much more sparingly.}. Furthermore, we recall some of the most basic facts of homotopy type theory, and then we will show that the univalence axiom establishes any universe as an object classifier. 

In \cref{chap:descent} we will first recall the most basic properties of homotopy pushouts, and then we will proceed to prove the descent property for pushouts, using the univalence axiom. Note that the coherence problem of type theory plays a role in this chapter: we would much rather have shown that the descent property holds for any homotopy colimit, but this task requires the definition of an internal $\infty$-category. Some solace will be offered in \cref{chap:equifibrant}, where we will prove a descent property for any modality.

In \cref{chap:rcoeq} we will study reflexive graphs and, most importantly, reflexive coequalizers in type theory. We take interest in reflexive graphs, because the topos of reflexive graphs (over sets) is cohesive over the topos of sets. The left-most adjunction is the reflexive coequalizer which is left adjoint to the discrete functor. We will construct the reflexive coequalizer as a pushout, so it exists under our assumptions, and we know in some cases how to compute them into previously known operations. Here we run again into the limitation of homotopy type theory, because we cannot establish the type of reflexive graphs as the type of objects of an $\infty$-category, and neither can we establish the reflexive coequalizer as an $\infty$-functor. However, to state and prove that the reflexive coequalizer satisfies the universal property of the left adjoint of the discrete functor, we only need composition of reflexive graph morphisms, and the action on morphisms of the operation equipping a type with the structure of a discrete graph. The universal property is indeed sufficient for many purposes. We then introduce the notion of fibrations for reflexive graphs, and show that the fibrations are right orthogonal to the same maps as the discrete reflexive graphs: morphisms between representables. It follows that a morphism is a fibration if and only if it is cartesian. Then we proceed to prove the descent property of reflexive coequalizers. We also note that diagrams over reflexive graphs are just left fibrations of reflexive graphs, so we also obtain a descent theorem for diagrams over graphs. Of particular interest in the present work is the descent property for sequential colimits. The contents of this chapter are joint work with Bas Spitters.

In \cref{chap:image} we will show that any function factors as a surjective function followed by an embedding, even though we only assume that the universe is closed under pushouts and the basic type constructors. Of course, it is of essential importance here that the universe contains a natural numbers object. Our construction of the image of a map proceeds by iteratively taking the fiberwise join of a map with itself, so we call it the join construction. It follows from our construction that the image of a map from an essentially small type into a locally small type (notions that are explained in \cref{chap:univalent}), is again essentially small. The join construction can be used to construct the quotient of a type by a $\prop$-valued equivalence relation (i.e.~an equivalence relation in the usual sense), and it can be used to construct the Rezk completion of a pre-category. The construction of set-quotients includes the construction of the set truncation, and the construction of the Rezk completion includes the construction of the $1$-truncation, and of Eilenberg-Mac Lane spaces of the form $K(G,1)$ \cite{FinsterLicata}. Following \cite{FinsterLicata}, the Eilenberg Mac-Lane spaces $K(G,n)$ for $G$ abelian and $n\geq 2$ can be constructed once we have constructed the $k$-truncation for any $k\geq -2$. We note that Eilenberg-Mac Lane spaces are important classifying spaces in higher group theory, and so are the connected components of the universe (which we show to be essentially small). 

In \cref{chap:reflective} we consider general reflective subuniverses. Examples that we have at our disposal at this point are the $(-2)$-, $(-1)$-, $0$-, and $1$-truncations. Most examples of reflective subuniverses are obtained by localization at a family of maps. However, since the only assumed homotopy colimits are pushouts, we will construct localizations only in \cref{chap:compact}, and only of families between compact types, because we need more theory in order to establish the necessary basic results. Thus, in \cref{chap:reflective} we focus on general reflective subuniverses. We show that for any reflective subuniverse $L$, the subuniverse $L'$ of $L$-separated types (i.e.~types whose identity types are $L$-types) is again reflective. It follows at once that the subuniverse of $k$-truncated types is reflective, for any $k\geq -2$. Furthermore, we will study several classes of maps related to a reflective subuniverse. First of all, we study the $L$-equivalences, i.e.~the maps that become an equivalence by the functorial action of $L$, and second of all we study the $L$-connected maps, i.e.~ the maps with fibers that become trivial after applying $L$. Clearly, any $L$-connected map is also an $L$-equivalence, but the converse is one of the many characterizations of $L$ being lex given in \cite{RijkeShulmanSpitters}. 
Furthermore, we study modalities. One of the characterizations of modalities is as reflective subuniverses that are $\Sigma$-closed, but we provide three more equivalent definitions of modalities. One particularly important alternative definition is that of a stable orthogonal factorization system, i.e.~a pair $(\mathcal{L},\mathcal{R})$ of two classes of maps such that every map factors as an $\mathcal{L}$-map followed by a $\mathcal{R}$-map; the class $\mathcal{L}$ is left orthogonal to the class $\mathcal{R}$; and the pullback of an $\mathcal{L}$-map is again a $\mathcal{L}$-map. For any modality $\modal$, the stable orthogonal factorization system associated to it consists of the $\modal$-connected maps as the $\mathcal{L}$-maps, and the $\modal$-modal maps as the $\mathcal{R}$-maps. A final topic for this chapter is the notion of accessibility for reflective subuniverses and accessible modalities. 

The contents from this chapter are selected from \cite{RijkeShulmanSpitters} and \cite{ChristensenOpieRijkeScoccola}. I began to study reflective subuniverses with Mike Shulman and Bas Spitters, and continued to study them with my MRC teammates Morgan Opie and Luis Scoccola, under the lead of Dan Christensen.

In \cref{chap:equifibrant} we recall from \cite{WellenPhD} the notion of $\modal$-\'etale map for an arbitrary modality $\modal$. We prove a modal version of the descent theorem, which asserts that a maps into $\modal X$ from a $\modal$-modal type are equivalently described as \'etale maps into $X$. The $\modal$-\'etale maps form the right class of a second orthogonal factorization system associated to any modality: the \emph{reflective} factorization system. The left class of this factorization system is the class of $\modal$-equivalences. Using this factorization system we obtain that the universal cover of a type $X$ at a point $x_0$ is the left-right-factorization of the map $\unit\to X$. 
We then proceed to study the \'etale maps for the modality of discrete reflexive graphs. Note that the type of all reflexive graphs isn't exactly a universe, so it is not possible to directly apply our previous observations about $\modal$-\'etale maps. Nevertheless, most arguments are practically the same. Thus, we treat our section on $\modal$-\'etale maps as a blue-print for our section of $\Delta$-\'etale maps, and do most arguments a second time. Such is the current state of homotopy type theory. What we get out is a generalized flattening lemma, which states that for any morphism $f:\mathcal{B}\to\mathcal{A}$ factors uniquely as an $\Delta$-equivalence followed by a fibration of graphs. We use this to show that the loop space of the suspension of a pointed type $X$ is the free H-space $G$ with a base-point preserving map $X\to_\ast G$. The generalized flattening lemma also applies to diagrams over reflexive graphs, and in particular to sequential colimits. Moreover, the equifibrant replacement can be constructed by a telescope construction. We show in this chapter that sequential colimits commute with $\Sigma$- and identity types, and therefore also with pullbacks. In particular, the sequential colimit operation sends sequences of fiber sequences to fiber sequences. Moreover, we show that sequential colimits commute with $k$-truncation for all $k\geq -2$, from which it follows that sequential colimits commute with $\pi_k$ for any $k\geq 0$. We expect to be able to use these results also in showing that the spectrification of a pre-spectrum is indeed a spectrum, but we haven't done that yet.

The idea of a modal version of the descent theorem first arose in unpublished work on reflexive graphs with Bas Spitters, in the spring of 2016. However, I only learned about $\modal$-\'etale maps much later from Felix Wellen, and many of the results presented in \cref{sec:modal_descent} came out of a discussion I had with Felix Wellen and Mike Shulman, who also brought my attention to the reflective factorization system of a modality, of which the classical case is due to \cite{chk:reflocfact}. 
The material in \cref{sec:equifibrant_replacement} on the equifibrant replacement operation on reflexive graphs is joint work with Bas Spitters. 
The material in \cref{sec:seqcolim_eqf} on sequential colimits is joint work with Floris van Doorn and Kristina Sojakova \cite{DoornRijkeSojakova}, and all the results concerning sequential colimits are formalized in the proof assistant Lean. 

In \cref{chap:compact} we introduce the notion of (sequentially) compact types, in order to provide an application for the results in \cref{chap:equifibrant}. The most important basic result about compact types is that they are closed under pushouts, and in proving this fact we use that sequential colimits commute with pullbacks.
Our main purpose here, and the final main result of this dissertation, is to show that for any family $f$ of maps between compact types, the subuniverse of $f$-local types is reflective, thus providing a fairly large class of reflective subuniverses including types localized away from a prime \cite{ChristensenOpieRijkeScoccola}. It should be noted that all subuniverses of $f$-local types are reflective if enough higher inductive types are assumed. Furthermore, our result about $\omega$-compact types should in principle hold for $\kappa$-compact types for cardinals larger than $\omega$. However, we restrict to the case of $\omega$-compact types since we do not have a good theory of such larger cardinals available in homotopy type theory, while the natural numbers object is right there (by assumption).

\section{Acknowledgments\footnote{I gratefully acknowledge the support of the Air Force Office of Scientific Research through MURI grant FA9550-15-1-0053.}}
First and foremost, it is my pleasure to thank my advisor, professor Steve Awodey. I could not have done this PhD without his advice, support, and inspiration. I cherish the many pleasant discussions we had: over the blackboard, between us, with our many visitors, while visiting other places, or over a beer somewhere in Pittsburgh. I am very grateful for the guidance you offered in selecting a worthwhile research topic, and I feel honored to have had the privilege to work with you.

Then I would like to express my gratitude to the members of my committee: Jeremy Avigad for transmitting his enthusiasm for interactive theorem proving, and for his wisdom and unparalleled kindness; Ulrik Buchholz for his unabated energy to make large formalization projects possible, introducing me to many concepts of algebraic topology along the way; and Michael Shulman for shining a light on many beautiful subjects related to higher category theory, that would otherwise have remained obscure for me.

I would like to thank my other collaborators, the people I had projects with:
Simon Boulier, Dan Christensen, Floris van Doorn, Jonas Frey, Morgan Opie, Luis Scoccola, Kristina Sojakova, Bas Spitters, Nicolas Tabareau, Felix Wellen, and Alexandra Yarosh. I thank you for all your inspiration, insights, energy, and creativity.

I would also like to thank all the people who have generously invited me for a visit. I am deeply indebted to Joachim Kock, who has hosted me for half a year in 2013-2014 at the \textit{Departament de Matemàtiques} of the Universitat Autònoma de Barcelona and helped me finding my PhD position at CMU; I am grateful to Vladimir Voevodsky for inviting me to the Institute for Advanced Study in March 2015; Nicolas Tabareau and his students Kevin Quirin and Simon Boulier for hosting me at INRIA Nantes during the summers of 2015 and 2016; Andrej Bauer for hosting me at the \textit{Fakulteta za Matematiko in Fiziko} of the University of Ljubljana in January 2016; Lars Birkedal and Bas Spitters for hosting me at the Department of Computer Science of Aarhus University in February 2016; Marie-Françoise Roy for inviting me to speak in the Effective Geometry and Algebra seminar at the \textit{Institut de Recherche Mathématiques de Rennes} in June 2016; Dan Christensen and his student Luis Scoccola for hosting me at the University of Western Ontario in London, Ontario in October 2017; Guillaume Brunerie for hosting me at the Institute for Advanced Study in November 2017; Charles Rezk and his student Nima Rasekh for inviting me to speak in the Topology Seminar at the University of Illinois at Urbana-Champaign in December 2017; Tom Hales for inviting me to speak at the Algebra, Combinatorics, and Geometry seminar at the University of Pittsburgh in April 2018; and Pieter Hofstra for inviting me to speak for the Canadian Mathematical Society in Fredericton, New Brunswick, in June 2018. Tak, dankjewel, thank you, merci, danke sch\"on, spasibo, \v dankujem, hvala, gracias.

I would like to thank Karin Arnds for lending me her cabin along the Allegheny River, where I wrote and rewrote significant parts of this dissertation.

Finally, I thank my dear parents Mieke and Reinier for their unconditional support, and my siblings Jeroen, Sophie, and Fleur, whom I didn't get to see as much as I would have liked because I chose to study overseas. I miss you and I dedicate this thesis to you.\\[2em]

\noindent \hfill Pittsburgh, \today

\mainmatter

\chapter{The object classifier}\label{chap:univalent}

In this chapter we establish notation, and we highlight the basic results concerning fiberwise transformations and fiberwise equivalences, which we will use for the descent theorems \cref{thm:descent,thm:rcoeq_cartesian}. Of particular importance are the following theorems:
\begin{enumerate}
\item The Fundamental Theorem of Identity Types (\cref{thm:id_fundamental}), which establishes that a type family $B$ over $A$ with $b:B(a)$ for a given $a:A$ is fiberwise equivalent to the identity type $a=x$ if and only if its total space is contractible. This result appears in \cite{hottbook} as Theorem 5.8.2, which contains other equivalent conditions as well.
\item \cref{thm:pb_fibequiv}, in which we establish that for any fiberwise map
\begin{equation*}
g:\prd{x:A}P(x)\to Q(f(x)),
\end{equation*}
the commuting square
\begin{equation*}
\begin{tikzcd}[column sep=large]
\sm{x:A}P(x) \arrow[r,"{\total[f]{g}}"] \arrow[d,swap,"\proj 1"] & \sm{y:B}Q(y) \arrow[d,"\proj 1"] \\
A \arrow[r,swap,"f"] & B,
\end{tikzcd}
\end{equation*}
where $\total[f]{g}$ is defined as $\lam{(x,p)}(f(x),g(x,y))$, is a pullback square if and only if $g$ is a fiberwise equivalence\index{fiberwise equivalence}. As a consequence, we obtain that a commuting square
\begin{equation*}
\begin{tikzcd}
A \arrow[d,swap,"f"] \arrow[r] & B \arrow[d,"g"] \\
X \arrow[r,swap,"h"] & Y
\end{tikzcd}
\end{equation*}
is a pullback square if and only if the induced fiberwise transformation
\begin{equation*}
\prd{x:X} \fib{f}{x}\to\fib{g}{h(x)}
\end{equation*}
is a fiberwise equivalence. Our main reference \cite{hottbook} does not present many results of homotopy pullbacks, although the material we present about homotopy pullbacks is surely well-known. The connection between pullbacks and fiberwise equivalences has an important role in the descent theorem\index{descent} in \cref{chap:descent}, which is why we devote a section to this result.
\item \cref{thm:classifier}, in which we establish that the universe is an object classifier. This result appears in \cite{RijkeSpitters} as Theorem 2.31, and in \cite{hottbook} as Theorem 4.8.4
\end{enumerate}

\section{Notation and preliminary results}

We work in Martin-L\"of dependent type theory with $\Pi$-types, $\Sigma$-types and cartesian products, coproducts $A+B$ equipped with $\inl:A\to A+B$ and $\inr:B\to A+B$ for any two types $A$ and $B$, an empty type $\emptyt$, a unit type $\unit$ equipped with $\ttt:\unit$, a type $\bool$ of booleans equipped with $\btrue,\bfalse:\bool$, a type $\N$ of natural numbers equipped with $0:\N$ and $\suc:\N\to\N$, and identity types. 

\begin{rmk}
As usual, we write $\idfunc[A]:A\to A$ for the \define{identity function} $\lam{x}x$ on $A$, and we write $g\circ f:A\to C$ for the \define{composite function} $\lam{x}g(f(x))$ of $f:A\to B$ and $g:B\to C$. For any two types $A$ and $B$, and any $b:B$, we write
\begin{equation*}
\const_b : A\to B
\end{equation*}
for the \define{constant function} $\lam{x}b$. Sometimes we also write $\lam{\nameless}b$ for the constant function.

In the case of $\Sigma$-types, the empty type $\emptyt$, and the unit type, we use the following notation to define functions by pattern-matching:
\begin{align*}
\lam{(x,y)}f(x,y) & : \prd{t:\sm{x:A}B(x)} P(t) \\
\lam{\ttt}y & : \prd{t:\unit}P(t).
\end{align*}
For instance, the first and second projection maps 
\begin{align*}
\proj 1 & : (\sm{x:A}B(x))\to A \\
\proj 2 & : \prd{p:\sm{x:A}B(x)}B(\proj 1)
\end{align*}
are defined as $\proj 1\defeq \lam{(x,y)}x$ and $\proj 2\defeq\lam{(x,y)}y$.
We use similar notation for definitions by iterated pattern-matching. For instance, given a dependent function $f:\prd{x:A}{y:B(x)}{z:C(x,y)} P((x,y),z)$ we obtain the function
\begin{equation*}
\lam{((x,y),z)} f(x,y,z): \prd{t:\sm{s:\sm{x:A}B(x)}C(s)}P(t).
\end{equation*}
\end{rmk}

Given a type $A$ in context $\Gamma$, the \define{identity type} of $A$ at $a:A$ is the inductive type family 
\begin{equation*}
\Gamma,x:A\vdash a =_A x~\mathrm{type}
\end{equation*}
with constructor
\begin{equation*}
\Gamma \vdash \refl{a} : a=_A a.
\end{equation*}
The induction principle for the identity type of $A$ at $a$ asserts that for any type family
\begin{equation*}
\Gamma,x:A,\alpha: a=_A x\vdash P(x,\alpha)~\mathrm{type}
\end{equation*}
there is a term
\begin{equation*}
\ind{a=} : P(a,\refl{a})\to \prd{x:A}{\alpha:a=_A x}P(x,\alpha)
\end{equation*}
in context $\Gamma$, satisfying the computation rule
\begin{equation*}
\ind{a=}(p,a,\refl{a})\jdeq p.
\end{equation*}

A term of type $a=_A x$ is also called an \define{identification} of $a$ with $x$, or a \define{path} from $a$ to $x$.
The induction principle for identity types is sometimes called \define{identification elimination} or \define{path induction}. Occasionally, we also write $\idtypevar{A}$ for the identity type on $A$. 

Moreover, we assume that there is a universe $\UU$ with a universal family $\mathrm{El}$ over $\UU$, that is closed under the type forming operations. For example, there is a map
\begin{equation*}
\check{\idtypevar{}}:\prd{A:\UU}\mathrm{El}(A)\to\mathrm{El}(A)\to\UU
\end{equation*}
satisfying
\begin{equation*}
\mathrm{El}(\check{\idtypevar{}}(A,x,y))\jdeq (x=_{\mathrm{El}(A)} y),
\end{equation*}
establishing that the universe is closed under identity types.

Given a type $A$ the \define{concatenation} operation
\begin{equation*}
\concat : \prd{x,y,z:A} (\id{x}{y})\to(\id{y}{z})\to (\id{x}{z})
\end{equation*}
is defined by $\concat(\refl{x},q)\defeq q$. We will usually write $\ct{p}{q}$ for $\concat(p,q)$. 
The concatenation operation satisfies the unit laws
\begin{align*}
\leftunit(p) & : \ct{\refl{x}}{p}=p \\
\rightunit(p) & : \ct{p}{\refl{y}}=p.
\end{align*}

The \define{inverse operation} 
\begin{equation*}
\invfunc:\prd{x,y:A} (x=y)\to (y=x)
\end{equation*}
is defined by $\invfunc(\refl{x})\defeq\refl{x}$. We will usually write $p^{-1}$ for $\invfunc(p)$.
The inverse operation satisfies the inverse laws
\begin{align*}
\leftinv(p) & : \ct{p^{-1}}{p} = \refl{y} \\
\rightinv(p) & : \ct{p}{p^{-1}} = \refl{x}.
\end{align*}

The \define{associativity operation}, which assigns to each $p:x=y$, $q:y=z$, and $r:z=w$ the \define{associator}
\begin{equation*}
\assoc(p,q,r) : \ct{(\ct{p}{q})}{r}=\ct{p}{(\ct{q}{r})}
\end{equation*}
is defined by $\assoc(\refl{x},q,r)\defeq \refl{\ct{q}{r}}$.

Given a map $f:A\to B$, the \define{action on paths} of $f$ is an operation
\begin{equation*}
\apfunc{f} : \prd{x,y:A} (\id{x}{y})\to(\id{f(x)}{f(y)})
\end{equation*}
defined by $\ap{f}{\refl{x}}\defeq\refl{f(x)}$. 
Moreover, there are operations
\begin{align*}
\apid_A & : \prd{x,y:A}{p:\id{x}{y}} \id{p}{\ap{\idfunc[A]}{p}} \\
\apcomp(f,g) & : \prd{x,y:A}{p:\id{x}{y}} \id{\ap{g}{\ap{f}{p}}}{\ap{g\circ f}{p}}
\end{align*}
defined by $\apid_A(\refl{x})\defeq \refl{\refl{x}}$ and $\apcomp(f,g,\refl{x})\jdeq \refl{\refl{g(f(x))}}$, respectively.
It can be shown easily that the action on paths of a map preserves the groupoid operations, and that the groupoid laws are also preserved.

\begin{defn}
Let $A$ be a type, and let $B$ be a type family over $A$. The \define{transport} operation
\begin{equation*}
\tr_B:\prd{x,y:A} (\id{x}{y})\to (B(x)\to B(y))
\end{equation*}
is defined by $\tr_B(\refl{x}) \defeq \idfunc[B(x)]$. 
\end{defn}

\begin{defn}\label{defn:apd}
Given a dependent function $f:\prd{a:A}B(a)$ and a path $p:\id{x}{y}$ in $A$, the \define{dependent action on paths}
\begin{equation*}
\apdfunc{f} : \prd{x,y:A}{p:x=y}\id{\tr_B(p,f(x))}{f(y)}
\end{equation*}
is defined by $\apd{f}{\refl{x}}\defeq \refl{f(x)}$.
\end{defn}

\begin{defn}
Let $f,g:\prd{x:A}P(x)$ be two dependent functions. The type $f\htpy g$ of \define{homotopies}\index{homotopy|textbf} from $f$ to $g$ is defined as
\begin{equation*}
f\htpy g \defeq \prd{x:A} f(x)=g(x).
\end{equation*}
\end{defn}

Commutativity of diagrams is stated using homotopies. For instance, a triangle
\begin{equation*}
\begin{tikzcd}[column sep=tiny]
A \arrow[dr,swap,"f"] \arrow[rr,"h"] & & B \arrow[dl,"g"] \\
& X
\end{tikzcd}
\end{equation*}
is said to commute if it comes equipped with a homotopy $H:f\htpy g\circ h$, and a square
\begin{equation*}
\begin{tikzcd}
A \arrow[d,"i"'] \arrow[r,"g"] & X \arrow[d,"f"] \\
B \arrow[r,swap,"h"] & Y
\end{tikzcd}
\end{equation*}
is said to commute if it comes equipped with a homotopy $H:h\circ i\htpy f\circ g$. 

The reflexivity, inverse, and concatenation operations on homotopies are defined pointwise.
We will write $H^{-1}$ for $\lam{x}H(x)^{-1}$, and $\ct{H}{K}$ for $\lam{x}\ct{H(x)}{K(x)}$.
These operations satisfy the groupoid laws (phrased appropriately as homotopies). Apart from the groupoid operations and their laws, we will occasionally need \emph{whiskering} operations and the naturality of homotopies.

\begin{defn}\label{defn:htpy_whisering}
We define the following \define{whiskering}\index{homotopy!whiskering operations|textbf}\index{whiskering operations!of homotopies|textbf} operations on homotopies:
\begin{enumerate}
\item Suppose $H:f\htpy g$ for two functions $f,g:A\to B$, and let $h:B\to C$. We define
\begin{equation*}
h\cdot H\defeq \lam{x}\ap{h}{H(x)}:h\circ f\htpy h\circ g.
\end{equation*}
\item Suppose $f:A\to B$ and $H:g\htpy h$ for two functions $g,h:B\to C$. We define
\begin{equation*}
H\cdot f\defeq\lam{x}H(f(x)):h\circ f\htpy g\circ f.
\end{equation*}
\end{enumerate}
\end{defn}

We will frequently make use of commuting cubes. The commutativity of a cube is stated using the whiskering operations on homotopies.

\begin{defn}\label{defn:cube}
A \define{commuting cube}\index{commuting cube|textbf}
\begin{equation*}
\begin{tikzcd}
& A_{111} \arrow[dl] \arrow[dr] \arrow[d] \\
A_{110} \arrow[d] & A_{101} \arrow[dl] \arrow[dr] & A_{011} \arrow[dl,crossing over] \arrow[d] \\
A_{100} \arrow[dr] & A_{010} \arrow[d] \arrow[from=ul,crossing over] & A_{001} \arrow[dl] \\
& A_{000},
\end{tikzcd}
\end{equation*}
consists of 
\begin{enumerate}
\item types
\begin{equation*}
A_{111},A_{110},A_{101},A_{011},A_{100},A_{010},A_{001},A_{000},
\end{equation*}
\item \begin{samepage}%
maps
\begin{align*}
f_{11\check{1}} & : A_{111}\to A_{110} & f_{\check{1}01} & : A_{101}\to A_{001} \\
f_{1\check{1}1} & : A_{111}\to A_{101} & f_{01\check{1}} & : A_{011}\to A_{010} \\
f_{\check{1}11} & : A_{111}\to A_{011} & f_{0\check{1}1} & : A_{011}\to A_{001} \\
f_{1\check{1}0} & : A_{110}\to A_{100} & f_{\check{1}00} & : A_{100}\to A_{000} \\
f_{\check{1}10} & : A_{110}\to A_{010} & f_{0\check{1}0} & : A_{010}\to A_{000} \\
f_{10\check{1}} & : A_{101}\to A_{100} & f_{00\check{1}} & : A_{001}\to A_{000},
\end{align*}
\end{samepage}%
\item homotopies
\begin{align*}
H_{1\check{1}\check{1}} & : f_{1\check{1}0}\circ f_{11\check{1}} \htpy f_{10\check{1}}\circ f_{1\check{1}1} & H_{0\check{1}\check{1}} & : f_{0\check{1}0}\circ f_{01\check{1}} \htpy f_{00\check{1}}\circ f_{0\check{1}1} \\
H_{\check{1}1\check{1}} & : f_{\check{1}10}\circ f_{11\check{1}} \htpy f_{01\check{1}}\circ f_{\check{1}11} & H_{\check{1}0\check{1}} & : f_{\check{1}00}\circ f_{10\check{1}} \htpy f_{00\check{1}}\circ f_{\check{1}01} \\
H_{\check{1}\check{1}1} & : f_{\check{1}01}\circ f_{1\check{1}1} \htpy f_{0\check{1}1}\circ f_{\check{1}11} & H_{\check{1}\check{1}0} & : f_{\check{1}00}\circ f_{1\check{1}0} \htpy f_{0\check{1}0}\circ f_{\check{1}10},
\end{align*}
\item and a homotopy 
\begin{align*}
C & : \ct{(f_{\check{1}00}\cdot H_{1\check{1}\check{1}})}{(\ct{(H_{\check{1}0\check{1}}\cdot f_{1\check{1}1})}{(f_{00\check{1}}\cdot H_{\check{1}\check{1}1})})} \\
& \qquad \htpy \ct{(H_{\check{1}\check{1}0}\cdot f_{11\check{1}})}{(\ct{(f_{0\check{1}0}\cdot H_{\check{1}1\check{1}})}{(H_{0\check{1}\check{1}}\cdot f_{\check{1}11})})}
\end{align*}
filling the cube.
\end{enumerate}
\end{defn}

\begin{defn}
Let $f:A\to B$ be a function. We say that $f$ has a \define{section}\index{section!of a map|textbf} if there is a term of type\index{sec(f)@{$\sections(f)$}|textbf}
\begin{equation*}
\sections(f) \defeq \sm{g:B\to A} f\circ g\htpy \idfunc[B].
\end{equation*}
Dually, we say that $f$ has a \define{retraction}\index{retraction} if there is a term of type\index{retr(f)@{$\retractions(f)$}|textbf}
\begin{equation*}
\retractions(f) \defeq \sm{h:B\to A} h\circ f\htpy \idfunc[A].
\end{equation*}
If $f$ has a retraction, we also say that $A$ is a \define{retract}\index{retract!of a type} of $B$.
\end{defn}

\begin{defn}
We say that a function $f:A\to B$ is an \define{equivalence}\index{equivalence|textbf}\index{bi-invertible map|see {equivalence}} if it has both a section and a retraction, i.e.~if it comes equipped with a term of type\index{is_equiv@{$\isequiv$}|textbf}
\begin{equation*}
\isequiv(f)\defeq\sections(f)\times\retractions(f).
\end{equation*}
We will write $\eqv{A}{B}$\index{equiv@{$\eqv{A}{B}$}|textbf} for the type $\sm{f:A\to B}\isequiv(f)$.
\end{defn}

Clearly, if $f$ is \define{invertible}\index{invertible map} in the sense that it comes equipped with a function $g:B\to A$ such that $f\circ g\htpy\idfunc[B]$ and $g\circ f\htpy\idfunc[A]$, then $f$ is an equivalence. We write\index{has_inverse@{$\hasinverse$}|textbf}
\begin{equation*}
\hasinverse(f)\defeq\sm{g:B\to A} (f\circ g\htpy \idfunc[B])\times (g\circ f\htpy\idfunc[A]).
\end{equation*}
The section of an equivalence is also a retraction (and vice versa), so we define the \define{inverse} of an equivalence to be its section. It follows immediately that the inverse of any equivalence is again an equivalence.\index{equivalence!invertibility of} The identity function $\idfunc[A]$ on a type $A$ is an equivalence since it is its own section and its own retraction.

It is straightforward to show that for any two functions $f,g:A\to B$, we have
\begin{equation*}
(f\htpy g)\to (\isequiv(f)\leftrightarrow\isequiv(g)).
\end{equation*}
Given a commuting triangle
\begin{equation*}
\begin{tikzcd}[column sep=tiny]
A \arrow[rr,"h"] \arrow[dr,swap,"f"] & & B \arrow[dl,"g"] \\
& X.
\end{tikzcd}
\end{equation*}
with $H:f\htpy g\circ h$, we have:
\begin{enumerate}
\item If the map $h$ has a section, then $f$ has a section if and only if $g$ has a section.
\item If the map $g$ has a retraction, then $f$ has a retraction if and only if $h$ has a retraction.
\item (The \define{3-for-2 property} for equivalences.) If any two of the functions
\begin{equation*}
f,\qquad g,\qquad h
\end{equation*}
are equivalences, then so is the third.
\end{enumerate}

In the following theorem we characterize the identity type of a $\Sigma$-type as a $\Sigma$-type of identity types.

\begin{prp}[Theorem 2.7.2 of \cite{hottbook}]\label{thm:eq_sigma}
Let $B$ be a type family over $A$, let $s:\sm{x:A}B(x)$, and consider the dependent function\index{pair_eq@{$\paireq$}|textbf}
\begin{equation*}
\paireq_s:\prd{t:\sm{x:A}B(x)} (s=t)\to \sm{\alpha:\proj 1(s)=\proj 1(t)} \tr_B(\alpha,\proj 2(s))=\proj 2(t)
\end{equation*}
defined by $\paireq_s(\refl{s}) \defeq (\refl{\proj 1(s)},\refl{\proj 2(s)})$. Then $\paireq_{s,t}$ is an equivalence for every $t:\sm{x:A}B(x)$.\index{Sigma type@{$\Sigma$-type}!identity types of|textit}\index{identity type!of a Sigma-type@{of a $\Sigma$-type}|textit}
\end{prp}

We include the proof mainly to introduce some more notation.

\begin{proof}
The maps in the converse direction\index{eq_pair@{$\eqpair$}}
\begin{equation*}
\eqpair_{s,t} : \Big(\sm{p:\proj 1(s)=\proj 1(t)}\id{\tr_B(p,\proj 2(s))}{\proj 2(t)}\Big)\to(\id{s}{t})
\end{equation*}
is defined by
\begin{equation*}
\eqpair_{(x,y),(x',y')}(\refl{x},\refl{y})\defeq \refl{(x,y)}.
\end{equation*}
The proofs that the function $\eqpair_{s,t}$ is indeed an inverse of $\paireq_{s,t}$ are also by induction.
\end{proof}

\begin{defn}
We say that a type $A$ is \define{contractible}\index{contractible!type|textbf} if there is a term of type
\begin{equation*}
\iscontr(A) \defeq \sm{c:A}\prd{x:A}c=x.
\end{equation*}
Given a term $(c,C):\iscontr(A)$, we call $c:A$ the \define{center of contraction}\index{center of contraction|textbf} of $A$, and we call $C:\prd{x:A}a=x$ the \define{contraction}\index{contraction} of $A$.
\end{defn}

Suppose $A$ is a contractible type with center of contraction $c$ and contraction $C$. Then the type of $C$ is (judgmentally) equal to the type
\begin{equation*}
\const_c\htpy\idfunc[A].
\end{equation*}
In other words, the contraction $C$ is a \emph{homotopy} from the constant function to the identity function.

\begin{defn}
Consider a type $A$ with a base point $a:A$. We say that $A$ satisfies \define{singleton induction}\index{singleton induction|textbf} if for every type family $B$ over $A$, the map
\begin{equation*}
\evpt:\Big(\prd{x:A}B(x)\Big)\to B(a)
\end{equation*}
given by $f\mapsto f(a)$ has a section.
\end{defn}

\begin{prp}\label{thm:contractible}
A type $A$ is contractible if and only if it satisfies singleton induction.
\end{prp}

\begin{eg}
By definition the unit type\index{unit type!contractibility} $\unit$ satisfies singleton induction, so it is contractible.
\end{eg}

\begin{rmk}
For any family $P:\Big(\sm{x:A}B(x)\Big)\to\UU$ there is a map
\begin{equation*}
\evpair : \Big(\prd{t:\sm{x:A}B(x)}P(t)\Big)\to \prd{x:A}{y:B(x)}P(x,y)
\end{equation*}
that evaluates $f:\prd{t:\sm{x:A}B(x)}P(t)$ at pairs $(x,y)$. In other words, $\evpair$ is defined by $\lam{f}{x}{y}f(x,y)$. By the induction principle for $\Sigma$-types, this map has a section. It is easy to show that $\evpair$ is in fact an equivalence. 

Similarly, there is a map
\begin{equation*}
\evrefl : \Big(\prd{x:A}{p:a=x}B(x,p)\Big)\to B(a,\refl{a})
\end{equation*}
given by $\lam{f}f(a,\refl{a})$, for any type family $B:\prd{x:A} (a=x)\to\UU$. By path induction, this map has a section, and again it is easy to show that this map is in fact an equivalence. 
\end{rmk}

\begin{prp}[Lemma 3.11.8 in \cite{hottbook}]\label{thm:total_path}
For any $x:A$, the type
\begin{equation*}
\sm{y:A}x=y
\end{equation*}
is contractible.\index{identity type!contractibility of total space|textit}
\end{prp}

\begin{proof}
We have the term $(x,\refl{x}):\sm{y:A}x=y$, and both maps in the composite
\begin{equation*}
\begin{tikzcd}[column sep=large]
\prd{t:\sm{y:A}x=y}B(t) \arrow[r,"\evpair"] & \prd{y:A}{p:x=y}B((y,p)) \arrow[r,"\evrefl"] & B((x,\refl{x}))
\end{tikzcd}
\end{equation*}
have sections, so the composite has a section. The composite is $\evpt$, so we see that the asserted type satisfies singleton induction.
\end{proof}

\begin{defn}
Let $f:A\to B$ be a function, and let $b:B$. The \define{fiber}\index{fiber|textbf}\index{homotopy fiber|see {fiber}} of $f$ at $b$ is defined to be the type
\begin{equation*}
\fib{f}{b}\defeq\sm{a:A}f(a)=b.
\end{equation*}
\end{defn}

\begin{eg}[Lemma 4.8.1 of \cite{hottbook}]\label{eg:fib_proj}
Consider a type family $B$ over $A$. Then the map
\begin{equation*}
B(a)\to \fib{\proj 1}{a}
\end{equation*}
given by $b\mapsto ((a,b),\refl{a})$ is an equivalence. In other words, the fibers of the projection function $\proj 1 : \big(\sm{x:A}B(x)\big)\to A$ are just the fibers of the family $B$.
\end{eg}

\begin{defn}
We say that a function $f:A\to B$ is \define{contractible}\index{contractible!map|textbf} if there is a term of type
\begin{equation*}
\iscontr(f)\defeq\prd{b:B}\iscontr(\fib{f}{b}).
\end{equation*}
\end{defn}

We cite Chapter 4 of \cite{hottbook} for the following result, although it is well-known that it can be proven directly and without the use of function extensionality.

\begin{prp}[Chapter 4 in \cite{hottbook}]\label{thm:contr_equiv}
A function is an equivalence if and only if it is contractible.\index{contractible!map!is an equivalence|textit}
\end{prp}

\section{The Fundamental Theorem of Identity Types}
Consider a family
\begin{equation*}
f : \prd{x:A}B(x)\to C(x)
\end{equation*}
of maps. Such $f$ is also called a \define{fiberwise map} or \define{fiberwise transformation}.

\begin{defn}[Definition 4.7.5 of \cite{hottbook}]
We define the map
\begin{equation*}
\total{f}:\sm{x:A}B(x)\to\sm{x:A}C(x).
\end{equation*}
by $\lam{(x,y)}(x,f(x,y))$.
\end{defn}

\begin{lem}[Theorem 4.7.6 of \cite{hottbook}]\label{lem:fib_total}
For any fiberwise transformation $f:\prd{x:A}B(x)\to C(x)$, and any $a:A$ and $c:C(a)$, there is an equivalence
\begin{equation*}
\eqv{\fib{f(a)}{c}}{\fib{\total{f}}{\pairr{a,c}}}.
\end{equation*}
\end{lem}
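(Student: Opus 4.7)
The plan is to exhibit the equivalence as a chain of equivalences obtained by unfolding definitions, applying the characterization of identity types in $\Sigma$-types, and then contracting a based path space. Concretely, I would first unfold
\begin{equation*}
\fib{\total{f}}{\pairr{a,c}} \jdeq \sm{t:\sm{x:A}B(x)} \total{f}(t)=\pairr{a,c},
\end{equation*}
and reassociate the outer $\Sigma$-type to obtain $\sm{x:A}{b:B(x)} \pairr{x,f(x,b)}=\pairr{a,c}$, using that $\total{f}(\pairr{x,b})\jdeq\pairr{x,f(x,b)}$.

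Next I would apply \cref{thm:eq_sigma} fiberwise, pulling the equivalence
\begin{equation*}
\eqv{\big(\pairr{x,f(x,b)}=\pairr{a,c}\big)}{\sm{p:x=a}\tr_C(p,f(x,b))=c}
\end{equation*}
through the $\Sigma$. This yields
\begin{equation*}
\eqv{\fib{\total{f}}{\pairr{a,c}}}{\sm{x:A}{b:B(x)}{p:x=a}\tr_C(p,f(x,b))=c}.
\end{equation*}
Swapping the order of the $b$ and $p$ quantifiers — an obvious equivalence since neither type depends on the other — rewrites this as $\sm{x:A}{p:x=a}{b:B(x)}\tr_C(p,f(x,b))=c$, grouping the outermost two $\Sigma$-type components into $\sm{(x,p):\sm{x:A}x=a} P(x,p)$ for the evident family $P$.

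Now the base $\sm{x:A}x=a$ is contractible with center $\pairr{a,\refl{a}}$ by \cref{thm:total_path}. I would invoke the standard principle that for any family $P$ over a contractible type with center $c_0$, the total space $\sm{t}P(t)$ is equivalent to $P(c_0)$ (a short application of singleton induction via \cref{thm:contractible}, combined with \cref{thm:eq_sigma}). Applied here, this contracts $\pairr{x,p}$ to $\pairr{a,\refl{a}}$, leaving $\sm{b:B(a)}\tr_C(\refl{a},f(a,b))=c$, which reduces judgmentally to $\sm{b:B(a)}f(a,b)=c\jdeq\fib{f(a)}{c}$. Composing all of the equivalences in the chain delivers the required $\eqv{\fib{f(a)}{c}}{\fib{\total{f}}{\pairr{a,c}}}$.

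The main obstacle is bureaucratic rather than conceptual: the swap of $\Sigma$-quantifiers and the "contract the based path space" step each require a small auxiliary equivalence that must be stated precisely so the chain composes, and one has to keep track of which components live where. An alternative route, which I would mention for transparency, is to define the forward map $\pairr{b,q}\mapsto\pairr{\pairr{a,b},\eqpair(\refl{a},q)}$ directly, define the inverse by applying $\paireq$ and then doing path induction on the resulting identification $p:x=a$, and verify the two round-trip homotopies by a routine double induction; this is essentially the same argument unfolded, but avoids citing the "contract a contractible base" lemma explicitly.
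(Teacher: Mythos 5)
Your proof is correct, and it is essentially the argument the paper relies on: the paper gives no proof of its own here, citing Theorem 4.7.6 of the HoTT book, whose proof is exactly your chain — unfold the fiber, apply the characterization of identity types of $\Sigma$-types, swap the two independent $\Sigma$-components, and contract the contractible based path space (your appeal to \cref{thm:total_path} needs only the trivial variant with the free endpoint on the other side). No gaps; the "bureaucratic" steps you flag are indeed routine.
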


\begin{eg}
There are equivalences
\begin{equation*}
\eqv{\fib{(\apfunc{f})_{x,y}}{q}}{\fib{\delta_f}{(x,y,q)}}
\end{equation*}
for any $q:f(x)=f(y)$, because the triangle
\begin{equation*}
\begin{tikzcd}[column sep=-1em]
A \arrow[rr,"{\lam{x}(x,x,\refl{x})}"] \arrow[dr,swap,"\delta_f"] & & \sm{x,y:A}x=y \arrow[dl,"\total{\total{\apfunc{f}}}"] \\
\phantom{\sm{x,y:A}x=y} & \sm{x,y:A}f(x)=f(y)
\end{tikzcd}
\end{equation*}
commutes, and the top map is an equivalence.
\end{eg}

\begin{prp}[Theorem 4.7.7 of \cite{hottbook}]\label{thm:fib_equiv}
Let $f:\prd{x:A}B(x)\to C(x)$ be a fiberwise transformation. The following are logically equivalent:
\begin{enumerate}
\item For each $x:A$, the map $f_x:B(x)\to C(x)$ is an equivalence. In this case we say that $f$ is a \define{fiberwise equivalence}.
\item The map $\total{f}:\sm{x:A}B(x)\to\sm{x:A}C(x)$ is an equivalence.
\end{enumerate}
\end{prp}

The following theorem is the key to many results about identity types, which we will use instead of the \emph{encode-decode method}\index{encode-decode method} of \cite{LicataShulman}. We refer to it as the \define{Fundamental Theorem of Identity Types}.

\begin{thm}[Theorem 5.8.2 of \cite{hottbook}]\label{thm:id_fundamental}
Let $A$ be a type with $a:A$, and let $B$ be a type family over $A$ with $b:B(a)$.
Then  the following are logically equivalent:
\begin{enumerate}
\item The canonical family of maps
\begin{equation*}
\ind{a{=}}(b):\prd{x:A} (a=x)\to B(x)
\end{equation*}
is a fiberwise equivalence.
\item The total space
\begin{equation*}
\sm{x:A}B(x)
\end{equation*}
is contractible.
\end{enumerate}
\end{thm}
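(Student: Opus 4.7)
The plan is to reduce the question about fiberwise equivalences to a statement about the total space, and then exploit \cref{thm:total_path} together with the fact that equivalences preserve and reflect contractibility (given one side is contractible).

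First, by \cref{thm:fib_equiv}, the family $\ind{a=}(b)$ is a fiberwise equivalence if and only if the induced map on total spaces
\begin{equation*}
\total{\ind{a=}(b)}:\sm{x:A}(a=x) \to \sm{x:A}B(x)
\end{equation*}
is an equivalence. So it suffices to show that this map on total spaces is an equivalence if and only if $\sm{x:A}B(x)$ is contractible. By \cref{thm:total_path}, the domain $\sm{x:A}(a=x)$ is contractible, so we are reduced to the following general lemma: for any map $f:X\to Y$ with $X$ contractible, $f$ is an equivalence if and only if $Y$ is contractible.

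To prove this lemma in the forward direction, note that if $f$ is an equivalence and $X$ is contractible with center $c$ and contraction $C$, then $Y$ has center $f(c)$ and a contraction given by $\lam{y}\ct{\ap{f}{C(f^{-1}(y))}}{H(y)}$, where $H:f\circ f^{-1}\htpy \idfunc[Y]$ is the section homotopy. For the converse, if both $X$ and $Y$ are contractible, then any $f:X\to Y$ is an equivalence: define $g(y)\defeq c$ (the center of $X$); then $g\circ f\htpy \idfunc[X]$ using the contraction of $X$, and $f\circ g\htpy\idfunc[Y]$ using the contraction of $Y$. Alternatively, one can invoke \cref{thm:contr_equiv} and check that every fiber of $f$ is contractible, using that $\sm{x:X}f(x)=y$ is a retract of $X$ via the first projection.

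The main obstacle, if any, is keeping track of the directions and making sure the logical equivalence is genuinely stated in both directions; there is no deep coherence to manage, since \cref{thm:fib_equiv} packages the passage between fiberwise and total-space equivalences. Once that packaging is in place, the result is a direct combination of the contractibility of the total space of the based path fibration with the preservation of contractibility by equivalences out of a contractible type.
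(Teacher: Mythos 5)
Your proposal is correct and follows essentially the same route as the paper: reduce to total spaces via \cref{thm:fib_equiv}, use the contractibility of $\sm{x:A}a=x$ from \cref{thm:total_path}, and conclude. The only cosmetic difference is that the paper finishes with the 3-for-2 property applied to the triangle over $\unit$, whereas you prove directly the (equivalent) lemma that a map out of a contractible type is an equivalence if and only if its codomain is contractible.
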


\begin{proof}
By \autoref{thm:fib_equiv} it follows that the fiberwise transformation $\ind{a{=}}(b)$ is a fiberwise equivalence if and only if it induces an equivalence
\begin{equation*}
\eqv{\Big(\sm{x:A}a=x\Big)}{\Big(\sm{x:A}B(x)\Big)}
\end{equation*}
on total spaces. We have that $\sm{x:A}a=x$ is contractible. Now it follows by the 3-for-2 property of equivalences, applied in the case
\begin{equation*}
\begin{tikzcd}
\sm{x:A}a=x \arrow[rr,"\total{\ind{a{=}}(b)}"] \arrow[dr,swap,"\eqvsym"] & & \sm{x:A}B(x) \arrow[dl] \\
& \unit & \phantom{\sm{x:A}a=x}
\end{tikzcd}
\end{equation*}
that $\total{\ind{a{=}}(b)}$ is an equivalence if and only if $\sm{x:A}B(x)$ is contractible.
\end{proof}

Observe that in the proof of \cref{thm:id_fundamental} we haven't used the actual definition of the fiberwise transformation. Indeed, for any fiberwise transformation
\begin{equation*}
f:\prd{x:A}(a=x)\to B(x)
\end{equation*}
we have that $f$ is a fiberwise equivalence if and only if the total space of $B$ is contractible.

Since retracts of contractible types are again contractible, it follows that the only retract of the identity type is the identity type itself:

\begin{cor}\label{cor:id_fundamental_retr}
Let $a:A$, and let $B$ be a type family over $A$. If each $B(x)$ is a retract of $\id{a}{x}$, then $B(x)$ is equivalent to $\id{a}{x}$ for every $x:A$.
\end{cor}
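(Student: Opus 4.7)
The plan is to invoke the Fundamental Theorem of Identity Types (\cref{thm:id_fundamental}) via the observation, noted in the paragraph preceding the corollary, that a retract of a contractible type is contractible. First I would extract from the hypothesis, for each $x:A$, a section $s_x:B(x)\to(a=x)$ and a retraction $r_x:(a=x)\to B(x)$ with $r_x\circ s_x\htpy \idfunc[B(x)]$. In particular, setting $b\defeq r_a(\refl{a}):B(a)$ gives the base point needed to form the canonical fiberwise transformation
\begin{equation*}
\ind{a{=}}(b):\prd{x:A}(a=x)\to B(x),
\end{equation*}
to which the fundamental theorem applies.

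Next I would promote the pointwise retract data to a retract on total spaces. The fiberwise maps $s$ and $r$ induce, by \cref{lem:fib_total}'s accompanying construction, maps $\total{s}:\sm{x:A}B(x)\to \sm{x:A}(a=x)$ and $\total{r}:\sm{x:A}(a=x)\to\sm{x:A}B(x)$. Since $\total{}$ preserves fiberwise composition and the identity up to a pointwise homotopy, we get
\begin{equation*}
\total{r}\circ\total{s} \jdeq \total{r\circ s}\htpy \total{\idfunc}\jdeq \idfunc,
\end{equation*}
so $\sm{x:A}B(x)$ is a retract of $\sm{x:A}(a=x)$. By \cref{thm:total_path} the latter is contractible, and retracts of contractible types are contractible (the center and contraction transfer across the retraction in a straightforward way), so $\sm{x:A}B(x)$ is contractible.

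Applying the Fundamental Theorem of Identity Types now shows that $\ind{a{=}}(b)$ is a fiberwise equivalence, yielding $\eqv{(a=x)}{B(x)}$ for every $x:A$. There is no serious obstacle here; the only point requiring a moment's care is the functoriality of $\total{}$ with respect to fiberwise composition and identities, which is exactly what makes the total-space retract fall out of the fiberwise retract.
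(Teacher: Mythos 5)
Your proof is correct and follows essentially the same route as the paper: show the total space $\sm{x:A}B(x)$ is a retract of the contractible total space $\sm{x:A}(a=x)$, hence contractible, and then conclude the fiberwise equivalence from the Fundamental Theorem of Identity Types (the paper's version additionally remarks that any fiberwise map $(a=x)\to B(x)$, e.g.\ the retraction itself, would serve in place of $\ind{a{=}}(b)$). Your extra detail on the functoriality of $\total{}$ simply makes explicit what the paper leaves implicit.
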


As a first application of the fundamental theorem we give a quick new proof that equivalences are embeddings. The proof of the corresponding theorem in \cite{hottbook} is more involved.

\begin{defn}
An \define{embedding}\index{embedding|textbf} is a map $f:A\to B$ satisfying the property that
\begin{equation*}
\apfunc{f}:(\id{x}{y})\to(\id{f(x)}{f(y)})
\end{equation*}
is an equivalence for every $x,y:A$. We write $\isemb(f)$ for the type of witnesses that $f$ is an embedding.
\end{defn}

\begin{prp}[Theorem 2.11.1 in \cite{hottbook}]
\label{cor:emb_equiv} 
Any equivalence is an embedding.\index{embedding!equivalences are embeddings|textit}\index{equivalence!is an embedding|textit}
\end{prp}

\begin{proof}
Let $e:\eqv{A}{B}$ be an equivalence, and let $x:A$. By \autoref{thm:id_fundamental} it follows that
\begin{equation*}
\apfunc{e} : (\id{x}{y})\to (\id{e(x)}{e(y)})
\end{equation*}
is an equivalence for every $y:A$ if and only if the total space
\begin{equation*}
\sm{y:A}e(x)=e(y)
\end{equation*}
is contractible for every $y:A$. Now observe that $\sm{y:A}e(x)=e(y)$ is equivalent to the fiber $\fib{e}{e(x)}$, which is contractible by \cref{thm:contr_equiv}.
\end{proof}

\begin{defn}
A type $A$ is said to be a \define{proposition} if there is a term of type
\begin{equation*}
\isprop(A)\defeq\prd{x,y:A}\iscontr(x=y).
\end{equation*}
Furthermore, we write $\prop\defeq\sm{X:\UU}\isprop(X)$ for the type of all small propositions.
\end{defn}

We will often use either of the following characterizations of propositions.

\begin{lem}[Lemma 3.11.10 and Exercise 3.5 of \cite{hottbook}]\label{lem:prop_char}
For any type $A$ the following are equivalent:
\begin{enumerate}
\item $A$ is a proposition.
\item $A$ is \define{proof irrelevant} in the sense that $\prd{x,y:A}x=y$.
\item $A\to\iscontr(A)$. 
\end{enumerate}
\end{lem}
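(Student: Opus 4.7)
The plan is to establish the cyclic chain of implications $(1)\Rightarrow(2)\Rightarrow(3)\Rightarrow(1)$. The first two are essentially unpacking of definitions, and the real content is in $(3)\Rightarrow(1)$, where the Fundamental Theorem of Identity Types (\cref{thm:id_fundamental}) supplies a direct argument.

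For $(1)\Rightarrow(2)$, given $x,y:A$ the hypothesis produces a term of type $\iscontr(x=y)$, whose center of contraction is the desired element of $x=y$. For $(2)\Rightarrow(3)$, suppose $H:\prd{x,y:A}x=y$ and let $a:A$ be given; then $(a,\lam{x}H(a,x))$ witnesses that $A$ is contractible, so the assignment $\lam{a}(a,\lam{x}H(a,x))$ is the required function $A\to\iscontr(A)$.

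For $(3)\Rightarrow(1)$, assume $f:A\to\iscontr(A)$ and fix $x:A$. Applying $f$ to $x$ shows that $A$ is contractible, and since the first projection exhibits $\sm{y:A}\unit$ as equivalent to $A$, the total space $\sm{y:A}\unit$ is contractible as well. Now consider the constant type family $\lam{y}\unit$ over $A$, together with the distinguished point $\ttt:\unit$ at $x$. By the Fundamental Theorem of Identity Types, the contractibility of the total space $\sm{y:A}\unit$ is equivalent to the assertion that the canonical fiberwise transformation $\ind{x{=}}(\ttt):\prd{y:A}(x=y)\to\unit$ is a fiberwise equivalence. Hence for every $y:A$ the identity type $x=y$ is equivalent to $\unit$, and therefore contractible, establishing $\isprop(A)$.

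The main obstacle is choosing the right fiberwise transformation to feed into the Fundamental Theorem in the last implication. Once we recognize that the constant family $\lam{y}\unit$ does the job---precisely because its total space computes to $A$ itself---the desired contractibility of $x=y$ is immediate from \cref{thm:id_fundamental}, without any delicate manipulation of paths.
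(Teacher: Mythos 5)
Your proof is correct. Note that the paper itself gives no proof of this lemma: it is stated with a citation to Lemma 3.11.10 and Exercise 3.5 of the HoTT book, so there is no in-text argument to compare against. Your cycle $(1)\Rightarrow(2)\Rightarrow(3)\Rightarrow(1)$ works: the first two implications are indeed just unpacking (center of contraction, resp.\ assembling $(a,\lam{x}H(a,x)):\iscontr(A)$), and the interesting step $(3)\Rightarrow(1)$ is handled soundly. Given $x,y:A$ you may apply $f$ at $x$ to get $\iscontr(A)$, hence the total space $\sm{y:A}\unit$ of the constant family is contractible (being a retract of $A$), and \cref{thm:id_fundamental} with base point $x$ and $\ttt:\unit$ makes $\ind{x{=}}(\ttt):\prd{y:A}(x=y)\to\unit$ a fiberwise equivalence, so each $x=y$ is contractible. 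This is worth contrasting with the standard argument behind the cited HoTT book results, which first shows an inhabited proof-irrelevant type is contractible and then uses a path-algebra computation (transporting $H(x,\blank)$ along a path, i.e.\ the ``every mere proposition is a set'' trick) to contract the identity types; your route replaces that manipulation by a single appeal to the Fundamental Theorem of Identity Types, which is exactly the methodology this thesis advocates in place of encode--decode style arguments, at the mild cost of invoking a heavier theorem for an elementary fact.
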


\begin{eg}\label{eg:prop_contr}
Any contractible type is a proposition. The empty type is a proposition by a direct application of the induction principle of the empty type. Furthermore, any retract of a proposition is again a proposition. In particular, propositions are closed under equivalences.
\end{eg}

\begin{lem}\label{lem:id_fib}
Consider a function $f:A\to B$, and let $(a,p),(a',p'):\fib{f}{b}$ for some $b:B$. Then the canonical map
\begin{equation*}
((a,p)=(a',p'))\to \fib{\apfunc{f}}{\ct{p}{p'^{-1}}}
\end{equation*}
is an equivalence.
\end{lem}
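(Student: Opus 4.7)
My plan is to build the equivalence as a chain of simpler equivalences obtained from Proposition \ref{thm:eq_sigma} and elementary path algebra, and then verify that the composite agrees with the canonical map (defined by path induction).

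First I would apply \cref{thm:eq_sigma} to the fiber $\fib{f}{b}\jdeq\sm{a:A}f(a)=b$, obtaining an equivalence
\begin{equation*}
\big((a,p)=(a',p')\big)\;\eqvsym\;\sm{\alpha:a=a'}\tr_{\lam{x}f(x)=b}(\alpha,p)=p'.
\end{equation*}
Next I would compute the relevant transport: by path induction on $\alpha$, one easily shows that
\begin{equation*}
\tr_{\lam{x}f(x)=b}(\alpha,p)=\ct{\ap{f}{\alpha}^{-1}}{p},
\end{equation*}
since both sides agree on $\refl_{a}$. Substituting this into the dependent type of $\beta$ yields an equivalent $\Sigma$-type whose second component asks for an identification $\ct{\ap{f}{\alpha}^{-1}}{p}=p'$.

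Then I would move the inverse across using the standard fact that pre-concatenation by an invertible path and post-concatenation by an invertible path are equivalences on identity types (a direct application of \cref{thm:id_fundamental}, or just two uses of path induction). This gives a fiberwise equivalence in $\alpha$ between the type $\ct{\ap{f}{\alpha}^{-1}}{p}=p'$ and the type $\ap{f}{\alpha}=\ct{p}{p'^{-1}}$. By \cref{thm:fib_equiv}, the induced map on total spaces is an equivalence, and the resulting total space is by definition $\fib{\apfunc{f}}{\ct{p}{p'^{-1}}}$.

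Finally I would check that the composite equivalence agrees with the canonical map, which is defined by path induction: on the input $\refl_{(a,p)}:(a,p)=(a,p)$ the canonical map returns the element $(\refl_{a},\rightinv(p)^{-1})$ of $\fib{\apfunc{f}}{\ct{p}{p^{-1}}}$. Tracing $\refl_{(a,p)}$ through each step of the chain produces precisely this element, so the two maps agree on $\refl$ and hence (by path induction) everywhere. The main obstacle is simply the path-algebraic bookkeeping in identifying the transport and in verifying the reflexivity case, but both amount to routine $\refl$-computations once the chain of equivalences is set up.
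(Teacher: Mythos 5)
Your chain of equivalences is correct, and the final path-induction check does make the argument go through; but your route is genuinely different from the paper's. You unfold the identity type of the fiber via \cref{thm:eq_sigma}, compute the transport in the family $x\mapsto (f(x)=b)$ as $\ct{\ap{f}{\alpha}^{-1}}{p}$, rearrange by concatenation equivalences, pass to total spaces with \cref{thm:fib_equiv}, and then verify agreement with the canonical map on $\refl$. The paper instead observes that the canonical map is a fiberwise transformation out of the based identity type of $\fib{f}{b}$ at $(a,p)$, so by the Fundamental Theorem of Identity Types (\cref{thm:id_fundamental}, together with the remark following it that the specific transformation is irrelevant) it is a fiberwise equivalence as soon as the total space $\sm{y:A}{q:f(y)=b}{\alpha:a=y}\,\ap{f}{\alpha}=\ct{p}{q^{-1}}$ is contractible, which follows from two applications of \cref{thm:total_path} after reshuffling the $\Sigma$'s. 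The paper's argument is shorter and needs neither the transport computation nor the comparison with the canonical map; yours buys an explicit description of the inverse-direction data and of where each identification goes. Note also that your last step is dispensable: once you know the family $(a',p')\mapsto\fib{\apfunc{f}}{\ct{p}{p'^{-1}}}$ is fiberwise equivalent to the identity family, its total space is contractible, and then \emph{any} fiberwise map out of the identity type into it---in particular the canonical one---is automatically a fiberwise equivalence, so the $\refl$-tracing bookkeeping (where one must hit exactly the identification the canonical map produces, e.g.\ $\rightinv(p)^{-1}$) can be avoided entirely.
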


\begin{proof}
By \cref{thm:fib_equiv} it suffices to show that the type
\begin{equation*}
\sm{y:A}{q:f(y)=b}{p:\proj 1(s)=y} \ap{f}{p}=\ct{\proj 2(s)}{q^{-1}}
\end{equation*}
is contractible, which is immediate by two applications of \cref{thm:total_path}.
\end{proof}

\begin{prp}[Lemma 7.6.2 of \cite{hottbook}]\label{thm:prop_emb}
A map is an embedding if and only if its fibers are propositions.
\end{prp}

\begin{proof}
If $f$ is an embedding, then the fibers of $\apfunc{f}$ are contractible by \cref{thm:contr_equiv}. Thus it follows by \cref{lem:id_fib} that the fibers of $f$ are propositions.

Conversely, if the fibers of $f$ are propositions, then we have by \cref{lem:id_fib} an equivalence
\begin{equation*}
\eqv{((x,p)=(y,\refl{f(y)}))}{\fib{\apfunc{f}}{p}}
\end{equation*}
for any $p:f(x)=f(y)$, which shows that the fibers of $\apfunc{f}$ are contractible. Thus $f$ is an embedding by \cref{thm:contr_equiv}.
\end{proof}

\begin{defn}
A type family $B$ over $A$ is said to be a \define{subtype} of $A$ if for each $x:A$ the type $B(x)$ is a proposition.
\end{defn}

\begin{cor}\label{thm:subtype}
A type family $B$ over $A$ is a subtype if and only if the projection map
\begin{equation*}
\proj 1 : \Big(\sm{x:A}B(x)\Big)\to A
\end{equation*}
is an embedding.
\end{cor}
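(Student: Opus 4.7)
The plan is to combine \cref{eg:fib_proj} with \cref{thm:prop_emb}: the former identifies the fibers of $\proj 1$ with the fibers of the family $B$, and the latter characterizes embeddings as maps with propositional fibers. So the entire argument is essentially to chain these two facts together.

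More concretely, I would first recall from \cref{thm:prop_emb} that $\proj 1$ is an embedding if and only if, for every $a:A$, the fiber $\fib{\proj 1}{a}$ is a proposition. Then, by \cref{eg:fib_proj}, there is an equivalence $\eqv{B(a)}{\fib{\proj 1}{a}}$ given by $b\mapsto ((a,b),\refl{a})$. Since being a proposition is preserved by equivalences (as noted in \cref{eg:prop_contr}, where propositions are shown to be closed under retracts and hence under equivalences), the fiber $\fib{\proj 1}{a}$ is a proposition if and only if $B(a)$ is a proposition. Putting these two biconditionals together gives the result: $B$ is a subtype iff each $B(a)$ is a proposition iff each $\fib{\proj 1}{a}$ is a proposition iff $\proj 1$ is an embedding.

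There is no real obstacle here; the proof is a one-line application of the preceding two results. The only thing worth being careful about is making sure the direction of the equivalence from \cref{eg:fib_proj} is used correctly, but since propositions are closed under equivalence in both directions this is not an issue. I would present the argument as a short chain of equivalences/biconditionals without introducing any new machinery.
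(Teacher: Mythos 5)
Your proof is correct and matches the paper's own argument, which simply cites \cref{eg:fib_proj} together with \cref{thm:prop_emb}; you have just spelled out the same one-line combination in more detail. No issues.
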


\begin{proof}
Immediate by \cref{eg:fib_proj,thm:prop_emb}.
\end{proof}

\section{Function extensionality}
\begin{prp}[Theorem 4.9.5 of \cite{hottbook}]\label{thm:funext_wkfunext}
The following are equivalent:
\begin{enumerate}
\item The \define{function extensionality principle}\index{function extensionality}: For every type family $B$ over $A$, and any two dependent functions $f,g:\prd{x:A}B(x)$, the canonical map\index{htpy_eq@{$\htpyeq$}|textbf}
\begin{equation*}
\htpyeq(f,g) : (\id{f}{g})\to (f\htpy g)
\end{equation*}
by path induction (sending $\refl{f}$ to $\lam{x}\refl{f(x)}$) is an equivalence. We will write $\eqhtpy$\index{eq_htpy@{$\eqhtpy$}} for its inverse.
\item The \define{weak function extensionality principle}\index{weak function extensionality} holds: For every type family $B$ over $A$ one has\index{contractible!weak function extensionality}
\begin{equation*}
\Big(\prd{x:A}\iscontr(B(x))\Big)\to\iscontr\Big(\prd{x:A}B(x)\Big).
\end{equation*}
\end{enumerate}
\end{prp}

From now on we will assume that function extensionality holds.

\begin{cor}[Theorem 7.1.9 of \cite{hottbook}]\label{thm:prop_pi}
For any type family $B$ over $A$ one has
\begin{equation*}
\Big(\prd{x:A}\isprop(B(x))\Big)\to \isprop\Big(\prd{x:A}B(x)\Big).
\end{equation*}
In particular, if $B$ is a proposition, then $A\to B$ is a proposition for any type $A$.
\end{cor}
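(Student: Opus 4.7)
The plan is to reduce identifications of dependent functions to pointwise identifications via function extensionality, and then apply weak function extensionality to obtain contractibility. Suppose we are given a term $p:\prd{x:A}\isprop(B(x))$; we want to show $\prd{x:A}B(x)$ is a proposition. By \cref{lem:prop_char} it suffices to show that for any two $f,g:\prd{x:A}B(x)$ the identity type $f=g$ is contractible.

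First I would invoke \cref{thm:funext_wkfunext}: under our standing assumption of function extensionality, the canonical map $\htpyeq(f,g):(f=g)\to (f\htpy g)$ is an equivalence, and contractibility is preserved under equivalences (as recorded in \cref{eg:prop_contr}). Hence it suffices to show that $f\htpy g\jdeq\prd{x:A}f(x)=g(x)$ is contractible. By hypothesis each $B(x)$ is a proposition, so by \cref{lem:prop_char} each identity type $f(x)=g(x)$ is contractible. At this point I apply the weak function extensionality principle, which by \cref{thm:funext_wkfunext} is available from our assumption of function extensionality, to conclude that $\prd{x:A}f(x)=g(x)$ is indeed contractible. This completes the proof that $\prd{x:A}B(x)$ is a proposition, and the ``in particular'' clause follows immediately by taking $B$ to be the constant family $\lam{\nameless}B$.

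The argument has no real obstacle: it is a direct assembly of three ingredients already established, namely the equivalence between $(f=g)$ and $(f\htpy g)$ supplied by function extensionality, the reformulation of ``proposition'' in terms of contractibility of identity types, and weak function extensionality. The only point worth flagging is the interchange between the ``strong'' and ``weak'' forms of function extensionality — both are in play, and \cref{thm:funext_wkfunext} guarantees they come together.
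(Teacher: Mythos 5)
Your proof is correct and is exactly the argument the paper intends: the corollary is stated immediately after \cref{thm:funext_wkfunext} with no written proof (only a citation to the HoTT book), and the intended derivation is precisely your reduction of $f=g$ to $f\htpy g$ via function extensionality, followed by weak function extensionality applied to the pointwise-contractible family $x\mapsto (f(x)=g(x))$, and the constant-family specialization for the ``in particular'' clause. Two trivial nits: since $\isprop$ is \emph{defined} in this paper as contractibility of identity types, the appeals to \cref{lem:prop_char} are unnecessary, and \cref{eg:prop_contr} records closure of \emph{propositions} under equivalences/retracts, whereas the fact you actually use is that \emph{contractible} types are closed under equivalences (noted in the paper just before \cref{cor:id_fundamental_retr}).
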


We show in this section that a map $f:A\to B$ is an equivalence if and only if for any type family $P$ over $B$, the precomposition map
\begin{equation*}
\blank\circ f: \Big(\prd{y:B}P(y)\Big)\to\Big(\prd{x:A}P(f(x))\Big)
\end{equation*}
is an equivalence. 
In the proof we use the notion of \emph{path-split} maps, which was introduced in \cite{RijkeShulmanSpitters}.

\begin{defn}
We say that a map $f:A\to B$ is \define{path-split}\index{path-split|textbf} if $f$ has a section, and for each $x,y:A$ the map
\begin{equation*}
\apfunc{f}(x,y):(x=y)\to (f(x)=f(y))
\end{equation*}
also has a section. We write $\pathsplit(f)$\index{path_split(f)@{$\pathsplit(f)$}|textbf} for the type
\begin{equation*}
\sections(f)\times\prd{x,y:A}\sections(\apfunc{f}(x,y)).
\end{equation*}
\end{defn}

We will also use the notion of \emph{half-adjoint equivalences}, which were introduced in \cite{hottbook}.

\begin{defn}[Definition 4.2.1 of \cite{hottbook}]
We say that a map $f:A\to B$ is a \define{half-adjoint equivalence}\index{half-adjoint equivalence|textbf}, in the sense that there are
\begin{align*}
g & : B \to A\\
G & : f\circ g \htpy \idfunc[B] \\
H & : g\circ f \htpy \idfunc[A] \\
K & : G\cdot f \htpy f\cdot H.
\end{align*}
We write $\halfadj(f)$\index{half_adj(f)@{$\halfadj(f)$}|textbf} for the type of such quadruples $(g,G,H,K)$.
\end{defn}

Furthermore, we will need `type theoretic choice'. 

\begin{prp}[Theorem 2.15.7 of \cite{hottbook}]\label{thm:choice}
Let $C(x,y)$ be a type in context $\Gamma,x:A,y:B(x)$. Then the map
\begin{equation*}
\varphi:\Big(\prd{x:A}\sm{y:B(x)}C(x,y)\Big)\to \Big(\sm{f:\prd{x:A}B(x)}\prd{x:A}C(x,f(x))\Big)
\end{equation*}
given by $\lam{h}(\lam{x}\proj 1(h(x)),\lam{x}\proj 2(h(x)))$ is an equivalence.
\end{prp}

\begin{cor}
For type $A$ and any type family $C$ over $B$, the map
\begin{equation*}
\Big(\sm{f:A\to B} \prd{x:A}C(f(x))\Big)\to\Big(A\to\sm{y:B}C(x)\Big)
\end{equation*}
given by $\lam{(f,g)}{x}(f(x),g(x))$ is an equivalence.
\end{cor}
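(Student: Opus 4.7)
The plan is to reduce this to the type-theoretic choice principle \cref{thm:choice} by specializing it to a constant family. Taking the empty context $\Gamma$, the constant family $B(x) := B$, and $C(x,y) := C(y)$ depending only on the second variable, the type $\prd{x:A}B(x)$ collapses to $A\to B$, and the type $\prd{x:A}\sm{y:B(x)}C(x,y)$ collapses to $A\to\sm{y:B}C(y)$. Hence \cref{thm:choice} supplies an equivalence
\begin{equation*}
\varphi : \Big(A\to\sm{y:B}C(y)\Big) \to \Big(\sm{f:A\to B}\prd{x:A}C(f(x))\Big),
\end{equation*}
given by $\varphi(h)\jdeq(\lam{x}\proj 1(h(x)),\lam{x}\proj 2(h(x)))$.

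It then remains to identify the map $\psi := \lam{(f,g)}{x}(f(x),g(x))$ of the statement as an inverse of $\varphi$. By the computation rules for $\Sigma$-types, the composite $\varphi\circ\psi$ reduces judgmentally to the identity map on $\sm{f:A\to B}\prd{x:A}C(f(x))$, so $\psi$ is a section of $\varphi$. Since $\varphi$ is an equivalence, the $3$-for-$2$ property applied to the triangle witnessing $\idfunc\htpy\varphi\circ\psi$ forces $\psi$ to be an equivalence as well.

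There is no real obstacle here: the corollary is essentially a reformulation of \cref{thm:choice} in the special case where the indexing family is constant, and the only step requiring any verification is that $\psi$ matches the inverse of the specialized $\varphi$, which is immediate from the $\beta$-rules for pair projections.
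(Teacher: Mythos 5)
Your argument is correct and is exactly the intended one: the paper states this corollary without proof as an immediate consequence of \cref{thm:choice}, specialized to the constant family $B(x)\defeq B$, with the stated map being (judgmentally, up to the trivial computation on pairs) the inverse of the choice equivalence. Your use of the 3-for-2 property to conclude that the inverse direction is itself an equivalence is a perfectly fine way to finish.
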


\begin{prp}\label{prp:equiv_precomp}
For any map $f:A\to B$, the following are equivalent:
\begin{enumerate}
\item $f$ is an equivalence.
\item $f$ is path-split.
\item $f$ is a half-adjoint equivalence.
\item For any type family $P$ over $B$ the map
\begin{equation*}
\Big(\prd{y:B}P(y)\Big)\to\Big(\prd{x:A}P(f(x))\Big)
\end{equation*}
given by $s\mapsto s\circ f$ is an equivalence.
\item For any type $X$ the map
\begin{equation*}
(B\to X)\to (A\to X)
\end{equation*}
given by $g\mapsto g\circ f$ is an equivalence. 
\end{enumerate}
\end{prp}

\begin{proof}
To see that (i) implies (ii) we note that any equivalence has a section, and its action on paths is an equivalence by \cref{cor:emb_equiv} so again it has a section.

To show that (ii) implies (iii), assume that $f$ is path-split. Thus we have $(g,G):\sections(f)$, and the assumption that $\apfunc{f}:(x=y)\to (f(x)=f(y))$ has a section for every $x,y:A$ gives us a term of type
\begin{equation*}
\prd{x:A}\fib{\apfunc{f}}{G(f(x))}.
\end{equation*}
By \cref{thm:choice} this type is equivalent to
\begin{equation*}
\sm{H:\prd{x:A}g(f(x))=x}\prd{x:A}G(f(x))=\ap{f}{H(x)},
\end{equation*}
so we obtain $H:g\circ f\htpy \idfunc[A]$ and $K:G\cdot f\htpy f\cdot H$, showing that $f$ is a half-adjoint equivalence.

To show that (iii) implies (iv), suppose that $f$ comes equipped with $(g,G,H,K)$ witnessing that $f$ is a half-adjoint equivalence. Then we define the inverse of $\blank\circ f$ to be the map
\begin{equation*}
\varphi:\Big(\prd{x:A}P(f(x))\Big)\to\Big(\prd{y:B}P(y)\Big)
\end{equation*}
given by $s\mapsto \lam{y}\tr_P(G(y),sg(y))$. 

To see that $\varphi$ is a section of $\blank\circ f$, let $s:\prd{x:A}P(f(x))$. By function extensionality it suffices to construct a homotopy $\varphi(s)\circ f\htpy s$. In other words, we have to show that
\begin{equation*}
\tr_P(G(f(x)),s(g(f(x)))=s(x)
\end{equation*}
for any $x:A$. Now we use the additional homotopy $K$ from our assumption that $f$ is a half-adjoint equivalence. Since we have $K(x):G(f(x))=\ap{f}{H(x)}$ it suffices to show that
\begin{equation*}
\tr_P(\ap{f}{H(x)},sgf(x))=s(x).
\end{equation*}
A simple path-induction argument yields that
\begin{equation*}
\tr_P(\ap{f}{p})\htpy \tr_{P\circ f}(p)
\end{equation*}
for any path $p:x=y$ in $A$, so it suffices to construct an identification
\begin{equation*}
\tr_{P\circ f}(H(x),sgf(x))=s(x).
\end{equation*}
We have such an identification by $\apd{H(x)}{s}$.

To see that $\varphi$ is a retraction of $\blank\circ f$, let $s:\prd{y:B}P(y)$. By function extensionality it suffices to construct a homotopy $\varphi(s\circ f)\htpy s$. In other words, we have to show that
\begin{equation*}
\tr_P(G(y),sfg(y))=s(y)
\end{equation*}
for any $y:B$. We have such an identification by $\apd{G(y)}{s}$. This completes the proof that (iii) implies (iv).

Note that (v) is an immediate consequence of (iv), since we can just choose $P$ to be the constant family $X$.

It remains to show that (v) implies (i). Suppose that
\begin{equation*}
\blank\circ f:(B\to X)\to (A\to X)
\end{equation*}
is an equivalence for every type $X$. Then its fibers are contractible by \cref{thm:contr_equiv}. In particular, choosing $X\jdeq A$ we see that the fiber
\begin{equation*}
\fib{\blank\circ f}{\idfunc[A]}\jdeq \sm{h:B\to A}h\circ f=\idfunc[A]
\end{equation*}
is contractible. Thus we obtain a function $h:B\to A$ and a homotopy $H:h\circ f\htpy\idfunc[A]$ showing that $h$ is a retraction of $f$. We will show that $h$ is also a section of $f$. To see this, we use that the fiber
\begin{equation*}
\fib{\blank\circ f}{f}\jdeq \sm{i:B\to B} i\circ f=f
\end{equation*}
is contractible (choosing $X\jdeq B$). 
Of course we have $(\idfunc[B],\refl{f})$ in this fiber. However we claim that there also is an identification $p:(f\circ h)\circ f=f$, showing that $(f\circ h,p)$ is in this fiber, because
\begin{align*}
(f\circ h)\circ f & \jdeq f\circ (h\circ f) \\
& = f\circ \idfunc[A] \\
& \jdeq f
\end{align*}
Now we conclude by the contractibility of the fiber that there is an identification $(\idfunc[B],\refl{f})=(f\circ h,p)$. In particular we obtain that $\idfunc[B]=f\circ h$, showing that $h$ is a section of $f$.
\end{proof}

\section{Homotopy pullbacks}
\subsection{Cartesian squares}

Recall that a square
\begin{equation*}
\begin{tikzcd}
C \arrow[r,"q"] \arrow[d,swap,"p"] & B \arrow[d,"g"] \\
A \arrow[r,swap,"g"] & X
\end{tikzcd}
\end{equation*}
is said to \define{commute}\index{commuting square|textbf} if there is a homotopy $H:f\circ p\htpy g\circ q$. 

\begin{defn}\label{defn:cospan}
A \define{cospan}\index{cospan|textbf} consists of three types $A$, $X$, and $B$, and maps $f:A\to X$ and $g:B\to X$. Given a type $C$, a \define{cone}\index{cone!on a cospan|textbf} on the cospan $A \stackrel{f}{\rightarrow} X \stackrel{g}{\leftarrow} B$ with \define{vertex} $C$\index{vertex!of a cone|textbf} consists of maps $p:C\to A$, $q:C\to B$ and a homotopy $H:f\circ p\htpy g\circ q$ witnessing that the square
\begin{equation*}
\begin{tikzcd}
C \arrow[r,"q"] \arrow[d,swap,"p"] & B \arrow[d,"g"] \\
A \arrow[r,swap,"f"] & X
\end{tikzcd}
\end{equation*}
commutes. We write\index{cone(C)@{$\cone(\blank)$}|textbf}
\begin{equation*}
\cone(C)\defeq \sm{p:C\to A}{q:C\to B}f\circ p\htpy g\circ q
\end{equation*}
for the type of cones with vertex $C$.
\end{defn}

Given a cone with vertex $C$ on a span $A\stackrel{f}{\rightarrow} X \stackrel{g}{\leftarrow} B$ and a map $h:C'\to C$, we construct a new cone with vertex $C'$ in the following definition.

\begin{defn}
For any cone $(p,q,H)$ with vertex $C$ and any type $C'$, we define a map\index{cone map@{$\conemap$}|textbf}
\begin{equation*}
\conemap(p,q,H):(C'\to C)\to\cone(C')
\end{equation*}
by $h\mapsto (p\circ h,q\circ h,H\cdot h)$. 
\end{defn}

\begin{defn}
We say that a commuting square
\begin{equation*}
\begin{tikzcd}
C \arrow[r,"q"] \arrow[d,swap,"p"] & B \arrow[d,"g"] \\
A \arrow[r,swap,"f"] & X
\end{tikzcd}
\end{equation*}
with $H:f\circ p\htpy g\circ q$ is a \define{pullback square}\index{pullback square|textbf}, or that it is \define{cartesian}\index{cartesian square|textbf}, if it satisfies the \define{universal property} of pullbacks\index{universal property!of pullbacks}, which asserts that the map
\begin{equation*}
\conemap(p,q,H):(C'\to C)\to\cone(C')
\end{equation*}
is an equivalence for every type $C'$. 
\end{defn}

We often indicate the universal property with a diagram as follows:
\begin{equation*}
\begin{tikzcd}
C' \arrow[drr,bend left=15,"{q'}"] \arrow[dr,densely dotted,"h"] \arrow[ddr,bend right=15,swap,"{p'}"] \\
& C \arrow[r,"q"] \arrow[d,swap,"p"] & B \arrow[d,"g"] \\
& A \arrow[r,swap,"f"] & X
\end{tikzcd}
\end{equation*}
since the universal property states that for every cone $(p',q',H')$ with vertex $C'$, the type of pairs $(h,\alpha)$ consisting of $h:C'\to C$ equipped with $\alpha:\conemap((p,q,H),h)=(p',q',H')$ is contractible by \cref{thm:contr_equiv}.

\begin{prp}\label{thm:pullback_up}
Consider a commuting square
\begin{equation*}
\begin{tikzcd}
C \arrow[r,"q"] \arrow[d,swap,"p"] & B \arrow[d,"g"] \\
A \arrow[r,swap,"f"] & X
\end{tikzcd}
\end{equation*}
with $H:f\circ p\htpy g\circ q$
Then the following are equivalent:\index{universal property!of pullbacks (characterization)|textit}
\begin{enumerate}
\item The square is a pullback square.
\item For every type $C'$ and every cone $(p',q',H')$ with vertex $C'$, the type of quadruples $(h,K,L,M)$ consisting of
\begin{align*}
h & : C'\to C \\
K & : p\circ h \htpy p' \\
L & : q\circ h \htpy q' \\
M & : \ct{(H\cdot h)}{(g\cdot L)} \htpy \ct{(f\cdot K)}{H'}
\end{align*}
is contractible.
\end{enumerate}
\end{prp}

\begin{rmk}
The homotopy $M$ in \cref{thm:pullback_up} witnesses that the square
\begin{equation*}
\begin{tikzcd}
f\circ p\circ h \arrow[r,"f\cdot K"] \arrow[d,swap,"H\cdot h"] & f\circ p' \arrow[d,"{H'}"] \\
g\circ q\circ h \arrow[r,swap,"g\cdot L"] & g\circ q'
\end{tikzcd}
\end{equation*}
of homotopies commutes.
\end{rmk}

\subsection{The unique existence of pullbacks}

\begin{defn}
Let $f:A\to X$ and $B\to X$ be maps. Then we define
\begin{align*}
A\times_X B & \defeq \sm{x:A}{y:B}f(x)=g(y) \\
\pi_1 & \defeq \proj 1 & & : A\times_X B\to A \\
\pi_2 & \defeq \proj 1\circ\proj 2 & & : A\times_X B\to B\\
\pi_3 & \defeq \proj 2\circ\proj 2 & & : f\circ \pi_1 \htpy g\circ\pi_2.
\end{align*}
The type $A\times_X B$ is called the \define{canonical pullback}\index{canonical pullback|textbf} of $f$ and $g$.
\end{defn}

Note that $A\times_X B$ depends on $f$ and $g$, although this dependency is not visible in the notation.

\begin{prp}[Exercise 2.11 of \cite{hottbook}]
Given maps $f:A\to X$ and $g:B\to X$, the commuting square\index{canonical pullback|textit}
\begin{equation*}
\begin{tikzcd}
A\times_X B \arrow[r,"\pi_2"] \arrow[d,swap,"\pi_1"] & B \arrow[d,"g"] \\
A \arrow[r,swap,"f"] & X,
\end{tikzcd}
\end{equation*}
is a pullback square.
\end{prp}

In the following lemma we establish the uniqueness of pullbacks up to equivalence via a \emph{3-for-2 property} for pullbacks.

\begin{lem}\label{lem:pb_3for2}\index{pullback!3-for-2 property|textit}\index{3-for-2 property!of pullbacks|textit}%
Consider the squares
\begin{equation*}
\begin{tikzcd}
C \arrow[r,"q"] \arrow[d,swap,"p"] & B \arrow[d,"g"] & {C'} \arrow[r,"{q'}"] \arrow[d,swap,"{p'}"] & B \arrow[d,"g"] \\
A \arrow[r,swap,"f"] & X & A \arrow[r,swap,"f"] & X
\end{tikzcd}
\end{equation*}
with homotopies $H:f\circ p \htpy g\circ q$ and $H':f\circ p'\htpy g\circ q'$.
Furthermore, suppose we have a map $h:C'\to C$ equipped with
\begin{align*}
K & : p\circ h \htpy p' \\
L & : q\circ h \htpy q' \\
M & : \ct{(H\cdot h)}{(g\cdot L)} \htpy \ct{(f\cdot K)}{H'}.
\end{align*}
If any two of the following three properties hold, so does the third:
\begin{samepage}%
\begin{enumerate}
\item $C$ is a pullback.
\item $C'$ is a pullback.
\item $h$ is an equivalence.
\end{enumerate}%
\end{samepage}%
\end{lem}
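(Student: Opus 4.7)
The plan is to observe that the universal property of a cartesian square is precisely the condition that $\conemap$ is a pointwise equivalence, and to combine this with the $3$-for-$2$ property for equivalences.

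First I would show that, for every type $D$, the triangle
\begin{equation*}
\begin{tikzcd}[column sep=small]
(D\to C') \arrow[rr,"{h\circ\blank}"] \arrow[dr,swap,"{\conemap(p',q',H')}"] & & (D\to C) \arrow[dl,"{\conemap(p,q,H)}"] \\
& \cone(D) &
\end{tikzcd}
\end{equation*}
commutes up to homotopy. For any $k:D\to C'$ the two cones
\begin{align*}
\conemap(p,q,H)(h\circ k) & \jdeq (p\circ h\circ k,\,q\circ h\circ k,\,H\cdot(h\circ k)) \\
\conemap(p',q',H')(k) & \jdeq (p'\circ k,\,q'\circ k,\,H'\cdot k)
\end{align*}
are identified by the triple $(K\cdot k,\,L\cdot k,\,M\cdot k)$: the whiskered homotopies $K\cdot k$ and $L\cdot k$ supply, via function extensionality, identifications of the first two components, and $M\cdot k$ supplies the third-component coherence, after using standard identities such as $(H\cdot h)\cdot k=H\cdot(h\circ k)$ and the distributivity of horizontal whiskering over vertical concatenation.

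Next I would apply the $3$-for-$2$ property for equivalences at each $D$. By definition, $C$ (respectively $C'$) is a pullback exactly when $\conemap(p,q,H)$ (respectively $\conemap(p',q',H')$) is an equivalence for every $D$. Hence any two of the three statements (i) $C$ is a pullback, (ii) $C'$ is a pullback, and (iii) $h\circ\blank:(D\to C')\to(D\to C)$ is an equivalence for every $D$ imply the third. It remains to observe that (iii) is equivalent to $h$ being an equivalence. One direction is immediate by function extensionality: an inverse $h^{-1}$ of $h$ yields an inverse $h^{-1}\circ\blank$ of $h\circ\blank$. Conversely, taking $D\jdeq C$, the fiber $\fib{h\circ\blank}{\idfunc[C]}$ produces a map $k:C\to C'$ with $h\circ k=\idfunc[C]$; then taking $D\jdeq C'$ and noting that $h\circ(k\circ h)=h=h\circ\idfunc[C']$, the contractibility of $\fib{h\circ\blank}{h}$ forces $k\circ h=\idfunc[C']$, so $h$ is an equivalence.

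The main obstacle I expect is the first step: verifying that $M\cdot k$ really furnishes the required $2$-cell between the transported homotopy $H\cdot(h\circ k)$ and $H'\cdot k$ along the identifications arising from $K\cdot k$ and $L\cdot k$. The manipulations are routine but involve several lemmas about how transport in function types translates, under function extensionality, into conjugation by whiskered homotopies, which is exactly the compatibility $M$ was designed to record.
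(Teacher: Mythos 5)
Your proposal is correct and follows essentially the same route as the paper: establish that the triangle $(D\to C')\to(D\to C)\to\cone(D)$ commutes, apply the 3-for-2 property of equivalences pointwise in $D$, and finish by the fact that $h$ is an equivalence if and only if post-composition $h\circ\blank$ is an equivalence for every $D$. The only difference is that the paper first packages $(K,L,M)$ into a single identification $\conemap((p,q,H),h)=(p',q',H')$ and then uses the judgmental compositionality $\conemap((p,q,H),h\circ k)\jdeq\conemap(\conemap((p,q,H),h),k)$, which discharges in one stroke the whiskering coherences you flag as the main obstacle.
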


\begin{proof}
The type of triples $(K,L,M)$ is equivalent to the type of identifications
\begin{equation*}
\conemap((p,q,H),h)=(p',q',H').
\end{equation*}
Let $D$ be a type, and let $k:D\to C'$ be a map. We observe that
\begin{align*}
\conemap((p,q,H),(h\circ k)) & \jdeq (p\circ (h\circ k),q\circ (h\circ k),H\circ (h\circ k)) \\
& \jdeq ((p\circ h)\circ k,(q\circ h)\circ k, (H\circ h)\circ k) \\
& \jdeq \conemap(\conemap((p,q,H),h),k) \\
& = \conemap((p',q',H'),k).
\end{align*}
Thus we see that the triangle 
\begin{equation*}
\begin{tikzcd}[column sep=-1em]
(D\to C') \arrow[rr,"{h\circ \blank}"] \arrow[dr,swap,"{\conemap(p',q',H')}"] & & (D\to C) \arrow[dl,"{\conemap(p,q,H)}"] \\
& \cone(D) & \phantom{(D\to C')}
\end{tikzcd}
\end{equation*}
commutes. Therefore it follows from the 3-for-2 property of equivalences that if any two of the following properties hold, then so does the third:
\begin{enumerate}
\item The map $\conemap(p,q,H):(D\to C)\to \cone(D)$ is an equivalence,
\item The map $\conemap(p',q',H'):(D\to C')\to \cone(D)$ is an equivalence,
\item The map $h\circ\blank : (D\to C')\to (D\to C)$ is an equivalence.
\end{enumerate}
Thus the 3-for-2 property for pullbacks follows from the fact that $h$ is an equivalence if and only if $h\circ\blank : (D\to C')\to (D\to C)$ is an equivalence for any type $D$.
\end{proof}

\begin{defn}
Given a commuting square
\begin{equation*}
\begin{tikzcd}
C \arrow[r,"q"] \arrow[d,swap,"p"] & B \arrow[d,"g"] \\
A \arrow[r,swap,"f"] & X
\end{tikzcd}
\end{equation*}
with $H:f\circ p \htpy g \circ q$, we define the \define{gap map}\index{gap map|textbf}\index{pullback!gap map|textbf}
\begin{equation*}
\gap(p,q,H):C \to A\times_X B
\end{equation*}
by $\lam{z}(p(z),q(z),H(z))$. Furthermore, we will write\index{is_pullback@{$\ispullback$}|textbf}
\begin{equation*}
\ispullback(f,g,H)\defeq \isequiv(\gap(p,q,H)).
\end{equation*}
\end{defn}

\begin{prp}\label{thm:is_pullback}
Consider a commuting square
\begin{equation*}
\begin{tikzcd}
C \arrow[r,"q"] \arrow[d,swap,"p"] & B \arrow[d,"g"] \\
A \arrow[r,swap,"f"] & X
\end{tikzcd}
\end{equation*}
with $H:f\circ p \htpy g \circ q$. The following are equivalent:
\begin{enumerate}
\item The square is a pullback square
\item There is a term of type
\begin{equation*}
\ispullback(p,q,H)\defeq \isequiv(\gap(p,q,H)).
\end{equation*}
\end{enumerate}
\end{prp}

\begin{proof}
Note that there are homotopies
\begin{align*}
K & : \pi_1\circ \gap(p,q,H) \htpy p \\
L & : \pi_2\circ \gap(p,q,H) \htpy q \\
M & : \ct{(\pi_3\cdot \gap(p,q,H))}{(g\cdot L)} \htpy \ct{(f\cdot K)}{H}.
\end{align*}
given by 
\begin{align*}
K & \defeq \lam{z}\refl{p(z)} \\
L & \defeq \lam{z}\refl{q(z)} \\
M & \defeq \lam{z}\ct{\rightunit(H(z))}{\leftunit(H(z))^{-1}}.
\end{align*}
Therefore the claim follows by \cref{lem:pb_3for2}.
\end{proof}

\subsection{Fiberwise equivalences}

\begin{prp}\label{thm:pb_fibequiv}
Let $f:A\to B$, and let $g:\prd{a:A}P(a)\to Q(f(a))$ be a fiberwise transformation\index{fiberwise transformation|textit}. The following are equivalent:
\begin{enumerate}
\item The commuting square
\begin{equation*}
\begin{tikzcd}[column sep=large]
\sm{a:A}P(a) \arrow[r,"{\total[f]{g}}"] \arrow[d,swap,"\proj 1"] & \sm{b:B}Q(b) \arrow[d,"\proj 1"] \\
A \arrow[r,swap,"f"] & B
\end{tikzcd}
\end{equation*}
is a pullback square.
\item $g$ is a fiberwise equivalence.\index{fiberwise equivalence|textit}
\end{enumerate}
\end{prp}

\begin{proof}
The gap map factors as follows
\begin{equation*}
\begin{tikzcd}[column sep=-2em]
\sm{x:A}P(x) \arrow[dr,swap,"\total{g}"] \arrow[rr,"\gap"] & & A \times_B \Big(\sm{y:B}Q(y)\Big) \\
\phantom{A \times_B \Big(\sm{y:B}Q(y)\Big)} & \sm{x:A}Q(f(x)) \arrow[ur,swap,"{\gap'\,\defeq\,\lam{(x,q)}(x,(f(x),q),\refl{f(x)})}"]
\end{tikzcd}
\end{equation*}
Since $\gap'$ is an equivalence, it follows by \cref{thm:fib_equiv} that the gap map is an equivalence if and only if $g$ is a fiberwise equivalence.
\end{proof}

\begin{lem}
Consider a commuting square
\begin{equation*}
\begin{tikzcd}
C \arrow[r,"q"] \arrow[d,swap,"p"] & B \arrow[d,"g"] \\
A \arrow[r,swap,"f"] & X
\end{tikzcd}
\end{equation*}
with $H:f\circ p\htpy g\circ q$, and consider the fiberwise transformation
\begin{equation*}
\fibf{(f,q,H)} : \prd{a:A} \fib{p}{a}\to \fib{g}{f(a)}
\end{equation*}
given by $\lam{a}{(c,u)}(q(c),\ct{H(c)^{-1}}{\ap{f}{u}})$. Then there is an equivalence
\begin{equation*}
\eqv{\fib{\gap(p,q,H)}{(a,b,\alpha)}}{\fib{\fibf{(f,q,H)}(a)}{(b,\alpha^{-1})}}
\end{equation*}
\end{lem}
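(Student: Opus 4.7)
The plan is to identify the gap map, up to equivalences on its source and target, with the map $\total{\fibf{(f,q,H)}}$, after which the stated equivalence follows immediately from \cref{lem:fib_total}. This converts what looks like a calculation about iterated $\Sigma$-paths into a clean application of prior results.

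First I would introduce two auxiliary equivalences. The map
\begin{equation*}
\varphi : C \to \sm{a:A}\fib{p}{a}
\end{equation*}
defined by $c \mapsto (p(c),(c,\refl{p(c)}))$ is an equivalence, since the codomain, after swapping the order of $\Sigma$s, is $\sm{c:C}{a:A}p(c)=a$, which is equivalent to $C$ by contractibility of singletons (\cref{thm:total_path}). Similarly, the map
\begin{equation*}
\psi : A\times_X B \to \sm{a:A}\fib{g}{f(a)}
\end{equation*}
given by $(a,b,\alpha) \mapsto (a,(b,\alpha^{-1}))$ is an equivalence, because the inversion operation $\invfunc$ on identity types is an equivalence (being its own inverse up to path induction), and induces an equivalence fiberwise in $a$ and $b$.

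Next, I would verify that the square
\begin{equation*}
\begin{tikzcd}[column sep=large]
C \arrow[r,"{\gap(p,q,H)}"] \arrow[d,swap,"\varphi"] & A \times_X B \arrow[d,"\psi"] \\
\sm{a:A}\fib{p}{a} \arrow[r,swap,"{\total{\fibf{(f,q,H)}}}"] & \sm{a:A}\fib{g}{f(a)}
\end{tikzcd}
\end{equation*}
commutes. Unfolding, $\psi(\gap(c)) \jdeq (p(c),(q(c),H(c)^{-1}))$, whereas
\begin{equation*}
\total{\fibf{(f,q,H)}}(\varphi(c)) \jdeq (p(c),(q(c),\ct{H(c)^{-1}}{\ap{f}{\refl{p(c)}}})),
\end{equation*}
and since $\ap{f}{\refl{p(c)}}\jdeq\refl{f(p(c))}$ these agree pointwise via the right unit law $\rightunit(H(c)^{-1})$.

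Finally, since $\varphi$ and $\psi$ are equivalences, for any $(a,b,\alpha):A\times_X B$ the fiber of $\gap(p,q,H)$ at $(a,b,\alpha)$ is equivalent to the fiber of $\total{\fibf{(f,q,H)}}$ at $\psi(a,b,\alpha) \jdeq (a,(b,\alpha^{-1}))$; one clean way to obtain this is to invoke the 3-for-2 property of pullbacks (\cref{lem:pb_3for2}), applied to the above square together with the canonical pullback square exhibiting a fiber as a pullback along a point. Then \cref{lem:fib_total}, applied to the fiberwise transformation $\fibf{(f,q,H)}$ at $a$ with point $(b,\alpha^{-1}):\fib{g}{f(a)}$, identifies that fiber with $\fib{\fibf{(f,q,H)}(a)}{(b,\alpha^{-1})}$, completing the chain. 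The only mildly delicate step is supplying the right-unit-law homotopy that fills the commuting square; everything else is a direct invocation of prior results.
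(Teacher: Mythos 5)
Your argument is correct, but it is a genuinely different proof from the one in the paper. The paper computes the fiber of the gap map head-on: it unfolds $\fib{\gap(p,q,H)}{(a,b,\alpha)}$ as $\sm{z:C}(p(z),q(z),H(z))=(a,b,\alpha)$, characterizes identifications in the canonical pullback, and rearranges the resulting path algebra until the definition of $\fib{\fibf{(f,q,H)}(a)}{(b,\alpha^{-1})}$ appears. You instead conjugate $\gap(p,q,H)$ by the explicit equivalences $\varphi$ and $\psi$ into $\total{\fibf{(f,q,H)}}$ and then quote \cref{lem:fib_total}; the only path algebra left is the single right-unit homotopy $\rightunit(H(c)^{-1})$, and the appearance of $\alpha^{-1}$ is explained structurally by $\psi$ rather than falling out of a computation. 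The paper's route buys brevity and needs no auxiliary squares; yours buys reuse of \cref{lem:fib_total} and cleaner bookkeeping. Two of your steps deserve slightly more care, though neither is a real gap: first, knowing that the codomain of $\varphi$ is equivalent to $C$ does not by itself make the specific map $\varphi$ an equivalence --- instead note that $\varphi$ is the composite of $c\mapsto (c,(p(c),\refl{p(c)}))$, which is an equivalence because each $\sm{a:A}p(c)=a$ is contractible (\cref{thm:total_path}), with the evident swap of the two $\Sigma$s; second, the final fiber transfer can be completed either as you suggest (after observing that a commuting square whose two vertical maps are equivalences is a pullback, so that \cref{thm:pb_pasting} or \cref{lem:pb_3for2} together with the fiber-as-pullback square applies), or more directly by the chain $\fib{\gap(p,q,H)}{x}\eqvsym\fib{\psi\circ\gap(p,q,H)}{\psi(x)}\eqvsym\fib{\total{\fibf{(f,q,H)}}\circ\varphi}{\psi(x)}\eqvsym\fib{\total{\fibf{(f,q,H)}}}{\psi(x)}$, using that homotopic maps have equivalent fibers and that pre- and post-composition with an equivalence does not change fibers up to equivalence (for post-composition this uses \cref{cor:emb_equiv}).
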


\begin{proof}
To obtain an equivalence of the desired type we simply concatenate known equivalences:
\begin{align*}
\fib{h}{(a,b,\alpha)} & \jdeq \sm{z:C} (p(z),q(z),H(z))=(a,b,\alpha) \\
& \eqvsym \sm{z:C}{u:p(z)=a}{v:q(z)=b}\ct{H(z)}{\ap{g}{v}}=\ct{\ap{f}{u}}{\alpha} \\
& \eqvsym \sm{(z,u):\fib{p}{a}}{v:q(z)=b} \ct{H(z)^{-1}}{\ap{f}{u}}=\ct{\ap{g}{v}}{\alpha^{-1}} \\
& \eqvsym \fib{\varphi(a)}{(b,\alpha^{-1})}\qedhere
\end{align*}
\end{proof}

\begin{cor}\label{cor:pb_fibequiv}
Consider a commuting square
\begin{equation*}
\begin{tikzcd}
C \arrow[r,"q"] \arrow[d,swap,"p"] & B \arrow[d,"g"] \\
A \arrow[r,swap,"f"] & X
\end{tikzcd}
\end{equation*}
with $H:f\circ p\htpy g\circ q$. The following are equivalent:
\begin{enumerate}
\item The square is a pullback square.\index{pullback square!characterized by fiberwise equivalence|textit}
\item The induced map on fibers
\begin{equation*}
\fibf{(p,q,H)}:\prd{x:A}\fib{p}{x}\to \fib{g}{f(x)}
\end{equation*}
is a fiberwise equivalence.
\end{enumerate}
\end{cor}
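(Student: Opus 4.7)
The plan is to chain together three results already at our disposal: the characterization of pullback squares via the gap map (\cref{thm:is_pullback}), the characterization of equivalences as contractible maps (\cref{thm:contr_equiv}), and the fiber computation supplied by the preceding lemma. Since there is no canonical gadget in the statement other than the fiberwise transformation $\fibf{(p,q,H)}$, no extra cones or diagrams need to be introduced.

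First I would rewrite (i) using \cref{thm:is_pullback}: the square is cartesian if and only if the gap map
\begin{equation*}
\gap(p,q,H):C\to A\times_X B
\end{equation*}
is an equivalence. By \cref{thm:contr_equiv}, this holds precisely when $\fib{\gap(p,q,H)}{(a,b,\alpha)}$ is contractible for every $(a,b,\alpha):A\times_X B$. Then I would apply the preceding lemma, which supplies equivalences
\begin{equation*}
\fib{\gap(p,q,H)}{(a,b,\alpha)}\eqvsym \fib{\fibf{(p,q,H)}(a)}{(b,\alpha^{-1})},
\end{equation*}
and since contractibility is preserved under equivalence (\cref{eg:prop_contr} and its consequences), the gap map is contractible if and only if each fiber on the right-hand side is contractible.

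Next I need to check that as $(a,b,\alpha)$ ranges over $A\times_X B$, the pair $(b,\alpha^{-1})$ ranges over all of $\fib{g}{f(a)}$ for each $a:A$. This is immediate by unfolding: $A\times_X B\jdeq\sm{a:A}{b:B}f(a)=g(b)$, and the map $\alpha\mapsto\alpha^{-1}$ is an equivalence $(f(a)=g(b))\eqvsym(g(b)=f(a))$, so the reindexing is a bijection. Hence the gap map has contractible fibers over every point of $A\times_X B$ if and only if, for every $a:A$, every fiber of $\fibf{(p,q,H)}(a)$ is contractible. A second appeal to \cref{thm:contr_equiv} turns the latter condition into: for every $a:A$, the map $\fibf{(p,q,H)}(a):\fib{p}{a}\to\fib{g}{f(a)}$ is an equivalence, which is exactly the assertion that $\fibf{(p,q,H)}$ is a fiberwise equivalence, i.e.\ condition (ii).

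The main obstacle is really a bookkeeping one: making sure the reindexing $(a,b,\alpha)\mapsto(a,b,\alpha^{-1})$ between $A\times_X B$ and $\sm{a:A}\fib{g}{f(a)}$ is indeed an equivalence of index types so that ``contractibility of all fibers'' transports correctly across the equivalence supplied by the lemma. Once that is noted, the argument is a direct concatenation of \cref{thm:is_pullback}, the lemma, and two instances of \cref{thm:contr_equiv}.
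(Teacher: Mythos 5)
Your proposal is correct and is exactly the argument the paper intends: the corollary is stated without proof precisely because it follows immediately from the preceding lemma together with \cref{thm:is_pullback} and two applications of \cref{thm:contr_equiv}, with the reindexing $(b,\alpha)\mapsto(b,\alpha^{-1})$ being an equivalence since path inversion is one. Nothing is missing.
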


\begin{cor}\label{cor:pb_equiv}
Consider a commuting square
\begin{equation*}
\begin{tikzcd}
C \arrow[r,"q"] \arrow[d,swap,"p"] & B \arrow[d,"g"] \\
A \arrow[r,swap,"f"] & X.
\end{tikzcd}
\end{equation*}
and suppose that $g$ is an equivalence. Then the following are equivalent:
\begin{enumerate}
\item The square is a pullback square.
\item The map $p:C\to A$ is an equivalence.\index{equivalence!pullback of|textit}
\end{enumerate}
\end{cor}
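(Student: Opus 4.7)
The plan is to invoke \cref{cor:pb_fibequiv} to translate the pullback condition into a statement about fibers, and then use the characterization of equivalences as contractible maps from \cref{thm:contr_equiv} to compare the fibers of $p$ with the fibers of $g$.

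Specifically, \cref{cor:pb_fibequiv} tells us that the square is a pullback square if and only if the induced fiberwise transformation
\begin{equation*}
\fibf{(p,q,H)}:\prd{x:A}\fib{p}{x}\to \fib{g}{f(x)}
\end{equation*}
is a fiberwise equivalence. Similarly, \cref{thm:contr_equiv} tells us that $p$ is an equivalence if and only if each fiber $\fib{p}{x}$ is contractible. So it suffices to show that, under the hypothesis that $g$ is an equivalence, $\fibf{(p,q,H)}$ is a fiberwise equivalence if and only if each $\fib{p}{x}$ is contractible.

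The hypothesis that $g$ is an equivalence gives, again by \cref{thm:contr_equiv}, that $\fib{g}{f(x)}$ is contractible for every $x:A$. A map whose codomain is contractible is an equivalence precisely when its domain is contractible (one direction is obvious, the other follows since equivalences preserve contractibility by their inverse). Applying this to each component of $\fibf{(p,q,H)}$, we conclude that $\fibf{(p,q,H)}(x)$ is an equivalence if and only if $\fib{p}{x}$ is contractible, which completes the chain of equivalences.

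The argument is essentially a two-line corollary of the preceding results, so I do not expect any real obstacle; the only thing to be careful about is making sure the direction ``codomain contractible $+$ equivalence implies domain contractible'' is justified, but this follows immediately from the fact that any equivalence preserves contractibility (a contractible type is a retract of a point, and equivalences preserve retracts).
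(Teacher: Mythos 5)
Your proposal is correct and follows essentially the same route as the paper: both reduce the statement via \cref{cor:pb_fibequiv} to a comparison of the fibers of $p$ with the (contractible, by \cref{thm:contr_equiv}) fibers of $g$, and conclude using the fact that a map into a contractible type is an equivalence exactly when its domain is contractible. The only difference is cosmetic — the paper treats the two implications separately while you package them as a single biconditional.
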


\begin{proof}
If the square is a pullback square, then by \cref{thm:pb_fibequiv} the fibers of $p$ are equivalent to the fibers of $g$, which are contractible by \cref{thm:contr_equiv}. Thus it follows that $p$ is a contractible map, and hence that $p$ is an equivalence.

If $p$ is an equivalence, then by \cref{thm:contr_equiv} both $\fib{p}{x}$ and $\fib{g}{f(x)}$ are contractible for any $x:X$. It follows that the induced map $\fib{p}{x}\to\fib{g}{f(x)}$ is an equivalence. Thus we apply \cref{cor:pb_fibequiv} to conclude that the square is a pullback.
\end{proof}

\section{The univalence axiom}

The univalence axiom characterizes the identity type of the universe. It is considered to be an \emph{extensionality principle}\index{extensionality principle!types} for types. In the following theorem we introduce the univalence axiom and give two more equivalent ways of stating this.

\begin{prp}\label{thm:univalence}
The following are equivalent:
\begin{enumerate}
\item The \define{univalence axiom}\index{univalence axiom|textbf}: for any $A:\UU$ the map\index{equiv_eq@{$\equiveq$}|textbf}
\begin{equation*}
\equiveq\defeq \ind{A=}(\idfunc[A]) : \prd{B:\UU} (\id{A}{B})\to(\eqv{A}{B}).
\end{equation*}
is a fiberwise equivalence.\index{identity type!universe} If this is the case, we write
$\eqequiv$\index{eq equiv@{$\eqequiv$}|textbf}
for the inverse of $\equiveq$.
\item The type
\begin{equation*}
\sm{B:\UU}\eqv{A}{B}
\end{equation*}
is contractible for each $A:\UU$.
\item The principle of \define{equivalence induction}\index{equivalence induction}\index{induction principle!for equivalences}: for every $A:\UU$ and for every type family
\begin{equation*}
P:\prd{B:\UU} (\eqv{A}{B})\to \type,
\end{equation*}
the map
\begin{equation*}
\Big(\prd{B:\UU}{e:\eqv{A}{B}}P(B,e)\Big)\to P(A,\idfunc[A])
\end{equation*}
given by $f\mapsto f(A,\idfunc[A])$ has a section.\qedhere
\end{enumerate}
\end{prp}

It is a trivial observation, but nevertheless of fundamental importance, that by the univalence axiom the identity types of $\UU$ are equivalent to types in $\UU$, because it provides an equivalence $\eqv{(A=B)}{(\eqv{A}{B})}$, and the type $\eqv{A}{B}$ is in $\UU$ for any $A,B:\UU$. Since the identity types of $\UU$ are equivalent to types in $\UU$, we also say that the universe is \emph{locally small}.

\begin{defn}\label{defn:ess_small}
\begin{enumerate}
\item A type $A$ is said to be \define{essentially small}\index{essentially small!type|textbf} if there is a type $X:\UU$ and an equivalence $\eqv{A}{X}$. We write\index{ess_small(A)@{$\esssmall(A)$}|textbf}
\begin{equation*}
\esssmall(A)\defeq\sm{X:\UU}\eqv{A}{X}.
\end{equation*}
\item A map $f:A\to B$ is said to be \define{essentially small}\index{essentially small!map|textbf} if for each $b:B$ the fiber $\fib{f}{b}$ is essentially small.
We write\index{ess_small(f)@{$\esssmall(f)$}|textbf}
\begin{equation*}
\esssmall(f)\defeq\prd{b:B}\esssmall(\fib{f}{b}).
\end{equation*}
\item A type $A$ is said to be \define{locally small}\index{locally small!type} if for every $x,y:A$ the identity type $x=y$ is essentially small.
We write\index{loc_small(A)@{$\locsmall(A)$}|textbf}
\begin{equation*}
\locsmall(A)\defeq \prd{x,y:A}\esssmall(x=y).
\end{equation*}
\item Similarly, a map $f:A\to X$ is said to be \define{locally small} if $\delta_f:A\to A\times_X A$ is essentially small.
\end{enumerate}
\end{defn}

\begin{lem}\label{lem:isprop_ess_small}
The type $\esssmall(X)$ is a proposition for any type $X$.\index{essentially small!is a proposition|textit}
\end{lem}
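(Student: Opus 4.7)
The plan is to use the characterization of propositions from \cref{lem:prop_char}, specifically that it suffices to show $\esssmall(X)\to\iscontr(\esssmall(X))$. So I assume given a witness $(Y_0,e_0):\esssmall(X)$ consisting of a small type $Y_0:\UU$ together with an equivalence $e_0:\eqv{X}{Y_0}$, and aim to show that the whole type $\sm{Y:\UU}\eqv{X}{Y}$ is contractible.

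The key move is to transport the problem from equivalences out of $X$ to equivalences out of the small type $Y_0$, where the univalence axiom applies directly. Precomposition with $e_0^{-1}$ gives a fiberwise transformation
\begin{equation*}
\varphi:\prd{Y:\UU}(\eqv{X}{Y})\to(\eqv{Y_0}{Y})
\end{equation*}
defined by $\varphi(Y,e)\defeq e\circ e_0^{-1}$. For each $Y$, the map $\varphi(Y)$ is an equivalence because it has an inverse given by precomposition with $e_0$; indeed this uses only that composing equivalences is an equivalence and that inverses of equivalences are equivalences, both of which were noted after the definition of $\isequiv$. Hence $\varphi$ is a fiberwise equivalence, and by \cref{thm:fib_equiv} the induced map on total spaces
\begin{equation*}
\total{\varphi}:\Big(\sm{Y:\UU}\eqv{X}{Y}\Big)\to\Big(\sm{Y:\UU}\eqv{Y_0}{Y}\Big)
\end{equation*}
is an equivalence.

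Finally, by clause (ii) of the univalence axiom (\cref{thm:univalence}), the type $\sm{Y:\UU}\eqv{Y_0}{Y}$ is contractible, since $Y_0:\UU$. Contractibility is preserved by equivalences (as contractible types are propositions and propositions are closed under equivalences by \cref{eg:prop_contr}, and equivalently by \cref{thm:contr_equiv}), so $\sm{Y:\UU}\eqv{X}{Y}\jdeq\esssmall(X)$ is contractible. By \cref{lem:prop_char} this proves $\isprop(\esssmall(X))$.

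There is no real obstacle here; the proof is essentially a direct reduction to the univalence axiom. The only point requiring care is verifying that precomposition with a fixed equivalence gives a fiberwise equivalence on equivalence types, but this is immediate from the group-like structure of equivalences established earlier in this chapter.
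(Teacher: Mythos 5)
Your proof is correct and follows essentially the same route as the paper: assume a witness $(Y_0,e_0)$, use precomposition with $e_0^{-1}$ to get a fiberwise equivalence and hence (via \cref{thm:fib_equiv}) an equivalence of total spaces onto $\sm{Y:\UU}\eqv{Y_0}{Y}$, which is contractible by \cref{thm:univalence}, and conclude by the ``inhabited implies contractible'' characterization of propositions. The only difference is that you spell out the appeal to \cref{lem:prop_char} explicitly, which the paper leaves implicit.
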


\begin{proof}
Let $X$ be a type. Our goal is to show that the type
\begin{equation*}
\sm{Y:\UU}\eqv{X}{Y}
\end{equation*}
is a proposition. Suppose there is a type $X':\UU$ and an equivalence $e:\eqv{X}{X'}$, then the map
\begin{equation*}
(\eqv{X}{Y})\to (\eqv{X'}{Y})
\end{equation*}
given by precomposing with $e^{-1}$ is an equivalence. This induces an equivalence on total spaces
\begin{equation*}
\eqv{\Big(\sm{Y:\UU}\eqv{X}{Y}\Big)}{\Big(\sm{Y:\UU}\eqv{X'}{Y}\Big)}
\end{equation*}
However, the codomain of this equivalence is contractible by \cref{thm:univalence}. Thus it follows that the asserted type is a proposition.
\end{proof}

\begin{cor}
For each function $f:A\to B$, the type $\esssmall(f)$ is a proposition, and for each type $X$ the type $\locsmall(X)$ is a proposition.
\end{cor}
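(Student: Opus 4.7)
The plan is to reduce both statements directly to \cref{lem:isprop_ess_small} together with the closure of propositions under dependent products (\cref{thm:prop_pi}), which is where function extensionality enters.

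For the first claim, unfolding the definition gives
\begin{equation*}
\esssmall(f)\jdeq \prd{b:B}\esssmall(\fib{f}{b}).
\end{equation*}
By \cref{lem:isprop_ess_small} the fiber $\esssmall(\fib{f}{b})$ is a proposition for every $b:B$, so \cref{thm:prop_pi} produces the required term of type $\isprop(\esssmall(f))$.

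For the second claim, unfolding the definition gives
\begin{equation*}
\locsmall(X)\jdeq\prd{x,y:X}\esssmall(x=y).
\end{equation*}
Again by \cref{lem:isprop_ess_small} each $\esssmall(x=y)$ is a proposition, so two iterated applications of \cref{thm:prop_pi} yield that $\locsmall(X)$ is a proposition.

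There is no real obstacle here: the content is entirely absorbed by the previous lemma, and the only ingredient still needed is that $\Pi$ preserves propositions, which has just been recorded as \cref{thm:prop_pi}.
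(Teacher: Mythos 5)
Your proof is correct and follows exactly the paper's argument: the paper's own proof is the one-line observation that the claim follows from closure of propositions under dependent products (\cref{thm:prop_pi}), applied to the definitions of $\esssmall(f)$ and $\locsmall(X)$ together with \cref{lem:isprop_ess_small}. You have merely spelled out the same reduction in full detail.
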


\begin{proof}
This follows from the fact that propositions are closed under dependent products, established in \cref{thm:prop_pi}.
\end{proof}

\begin{rmk}
The property of essentially smallness is preserved by $\Pi$, $\Sigma$, and $\mathsf{Id}$. Of course, any contractible type is essentially small, and so is any small type. The property of essentially smallness is preserved by $\Sigma$ and $\mathsf{Id}$, and the exponent $X^A$ of a locally small type $X$ by an essentially small type $A$ is again locally small. Furthermore, any proposition is locally small, and any universe is locally small with respect to itself. 
\end{rmk}

\begin{defn}
Consider two functions $f:A\to X$ and $g:B\to X$. We define the type 
\begin{equation*}
\hom_X(f,g)\defeq \sm{h:A\to B} f\htpy g\circ h.
\end{equation*}
\end{defn}

In other words, the type $\hom_X(f,g)$ is the type of functions $h:A\to B$ equipped with a homotopy witnessing that the triangle
\begin{equation*}
\begin{tikzcd}[column sep=tiny]
A \arrow[dr,swap,"f"] \arrow[rr,"h"] & & B \arrow[dl,"g"] \\
& X
\end{tikzcd}
\end{equation*}

\begin{lem}
Let $P$ and $Q$ be two type families over $X$, and write $\proj 1^P$ and $\proj 1^Q$ for their first projections, respectively. Then the map
\begin{equation*}
\tottriangle:\Big(\prd{x:X} P(x)\to Q(x)\Big)\to \hom_X(\proj 1^P,\proj 1^Q)
\end{equation*}
given by $\tottriangle(f)\defeq (\total{f},\lam{(x,y)}\refl{x})$, is an equivalence.
\end{lem}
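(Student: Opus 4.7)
The plan is to exhibit $\tottriangle$ as the composite of a chain of known equivalences, each of which has been established earlier in the chapter, and then verify that this composite agrees (up to homotopy) with $\tottriangle$.

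First I would unfold the codomain. By type-theoretic choice (\cref{thm:choice}) together with its corollary for $\Sigma$-types, a pair $(h,H)$ with $h:\big(\sm{x:X}P(x)\big)\to\sm{x:X}Q(x)$ and $H:\proj 1^P\htpy \proj 1^Q\circ h$ corresponds to a dependent function
\begin{equation*}
\prd{s:\sm{x:X}P(x)} \sm{t:\sm{x:X}Q(x)} \proj 1(s)=\proj 1(t).
\end{equation*}
For each fixed $s$ the type $\sm{t:\sm{x:X}Q(x)}\proj 1(s)=\proj 1(t)$ is by definition the fiber $\fib{\proj 1^Q}{\proj 1(s)}$, which by \cref{eg:fib_proj} is equivalent to $Q(\proj 1(s))$. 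Since the family of equivalences is fiberwise over $\sm{x:X}P(x)$, \cref{thm:fib_equiv} (or simply a direct application of function extensionality) lifts these to an equivalence between the codomain of $\tottriangle$ and $\prd{s:\sm{x:X}P(x)}Q(\proj 1(s))$.

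Next I would unfold the domain. Again by choice (and uncurrying), there is a chain of equivalences
\begin{equation*}
\Big(\prd{x:X}P(x)\to Q(x)\Big) \eqvsym \Big(\prd{x:X}{y:P(x)}Q(x)\Big) \eqvsym \Big(\prd{s:\sm{x:X}P(x)}Q(\proj 1 s)\Big),
\end{equation*}
so both sides have been identified with the same type. Tracing a fiberwise transformation $f$ through the composite of the codomain equivalences (working backwards) sends $(x,y)\mapsto f(x,y)\in Q(x)=Q(\proj 1(x,y))$, which corresponds via $\eqpair$ to the pair $((x,f(x,y)),\refl{x})$ in $\fib{\proj 1^Q}{x}$, which corresponds via un-choice to exactly the data $(\total{f},\lam{(x,y)}\refl{x})\jdeq \tottriangle(f)$.

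The main obstacle is purely bookkeeping: making sure that the composite of the equivalences above really is (pointwise equal to) $\tottriangle$, rather than some variant differing by a transport or an inverse path. This is handled by a direct computation along with the 3-for-2 property for equivalences: once one composite equivalence between the two types is produced, it suffices to check it is homotopic to $\tottriangle$, since by \cref{thm:contr_equiv} being an equivalence is a property. Alternatively, one can give $\tottriangle$ an explicit inverse sending $(h,H)$ to the fiberwise map $\lam{x}{y}\tr_Q(H(x,y)^{-1},\proj 2 (h(x,y)))$ and verify the two homotopies directly, which is straightforward by path induction on $H$ followed by induction on the $\Sigma$-type.
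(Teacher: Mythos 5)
Your proof is correct and follows essentially the route the paper takes for the more general statement \cref{thm:pb_fibequiv_complete} (of which this lemma is the special case $\alpha\jdeq\idfunc[X]$): identify both sides with $\prd{s:\sm{x:X}P(x)}Q(\proj 1(s))$ via type-theoretic choice, uncurrying ($\evpair$), and contractibility of based path spaces (equivalently \cref{eg:fib_proj}), and then check that the composite equivalence is the asserted map $\tottriangle$. The only caveats are cosmetic: the inner type $\sm{t:\sm{x:X}Q(x)}\proj 1(s)=\proj 1(t)$ is the fiber $\fib{\proj 1^Q}{\proj 1(s)}$ only up to inverting the path, not definitionally, and in your alternative explicit-inverse argument one cannot literally perform path induction on the homotopy $H$ itself but must first pass to its pointwise values (or through function extensionality) --- neither issue affects the substance.
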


\begin{cor}\label{cor:fib_triangle}
For any two maps $f:A\to X$ and $g:B\to X$, the map
\begin{equation*}
\fibtriangle : \hom_X(f,g) \to \prd{x:X}\fib{f}{x}\to \fib{g}{x}
\end{equation*}
given by $\lam{(h,H)}{x}{(a,p)}(h(a),\ct{H(a)^{-1}}{p})$ is an equivalence.
\end{cor}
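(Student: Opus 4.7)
The plan is to reduce the statement to the preceding lemma about $\tottriangle$. Setting $P(x) \defeq \fib{f}{x}$ and $Q(x) \defeq \fib{g}{x}$, that lemma gives an equivalence
\begin{equation*}
\tottriangle : \Big(\prd{x:X}\fib{f}{x}\to\fib{g}{x}\Big) \to \hom_X(\proj 1^P, \proj 1^Q).
\end{equation*}
I would then bridge between $\hom_X(\proj 1^P, \proj 1^Q)$ and $\hom_X(f,g)$ using the canonical fibrant replacements $\varphi_f : A \to \sm{x:X}\fib{f}{x}$, $a \mapsto (f(a),(a,\refl{f(a)}))$, and similarly $\varphi_g$. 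These satisfy $\proj 1^P\circ\varphi_f\jdeq f$ and $\proj 1^Q\circ\varphi_g\jdeq g$ judgementally, so once we know $\varphi_f$ and $\varphi_g$ are equivalences, pre- and postcomposition with them yields an equivalence $\hom_X(\proj 1^P, \proj 1^Q)\simeq \hom_X(f,g)$.

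That $\varphi_f$ is an equivalence follows from \cref{thm:fib_equiv}, since $\varphi_f$ is (up to swapping components) the map $\total{\lambda x.\ind{f(a)=}(\ldots)}$ induced by a fiberwise transformation that is a fiberwise equivalence by \cref{eg:fib_proj}; equivalently, one can exhibit the obvious pointwise inverse $(x,(a,p))\mapsto a$ directly.

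Composing with $\tottriangle$ gives an equivalence between $\prd{x:X}\fib{f}{x}\to\fib{g}{x}$ and $\hom_X(f,g)$. The final task is to identify the inverse of this composite with $\fibtriangle$ on the nose. Tracing through the definitions: a pair $(h,H):\hom_X(f,g)$ is sent by the reindexing equivalence to the triple $(\total{g}\circ h',K)$ where $h':\sm{x}\fib{f}{x}\to \sm{x}\fib{g}{x}$ acts as $(x,a,p)\mapsto (x,h(a),\ct{H(a)^{-1}}{p})$; applying $\tottriangle^{-1}$ then reads off precisely the fiberwise map $\lambda x.\lambda (a,p).(h(a),\ct{H(a)^{-1}}{p})$, which is $\fibtriangle(h,H)$.

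The main obstacle will be this last identification, since the homotopy component of the reindexing equivalence has to be unfolded carefully to see that the $\ct{H(a)^{-1}}{p}$ in $\fibtriangle$ emerges in the right way; but once the definitions of $\varphi_f,\varphi_g$ and $\tottriangle$ are unpacked this is a direct calculation, with no transport lemmas needed.
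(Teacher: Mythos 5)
Your proposal is correct and is essentially the intended derivation: the paper states this as an immediate corollary of the $\tottriangle$ lemma, obtained exactly by taking $P\defeq\fibf{f}$, $Q\defeq\fibf{g}$ and reindexing along the canonical equivalences $A\simeq\sm{x:X}\fib{f}{x}$ and $B\simeq\sm{x:X}\fib{g}{x}$ over $X$, then identifying the resulting composite with $\fibtriangle$. Only a small caveat: unfolding the inverse of $\tottriangle$ does involve a transport computation in the family $\fib{g}{\blank}$ (which is where the term $\ct{H(a)^{-1}}{p}$ arises), so the closing remark that no transport lemmas are needed is slightly optimistic, but this does not affect the correctness of the argument.
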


\begin{thm}\label{thm:fam_proj}
For any small type $A:\UU$ there is an equivalence
\begin{equation*}
\eqv{(A\to \UU)}{\Big(\sm{X:\UU} X\to A\Big)}.
\end{equation*}
\end{thm}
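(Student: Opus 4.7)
The plan is to exhibit mutually inverse maps. Define
\begin{equation*}
\varphi:(A\to\UU)\to\Big(\sm{X:\UU}X\to A\Big)
\qquad\text{by}\qquad
\varphi(B)\defeq\big(\tsm{a:A}B(a),\,\proj 1\big),
\end{equation*}
which is well-typed since $\UU$ is closed under $\Sigma$-types; and define
\begin{equation*}
\psi:\Big(\sm{X:\UU}X\to A\Big)\to(A\to\UU)
\qquad\text{by}\qquad
\psi(X,f)\defeq\lam{a}\fib{f}{a},
\end{equation*}
which is well-typed since $\UU$ is closed under $\Sigma$-types and identity types (recall $\fib{f}{a}\jdeq\sm{x:X}f(x)=a$). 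I would then verify $\psi\circ\varphi\htpy\idfunc$ and $\varphi\circ\psi\htpy\idfunc$ separately.

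For $\psi\circ\varphi\htpy\idfunc$, observe that for each $B:A\to\UU$ and each $a:A$, \cref{eg:fib_proj} provides an equivalence $B(a)\eqvsym\fib{\proj 1}{a}\jdeq\psi(\varphi(B))(a)$. By the univalence axiom (\cref{thm:univalence}), this gives an identification $B(a)=\psi(\varphi(B))(a)$ for each $a$, and then function extensionality (\cref{thm:funext_wkfunext}) yields $B=\psi(\varphi(B))$.

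For $\varphi\circ\psi\htpy\idfunc$, I must identify $\varphi(\psi(X,f))\jdeq\big(\sm{a:A}\fib{f}{a},\proj 1\big)$ with $(X,f)$ in $\sm{X:\UU}X\to A$. By \cref{thm:eq_sigma}, such an identification reduces to producing an identification $p:\big(\sm{a:A}\fib{f}{a}\big)=X$ in $\UU$ and an identification $\tr(p,\proj 1)=f$. For the first, the map
\begin{equation*}
e:\tsm{a:A}\fib{f}{a}\to X,\qquad (a,(x,q))\mapsto x
\end{equation*}
is an equivalence: commuting the two $\Sigma$'s presents its domain as $\sm{x:X}\sm{a:A}f(x)=a$, whose inner total space is contractible by \cref{thm:total_path}, and projecting onto $x$ yields $e$. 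Univalence converts $e$ into the desired identification $p$. The second identification follows from the homotopy $f\circ e\htpy \proj 1$ (immediate from $q:f(x)=a$), together with the computation of transport of a function along a univalence-induced identification.

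The main obstacle is the coherence in this second part: one needs that for a univalence identification $p=\eqequiv(e)$ with $e:X\eqvsym Y$, the transport $\tr_{\lam{Z}Z\to A}(p,g)$ of a function $g:X\to A$ equals $g\circ e^{-1}$. This is a standard computation by equivalence induction (\cref{thm:univalence}(iii)): it reduces to the case $e\jdeq\idfunc$, where transport is trivial. Once this lemma is in hand, the second identification is just the content of the homotopy $f\circ e\htpy\proj 1$ rewritten as $\proj 1\circ e^{-1}\htpy f$, and function extensionality promotes it to the required identification of second components.
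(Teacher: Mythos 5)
Your proof is correct, but it takes a genuinely different route from the paper. You construct an explicit two-sided inverse $\psi\defeq\lam{(X,f)}\fibf{f}$ and verify both composites, which forces you to handle the identity type of the $\Sigma$-type via \cref{thm:eq_sigma} and, in particular, to compute transport in the family $Z\mapsto (Z\to A)$ along a univalence-induced path (your equivalence-induction lemma $\tr(\eqequiv(e),g)=g\circ e^{-1}$). The paper instead proves that $\varphi$ is a contractible map: it computes the fiber of $\varphi$ at $(X,f)$ directly, showing
\begin{equation*}
\eqv{\fib{\varphi}{(X,f)}}{\esssmall(f)}
\end{equation*}
by a chain of equivalences through \cref{cor:fib_triangle} and \cref{thm:choice}, and then observes that $\esssmall(f)$ is an inhabited proposition (\cref{lem:isprop_ess_small}), hence contractible. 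The trade-off is real: the paper's fiberwise argument avoids all of the transport and coherence bookkeeping in your second round-trip, and the intermediate identification of the fiber with $\esssmall(f)$ is itself valuable later (it is essentially what powers \cref{thm:classifier}); your argument, on the other hand, produces the explicit description of the inverse as $(X,f)\mapsto\fibf{f}$ as part of the proof, which the paper only records afterwards in a remark. Both proofs use the same basic inputs (univalence, function extensionality, \cref{eg:fib_proj} or equivalently \cref{thm:total_path}), so neither is more elementary; they just package the contractibility differently.
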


\begin{proof}
Note that we have the function
\begin{equation*}
\varphi :\lam{B} \Big(\sm{x:A}B(x),\proj 1\Big) : (A\to \UU)\to \Big(\sm{X:\UU}X\to A\Big).
\end{equation*}
The fiber of this map at $(X,f)$ is by univalence and function extensionality equivalent to the type
\begin{equation*}
\sm{B:A\to \UU}{e:\eqv{(\sm{x:A}B(x))}{X}} \proj 1\htpy f\circ e.
\end{equation*}
By \cref{cor:fib_triangle} this type is equivalent to the type
\begin{equation*}
\sm{B:A\to \UU}\prd{a:A} \eqv{B(a)}{\fib{f}{a}},
\end{equation*}
and by `type theoretic choice', which was established in \cref{thm:choice}, this type is equivalent to
\begin{equation*}
\prd{a:A}\sm{X:\UU}\eqv{X}{\fib{f}{a}}.
\end{equation*}
We conclude that the fiber of $\varphi$ at $(X,f)$ is equivalent to the type $\esssmall(f)$. However, since $f:X\to A$ is a map between small types it is essentially small. Moreover, since being essentially small is a proposition by \cref{lem:isprop_ess_small}, it follows that $\fib{\varphi}{(X,f)}$ is contractible for every $f:X\to A$. In other words, $\varphi$ is a contractible map, and therefore it is an equivalence.
\end{proof}

\begin{rmk}
The inverse of the map
\begin{equation*}
\varphi : (A\to \UU)\to \Big(\sm{X:\UU}X\to A\Big).
\end{equation*}
constructed in \cref{thm:fam_proj} is the map $(X,f)\mapsto \fibf{f}$.
\end{rmk}

\section{The object classifier}

\begin{defn}
Let $p:E\to B$ and $p':E'\to B'$ be maps. A morphism from $p'$ to $p$ is a triple $(f,g,H)$ consisting of maps $f:B'\to B$ and $g:E'\to E$ and a homotophy $H:f\circ p'\htpy p\circ g$ witnessing that the square
\begin{equation}\label{eq:morphism_arrow}
\begin{tikzcd}
E' \arrow[r,"g"] \arrow[d,swap,"p'"] & E \arrow[d,"p"] \\
B' \arrow[r,"f"] & B
\end{tikzcd}
\end{equation}
commutes. We write $\mathsf{hom}(p',p)$ for the type of such triples $(f,g,H)$, and sometimes we write $\mathsf{hom}_f(p',p)$ for the type of pairs $(g,H)$. A morphism $(f,g,H)$ is said to be \define{cartesian} if the square in \cref{eq:morphism_arrow} is cartesian. We write $\mathsf{cart}(p',p)$ for the type of cartesian morphisms from $p'$ to $p$, and we write $\mathsf{cart}_f(p',p)$ for the type of triples $(g,H,t)$ for the type of triples, where $t:\ispullback(p',g,H)$.  
\end{defn}

\begin{defn}\label{defn:object_classifier}
A morphism $p:E\to B$ is said to be an \define{object classifier} if the type $\mathsf{cart}(p',p)$ is a proposition for each $p':E'\to B'$. If $p:E\to B$ is an object classifier, we also write 
\begin{equation*}
\mathsf{is\usc{}classified}(p')\defeq \mathsf{hom}(p',p).
\end{equation*}
\end{defn}

Our goal in this section is to show that a univalent universe is an object classifier. 

\begin{prp}\label{thm:pb_fibequiv_complete}
Let $\alpha:I\to J$ be a map, and let $A:I\to\UU$ and $B:J\to\UU$ be type families.
Then the map
\begin{equation*}
\Big(\prd{i:I}A_i\to B_{\alpha(i)}\Big) \to \mathsf{hom}_\alpha(\proj 1^A,\proj 1^B)
\end{equation*}
given by $\lam{f}(\total[\alpha]{f},\lam{(x,y)}\refl{x})$ is an equivalence. Furthermore, the map
\begin{equation*}
\Big(\prd{i:I}A_i\eqvsym B_{\alpha(i)}\Big) \to \mathsf{cart}_\alpha(\proj 1^A,\proj 1^B)
\end{equation*}
given by $\lam{e}(\total[\alpha]{e},\lam{(x,y)}\refl{x},t)$ where $t$ is the term constructed in \cref{thm:pb_fibequiv}, is an equivalence. 
\end{prp}

\begin{proof}
We have the equivalences
\begin{align*}
\prd{i:I}A_i\to B_{\alpha(i)} & \eqvsym \prd{i:I}{a:A_i}\sm{j:J}{\gamma : \alpha(i)=j} B_j \\
& \eqvsym \prd{i:I}{a:A_i}\sm{j:J}{b:B_j}\alpha(i)=j \\
& \eqvsym \prd{(i,a):\sm{i:I}A_i}\sm{(j,b):\sm{j:J}B_j}\alpha(i)=j \\
& \eqvsym \sm{f:\big(\sm{i:I}A_i\big)\to\big(\sm{j:J}B_j\big)}\alpha\circ \proj 1^A\htpy \proj 1^B\circ f \\
& \jdeq \mathsf{hom}_\alpha(\proj 1^A,\proj 1^B).
\end{align*}
It is easy to check that this composite is the asserted map. The second claim follows from \cref{thm:pb_fibequiv}.
\end{proof}

\begin{cor}\label{cor:sq_fib}
Consider a diagram of the form
\begin{equation*}
\begin{tikzcd}
A \arrow[d,swap,"f"] & B \arrow[d,"g"] \\
I \arrow[r,swap,"\alpha"] & J.
\end{tikzcd}
\end{equation*}
Then the map
\begin{equation*}
\mathsf{hom}_\alpha(f,g)\to \Big(\prd{i:I}\fib{f}{i}\to\fib{g}{\alpha(i)}\Big)
\end{equation*}
given by $\lam{(h,H)}{i}{(a,p)}(h(a),\ct{H(a)^{-1}}{\ap{\alpha}{p}})$ is an equivalence.
\end{cor}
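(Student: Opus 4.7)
The plan is to reduce this corollary to \cref{thm:pb_fibequiv_complete} by replacing the maps $f:A\to I$ and $g:B\to J$ with the first projections of their associated fiber families. By \cref{thm:fam_proj}, the maps $f$ and $g$ are equivalent (over $I$ and $J$, respectively) to the first projections
\begin{equation*}
\proj 1 : \Big(\sm{i:I}\fib{f}{i}\Big)\to I \quad\text{and}\quad \proj 1 : \Big(\sm{j:J}\fib{g}{j}\Big)\to J,
\end{equation*}
via the equivalences $A\eqvsym \sm{i:I}\fib{f}{i}$ and $B\eqvsym \sm{j:J}\fib{g}{j}$ sending $a\mapsto (f(a),(a,\refl{f(a)}))$ and $b\mapsto (g(b),(b,\refl{g(b)}))$.

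First I would observe that precomposing and postcomposing with these equivalences (which themselves sit inside commuting triangles over $I$ and $J$) induces an equivalence
\begin{equation*}
\mathsf{hom}_\alpha(f,g) \eqvsym \mathsf{hom}_\alpha\Big(\proj 1^{\fibf{f}},\proj 1^{\fibf{g}}\Big)
\end{equation*}
by the 3-for-2 property for equivalences (\cref{lem:pb_3for2} in the degenerate form, or just direct computation): the data of $(h,H):\mathsf{hom}_\alpha(f,g)$ is transported to the morphism of first projections that sends $(i,(a,p))$ to $(\alpha(i),(h(a),\ct{H(a)^{-1}}{\ap{\alpha}{p}}))$, using the identification $g(h(a))=\alpha(f(a))=\alpha(i)$.

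Next, applying the first half of \cref{thm:pb_fibequiv_complete} to the families $\fibf{f}:I\to\UU$ and $\fibf{g}:J\to\UU$ gives an equivalence
\begin{equation*}
\Big(\prd{i:I}\fib{f}{i}\to\fib{g}{\alpha(i)}\Big) \eqvsym \mathsf{hom}_\alpha\Big(\proj 1^{\fibf{f}},\proj 1^{\fibf{g}}\Big),
\end{equation*}
and composing the inverse of this with the equivalence from the previous paragraph yields an equivalence $\mathsf{hom}_\alpha(f,g)\eqvsym \prd{i:I}\fib{f}{i}\to\fib{g}{\alpha(i)}$.

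The main obstacle is simply unwinding the composite to check it agrees definitionally (or at least up to a straightforward computation) with the displayed formula $\lam{(h,H)}{i}{(a,p)}(h(a),\ct{H(a)^{-1}}{\ap{\alpha}{p}})$. This is essentially the same bookkeeping already done in the proof of \cref{thm:pb_fibequiv_complete}: the identification $\ap{\alpha}{p}:\alpha(f(a))=\alpha(i)$ together with the homotopy value $H(a)$ produces the second component of the image in $\fib{g}{\alpha(i)}$, while $h(a)$ supplies the first. Alternatively, one can avoid the intermediate step entirely and write down the equivalence directly by a chain of $\Sigma$-manipulations and type-theoretic choice (\cref{thm:choice}) applied to $\mathsf{hom}_\alpha(f,g)\jdeq\sm{h:A\to B}\alpha\circ f\htpy g\circ h$, but the route through \cref{thm:pb_fibequiv_complete} is more economical since all of the heavy lifting has already been carried out.
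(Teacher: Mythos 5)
Your proof is correct and is essentially the derivation the paper intends: the corollary is left as an immediate consequence of \cref{thm:pb_fibequiv_complete}, obtained exactly as you do by replacing $f$ and $g$ with the first projections from the total spaces of their fibers and checking (up to homotopy) that the composite equivalence is the displayed map. One small note: the equivalence $A\eqvsym\sm{i:I}\fib{f}{i}$ over $I$ is the elementary fact coming from \cref{thm:total_path} (cf.\ \cref{eg:fib_proj}), not \cref{thm:fam_proj}, which concerns small types and the universe; with that citation adjusted your argument is exactly the intended one.
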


\begin{thm}\label{thm:classifier}
Let $f:A\to B$ be a map, and let $\UU$ be a univalent universe with universal family $\mathrm{El}$ over $\UU$. Then there is an equivalence
\begin{equation*}
\eqv{\esssmall(f)}{\mathsf{cart}(f,\proj 1^{\mathrm{El}})}.
\end{equation*}
In particular, the type $\mathsf{cart}(f,\proj 1^{\mathrm{El}})$ is a proposition for each map $f$, so the universe is an object classifier in the sense of \cref{defn:object_classifier}.
\end{thm}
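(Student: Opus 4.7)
The plan is to unfold both sides of the desired equivalence into a common form, namely $\prd{b:B}\sm{X:\UU}\eqv{\fib{f}{b}}{\mathrm{El}(X)}$, and then apply type-theoretic choice (\cref{thm:choice}) to swap the quantifiers. First I would split the $\mathsf{cart}$ type according to its first component $h:B\to\UU$:
\begin{equation*}
\mathsf{cart}(f,\proj 1^{\mathrm{El}}) \jdeq \sm{h:B\to\UU} \mathsf{cart}_h(f,\proj 1^{\mathrm{El}}).
\end{equation*}
For fixed $h$, a term of $\mathsf{cart}_h(f,\proj 1^{\mathrm{El}})$ is a triple $(g,H,t)$ making the square a pullback. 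By \cref{cor:sq_fib}, the type $\mathsf{hom}_h(f,\proj 1^{\mathrm{El}})$ is equivalent to the type of fiberwise transformations $\prd{b:B}\fib{f}{b}\to\fib{\proj 1^{\mathrm{El}}}{h(b)}$, and by \cref{cor:pb_fibequiv} the square is cartesian precisely when this fiberwise transformation is a fiberwise equivalence. Combining and using \cref{eg:fib_proj} to identify $\fib{\proj 1^{\mathrm{El}}}{h(b)}\eqvsym\mathrm{El}(h(b))$, I obtain
\begin{equation*}
\mathsf{cart}_h(f,\proj 1^{\mathrm{El}}) \eqvsym \prd{b:B} \eqv{\fib{f}{b}}{\mathrm{El}(h(b))}.
\end{equation*}

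Next I would assemble these equivalences and apply \cref{thm:choice} in the reverse direction to interchange the $\Sigma$ over $h$ with the $\Pi$ over $b$:
\begin{align*}
\mathsf{cart}(f,\proj 1^{\mathrm{El}})
&\eqvsym \sm{h:B\to\UU} \prd{b:B} \eqv{\fib{f}{b}}{\mathrm{El}(h(b))} \\
&\eqvsym \prd{b:B} \sm{X:\UU} \eqv{\fib{f}{b}}{\mathrm{El}(X)} \\
&\jdeq \prd{b:B} \esssmall(\fib{f}{b}) \\
&\jdeq \esssmall(f).
\end{align*}

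The main subtlety is in verifying that the equivalence of \cref{cor:sq_fib} packages up correctly with the pullback criterion of \cref{cor:pb_fibequiv}, so that the cartesian structure really corresponds exactly to the fiberwise transformation being a fiberwise equivalence. This is essentially the content of \cref{thm:pb_fibequiv_complete}, which treats the projection case; for a general $f:A\to B$ one may either apply that proposition to the equivalent projection $\proj 1 : \big(\sm{b:B}\fib{f}{b}\big)\to B$, or reason directly with \cref{cor:pb_fibequiv}. Once the equivalence is established, the in-particular statement is immediate from \cref{lem:isprop_ess_small}, since $\esssmall(f)$ is a $\Pi$-type of propositions and hence itself a proposition by \cref{thm:prop_pi}.
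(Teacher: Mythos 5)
Your proposal is correct and follows essentially the same route as the paper's proof: decompose the cartesian morphism data, identify the pairs $(g,H)$ with fiberwise transformations via \cref{cor:sq_fib}, identify the pullback condition with fiberwise equivalence via \cref{cor:pb_fibequiv}, and then apply type-theoretic choice (\cref{thm:choice}) to interchange the quantifiers and land on $\esssmall(f)$, with the proposition-ness following from \cref{lem:isprop_ess_small} and \cref{thm:prop_pi}. The only differences are cosmetic (you make the use of \cref{eg:fib_proj} and the alternative via \cref{thm:pb_fibequiv_complete} explicit, which the paper leaves implicit).
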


\begin{proof}
From \cref{cor:sq_fib} we obtain that the type of pairs $(\tilde{F},H)$ is equivalent to the type of fiberwise transformations
\begin{equation*}
\prd{b:B}\fib{f}{b}\to F(b).
\end{equation*}
By \cref{cor:pb_fibequiv} the square is a pullback square if and only if the induced map
\begin{equation*}
\prd{b:B}\fib{f}{b}\to F(b)
\end{equation*}
is a fiberwise equivalence. Thus the data $(F,\tilde{F},H,pb)$ is equivalent to the type of pairs $(F,e)$ where $e$ is a fiberwise equivalence from $\fibf{f}$ to $F$. By \cref{thm:choice} the type of pairs $(F,e)$ is equivalent to the type $\esssmall(f)$. 
\end{proof}

\begin{rmk}
For any type $A$ (not necessarily small), and any $B:A\to \UU$, the square\index{Sigma-type@{$\Sigma$-type}!as pullback of universal family|textit}
\begin{equation*}
\begin{tikzcd}[column sep=6em]
\sm{x:A}B(x) \arrow[d,swap,"\proj 1"] \arrow[r,"{\lam{(x,y)}(B(x),y)}"] & \sm{X:\UU}X \arrow[d,"\proj 1"] \\
A \arrow[r,swap,"B"] & \UU
\end{tikzcd}
\end{equation*}
is a pullback square. Therefore it follows that for any family $B:A\to\UU$ of small types, the projection map $\proj 1:\sm{x:A}B(x)\to A$ is an essentially small map.
To see that the claim is a direct consequence of \cref{thm:pb_fibequiv} we write the asserted square in its rudimentary form:
\begin{equation*}
\begin{tikzcd}[column sep=6em]
\sm{x:A}\mathrm{El}(B(x)) \arrow[d,swap,"\proj 1"] \arrow[r,"{\lam{(x,y)}(B(x),y)}"] & \sm{X:\UU}\mathrm{El}(X) \arrow[d,"\proj 1"] \\
A \arrow[r,swap,"B"] & \UU.
\end{tikzcd}
\end{equation*}
\end{rmk}

In the following theorem we show that a type is locally small if and only if its diagonal is classified by $\UU$.

\begin{thm}
Let $A$ be a type. The following are equivalent:
\begin{enumerate}
\item $A$ is locally small.\index{locally small|textit}
\item The diagonal $\delta_A : A\to A\times A$ is classified by $\UU$.\index{diagonal!of a type|textit}
\end{enumerate}
\end{thm}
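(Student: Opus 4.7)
The plan is to reduce the theorem to \cref{thm:classifier} after identifying the fibers of $\delta_A$ with identity types of $A$. Indeed, \cref{thm:classifier} tells us that the map $\delta_A:A\to A\times A$ is classified by $\UU$ if and only if $\esssmall(\delta_A)$ holds, which by definition unfolds to $\prd{u:A\times A}\esssmall(\fib{\delta_A}{u})$.

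The key step is then the computation of the fibers of $\delta_A$. Unfolding the definition of fiber and applying \cref{thm:eq_sigma} to the identity type of the product $A\times A$ (viewed as a non-dependent $\Sigma$-type) gives
\begin{equation*}
\fib{\delta_A}{(x,y)}\jdeq\sm{a:A}(a,a)=(x,y)\eqvsym\sm{a:A}(a=x)\times(a=y).
\end{equation*}
Commuting the iterated $\Sigma$ and then contracting the type $\sm{a:A}a=x$, which is contractible by \cref{thm:total_path} with center $(x,\refl{x})$, yields a further equivalence $\fib{\delta_A}{(x,y)}\eqvsym(x=y)$.

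Since essential smallness is preserved by equivalence (and in any case is a proposition by \cref{lem:isprop_ess_small}), the condition $\prd{(x,y):A\times A}\esssmall(\fib{\delta_A}{(x,y)})$ is logically equivalent to $\prd{x,y:A}\esssmall(x=y)$, which is by definition $\locsmall(A)$. Chaining this with the preceding application of \cref{thm:classifier} yields the desired equivalence between the two conditions.

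There is no serious obstacle here: everything reduces to the fiber computation and an appeal to the object classifier property of $\UU$. The only small care point is unpacking the equivalence in \cref{thm:eq_sigma} for a non-dependent product so as to compute $\fib{\delta_A}{(x,y)}$ cleanly; once that is done, the contraction via \cref{thm:total_path} is immediate, and the final step is just the routine fact that a proposition-valued predicate like $\esssmall$ transports along a fiberwise equivalence.
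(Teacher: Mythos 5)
Your proposal is correct and follows essentially the same route as the paper: identify the fibers of $\delta_A$ at $(x,y)$ with the identity types $x=y$ (so local smallness of $A$ is exactly essential smallness of $\delta_A$), and then apply \cref{thm:classifier}. The extra detail you give on the fiber computation via \cref{thm:eq_sigma} and \cref{thm:total_path} is just an unpacking of the step the paper states directly.
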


\begin{proof}
The identity type $x=y$ is the fiber of $\delta_A$ at $(x,y):A\times A$. Therefore it follows that $A$ is locally small if and only if the diagonal $\delta_A$ is essentially small.
Now the result follows from \cref{thm:classifier}.
\end{proof}

\chapter{Type theoretic descent}\label{chap:descent}

In this chapter we study homotopy pushouts, which were established as higher inductive types in homotopy type theory in section 6.8 of \cite{hottbook}. From this chapter on, we will assume that universes are closed under homotopy pushouts. This is the last assumption that we will be making in the present work. In particular, we will not assume the existence of higher inductive types with some self-reference in the constructors (e.g.~the propositional truncation).

Our first main result is the descent theorem for homotopy pushouts (\cref{thm:descent,cor:descent_fib}), in which we establish that a cartesian transformation of spans
\begin{equation*}
\begin{tikzcd}
A' \arrow[d]  & S' \arrow[l] \arrow[r] \arrow[d] \arrow[dl,phantom,"\llcorner" very near start] \arrow[dr,phantom,"\lrcorner" very near start] & B' \arrow[d] \\
A & S \arrow[l] \arrow[r] & B
\end{tikzcd}
\end{equation*}
extends uniquely to a cartesian transformation of the pushout squares, i.e. a commuting cube
\begin{equation*}
\begin{tikzcd}
& S' \arrow[dl] \arrow[dr] \arrow[d] \\
A' \arrow[d] & S \arrow[dl] \arrow[dr] & B' \arrow[dl,crossing over] \arrow[d] \\
A \arrow[dr] & A'\sqcup^{S'}B' \arrow[d] \arrow[from=ul,crossing over] & B \arrow[dl] \\
& A\sqcup^S B
\end{tikzcd}
\end{equation*}
of which the vertical sides are pullback squares.

The second main theorem of this chapter, \cref{thm:cartesian_cube}, is an adaption to homotopy type theory of a theorem due to \cite{AnelBiedermanFinsterJoyal}. It is closely related to the descent theorem but can be stated without a universe: for any commuting cube
\begin{equation*}
\begin{tikzcd}
& S' \arrow[dl] \arrow[dr] \arrow[d] \\
A' \arrow[d] & S \arrow[dl] \arrow[dr] & B' \arrow[dl,crossing over] \arrow[d] \\
A \arrow[dr] & X' \arrow[d] \arrow[from=ul,crossing over] & B \arrow[dl] \\
& X
\end{tikzcd}
\end{equation*}
of which the two vertical back squares are pullback squares, the two vertical front squares are pullback squares if and only if the square
\begin{equation*}
\begin{tikzcd}
A' \sqcup^{S'} B' \arrow[r] \arrow[d] & X' \arrow[d] \\
A\sqcup^{S} B \arrow[r] & X.
\end{tikzcd}
\end{equation*}
is a pullback square. Even though this statement does not involve a universe, we use the univalence axiom in our proof that this square being pullback implies that the front two vertical squares of the cube are pullback squares. Function extensionality suffices for the converse direction.

\section{Homotopy pushouts}

\subsection{Pushouts as higher inductive types}

\begin{defn}
A \define{span} $\mathcal{S}$ from $A$ to $B$ is a triple $(S,f,g)$ consisting of a type $S$ and maps $f:S\to A$ and $g:S\to B$. We write $\mathsf{span}(A,B)$ for the type of small spans from $A$ to $B$. 
\end{defn}

\begin{defn}
Consider a span $\mathcal{S}\jdeq (S,f,g)$ from $A$ to $B$, and let $X$ be a type. A cocone with vertex $X$ on $\mathcal{S}$ is a triple $(i,j,H)$ consisting of maps $i:A\to X$, $j:B\to X$, and a homotopy $H:i\circ f\htpy j\circ g$ witnessing that the square
\begin{equation*}
\begin{tikzcd}
S \arrow[d,swap,"f"] \arrow[r,"g"] & B \arrow[d,"j"] \\
A \arrow[r,swap,"i"] & X
\end{tikzcd}
\end{equation*}
commutes. We write $\mathsf{cocone}_{\mathcal{S}}(X)$ for the type of cocones with vertex $X$ on $\mathcal{S}$. 
\end{defn}

\begin{defn}
Consider a commuting square
\begin{equation*}
\begin{tikzcd}
S \arrow[d,swap,"f"] \arrow[r,"g"] & B \arrow[d,"j"] \\
A \arrow[r,swap,"i"] & X,
\end{tikzcd}
\end{equation*}
with $H:i\circ f\htpy j\circ g$, and let $Y$ be a type. We define the operation
\begin{equation*}
\mathsf{cocone\usc{}map}((i,j,H),Y) \defeq (X\to Y) \to \mathsf{cocone}_{\mathcal{S}}(Y).
\end{equation*}
by $h\mapsto (h\circ i,h\circ j,h\cdot H)$. 
\end{defn}

\begin{defn}
A commuting square
\begin{equation*}
\begin{tikzcd}
S \arrow[r,"g"] \arrow[d,swap,"f"] & B \arrow[d,"j"] \\
A \arrow[r,swap,"i"] & X
\end{tikzcd}
\end{equation*}
with $H:i\circ f \htpy j\circ g$ is said to be a \define{(homotopy) pushout square}\index{pushout square} if the cocone $(i,j,H)$ with vertex $X$ on the span $\mathcal{S}\jdeq (S,f,g)$
satisfies the \define{universal property of pushouts}\index{universal property!of pushouts|textbf}, which asserts that the map
\begin{equation*}
\mathsf{cocone\usc{}map}(i,j,H):(X\to Y)\to \mathsf{cocone}(Y)
\end{equation*}
is an equivalence for any type $Y$. Sometimes pushout squares are also called \define{cocartesian squares}\index{cocartesian square|textbf}.
\end{defn}

\begin{defn}
Consider a pushout square
\begin{equation*}
\begin{tikzcd}
S \arrow[r,"g"] \arrow[d,swap,"f"] & B \arrow[d,"j"] \\
A \arrow[r,swap,"i"] & X
\end{tikzcd}
\end{equation*}
with $H:i\circ f \htpy j\circ g$, and consider a cocone $(i',j',H')$ with vertex $Y$ on the same span $\mathcal{S}\jdeq(S,f,g)$. Then the unique map $h:X\to Y$ such that 
\begin{equation*}
\mathsf{cocone\usc{}map}((i,j,H),Y,h)= (i',j',H')
\end{equation*}
is called the \define{cogap map} of $(i',j',H')$. We also write $\mathsf{cogap}(i',j',H')$ for the cogap map, and we write
\begin{align*}
\mathsf{left\usc{}comp}(i',j',H') & : i' \htpy \mathsf{cogap}(i',j',H')\circ \inl  \\
\mathsf{right\usc{}comp}(i',j',H') & : j' \htpy \mathsf{cogap}(i',j',H')\circ \inr  \\
\mathsf{coh\usc{}comp}(i',j',H') & : \ct{(\mathsf{left\usc{}comp}(i',j',H')\cdot g)}{H'} \htpy \ct{(\mathsf{cogap}(i',j',H')\cdot \glue)}{(\mathsf{right\usc{}comp}(i',j',H')\cdot f)}.
\end{align*}
for the homotopies determining the uniqueness of $\mathsf{cogap}(i',j',H')$.
\end{defn}

\begin{prp}\label{thm:pushout_up}
Consider a commuting square\index{universal property!of pushouts|textit}
\begin{equation*}
\begin{tikzcd}
S \arrow[r,"g"] \arrow[d,swap,"f"] & B \arrow[d,"j"] \\
A \arrow[r,swap,"i"] & X,
\end{tikzcd}
\end{equation*}
with $H:i\circ f\htpy j\circ g$. The following are equivalent:
\begin{enumerate}
\item The square is a pushout square.
\item The square
\begin{equation*}
\begin{tikzcd}
Y^X \arrow[r,"\blank\circ j"] \arrow[d,swap,"\blank\circ i"] & Y^B \arrow[d,"\blank\circ g"] \\
Y^A \arrow[r,swap,"\blank\circ f"] & Y^S,
\end{tikzcd}
\end{equation*}
which commutes by the homotopy
\begin{equation*}
\lam{h} \mathsf{eq\usc{}htpy}(h\cdot H),
\end{equation*}
is a pullback square, for every type $Y$.
\item For every type family $P$ over $X$, the square
\begin{equation*}
\begin{tikzcd}[column sep=9em]
\prd{x:X}P(x) \arrow[r,"\blank\circ j"] \arrow[d,swap,"\blank\circ i"] & \prd{b:B}P(j(b)) \arrow[d,"\blank\circ g"] \\
\prd{a:A}P(i(a)) \arrow[r,swap,"{\lam{h}{x} \tr_{P}(H(x),h(f(x)))}"] & \prd{x:S}P(j(g(x))),
\end{tikzcd}
\end{equation*}
which commutes by the homotopy
\begin{equation*}
\lam{h}\mathsf{eq\usc{}htpy}(\lam{x}\apd{h}{H(x)})
\end{equation*}
is a pullback square. This property is also called the \define{dependent universal property of pushouts}.
\item The gap map of the square
\begin{equation*}
\begin{tikzcd}
\prd{x:X}P(x) \arrow[r] \arrow[d] & \prd{b:B}P(j(b)) \arrow[d] \\
\prd{a:A}P(i(a)) \arrow[r] & \prd{x:S}P(j(g(x)))
\end{tikzcd}
\end{equation*}
has a section, for any type family $P$ over $X$. This property is also called the \define{induction principle of pushouts}.
\end{enumerate}
\end{prp}

\begin{defn}
From now on we will assume that any span has a pushout, and moreover that universes are closed under pushouts. We will write $A\sqcup^{\mathcal{S}} B$ for the pushout of the span $\mathcal{S}\jdeq(S,f,g)$ from $A$ to $B$. The type $A\sqcup^{\mathcal{S}} B$ comes equipped with a colimiting cocone $(\inl,\inr,\glue)$, as displayed in the pushout square
\begin{equation*}
\begin{tikzcd}
S \arrow[d,swap,"f"] \arrow[r,"g"] & B \arrow[d,"\inr"] \\
A \arrow[r,swap,"\inl"] & A \sqcup^{\mathcal{S}} B.
\end{tikzcd}
\end{equation*}
\end{defn}

\begin{rmk}
We note that if $\mathcal{S}\jdeq (S,f,g)$ is a span of \emph{pointed} types and pointed maps between them, then the pushout $A\sqcup^{\mathcal{S}} B$ of $\mathcal{S}$ is again a pointed type. The cocone $(\inl,\inr,\glue)$ consists of two pointed maps and a pointed homotopy filling the square of pointed maps. Moreover, the pushout $A\sqcup^{\mathcal{S}} B$ satisfies a pointed version of the universal property: for any pointed type $Y$ the square
\begin{equation*}
\begin{tikzcd}
(A\sqcup^{\mathcal{S}} B \to_\ast Y) \arrow[r] \arrow[d] & (B\to_\ast Y) \arrow[d] \\
(A\to_\ast Y) \arrow[r] & (S\to_\ast Y)
\end{tikzcd}
\end{equation*}
is a pullback square. 
\end{rmk}

\subsection{Examples of pushouts}

\begin{defn}
Let $X$ be a type. We define the \define{suspension}\index{suspension|textbf} $\susp X$\index{SX@{$\susp X$}|textbf} of $X$ to be the pushout of the span
\begin{equation*}
\begin{tikzcd}
X \arrow[r] \arrow[d] & \unit \arrow[d,"\inr"] \\
\unit \arrow[r,swap,"\inl"] & \susp X 
\end{tikzcd}
\end{equation*}
We will write $\north\defeq\inl(\ttt)$ and $\south\defeq\inr(\ttt)$. 
\end{defn}

\begin{rmk}By the universal property it follows that the map
\begin{equation*}
(\susp X \to Y) \to \sm{y,y':Y}X\to (y=y')
\end{equation*}
given by $h\mapsto (h(\north),h(\south),h\cdot\glue)$ is an equivalence. 

Moreover, if $X$ is a pointed type, then the suspension is considered to be a pointed type with base point $\north$. By the universal property of $\susp X$ it follows that the square
\begin{equation*}
\begin{tikzcd}
(\susp X\to_\ast Y) \arrow[r] \arrow[d] & (\unit \to_\ast Y) \arrow[d] \\
(\unit\to_\ast Y) \arrow[r] & (X\to_\ast Y)
\end{tikzcd}
\end{equation*}
is a pullback square. Since $\unit\to_\ast Y$ is contractible, it follows that 
\begin{equation*}
(\susp X\to_\ast Y) \eqvsym \loopspace{X\to_\ast Y} \eqvsym X \to_\ast \loopspace Y
\end{equation*}
\end{rmk}

\begin{defn}
Given a map $f:A\to B$, we define the \define{cofiber}\index{cofiber|textbf} $\mathsf{cof}_f$\index{cofib_f@{$\mathsf{cof}_f$}|textbf} of $f$ as the pushout
\begin{equation*}
\begin{tikzcd}
A \arrow[r,"f"] \arrow[d] & B \arrow[d,"\inr"] \\
\unit \arrow[r,swap,"\inl"] & \mathsf{cof}_f. 
\end{tikzcd}
\end{equation*}
The cofiber of a map is sometimes also called the \define{mapping cone}\index{mapping cone|textbf}.
\end{defn}

\begin{defn}
We define the \define{join}\index{join} $\join{X}{Y}$\index{join X Y@{$\join{X}{Y}$}|textbf} of $X$ and $Y$ to be the pushout 
\begin{equation*}
\begin{tikzcd}
X\times Y \arrow[r,"\proj 2"] \arrow[d,swap,"\proj 1"] & Y \arrow[d,"\inr"] \\
X \arrow[r,swap,"\inl"] & X \ast Y. 
\end{tikzcd}
\end{equation*}
\end{defn}

\begin{defn}
We define the \define{$n$-sphere}\index{n-sphere@{$n$-sphere}|textbf} $\sphere{n}$\index{Sn@{$\sphere{n}$}|textbf} for any $n\geq -1$ by induction on $n$, by taking
\begin{align*}
\sphere{-1} & \defeq \emptyt \\
\sphere{0} & \defeq \bool \\
\sphere{n+1} & \defeq \join{\bool}{\sphere{n}}.
\end{align*}
\end{defn}

\begin{defn}
Suppose $A$ and $B$ are pointed types, with base points $a_0$ and $b_0$, respectively. The \define{(binary) wedge}\index{wedge@(binary) wedge|textbf} $A\vee B$ of $A$ and $B$ is defined as the pushout
\begin{equation*}
\begin{tikzcd}
\bool \arrow[r] \arrow[d] & A+B \arrow[d] \\
\unit \arrow[r] & A\vee B.
\end{tikzcd}
\end{equation*}
\end{defn}

\begin{defn}
Given a type $I$, and a family of pointed types $A$ over $i$, with base points $a_0(i)$. We define the \define{(indexed) wedge}\index{wedge@{(indexed) wedge}|textbf} $\bigvee_{(i:I)}A_i$ as the pushout
\begin{equation*}
\begin{tikzcd}[column sep=huge]
I \arrow[d] \arrow[r,"{\lam{i}(i,a_0(i))}"] & \sm{i:I}A_i \arrow[d] \\
\unit \arrow[r] & \bigvee_{(i:I)} A_i.
\end{tikzcd}
\end{equation*}
\end{defn}

\begin{defn}
Suppose $A$ and $B$ are pointed types. We define the \define{wedge inclusion} $\mathsf{wedge\usc{}in}:A\vee B\to A\times B$ to be the unique map obtained via the universal property of pushouts as indicated in the diagram
\begin{equation*}
\begin{tikzcd}
\unit \arrow[d] \arrow[r] &[1em] B \arrow[d] \arrow[ddr,bend left=15,"{\lam{b}(a_0,b)}"] \\
A \arrow[r] \arrow[drr,bend right=15,swap,"{\lam{a}(a,b_0)}"] & A\vee B \arrow[dr,swap,"\mathsf{wedge\usc{}in}" near start] \\
& & A \times B.
\end{tikzcd}
\end{equation*}
We define the \define{smash product} $A\wedge B$ of $A$ and $B$ as the cofiber of the wedge inclusion, i.e.~as a pushout
\begin{equation*}
\begin{tikzcd}[column sep=large]
A\vee B \arrow[r,"\mathsf{wedge\usc{}in}"] \arrow[d] & A\times B \arrow[d] \\
\unit \arrow[r] & A\wedge B.
\end{tikzcd}
\end{equation*}
\end{defn}

\subsection{Properties of iterated pushouts}
The following corollary is also called the \define{pasting property} of pullbacks.\index{pasting property!of pullbacks|textit}\marginnote{edit}

\begin{cor}\label{thm:pb_pasting}
Consider a commuting diagram of the form
\begin{equation*}
\begin{tikzcd}
A \arrow[r,"k"] \arrow[d,swap,"f"] & B \arrow[r,"l"] \arrow[d,"g"] & C \arrow[d,"h"] \\
X \arrow[r,swap,"i"] & Y \arrow[r,swap,"j"] & Z
\end{tikzcd}
\end{equation*}
with homotopies $H:i\circ f\htpy g\circ k$ and $K:j\circ g\htpy h\circ l$, and the homotopy
\begin{equation*}
\ct{(j\cdot H)}{(K\cdot k)}:j\circ i\circ f\htpy h\circ l\circ k
\end{equation*}
witnessing that the outer rectangle commutes. Furthermore, suppose that the square on the right is a pullback square. Then the following are equivalent:
\begin{samepage}%
\begin{enumerate}
\item The square on the left is a pullback square.
\item The outer rectangle is a pullback square.
\end{enumerate}%
\end{samepage}%
\end{cor}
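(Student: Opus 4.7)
The plan is to reduce the question to the 3-for-2 property of equivalences via the characterization of pullback squares by fiberwise equivalences, i.e.~\cref{cor:pb_fibequiv}. That corollary tells us that a commuting square is a pullback if and only if its induced fiberwise transformation on fibers of the vertical maps is a fiberwise equivalence. So all three assertions in the corollary have a direct translation: the left square is a pullback iff $\fibf{(f,k,H)}:\prd{x:X}\fib{f}{x}\to\fib{g}{i(x)}$ is a fiberwise equivalence; the right square being a pullback gives that $\fibf{(g,l,K)}:\prd{y:Y}\fib{g}{y}\to\fib{h}{j(y)}$ is a fiberwise equivalence; and the outer rectangle is a pullback iff the induced $\fibf{(f,l\circ k,\ct{(j\cdot H)}{(K\cdot k)})}:\prd{x:X}\fib{f}{x}\to\fib{h}{j(i(x))}$ is a fiberwise equivalence.

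First I would spell out the composite. For each $x:X$ we obtain, by restricting the right fiberwise transformation along $i$, a composite
\begin{equation*}
\begin{tikzcd}[column sep=large]
\fib{f}{x} \arrow[r,"{\fibf{(f,k,H)}(x)}"] & \fib{g}{i(x)} \arrow[r,"{\fibf{(g,l,K)}(i(x))}"] & \fib{h}{j(i(x))}.
\end{tikzcd}
\end{equation*}
A direct computation using the definition of $\fibf{}$ from \cref{cor:pb_fibequiv}, together with the functoriality of $\apfunc{j}$ on concatenations and inverses, produces a homotopy identifying this composite with $\fibf{(f,l\circ k,\ct{(j\cdot H)}{(K\cdot k)})}(x)$. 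I expect this step to be the main obstacle: it is routine but requires careful path-algebraic bookkeeping with $\apfunc{j}$, $\invfunc$, and $\concat$ in order to match the two expressions on the nose up to a homotopy.

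With this comparison in hand, the conclusion is immediate. Since the right square is a pullback, $\fibf{(g,l,K)}(i(x))$ is an equivalence for every $x:X$ by \cref{cor:pb_fibequiv}. The 3-for-2 property of equivalences applied to the commuting triangle
\begin{equation*}
\begin{tikzcd}[column sep=-1em]
\fib{f}{x} \arrow[rr,"{\fibf{(f,l\circ k,\ldots)}(x)}"] \arrow[dr,swap,"{\fibf{(f,k,H)}(x)}"] & & \fib{h}{j(i(x))} \\
& \fib{g}{i(x)} \arrow[ur,swap,"{\fibf{(g,l,K)}(i(x))}"]
\end{tikzcd}
\end{equation*}
then shows that for each $x:X$ the map $\fibf{(f,k,H)}(x)$ is an equivalence if and only if the outer fiberwise map $\fibf{(f,l\circ k,\ldots)}(x)$ is. Quantifying over $x:X$, the left fiberwise transformation is a fiberwise equivalence if and only if the outer one is, and a final invocation of \cref{cor:pb_fibequiv} yields that the left square is a pullback if and only if the outer rectangle is a pullback.
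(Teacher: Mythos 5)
Your proposal is correct and follows essentially the same route as the paper's proof: translate all three pullback conditions into fiberwise equivalences via \cref{cor:pb_fibequiv} and conclude by the 3-for-2 property of equivalences. The only difference is that you make explicit the comparison homotopy identifying the composite of the two induced fiberwise maps with the one induced by the outer rectangle, a step the paper leaves implicit.
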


\begin{proof}
The commutativity of the two squares induces fiberwise transformations
\begin{align*}
& \prd{x:X}\fib{f}{x}\to \fib{g}{i(x)} \\
& \prd{y:Y}\fib{g}{y}\to \fib{h}{j(y)}.
\end{align*}
By the assumption that the square on the right is a pullback square, it follows from \cref{cor:pb_fibequiv} that the fiberwise transformation
\begin{equation*}
\prd{y:Y}\fib{g}{y}\to\fib{h}{j(y)}
\end{equation*}
is a fiberwise equivalence. Therefore it follows from 3-for-2 property of equivalences that the fiberwise transformation
\begin{equation*}
\prd{x:X}\fib{f}{x}\to\fib{g}{i(x)}
\end{equation*}
is a fiberwise equivalence if and only if the fiberwise transformation
\begin{equation*}
\prd{x:X}\fib{f}{x}\to\fib{h}{j(i(x))}
\end{equation*}
is a fiberwise equivalence. Now the claim follows from one more application of \cref{cor:pb_fibequiv}.
\end{proof}

\begin{cor}
Consider a commuting cube
\begin{equation*}
\begin{tikzcd}
& S' \arrow[dl] \arrow[dr] \arrow[d] \\
A' \arrow[d] & S \arrow[dl] \arrow[dr] & B' \arrow[dl,crossing over] \arrow[d] \\
A \arrow[dr] & X' \arrow[d] \arrow[from=ul,crossing over] & B \arrow[dl] \\
& X,
\end{tikzcd}
\end{equation*}
of which the two front squares are pullback squares. Then the back left square is a pullback square if and only if the back right square is a pullback square.
\end{cor}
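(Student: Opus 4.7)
The plan is to reduce the claim to the pasting property of pullbacks (\cref{thm:pb_pasting}), applied twice. Pasting the back-left square on top of the front-left square along the common edge $A' \to A$ produces a horizontal rectangle
\begin{equation*}
\begin{tikzcd}
S' \arrow[r] \arrow[d] & A' \arrow[r] \arrow[d] & X' \arrow[d] \\
S \arrow[r] & A \arrow[r] & X
\end{tikzcd}
\end{equation*}
whose outer square has vertices $S', X', S, X$, top edge the composite $S' \to A' \to X'$, and bottom edge the composite $S \to A \to X$. Because the right inner square is a pullback by hypothesis, \cref{thm:pb_pasting} tells us that the back-left square is a pullback if and only if this outer rectangle is a pullback. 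The same reasoning applied to the right-hand pair of faces shows that the back-right square is a pullback if and only if the analogous outer rectangle built from the back-right and front-right faces is a pullback.

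It therefore suffices to show that the two outer rectangles are equivalent as commuting squares over the cospan $S \to X \leftarrow X'$. They share all four vertices and both vertical edges $S' \to S$ and $X' \to X$. The two candidate top maps $S' \to X'$ (the composites through $A'$ and through $B'$) are identified by the homotopy filling the top face of the cube, and likewise the two candidate bottom maps $S \to X$ are identified by the homotopy filling the bottom face. The coherence filler $C$ of the cube from \cref{defn:cube} then supplies the identification of the two resulting rectangle-fillers. Since the property of being a pullback depends on the square data only up to homotopy — for example by the characterization in \cref{cor:pb_fibequiv} via fiberwise equivalences on fibers — one of these rectangles is a pullback if and only if the other is.

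Chaining the two equivalences completes the argument: the back-left face is a pullback iff the left rectangle is iff the right rectangle is iff the back-right face is. The main obstacle will be the homotopy-coherent book-keeping in the middle step, namely verifying that the cube's coherence filler really does identify the two pasted rectangle-fillers under the whiskering operations; however, this is exactly what the coherence datum $C$ in \cref{defn:cube} is designed to provide, so the verification, though intricate, is routine.
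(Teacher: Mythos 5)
Your argument is correct and is exactly the intended one: the paper states this corollary immediately after the pasting property (\cref{thm:pb_pasting}) and leaves the proof to the reader, the expected argument being precisely a double application of pasting followed by comparing the two pasted outer rectangles via the top, bottom, and coherence fillers of the cube. Your acknowledgment that the remaining work is transporting the pullback property along the identification of the two (homotopic) composite cones, with the coherence datum $C$ supplying that identification, is the right way to handle the only subtle point.
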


\begin{prp}\label{thm:pushout_pasting}
Consider the following configuration of commuting squares:\index{pushout!pasting property|textit}\index{pasting property!for pushouts|textit}
\begin{equation*}
\begin{tikzcd}
A \arrow[r,"i"] \arrow[d,swap,"f"] & B \arrow[r,"k"] \arrow[d,swap,"g"] & C \arrow[d,"h"] \\
X \arrow[r,swap,"j"] & Y \arrow[r,swap,"l"] & Z
\end{tikzcd}
\end{equation*}
with homotopies $H:j\circ f\htpy g\circ i$ and $K:l\circ g\htpy h\circ k$, and suppose that the square on the left is a pushout square. 
Then the square on the right is a pushout square if and only if the outer rectangle is a pushout square.
\end{prp}

\begin{proof}
Let $T$ be a type. Taking the exponent $T^{(\blank)}$ of the entire diagram of the statement of the theorem, we obtain the following commuting diagram
\begin{equation*}
\begin{tikzcd}
T^Z \arrow[r,"\blank\circ l"] \arrow[d,swap,"\blank\circ h"] & T^Y \arrow[d,swap,"\blank\circ g"] \arrow[r,"\blank\circ j"] & T^X \arrow[d,"\blank\circ f"] \\
T^C \arrow[r,swap,"\blank\circ k"] & T^B \arrow[r,swap,"\blank\circ i"] & T^A.
\end{tikzcd}
\end{equation*}
By the assumption that $Y$ is the pushout of $B\leftarrow A \rightarrow X$, it follows that the square on the right is a pullback square. It follows by \autoref{thm:pb_pasting} that the rectangle on the left is a pullback if and only if the outer rectangle is a pullback. Thus the statement follows by the second characterization in \autoref{thm:pushout_up}.
\end{proof}

\begin{lem}
Consider a map $f:A\to B$. Then the cofiber of the map $\inr:B\to \mathsf{cof}_f$ is equivalent to the suspension $\susp{A}$ of $A$. 
\end{lem}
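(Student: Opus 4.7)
The plan is to invoke the pasting property for pushouts (\cref{thm:pushout_pasting}) applied to the following configuration of commuting squares:
\begin{equation*}
\begin{tikzcd}
A \arrow[r,"f"] \arrow[d] & B \arrow[r] \arrow[d,"\inr"] & \unit \arrow[d] \\
\unit \arrow[r,swap,"\inl"] & \mathsf{cof}_f \arrow[r,swap,"\inr"] & \mathsf{cof}_{\inr}.
\end{tikzcd}
\end{equation*}
The left square is the pushout defining $\mathsf{cof}_f$, and the right square is the pushout defining $\mathsf{cof}_{\inr}$. By \cref{thm:pushout_pasting}, since the left square is a pushout, the right square is a pushout if and only if the outer rectangle is a pushout. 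Since we know the right square is a pushout, we conclude that the outer rectangle
\begin{equation*}
\begin{tikzcd}
A \arrow[r] \arrow[d] & \unit \arrow[d] \\
\unit \arrow[r] & \mathsf{cof}_{\inr}
\end{tikzcd}
\end{equation*}
is a pushout square.

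This outer rectangle is precisely the defining pushout for the suspension $\susp{A}$, so the uniqueness of pushouts up to equivalence (which follows from the 3-for-2 argument analogous to \cref{lem:pb_3for2}, applied to pushouts via the universal property) yields the desired equivalence $\eqv{\mathsf{cof}_{\inr}}{\susp{A}}$.

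The main task is simply bookkeeping: one must construct the two filling homotopies for the squares, verify that their pasted composite is (up to homotopy) the canonical filler of the outer rectangle, and only then invoke \cref{thm:pushout_pasting}. Nothing conceptually hard happens — no use of the univalence axiom or descent is required — but attention must be paid so that the cocone used to identify the outer rectangle with $\susp{A}$ matches the one produced by pasting the two defining cocones. Once that coherence is in place, the equivalence is obtained from the universal property applied in both directions, producing mutually inverse cogap maps.
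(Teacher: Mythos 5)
Your proof is correct, and it is the evident intended argument: the paper states this lemma immediately after the pasting property for pushouts and gives no proof, so your horizontal pasting of the two defining cofiber squares, followed by uniqueness of pushouts, is exactly the expected route. The only point to watch (which you already flag as bookkeeping) is that the defining pushout square for $\mathsf{cof}_{\inr}$ enters your diagram in transposed form; this is harmless since the universal property of a pushout is symmetric in the two legs of the span.
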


\begin{prp}
Consider a commuting square
\begin{equation*}
\begin{tikzcd}
A \arrow[r,"i"] \arrow[d,swap,"f"] & B \arrow[d,"g"] \\
X \arrow[r,swap,"j"] & Y
\end{tikzcd}
\end{equation*}
and write $\mathsf{cogap}: X\sqcup^A B\to Y$ for the cogap map. 
Then the square
\begin{equation*}
\begin{tikzcd}
\mathsf{cof}_f \arrow[d] \arrow[r] & \mathsf{cof}_g \arrow[d] \\
\unit \arrow[r] & \mathsf{cof}_{\mathsf{cogap}}
\end{tikzcd}
\end{equation*}
is a pushout square.
\end{prp}

\section{Descent for pushouts}\label{sec:descent}

\subsection{Type families over pushouts}

\begin{defn}
Consider a commuting square
\begin{equation*}
\begin{tikzcd}
S \arrow[r,"g"] \arrow[d,swap,"f"] & B \arrow[d,"j"] \\
A \arrow[r,swap,"i"] & X.
\end{tikzcd}
\end{equation*}
with $H:i\circ f\htpy j\circ g$, where all types involved are in $\UU$. The type $\mathsf{Desc}(\mathcal{S})$\index{Desc@{$\mathsf{Desc}(\mathcal{S})$}|textbf} of \define{descent data}\index{descent data|textbf} for $X$, is defined to be the type of triples $(P_A,P_B,P_S)$ consisting of
\begin{align*}
P_A & : A \to \UU \\
P_B & : B \to \UU \\
P_S & : \prd{x:S} \eqv{P_A(f(x))}{P_B(g(x))}.
\end{align*}
Furthermore, we define the map\index{desc_fam@{$\mathsf{desc\usc{}fam}_{\mathcal{S}}$}|textbf}
\begin{equation*}
\mathsf{desc\usc{}fam}_{\mathcal{S}}(i,j,H) : (X\to \UU)\to \mathsf{Desc}(\mathcal{S})
\end{equation*}
by $P\mapsto (P\circ i,P\circ j,\lam{x}\mathsf{tr}_P(H(x)))$.
\end{defn}

\begin{prp}\label{thm:desc_fam}
Consider a commuting square
\begin{equation*}
\begin{tikzcd}
S \arrow[r,"g"] \arrow[d,swap,"f"] & B \arrow[d,"j"] \\
A \arrow[r,swap,"i"] & X.
\end{tikzcd}
\end{equation*}
with $H:i\circ f\htpy j\circ g$. If the square is a pushout square, then the function\index{desc_fam@{$\mathsf{desc\usc{}fam}_{\mathcal{S}}$}!is an equivalence|textit}
\begin{equation*}
\mathsf{desc\usc{}fam}_{\mathcal{S}}(i,j,H) : (X\to \UU)\to \mathsf{Desc}(\mathcal{S})
\end{equation*}
is an equivalence.
\end{prp}

\begin{proof}
By the 3-for-2 property of equivalences it suffices to construct an equivalence $\varphi:\mathsf{cocone}_{\mathcal{S}}(\UU)\to\mathsf{Desc}(\mathcal{S})$ such that the triangle
\begin{equation*}
\begin{tikzcd}
& \UU^X \arrow[dl,swap,"{\mathsf{cocone\usc{}map}_{\mathcal{S}}(i,j,H)}"] \arrow[dr,"{\mathsf{desc\usc{}fam}_{\mathcal{S}}(i,j,H)}"] & \phantom{\mathsf{cocone}_{\mathcal{S}}(\UU)} \\
\mathsf{cocone}_{\mathcal{S}}(\UU) \arrow[rr,densely dotted,"\eqvsym","\varphi"'] & & \mathsf{Desc}(\mathcal{S})
\end{tikzcd}
\end{equation*}
commutes.

Since we have equivalences
\begin{equation*}
\mathsf{equiv\usc{}eq}:\eqv{\Big(P_A(f(x))=P_B(g(x))\Big)}{\Big(\eqv{P_A(f(x))}{P_B(g(x))}\Big)}
\end{equation*}
for all $x:S$, we obtain an equivalence on the dependent products
\begin{equation*}
\eqv{\Big(\prd{x:S}P_A(f(x))=P_B(g(x))\Big)}{\Big(\prd{x:S}\eqv{P_A(f(x))}{P_B(g(x))}\Big)}.
\end{equation*}
by post-composing with the equivalences $\mathsf{equiv\usc{}eq}$. 
We define $\varphi$ to be the induced map on total spaces. Explicitly, we have
\begin{equation*}
\varphi\defeq \lam{(P_A,P_B,K)}(P_A,P_B,\lam{x}\mathsf{equiv\usc{}eq}(K(x))).
\end{equation*}
Then $\varphi$ is an equivalence by \cref{thm:fib_equiv}, and the triangle commutes because there is a homotopy
\begin{equation*}
\mathsf{equiv\usc{}eq}(\ap{P}{H(x)}) \htpy \mathsf{tr}_P(H(x)). \qedhere
\end{equation*}
\end{proof}

\begin{cor}\label{cor:desc_fam}
Consider descent data $(P_A,P_B,P_S)$ for a pushout square as in \cref{thm:desc_fam}.
Then the type of quadruples $(P,e_A,e_B,e_S)$ consisting of a family $P:X\to\UU$ equipped with fiberwise equivalences
\begin{samepage}
\begin{align*}
e_A & : \prd{a:A}\eqv{P_A(a)}{P(i(a))} \\
e_B & : \prd{b:B}\eqv{P_B(a)}{P(j(b))}
\end{align*}
\end{samepage}%
and a homotopy $e_S$ witnessing that the square
\begin{equation*}
\begin{tikzcd}[column sep=huge]
P_A(f(x)) \arrow[r,"e_A(f(x))"] \arrow[d,swap,"P_S(x)"] & P(i(f(x))) \arrow[d,"\mathsf{tr}_P(H(x))"] \\
P_B(g(x)) \arrow[r,swap,"e_B(g(x))"] & P(j(g(x)))
\end{tikzcd}
\end{equation*}
commutes, is contractible.
\end{cor}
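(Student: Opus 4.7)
The plan is to derive this corollary as an immediate consequence of \cref{thm:desc_fam} by unfolding the fiber of the equivalence $\mathsf{desc\usc{}fam}_{\mathcal{S}}(i,j,H)$ over the given descent data $(P_A,P_B,P_S)$. Since this map is an equivalence, by \cref{thm:contr_equiv} its fiber at $(P_A,P_B,P_S)$ is contractible, and it will remain to exhibit an equivalence between this fiber and the type of quadruples $(P,e_A,e_B,e_S)$ described in the statement.

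The fiber at $(P_A,P_B,P_S)$ is the type of pairs $(P,\alpha)$ with $P:X\to\UU$ and
\begin{equation*}
\alpha : \big(P\circ i,\,P\circ j,\,\lam{x}\mathsf{tr}_P(H(x))\big) = (P_A,P_B,P_S).
\end{equation*}
First I would apply \cref{thm:eq_sigma} iteratively to rewrite $\alpha$ as a triple consisting of an identification $\alpha_A : P\circ i = P_A$, an identification $\alpha_B : P\circ j = P_B$, and a third identification comparing $\lam{x}\mathsf{tr}_P(H(x))$ to $P_S$ after transport along $(\alpha_A,\alpha_B)$. Then I would use function extensionality to replace $\alpha_A$ and $\alpha_B$ by pointwise identifications, and the univalence axiom (\cref{thm:univalence}) to convert each pointwise identification $P(i(a)) = P_A(a)$ into a pointwise equivalence, obtaining (after inversion) $e_A(a) : P_A(a)\eqvsym P(i(a))$ and likewise $e_B(b) : P_B(b)\eqvsym P(j(b))$. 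At this point one invokes \cref{thm:choice} to pull the pointwise structure out of the dependent product.

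The third component, which originally expresses a homotopy $\lam{x}\mathsf{tr}_P(H(x)) \htpy P_S$ modulo the transport induced by $\alpha_A$ and $\alpha_B$, then becomes precisely the homotopy $e_S$ witnessing the commutativity of the asserted square. Concatenating all these equivalences yields an equivalence between the fiber and the asserted type of quadruples, and contractibility transports across this equivalence.

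The main obstacle I anticipate is the bookkeeping for the third component: one needs that the transport of $\lam{x}\mathsf{tr}_P(H(x))$ along the equivalences induced by $\alpha_A$ and $\alpha_B$ (after passing through univalence) coincides with the operation of conjugating $\mathsf{tr}_P(H(x))$ by $e_A(f(x))$ and $e_B(g(x))$. This is handled by the usual computation rule $\mathsf{equiv\usc{}eq}(\ap{P}{H(x)}) \htpy \mathsf{tr}_P(H(x))$ already appearing in the proof of \cref{thm:desc_fam}, combined with the naturality of transport along an identification of functions obtained from function extensionality. None of these steps is deep, but the coherence data has to be arranged carefully so that the final square matches the one in the statement.
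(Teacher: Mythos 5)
Your proposal is correct and follows essentially the same route as the paper: the paper's proof simply asserts that the fiber of the equivalence $\mathsf{desc\usc{}fam}_{\mathcal{S}}(i,j,H)$ at $(P_A,P_B,P_S)$ is equivalent to the asserted type of quadruples $(P,e_A,e_B,e_S)$, which is then contractible by \cref{thm:contr_equiv}. You merely spell out in more detail (via \cref{thm:eq_sigma}, function extensionality, univalence, and \cref{thm:choice}) the equivalence the paper leaves implicit, including the coherence $\mathsf{equiv\usc{}eq}(\ap{P}{H(x)})\htpy\mathsf{tr}_P(H(x))$ that the paper itself uses in the proof of \cref{thm:desc_fam}.
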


\begin{proof}
The fiber of $\mathsf{desc\usc{}fam}_{\mathcal{S}}(i,j,H)$ map at $(P_A,P_B,P_S)$ is equivalent to the type of quadruples $(P,e_A,e_B,e_S)$ as described in the theorem, which are contractible by \cref{thm:contr_equiv}.
\end{proof}

For the remainder of this subsection we consider a pushout square
\begin{equation*}
\begin{tikzcd}
S \arrow[r,"g"] \arrow[d,swap,"f"] & B \arrow[d,"j"] \\
A \arrow[r,swap,"i"] & X.
\end{tikzcd}
\end{equation*}
with $H:i\circ f\htpy j\circ g$, descent data
\begin{align*}
P_A & : A \to \UU \\
P_B & : B \to \UU \\
P_S & : \prd{x:S} \eqv{P_A(f(x))}{P_B(g(x))},
\end{align*}
and a family $P:X\to\UU$ equipped with 
\begin{align*}
e_A & : \prd{a:A}\eqv{P_A(a)}{P(i(a))} \\
e_B & : \prd{b:B}\eqv{P_B(a)}{P(j(b))}
\end{align*}
and a homotopy $e_S$ witnessing that the square
\begin{equation*}
\begin{tikzcd}[column sep=huge]
P_A(f(x)) \arrow[r,"e_A(f(x))"] \arrow[d,swap,"P_S(x)"] & P(i(f(x))) \arrow[d,"\mathsf{tr}_P(H(x))"] \\
P_B(g(x)) \arrow[r,swap,"e_B(g(x))"] & P(j(g(x)))
\end{tikzcd}
\end{equation*}
commutes.

\begin{defn}
We define the commuting square
\begin{equation*}
\begin{tikzcd}[column sep=6em]
\sm{x:S}P_A(f(x)) \arrow[d,swap,"{f'\,\defeq\,\total[f]{\lam{x}\idfunc[P_A(f(x))]}}"] \arrow[r,"{g'\,\defeq\, \total[g]{e_S}}"] & \sm{b:B}P_B(b) \arrow[d,"{j'\,\defeq\, \total[j]{e_B}}"] \\
\sm{a:A}P_A(a) \arrow[r,swap,"{i'\, \defeq\, \total[i]{e_A}}"] & \sm{x:X}P(x)
\end{tikzcd}
\end{equation*}
with the homotopy $H':i'\circ f'\htpy j'\circ g'$ defined as
\begin{equation*}
\lam{(x,y)}\mathsf{eq\usc{}pair}(H(x),e_S(x,y)^{-1}).
\end{equation*}
Furthermore, we will write $\mathcal{S'}$ for the span
\begin{equation*}
\begin{tikzcd}
\sm{a:A}P_A(a) & \sm{x:S}P_A(f(x)) \arrow[l,swap,"{f'}"] \arrow[r,"{g'}"] & \sm{b:B}P_B(b).
\end{tikzcd}
\end{equation*}
\end{defn}

We now state the flattening lemma for pushouts, which should be compared to the flattening lemma for coequalizers, stated in Lemma 6.12.2 of \cite{hottbook}. We note that, using the dependent universal property of pushouts, our proof is substantially shorter.

\begin{lem}[The flattening lemma]\label{lem:flattening}
The commuting square\index{flattening lemma!for pushouts|textit}
\begin{equation*}
\begin{tikzcd}
\sm{x:S}P_A(f(x)) \arrow[d,swap,"{f'}"] \arrow[r,"{g'}"] & \sm{b:B}P_B(b) \arrow[d,"{j'}"] \\
\sm{a:A}P_A(a) \arrow[r,swap,"{i'}"] & \sm{x:X}P(x)
\end{tikzcd}
\end{equation*}
is a pushout square.
\end{lem}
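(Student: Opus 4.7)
My plan is to reduce to a canonical case and then invoke the dependent universal property of the original pushout applied to an appropriate function-type family.

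\textbf{Reduction.} By \cref{cor:desc_fam}, the type of quadruples $(P', e_A', e_B', e_S')$ extending the given descent data $(P_A, P_B, P_S)$ is contractible; in particular the given quadruple is uniquely identifiable with the canonical one with $P_A \jdeq P\circ i$, $P_B \jdeq P\circ j$, $P_S \jdeq \lam{x}\mathsf{tr}_P(H(x))$, where $e_A, e_B$ are identities and $e_S$ is reflexivity. Transporting the statement along this identification, it suffices to prove the lemma in this canonical form, where $i' = \total[i]{\idfunc}$, $j' = \total[j]{\idfunc}$, $f' = \total[f]{\idfunc}$, and $g' = \total[g]{\lam{s}\mathsf{tr}_P(H(s))}$.

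\textbf{Main argument.} Let $T$ be any type. I will show that
\[
\mathsf{cocone\usc{}map}((i',j',H'),T):\Big(\!\sm{x:X}P(x)\to T\Big)\to \mathsf{cocone}_{\mathcal{S}'}(T)
\]
is an equivalence. Currying (\cref{thm:choice}) identifies the domain with $\prd{x:X}Q(x)$ for $Q(x)\defeq P(x)\to T$. Applying the dependent universal property of the pushout $X$ (\cref{thm:pushout_up}(iii)) to the family $Q$ identifies $\prd{x:X}Q(x)$ with the pullback of
\[
\Big(\prd{a:A}Q(i(a))\Big)\to \Big(\prd{s:S}Q(j(g(s)))\Big)\leftarrow \Big(\prd{b:B}Q(j(b))\Big),
\]
where the left map sends $h$ to $\lam{s}\mathsf{tr}_Q(H(s),h(f(s)))$ and the right is precomposition with $g$. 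A path-induction on $H(s)$ identifies $\mathsf{tr}_Q(H(s))$ on $Q = P(\blank)\to T$ with precomposition by $\mathsf{tr}_P(H(s))^{-1}$.

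\textbf{Unfolding to cocones.} Uncurrying every component, the pullback above becomes the type of triples $(\alpha,\beta,\gamma)$ with $\alpha:(\sm{a:A}P(i(a)))\to T$, $\beta:(\sm{b:B}P(j(b)))\to T$, and $\gamma$ a homotopy $\alpha\circ f' \htpy \beta\circ g'$, matching the definition of $\mathcal{S}'$. This type is precisely $\mathsf{cocone}_{\mathcal{S}'}(T)$, and the composite equivalence agrees with $\mathsf{cocone\usc{}map}((i',j',H'),T)$ by construction; by the 3-for-2 property we conclude.

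\textbf{Main obstacle.} The real work is in the unfolding step: one must align the transport-based coherence $\mathsf{tr}_Q(H(s))$ supplied by the dependent universal property with the $\mathsf{eq\usc{}pair}$ coherence that defines $H'$ on the total space. After recognizing transport in a function-type family as precomposition with inverse transport, this reduces to a diagrammatic check, but verifying that the naturality square with $\mathsf{eq\usc{}pair}(H(s),\mathsf{refl})$ commutes on the nose, and that this identification is compatible with $\mathsf{cocone\usc{}map}$, is where care is required. Everything else is a routine chain of currying/uncurrying equivalences.
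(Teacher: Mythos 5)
Your proposal is correct and takes essentially the same route as the paper: the heart of both arguments is to apply the dependent universal property of the base pushout (\cref{thm:pushout_up}(iii)) to the family $x\mapsto (P(x)\to T)$ and then use the $\mathsf{ev\usc{}pair}$/currying equivalences to identify the resulting pullback with $\mathsf{cocone}_{\mathcal{S}'}(T)$, which the paper packages as a commuting cube of exponentials. The only differences are cosmetic: your preliminary reduction to the canonical descent data is sound, but is most cleanly justified by regarding $(e_A,e_B,e_S)$ as an identification of $(P_A,P_B,P_S)$ with $\mathsf{desc\usc{}fam}_{\mathcal{S}}(i,j,H)(P)$ and path-inducting (rather than by \cref{cor:desc_fam}, whose contractible type is indexed by the \emph{given} descent data and so does not literally contain the quadruple with identity equivalences), and the currying step is the $\mathsf{ev\usc{}pair}$ equivalence rather than \cref{thm:choice}; the paper avoids the reduction by carrying the fiberwise equivalences along inside the cube.
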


\begin{proof}
Note that we have a commuting cube
\begin{equation*}
\begin{tikzcd}[row sep=large]
& Y^{\sm{x:X}P(x)} \arrow[dl] \arrow[d,"\mathsf{ev\usc{}pair}"] \arrow[dr] \\
Y^{\sm{a:A}P_A(a)} \arrow[d,swap,"\mathsf{ev\usc{}pair}"] & \prd{x:X}Y^{P(x)} \arrow[dl] \arrow[dr] & Y^{\sm{b:B}P_B(b)} \arrow[dl,crossing over] \arrow[d,"\mathsf{ev\usc{}pair}"] \\
\prd{a:A}Y^{P_A(a)} \arrow[dr] & Y^{\sm{x:S}P_A(f(x))} \arrow[from=ul,crossing over] \arrow[d,swap,"\mathsf{ev\usc{}pair}"] & \prd{b:B}Y^{P_B(b)} \arrow[dl] \\
\phantom{\prd{b:B}Y^{P_B(b)}} & \prd{x:S}Y^{P_A(f(x))} & \phantom{\prd{a:A}Y^{P_A(a)}}
\end{tikzcd}
\end{equation*}
for any type $Y$. In this cube, the bottom square is a pullback square by property (iii) of \cref{thm:pushout_up}. The vertical maps (of the form $\mathsf{ev\usc{}pair}$) are equivalences, so it follows that the top square is a pullback square. We conclude that $\sm{x:X}P(x)$ is a pushout.
\end{proof}

\subsection{The descent property for pushouts}

\begin{defn}
Consider a span $\mathcal{S}$ from $A$ to $B$, and a span $\mathcal{S}'$ from $A'$ to $B'$. A \define{cartesian transformation} of spans\index{cartesian transformation!of spans|textbf} from $\mathcal{S}'$ to $\mathcal{S}$ is a tuple
\begin{equation*}
(h_A,h_S,h_B,F,G,p_f,p_g)
\end{equation*}
consisting of maps $h_A:A'\to A$, $h_S:S'\to S$, and $h_B:B'\to B$, as indicated in the diagram
\begin{equation*}
\begin{tikzcd}
A' \arrow[d,swap,"h_A"]  & S' \arrow[l,swap,"{f'}"] \arrow[r,"{g'}"] \arrow[d,swap,"h_S"] & B' \arrow[d,"h_B"] \\
A & S \arrow[l,"f"] \arrow[r,swap,"g"] & B,
\end{tikzcd}
\end{equation*}
with homotopies $F:f\circ h_S\htpy h_A\circ f'$ and $G:g\circ h_S\htpy h_B\circ g'$, satisfying the conditions
\begin{align*}
p_f & : \mathsf{is\usc{}pullback}(h_S,f',F) \\
p_g & : \mathsf{is\usc{}pullback}(h_S,g',G)
\end{align*}
that both squares are pullback squares. We write $\mathsf{cart}(\mathcal{S}',\mathcal{S})$\index{cart(S,S')@{$\mathsf{cart}(\mathcal{S},\mathcal{S}')$}|textbf} for the type of cartesian transformations from $\mathcal{S}'$ to $\mathcal{S}$, and we write
\begin{equation*}
\mathsf{Cart}(\mathcal{S}) \defeq \sm{A',B':\UU}{\mathcal{S}':\mathsf{span}(A',B')}\mathsf{cart}(\mathcal{S}',\mathcal{S}).
\end{equation*}
\end{defn}

Given descent data $(P_A,P_B,P_S)$ on a span $\mathcal{S}$ from $A$ to $B$, we obtain a cartesian transformation
\begin{equation*}
\begin{tikzcd}[column sep=large]
\sm{a:A}P_A(a) \arrow[d,swap,"\proj 1"] & \sm{x:S}P_A(f(x)) \arrow[d,swap,"\proj 1"] \arrow[l,swap,"{\total[f]{\idfunc}}"] \arrow[r,"{\total[g]{P_S}}"] & \sm{b:B}P_B(b) \arrow[d,"\proj 1"] \\
A & S \arrow[l,"f"] \arrow[r,swap,"g"] & B
\end{tikzcd}
\end{equation*}
with the canonical homotopies witnessing that the squares commute. Note that both the left and right commuting squares are pullback squares by \cref{thm:pb_fibequiv}. Thus we obtain an operation
\begin{equation*}
\mathsf{cart\usc{}desc}_{\mathcal{S}}:\mathsf{Desc}(\mathcal{S})\to \mathsf{Cart}(\mathcal{S}).
\end{equation*}

\begin{lem}\label{lem:cart_desc}
For any span $\mathcal{S}$, the operation\index{cart_desc@{$\mathsf{cart\usc{}desc}_{\mathcal{S}}$}|textit}
\begin{equation*}
\mathsf{cart\usc{}desc}_{\mathcal{S}}:\mathsf{Desc}(\mathcal{S})\to \mathsf{Cart}(\mathcal{S})
\end{equation*}
is an equivalence.
\end{lem}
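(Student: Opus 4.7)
The plan is to construct an explicit inverse $\mathsf{desc\usc{}cart}_{\mathcal{S}}:\mathsf{Cart}(\mathcal{S})\to\mathsf{Desc}(\mathcal{S})$ by taking fibres. Given a cartesian transformation $(h_A,h_S,h_B,F,G,p_f,p_g)$ from $\mathcal{S}'$ to $\mathcal{S}$, I set $P_A(a)\defeq\fib{h_A}{a}$, $P_B(b)\defeq\fib{h_B}{b}$, and let $P_S(x)$ be the composite equivalence
\begin{equation*}
\fib{h_A}{f(x)}\eqvsym \fib{h_S}{x} \eqvsym \fib{h_B}{g(x)},
\end{equation*}
whose two constituents are supplied by $p_f$ and $p_g$ together with \cref{cor:pb_fibequiv}. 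Since $\UU$ is closed under $\Sigma$- and identity types and each of $A',B',S'$ lies in $\UU$, these fibres are honest elements of $\UU$.

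To show that $\mathsf{desc\usc{}cart}_{\mathcal{S}}\circ\mathsf{cart\usc{}desc}_{\mathcal{S}}$ is homotopic to the identity on $\mathsf{Desc}(\mathcal{S})$, observe that $\mathsf{cart\usc{}desc}_{\mathcal{S}}(P_A,P_B,P_S)$ has as its vertical maps the first projections out of $\sm{a:A}P_A(a)$, $\sm{b:B}P_B(b)$, and $\sm{x:S}P_A(f(x))$; by \cref{eg:fib_proj} the fibres of these projections are equivalent to $P_A$ and $P_B$, and unfolding definitions confirms that the composite equivalence $P_S$ is likewise recovered. Function extensionality and univalence then convert these pointwise equivalences into an identification in $\mathsf{Desc}(\mathcal{S})$. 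For the other composite, I invoke \cref{thm:fam_proj} to identify $(A',h_A)$ with $\fibf{h_A}:A\to\UU$ up to a canonical equivalence, and analogously $(B',h_B)$ with $\fibf{h_B}$; the cartesian structure on $\mathcal{S}'$ is transported along these identifications, using the 3-for-2 property of pullbacks (\cref{lem:pb_3for2}) to identify $S'$ with $\sm{x:S}\fib{h_A}{f(x)}$ and to recover $h_S$, $f'$, $g'$, $F$, $G$ as the canonical maps.

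The main obstacle is the bookkeeping needed to align all the coherence data --- the homotopies $F$, $G$ and the pullback witnesses $p_f$, $p_g$ --- across the chain of equivalences. While each step is routine in isolation, the fact that $\mathsf{Cart}(\mathcal{S})$ packages many interlocking pieces of data means that matching the two iterated $\Sigma$-types requires repeated use of \cref{thm:choice} and \cref{thm:eq_sigma} to rearrange quantifiers. A cleaner execution is to first rewrite $\mathsf{Desc}(\mathcal{S})$ via \cref{thm:fam_proj} as
\begin{equation*}
\sm{A':\UU}{h_A:A'\to A}{B':\UU}{h_B:B'\to B}\prd{x:S}\eqv{\fib{h_A}{f(x)}}{\fib{h_B}{g(x)}},
\end{equation*}
and then observe that, with $h_A$ and $h_B$ fixed, the final $\Pi$-type is equivalent --- by \cref{cor:pb_fibequiv} applied twice together with \cref{thm:fam_proj} once more on the family $\lam{x:S}\fib{h_A}{f(x)}$ --- to the type of all remaining cartesian-square data $(S',f',g',h_S,F,G,p_f,p_g)$ completing the diagram, which is precisely what $\mathsf{Cart}(\mathcal{S})$ records.
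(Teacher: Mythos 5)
Your proposal is correct, and in its ``cleaner execution'' it is essentially the paper's own argument read in the opposite direction: the paper computes $\mathsf{Cart}(\mathcal{S})$ down to $\mathsf{Desc}(\mathcal{S})$ by using \cref{thm:fam_proj} to trade each pair $(A',h_A)$, $(B',h_B)$, $(S',h_S)$ for a type family, using \cref{thm:pb_fibequiv_complete} to trade the triples $(f',F,p_f)$ and $(g',G,p_g)$ for fiberwise equivalences out of $\fibf{h_S}$, and then contracting the redundant pair $(Q,\varphi)$ (the family $\fibf{h_S}$ together with its identification with $\fibf{h_A}\circ f$, via univalence and function extensionality), exactly as in your last paragraph. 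Your first, ``explicit inverse'' framing is also workable but shifts all the labour into characterizing the identity types of $\mathsf{Desc}(\mathcal{S})$ and of $\mathsf{Cart}(\mathcal{S})$ so as to verify the two round trips, which is precisely the bookkeeping you flag; the equivalence-chain-plus-contraction route avoids ever having to write those identity types down. One small citation point: the pointwise statement \cref{cor:pb_fibequiv} only characterizes, for a \emph{fixed} commuting square, when it is a pullback; to convert the whole package $(f',F,p_f)$ into a fiberwise equivalence as an equivalence of types you want the structured version \cref{thm:pb_fibequiv_complete} (or \cref{cor:sq_fib} combined with the fact that being a pullback and being a fiberwise equivalence are propositions), which is what the paper invokes.
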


\begin{proof}
Note that by \cref{thm:pb_fibequiv_complete} it follows that the types of triples $(f',F,p_f)$ and $(g',G,p_g)$ are equivalent to the types of fiberwise equivalences
\begin{align*}
& \prd{x:S}\eqv{\fib{h_S}{x}}{\fib{h_A}{f(x)}} \\
& \prd{x:S}\eqv{\fib{h_S}{x}}{\fib{h_B}{g(x)}}
\end{align*} 
respectively. Furthermore, by \cref{thm:fam_proj} the types of pairs $(S',h_S)$, $(A',h_A)$, and $(B',h_B)$ are equivalent to the types $S\to \UU$, $A\to \UU$, and $B\to \UU$, respectively. Therefore it follows that the type $\mathsf{Cart}(\mathcal{S})$ is equivalent to the type of tuples $(Q,P_A,\varphi,P_B,P_S)$ consisting of
\begin{align*}
Q & : S\to \UU \\
P_A & : A \to \UU \\
P_B & : B \to \UU \\
\varphi & : \prd{x:S}\eqv{Q(x)}{P_A(f(x))} \\
P_S & : \prd{x:S}\eqv{Q(x)}{P_B(g(x))}.
\end{align*}
However, the type of $\varphi$ is equivalent to the type $P_A\circ f=Q$. Thus we see that the type of pairs $(Q,\varphi)$ is contractible, so our claim follows.
\end{proof}

\begin{defn}
Consider a commuting square
\begin{equation*}
\begin{tikzcd}
S \arrow[r,"g"] \arrow[d,swap,"f"] & B \arrow[d,"j"] \\
A \arrow[r,swap,"i"] & X
\end{tikzcd}
\end{equation*}
with $H:i\circ f\htpy j\circ g$. 
We define an operation\index{cart map!{$\mathsf{cart\usc{}map}_{\mathcal{S}}$}|textbf}
\begin{equation*}
\mathsf{cart\usc{}map}_{\mathcal{S}}:{\Big(\sm{X':\UU}X'\to X\Big)}\to \mathsf{Cart}(\mathcal{S}).
\end{equation*}
\end{defn}

\begin{proof}[Construction]
Let $X':\UU$ and $h_X:X'\to X$. Then we define $A'$, $B'$, and $S'$ as the pullbacks
\begin{align*}
A' & \defeq A\times_X X' \\
B' & \defeq B\times_X X' \\
S' & \defeq S\times_A A',
\end{align*}
resulting in a diagram of the form
\begin{equation*}
\begin{tikzcd}
& S' \arrow[dl] \arrow[dr,densely dotted] \arrow[d] \\
A' \arrow[d] & S \arrow[dl] \arrow[dr] & B' \arrow[dl,crossing over] \arrow[d] \\
A \arrow[dr] & X' \arrow[d] \arrow[from=ul,crossing over] & B \arrow[dl] \\
& X
\end{tikzcd}
\end{equation*}
By the universal property of $B'$ it follows that there is a unique map $g':S'\to B'$ making the cube commute. 
Moreover, since the two front squares and the back left squares are pullback squares by construction, it follows by \cref{thm:pb_pasting} that also the back right square is a pullback square. Thus we obtain a cartesian transformation of spans.
\end{proof}

The following theorem is analogous to \cref{thm:desc_fam}.

\begin{thm}[The descent theorem for pushouts]\label{thm:descent}\index{descent theorem!for pushouts|textit}
Consider a commuting square
\begin{equation*}
\begin{tikzcd}
S \arrow[r,"g"] \arrow[d,swap,"f"] & B \arrow[d,"j"] \\
A \arrow[r,swap,"i"] & X
\end{tikzcd}
\end{equation*}
with $H:i\circ f\htpy j\circ g$. 
If this square is a pushout square, then the operation $\mathsf{cart\usc{}map}_{\mathcal{S}}$\index{cart map!{$\mathsf{cart\usc{}map}_{\mathcal{S}}$}!is an equivalence|textit} is an equivalence
\begin{equation*}
\eqv{\Big(\sm{X':\UU}X'\to X\Big)}{\mathsf{Cart}(\mathcal{S})}
\end{equation*}
\end{thm}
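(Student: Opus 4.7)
The plan is to exhibit $\mathsf{cart\usc{}map}_{\mathcal{S}}$ as a composition of three previously established equivalences, and then use the $3$-for-$2$ property. Specifically, assuming the square is a pushout, we have the chain
\begin{equation*}
\begin{tikzcd}[column sep=large]
\sm{X':\UU}X'\to X \arrow[r,"\varphi^{-1}"] & (X\to \UU) \arrow[r,"{\mathsf{desc\usc{}fam}_{\mathcal{S}}}"] & \mathsf{Desc}(\mathcal{S}) \arrow[r,"{\mathsf{cart\usc{}desc}_{\mathcal{S}}}"] & \mathsf{Cart}(\mathcal{S}),
\end{tikzcd}
\end{equation*}
where $\varphi^{-1}$ is the inverse of the equivalence from \cref{thm:fam_proj}, sending $(X',h_X)$ to the fiber family $\fibf{h_X}$; the second map is an equivalence by \cref{thm:desc_fam} (this is where we use that the square is a pushout); and the third is an equivalence by \cref{lem:cart_desc}. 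Thus the composite is already an equivalence. It remains to show that this composite is homotopic to $\mathsf{cart\usc{}map}_{\mathcal{S}}$.

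To verify the homotopy, I would trace an object $(X',h_X)$ through both maps. The composite sends $(X',h_X)$ first to the family $\lam{x}\fib{h_X}{x}:X\to\UU$, then to the descent data
\begin{equation*}
\big(\lam{a}\fib{h_X}{i(a)},\ \lam{b}\fib{h_X}{j(b)},\ \lam{s}\tr_{\fibf{h_X}}(H(s))\big),
\end{equation*}
and finally to the cartesian transformation whose total spaces are $\sm{a:A}\fib{h_X}{i(a)}$, $\sm{b:B}\fib{h_X}{j(b)}$, $\sm{s:S}\fib{h_X}{i(f(s))}$, each equipped with its first projection and the canonical pullback structure provided by \cref{thm:pb_fibequiv}. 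On the other hand, $\mathsf{cart\usc{}map}_{\mathcal{S}}$ produces the span $A\times_X X' \leftarrow S\times_A (A\times_X X') \rightarrow B\times_X X'$. By \cref{eg:fib_proj} the canonical pullback $A\times_X X'$ is naturally equivalent to $\sm{a:A}\fib{h_X}{i(a)}$ over $A$, and similarly for $B\times_X X'$ and (by pullback pasting, \cref{thm:pb_pasting}) for $S\times_A A'\simeq \sm{s:S}\fib{h_X}{i(f(s))}$.

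The main obstacle will be the bookkeeping in showing that the natural equivalences of total types intertwine the legs of the two spans together with the filler homotopies and the pullback witnesses, so that the two cartesian transformations really are identified as elements of $\mathsf{Cart}(\mathcal{S})$. The map $g'$ produced by $\mathsf{cart\usc{}map}_{\mathcal{S}}$ is characterized by the universal property of the pullback $B\times_X X'$, so to match it against $\total[g]{\tr_{\fibf{h_X}}(H(\blank))}$ one must check that both maps, viewed as cones over the relevant cospan, are equal; this reduces to a diagram chase using naturality of transport along $H$ and the definition of $\mathsf{cart\usc{}desc}_{\mathcal{S}}$. Once the diagram on the level of elements is identified, the $3$-for-$2$ property (already used pervasively in this chapter) yields that $\mathsf{cart\usc{}map}_{\mathcal{S}}$ is an equivalence, completing the proof.
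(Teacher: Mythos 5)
Your proposal is correct and follows essentially the same route as the paper: the paper proves that the square formed by $\mathsf{map\usc{}fam}_X$, $\mathsf{desc\usc{}fam}_{\mathcal{S}}(i,j,H)$, $\mathsf{cart\usc{}desc}_{\mathcal{S}}$, and $\mathsf{cart\usc{}map}_{\mathcal{S}}$ commutes and then invokes the 3-for-2 property, which is just your factorization with the \cref{thm:fam_proj} equivalence traversed in the other direction. The identification of the two resulting cartesian transformations (pullbacks $A\times_X X'$ versus total spaces of fiber families) is handled in the paper by the same appeal to \cref{thm:pb_fibequiv} that underlies your fiber/pullback comparison.
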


\begin{proof}
It suffices to show that the square
\begin{equation*}
\begin{tikzcd}[column sep=huge]
\UU^X \arrow[r,"{\mathsf{desc\usc{}fam}_{\mathcal{S}}(i,j,H)}"] \arrow[d,swap,"\mathsf{map\usc{}fam}_X"] & \mathsf{Desc}(\mathcal{S}) \arrow[d,"\mathsf{cart\usc{}desc}_{\mathcal{S}}"] \\
\sm{X':\UU}X^{X'} \arrow[r,swap,"\mathsf{cart\usc{}map}_{\mathcal{S}}"] & \mathsf{Cart}(\mathcal{S})
\end{tikzcd}
\end{equation*}
commutes. To see that this suffices, note that the operation $\mathsf{map\usc{}fam}_X$ is an equivalence by \cref{thm:fam_proj}, the operation $\mathsf{desc\usc{}fam}_{\mathcal{S}}(i,j,H)$ is an equivalence by \cref{thm:desc_fam}, and the operation $\mathsf{cart\usc{}desc}_{\mathcal{S}}$ is an equivalence by \cref{lem:cart_desc}.

To see that the square commutes, note that the composite
\begin{equation*}
\mathsf{cart\usc{}map}_{\mathcal{S}}\circ \mathsf{map\usc{}fam}_X
\end{equation*}
takes a family $P:X\to \UU$ to the cartesian transformation of spans
\begin{equation*}
\begin{tikzcd}
A\times_X\tilde{P} \arrow[d,swap,"\pi_1"] & S\times_A\Big(A\times_X\tilde{P}\Big) \arrow[l] \arrow[r] \arrow[d,swap,"\pi_1"] & B\times_X\tilde{P} \arrow[d,"\pi_1"] \\
A & S \arrow[l] \arrow[r] & B,
\end{tikzcd}
\end{equation*}
where $\tilde{P}\defeq\sm{x:X}P(x)$.

The composite 
\begin{equation*}
\mathsf{cart\usc{}desc}_{\mathcal{S}}\circ \mathsf{desc\usc{}fam}_X
\end{equation*}
takes a family $P:X\to \UU$ to the cartesian transformation of spans
\begin{equation*}
\begin{tikzcd}
\sm{a:A}P(i(a)) \arrow[d] & \sm{s:S}P(i(f(s))) \arrow[l] \arrow[r] \arrow[d] & \sm{b:B}P(j(b)) \arrow[d] \\
A & S \arrow[l] \arrow[r] & B
\end{tikzcd}
\end{equation*}
These cartesian natural transformations are equal by \cref{thm:pb_fibequiv}.
\end{proof}

Since $\mathsf{cart\usc{}map}_{\mathcal{S}}$ is an equivalence it follows that its fibers are contractible. 

\begin{cor}\label{cor:descent_fib}
Consider a diagram of the form 
\begin{equation*}
\begin{tikzcd}
& S' \arrow[d,swap,"h_S"] \arrow[dl,swap,"{f'}"] \arrow[dr,"{g'}"] \\
A' \arrow[d,swap,"h_A"] & S \arrow[dl,swap,"f"] \arrow[dr,"g"] & B' \arrow[d,"{h_B}"] \\
A \arrow[dr,swap,"i"] & & B \arrow[dl,"j"] \\
& X
\end{tikzcd}
\end{equation*}
with homotopies
\begin{align*}
F & : f\circ h_S \htpy h_A\circ f' \\
G & : g\circ h_S \htpy h_B\circ g' \\
H & : i\circ f \htpy j\circ g,
\end{align*}
and suppose that the bottom square is a pushout square, and the top squares are pullback squares.
Then the type of tuples $((X',h_X),(i',I,p),(j',J,q),(H',C))$ consisting of
\begin{enumerate}
\item A type $X':\UU$ together with a morphism
\begin{equation*}
h_X : X'\to X,
\end{equation*}
\item A map $i':A'\to X'$, a homotopy $I:i\circ h_A\htpy h_X\circ i'$, and a term $p$ witnessing that the square
\begin{equation*}
\begin{tikzcd}
A' \arrow[d,swap,"h_A"] \arrow[r,"{i'}"] & X' \arrow[d,"h_X"] \\
A \arrow[r,swap,"i"] & X
\end{tikzcd}
\end{equation*}
is a pullback square.
\item A map $j':B'\to X'$, a homotopy $J:j\circ h_B\htpy h_X\circ j'$, and a term $q$ witnessing that the square
\begin{equation*}
\begin{tikzcd}
B' \arrow[d,swap,"h_B"] \arrow[r,"{j'}"] & X' \arrow[d,"h_X"] \\
B \arrow[r,swap,"j"] & X
\end{tikzcd}
\end{equation*}
is a pullback square,
\item A homotopy $H':i'\circ f'\htpy j'\circ g'$, and a homotopy
\begin{equation*}
C : \ct{(i\cdot F)}{(\ct{(I\cdot f')}{(h_X\cdot H')})} \htpy \ct{(H\cdot h_S)}{(\ct{(j\cdot G)}{(J\cdot g')})}
\end{equation*}
witnessing that the cube
\begin{equation*}
\begin{tikzcd}
& S' \arrow[dl] \arrow[dr] \arrow[d] \\
A' \arrow[d] & S \arrow[dl] \arrow[dr] & B' \arrow[dl,crossing over] \arrow[d] \\
A \arrow[dr] & X' \arrow[d] \arrow[from=ul,crossing over] & B \arrow[dl] \\
& X,
\end{tikzcd}
\end{equation*}
commutes,
\end{enumerate}
is contractible.
\end{cor}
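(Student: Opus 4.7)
The plan is to recognize that this corollary is essentially an unpacking of the contractibility of the fibers of $\mathsf{cart\usc{}map}_{\mathcal{S}}$, which is an equivalence by \cref{thm:descent}. The hypothesis data (the top two squares being pullback squares, with the homotopies $F$ and $G$) is precisely a cartesian transformation $\mathcal{T}:\mathcal{S}'\to\mathcal{S}$ of spans, i.e.~an element of $\mathsf{Cart}(\mathcal{S})$. Since $\mathsf{cart\usc{}map}_{\mathcal{S}}$ is an equivalence, its fiber at $\mathcal{T}$ is contractible by \cref{thm:contr_equiv}. So the core task is to exhibit an equivalence between this fiber and the type of tuples $((X',h_X),(i',I,p),(j',J,q),(H',C))$ described in the statement.

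First I would unpack the fiber. An element of $\fib{\mathsf{cart\usc{}map}_{\mathcal{S}}}{\mathcal{T}}$ consists of a pair $(X',h_X)$ with $X':\UU$ and $h_X:X'\to X$, together with an identification of $\mathsf{cart\usc{}map}_{\mathcal{S}}(X',h_X)$ with $\mathcal{T}$ in the type $\mathsf{Cart}(\mathcal{S})$. By the construction of $\mathsf{cart\usc{}map}_{\mathcal{S}}$, the former produces the canonical cartesian transformation whose top row is $A\times_X X' \leftarrow S\times_A (A\times_X X') \rightarrow B\times_X X'$, with projections for $h_A, h_S, h_B$ and the canonical homotopies for $F,G$. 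Using univalence and the characterization of identity types of $\Sigma$-types (\cref{thm:eq_sigma}), an identification in $\mathsf{Cart}(\mathcal{S})$ amounts to equivalences $A'\simeq A\times_X X'$ over $A$, $B'\simeq B\times_X X'$ over $B$, $S'\simeq S\times_A(A\times_X X')$ over $S$, compatible with $f'$ and $g'$ up to coherences.

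Next I would convert this "equivalence-over-base" data into the "new pullback square" data appearing in the statement. By \cref{cor:pb_equiv} (or more directly the 3-for-2 property for pullbacks, \cref{lem:pb_3for2}), giving an equivalence $A'\simeq A\times_X X'$ over $A$ is equivalent to giving a map $i':A'\to X'$, a homotopy $I:i\circ h_A\htpy h_X\circ i'$, and a witness $p$ that the resulting square is a pullback; similarly for $j',J,q$. The equivalence $S'\simeq S\times_A(A\times_X X')$ is then redundant: once $i'$ is given, the pullback of $h_S$ is forced, and by the pasting property \cref{thm:pb_pasting} applied to the two back-left squares, the induced data is determined. What remains is the compatibility with $g'$, which unpacks to precisely the homotopy $H':i'\circ f'\htpy j'\circ g'$ together with the cube coherence $C$ relating the six composite homotopies on the faces.

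The main obstacle I anticipate is bookkeeping: verifying that under these equivalences, the coherence datum hidden in an identification of cartesian transformations of spans matches exactly the cube-filling homotopy $C$ as specified. This is conceptually straightforward but demands care because the homotopy $C$ involves the whiskerings of $F,G,H,I,J,H'$ with the appropriate maps, and one must track how $\mathsf{eq\usc{}pair}$ decomposes an identification of triples into its three components and how the inherited homotopy for $G$ under $\mathsf{cart\usc{}map}_{\mathcal{S}}$ is built from the canonical ones. Once this identification of types is in place, contractibility follows from contractibility of the fiber, completing the proof.
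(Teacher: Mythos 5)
Your proposal takes essentially the same route as the paper: the paper's own proof is just the observation, stated immediately before the corollary, that $\mathsf{cart\usc{}map}_{\mathcal{S}}$ is an equivalence (by \cref{thm:descent}) and hence has contractible fibers, with the corollary read as the unpacking of the fiber at the given cartesian transformation of spans. Your sketch of how that fiber unpacks into the stated tuples (via the identity types of $\mathsf{Cart}(\mathcal{S})$, the gap-map characterization of pullbacks, and pullback pasting) is consistent with this and in fact supplies more detail than the paper does.
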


The following theorem should be compared to the flattening lemma, \cref{lem:flattening}.\index{flattening lemma!for pushouts}

\begin{thm}\label{cor:descent}
Consider a commuting cube
\begin{equation*}
\begin{tikzcd}
& S' \arrow[dl,swap,"{f'}"] \arrow[dr,"{g'}"] \arrow[d,"h_S"] \\
A' \arrow[d,swap,"h_A"] & S \arrow[dl,swap,"f" near start] \arrow[dr,"g" near start] & B' \arrow[dl,crossing over,"{j'}" near end] \arrow[d,"h_B"] \\
A \arrow[dr,swap,"i"] & X' \arrow[d,"h_X" near start] \arrow[from=ul,crossing over,"{i'}"' near end] & B \arrow[dl,"j"] \\
& X
\end{tikzcd}
\end{equation*}
in which the bottom square is a pushout, and the two vertical squares in the back are pullbacks. Then the following are equivalent:
\begin{enumerate}
\item The two vertical squares in the front are pullback squares.
\item The top square is a pushout square.
\end{enumerate}
\end{thm}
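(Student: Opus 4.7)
The strategy is to combine the descent theorem \cref{cor:descent_fib} with the flattening lemma \cref{lem:flattening}. Applying \cref{cor:descent_fib} to the back cartesian transformation over the bottom pushout yields a canonical ``cube completion'' with front pullback squares; concretely, this comes from the type family $P:X\to\UU$ classifying the back transformation, with top corner $X''\defeq\sm{x:X}P(x)$ and the evident inclusions and projections. By \cref{lem:flattening}, this $X''$ equipped with its total-space cocone is a pushout of the top span (transported across the descent equivalences $A'\simeq\sm{a:A}P(i(a))$ and $B'\simeq\sm{b:B}P(j(b))$ and $S'\simeq\sm{s:S}P(i(f(s)))$).

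For $(i)\Rightarrow(ii)$, assume the front squares of the given cube are pullbacks. Then the tuple $(X',h_X,i',p,j',q,H',C)$ inhabits the contractible type of cube completions of \cref{cor:descent_fib}, so it is canonically identified with the completion $(X'',h_{X''},i'',p'',j'',q'',H'',C'')$ constructed above. This identification provides an equivalence $X'\simeq X''$ compatible with the top cocones, so $X'$ with cocone $(i',j',H')$ is a pushout of the top span, proving $(ii)$.

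For $(ii)\Rightarrow(i)$, assume the top square is a pushout. Then $X'$ and $X''$ are both pushouts of the top span, and so the cogap map $e:X'\to X''$ is an equivalence. Moreover $e$ is an equivalence \emph{over} $X$: the maps $h_X$ and $h_{X''}$ both arise as the cogap map of the same cocone $(i\circ h_A,j\circ h_B,\ldots)$ on the top span with vertex $X$, whence $h_{X''}\circ e\htpy h_X$. Consequently, for every $x:X$ the induced map on fibers $\fib{h_X}{x}\to\fib{h_{X''}}{x}$ is an equivalence, so by \cref{cor:pb_fibequiv} the square
\begin{equation*}
\begin{tikzcd}
X' \arrow[r,"e"] \arrow[d,swap,"h_X"] & X'' \arrow[d,"h_{X''}"] \\
X \arrow[r,swap,"\idfunc_X"] & X
\end{tikzcd}
\end{equation*}
is a pullback. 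Pasting this on the right of the front-left square of the given cube via \cref{thm:pb_pasting}, and using $e\circ i'\htpy i''$ from cocone compatibility, recovers the front-left square of the canonical completion, which is a pullback by construction; hence the front-left square of the given cube is a pullback. The front-right square is handled by the same argument.

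The hardest part is not any deep step but rather the bookkeeping of coherences: verifying that the equivalence $X'\simeq X''$ genuinely preserves the top cocone and sits over $X$ (so that the pasting in the last step goes through cleanly). Once this compatibility is in hand, the conceptual content reduces entirely to the two previously established black boxes, \cref{lem:flattening} and \cref{cor:descent_fib}.
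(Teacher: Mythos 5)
Your proof is correct, and both directions go through; but its organization differs from the paper's. The paper's proof works at the level of fibers: it extracts fiberwise equivalences from the pullback squares via \cref{cor:pb_fibequiv}, assembles them into descent data $(P_A,P_B,P_S)$ glued by \cref{thm:desc_fam}, builds the explicit cube whose corners are the total spaces $\sm{a:A}\fib{h_A}{a}$, etc., and then concludes by the flattening lemma together with two applications of the pasting property for pushouts (\cref{thm:pushout_pasting}); note that as written it only carries out the direction (i)$\Rightarrow$(ii), since the fiberwise equivalences $I$ and $J$ it starts from presuppose that the front squares are pullbacks. You instead package the same descent-plus-flattening content through the contractibility statement \cref{cor:descent_fib}: for (i)$\Rightarrow$(ii) the given cube is identified with the canonical completion $X''=\sm{x:X}P(x)$, whose top square is a pushout by \cref{lem:flattening}, and the pushout property transfers along the identification of cocones (a routine 3-for-2 for pushouts, analogous to \cref{lem:pb_3for2}, which the paper does not state but which follows from \cref{thm:pushout_up}); for (ii)$\Rightarrow$(i) you compare the two pushouts via the cogap equivalence $e:X'\to X''$ over $X$, observe that the square with horizontal maps $e$ and $\idfunc_X$ is a pullback (equivalently, $e$ is a fiberwise equivalence by \cref{thm:fib_equiv}), and conclude by pullback pasting \cref{thm:pb_pasting} against the canonical front squares, which are pullbacks by \cref{thm:pb_fibequiv}. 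What your route buys is an explicit treatment of the converse direction (which the paper uses later, e.g.\ in \cref{thm:cartesian_cube}, but leaves implicit) and less coherence bookkeeping, since uniqueness is delegated to the contractibility of the descent type; what the paper's route buys is a more concrete description of the comparison cube, with all maps given directly on fibers. The two small points you flagged as bookkeeping — that the identification from \cref{cor:descent_fib} respects the cocone data, and that $e$ really lies over $X$ with $e\circ i'\htpy i''$ — are indeed exactly the coherences recorded in the tuple of \cref{cor:descent_fib} and in the uniqueness clause of the cogap map, so they are available as stated.
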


\begin{proof}
By \cref{cor:pb_fibequiv} we have fiberwise equivalences
\begin{align*}
F & : \prd{x:S}\eqv{\fib{h_S}{x}}{\fib{h_A}{f(x)}} \\
G & : \prd{x:S}\eqv{\fib{h_S}{x}}{\fib{h_B}{g(x)}} \\
I & : \prd{a:A}\eqv{\fib{h_A}{a}}{\fib{h_X}{i(a)}} \\
J & : \prd{b:B}\eqv{\fib{h_B}{b}}{\fib{h_X}{j(b)}}. 
\end{align*}
Moreover, since the cube commutes we obtain a fiberwise homotopy
\begin{equation*}
K : \prd{x:S} I(f(x))\circ F(x) \htpy J(g(x))\circ G(x).
\end{equation*}
We define the descent data $(P_A,P_B,P_S)$ consisting of $P_A:A\to\UU$, $P_B:B\to\UU$, and $P_S:\prd{x:S}\eqv{P_A(f(x))}{P_B(g(x))}$ by
\begin{align*}
P_A(a) & \defeq \fib{h_A}{a} \\
P_B(b) & \defeq \fib{h_B}{b} \\
P_S(x) & \defeq G(x)\circ F(x)^{-1}.
\end{align*}
We have
\begin{align*}
P & \defeq \fibf{h_X} \\
e_A & \defeq I \\
e_B & \defeq J \\
e_S & \defeq K.
\end{align*}
Now consider the diagram
\begin{equation*}
\begin{tikzcd}
\sm{s:S}\fib{h_S}{s} \arrow[r] \arrow[d] & \sm{s:S}\fib{h_A}{f(s)} \arrow[r] \arrow[d] & \sm{b:B}\fib{h_B}{b} \arrow[d] \\
\sm{a:A}\fib{h_A}{a} \arrow[r] & \sm{a:A}\fib{h_A}{a} \arrow[r] & \sm{x:X}\fib{h_X}{x}
\end{tikzcd}
\end{equation*}
Since the top and bottom map in the left square are equivalences, we obtain that the left square is a pushout square. Moreover, the right square is a pushout by \cref{lem:flattening}. Therefore it follows by \cref{thm:pushout_pasting} that the outer rectangle is a pushout square.

Now consider the commuting cube
\begin{equation*}
\begin{tikzcd}
& \sm{s:S}\fib{h_S}{s} \arrow[dl] \arrow[dr] \arrow[d] \\
\sm{a:A}\fib{h_A}{a} \arrow[d] & S' \arrow[dl] \arrow[dr] & \sm{b:B}\fib{h_B}{b} \arrow[dl,crossing over] \arrow[d] \\
A' \arrow[dr,swap] & \sm{x:X}\fib{h_X}{x} \arrow[d] \arrow[from=ul,crossing over] & B' \arrow[dl] \\
& X'.
\end{tikzcd}
\end{equation*}
We have seen that the top square is a pushout. The vertical maps are all equivalences, so the vertical squares are all pushout squares. Thus it follows from one more application of \cref{thm:pushout_pasting} that the bottom square is a pushout.
\end{proof}

\begin{thm}\label{thm:cartesian_cube}
Consider a commuting cube of types 
\begin{equation*}\label{eq:cube}
\begin{tikzcd}
& S' \arrow[dl] \arrow[dr] \arrow[d] \\
A' \arrow[d] & S \arrow[dl] \arrow[dr] & B' \arrow[dl,crossing over] \arrow[d] \\
A \arrow[dr] & X' \arrow[d] \arrow[from=ul,crossing over] & B \arrow[dl] \\
& X,
\end{tikzcd}
\end{equation*}
and suppose the two vertical squares in the back are pullback squares. Then the following are equivalent:
\begin{enumerate}
\item The two vertical squares in the front are pullback squares.
\item The commuting square
\begin{equation*}
\begin{tikzcd}
A' \sqcup^{S'} B' \arrow[r] \arrow[d] & X' \arrow[d] \\
A\sqcup^{S} B \arrow[r] & X
\end{tikzcd}
\end{equation*}
is a pullback square.
\end{enumerate}
\end{thm}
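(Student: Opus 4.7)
The plan is to prove both directions by constructing a suitable auxiliary cube and applying the descent theorem for pushouts (\cref{cor:descent}), which asserts that in a commuting cube whose bottom is a pushout and whose back squares are pullbacks, the front squares are pullbacks if and only if the top is a pushout.

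For (ii)$\Rightarrow$(i), the idea is to form the auxiliary cube
\begin{equation*}
\begin{tikzcd}
& S' \arrow[dl] \arrow[dr] \arrow[d] \\
A' \arrow[d] & S \arrow[dl] \arrow[dr] & B' \arrow[dl,crossing over] \arrow[d] \\
A \arrow[dr] & A'\sqcup^{S'}B' \arrow[d] \arrow[from=ul,crossing over] & B \arrow[dl] \\
& A\sqcup^{S}B
\end{tikzcd}
\end{equation*}
obtained from the given cube by replacing $X'$ and $X$ with the canonical pushouts $A'\sqcup^{S'}B'$ and $A\sqcup^{S}B$ of the top and bottom spans. Its bottom and top faces are pushouts by construction, and its back squares agree with those of the given cube and so are pullbacks. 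Hence \cref{cor:descent} will give that its front-left and front-right faces are pullbacks, at which point pasting each of them via \cref{thm:pb_pasting} with the pullback square from (ii) will yield pullback rectangles whose outer squares are precisely the front-left and front-right faces of the original cube.

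For (i)$\Rightarrow$(ii), the plan is to form instead the auxiliary cube
\begin{equation*}
\begin{tikzcd}
& S' \arrow[dl] \arrow[dr] \arrow[d] \\
A' \arrow[d] & S \arrow[dl] \arrow[dr] & B' \arrow[dl,crossing over] \arrow[d] \\
A \arrow[dr] & X'\times_X (A\sqcup^{S}B) \arrow[d] \arrow[from=ul,crossing over] & B \arrow[dl] \\
& A\sqcup^{S}B,
\end{tikzcd}
\end{equation*}
in which the maps $A' \to X'\times_X (A\sqcup^{S}B)$ and $B' \to X'\times_X (A\sqcup^{S}B)$ are the gap maps into the pullback determined by the pairs $(i',\inl\circ h_A)$ and $(j',\inr\circ h_B)$, their compatibility with $h_X$ being witnessed by the homotopies filling the original front faces. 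The bottom face is a pushout by construction, the back squares are pullbacks by hypothesis, and each front face of this cube is itself a pullback: by pullback pasting we have $A\times_{A\sqcup^{S}B}(X'\times_X (A\sqcup^{S}B)) \eqvsym A\times_X X' \eqvsym A'$, where the second equivalence uses the assumption in (i), and similarly for $B'$. Thus \cref{cor:descent} will tell us that the top of this cube is a pushout, and the universal property of $A'\sqcup^{S'}B'$ will identify the induced map $A'\sqcup^{S'}B' \to X'\times_X (A\sqcup^{S}B)$ as an equivalence. A direct check then shows that this map coincides with the gap map of the square in (ii), so that square is a pullback.

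The main obstacle is the coherence bookkeeping for the second auxiliary cube: its top face requires combining the homotopy that fills the top face of the original cube (witnessing $i'\circ f' \htpy j'\circ g'$ in $X'$) with the glue of the pushout $A\sqcup^{S}B$ in order to produce a well-defined cocone into the pullback $X'\times_X (A\sqcup^{S}B)$. Once this coherence is in place, both directions of the theorem reduce to two applications of \cref{cor:descent} and the pullback pasting lemma \cref{thm:pb_pasting}.
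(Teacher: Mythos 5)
Your proposal is correct and follows essentially the same route as the paper: for (ii)$\Rightarrow$(i) both use the cube with $A'\sqcup^{S'}B'$ over $A\sqcup^{S}B$, apply \cref{cor:descent} to get its front faces cartesian, and paste with the square in (ii); for (i)$\Rightarrow$(ii) both replace $X'$ by the pullback $(A\sqcup^{S}B)\times_X X'$, check its vertical faces via \cref{thm:pb_pasting}, and use descent to show it has the universal property of the pushout. The extra coherence bookkeeping you flag is real but is exactly the part the paper also leaves implicit.
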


\begin{proof}
To see that (i) implies (ii), it suffices to show that the pullback 
\begin{equation*}
(A\sqcup^{S} B)\times_{X}X'
\end{equation*}
has the universal property of the pushout. This follows by the descent theorem, since the vertical squares in the cube
\begin{equation*}
\begin{tikzcd}
& S' \arrow[dl] \arrow[dr] \arrow[d] \\
A' \arrow[d] & S \arrow[dl] \arrow[dr] & B' \arrow[dl,crossing over] \arrow[d] \\
A \arrow[dr] & (A\sqcup^{S} B)\times_{X}X' \arrow[d] \arrow[from=ul,crossing over] & B \arrow[dl] \\
& A\sqcup^{S} B
\end{tikzcd}
\end{equation*}
are pullback squares by \cref{thm:pb_pasting}.

To prove that (ii) implies (i), we note that in the cube
\begin{equation*}
\begin{tikzcd}
& S' \arrow[dl] \arrow[dr] \arrow[d] \\
A' \arrow[d] & S \arrow[dl] \arrow[dr] & B' \arrow[dl,crossing over] \arrow[d] \\
A \arrow[dr] & A'\sqcup^{S'}B' \arrow[d] \arrow[from=ul,crossing over] & B \arrow[dl] \\
& A\sqcup^S B,
\end{tikzcd}
\end{equation*}
the two back squares are pullback squares, and the top and bottom squares are pushout squares. Therefore it follows from \cref{cor:descent} that the two front squares are pullback squares. Now we obtain (i) from the pasting lemma for pushouts.
\end{proof}

\section{Applications of the descent theorem for pushouts}
\sectionmark{Applications of the descent theorem}

\subsection{Fiber sequences}

\begin{defn}
A \define{pointed type} is a pair $(X,x)$ consisting of a type $X$ equipped with a \define{base point} $x:X$. We will write $\UU_\ast$ for the type $\sm{X:\UU}X$ of all pointed types.
\end{defn}

In the following lemma we characterize the identity type of $\UU_\ast$. 

\begin{lem}\label{lem:equiv_of_ptdtype}
For any $(A,a),(B,b):\UU_\ast$ we have an equivalence
\begin{equation*}
\eqv{\Big(\pairr{A,a}=\pairr{B,b}\Big)}{\Big(\sm{e:\eqv{A}{B}}e(a)=b\Big)}.
\end{equation*}
\end{lem}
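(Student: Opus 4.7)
The plan is to compose two well-known equivalences: the characterization of identities in a $\Sigma$-type (\cref{thm:eq_sigma}) and univalence (\cref{thm:univalence}).

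First, since $\UU_\ast \jdeq \sm{X:\UU}X$ is a $\Sigma$-type, \cref{thm:eq_sigma} gives an equivalence
\begin{equation*}
\eqv{\Big(\pairr{A,a}=\pairr{B,b}\Big)}{\Big(\sm{p:A=B}\tr_{\idfunc[\UU]}(p,a)=b\Big)},
\end{equation*}
where the transport is along the identity type family $X\mapsto X$ on $\UU$.

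Next, I would prove a short lemma (by path induction on $p:A=B$) that for any $a:A$,
\begin{equation*}
\tr_{\idfunc[\UU]}(p,a)=\equiveq(p)(a).
\end{equation*}
Indeed, when $p\jdeq\refl{A}$ both sides reduce to $a$. Consequently the fiberwise map sending the identification $\tr_{\idfunc[\UU]}(p,a)=b$ to $\equiveq(p)(a)=b$ (concatenation with the just-proved identification) is a fiberwise equivalence, so by \cref{thm:fib_equiv} we obtain
\begin{equation*}
\eqv{\Big(\sm{p:A=B}\tr_{\idfunc[\UU]}(p,a)=b\Big)}{\Big(\sm{p:A=B}\equiveq(p)(a)=b\Big)}.
\end{equation*}

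Finally, univalence gives that $\equiveq:(A=B)\to(\eqv{A}{B})$ is an equivalence, so reindexing the $\Sigma$-type along $\equiveq$ (which is routine: the total space of a family $Q$ pulled back along an equivalence $\varphi:X\to Y$ is equivalent to the total space of $Q$) yields
\begin{equation*}
\eqv{\Big(\sm{p:A=B}\equiveq(p)(a)=b\Big)}{\Big(\sm{e:\eqv{A}{B}}e(a)=b\Big)}.
\end{equation*}
Composing the three displayed equivalences gives the result. The only non-routine step is the path-induction lemma identifying transport in the universal family with application of $\equiveq$; everything else is a direct application of results already established.
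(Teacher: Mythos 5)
Your proposal is correct and takes essentially the same route as the paper: apply \cref{thm:eq_sigma} to the $\Sigma$-type $\UU_\ast$, use univalence to trade paths $A=B$ for equivalences, and compute transport in the universal family to turn $\tr(p,a)=b$ into $e(a)=b$. The only cosmetic difference is order: you compute $\tr(p,a)=\equiveq(p)(a)$ by path induction and then reindex along $\equiveq$, while the paper reindexes first and then computes $\tr(\eqequiv(e),a)=e(a)$ by equivalence induction.
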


\begin{proof}[Construction]
By \cref{thm:eq_sigma} the type on the left hand side is
equivalent to the type $\sm{p:A=B}\tr_{\universalfam}({p},{a})=b$.
By the univalence axiom, the map 
\begin{equation*}
\equiveq_{A,B}:(A=B)\to (\eqv{A}{B})
\end{equation*}
is an equivalence for each $B:\UU$. 
Therefore, we have an equivalence of type
\begin{equation*}
\eqv{\Big(\sm{p:A=B}\tr_{\universalfam}({p},{a})=b\Big)}{\Big(\sm{e:\eqv{A}{B}}\tr_{\universalfam}({\eqequiv(e)},{a})=b\Big)}
\end{equation*} 
Moreover, by equivalence induction (the analogue of path induction for 
equivalences), we can compute the transport:
\begin{equation*}
\tr_{\universalfam}({\eqequiv(e)},{a})=e(a).
\end{equation*}
It follows that $\eqv{(\tr_{\universalfam}({\eqequiv(e)},{a})=b)}
{(e(a)=b)}$.
\end{proof}

\begin{defn}
\begin{enumerate}
\item Let $(X,\ast_X)$ be a pointed type. A \define{pointed family} over $(X,\ast_X)$ consists of a type family $P:X\to \UU$ equipped with a base point $\ast_P:P(\ast_X)$. 
\item Let $(P,\ast_P)$ be a pointed family over $(X,\ast_X)$. A \define{pointed section} of $(P,\ast_P)$ consists of a dependent function $f:\prd{x:X}P(x)$ and an identification $p:f(\ast_X)=\ast_P$. We define the \define{pointed $\Pi$-type} to be the type of pointed sections:
\begin{equation*}
\Pi^\ast_{(x:X)}P(x) \defeq \sm{f:\prd{x:X}P(x)}f(\ast_X)=\ast_P
\end{equation*}
In the case of two pointed types $X$ and $Y$, we may also view $Y$ as a pointed family over $X$. In this case we write $X\to_\ast Y$ for the type of pointed functions.
\item Given any two pointed sections $f$ and $g$ of a pointed family $P$ over $X$, we define the type of pointed homotopies
\begin{equation*}
f\htpy_\ast g \defeq \Pi^\ast_{(x:X)} f(x)=g(x),
\end{equation*}
where the family $x\mapsto f(x)=g(x)$ is equipped with the base point $\ct{p}{q^{-1}}$. 
\end{enumerate}
\end{defn}

\begin{defn}
\begin{enumerate}
\item For any pointed type $X$, we define the \define{pointed identity function} $\idp_X\defeq (\idfunc[X],\refl{\ast})$. 
\item For any two pointed maps $f:X\to_\ast Y$ and $g:Y\to_\ast Z$, we define the \define{pointed composite}
\begin{equation*}
g\mathbin{\circ_\ast} f \defeq (g\circ f,\ct{\ap{g}{p_f}}{p_g}).
\end{equation*}
\end{enumerate}
\end{defn}

\begin{defn}
Let $X$ be a pointed type with base point $x$. We define the \define{loop space} $\loopspace{X,x}$ of $X$ at $x$ to be the pointed type $x=x$ with base point $\refl{x}$. 
\end{defn}

\begin{defn}
The loop space operation $\loopspacesym$ is \emph{functorial} in the sense that
\begin{enumerate}
\item For every pointed map $f:X\to_\ast Y$ there is a pointed map
\begin{equation*}
\loopspace{f}:\loopspace{X}\to_\ast \loopspace{Y},
\end{equation*}
defined by $\loopspace{f}(\omega)\defeq \ct{p_f}{\ap{f}{\omega}}{p_f^{-1}}$, which is base point preserving by $\rightinv(p_f)$. 
\item For every pointed type $X$ there is a pointed homotopy
\begin{equation*}
\loopspace{\idp_X}\htpy_\ast \idp_{\loopspace{X}}.
\end{equation*}
\item For any two pointed maps $f:X\to_\ast Y$ and $g:Y\to_\ast X$, there is a pointed homotopy witnessing that the triangle
\begin{equation*}
\begin{tikzcd}
& \loopspace{Y} \arrow[dr,"\loopspace{g}"] \\
\loopspace{X} \arrow[rr,swap,"\loopspace{g\circ_\ast f}"] \arrow[ur,"\loopspace{f}"] & & \loopspace{Z}
\end{tikzcd}
\end{equation*}
of pointed types commutes.
\end{enumerate}
\end{defn}

\begin{lem}\label{lem:equiv_of_ptdequiv}
For any $\pairr{e,p},\pairr{f,q}:\sm{e:\eqv{A}{B}}e(a)=b$, we have an equivalence of type
\begin{equation*}
\eqv{\Big(\pairr{e,p}=\pairr{f,q}\Big)}{\Big(\sm{h:e\htpy f} p=\ct{h(a)}{q}\Big)}.
\end{equation*}
\end{lem}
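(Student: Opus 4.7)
The plan is to reduce everything to the characterization of identity types of $\Sigma$-types (\cref{thm:eq_sigma}), applied twice, together with function extensionality. By \cref{thm:eq_sigma}, an identification $\pairr{e,p}=\pairr{f,q}$ in the $\Sigma$-type $\sm{e:\eqv{A}{B}}e(a)=b$ is equivalent to the type
\begin{equation*}
\sm{\alpha:e=f}\tr_{e\mapsto e(a)=b}(\alpha,p)=q.
\end{equation*}

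Next, I would unfold $\eqv{A}{B}\jdeq \sm{u:A\to B}\isequiv(u)$. Since $\isequiv(u)$ is a proposition (which follows from function extensionality via \cref{thm:prop_pi} together with \cref{lem:prop_char}, using that an equivalence has contractible space of sections and retractions), \cref{cor:sq_fib} (or, more directly, the subtype characterization in \cref{thm:subtype}) shows that the projection from $\eqv{A}{B}$ to $A\to B$ is an embedding. Hence the identity type $e=f$ in $\eqv{A}{B}$ is equivalent to the identity type of the underlying functions, and function extensionality (\cref{thm:funext_wkfunext}) then yields a further equivalence to the type $e\htpy f$ of homotopies. Under this composite equivalence, a homotopy $h:e\htpy f$ corresponds to an identification $\alpha:e=f$ satisfying $\ap{\evpt_a}{\alpha}=h(a)$, where $\evpt_a$ denotes evaluation at $a$.

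It remains to rewrite the condition $\tr_{e\mapsto e(a)=b}(\alpha,p)=q$ in terms of $h$. A standard path-induction computation gives
\begin{equation*}
\tr_{e\mapsto e(a)=b}(\alpha,p) = \ct{\ap{\evpt_a}{\alpha}^{-1}}{p},
\end{equation*}
so the condition becomes $\ct{h(a)^{-1}}{p}=q$, which is equivalent (by concatenating with $h(a)$ on the left and using the inverse laws) to $p=\ct{h(a)}{q}$. Assembling these equivalences gives the desired chain
\begin{align*}
\pairr{e,p}=\pairr{f,q}
&\eqvsym \sm{\alpha:e=f}\tr_{e\mapsto e(a)=b}(\alpha,p)=q \\
&\eqvsym \sm{h:e\htpy f}p=\ct{h(a)}{q}.
\end{align*}

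The only mildly delicate step is the transport computation in the last paragraph, which must be handled by path induction on $\alpha$ (or equivalently by equivalence induction combined with the fact that $\tr_{e\mapsto e(a)=b}(\refl{e},p)=p$); every other step is formal. Care is also needed to keep the direction of the equation straight so that the final form matches $p=\ct{h(a)}{q}$ rather than its inverse, but this is routine.
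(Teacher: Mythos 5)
Your proof is correct and follows essentially the same route as the paper's: characterize the identity type of the $\Sigma$-type via \cref{thm:eq_sigma}, convert $e=f$ to $e\htpy f$ using function extensionality (the paper leaves the propositionality of $\isequiv$ implicit where you make it explicit), and compute the transport by path/homotopy induction to obtain $p=\ct{h(a)}{q}$. No gaps.
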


\begin{proof}[Construction]
The type $\pairr{e,p}=\pairr{f,q}$ is equivalent
to the type $\sm{h:e=f}\tr({h},{p})=q$.
Note that by the principle of function extensionality,
the map $\htpyeq:(e=f)\to(e\htpy f)$
is an equivalence. Furthermore, it follows by homotopy induction that for any 
$h:e\htpy f$ we have an equivalence of type
\begin{equation*}
\eqv{(\tr({\eqhtpy(h)},{p})=q)}
    {(p= \ct{h(a)}{q})}.\qedhere
\end{equation*}
\end{proof}

\begin{defn}
A \define{fiber sequence} $F\hookrightarrow E \twoheadrightarrow B$ consists of:
\begin{enumerate}
\item Pointed types $F$, $E$, and $B$, with base points $x_0$, $y_0$, and $b_0$ respectively, 
\item Base point preserving maps $i:F\to_\ast E$ and $p:E\to_\ast B$, with $\alpha:i(x_0)=y_0$ and $\beta:p(y_0)=b_0$,
\item A pointed homotopy $H:\const_{b_0}\htpy_\ast p\circ_\ast i$ witnessing that the square
\begin{equation*}
\begin{tikzcd}
F \arrow[r,"i"] \arrow[d] & E \arrow[d,"p"] \\
\unit \arrow[r,swap,"\const_{b_0}"] & B,
\end{tikzcd}
\end{equation*}
commutes and is a pullback square.
\end{enumerate}
We will write $\FibSeq$ for the type of all fiber sequences in $\UU$.
\end{defn}

\begin{prp}
The type of all fiber sequences is equivalent to the type
\begin{equation*}
\sm{(B,b):\UU_\ast}{P:B\to\UU}P(b).
\end{equation*}
\end{prp}

\subsection{Fiber sequences obtained by the descent property}

\begin{defn}
Let $f:A\to B$ be a map. The \define{codiagonal}\index{codiagonal}\index{nabla@{$\nabla_f$}} $\nabla_f$ of $f$ is the map obtained from the universal property of the pushout, as indicated in the diagram
\begin{equation*}
\begin{tikzcd}
A \arrow[d,swap,"f"] \arrow[r,"f"] \arrow[dr, phantom, "\ulcorner", very near end] & B \arrow[d,"\inr"] \arrow[ddr,bend left=15,"{\idfunc[B]}"] \\
B \arrow[r,"\inl"] \arrow[drr,bend right=15,swap,"{\idfunc[B]}"] & B\sqcup^{A} B \arrow[dr,densely dotted,near start,swap,"\nabla_f"] \\
& & B
\end{tikzcd}
\end{equation*}
\end{defn}

\begin{prp}
For any map $f:A\to B$ and any $y:B$, there is an equivalence $\eqv{\fib{\nabla_f}{y}}{\susp(\fib{f}{y})}$. 
\end{prp}

\begin{proof}
For any $b:B$ we have the commuting cube 
\begin{equation*}
\begin{tikzcd}
& \fib{f}{b} \arrow[dl] \arrow[d] \arrow[dr] \\
\unit \arrow[d] & A \arrow[dl] \arrow[dr] & \unit \arrow[dl,crossing over] \arrow[d] \\
B \arrow[dr] & \unit \arrow[from=ul,crossing over] \arrow[d,swap,"b"] & B \arrow[dl] \\
& B
\end{tikzcd}
\end{equation*}
of which the vertical sides are pullback squares. Hence we obtain the pullback square
\begin{equation*}
\begin{tikzcd}
\susp{\fib{f}{b}} \arrow[r] \arrow[d] & \unit \arrow[d,"b"] \\
B\sqcup^{A} B \arrow[r] & B
\end{tikzcd}
\end{equation*}
from \cref{thm:cartesian_cube}, from which the claim follows.
\end{proof}

\begin{defn}
Consider two maps $f:A\to B$ and $g:C\to D$.
The \define{pushout-product}\index{pushout-product}
\begin{equation*}
f\square g : (A\times D)\sqcup^{A\times C} (B\times C)\to B\times D
\end{equation*}
of $f$ and $g$ is defined by the universal property of the pushout as the unique map rendering the diagram
\begin{equation*}
\begin{tikzcd}
A\times C \arrow[r,"{f\times \idfunc[C]}"] \arrow[d,swap,"{\idfunc[A]\times g}"] & B\times C \arrow[d,"\inr"] \arrow[ddr,bend left=15,"{\idfunc[B]\times g}"] \\
A\times D \arrow[r,"\inl"] \arrow[drr,bend right=15,swap,"{f\times\idfunc[D]}"] & (A\times D)\sqcup^{A\times C} (B\times C) \arrow[dr,densely dotted,swap,near start,"f\square g"] \\
& & B\times D
\end{tikzcd}
\end{equation*}
commutative.
\end{defn}

\begin{prp}
For any two maps $f:A\to B$ and $g:C\to D$, and any $(b,d):B\times D$, there is an equivalence
\begin{equation*}
\eqv{\fib{f\square g}{b,d}}{\join{\fib{f}{b}}{\fib{g}{d}}}.
\end{equation*}
\end{prp}

\begin{proof}
Let $b:B$ and $d:D$. Then we have the commuting cube 
\begin{equation*}
\begin{tikzcd}
& \fib{f}{b}\times \fib{g}{d} \arrow[dl] \arrow[d] \arrow[dr] \\
\fib{f}{b} \arrow[d] & A\times C \arrow[dl] \arrow[dr] & \fib{g}{d} \arrow[dl,crossing over] \arrow[d] \\
A\times D \arrow[dr] & \unit \arrow[from=ul,crossing over] \arrow[d] & B\times C \arrow[dl] \\
& B\times D
\end{tikzcd}
\end{equation*}
of which the vertical sides are pullback squares. Hence the claim follows from \cref{thm:cartesian_cube}.
\end{proof}

\begin{defn}\label{defn:fib_join}
Let $f:A\to X$ and $g:B\to X$ be maps into $X$. We define the \define{fiberwise join} $\join[X]{A}{B}$ and the \define{join}\footnote{\emph{Warning}: By $\join{f}{g}$ we do \emph{not} mean the functorial action of the
join, applied to $(f,g)$.} $\join{f}{g}:\join[X]{A}{B}\to X$ of
$f$ and $g$, as indicated in the following diagram:
\begin{equation*}
\begin{tikzcd}
A\times_X B \arrow[r,"\pi_2"] \arrow[d,swap,"\pi_1"] \arrow[dr, phantom, "{\ulcorner}", at end] & B \arrow[d,"\inr"] \arrow[ddr,bend left=15,"g"] \\
A \arrow[r,swap,"\inl"] \arrow[drr,bend right=15,swap,"f"] & \join[X]{A}{B} \arrow[dr,densely dotted,swap,near start,"\join{f}{g}" xshift=1ex] \\
& & X.
\end{tikzcd}
\end{equation*}
\end{defn}

\begin{thm}\label{defn:join-fiber}
Let $f:A\to X$ and $g:B\to X$ be maps into $X$, and let $x:X$. Then there is
an equivalence
\begin{equation*}
\eqv{\fib{\join{f}{g}}{x}}{\join{\fib{f}{x}}{\fib{g}{x}}}.
\end{equation*}
\end{thm}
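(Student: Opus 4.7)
The plan is to apply \cref{thm:cartesian_cube} to the commuting cube
\begin{equation*}
\begin{tikzcd}
& \fib{f}{x} \times \fib{g}{x} \arrow[dl] \arrow[dr] \arrow[d] \\
\fib{f}{x} \arrow[d] & A\times_X B \arrow[dl] \arrow[dr] & \fib{g}{x} \arrow[dl,crossing over] \arrow[d] \\
A \arrow[dr] & \unit \arrow[d,"x"] \arrow[from=ul,crossing over] & B \arrow[dl] \\
& X
\end{tikzcd}
\end{equation*}
in which the back span is the span $A \leftarrow A\times_X B \to B$ defining $\join[X]{A}{B}$, the map in the middle picks out the point $x$, and the top span is the projection span $\fib{f}{x} \leftarrow \fib{f}{x}\times\fib{g}{x} \to \fib{g}{x}$, whose pushout is by definition $\join{\fib{f}{x}}{\fib{g}{x}}$. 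The connecting vertical map $\fib{f}{x}\times\fib{g}{x}\to A\times_X B$ sends $((a,q),(b,r))$ to $(a,b,\ct{q}{r^{-1}})$, and the remaining structural homotopies follow the template of the codiagonal and pushout-product examples above.

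The front vertical squares are pullback squares immediately from the definition of the fibers of $f$ and $g$ at $x$. For the back vertical squares, one observes that the fiber of $\pi_1 : A\times_X B \to A$ at any $a:A$ is equivalent to $\fib{g}{f(a)}$; specializing to $(a,q):\fib{f}{x}$ and transporting along $q$, this becomes $\fib{g}{x}$, so by \cref{cor:pb_fibequiv} the left back square is cartesian. The right back square is cartesian by the symmetric argument.

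Since both front and back vertical squares are pullbacks, \cref{thm:cartesian_cube} yields that the induced square
\begin{equation*}
\begin{tikzcd}
\join{\fib{f}{x}}{\fib{g}{x}} \arrow[r] \arrow[d] & \unit \arrow[d,"x"] \\
\join[X]{A}{B} \arrow[r,swap,"\join{f}{g}"] & X
\end{tikzcd}
\end{equation*}
is a pullback square. Since the pullback of $\join{f}{g}$ along $x:\unit\to X$ is $\fib{\join{f}{g}}{x}$, the uniqueness of pullbacks via \cref{lem:pb_3for2} delivers the desired equivalence. The main technical nuisance, rather than any conceptual obstacle, will be the bookkeeping for the homotopies filling the cube, which closely mirrors the analogous step in the pushout-product proposition just above.
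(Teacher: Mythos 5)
Your proposal is correct and follows essentially the same route as the paper: the paper's proof applies \cref{thm:cartesian_cube} to exactly this cube (with $\fib{f}{x}\times\fib{g}{x}$ over $A\times_X B$ and $\unit$ picking out $x$), noting that all four vertical squares are pullbacks, to conclude that $\join{\fib{f}{x}}{\fib{g}{x}}$ is the pullback of $\join{f}{g}$ along $x:\unit\to X$. Your extra details — the explicit map $((a,q),(b,r))\mapsto(a,b,\ct{q}{r^{-1}})$, the fiberwise verification of the back squares via \cref{cor:pb_fibequiv}, and the final identification with $\fib{\join{f}{g}}{x}$ via uniqueness of pullbacks — are exactly the bookkeeping the paper leaves implicit.
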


\begin{proof}
We have the following commuting cube
\begin{equation*}
\begin{tikzcd}
& \fib{f}{x}\times\fib{g}{x} \arrow[dl] \arrow[d] \arrow[dr] \\
\fib{f}{x} \arrow[d] & A\times_X B \arrow[dl] \arrow[dr] & \fib{g}{x} \arrow[dl,crossing over] \arrow[d] \\
A \arrow[dr] & \unit \arrow[d] \arrow[from=ul,crossing over] & B \arrow[dl] \\
& X
\end{tikzcd}
\end{equation*}
in which the vertical squares are pullback squares. Therefore it follows by \cref{thm:cartesian_cube} that the square
\begin{equation*}
\begin{tikzcd}
\join{\fib{f}{x}}{\fib{g}{x}} \arrow[r] \arrow[d] & \unit \arrow[d] \\
\join[X]{A}{B} \arrow[r] & X
\end{tikzcd}
\end{equation*}
is a pullback square.
\end{proof}

\begin{rmk}
The join operation on maps with a common codomain is associative up to homotopy (this was formalized by Brunerie, see Proposition 1.8.6 of \cite{BruneriePhD}), and it is a commutative operation on the generalized elements of a type $X$. Furthermore, the unique map of type $\emptyt\to X$ is a unit for the join operation.
\end{rmk}

\begin{defn}
Let $A$ and $B$ be pointed types with base points $a_0:A$ and $b_0:B$. The \define{wedge inclusion}\index{wedge inclusion} is defined as follows by the universal property of the wedge:
\begin{equation*}
\begin{tikzcd}[column sep=huge]
\unit \arrow[r] \arrow[d] & B \arrow[d,"\inr"] \arrow[ddr,bend left=15,"{\lam{b}(a_0,b)}"] \\
A \arrow[r,"\inl"] \arrow[drr,bend right=15,swap,"{\lam{a}(a,b_0)}"] & A\vee B \arrow[dr,densely dotted,swap,"{\mathsf{wedge\usc{}in}_{A,B}}"{near start,xshift=1ex}] \\
& & A\times B
\end{tikzcd}
\end{equation*}
\end{defn}

\begin{prp}
There is a fiber sequence 
\begin{equation*}
\join{\loopspace{A}}{\loopspace{B}}\hookrightarrow A\vee B\twoheadrightarrow A\times B.
\end{equation*}
\end{prp}

\begin{proof}
We have the commuting cube 
\begin{equation*}
\begin{tikzcd}
& \loopspace{B}\times\loopspace{A} \arrow[dl] \arrow[d] \arrow[dr] \\
\loopspace{B} \arrow[d,swap,"\mathsf{const}_a"] & \unit \arrow[dl] \arrow[dr] & \loopspace{A} \arrow[dl,crossing over] \arrow[d,"\mathsf{const}_b"] \\
A \arrow[dr] & \unit \arrow[from=ul,crossing over] \arrow[d] & B \arrow[dl] \\
& A\times B
\end{tikzcd}
\end{equation*}
of which the vertical sides are pullback squares. Hence the claim follows from \cref{thm:cartesian_cube}.
\end{proof}

\begin{defn}
Consider a pointed type $X$. We define the map $\mathsf{fold}:X\vee X\to X$ by the universal property of the wedge as indicated in the diagram
\begin{equation*}
\begin{tikzcd}
\unit \arrow[d,swap,"x_0"] \arrow[r,"x_0"] \arrow[dr, phantom, "\ulcorner", very near end] & X \arrow[d,"\inr"] \arrow[ddr,bend left=15,"{\idfunc[X]}"] \\
X \arrow[r,"\inl"] \arrow[drr,bend right=15,swap,"{\idfunc[X]}"] & X\vee X \arrow[dr,densely dotted,near start,swap,"\mathsf{fold}"] \\
& & X.
\end{tikzcd}
\end{equation*}
\end{defn}

\begin{prp}
There is a fiber sequence
\begin{equation*}
\susp\loopspace{X} \hookrightarrow X\vee X \twoheadrightarrow X.
\end{equation*}
\end{prp}

\begin{proof}
We have the commuting cube 
\begin{equation*}
\begin{tikzcd}
& \loopspace{X} \arrow[dl] \arrow[d] \arrow[dr] \\
\unit \arrow[d] & \unit \arrow[dl] \arrow[dr] & \unit \arrow[dl,crossing over] \arrow[d] \\
X \arrow[dr] & \unit \arrow[from=ul,crossing over] \arrow[d] & X \arrow[dl] \\
& X
\end{tikzcd}
\end{equation*}
of which the vertical sides are pullback squares. Hence the claim follows from \cref{thm:cartesian_cube}.
\end{proof}

\begin{rmk}
As a corollary, there are fiber sequences
\begin{align*}
\sphere{1} \hookrightarrow \rprojective{\infty}\vee \rprojective{\infty} & \twoheadrightarrow \rprojective{\infty} \\
\sphere{2} \hookrightarrow \cprojective{\infty}\vee \cprojective{\infty} & \twoheadrightarrow \cprojective{\infty}.
\end{align*}
Here we take $\rprojective{\infty}\defeq K(\Z/2,1)$ and $\cprojective{\infty}\defeq K(\Z,2)$, where the Eilenberg-Mac Lane space $K(G,n)$ is defined in \cite{FinsterLicata}.
\end{rmk}

\begin{cor}
There is a fiber sequence
\begin{equation*}
(\susp\loopspace{X})^{\vee n} \hookrightarrow X^{\vee (n+1)} \twoheadrightarrow X.
\end{equation*}
\end{cor}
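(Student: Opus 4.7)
The plan is induction on $n$. The base case $n = 0$ is immediate: $(\susp\loopspace X)^{\vee 0}$ is $\unit$, the fold on $X^{\vee 1} = X$ is $\idfunc[X]$, and its fiber at the basepoint is $\unit$. For the inductive step, assume the fiber sequence for $n$ and use the pushout presentation $X^{\vee(n+2)} = X \vee X^{\vee(n+1)}$, whose fold map to $X$ restricts to $\idfunc[X]$ on the $X$-summand and to the $(n+1)$-fold fold on the $X^{\vee(n+1)}$-summand. Assemble a commuting cube with this pushout square (together with its fold map to $X$) as the bottom face, and with $\loopspace X$, $(\susp\loopspace X)^{\vee n}$, and $\unit$ as the apex and middle vertices, with $X' \defeq \unit$ mapping into $X$ via the basepoint. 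All vertical maps are the canonical fiber inclusions at the basepoint.

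The two back squares of the cube are pullbacks: the left by pasting the inductive pullback for $(\susp\loopspace X)^{\vee n}$ with the pullback defining $\loopspace X$, and the right by the very definition of $\loopspace X$. The two front squares are also pullbacks: the front-left by the inductive hypothesis, and the front-right because $\idfunc[X]$ is an equivalence (\cref{cor:pb_equiv}). Hence \cref{thm:cartesian_cube} identifies the fiber of the fold at the basepoint with the pushout
\begin{equation*}
(\susp\loopspace X)^{\vee n} \sqcup^{\loopspace X} \unit,
\end{equation*}
whose left leg is the fiber inclusion $c : \loopspace X \to (\susp\loopspace X)^{\vee n}$.

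It remains to identify this pushout with $(\susp\loopspace X)^{\vee(n+1)}$. The key observation is that $c$ factors, up to homotopy, through the basepoint: the basepoint $\unit \to X^{\vee(n+1)}$ itself factors through a wedge summand $\unit \to X \to X^{\vee(n+1)}$, and the pullback of the fiber inclusion $(\susp\loopspace X)^{\vee n} \to X^{\vee(n+1)}$ along this wedge inclusion is contractible (a direct computation, reducing to $\sm{x:X} x = x_0$ since the fold restricts to $\idfunc[X]$ on this summand). The pasting lemma for pullbacks (\cref{thm:pb_pasting}) then forces $c$ to agree with the composite $\loopspace X \to \unit \to (\susp\loopspace X)^{\vee n}$. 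Substituting this factorization into the pushout and invoking the pasting lemma for pushouts (\cref{thm:pushout_pasting}) on the square that defines $\susp\loopspace X$ yields
\begin{equation*}
(\susp\loopspace X)^{\vee n} \sqcup^{\loopspace X} \unit \simeq (\susp\loopspace X)^{\vee n} \sqcup^{\unit} \susp\loopspace X \simeq (\susp\loopspace X)^{\vee(n+1)},
\end{equation*}
completing the induction. The main obstacle is verifying that $c$ factors through the basepoint; this hinges on the contractibility of the pullback of $(\susp\loopspace X)^{\vee n}$ along a wedge summand, and the rest is routine application of the pasting lemmas.
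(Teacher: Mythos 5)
Your proof is correct. The paper states this corollary without proof (it is the evident iteration of the preceding proposition), and your induction is precisely that iteration: the cube over the presentation $X^{\vee (n+2)} = X \vee X^{\vee (n+1)}$ with fibers $\unit$, $\loopspace{X}$, $(\susp\loopspace{X})^{\vee n}$, followed by \cref{thm:cartesian_cube}. The one step needing genuine extra care — that the induced map $\loopspace{X}\to(\susp\loopspace{X})^{\vee n}$ on fibers is constant at the wedge point — you identify and justify correctly; just note that the factorization of $c$ comes from the universal property of the contractible pullback $\sm{x:X}x=x_0$ (which is what \cref{thm:pb_pasting} computes), rather than from the pasting lemma by itself, and that the inductive identification of the fiber should be tracked as a pointed equivalence so that the constant value really is the wedge point.
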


\begin{defn}\label{defn:coh_hspace}
A \define{coherent H-space} consists of a type $X$ equipped with a unit $1:X$, a multiplication operation $\mu:X \to (X \to X)$ such that the function $\mu(x,\blank)$ and $\mu(\blank,y)$ are equivalences for each $x:X$ and $y:X$, respectively, and \emph{coherent} unit laws
\begin{align*}
\mathsf{right\usc{}unit} & : \prd{x: X} \mu(x,1)= x \\
\mathsf{left\usc{}unit} & : \prd{y:X} \mu(1,y) = y \\
\mathsf{coh\usc{}unit} & : \mathsf{left\usc{}unit}(1)=\mathsf{right\usc{}unit}(1).
\end{align*}
\end{defn}

The following theorem is also known as the Hopf-construction.

\begin{thm}\label{thm:hopf_construction}
For any coherent H-space $X$ there is a fiber sequence
\begin{equation*}
X \hookrightarrow \join{X}{X} \twoheadrightarrow \susp X.
\end{equation*}
The map $\eta_X:\join{X}{X}\to \susp X$ is called the \define{Hopf fibration} for $X$.
\end{thm}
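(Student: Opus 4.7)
The plan is to construct the Hopf map and verify the fiber sequence via the descent theorem: $\eta_X$ will arise as the first projection from the total space of a family $P:\susp X\to\UU$ that encodes the H-space multiplication.

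First I would use \cref{thm:desc_fam} (which rests on the univalence axiom) to define $P:\susp X\to\UU$. Since $\susp X$ is the pushout of $\unit\leftarrow X\rightarrow\unit$, specifying $P$ is equivalent to giving descent data: two types $P(\north)$ and $P(\south)$ in $\UU$ together with a map $X\to \eqv{P(\north)}{P(\south)}$. I would take both fibers to be $X$ itself, with descent equivalence $\varphi(x)\defeq\mu(x,\blank)$, which is an equivalence for each $x$ by the coherent H-space assumption.

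Next I would apply the flattening lemma (\cref{lem:flattening}) to identify the total space $\sm{y:\susp X}P(y)$ as the pushout of the span
\begin{equation*}
X \xleftarrow{\pi_2} X\times X \xrightarrow{\mu} X.
\end{equation*}
I then exhibit an equivalence between this span and the defining span $X\xleftarrow{\pi_1}X\times X\xrightarrow{\pi_2} X$ of $\join{X}{X}$, using the map $\phi:X\times X\to X\times X$ given by $(x,y)\mapsto(y,\mu(x,y))$. One verifies that $\pi_1\circ\phi=\pi_2$ and $\pi_2\circ\phi=\mu$; moreover $\phi$ is an equivalence with inverse $(u,v)\mapsto(\mu(\blank,u)^{-1}(v),u)$, available because $\mu(\blank,u)$ is an equivalence for each $u$. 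This induces an equivalence $e:\join{X}{X}\eqvsym\sm{y:\susp X}P(y)$ between the two pushouts.

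Finally I set $\eta_X\defeq \pi_1\circ e:\join{X}{X}\to\susp X$. By \cref{eg:fib_proj} the fiber $\fib{\pi_1}{\north}$ is canonically equivalent to $P(\north)\simeq X$, so transferring this pullback square along $e$ yields the square
\begin{equation*}
\begin{tikzcd}
X \arrow[r] \arrow[d] & \join{X}{X} \arrow[d,"\eta_X"] \\
\unit \arrow[r,swap,"\north"] & \susp X,
\end{tikzcd}
\end{equation*}
which gives the desired fiber sequence. The main obstacle is not any individual step but the bookkeeping needed to check that all basepoints cohere, so that $\eta_X$ is a pointed map sending $\inl(1)$ to $\north$ and that the identification of $\fib{\eta_X}{\north}$ with $X$ is itself pointed; this is precisely the role of the coherence condition $\mathsf{coh\usc{}unit}$ in \cref{defn:coh_hspace}.
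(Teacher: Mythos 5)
Your construction is correct, but it takes a different route from the paper. You unfold the descent machinery by hand, in the style of the Hopf construction in the HoTT book: build the family $P:\susp X\to\UU$ from descent data $(X,X,\lam{x}\mu(x,\blank))$ via \cref{thm:desc_fam}, compute its total space with the flattening lemma as the pushout of $X \xleftarrow{\pi_2} X\times X \xrightarrow{\mu} X$, and identify that pushout with $\join{X}{X}$ through the explicit span equivalence $\phi(x,y)\defeq(y,\mu(x,y))$, finally reading off the fiber over $\north$ from \cref{eg:fib_proj}. The paper instead applies its cube-shaped descent theorem in one stroke: it forms the commuting cube whose top face is the defining pushout of $\join{X}{X}$, whose bottom face is the defining pushout of $\susp X$, and whose middle vertical map is $\mu$; the two back vertical squares are pullbacks by \cref{thm:pb_fibequiv} precisely because $\mu(x,\blank)$ and $\mu(\blank,y)$ are equivalences, and then \cref{thm:cartesian_cube} (equivalently \cref{cor:descent}) forces the front vertical squares to be pullbacks, which is the desired fiber sequence. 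The two arguments rest on the same univalence-based ingredients (your \cref{thm:desc_fam} and \cref{lem:flattening} are exactly what proves the cube theorem), so yours is essentially the unpacked version: it costs you the explicit equivalence $\phi$ and the transfer of the fiber computation along $e$, but it buys a concrete description of the Hopf fibration as a classifying family over $\susp X$, whereas the paper's version is shorter and showcases the cube descent theorem. One small caveat about your closing remark: neither your argument nor the paper's actually invokes $\mathsf{coh\usc{}unit}$ (or even the unit laws) for the pullback square itself; only the binary equivalence conditions on $\mu$ are used, and the coherence would enter only if one carries out the pointedness bookkeeping for the fiber sequence in full, which the paper also leaves implicit.
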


\begin{proof}
We have the commuting cube
\begin{equation*}
\begin{tikzcd}
& X \times X \arrow[dl,swap,"\pi_1"] \arrow[d,swap,"\mu"] \arrow[dr,"\pi_2"] \\
X \arrow[d] & X \arrow[dl] \arrow[dr] & X \arrow[dl,crossing over] \arrow[d] \\
\unit \arrow[dr] & \join{X}{X} \arrow[from=ul,crossing over] \arrow[d,densely dotted] & \unit \arrow[dl] \\
& \susp X,
\end{tikzcd}
\end{equation*}
where the front map is obtained by the universal property of the pushout.

In this cube, the two vertical squares in the back are pullback squares by \cref{thm:pb_fibequiv}, since $\mu(\blank,y)$ and $\mu(x,\blank)$ are equivalences for every $x:X$ and $y:X$, respectively. Since the bottom and top squares are pushout squares it follows by \cref{thm:cartesian_cube} that the front two squares are pullback squares.
\end{proof}

\begin{rmk}
The classical Hopf fibration
\begin{equation*}
\begin{tikzcd}
\sphere{1} \arrow[r,hookrightarrow] & \sphere{3} \arrow[r,->>] & \sphere{2}
\end{tikzcd}
\end{equation*}
is now obtained from the fact that $\join{\sphere{1}}{\sphere{1}}\eqvsym \sphere{3}$, which was established in \cite{BruneriePhD}. Indeed, the circle is a coherent H-space.
\end{rmk}

\chapter{Reflexive coequalizers}\label{chap:rcoeq}

The material of this chapter is joint work with Bas Spitters, which we started in the academic year 2012-2013 while I was a research assistant at the Radboud University of Nijmegen.

We begin this chapter with a proof of the type theoretic Yoneda lemma, \cref{lem:yoneda}. The Yoneda lemma is used to show in \cref{thm:emb_disc} that the discrete functor $\Delta:\UU\to\mathsf{rGph}$ from small types into the type of small reflexive graphs, is an embedding. In \cref{thm:discrete_rgraph} we show that a reflexive graph $\mathcal{A}$ is discrete if and only if it satisfies any one (and hence all) of the unique extension properties
\begin{equation*}
\begin{tikzcd}
\mathcal{I} \arrow[r] \arrow[d] & \mathcal{A} & \unit \arrow[d,swap,"0"] \arrow[r] & \mathcal{A} & \unit \arrow[d,swap,"1"] \arrow[r] & \mathcal{A}, \\
\unit \arrow[ur,densely dotted] & & \mathcal{I} \arrow[ur,densely dotted] & & \mathcal{I} \arrow[ur,densely dotted]
\end{tikzcd}
\end{equation*}
where $\mathcal{I}$ is the walking edge. 

The (homotopy) reflexive coequalizer satisfies the universal property of the left adjoint of $\Delta:\UU\to\mathsf{rGph}$. We show in \cref{thm:rcoeq_is_pushout} that the reflexive coequalizer of a reflexive graph $\mathcal{A}$ is just a pushout
\begin{equation*}
\begin{tikzcd}
\sm{x,y:\pts{A}}\edg{A}(x,y) \arrow[r,"\pi_2"] \arrow[d,swap,"\pi_1"] & \pts{A} \arrow[d] \\
\pts{A} \arrow[r] & \mathsf{rcoeq}(\mathcal{A}).
\end{tikzcd}
\end{equation*}
In particular it follows that if a universe is closed under pushouts (which is our running assumption), then it is also closed under reflexive coequalizers. More practically, this characterization of reflexive coequalizers as pushouts allows us to compute in \cref{eg:rcoeq} many reflexive coequalizers in terms of previously defined operations.

Our next purpose is to study the morphisms $f:\mathcal{B}\to\mathcal{A}$ of reflexive graphs that are \emph{fibrations} in the sense that they satisfy the right orthogonality conditions
\begin{equation*}
\begin{tikzcd}
\mathcal{I} \arrow[r] \arrow[d] & \mathcal{B} \arrow[d,"f"] & \unit \arrow[d,swap,"0"] \arrow[r] & \mathcal{B} \arrow[d,"f"] & \unit \arrow[d,swap,"1"] \arrow[r] & \mathcal{B} \arrow[d,"f"] \\
\unit \arrow[ur,densely dotted] \arrow[r] & \mathcal{A} & \mathcal{I} \arrow[ur,densely dotted] \arrow[r] & \mathcal{A} & \mathcal{I} \arrow[ur,densely dotted] \arrow[r] & \mathcal{A}.
\end{tikzcd}
\end{equation*}

We show in \cref{prp:fib_cart} that the class of fibrations is precisely the class of \emph{cartesian} morphisms of reflexive graphs, i.e.~the morphisms $f:\mathcal{B}\to\mathcal{A}$ for which the naturality squares
\begin{equation*}
\begin{tikzcd}
\pts{B} \arrow[d] & \sm{i,j:\pts{B}}\edg{B}(i,j) \arrow[d] \arrow[r] \arrow[l] & \pts{B} \arrow[d] & \pts{B} \arrow[r] \arrow[d] & \sm{i,j:\pts{B}}\edg{B}(i,j) \arrow[d] \\
\pts{A} & \sm{i,j:\pts{A}}\edg{A}(i,j) \arrow[l] \arrow[r] & \pts{A} & \pts{A} \arrow[r] & \sm{i,j:\pts{A}}\edg{A}(i,j)
\end{tikzcd}
\end{equation*}
are pullback squares. It follows that $f:\mathcal{B}\to\mathcal{A}$ is a fibration whenever it appears as a pullback
\begin{equation*}
\begin{tikzcd}
\mathcal{B} \arrow[d,swap,"f"] \arrow[r] & \Delta Y \arrow[d] \\
\mathcal{A} \arrow[r] & \Delta X
\end{tikzcd}
\end{equation*}
of some map between discrete reflexive graphs. Pulling back along $h:\mathcal{A}\to\Delta X$ for a fixed type $X$ therefore provides an operation
\begin{equation*}
h^\ast : \Big(\sm{Y:\UU}(Y\to X)\Big)\to \Big(\sm{\mathcal{B}:\mathsf{rGph}}\mathsf{fib}(\mathcal{B},\mathcal{A})\Big).
\end{equation*}
One way of stating the descent property for reflexive coequalizers, is that this map is an equivalence whenever $f:\mathcal{A}\to\Delta X$ is a reflexive coequalizer. As a consequence of the descent property we obtain in \cref{thm:rcoeq_cartesian} that for any fibration $f:\mathcal{B}\to\mathcal{A}$ of reflexive graphs, a commuting square of the form
\begin{equation*}
\begin{tikzcd}
\mathcal{B} \arrow[d,->>,swap,"f"] \arrow[r] & \Delta Y \arrow[d] \\
\mathcal{A} \arrow[r] & \Delta X
\end{tikzcd}
\end{equation*}
is a pullback square of reflexive graphs if and only if the square
\begin{equation*}
\begin{tikzcd}
\mathsf{rcoeq}(\mathcal{B}) \arrow[d,swap,"\mathsf{rcoeq}(f)"] \arrow[r] & Y \arrow[d] \\
\mathsf{rcoeq}(\mathcal{A}) \arrow[r] & X
\end{tikzcd}
\end{equation*}
is a pullback square. A minor note about the forward direction is that the hypothesis that $f$ is a fibration is implied by the assumption that the square is a pullback square, so this assumption is unrestrictively superfluous. However, this assumption is necessary for the converse direction.

The situation here is that we have an adjunction $F\dashv G$ (in the present case $\mathsf{rcoeq}\dashv \Delta$) satisfying the `descent condition' that if $f$ is a fibration, i.e. the naturality square of the unit $\eta:\idfunc\Rightarrow GF$ at $f$ is a pullback square, then any square of the form
\begin{equation*}
\begin{tikzcd}
A \arrow[d,swap,"f"] \arrow[r] & GX \arrow[d,"Gg"] \\
B \arrow[r] & GY
\end{tikzcd}
\end{equation*}
is a pullback square if and only if the square
\begin{equation*}
\begin{tikzcd}
FA \arrow[r] \arrow[d,swap,"Ff"] & X \arrow[d,"g"] \\
FB \arrow[r] & Y
\end{tikzcd}
\end{equation*}
is a pullback square. We will investigate this situation further in \cref{sec:modal_descent}, where we study the descent condition for general modalities.

In the final section of this chapter we apply the previous results to colimits of diagrams indexed by a (reflexive) graph. Most useful to us are the results on sequential colimits, which we will rely on in later chapters. Here we apply our results on diagrams over (reflexive) graphs to obtain results on sequential colimits. In \cite{DoornRijkeSojakova} these results are obtained directly, and moreover they are formalized.

\section{The Yoneda lemma}
The universal property of identity types is sometimes called the type theoretic Yoneda lemma: families of maps out of the identity type are uniquely determined by their action on the reflexivity identification.

\begin{lem}\label{lem:yoneda}
Let $B$ be a type family over $A$, and let $a:A$. Then the map
\begin{equation*}
\mathsf{ev\usc{}refl}:\Big(\prd{x:A} (a=x)\to B(x)\Big)\to B(a)
\end{equation*}
given by $\lam{f} f(a,\refl{a})$ is an equivalence. 
\end{lem}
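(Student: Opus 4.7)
The plan is to exhibit an explicit quasi-inverse to $\mathsf{ev\usc{}refl}$ by path induction, and then verify the two required homotopies using the computation rule for $\ind{a=}$ together with function extensionality. In fact the statement is essentially a reformulation of the Remark preceding \cref{thm:total_path}, which already asserts that an analogous evaluation map is an equivalence; I would point this out at the start, but still give a direct argument since the family $B$ here does not depend on the path.

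First I would construct the candidate inverse
\begin{equation*}
\varphi : B(a)\to \prd{x:A}(a=x)\to B(x)
\end{equation*}
by setting $\varphi(b)\defeq \ind{a=}(b)$, so that $\varphi(b,x,p)$ is the dependent function obtained from path induction with basepoint value $b$. The computation rule gives the judgemental equality $\varphi(b,a,\refl{a})\jdeq b$, which immediately supplies a homotopy $\mathsf{ev\usc{}refl}\circ\varphi\htpy\idfunc_{B(a)}$: pointwise both sides reduce to $b$, so $\refl{b}$ witnesses the identification.

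For the other composite, given $f:\prd{x:A}(a=x)\to B(x)$ I need an identification $\varphi(f(a,\refl{a}))=f$. Applying function extensionality (available from \cref{thm:funext_wkfunext} onward) twice, it suffices to produce, for every $x:A$ and every $p:a=x$, an identification $\varphi(f(a,\refl{a}))(x,p)=f(x,p)$. Path induction on $p$ reduces this to the single case $x\jdeq a$ and $p\jdeq\refl{a}$, in which both sides compute judgmentally to $f(a,\refl{a})$, so $\refl{}$ concludes. Combining the two homotopies, $\mathsf{ev\usc{}refl}$ is an equivalence.

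There is really no obstacle here: the only non-definitional ingredient is function extensionality, which is a standing assumption. Alternatively one could deduce the lemma in one line from \cref{prp:equiv_precomp} applied to the map $\unit\to\sm{x:A}(a=x)$ picking out $(a,\refl{a})$, which is an equivalence by \cref{thm:total_path}; precomposition along an equivalence, followed by the currying equivalence $\mathsf{ev\usc{}pair}$, yields exactly $\mathsf{ev\usc{}refl}$. I would likely keep the direct path-induction proof since it matches the elementary style of the surrounding material.
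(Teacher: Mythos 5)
Your proof is correct and is essentially the same as the paper's: the paper also takes $\ind{a=}$ as the inverse, verifies the section property directly from the computation rule, and establishes the retraction property by applying function extensionality twice and then path induction on $p$, using the judgmental computation $\ind{a=}(f(a,\refl{a}),a,\refl{a})\jdeq f(a,\refl{a})$.
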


\begin{proof}
The inverse of $\mathsf{ev\usc{}refl}$ is $\ind{a=}:B(a)\to \prd{x:A}(a=x)\to B(x)$. We have the homotopy $\lam{b}\refl{b}:\mathsf{ev\usc{}refl}\circ\ind{a=}\htpy \idfunc[B(a)]$ by the computation rule for identity types, so it is indeed the case that $\ind{a=}$ is a section of $\mathsf{ev\usc{}refl}$.

To see that $\ind{a=}\circ \mathsf{ev\usc{}refl}\htpy\idfunc$, let $f:\prd{x:A}(a=x)\to B(x)$. To show that $\ind{a=}(f(a,\refl{a}))=f$ we use function extensionality (twice), so it suffices to show that
\begin{equation*}
\prd{x:A}{p:a=x} \varphi(f(a,\refl{a}),x,p)=f(x,p).
\end{equation*}
This follows by path induction on $p$, since $\ind{a=}(f(a,\refl{a}),a,\refl{a})\jdeq f(a,\refl{a})$.
\end{proof}

\begin{cor}\label{cor:yoneda}
Let $B$ be a type family over $A$, and let $a:A$. Then there is an equivalence
\begin{equation*}
\eqv{\Big(\idtypevar{A}(a)=B\Big)}{\Big(B(a)\times\iscontr\Big(\sm{x:A}B(x)\Big)\Big)}.
\end{equation*}
Furthermore, for $b:B(a)$ there is an equivalence
\begin{equation*}
\eqv{\Big((\idtypevar{A}(a),\refl{a})=(B,b)\Big)}{\iscontr\Big(\sm{x:A}B(x)\Big)}.
\end{equation*}
\end{cor}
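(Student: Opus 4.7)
The plan is to combine the Yoneda lemma (\cref{lem:yoneda}), the Fundamental Theorem of Identity Types (\cref{thm:id_fundamental}), univalence, and function extensionality to compute the identity type of $\UU^A$ at the family $\idtypevar{A}(a)$.

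\emph{For the first equivalence.} By univalence applied pointwise and function extensionality, there is an equivalence
\begin{equation*}
\eqv{\Big(\idtypevar{A}(a)=B\Big)}{\Big(\prd{x:A}\eqv{(a=x)}{B(x)}\Big)}.
\end{equation*}
Unfolding the definition of $\eqvsym$ and applying \cref{thm:choice}, this type is equivalent to
\begin{equation*}
\sm{f:\prd{x:A}(a=x)\to B(x)}\prd{x:A}\isequiv(f(x)).
\end{equation*}
As noted immediately after \cref{thm:id_fundamental}, the fundamental theorem applies to \emph{any} fiberwise transformation out of the identity type, so the proposition $\prd{x:A}\isequiv(f(x))$ is logically equivalent---and hence equivalent, both sides being propositions---to $\iscontr(\sm{x:A}B(x))$, independently of~$f$. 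Pulling this factor out of the $\Sigma$ and applying \cref{lem:yoneda} to the remaining factor yields
\begin{equation*}
\eqv{\Big(\sm{f:\prd{x:A}(a=x)\to B(x)}\iscontr\Big(\sm{x:A}B(x)\Big)\Big)}{\Big(B(a)\times\iscontr\Big(\sm{x:A}B(x)\Big)\Big)}.
\end{equation*}

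\emph{For the second equivalence.} By \cref{thm:eq_sigma}, applied to the type family $\lam{B'}B'(a)$ over $\UU^A$, we have
\begin{equation*}
\eqv{\Big((\idtypevar{A}(a),\refl{a})=(B,b)\Big)}{\Big(\sm{p:\idtypevar{A}(a)=B}\tr_{\lam{B'}B'(a)}(p,\refl{a})=b\Big)}.
\end{equation*}
Let $\varphi$ denote the equivalence from the first part. The key computation is to verify that the first projection of $\varphi(p)\in B(a)\times\iscontr(\sm{x:A}B(x))$ agrees with $\tr_{\lam{B'}B'(a)}(p,\refl{a})$; this is checked by path induction on $p$, since when $p\jdeq\refl{}$ both sides are $\refl{a}$. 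Therefore $\varphi$ induces an equivalence
\begin{equation*}
\eqv{\Big(\sm{p:\idtypevar{A}(a)=B}\tr_{\lam{B'}B'(a)}(p,\refl{a})=b\Big)}{\Big(\sm{(e,c):B(a)\times\iscontr(\sm{x:A}B(x))}e=b\Big)}.
\end{equation*}
Rearranging the right-hand $\Sigma$ gives $\big(\sm{e:B(a)}e=b\big)\times\iscontr(\sm{x:A}B(x))$, and the first factor is contractible by \cref{thm:total_path}, yielding the desired equivalence with $\iscontr(\sm{x:A}B(x))$.

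The only nontrivial step is the path-induction verification that $\varphi$ intertwines the transport of $\refl{a}$ with evaluation at $\refl{a}$; this is the point at which one must actually look inside the definitions of the univalence equivalence and of $\mathsf{ev\usc{}refl}$, rather than merely composing equivalences abstractly.
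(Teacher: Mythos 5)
Your proof is correct and follows essentially the same route as the paper: you identify $\idtypevar{A}(a)=B$ with the type of fiberwise equivalences via univalence and function extensionality, split off $\prd{x:A}\isequiv(f(x))$ and replace it by $\iscontr(\sm{x:A}B(x))$ using the observation after \cref{thm:id_fundamental}, and finish with \cref{lem:yoneda} — exactly the paper's argument. Your treatment of the second equivalence (via \cref{thm:eq_sigma}, the path-induction compatibility check between transport at $\refl{a}$ and evaluation at $\refl{a}$, and contracting the based path space) is simply an explicit spelling-out of what the paper dismisses as "easy consequences", and it is sound.
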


\begin{proof}
First we show that there is an equivalence
\begin{equation*}
\eqv{\Big(\prd{x:A} (a=x)\eqvsym B(x)\Big)}{B(a)\times\iscontr\Big(\sm{x:A}B(x)\Big)}.
\end{equation*}
To see this, note that the type of fiberwise equivalences $\prd{x:A} (a=x)\eqvsym B(x)$ is equivalent to the type
\begin{equation*}
\sm{f:\prd{x:A}(a=x)\to B(x)}\prd{x:A}\isequiv(f)
\end{equation*}
By \cref{thm:id_fundamental}, the type $\prd{x:A}\isequiv(f)$ is equivalent to the contractibility of the total space of $B$. Now the claim follows from \cref{lem:yoneda}. The claims in the statement are now easy consequences.
\end{proof}

\begin{defn}\label{defn:rrel}
A (small) \define{reflexive relation} $\mathcal{R}$ on $A$ is a pair $(R,\rho)$ consisting of a binary type-valued relation $R:A\to (A\to \UU)$ equipped with a proof of reflexivity
\begin{equation*}
\rho : \prd{x:A}R(x,x).
\end{equation*}
We will write $\mathsf{rRel}(A)$ for the type of all small reflexive relations on $A$, i.e.
\begin{equation*}
\mathsf{rRel}(A) \defeq \sm{R:A\to (A\to \UU)}\prd{x:A}R(x,x).
\end{equation*}
\end{defn}

\begin{cor}\label{cor:yoneda_rel}
Let $\mathcal{R}\jdeq (R,\rho)$ be a reflexive relation on $A$.
Then there is an equivalence
\begin{equation*}
\eqv{\Big((\idtypevar,\refl{})=(R,\rho)\Big)}{\prd{a:A}\iscontr\Big(\sm{x:A}R(a,x)\Big)}.
\end{equation*}
\end{cor}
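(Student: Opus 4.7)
The plan is to derive the corollary by reducing it, fiber-by-fiber, to the already-established Corollary \ref{cor:yoneda}, after first reorganising the type of reflexive relations into a dependent product of pointed families.

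First I would use type-theoretic choice (Proposition \ref{thm:choice}) to construct an equivalence
\begin{equation*}
\mathsf{rRel}(A) \;\eqvsym\; \prd{a:A}\sm{B:A\to\UU}B(a),
\end{equation*}
sending a reflexive relation $(R,\rho)$ to the pointwise assignment $a\mapsto (R(a,\blank),\rho(a))$. Under this equivalence the identity relation $(\idtypevar,\refl{})$ corresponds to the function $a\mapsto (\idtypevar(a),\refl{a})$, where $\idtypevar(a)$ denotes the family $x\mapsto (a=x)$.

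Next I would apply function extensionality (Proposition \ref{thm:funext_wkfunext}) together with the characterisation of identity types of $\Sigma$-types (Proposition \ref{thm:eq_sigma}) to transport the identity type across this equivalence, producing
\begin{equation*}
\Big((\idtypevar,\refl{})=(R,\rho)\Big) \;\eqvsym\; \prd{a:A}\Big((\idtypevar(a),\refl{a})=(R(a,\blank),\rho(a))\Big).
\end{equation*}

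Finally, for each $a:A$, I would invoke Corollary \ref{cor:yoneda} with the pointed type family $(R(a,\blank),\rho(a))$ over $A$, yielding
\begin{equation*}
\Big((\idtypevar(a),\refl{a})=(R(a,\blank),\rho(a))\Big) \;\eqvsym\; \iscontr\Big(\sm{x:A}R(a,x)\Big).
\end{equation*}
Composing these equivalences produces the asserted equivalence. I do not anticipate a genuine obstacle: every step is a routine instance of a result proved earlier in the chapter. The only point requiring a moment's care is verifying that the first equivalence sends the identity relation precisely to the family $a\mapsto (\idtypevar(a),\refl{a})$ with its canonical reflexivity witness, so that the pointwise appeal to Corollary \ref{cor:yoneda} is legitimate.
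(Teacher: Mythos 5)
Your proposal is correct and follows essentially the route the paper intends for this corollary: rewrite the identification of reflexive relations as a family of pointwise identifications of pointed families (via function extensionality and the $\Sigma$-type identity characterization) and then apply the second equivalence of \cref{cor:yoneda} at each $a:A$. The preliminary detour through type-theoretic choice to reshuffle $\mathsf{rRel}(A)$ into $\prd{a:A}\sm{B:A\to\UU}B(a)$ is harmless but not needed — one can apply \cref{thm:eq_sigma} and function extensionality to the $\Sigma$-type $\mathsf{rRel}(A)$ directly — and your noted care point about where the identity relation lands under that equivalence is exactly the only thing to check.
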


The following theorem, which was proven independently by Escardó \cite{Escardo2016} around the same time, shows that the canonical map
\begin{equation*}
(x=y)\to (\idtypevar{A}(x)=\idtypevar{A}(y))
\end{equation*}
is an equivalence, for any $x,y:A$. This will be particularly relevant to us once we study $\infty$-equivalence relations.

\begin{prp}
For any type $A:\UU$, the map
\begin{equation*}
\idtypevar{A}:A\to (A\to\UU)
\end{equation*}
is an embedding.\index{identity type!is an embedding|textit}
\end{prp}

\begin{proof}
Let $a:A$. Then we calculate the fiber of $\idtypevar{A}$ at a type family $B:A\to \UU$ as follows:
\begin{align*}
\fib{\idtypevar{A}}{B} & \jdeq \sm{a:A}\idtypevar{A}(a)=B \\
& \eqvsym \sm{a:A} \Big(B(a)\times\iscontr\Big(\sm{x:A}B(x)\Big)\Big) \tag{by \cref{cor:yoneda}}\\
& \eqvsym \Big(\sm{a:A}B(a)\Big)\times \iscontr\Big(\sm{x:A}B(x)\Big) \\
& \eqvsym \iscontr\Big(\sm{x:A}B(x)\Big).
\end{align*}
The last equivalence follows since $X\times \iscontr(X)\eqvsym \iscontr(X)$ for every type $X$. We conclude that the fibers of $\idtypevar{A}$ are propositions, so $\idtypevar{A}$ is an embedding by \cref{thm:prop_emb}.
\end{proof}

\section{Discrete reflexive graphs}

\begin{defn}\label{defn:graphs_ctx}
A \define{non-reflexive graph} $\mathcal{A}$ in $\UU$ is a pair $\pairr{\pts{A},\edg{A}}$ consisting of
\begin{align*}
\pts{A} & : \UU \\
\edg{A} & : \pts{A}\to\pts{A}\to\UU.
\end{align*}
We write $\mathsf{Gph}$ for the type of all graphs in $\UU$. When $\mathcal{A}$ is a graph, we say that $\pts{A}$ is its type of \define{vertices}, and that $\edg{A}$ is its family of \define{edges}.

A \define{reflexive graph} $\mathcal{A}$ in $\UU$ consists of a graph $(\pts{A},\edg{A})$ equipped with a \define{reflexivity} term
\begin{align*}
\rfx{\mathcal{A}} & : \prd{i:\pts{A}}\edg{A}(i,i).
\end{align*}
We write
\begin{equation*}
\mathsf{rGph}\defeq \sm{V:\UU}{E:V\to (V\to\UU)}\prd{v:V}E(v,v)
\end{equation*}
for the type of reflexive graphs in $\UU$. 
\end{defn}

\begin{eg}
\begin{enumerate}
\item A \define{reflexive pair} consists of types $V$ and $E$, and maps $s$, $t$, and $r$ between $V$ and $E$ as indicated in the diagram
\begin{equation*}
\begin{tikzcd}[column sep=large]
E \arrow[r,yshift=1.5ex,"s"] \arrow[r,yshift=-1.5ex,swap,"t"] & V, \arrow[l,"r" description]
\end{tikzcd}
\end{equation*}
equipped with homotopies $H_{sr}:s\circ r\htpy \idfunc[V]$ and $H_{tr}:t\circ r\htpy \idfunc[V]$.
Given a reflexive pair as above, we obtain a reflexive graph $\mathcal{A}$ by taking
\begin{align*}
\pts{A} & \defeq V \\
\edg{A}(v,w) & \defeq \sm{e:E} (s(e)=v)\times (t(e)=w) \\
\rfx{\mathcal{A}}(v) & \defeq \pairr{r(v),H_{sr}(v),H_{tr}(v)}.
\end{align*}
By a routine construction it can be shown that the type of small reflexive pairs is equivalent to the type of small reflexive graphs.
\item \label{eg:disc_codisc} Given a type $X$, the \define{discrete graph} $\Delta(X)$ on $X$ is the reflexive graph consisting of
\begin{align*}
\pts{\Delta(X)} & \defeq X \\
\edg{\Delta(X)} & \defeq \idtypevar{X} \\
\rfx{\Delta(X)} & \defeq \refl{}.
\end{align*}
\item Given a map $f:A\to X$, the \define{pre-kernel} $k(f)$ of $f$ is the reflexive relation on $A$ given by $x,y\mapsto f(x)=f(y)$.
\item The \define{indiscrete graph} $\nabla(X)$ on $X$ is the reflexive graph consisting of
\begin{align*}
\pts{\nabla(X)} & \defeq X \\
\edg{\nabla(X)} & \defeq \lam{x}{y}\unit \\
\rfx{\nabla(X)} & \defeq \ttt.
\end{align*}
The reflexive pair corresponding to $\nabla(X)$ is (equivalent to) the reflexive pair
\begin{equation*}
\begin{tikzcd}
X\times X \arrow[r,yshift=1ex,"\pi_1"] \arrow[r,yshift=-1ex,"\pi_2"] & X. \arrow[l,"\delta" description]
\end{tikzcd}
\end{equation*}
In particular, we have the \define{unit} graph $\unit\defeq\nabla(\unit)$, which happens to also be $\Delta(\unit)$. 
\item The \define{walking edge} $\mathcal{I}$ is an example of a reflexive graph. One way of defining it is by taking
\begin{align*}
\pts{\mathcal{I}} & \defeq \bool \\
\edg{\mathcal{I}}(x,y) & \defeq \tau(x)\to \tau(y) \\
\rfx{\mathcal{I}}(x) & \defeq \idfunc[\tau(x)]
\end{align*}
where $\tau:\bool\to\UU$ is the \define{tautological family} on the type $\bool$ of booleans, given by $\tau(\bfalse)\defeq\emptyt$ and $\tau(\btrue)\defeq\unit$. We write $a$ for the (unique) edge from $\bfalse$ to $\btrue$. It may be helpful to think of the walking edge $\mathcal{I}$ as an interval, hence the choice of notation.
\end{enumerate}
\end{eg}

\begin{defn}
A \define{morphism} of graphs $f$ from $\mathcal{A}$ to $\mathcal{B}$ is a pair
\begin{align*}
\pts{f} & : \pts{A} \to \pts{B} \\
\edg{f} & : \prd{i,j:\pts{A}} \edg{A}(i,j)\to \edg{B}(\pts{f}(i),\pts{f}(j)).
\end{align*}
We write $\mathsf{Gph}(\mathcal{A},\mathcal{B})$ for the type of graph morphisms from $\mathcal{A}$ to $\mathcal{B}$.

A \define{morphism} of reflexive graphs $f$ from $\mathcal{A}$ to $\mathcal{B}$ is a morphism of graphs equipped with a term
\begin{align*}
\rfx{f} & : \prd{i:\pts{A}} \edg{f}(\rfx{\mathcal{A}}(i))= \rfx{\mathcal{B}}(\pts{f}(i))
\end{align*}
witnessing that reflexivity is preserved. We write $\mathsf{rGph}(\mathcal{A},\mathcal{B})$ for the type of reflexive graph morphisms from $\mathcal{A}$ to $\mathcal{B}$.
\end{defn}

For any reflexive graph there is an identity morphism, and for any composable pair of morphisms there is a composite. Furthermore, these operations are associative, and satisfy the unit laws, both up to homotopy. It should be noted, however, that composition of reflexive graph morphisms is not expected to be associative on the nose, since the reflexivity is only preserved up to higher identification.

\begin{eg}\label{eg:rgraph_morphism}
\begin{enumerate}
\item Given a function $f:X\to Y$ we obtain a morphism $\Delta f:\mathsf{rGph}(\Delta X,\Delta Y)$ given by
\begin{align*}
\pts{\Delta f} & \defeq f \\
\edg{\Delta f} & \defeq \apfunc{f} \\
\rfx{\Delta f} & \defeq \refl{\refl{f(x)}}
\end{align*}
The action of $\Delta$ on morphisms preserves identity morphisms and compositions, and moreover it preserves the unit laws and associativity. 
\item The graph $\nabla(\unit)$ is the terminal reflexive graph in the sense that for any graph $\mathcal{A}$, the type $\mathsf{rGph}(\mathcal{A},\unit)$ is contractible.
\item For any reflexive graph $\mathcal{A}$, the map 
\begin{equation*}
\mathsf{ev\usc{}pt}:\mathsf{rGph}(\unit,\mathcal{A})\to\pts{A}
\end{equation*}
given by $f\mapsto \pts{f}(\ttt)$, is an equivalence. The analogous statement is false for non-reflexive graphs.
\item The universal property of the walking edge $\mathcal{I}$ is that every edge $e$ in a graph $\mathcal{A}$, the type of reflexive graph morphisms that map the edge $a$ of $\mathcal{I}$ to $e$ is contractible. In other words, the map
\begin{equation*}
\mathsf{ev\usc{}edge} : \mathsf{rGph}(\mathcal{I},\mathcal{A})\to \sm{i,j:\pts{A}}\edg{A}(i,j)
\end{equation*}
given by $f\mapsto (\pts{f}(0),\pts{f}(1),\edg{f}(a))$ is an equivalence.
\item For the walking edge $\mathcal{I}$ there are reflexive graph morphisms
\begin{equation*}
\begin{tikzcd}
\mathcal{I} \arrow[r] & \unit \arrow[l,yshift=1ex,swap,"1"] \arrow[l,yshift=-1ex,"0"]
\end{tikzcd}
\end{equation*}
This is a \emph{cograph} object of reflexive graphs, since the morphism $\mathcal{I}\to\unit$ is a common \emph{retraction} of the end-point inclusions $0,1:\unit\to \mathcal{I}$, whereas in a reflexive pair (a graph object) the morphism $r$ is a common \emph{section} of the source and target maps.
\item \label{eg:freerfx} Since we do not have the technology available to establish that graphs and reflexive graphs form $\infty$-categories, a comparison between the two structures is limited to what we can say directly using the basic categorical operations such as composition. However, this is just enough to establish the universal property of an adjunction.

Given a non-reflexive graph $\mathcal{A}\jdeq\pairr{\pts{A},\edg{A}}$, we can obtain a reflexive graph $F\mathcal{A}$ by freely adjoining reflexivity:
\begin{align*}
\pts{F\mathcal{A}} & \defeq \pts{A} \\
\edg{F\mathcal{A}}(i,j) & \defeq \edg{A}(i,j)+(i=j) \\
\rfx{F\mathcal{A}}(i) & \defeq \inr(\refl{i}).
\end{align*}
On the other hand, there is the projection $U:\mathsf{rGph}\to\mathsf{Gph}$ which forgets the reflexivity structure, and for each graph $\mathcal{A}$ there is a graph morphism $\eta:\mathcal{A}\to UF\mathcal{A}$. Both $F$ and $U$ are functorial in the sense that they act on morphisms, and preserve units and composition in the obvious way, and $\eta$ is natural in $\mathcal{A}$. 

The universal property of the construction $F$ of freely adjoining reflexivity, is that the map
\begin{equation*}
U(\blank)\circ \eta : \mathsf{rGph}(F\mathcal{A},\mathcal{B})\to \mathsf{Gph}(\mathcal{A},U\mathcal{B})
\end{equation*}
is an equivalence. Indeed, this is the universal property that establishes $F$ as a left adjoint to $U$, even though we cannot manifest $F$ and $U$ as functors.

The construction $F$ of freely adjoining reflexivity is not surjective on morphisms. For example, there are no morphisms from $\unit$ to the graph $\mathcal{A}$ with a contractible type of vertices but no edges. However $F\mathcal{A}$ is the terminal reflexive graph.
\end{enumerate}
\end{eg}

\begin{prp}\label{thm:emb_disc}
The operation $\Delta:\UU\to\mathsf{rGph}$ is an embedding in the following sense:
\begin{enumerate}
\item As a function, $\Delta:\UU\to\mathsf{rGph}$ is an embedding.
\item For every two types $X$ and $Y$, the action on morphisms
\begin{equation*}
\Delta : (X\to Y)\to \mathsf{rGph}(\Delta X,\Delta Y)
\end{equation*}
is an equivalence.
\end{enumerate}
\end{prp}

\begin{proof}
Let $\mathcal{A}$ be a reflexive graph. Then the fiber of $\Delta:\UU\to\mathsf{rGph}$ is calculated as follows:
\begin{align*}
\fib{\Delta}{\mathcal{A}} & \defeq \sm{X:\UU} \Delta X = \mathcal{A} \\
& \eqvsym (\idtypevar{\pts{A}},\refl{})=(\edg{A},\rfx{A}) \\
& \eqvsym \prd{x:\pts{A}}\iscontr\Big(\sm{y:\pts{A}}\edg{A}(x,y)\Big),\tag{by \cref{cor:yoneda_rel}}
\end{align*}
which is a proposition.

The map $(X\to Y)\to \mathsf{rGph}(\Delta X,\Delta Y)$ is an equivalence, since the type
\begin{equation*}
\sm{\varphi:\prd{x,y:X}(x=y)\to (f(x)=f(y))}\varphi(\refl{x})=\refl{f(x)}
\end{equation*}
is equivalent to the fiber of the map
\begin{equation*}
\mathsf{ev\usc{}refl} : \Big(\prd{x,y:X}(x=y)\to (f(x)=f(y))\Big)\to \Big(f(x)=f(x)\Big),
\end{equation*}
which is contractible by \cref{lem:yoneda}.
\end{proof}

\begin{thm}\label{thm:discrete_rgraph}
Let $\mathcal{A}$ be a reflexive graph. The following are equivalent:
\begin{enumerate}
\item The canonical map
\begin{equation*}
\prd{i,j:\pts{A}} (i=j)\to \edg{A}(i,j)
\end{equation*}
given by $\refl{i}\mapsto \rfx{\mathcal{A}}(i)$ is a fiberwise equivalence. In particular, $\mathcal{A}$ is in the image of $\Delta:\UU\to\mathsf{rGph}$. 
\item The graph $\mathcal{A}$ is \define{$\mathcal{I}$-null} in the sense that the map
\begin{equation*}
\mathsf{rGph}(\unit,\mathcal{A}) \to \mathsf{rGph}(\mathcal{I},\mathcal{A})
\end{equation*}
given by pre-composition by the unique morphism $\mathsf{rGph}(\mathcal{I},\unit)$, is an equivalence.
\item The map
\begin{equation*}
\mathsf{rGph}(\mathcal{I},\mathcal{A})\to \mathsf{rGph}(\unit,\mathcal{A})
\end{equation*}
given by pre-composing with the end-point inclusion $0:\mathsf{rGph}(\unit,\mathcal{I})$, is an equivalence.
\item The map
\begin{equation*}
\mathsf{rGph}(\mathcal{I},\mathcal{A})\to \mathsf{rGph}(\unit,\mathcal{A})
\end{equation*}
given by pre-composing with the end-point inclusion $1:\mathsf{rGph}(\unit,\mathcal{I})$, is an equivalence.
\end{enumerate}
If any of these conditions hold, we say that $\mathcal{A}$ is \define{discrete}.
\end{thm}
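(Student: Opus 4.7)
The plan is to reduce all four conditions to concrete statements about types and maps of types, by invoking the universal properties of $\unit$ and $\mathcal{I}$ recorded in \cref{eg:rgraph_morphism}. These yield natural equivalences $\mathsf{rGph}(\unit,\mathcal{A}) \eqvsym \pts{A}$ (by evaluation at the point) and $\mathsf{rGph}(\mathcal{I},\mathcal{A}) \eqvsym \sm{i,j:\pts{A}}\edg{A}(i,j)$ (by evaluation at the edge $a$).

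First I would verify that under these equivalences the three pre-composition maps in (ii), (iii), and (iv) become, respectively: the map $r_{\mathcal{A}}:\pts{A}\to \sm{i,j:\pts{A}}\edg{A}(i,j)$ given by $i\mapsto (i,i,\rfx{\mathcal{A}}(i))$ (pre-composition with $\mathcal{I}\to\unit$); the source projection $\pi_{\mathsf{src}} \defeq \lam{(i,j,e)}i$ (pre-composition with $0:\unit\to\mathcal{I}$); and the target projection $\pi_{\mathsf{tgt}} \defeq \lam{(i,j,e)}j$ (pre-composition with $1:\unit\to\mathcal{I}$). Since the composites $\pi_{\mathsf{src}}\circ r_{\mathcal{A}}$ and $\pi_{\mathsf{tgt}}\circ r_{\mathcal{A}}$ are both homotopic to $\idfunc[\pts{A}]$, the 3-for-2 property of equivalences gives (ii)$\Leftrightarrow$(iii)$\Leftrightarrow$(iv) at once.

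To connect these to (i), I would apply the Fundamental Theorem of Identity Types (\cref{thm:id_fundamental}) to the family $\edg{A}(i,\blank)$ over $\pts{A}$ based at $\rfx{\mathcal{A}}(i):\edg{A}(i,i)$: the canonical family of maps $(i=j)\to \edg{A}(i,j)$ appearing in (i) is a fiberwise equivalence if and only if the total space $\sm{j:\pts{A}}\edg{A}(i,j)$ is contractible for every $i:\pts{A}$. This total space is precisely the fiber of $\pi_{\mathsf{src}}$ at $i$, so by \cref{thm:contr_equiv} the map $\pi_{\mathsf{src}}$ has contractible fibers (i.e.\ is an equivalence) if and only if (i) holds, yielding (i)$\Leftrightarrow$(iii) and completing the loop.

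The main point requiring care will be the first step: a morphism $\mathcal{I}\to\mathcal{A}$ of reflexive graphs is not just a choice of edge but also carries a witness of reflexivity preservation, so the identification $\mathsf{rGph}(\mathcal{I},\mathcal{A}) \eqvsym \sm{i,j:\pts{A}}\edg{A}(i,j)$ and the descriptions of the three pre-composition maps must be unfolded carefully (this is essentially the content of items 3 and 4 of \cref{eg:rgraph_morphism}, together with the observation that the unique morphism $\mathcal{I}\to\unit$ sends the edge $a$ to the reflexivity edge of $\unit$). Once these translations are pinned down, everything else is an immediate consequence of the Fundamental Theorem and the 3-for-2 property.
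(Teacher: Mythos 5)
Your proposal is correct and uses essentially the same ingredients as the paper's proof: the evaluation equivalences $\mathsf{ev\usc{}pt}$ and $\mathsf{ev\usc{}edge}$ from \cref{eg:rgraph_morphism}, the 3-for-2/section argument relating the three pre-composition maps, and the Fundamental Theorem of Identity Types applied to the contractibility of $\sm{j:\pts{A}}\edg{A}(i,j)$ as the fiber of the source projection. The only difference is organizational: you prove the biconditionals (ii)$\Leftrightarrow$(iii)$\Leftrightarrow$(iv) and (i)$\Leftrightarrow$(iii) directly, whereas the paper runs the implication cycle (i)$\Rightarrow$(ii)$\Rightarrow$(iii)/(iv)$\Rightarrow$(i), which amounts to the same argument.
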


\begin{proof}
The outline of our argument is as follows:
\begin{equation*}
\begin{tikzcd}[row sep=tiny]
& & (iii) \arrow[dr,Rightarrow] \\
(i) \arrow[r,Rightarrow] & (ii) \arrow[ur,Rightarrow] \arrow[dr,Rightarrow] & & (i). \\
& & (iv) \arrow[ur,Rightarrow]
\end{tikzcd}
\end{equation*}
Suppose (i) holds. We have a commuting square
\begin{equation*}
\begin{tikzcd}
\mathsf{rGph}(\unit,\mathcal{A}) \arrow[d,swap,"\mathsf{ev\usc{}pt}"] \arrow[r] & \mathsf{rGph}(\mathcal{I},\mathcal{A}) \arrow[d,"\mathsf{ev\usc{}edge}"] \\
\pts{A} \arrow[r] & \sm{i,j:\pts{A}}\edg{A}(i,j)
\end{tikzcd}
\end{equation*}
where both vertical maps are equivalences, and the bottom map is an equivalence by assumption. Therefore the top map is an equivalence. This proves that (i) implies (ii).

Since the reflexive graph morphism $\mathsf{rGph}(\mathcal{I},\unit)$ is a common retraction of both end-point inclusions, it follows that the pre-composition map
\begin{equation*}
\mathsf{rGph}(\unit,\mathcal{A}) \to \mathsf{rGph}(\mathcal{I},\mathcal{A})
\end{equation*}
is a common section of both pre-composition maps
\begin{equation*}
0^\ast,1^\ast:\mathsf{rGph}(\mathcal{I},\mathcal{A})\to \mathsf{rGph}(\unit,\mathcal{A})
\end{equation*}
However, assuming (ii) it follows that both $0^\ast$ and $1^\ast$ are equivalences, so (ii) implies both (iii) and (iv).

Now suppose that (iii) holds; we will show that (i) follows. We have the commuting square
\begin{equation*}
\begin{tikzcd}
\mathsf{rGph}(\mathcal{I},\mathcal{A}) \arrow[d,swap,"\mathsf{ev\usc{}edge}"] \arrow[r,"0^\ast"] & \mathsf{rGph}(\unit,\mathcal{A}) \arrow[d,"\mathsf{ev\usc{}pt}"] \\
\sm{i,j:\pts{A}}\edg{A}(i,j) \arrow[r,swap,"\pi_1"] & \pts{A}
\end{tikzcd}
\end{equation*}
in which both vertical maps are equivalences. Therefore the fibers of $\pi_1$ are equivalent to the fibers of $0^\ast$. Note that the fibers of $\pi_1$ are of the form
\begin{equation*}
\sm{j:\pts{A}}\edg{A}(i,j),
\end{equation*}
so it follows from (iii) that these are contractible. Then (i) follows by the fundamental theorem of identity types, \cref{thm:id_fundamental}.

The argument that (i) follows from (iv) is similar, using $\pi_2$ and $1^\ast$ instead of $\pi_1$ and $0^\ast$ in the square.
\end{proof}

\section{Reflexive coequalizers}

\begin{defn}
Consider a reflexive graph $\mathcal{A}$ and a type $X$ equipped with a morphism $f:\mathsf{rGph}(\mathcal{A},\Delta(X))$. We say that $f$ is a \define{reflexive coequalizer of $\mathcal{A}$} if the map
\begin{equation*}
\Delta(\blank) \circ f: (X\to Y)\to \mathsf{rGph}(\mathcal{A},\Delta(Y))
\end{equation*}
is an equivalence.
\end{defn}

Our goal in this section is to show that reflexive coequalizers can be constructed as pushouts. We will use the following lemma, which was discovered jointly with Simon Boulier.

\begin{lem}\label{lem:coh_red}
Consider a type $A$ with a type family $B$, and $a:A$. Furthermore, suppose that
\begin{equation*}
\alpha:\prd{x:A}B(x)\to (a=x).
\end{equation*}
Then the \define{coherence reduction} map
\begin{equation*}
\mathsf{coh\usc{}red} : \Big(\sm{y:B(a)}\alpha(y)=\refl{a}\Big)\to\Big(\sm{x:A}B(x)\Big)
\end{equation*}
given by $(y,q)\mapsto (a,y)$ is an equivalence.
\end{lem}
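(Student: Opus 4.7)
The plan is to exhibit $\mathsf{coh\usc{}red}$ as a composite of known equivalences by routing both source and target through the fiber of the total map
\[
\total{\alpha}:\sm{x:A}B(x)\to\sm{x:A}(a=x),\qquad (x,y)\mapsto(x,\alpha(x,y)),
\]
at the contraction center $(a,\refl{a})$ of its codomain, which is contractible by \cref{thm:total_path}.

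First, I would record a general observation: whenever $f:E\to C$ is a map into a contractible type with center $c$, the rearrangement $E\simeq\sm{c':C}\fib{f}{c'}$ together with contractibility of $C$ yields an equivalence $\fib{f}{c}\simeq E$ whose underlying function is the first projection. Instantiating at $f\defeq\total{\alpha}$ and $c\defeq(a,\refl{a})$ produces an equivalence $\fib{\total{\alpha}}{(a,\refl{a})}\simeq\sm{x:A}B(x)$ with underlying map $((x,y),e)\mapsto(x,y)$.

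Next, I would compute the fiber. Let $P:A\to\UU$ be the family $w\mapsto(a=w)$. By \cref{thm:eq_sigma},
\[
\fib{\total{\alpha}}{(a,\refl{a})}\;\simeq\;\sm{x:A}{y:B(x)}{p:x=a}\tr_P(p,\alpha(x,y))=\refl{a}.
\]
Reordering the $\Sigma$-quantifiers so that $p:x=a$ sits next to $x:A$, the factor $\sm{x:A}(x=a)$ is contractible and may be contracted away via path induction on $p$, substituting $a$ for $x$ and $\refl{a}$ for $p$. The fiber thus simplifies to $\sm{y:B(a)}\alpha(a,y)=\refl{a}$, with the inverse equivalence sending $(y,q)$ back to the fiber element $((a,y),\eqpair(\refl{a},q))$. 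Composing with the first projection from the preceding step, the overall map $\sm{y:B(a)}\alpha(a,y)=\refl{a}\to\sm{x:A}B(x)$ sends $(y,q)$ to $(a,y)$, which is precisely $\mathsf{coh\usc{}red}(y,q)$.

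The main obstacle is the bookkeeping in the previous paragraph: one must verify that reordering the $\Sigma$-s and running path induction on $p:x=a$ does not introduce any spurious transport that would obstruct the identification of the resulting composite with $\mathsf{coh\usc{}red}$. Everything else is a routine application of the results recalled earlier in the chapter, in particular \cref{thm:fib_equiv,thm:eq_sigma,thm:total_path}.
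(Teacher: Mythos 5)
Your proposal is correct, but it takes a genuinely different route from the paper's proof. The paper argues directly that $\mathsf{coh\usc{}red}$ is a contractible map: a short chain of equivalences identifies $\fib{\mathsf{coh\usc{}red}}{(x,y)}$ with $\sm{p:a=x}p=\alpha(y)$, which is contractible as a based path space, so the conclusion follows from \cref{thm:contr_equiv}. You instead factor $\mathsf{coh\usc{}red}$ through $\fib{\total{\alpha}}{(a,\refl{a})}$: since the codomain $\sm{x:A}(a=x)$ of $\total{\alpha}$ is contractible (\cref{thm:total_path}), the projection out of that fiber is an equivalence (its fiber at $(x,y)$ is the identity type $\total{\alpha}(x,y)=(a,\refl{a})$ in a contractible type), and the fiber itself is identified with $\sm{y:B(a)}\alpha(y)=\refl{a}$ via \cref{thm:eq_sigma} and contraction of the symmetric based path space $\sm{x:A}(x=a)$. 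Note that the remark following the lemma warns about precisely this kind of chain-of-equivalences argument; what rescues yours is that you run the chain in the direction in which every map is an explicit section or $\eqpair$, so the composite visibly sends $(y,q)$ to $\big((a,y),\eqpair(\refl{a},q)\big)$ and then, under the projection, to $(a,y)$ with no transport appearing. The ``bookkeeping obstacle'' you flag is therefore the real content of your argument and should be written out rather than deferred: concretely, that the explicit map $(y,q)\mapsto\big((a,y),\eqpair(\refl{a},q)\big)$ is an equivalence because it is a composite of the inclusion at the center of contraction, a reordering of independent $\Sigma$-factors, and the total map of the fiberwise equivalence $\eqpair$ (the last by \cref{thm:fib_equiv}), and that postcomposing with the projection gives $\mathsf{coh\usc{}red}$ judgmentally. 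The paper's route buys brevity and avoids all map-tracking; yours buys a more conceptual reading of $\mathsf{coh\usc{}red}$ as the projection from the fiber of $\total{\alpha}$ over the contraction center, at the cost of the extra care just described.
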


\begin{rmk}
A quick way to see that there is an equivalence
\begin{equation*}
\sm{x:A}B(x) \eqvsym \sm{b:B(a)} \alpha_{a}(b)=\refl{a}.
\end{equation*}
is to use the contractibility of the total space of identity types twice:
\begin{align*}
\sm{x:A}B(x) & \eqvsym \sm{x:A}{y:B(x)}{p:a=x}\alpha(y)=p \\
& \eqvsym \sm{y:B(a)}\alpha(y)=\refl{a}.\qedhere
\end{align*}
However, it is not clear at once that the underlying map of this composite of equivalences is indeed the map $\mathsf{coh\usc{}red}$ defined in \cref{lem:coh_red}.
\end{rmk}

\begin{proof}[Proof of \cref{lem:coh_red}]
We show that the fibers are contractible:
\begin{align*}
\fib{\mathsf{coh\usc{}red}}{(x,y)} & \eqvsym \sm{y':B(a)}{q:\alpha(y')=\refl{a}} (a,y')=(x,y) \\
& \eqvsym \sm{y':B(a)}{q:\alpha(y')=\refl{a}}{p:a=x} \mathsf{tr}_B(p,y')=y \\
& \eqvsym \sm{y':B(a)}{q:\alpha(y')=\refl{a}}{p:a=x} y'=\mathsf{tr}_B(p^{-1},y) \\
& \eqvsym \sm{p:a=x}\alpha(\mathsf{tr}_B(p^{-1},y))=\refl{a} \\
& \eqvsym \sm{p:a=x}p=\alpha(y).\qedhere
\end{align*}
\end{proof}

\begin{cor}
Consider a type $A$ with a relation $R:A\to A\to\UU$ such that
\begin{equation*}
\alpha: \prd{x,y:A}R(x,y)\to (x=y).
\end{equation*}
Then the map
\begin{equation*}
\mathsf{coh\usc{}red}:\Big(\sm{x:A}{r:R(x,x)}\alpha(r)=\refl{x}\Big)\to \Big(\sm{x,y:A}R(x,y)\Big).
\end{equation*}
given by $(x,r,c)\mapsto (x,x,r)$ is an equivalence.\qed
\end{cor}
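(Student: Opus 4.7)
The plan is to derive this corollary by applying \cref{lem:coh_red} fiberwise in the parameter $x:A$, and then to pass to total spaces via \cref{thm:fib_equiv}. Concretely, for each fixed $x:A$, we regard $R(x,\blank):A\to\UU$ as a type family over $A$ with base point $x$, equipped with the fiberwise transformation
\begin{equation*}
\alpha(x,\blank) : \prd{y:A} R(x,y)\to (x=y).
\end{equation*}
This is exactly the setup of \cref{lem:coh_red} (with $A$ in the role of the lemma's $A$, $R(x,\blank)$ in the role of $B$, $x$ in the role of $a$, and $\alpha(x,\blank)$ in the role of $\alpha$). The lemma then produces an equivalence
\begin{equation*}
e_x : \Big(\sm{r:R(x,x)} \alpha(x,r)=\refl{x}\Big)\xrightarrow{\eqvsym} \Big(\sm{y:A}R(x,y)\Big)
\end{equation*}
defined by $(r,c)\mapsto (x,r)$.

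Next, I would apply \cref{thm:fib_equiv} to the fiberwise equivalence $x\mapsto e_x$. This yields that the induced map on total spaces
\begin{equation*}
\total{e}: \Big(\sm{x:A}{r:R(x,x)} \alpha(x,r)=\refl{x}\Big)\to \Big(\sm{x:A}{y:A}R(x,y)\Big)
\end{equation*}
is an equivalence. Unwinding the definitions of $\total{\blank}$ and $e_x$, this induced map is computed to be $(x,r,c)\mapsto(x,x,r)$, which is precisely the map $\mathsf{coh\usc{}red}$ in the statement.

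There is no real obstacle here; the only thing to verify is the judgmental identification of $\total{e}$ with the displayed $\mathsf{coh\usc{}red}$, which is immediate from the definitions, and the observation that $R(x,\blank)$ is genuinely a type family over $A$ so that \cref{lem:coh_red} applies for each $x$.
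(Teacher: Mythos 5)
Your proof is correct and is exactly the intended argument: the paper marks this corollary as an immediate consequence of \cref{lem:coh_red}, obtained by applying the lemma with $B\defeq R(x,\blank)$, $a\defeq x$ for each $x:A$ and then passing to total spaces via \cref{thm:fib_equiv}, which is precisely what you do. The verification that the induced total map is the displayed $\mathsf{coh\usc{}red}$ is indeed immediate from the definitions.
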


\begin{defn}
For any reflexive graph $\mathcal{A}$ we define the span $\tilde{\mathcal{A}}$ of $\mathcal{A}$ to consist of
\begin{equation*}
\begin{tikzcd}
\tilde{A}_0 & \tilde{A}_1 \arrow[l,swap,"\pi_1"] \arrow[r,"\pi_2"] & \tilde{A}_0
\end{tikzcd}
\end{equation*}
where $\tilde{A}_0\defeq A_0$ and $\tilde{A}_1\defeq\sm{x,y:A_0}A_1(x,y)$.
\end{defn}

\begin{lem}\label{lem:rcoeq_into_disc}
For any reflexive graph $\mathcal{A}$, and any type $X$, the map
\begin{equation*}
f\mapsto (f_0,f_0,\lam{(x,y,e)}\edg{f}(e)) : \mathsf{rGph}(\mathcal{A},\Delta X) \to \mathsf{cocone}_{\tilde{\mathcal{A}}}(X),
\end{equation*}
is an equivalence, where $\tilde{\mathcal{A}}$ is the span $(\tilde{A}_1,\pi_1,\pi_2)$ from $A_0$ to $A_0$. 
\end{lem}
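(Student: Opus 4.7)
The plan is to show that the fibers of the given map are all contractible, which by \cref{thm:contr_equiv} suffices to conclude that it is an equivalence. Fix a cocone $(i_0, j_0, H_0) : \mathsf{cocone}_{\tilde{\mathcal{A}}}(X)$. A point in the fiber consists of a reflexive graph morphism $f = (\pts{f}, \edg{f}, \rfx{f})$ together with an identification of cocones $(\pts{f}, \pts{f}, \lam{(x,y,e)}\edg{f}(e)) = (i_0, j_0, H_0)$. Unfolding this equality of $\Sigma$-types via \cref{thm:eq_sigma} presents the fiber as the type of tuples $(p, q, \sigma)$ where $p : \pts{f} = i_0$, $q : \pts{f} = j_0$, and $\sigma$ is a coherence expressing that $\edg{f}$, transported along $(p, q)$, agrees with $H_0$.

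First I would contract $\pts{f}$ and $p$ using the contractibility of singletons (\cref{thm:total_path}), reducing to the case $\pts{f}\jdeq i_0$ and $p\jdeq\refl{i_0}$. Applying function extensionality, the remaining path $q : i_0 = j_0$ corresponds to a homotopy $\alpha : i_0\htpy j_0$, and the standard formula for transport in the family $j \mapsto i_0(x) = j(y)$ rewrites $\sigma$ into the pointwise equation
\[
\ct{\edg{f}(x,y,e)}{\alpha(y)} = H_0(x,y,e).
\]
This uniquely determines $\edg{f}(x,y,e) = \ct{H_0(x,y,e)}{\alpha(y)^{-1}}$, so I can contract $\edg{f}$ together with $\sigma$ away. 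Substituting this formula into the type of $\rfx{f}$ transforms it into $\prod_{x:\pts{A}}\alpha(x) = H_0(x,x,\rfx{\mathcal{A}}(x))$, which pins $\alpha$ down to be the homotopy $\alpha_0(x)\defeq H_0(x,x,\rfx{\mathcal{A}}(x))$. Contractibility of the residual $\sum_{\alpha:i_0\htpy j_0} (\alpha \htpy \alpha_0)$ then follows from another application of \cref{thm:total_path}, combined with function extensionality.

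The main obstacle is bookkeeping of transport in nested $\Sigma$-types, especially because $\edg{f}$'s codomain depends on the two functional slots affected by $p$ and $q$. Invoking function extensionality promptly to replace paths of functions with homotopies, and then applying the standard transport formula for paths in an identity family, keeps the calculation tractable. An equally acceptable route would be to construct the inverse explicitly: send $(i, j, H)$ to the morphism with $\pts{f}\defeq i$, $\edg{f}(x,y,e)\defeq \ct{H(x,y,e)}{H(y,y,\rfx{\mathcal{A}}(y))^{-1}}$, and $\rfx{f}(x)\defeq\rightinv(H(x,x,\rfx{\mathcal{A}}(x)))$, and then verify that both round trips are identities up to the usual coherences.
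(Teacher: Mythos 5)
Your argument is correct: computing the fiber over a fixed cocone $(i_0,j_0,H_0)$, contracting $\pts{f}$ together with $p$, converting $q$ into a homotopy $\alpha:i_0\htpy j_0$ by function extensionality, solving the transported homotopy component uniquely for $\edg{f}(e)=\ct{H_0(x,y,e)}{\alpha(y)^{-1}}$, and then using the reflexivity coherence to pin $\alpha$ down to $x\mapsto H_0(x,x,\rfx{\mathcal{A}}(x))$ does leave a contractible residue, so \cref{thm:contr_equiv} applies. It is, however, a different decomposition from the paper's. The paper first replaces both sides by ``primed'' versions via function extensionality --- $\mathsf{rGph}(\mathcal{A},\Delta X)$ becomes the type of triples whose reflexivity coherence is a single identification $\mathsf{eq\usc{}htpy}(\lam{x}\edg{f}(\rfx{\mathcal{A}}(x)))=\refl{\pts{f}}$, and $\mathsf{cocone}_{\tilde{\mathcal{A}}}(X)$ is uncurried --- and then places the map of the statement in a commuting square whose other three sides are known equivalences, the bottom one being the coherence reduction map of \cref{lem:coh_red} applied to the type $\pts{A}\to X$ with relation $R(f,g)\defeq\prd{x,y:\pts{A}}\edg{A}(x,y)\to (f(x)=g(y))$ and $\alpha(H)\defeq\mathsf{eq\usc{}htpy}(\lam{x}H(\rfx{\mathcal{A}}(x)))$; the 3-for-2 property then finishes the proof. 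Since \cref{lem:coh_red} is itself proved by a short fiber computation, your proof essentially inlines that lemma at this particular instance: the paper's route buys modularity (the transport bookkeeping is done once, abstractly, for an arbitrary family), at the price of the two currying equivalences, while yours is self-contained but must carry the funext/transport coherences through the nested $\Sigma$-type --- exactly the bookkeeping you flag. Two small remarks: \cref{thm:total_path} is stated for $\sm{y:A}x=y$, so you are invoking its equally valid mirror image $\sm{y:A}y=x$; and the explicit inverse you sketch is the same candidate that coherence reduction produces, with the round-trip verification being the one part you leave genuinely unchecked.
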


\begin{proof}
First observe that $\mathsf{rGph}(\mathcal{A},\Delta X)$ is equivalent to the type
\begin{equation*}
\sm{\pts{f}:\pts{A}\to X}{\edg{f}:\prd{x,y:\pts{A}}\edg{A}(x,y)\to f(x)=f(y)}\mathsf{eq\usc{}htpy}(\lam{x}\edg{f}(\rfx{\mathcal{A}}(x)))=\refl{\pts{f}}.
\end{equation*}
We write $\mathsf{rGph}'(\mathcal{A},\Delta X)$ for the latter type. Furthermore, we observe that the type $\mathsf{cocone}_{\tilde{\mathcal{A}}}(X)$ is equivalent to the type
\begin{equation*}
\mathsf{cocone}'_{\tilde{A}}(X) \defeq \sm{f,g:\pts{A}\to X}\prd{x,y:\pts{A}}\edg{A}(x,y)\to f(x)=f(y).
\end{equation*}
Now we note that we have a commuting square
\begin{equation*}
\begin{tikzcd}[column sep=large]
\mathsf{rGph}(\mathcal{A},\Delta X) \arrow[d] \arrow[r,"\varphi"] & \mathsf{cocone}_{\tilde{\mathcal{A}}}(X) \arrow[d] \\
\mathsf{rGph}'(\mathcal{A},\Delta X) \arrow[r,swap,"\mathsf{coh\usc{}red}"] & \mathsf{cocone}'_{\tilde{\mathcal{A}}}(X).
\end{tikzcd}
\end{equation*}
where the map $\varphi$ is the map in the statement, and the coherence reduction map uses the homotopy
\begin{equation*}
(f,g,H)\mapsto \mathsf{eq\usc{}htpy}(\lam{x}H(\rfx{A}(x))).
\end{equation*}
Both vertical maps and the coherence reduction map are equivalences, so it follows that the asserted map is an equivalence.
\end{proof}

\begin{prp}\label{thm:rcoeq_is_pushout}
Let $\mathcal{A}$ be a reflexive graph, and let $X$ be a type equipped with $f:\mathsf{rGph}(\mathcal{A},\Delta X)$. Then the following are equivalent:
\begin{enumerate}
\item $X$ is a reflexive coequalizer of $\mathcal{A}$.
\item The square
\begin{equation*}
\begin{tikzcd}
\sm{x,y:\pts{A}}\edg{A}(x,y) \arrow[r,"\pi_2"] \arrow[d,swap,"\pi_1"] & \pts{A} \arrow[d,"\pts{f}"] \\
\pts{A} \arrow[r,swap,"\pts{f}"] & X
\end{tikzcd}
\end{equation*}
which commutes by $\lam{(i,j,e)}\edg{f}(e)$, is a pushout square.
\end{enumerate}
In particular, there is a reflexive coequalizer for every reflexive graph $\mathcal{A}$, for which we write
\begin{equation*}
\mathsf{constr}:\mathsf{rGph}(\mathcal{A},\Delta(\mathsf{rcoeq}(\mathcal{A}))).
\end{equation*}
\end{prp}

\begin{proof}
The triangle
\begin{equation*}
\begin{tikzcd}
& Y^X \arrow[dl,swap,"{\Delta(\blank)\circ f}"] \arrow[dr,"\mathsf{cocone\usc{}map}"] \\
\mathsf{rGph}(\mathcal{A},\Delta Y) \arrow[rr,"\eqvsym"] & & \mathsf{cocone}(Y)
\end{tikzcd}
\end{equation*}
commutes, for any type $Y$. In this triangle, the bottom map is the map defined in \cref{lem:rcoeq_into_disc}, which is an equivalence, so the claim follows by the 3-for-2 property of equivalences.
\end{proof}

\begin{eg}\label{eg:rcoeq} {}~
\begin{enumerate}
\item The reflexive graph quotient of the discrete graph $\Delta(X)$ of a type $X$ is just $X$ itself. It also follows that $\mathsf{constr} : \mathsf{rGph}(\mathcal{A},\Delta(\mathsf{rcoeq}(\mathcal{A})))$ is an equivalence of reflexive graphs if and only if $\mathcal{A}$ is a discrete graph.
\item The reflexive graph quotient of the indiscrete graph $\nabla(X)$ on a type $X$ is the join square $\join{X}{X}$.
\item Let $\mathcal{A}$ be a non-reflexive graph, and let $F(\mathcal{A})$ be the reflexive graph obtained by freely adding a proof of reflexivity, as in \autoref{eg:freerfx} of \autoref{eg:rgraph_morphism}. Then the non-reflexive graph quotient of $\mathcal{A}$ is the reflexive graph quotient of $F(\mathcal{A})$. 
\item Let $f:A\to B$ be a map. Then the reflexive coequalizer of the reflexive graph $(A,k(f))$ is the fiberwise join $\join[X]{A}{A}$, which was introduced in \cref{defn:fib_join}.
\item Let $X$ be a type with base point $x_0:X$. Define the reflexive graph $SX$ by
\begin{align*}
\pts{SX} & \defeq \unit \\
\edg{SX} & \defeq \lam{\nameless}{\nameless}X \\
\rfx{SX} & \defeq \lam{\nameless}x_0.
\end{align*} 
The reflexive graph quotient of $SX$ is the suspension of $X$.
\item The reflexive coequalizer of the walking edge $\mathcal{I}$ is the interval, which is contractible.
\end{enumerate}
\end{eg}

\section{Descent for reflexive coequalizers}\label{sec:descent_rcoeq}

Recall from \cite{AnelBiedermanFinsterJoyal} that morphism $f:X\to Y$ is right orthogonal to a map $i:A\to B$ if and only if the square
\begin{equation*}
\begin{tikzcd}
X^B \arrow[r,"\blank\circ i"] \arrow[d,swap,"f\circ \blank"] & X^A \arrow[d,"f\circ \blank"] \\
Y^B \arrow[r,swap,"\blank\circ i"] & Y^A
\end{tikzcd}
\end{equation*}
is a pullback square. We use this way of stating the orthogonality condition in our definition of fibrations of reflexive graphs: a morphism $f:\mathsf{rGph}(\mathcal{X},\mathcal{Y})$ of reflexive graphs is said to be right orthogonal to a morphism $i:\mathsf{rGph}(\mathcal{A},\mathcal{B})$ if the square
\begin{equation*}
\begin{tikzcd}
\mathsf{rGph}(\mathcal{X},\mathcal{B}) \arrow[r,"\blank\circ i"] \arrow[d,swap,"f\circ\blank"] & \mathsf{rGph}(\mathcal{X},\mathcal{A}) \arrow[d,"f\circ\blank"] \\
\mathsf{rGph}(\mathcal{Y},\mathcal{B}) \arrow[r,swap,"\blank\circ i"] & \mathsf{rGph}(\mathcal{Y},\mathcal{A})
\end{tikzcd}
\end{equation*}
is a pullback square.%
\footnote{One may note that the category of reflexive graphs is also locally cartesian closed (has $\Pi$-types), so that we could also state the orthogonality condition internally. Although this is straightforward, it is extra work and we will not need it in this thesis.}

\begin{defn}\label{defn:graph_fibration}
A morphism $f:\mathsf{rGph}(\mathcal{A},\mathcal{B})$ is said to be a \define{left fibration} of reflexive graphs if it is right orthogonal to the morphism $0:\mathsf{rGph}(\unit,\mathcal{I})$. Similarly, we say that $f$ is a \define{right fibration} of reflexive graphs if it is right orthogonal to the morphism $1:\mathsf{rGph}(\unit,\mathcal{I})$, and we say that $f$ is a \define{fibration} if it is both a left and a right fibration.
\end{defn}

\begin{lem}\label{lem:leftfib_Inull}
Suppose $f:\mathsf{rGph}(\mathcal{B},\mathcal{A})$ is a left or a right fibration. Then $f$ is right orthogonal to the terminal projection $t:\mathsf{rGph}(\mathcal{I},\unit)$.
\end{lem}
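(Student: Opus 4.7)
The plan is to establish the claim by pasting of pullback squares, using the fact that the terminal projection $t:\mathcal{I}\to\unit$ is a common retraction of the endpoint inclusions $0,1:\unit\to\mathcal{I}$.

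Suppose first that $f$ is a left fibration, so that the right orthogonality square at $0:\unit\to\mathcal{I}$ is a pullback. The aim is to show that the right orthogonality square at $t$ is a pullback. Since $t\circ 0\htpy \idfunc_\unit$, the two squares fit together horizontally as
\begin{equation*}
\begin{tikzcd}[column sep=3em]
\mathsf{rGph}(\unit,\mathcal{B}) \arrow[r,"\blank\circ t"] \arrow[d,swap,"f\circ\blank"] & \mathsf{rGph}(\mathcal{I},\mathcal{B}) \arrow[d,"f\circ\blank"] \arrow[r,"\blank\circ 0"] & \mathsf{rGph}(\unit,\mathcal{B}) \arrow[d,"f\circ\blank"] \\
\mathsf{rGph}(\unit,\mathcal{A}) \arrow[r,swap,"\blank\circ t"] & \mathsf{rGph}(\mathcal{I},\mathcal{A}) \arrow[r,swap,"\blank\circ 0"] & \mathsf{rGph}(\unit,\mathcal{A}).
\end{tikzcd}
\end{equation*}
The two horizontal composites are homotopic to the identity (using associativity and the unit law for composition in $\mathsf{rGph}$, together with $t\circ 0\htpy\idfunc_\unit$), so the outer rectangle is trivially a pullback square. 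The right-hand square is a pullback by the hypothesis that $f$ is a left fibration. By the pasting lemma for pullbacks (\cref{thm:pb_pasting}), the left-hand square is a pullback, which is precisely what it means for $f$ to be right orthogonal to $t$.

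The case in which $f$ is a right fibration is entirely analogous: one replaces $0$ by $1$ throughout and invokes $t\circ 1\htpy \idfunc_\unit$. The only point requiring a bit of care is the handling of the coherences for composition of reflexive graph morphisms (associativity and unit laws, which only hold up to homotopy), but these are routine and do not affect the pasting argument, which only needs the outer rectangle and the right-hand square to be pullbacks up to homotopy.
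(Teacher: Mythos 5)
Your proof is correct and follows essentially the same route as the paper: horizontally paste the orthogonality square at $t$ with the orthogonality square at $0$ (resp.\ $1$), observe that the composite $\unit\to\mathcal{I}\to\unit$ is the identity so the outer rectangle is a pullback, and conclude by the pasting lemma \cref{thm:pb_pasting} that the left square is a pullback. Your extra remark about the up-to-homotopy coherences of composition of reflexive graph morphisms is a fair but routine caveat that the paper elides as well.
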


\begin{proof}
We prove the assertion assuming that $f$ is a left fibration, the case of a right fibration being similar. Consider the diagram
\begin{equation*}
\begin{tikzcd}
\mathsf{rGph}(\unit,\mathcal{A}) \arrow[d] \arrow[r] & \mathsf{rGph}(\mathcal{I},\mathcal{A}) \arrow[r] \arrow[d] & \mathsf{rGph}(\unit,\mathcal{A}) \arrow[d] \\
\mathsf{rGph}(\unit,\mathcal{B}) \arrow[r] & \mathsf{rGph}(\mathcal{I},\mathcal{B}) \arrow[r] & \mathsf{rGph}(\unit,\mathcal{B})
\end{tikzcd}
\end{equation*}
The square on the right is a pullback. Since the composite $\unit \to \mathcal{I}\to \unit$ is the identity morphism of reflexive graphs, the outer rectangle is also a pullback. Therefore the square on the left is a pullback.
\end{proof}

\begin{defn}
A morphism $f:\mathsf{rGph}(\mathcal{A},\mathcal{B})$ of reflexive graphs is said to be \define{left cartesian} if the naturality square
\begin{equation*}
\begin{tikzcd}
\sm{i,j:\pts{A}}\edg{A}(i,j) \arrow[d] \arrow[r,"\pi_1"] & \pts{A} \arrow[d] \\
\sm{i,j:\pts{B}}\edg{B}(i,j) \arrow[r,swap,"\pi_1"] & \pts{B}
\end{tikzcd}
\end{equation*}
is a pullback square. Similarly we say that $f$ is \define{right cartesian} if the naturality square
\begin{equation*}
\begin{tikzcd}
\sm{i,j:\pts{A}}\edg{A}(i,j) \arrow[d] \arrow[r,"\pi_2"] & \pts{A} \arrow[d] \\
\sm{i,j:\pts{B}}\edg{B}(i,j) \arrow[r,swap,"\pi_2"] & \pts{B}
\end{tikzcd}
\end{equation*}
is a pullback square, and we say that $f$ is \define{cartesian} if it is both left and right cartesian.
\end{defn}

\begin{eg}
A common way to obtain a cartesian morphism of reflexive graphs is via equifibered families.
An \define{equifibered family} $\mathcal{E}$ over $\mathcal{A}$ consists of
\begin{align*}
\pts{E} & : \pts{A}\to\UU \\
\edg{E} & : \prd{i,j:\pts{A}} \edg{A}(i,j)\to (\eqv{\pts{E}(i)}{\pts{E}(j)}) \\
\rfx{E} & : \prd{i:\pts{A}} \edg{E}(\rfx{\mathcal{A}}(i))\htpy \idfunc[\pts{E}(i)]
\end{align*}
Given an equifibered family $\mathcal{E}$, we form the reflexive graph $\msm{\mathcal{A}}{\mathcal{E}}$ by
\begin{align*}
\pts{\msm{\mathcal{A}}{\mathcal{E}}} & \defeq \sm{x:\pts{A}}\pts{E}(x) \\
\edg{\msm{\mathcal{A}}{\mathcal{E}}}((x,u),(y,v)) & \defeq \sm{e:\edg{A}(x,y)} \edg{E}(e,u) = v \\
\rfx{\msm{\mathcal{A}}{\mathcal{E}}}(x,u) & \defeq (\rfx{\mathcal{A}}(x),\rfx{\mathcal{E}}(x,u)).
\end{align*}
There is an obvious projection morphism $\proj 1 : \mathsf{rGph}(\msm{\mathcal{A}}{\mathcal{E}},\mathcal{A})$, which is cartesian, because the squares in the diagram
\begin{equation*}
\begin{tikzcd}
\pts{\msm{\mathcal{A}}{\mathcal{E}}} \arrow[d,"\proj 1"] & \sm{s,t:\msm{\mathcal{A}}{\mathcal{E}}}\edg{\msm{\mathcal{A}}{\mathcal{E}}} \arrow[l] \arrow[r] \arrow[d,swap,"\proj 1"] & \pts{\msm{\mathcal{A}}{\mathcal{E}}} \arrow[d] \\
\pts{\mathcal{A}} & \sm{x,y:\pts{A}}\edg{A}(x,y) \arrow[l] \arrow[r] & \pts{\mathcal{A}}
\end{tikzcd}
\end{equation*}
are pullback squares by \cref{thm:pb_fibequiv}.
\end{eg}

\begin{lem}
Consider $f:\mathsf{rGph}(\mathcal{B},\mathcal{A})$ and $g:\mathsf{rGph}(\mathcal{C},\mathcal{B})$, and suppose that $f$ is (left/right) cartesian. Then $g$ is (left/right) cartesian if and only if $f\circ g$ is (left/right) cartesian, respectively.
\end{lem}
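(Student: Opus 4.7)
The plan is to reduce the claim to a direct application of the pasting lemma for pullbacks (\cref{thm:pb_pasting}). I will treat the left cartesian case in detail; the right cartesian case is entirely analogous (with $\pi_2$ in place of $\pi_1$), and the combined cartesian case follows by conjunction.

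First I would observe that the action on edges of the composite $f \circ g$ is the pointwise composite of the actions on edges of $f$ and $g$, so the induced map on total spaces $\sm{i,j:\pts{C}}\edg{C}(i,j) \to \sm{i,j:\pts{A}}\edg{A}(i,j)$ is the composite of the corresponding maps induced by $g$ and $f$. Consequently, the naturality square for $f \circ g$ with respect to $\pi_1$ is the vertical pasting of the naturality squares for $g$ and for $f$:
\begin{equation*}
\begin{tikzcd}
\sm{i,j:\pts{C}}\edg{C}(i,j) \arrow[d] \arrow[r,"\pi_1"] & \pts{C} \arrow[d] \\
\sm{i,j:\pts{B}}\edg{B}(i,j) \arrow[d] \arrow[r,"\pi_1"] & \pts{B} \arrow[d] \\
\sm{i,j:\pts{A}}\edg{A}(i,j) \arrow[r,swap,"\pi_1"] & \pts{A}.
\end{tikzcd}
\end{equation*}

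Next, the hypothesis that $f$ is left cartesian states precisely that the bottom square is a pullback square. The top square is the naturality square defining left cartesianness of $g$, and the outer rectangle is the naturality square defining left cartesianness of $f \circ g$. By \cref{thm:pb_pasting}, applied with the bottom square as the given pullback, the top square is a pullback if and only if the outer rectangle is a pullback. This is exactly the statement that $g$ is left cartesian if and only if $f \circ g$ is left cartesian.

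The same argument, replacing $\pi_1$ by $\pi_2$ throughout and appealing to the right cartesianness of $f$, handles the right cartesian case. The combined cartesian statement is then the conjunction of the two. The only mild subtlety to address will be checking that the two naturality squares really do paste to the naturality square of the composite on the nose (up to the canonical identifications), which is immediate from the componentwise definition of composition of reflexive graph morphisms; no coherence issue intervenes because we only need commutativity up to homotopy, which is all that \cref{thm:pb_pasting} demands.
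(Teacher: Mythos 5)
Your proposal is correct and matches the paper's argument: the paper proves this lemma simply as ``Immediate by \cref{thm:pb_pasting}'', and your write-up just spells out the pasting of the two naturality squares that this citation implicitly invokes. The identification of the composite's naturality square with the vertical pasting, and the appeal to the pasting lemma with the $f$-square as the given pullback, is exactly the intended reasoning.
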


\begin{proof}
Immediate by \cref{thm:pb_pasting}.
\end{proof}

\begin{prp}\label{prp:fib_cart}
Let $f:\mathsf{rGph}(\mathcal{B},\mathcal{A})$ be a morphism of reflexive graphs. The following are equivalent:
\begin{enumerate}
\item $f$ is a (left/right) fibration.
\item $f$ is (left/right) cartesian.
\end{enumerate}
\end{prp}

\begin{proof}
We only show that $f$ is a left fibration if and only if $f$ is left cartesian, the right case being similar.

For any morphism $f:\mathsf{rGph}(\mathcal{B},\mathcal{A})$ we have the commuting cube
\begin{equation*}
\begin{tikzcd}
& \mathsf{rGph}(\mathcal{I},\mathcal{B}) \arrow[dl] \arrow[d] \arrow[dr] \\
\mathsf{rGph}(\unit,\mathcal{B}) \arrow[d] & \sm{i,j:\pts{B}}\edg{B}(i,j) \arrow[dl] \arrow[dr] & \mathsf{rGph}(\mathcal{I},\mathcal{A}) \arrow[dl,crossing over] \arrow[d] \\
\pts{B} \arrow[dr] & \mathsf{rGph}(\unit,\mathcal{A}) \arrow[d] \arrow[from=ul,crossing over] & \sm{i,j:\pts{A}}\edg{A}(i,j) \arrow[dl] \\
\phantom{\sm{i,j:\pts{A}}\edg{A}(i,j)} & \pts{A}
\end{tikzcd}
\end{equation*}
in which all vertical maps are equivalences. Therefore the top square is a pullback if and only if the bottom square is a pullback, which proves that (ii) holds if and only if (iii) holds.
\end{proof}

\begin{cor}
Suppose that $f:\mathsf{rGph}(\mathcal{B},\mathcal{A})$ is left or right cartesian. Then the square
\begin{equation*}
\begin{tikzcd}
\pts{B} \arrow[r] \arrow[d] & \sm{i,j:\pts{B}}\edg{B}(i,j) \arrow[d] \\
\pts{A} \arrow[r] & \sm{i,j:\pts{A}}\edg{A}(i,j)
\end{tikzcd}
\end{equation*}
is a pullback square.
\end{cor}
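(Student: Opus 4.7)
The plan is a one-line application of the pasting lemma for pullbacks (\cref{thm:pb_pasting}). Suppose $f$ is left cartesian; the right cartesian case is entirely symmetric. Observe that both reflexivity maps $\rfx{\mathcal{A}}$ and $\rfx{\mathcal{B}}$ are sections of the corresponding first projections, so the horizontal composites in the rectangle
\begin{equation*}
\begin{tikzcd}
\pts{B} \arrow[r,"\rfx{\mathcal{B}}"] \arrow[d,"\pts{f}"'] & \sm{i,j:\pts{B}}\edg{B}(i,j) \arrow[r,"\pi_1"] \arrow[d] & \pts{B} \arrow[d,"\pts{f}"] \\
\pts{A} \arrow[r,"\rfx{\mathcal{A}}"'] & \sm{i,j:\pts{A}}\edg{A}(i,j) \arrow[r,"\pi_1"'] & \pts{A}
\end{tikzcd}
\end{equation*}
are identities.

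The right-hand square is a pullback square by the assumption that $f$ is left cartesian. The outer rectangle is a commuting square whose top and bottom maps are the identity; in particular the bottom map is an equivalence, and the parallel top map is too, so by \cref{cor:pb_equiv} the outer rectangle is a pullback.

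Since the right-hand square is a pullback and the outer rectangle is a pullback, \cref{thm:pb_pasting} implies that the left-hand square is a pullback, which is exactly the required assertion. For the right cartesian case one pastes instead with the $\pi_2$-naturality square, using that $\pi_2\circ\rfx{\mathcal{B}}$ and $\pi_2\circ\rfx{\mathcal{A}}$ are identities. There is no real obstacle here; the only thing to verify is that the horizontal compositions are (judgmentally) identities, which is immediate from the definitions of $\rfx{}$, $\pi_1$, and $\pi_2$.
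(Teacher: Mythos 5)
Your proof is correct and is essentially the argument the paper intends: it is the same pasting trick used for the fibration analogue (\cref{lem:leftfib_Inull}), carried out directly on the $\pi_1$- (resp.\ $\pi_2$-) naturality squares, using that $\pi_1\circ\rfx{}$ and $\pi_2\circ\rfx{}$ are the identities and then applying \cref{thm:pb_pasting}. One small nitpick: \cref{cor:pb_equiv} as stated asks a \emph{vertical} map to be an equivalence, so to justify that the outer rectangle is a pullback you should either transpose the square first or note directly that any commuting square whose two parallel horizontal maps are equivalences is a pullback (its gap map is an equivalence by the 3-for-2 property, since the canonical pullback contracts onto $\pts{B}$).
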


\begin{prp}
Consider a commuting square
\begin{equation*}
\begin{tikzcd}
\mathcal{C} \arrow[r] \arrow[d] & \mathcal{B} \arrow[d] \\
\mathcal{A} \arrow[r] & \mathcal{X}
\end{tikzcd}
\end{equation*}
of reflexive graphs. The following are equivalent:
\begin{enumerate}
\item The square is a pullback square.
\item The squares
\begin{equation*}
\begin{tikzcd}
\tilde{C}_0 \arrow[r] \arrow[d] & \tilde{B}_0 \arrow[d] & \tilde{C}_1 \arrow[r] \arrow[d] & \tilde{B}_1 \arrow[d] \\
\tilde{A}_0 \arrow[r] & \tilde{X}_0 & \tilde{A}_1 \arrow[r] & \tilde{X}_1
\end{tikzcd}
\end{equation*}
are pullback squares.
\end{enumerate}
\end{prp}

\begin{proof}
Straightforward.
\end{proof}

The following proposition is true more generally:

\begin{prp}\label{thm:fibration_pullback}
Consider a pullback square
\begin{equation*}
\begin{tikzcd}
\mathcal{B}' \arrow[r] \arrow[d,swap,"{f'}"] & \mathcal{B} \arrow[d,"f"] \\
\mathcal{A}' \arrow[r] & \mathcal{A}
\end{tikzcd}
\end{equation*}
of reflexive graphs, and let $h:\mathsf{rGph}(\mathcal{Y},\mathcal{X})$ be a morphism of reflexive graphs. If $f$ is right orthogonal to $h$, then so is $f'$. In particular, if $f$ is a (left/right) fibration, then so is $f'$. 
\end{prp}

\begin{proof}
Consider the commuting cube
\begin{equation*}
\begin{tikzcd}
& \mathsf{rGph}(\mathcal{Y},\mathcal{B}') \arrow[dl] \arrow[d] \arrow[dr] \\
\mathsf{rGph}(\mathcal{Y},\mathcal{A}') \arrow[d] & \mathsf{rGph}(\mathcal{X},\mathcal{B}') \arrow[dl] \arrow[dr] & \mathsf{rGph}(\mathcal{Y},\mathcal{B}) \arrow[dl,crossing over] \arrow[d] \\
\mathsf{rGph}(\mathcal{X},\mathcal{A}') \arrow[dr] & \mathsf{rGph}(\mathcal{Y},\mathcal{A}) \arrow[from=ul,crossing over] \arrow[d] & \mathsf{rGph}(\mathcal{X},\mathcal{B}) \arrow[dl] \\
& \mathsf{rGph}(\mathcal{X},\mathcal{A}). & \phantom{\mathsf{rGph}(\mathcal{X},\mathcal{A}')}
\end{tikzcd}
\end{equation*}
Note $\mathsf{rGph}(\mathcal{X},\blank)$ preserves pullbacks for any reflexive graph $\mathcal{X}$. Therefore it follows that the top and bottom squares are pullback squares. Moreover, the square on the front right is a pullback square by the assumption that $f$ is right orthogonal to $h:\mathsf{rGph}(\mathcal{Y},\mathcal{X})$. Now it follows by the pasting property of pullbacks that the square on the back left is a pullback square. In other words, $f'$ is right orthogonal to $h$. 
\end{proof}

\begin{prp}\label{prp:fibration_discrete}
For any map $f:X\to Y$, the morphism $\Delta f : \Delta X \to \Delta Y$ is a fibration.
\end{prp}

\begin{proof}
The commuting square
\begin{equation*}
\begin{tikzcd}
\mathsf{rGph}(\mathcal{I},\Delta X) \arrow[d] \arrow[r,"0^\ast/1^\ast"] & \mathsf{rGph}(\unit,\Delta X) \arrow[d] \\
\mathsf{rGph}(\mathcal{I},\Delta Y) \arrow[r,swap,"0^\ast/1^\ast"] & \mathsf{rGph}(\unit,\Delta Y)
\end{tikzcd}
\end{equation*}
is a pullback square, since both horizontal maps are equivalences.
\end{proof}

\begin{thm}\label{thm:rcoeq_cartesian}
Consider a commuting square
\begin{equation}\label{eq:rcoeq_descent}
\begin{tikzcd}
\mathcal{B} \arrow[r] \arrow[d,swap,"f"] & \Delta Y \arrow[d] \\
\mathcal{A} \arrow[r] & \Delta X 
\end{tikzcd}
\end{equation}
of reflexive graphs, where $f:\mathcal{B}\to\mathcal{A}$ is a fibration of reflexive graphs. Then the following are equivalent:
\begin{enumerate}
\item The square is a pullback square of reflexive graphs.
\item The square
\begin{equation*}
\begin{tikzcd}
\mathsf{rcoeq}(\mathcal{B}) \arrow[r] \arrow[d,swap,"\mathsf{rcoeq}(f)"] & Y \arrow[d] \\
\mathsf{rcoeq}(\mathcal{A}) \arrow[r] & X 
\end{tikzcd}
\end{equation*}
of types is a pullback square.
\end{enumerate}
\end{thm}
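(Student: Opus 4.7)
The plan is to apply \cref{thm:cartesian_cube} to the cube of types
\[
\begin{tikzcd}
& \tilde{B}_1 \arrow[dl,swap,"\pi_1"] \arrow[dr,"\pi_2"] \arrow[d,"\tilde{f}_1"] \\
\pts{B} \arrow[d,swap,"\pts{f}"] & \tilde{A}_1 \arrow[dl,swap,"\pi_1" near start] \arrow[dr,"\pi_2" near start] & \pts{B} \arrow[dl,crossing over] \arrow[d,"\pts{f}"] \\
\pts{A} \arrow[dr] & Y \arrow[d] \arrow[from=ul,crossing over] & \pts{A} \arrow[dl] \\
& X,
\end{tikzcd}
\]
whose top and bottom spans are the underlying spans of $\mathcal{B}$ and $\mathcal{A}$ (with pushouts $\mathsf{rcoeq}(\mathcal{B})$ and $\mathsf{rcoeq}(\mathcal{A})$ by \cref{thm:rcoeq_is_pushout}), and whose maps $\pts{A}\to X$, $\pts{B}\to Y$ and $Y\to X$ come from the reflexive graph morphisms $\mathcal{A}\to\Delta X$, $\mathcal{B}\to\Delta Y$ and the right-hand side of the given square (using \cref{thm:emb_disc} for the last). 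Since $f$ is a fibration it is cartesian by \cref{prp:fib_cart}, so the two vertical back squares of the cube (the naturality squares for $\pi_1$ and for $\pi_2$) are pullback squares, and the hypothesis of \cref{thm:cartesian_cube} is met.

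Because both legs of the span of $\mathcal{A}$ share the codomain $\pts{A}$ (and likewise for $\mathcal{B}$), the two vertical front squares of the cube coincide, and are both the \emph{vertex square} $\pts{B}\to Y$ over $\pts{A}\to X$. Thus \cref{thm:cartesian_cube} directly gives that the vertex square is a pullback if and only if the square
\[
\begin{tikzcd}
\mathsf{rcoeq}(\mathcal{B}) \arrow[r] \arrow[d,swap,"\mathsf{rcoeq}(f)"] & Y \arrow[d] \\
\mathsf{rcoeq}(\mathcal{A}) \arrow[r] & X
\end{tikzcd}
\]
is a pullback, i.e., if and only if (ii) holds.

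To relate this to (i), I would use that a square of reflexive graphs is a pullback precisely when both its vertex square and its edge square (on the types $\tilde{(-)}_1$) are pullback squares. Using the equivalence $\sm{y_1,y_2:Y}y_1=y_2\eqvsym Y$ given by the first projection (and similarly for $X$), the edge square is identified with the outer rectangle of the pasting
\[
\begin{tikzcd}
\tilde{B}_1 \arrow[r,"\pi_1"] \arrow[d,swap,"\tilde{f}_1"] & \pts{B} \arrow[r] \arrow[d,swap,"\pts{f}"] & Y \arrow[d] \\
\tilde{A}_1 \arrow[r,swap,"\pi_1"] & \pts{A} \arrow[r] & X,
\end{tikzcd}
\]
whose left square is a pullback because $f$ is left cartesian. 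By the pasting lemma for pullbacks, the outer rectangle is a pullback iff the right (vertex) square is. Chaining yields (i) iff the vertex square is a pullback iff (ii). The one mildly delicate step is the identification of the edge square with this pasting rectangle: a routine unfolding checking that the edge-component of $\mathcal{B}\to\Delta Y$, post-composed with $\pi_1$, agrees with the vertex-component pre-composed with $\pi_1$. Everything else is a direct application of \cref{thm:cartesian_cube} and the pasting lemma.
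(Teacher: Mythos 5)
Your proof is correct and is essentially the paper's own argument: the same cube on the total spaces of vertices and edges, with the two back faces pullbacks because a fibration is cartesian (\cref{prp:fib_cart}), an application of \cref{thm:cartesian_cube} to pass to the square of pushouts, and \cref{thm:rcoeq_is_pushout} to identify those pushouts with the reflexive coequalizers; your pasting comparison of the edge square with the vertex square merely spells out the step the paper asserts without comment. The only imprecision is the claim that, knowing only that the left square is a pullback, the outer rectangle is a pullback \emph{iff} the right (vertex) square is---\cref{thm:pb_pasting} does not give the direction from the outer rectangle to the right square---but that direction is never needed, since a pullback square of reflexive graphs already has a pullback vertex square, so the equivalence you actually use is the valid one.
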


\begin{proof}
The commuting square \cref{eq:rcoeq_descent} induces a commuting cube
\begin{equation*}
\begin{tikzcd}
& \tilde{B}_1 \arrow[dl] \arrow[dr] \arrow[d] \\
\tilde{B}_0 \arrow[d] & \tilde{A}_1 \arrow[dl] \arrow[dr] & \tilde{B}_0 \arrow[d] \arrow[dl,crossing over] \\
\tilde{A}_0 \arrow[dr] & Y \arrow[from=ul,crossing over] \arrow[d] & \tilde{A}_0 \arrow[dl] \\
& X
\end{tikzcd}
\end{equation*}
in which the two vertical squares in the back are pullback squares. Moreover, the square in \cref{eq:rcoeq_descent} is a pullback square if and only if the front two squares in the cube are pullback squares. The front two squares are pullback squares if and only if the square
\begin{equation*}
\begin{tikzcd}
\tilde{B}_0 \sqcup^{\tilde{B}_1} \tilde{B}_0 \arrow[r] \arrow[d] & Y \arrow[d] \\
\tilde{A}_0 \sqcup^{\tilde{A}_1} \tilde{A}_0 \arrow[r] & Y
\end{tikzcd}
\end{equation*}
is a pullback square. Since the pushouts on the left are reflexive coequalizers, the claim follows.
\end{proof}

\section{Colimits of diagrams over graphs}

\begin{defn}
Let $\mathcal{A}$ be a reflexive graph. A diagram $\mathcal{D}$ over $\mathcal{A}$ is a triple $(\pts{D},\edg{D},\rfx{\mathcal{D}})$ consisting of
\begin{align*}
\pts{D} & : \pts{A}\to \UU \\
\edg{D} & : \prd{i,j:\pts{A}} \edg{A}(i,j)\to (\pts{D}(i)\to \pts{D}(j)) \\
\rfx{\mathcal{D}} & : \prd{i:\pts{A}} \edg{D}(\rfx{\mathcal{A}}(i))\htpy \idfunc[\pts{D}(i)].
\end{align*}
\end{defn}

\begin{defn}
Let $\mathcal{D}$ be a diagram over $\mathcal{A}$. We form the \define{total graph} $\msm{\mathcal{A}}{\mathcal{D}}$ of $\mathcal{D}$ by
\begin{align*}
\pts{\msm{\mathcal{A}}{\mathcal{D}}} & \defeq \sm{i:\pts{A}}\pts{D}(i) \\
\edg{\msm{\mathcal{A}}{\mathcal{D}}}((i,x),(j,y)) & \defeq \sm{e:\edg{A}(i,j)} \edg{D}(e,x)=y \\
\rfx{\msm{\mathcal{A}}{\mathcal{D}}}((i,x)) & \defeq (\rfx{\mathcal{A}}(i),\rfx{\mathcal{D}}(i,x)).
\end{align*}
There is an obvious projection $\proj 1 : \mathsf{rGph}(\msm{\mathcal{A}}{\mathcal{D}},\mathcal{A})$.
\end{defn}

\begin{lem}
For any diagram $\mathcal{D}$ over $\mathcal{A}$, the projection $\proj 1 : \mathsf{rGph}(\msm{\mathcal{A}}{\mathcal{D}},\mathcal{A})$ is a left fibration.
\end{lem}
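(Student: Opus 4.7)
The plan is to apply Proposition \ref{prp:fib_cart}: it suffices to show that $\proj 1 : \msm{\mathcal{A}}{\mathcal{D}} \to \mathcal{A}$ is left cartesian, i.e.\ that the naturality square
\begin{equation*}
\begin{tikzcd}
\sm{s,t:\pts{\msm{\mathcal{A}}{\mathcal{D}}}} \edg{\msm{\mathcal{A}}{\mathcal{D}}}(s,t) \arrow[d,swap,"{\total{\edg{\proj 1}}}"] \arrow[r,"\pi_1"] & \pts{\msm{\mathcal{A}}{\mathcal{D}}} \arrow[d,"\proj 1"] \\
\sm{i,j:\pts{A}} \edg{A}(i,j) \arrow[r,swap,"\pi_1"] & \pts{A}
\end{tikzcd}
\end{equation*}
is a pullback square.

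First I would simplify the top-left corner. Unfolding the definition of $\msm{\mathcal{A}}{\mathcal{D}}$ and its edges, and contracting the singleton $\sm{y:\pts{D}(j)} \edg{D}(e,x) = y$, I obtain an equivalence
\begin{equation*}
\Big(\sm{s,t:\pts{\msm{\mathcal{A}}{\mathcal{D}}}} \edg{\msm{\mathcal{A}}{\mathcal{D}}}(s,t)\Big) \eqvsym \Big(\sm{(i,j,e):\sm{i,j:\pts{A}}\edg{A}(i,j)} \pts{D}(i)\Big)
\end{equation*}
sending $((i,x),(j,y),e,p)$ to $((i,j,e),x)$. Under this equivalence the left vertical becomes the first projection, and the top horizontal becomes $((i,j,e),x)\mapsto (i,x)$.

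Thus the commuting square above is (up to equivalence on the top-left vertex) precisely
\begin{equation*}
\begin{tikzcd}[column sep=large]
\sm{(i,j,e):\sm{i,j}\edg{A}(i,j)} \pts{D}(i) \arrow[d,swap,"\proj 1"] \arrow[r,"\total[\pi_1]{\idfunc}"] & \sm{i:\pts{A}} \pts{D}(i) \arrow[d,"\proj 1"] \\
\sm{i,j:\pts{A}}\edg{A}(i,j) \arrow[r,swap,"\pi_1"] & \pts{A},
\end{tikzcd}
\end{equation*}
which is exactly of the form treated in \cref{thm:pb_fibequiv}, with fiberwise transformation the identity $\pts{D}(i)\to \pts{D}(i)$. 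Since the identity is trivially a fiberwise equivalence, \cref{thm:pb_fibequiv} yields that this square is a pullback square, and hence so is the original one. By \cref{prp:fib_cart}, $\proj 1$ is therefore a left fibration. The only subtle step is the singleton contraction used to reindex the top-left corner, and the bookkeeping needed to check that the resulting square really is the total space of the identity fiberwise transformation over $\pi_1$; beyond that, the proof is entirely routine.
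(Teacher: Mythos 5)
Your proposal is correct and follows essentially the same route as the paper: reduce via \cref{prp:fib_cart} to the left cartesian condition, contract the singleton $\sm{y:\pts{D}(j)}\edg{D}(e,x)=y$ to identify the total space of edges of $\msm{\mathcal{A}}{\mathcal{D}}$ with $\sm{i,j:\pts{A}}{e:\edg{A}(i,j)}\pts{D}(i)$, and observe that the resulting square over $\pi_1$ is a pullback. Your appeal to \cref{thm:pb_fibequiv} with the identity fiberwise transformation just makes explicit what the paper dismisses as ``clearly a pullback,'' so the arguments coincide.
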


\begin{proof}
It suffices to show that
\begin{equation*}
\begin{tikzcd}
\sm{(i,x),(j,y):\pts{\msm{\mathcal{A}}{\mathcal{D}}}} \edg{\msm{\mathcal{A}}{\mathcal{D}}}((i,x),(j,y)) \arrow[r,"\pi_1"] \arrow[d] & \msm{\mathcal{A}}{\mathcal{D}} \arrow[d] \\
\sm{i,j:\pts{A}}\edg{A}(i,j) \arrow[r,swap,"\pi_1"] & \pts{A}
\end{tikzcd}
\end{equation*}
is a pullback square. Note that this square is equivalent to the square
\begin{equation*}
\begin{tikzcd}[column sep=6em]
\sm{i,j:\pts{A}}{e:\edg{A}(i,j)}\pts{D}(i) \arrow[r,"{\lam{(i,j,e,x)}(i,x)}"] \arrow[d] & \sm{i:\pts{A}}\pts{D}(i) \arrow[d] \\
\sm{i,j:\pts{A}}\edg{A}(i,j) \arrow[r,swap,"\pi_1"] & \pts{A},
\end{tikzcd}
\end{equation*}
which is clearly a pullback square.
\end{proof}

\begin{rmk}
It can be shown that the total graph operation is an equivalence from diagrams over $\mathcal{A}$ to left fibrations over $\mathcal{A}$.
\end{rmk}

\begin{defn}
Let $D$ be a diagram over $\mathcal{A}$, and let $X$ be a type. A $D$-cocone on $X$ is a morphism $f:\mathsf{rGph}(\msm{\mathcal{A}}{\mathcal{D}},\Delta X)$ of reflexive graphs. A $D$-cocone $f$ on $X$ is colimiting if the map
\begin{equation*}
\Delta(\blank)\circ f : (X\to Y)\to \mathsf{rGph}(\msm{\mathcal{A}}{\mathcal{D}},\Delta Y)
\end{equation*}
is an equivalence for every type $Y$. 
\end{defn}

\begin{rmk}
By \cref{thm:rcoeq_is_pushout} it follows that every diagram $D$ over any reflexive graph $\mathcal{A}$ has a colimit. 
\end{rmk}

\begin{defn}
Let $\mathcal{D}$ and $\mathcal{D}'$ be diagrams over $\mathcal{A}$. A \define{natural transformation} $\tau : \mathcal{D}'\to \mathcal{D}$ of diagrams consists of
\begin{align*}
\pts{\tau} & : \prd{i:\pts{A}}\pts{D'}(i)\to\pts{D}(i) \\
\edg{\tau} & : \prd*{i,j:\pts{A}}{e:\edg{A}(i,j)}\prd{x:\pts{D}(i)}\pts{\tau}(\edg{D'}(e,x))=\edg{D}(e,\pts{\tau}(x)) \\
\rfx{\tau} & : \prd{i:\pts{A}}{x:\pts{D}(i)} \dpath{}{}{\edg{\tau}(\rfx{A}(i),x)}{\refl{\pts{\tau}(x)}}
\end{align*}
A natural transformation $\tau:\mathcal{D}'\to \mathcal{D}$ of diagrams over $\mathcal{A}$ is said to be \define{cartesian} if the commutative squares
\begin{equation*}
\begin{tikzcd}[column sep=large]
\pts{D'}(i) \arrow[r,"{\edg{D'}(e)}"] \arrow[d,swap,"\pts{\tau}(i)"] & \pts{D'}(j) \arrow[d,"\pts{\tau}(j)"] \\
\pts{D}(i) \arrow[r,swap,"{\edg{D}(e)}"] & \pts{D}(i)
\end{tikzcd}
\end{equation*}
are all pullback squares. 
\end{defn}

We will use the following general fact about pullbacks.

\begin{prp}\label{lem:pb_total}
Let $I$ be a type, and consider for each $i:I$ a commuting square
\begin{equation*}
\begin{tikzcd}
C_i \arrow[r,"q_i"] \arrow[d,swap,"p_i"] & B_i \arrow[d,"g_i"] \\
A_i \arrow[r,swap,"f_i"] & X_i.
\end{tikzcd}
\end{equation*}
with $H_i:f_i\circ p_i\htpy g_i\circ q_i$. Then the following are equivalent:
\begin{enumerate}
\item For each $i:I$, the square is a pullback square.
\item The induced square on total spaces
\begin{equation*}
\begin{tikzcd}
\sm{i:I}C_i \arrow[r,"\total{q}"] \arrow[d,swap,"\total{p}"] & \sm{i:I}B_i \arrow[d,"\total{g}"] \\
\sm{i:I}A_i \arrow[r,swap,"\total{f}"] & \sm{i:I}X_i.
\end{tikzcd}
\end{equation*}
which commutes via the homotopy $\total{H}:\total{f}\circ\total{p}\htpy\total{g}\circ\total{q}$, is a pullback square.
\end{enumerate}
\end{prp}

\begin{proof}
The gap map of the square in assertion (ii) factors as follows:
\begin{equation*}
\begin{tikzcd}[column sep=small]
\phantom{\Big(\sm{i:I}A_i\Big)\times_{\big(\sm{i:I}X_i\big)} \Big(\sm{i:I}B_i\Big)} & \sm{i:I}C_i \arrow[dl,swap,"\total{\mathsf{gap}(p_i,q_i,H_i)}"] \arrow[dr,"{\mathsf{gap}(\total{p},\total{q},\total{H})}"] \\
\sm{i:I}A_i\times_{X_i}B_i \arrow[rr,swap,"{\lam{(i,a,b,p)}((i,a),(i,b),(\refl{i},p))}"] & & \Big(\sm{i:I}A_i\Big)\times_{\big(\sm{i:I}X_i\big)} \Big(\sm{i:I}B_i\Big)
\end{tikzcd}
\end{equation*}
and the bottom map is an equivalence. Therefore it follows by the 3-for-2 property and an application of \cref{thm:fib_equiv} that $\mathsf{gap}(p_i,q_i,H_i)$ is an equivalence for each $i:I$, if and only if $\mathsf{gap}(\total{p},\total{q},\total{H})$ is an equivalence.
\end{proof}

\begin{prp}\label{prp:nattrans_cartesian}
Let $\tau:\mathsf{nat}(\mathcal{E},\mathcal{D})$ be a natural transformation of diagrams over $\mathcal{A}$. Then $\tau$ is cartesian if and only if the induced morphism of reflexive graphs
\begin{equation*}
\total{\tau} : \mathsf{rGph}(\msm{\mathcal{A}}{\mathcal{E}},\msm{\mathcal{A}}{\mathcal{D}})
\end{equation*}
is cartesian.
\end{prp}

\begin{proof}
Straightforward consequence of \cref{lem:pb_total}.
\end{proof}

\begin{cor}
Let $\tau:\mathsf{cart}(\mathcal{E},\mathcal{D})$ be a cartesian morphisms of diagrams over $\mathcal{A}$, and consider a commuting square of reflexive graphs of the following form
\begin{equation*}
\begin{tikzcd}
\msm{\mathcal{A}}{\mathcal{E}} \arrow[d,swap,"\mathsf{tot}(\tau)"] \arrow[r] & \Delta Y \arrow[d] \\
\msm{\mathcal{A}}{\mathcal{D}} \arrow[r] & \Delta X.
\end{tikzcd}
\end{equation*}
Then the following are equivalent:
\begin{enumerate}
\item The square is a pullback square.
\item The square
\begin{equation*}
\begin{tikzcd}
\mathsf{colim}(\mathcal{E}) \arrow[r] \arrow[d,swap,"\mathsf{colim}(\tau)"] & Y \arrow[d] \\
\mathsf{colim}(\mathcal{D}) \arrow[r] & X
\end{tikzcd}
\end{equation*}
is a pullback square.
\end{enumerate}
\end{cor}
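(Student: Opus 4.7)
The plan is to reduce the statement to \cref{thm:rcoeq_cartesian} applied to the induced morphism of reflexive graphs $\mathsf{tot}(\tau):\msm{\mathcal{A}}{\mathcal{E}}\to\msm{\mathcal{A}}{\mathcal{D}}$, after observing that the colimit of a diagram $\mathcal{D}$ over $\mathcal{A}$ coincides, by definition, with the reflexive coequalizer of the total reflexive graph $\msm{\mathcal{A}}{\mathcal{D}}$. Concretely, a colimiting $\mathcal{D}$-cocone on $X$ is nothing but a morphism $\msm{\mathcal{A}}{\mathcal{D}}\to\Delta X$ satisfying the universal property of \cref{thm:rcoeq_is_pushout}, so we have $\mathsf{colim}(\mathcal{D})\simeq\mathsf{rcoeq}(\msm{\mathcal{A}}{\mathcal{D}})$ and similarly for $\mathcal{E}$, and moreover $\mathsf{colim}(\tau)$ is identified with $\mathsf{rcoeq}(\mathsf{tot}(\tau))$ under these equivalences.

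First I would check that the hypothesis of \cref{thm:rcoeq_cartesian} is satisfied, i.e.~that $\mathsf{tot}(\tau)$ is a fibration of reflexive graphs. By assumption $\tau$ is a cartesian natural transformation of diagrams, so by \cref{prp:nattrans_cartesian} the induced morphism $\mathsf{tot}(\tau):\msm{\mathcal{A}}{\mathcal{E}}\to\msm{\mathcal{A}}{\mathcal{D}}$ is cartesian. By \cref{prp:fib_cart}, being cartesian as a morphism of reflexive graphs is equivalent to being a fibration, so $\mathsf{tot}(\tau)$ is a fibration.

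With the fibration hypothesis in place, the square
\begin{equation*}
\begin{tikzcd}
\msm{\mathcal{A}}{\mathcal{E}} \arrow[d,swap,"\mathsf{tot}(\tau)"] \arrow[r] & \Delta Y \arrow[d] \\
\msm{\mathcal{A}}{\mathcal{D}} \arrow[r] & \Delta X
\end{tikzcd}
\end{equation*}
satisfies the hypotheses of \cref{thm:rcoeq_cartesian}. That theorem then yields that this square is a pullback square of reflexive graphs if and only if the square of reflexive coequalizers
\begin{equation*}
\begin{tikzcd}
\mathsf{rcoeq}(\msm{\mathcal{A}}{\mathcal{E}}) \arrow[r] \arrow[d,swap,"\mathsf{rcoeq}(\mathsf{tot}(\tau))"] & Y \arrow[d] \\
\mathsf{rcoeq}(\msm{\mathcal{A}}{\mathcal{D}}) \arrow[r] & X
\end{tikzcd}
\end{equation*}
is a pullback square of types. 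Transporting along the equivalences $\mathsf{colim}(\mathcal{E})\simeq\mathsf{rcoeq}(\msm{\mathcal{A}}{\mathcal{E}})$, $\mathsf{colim}(\mathcal{D})\simeq\mathsf{rcoeq}(\msm{\mathcal{A}}{\mathcal{D}})$ and $\mathsf{colim}(\tau)\simeq\mathsf{rcoeq}(\mathsf{tot}(\tau))$ gives exactly the desired equivalence between conditions (i) and (ii).

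The only subtle point is verifying that the identification of colimits with reflexive coequalizers is compatible with the horizontal maps into $Y$ and $X$ (i.e.~that the induced maps $\mathsf{colim}(\mathcal{E})\to Y$ and $\mathsf{colim}(\mathcal{D})\to X$ agree, under the comparison equivalences, with the maps induced on reflexive coequalizers by the given morphisms to $\Delta Y$ and $\Delta X$). This is a routine unfolding of the universal property, since both maps are obtained via the same adjunction $\mathsf{rcoeq}\dashv\Delta$ in \cref{thm:rcoeq_is_pushout}, so it does not pose a real obstacle.
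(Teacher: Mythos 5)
Your proposal is correct and follows exactly the derivation the paper intends (the corollary is left without an explicit proof, but it is positioned precisely so that it follows from \cref{prp:nattrans_cartesian}, \cref{prp:fib_cart}, and \cref{thm:rcoeq_cartesian}, together with the identification of $\mathsf{colim}(\mathcal{D})$ with $\mathsf{rcoeq}(\msm{\mathcal{A}}{\mathcal{D}})$ coming from the definition of colimiting cocones). Nothing to add.
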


\section{Sequential colimits}

Type sequences are diagrams of the following form.
\begin{equation*}
\begin{tikzcd}
A_0 \arrow[r,"f_0"] & A_1 \arrow[r,"f_1"] & A_2 \arrow[r,"f_2"] & \cdots.
\end{tikzcd}
\end{equation*}
Their formal specification is as follows.

\begin{defn}
An \define{(increasing) type sequence} $\mathcal{A}$ consists of
\begin{align*}
A & : \N\to\UU \\
f & : \prd{n:\N} A_n\to A_{n+1}. 
\end{align*}
\end{defn}

Sequential colimits are characterized by their universal property.

\begin{defn}
\begin{enumerate}
\item A \define{(sequential) cocone} on a type sequence $\mathcal{A}$ with vertex $X$ consists of
\begin{align*}
h & : \prd{n:\N} A_n\to X \\
H & : \prd{n:\N} f_n\htpy f_{n+1}\circ H_n.
\end{align*}
We write $\mathsf{cocone}(X)$ for the type of cones with vertex $X$.
\item Given a cone $(h,H)$ with vertex $X$ on a type sequence $\mathcal{A}$ we define the map
\begin{equation*}
\mathsf{cocone\usc{}map}(h,H) : (X\to Y)\to \mathsf{cocone}(X)
\end{equation*}
given by $f\mapsto (f\circ h,\lam{n}{x}\mathsf{ap}_f(H_n(x)))$. 
\item We say that a cone $(h,H)$ with vertex $X$ is \define{colimiting} if the map $\mathsf{cocone\usc{}map}(h,H)$ is an equivalence for any type $Y$. 
\end{enumerate}
\end{defn}

In order to study sequential colimits, we first note that type sequences appear as diagrams over the graph $(\N,\prec)$ with type of vertices $\N$, and with
\begin{equation*}
(n\prec m) \defeq (n+1=m).
\end{equation*}
Note that this is a non-reflexive graph. A reflexive graph is obtained by the adjunction explained in \cref{eg:freerfx} of \cref{eg:rgraph_morphism}, which transforms the relation $\prec$ into the reflexive relation $\preceq$ given by
\begin{equation*}
(n\preceq m) \defeq (n+1=m)+ (n=m).
\end{equation*}
A diagram $\mathcal{A}$ on $(\N,\prec)$ consists of
\begin{equation*}
\pts{A} : \prd{n:\N} A_n \\
\edg{A} : \prd{n,m:\N}{e:n+1=m} A_n\to A_m.
\end{equation*}
Note that the type of $\edg{A}$ is equivalent to the type $\prd{n:\N}A_n\to A_{n+1}$, so we see indeed that type sequences are equivalently described as diagrams over $(\N,\prec)$. Similarly, a sequential cocone on $\mathcal{A}$ with vertex $X$ is equivalently described as a cocone on the diagram $\mathcal{A}$ with vertex $X$. Thus, we also have the following:

\begin{lem}\label{thm:sequential_up}
Consider a cocone $(h,H)$ with vertex $B$ for a type sequence $\mathcal{A}$. The following are equivalent:
\begin{enumerate}
\item The cocone $(h,H)$ is colimiting.
\item The cocone $(h,H)$ is inductive in the sense that for every type family $P:B\to \UU$, the map
\begin{align*}
\Big(\prd{b:B}P(b)\Big)\to {}& \sm{h:\prd{n:\N}{x:A_n}P(h_n(x))}\\ 
& \qquad \prd{n:\N}{x:A_n} \mathsf{tr}_P(H_n(x),h_n(x))={h_{n+1}(f_n(x))}
\end{align*}
given by
\begin{equation*}
s\mapsto (\lam{n}s\circ h_n,\lam{n}{x} \mathsf{apd}_{s}(H_n(x)))
\end{equation*}
has a section.
\item The map in (ii) is an equivalence.\qedhere
\end{enumerate}
\end{lem}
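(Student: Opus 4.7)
The plan is to reduce to the pushout case by presenting the sequential colimit as a pushout and then invoking \cref{thm:pushout_up}, which already establishes the equivalence between the non-dependent universal property, the dependent universal property, and the induction principle for pushouts.

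First I would verify that a sequential cocone $(h,H)$ with vertex $X$ on a type sequence $\mathcal{A}$ is equivalently the data of a reflexive graph morphism from the total graph $\msm{(\N,\preceq)}{\mathcal{A}}$ into the discrete graph $\Delta X$, where $(\N,\preceq)$ is the reflexive graph obtained from $(\N,\prec)$ by freely adjoining reflexivity as in \cref{eg:freerfx}. The underlying map on vertices sends $(n,x)$ to $h_n(x)$; the action on the nontrivial edges $n+1=m$ records the homotopies $H_n(x):h_n(x)=h_{n+1}(f_n(x))$; the action on the reflexivity edges is uniquely determined since $\Delta X$ is discrete. Since reflexivity in the discrete graph is $\refl{}$, the coherence data $\rfx{f}$ is also uniquely determined, as in the proof of \cref{lem:rcoeq_into_disc}.

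Once this identification is in place, by \cref{thm:rcoeq_is_pushout} the sequential colimit of $\mathcal{A}$ is computed by the pushout square
\begin{equation*}
\begin{tikzcd}[column sep=large]
\sm{(i,j):(\sm{n:\N}A_n)^2}\edg{\msm{\N}{\mathcal{A}}}(i,j) \arrow[r,"\pi_2"] \arrow[d,swap,"\pi_1"] & \sm{n:\N}A_n \arrow[d] \\
\sm{n:\N}A_n \arrow[r] & B.
\end{tikzcd}
\end{equation*}
Now \cref{thm:pushout_up} applied to this square directly supplies the equivalences: condition (i) of our lemma corresponds to (i) of \cref{thm:pushout_up}; condition (iii) corresponds to (iii) of \cref{thm:pushout_up} (after identifying dependent cocones on this pushout with the data described in (ii) of the lemma); and condition (ii) corresponds to (iv) of \cref{thm:pushout_up}, the induction principle.

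The main obstacle will be the translation of the dependent cocone data. A dependent cocone on the above pushout, relative to a family $P:B\to\UU$, involves sections over both the vertex type $\sm{n:\N}A_n$ and the edge type, together with a coherence. The edge type splits as $\sm{n,m:\N}((n+1=m)+(n=m))\times \edg{\mathcal{A}}(e,x)=y$; the reflexivity component is redundant by a coherence reduction like \cref{lem:coh_red}, since transport along $\refl{}$ is the identity. After absorbing this redundancy, the dependent cocone data reduces exactly to the pair $(h,H)$ in the statement of our lemma, establishing the correspondence needed to transport (iv) of \cref{thm:pushout_up} into our condition (ii), and (iii) of \cref{thm:pushout_up} into our condition (iii).
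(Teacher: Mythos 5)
Your proposal follows the paper's intended route exactly: the paper states this lemma without a separate proof, immediately after observing that type sequences are diagrams over $(\N,\prec)$ (hence over the free reflexive graph $(\N,\preceq)$), so that the sequential colimit is the reflexive coequalizer of the total graph, i.e.\ a pushout by \cref{thm:rcoeq_is_pushout}, with \cref{thm:pushout_up} supplying the equivalence of the universal property, dependent universal property, and induction principle. The coherence-reduction step you flag for the reflexivity edges is precisely the bookkeeping the paper leaves implicit (compare \cref{lem:rcoeq_into_disc,lem:coh_red}), so your argument is correct and essentially identical in approach.
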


\begin{defn}
The type of \define{descent data} on a type sequence $\mathcal{A}\jdeq (A,f)$ is defined to be
\begin{equation*}
\mathsf{Desc}(\mathcal{A}) \defeq \sm{B:\prd{n:\N}A_n\to\UU}\prd{n:\N}{x:A_n}\eqv{B_n(x)}{B_{n+1}(f_n(x))}.
\end{equation*}
Equivalently, if $\mathcal{A}$ is seen as a diagram over $(\N,\prec)$, 
\end{defn}

\begin{defn}
We define a map
\begin{equation*}
\mathsf{desc\usc{}fam} : (A_\infty\to\UU)\to\mathsf{Desc}(\mathcal{A})
\end{equation*}
by $B\mapsto (\lam{n}{x}B(\mathsf{seq\usc{}in}(n,x)),\lam{n}{x}\mathsf{tr}_B(\mathsf{seq\usc{}glue}(n,x)))$.
\end{defn}

\begin{thm}
The map 
\begin{equation*}
\mathsf{desc\usc{}fam} : (A_\infty\to\UU)\to\mathsf{Desc}(\mathcal{A})
\end{equation*}
is an equivalence.
\end{thm}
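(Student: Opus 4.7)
The plan is to mimic the proof of the pushout descent theorem \cref{thm:desc_fam}: we exploit the universal property of the sequential colimit applied to the universe $\UU$, and then convert identifications into equivalences using univalence. By the $3$-for-$2$ property of equivalences, it then suffices to factor $\mathsf{desc\usc{}fam}$ as a composite of two maps known to be equivalences.

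First, since $(\mathsf{seq\usc{}in},\mathsf{seq\usc{}glue})$ is a colimiting cocone on $\mathcal{A}$, \cref{thm:sequential_up} (the non-dependent universal property applied with $Y\jdeq \UU$) gives us that
\begin{equation*}
\mathsf{cocone\usc{}map}(\mathsf{seq\usc{}in},\mathsf{seq\usc{}glue}):(A_\infty\to\UU)\to\mathsf{cocone}(\UU)
\end{equation*}
is an equivalence, where $\mathsf{cocone}(\UU)$ unfolds to the type of pairs $(B,H)$ with $B:\prd{n:\N}A_n\to\UU$ and $H:\prd{n:\N}{x:A_n}B_n(x)=B_{n+1}(f_n(x))$.

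Next, for each $n:\N$ and each $x:A_n$ univalence supplies an equivalence $\mathsf{equiv\usc{}eq}:(B_n(x)=B_{n+1}(f_n(x)))\eqvsym (\eqv{B_n(x)}{B_{n+1}(f_n(x))})$. Postcomposing fibrewise with this equivalence (and using \cref{thm:fib_equiv} to pass to the total space over the first component $B$) yields an equivalence
\begin{equation*}
\varphi:\mathsf{cocone}(\UU)\to \mathsf{Desc}(\mathcal{A}),\qquad (B,H)\mapsto (B,\lam{n}{x}\mathsf{equiv\usc{}eq}(H_n(x))).
\end{equation*}

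Finally, we must verify that the triangle
\begin{equation*}
\begin{tikzcd}[column sep=small]
& (A_\infty\to\UU) \arrow[dl,swap,"\mathsf{cocone\usc{}map}"] \arrow[dr,"\mathsf{desc\usc{}fam}"] \\
\mathsf{cocone}(\UU) \arrow[rr,swap,"\varphi"] & & \mathsf{Desc}(\mathcal{A})
\end{tikzcd}
\end{equation*}
commutes up to homotopy, after which the claim follows from the $3$-for-$2$ property. This reduces to the standard identity $\mathsf{equiv\usc{}eq}(\ap{P}{p})\htpy \mathsf{tr}_P(p)$ (proved by path induction on $p$) applied to $P\jdeq B$ and $p\jdeq \mathsf{seq\usc{}glue}(n,x)$. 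The only mild obstacle is the bookkeeping for this last commutativity check, but since we only need a pointwise homotopy this is routine; no coherence on the glue paths beyond the transport computation is required.
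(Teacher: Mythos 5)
Your proof is correct and is essentially the paper's own approach: the paper states this theorem without a written proof, but the intended argument is exactly the one given for the analogous pushout descent theorem (\cref{thm:desc_fam}), which you reproduce — factor $\mathsf{desc\usc{}fam}$ through $\mathsf{cocone}(\UU)$ using the colimiting property of $(\mathsf{seq\usc{}in},\mathsf{seq\usc{}glue})$, turn the identifications into equivalences fiberwise by univalence and \cref{thm:fib_equiv}, and verify the triangle via the path-induction homotopy $\mathsf{equiv\usc{}eq}(\ap{B}{p})\htpy\mathsf{tr}_B(p)$. Since the indexing shape here has no composition constraints, no further coherence is needed, as you note.
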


\begin{defn}
A \define{cartesian transformation} of type sequences from $\mathcal{A}$ to $\mathcal{B}$ is a pair $(h,H)$ consisting of
\begin{align*}
h & : \prd{n:\N} A_n\to B_n \\
H & : \prd{n:\N} g_n\circ h_n \htpy h_{n+1}\circ f_n,
\end{align*}
such that each of the squares in the diagram
\begin{equation*}
\begin{tikzcd}
A_0 \arrow[d,swap,"h_0"] \arrow[r,"f_0"] & A_1 \arrow[d,swap,"h_1"] \arrow[r,"f_1"] & A_2 \arrow[d,swap,"h_2"] \arrow[r,"f_2"] & \cdots \\
B_0 \arrow[r,swap,"g_0"] & B_1 \arrow[r,swap,"g_1"] & B_2 \arrow[r,swap,"g_2"] & \cdots
\end{tikzcd}
\end{equation*}
is a pullback square. We define
\begin{align*}
\mathsf{cart}(\mathcal{A},\mathcal{B}) & \defeq\sm{h:\prd{n:\N}A_n\to B_n} \\
& \qquad\qquad \sm{H:\prd{n:\N}g_n\circ h_n\htpy h_{n+1}\circ f_n}\prd{n:\N}\mathsf{is\usc{}pullback}(h_n,f_n,H_n),
\end{align*}
and we write
\begin{equation*}
\mathsf{Cart}(\mathcal{B}) \defeq \sm{\mathcal{A}:\mathsf{Seq}}\mathsf{cart}(\mathcal{A},\mathcal{B}).
\end{equation*}
\end{defn}

\begin{defn}
We define a map
\begin{equation*}
\mathsf{cart\usc{}map}(\mathcal{B}) : \Big(\sm{X':\UU}X'\to X\Big)\to\mathsf{Cart}(\mathcal{B}).
\end{equation*}
which associates to any morphism $h:X'\to X$ a cartesian transformation of type sequences into $\mathcal{B}$.
\end{defn}

\begin{thm}
The operation $\mathsf{cart\usc{}map}(\mathcal{B})$ is an equivalence.
\end{thm}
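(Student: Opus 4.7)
The plan is to mimic the strategy used for the descent theorem for pushouts (\cref{thm:descent}): I would exhibit $\mathsf{cart\usc{}map}(\mathcal{B})$ as one edge of a commuting square whose three other edges are equivalences, and then conclude by the 3-for-2 property. Concretely, the commuting square has vertices $X\to \UU$, $\mathsf{Desc}(\mathcal{B})$, $\sm{X':\UU}X'\to X$, and $\mathsf{Cart}(\mathcal{B})$, where $X\defeq B_\infty$. Its top edge is $\mathsf{desc\usc{}fam}_{\mathcal{B}}$, known to be an equivalence by the preceding theorem; its left edge is $\mathsf{map\usc{}fam}_X$, an equivalence by \cref{thm:fam_proj}; its bottom edge is $\mathsf{cart\usc{}map}(\mathcal{B})$ itself; and its right edge is a map $\mathsf{cart\usc{}desc}_{\mathcal{B}}:\mathsf{Desc}(\mathcal{B})\to\mathsf{Cart}(\mathcal{B})$ that I still have to construct.

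To define $\mathsf{cart\usc{}desc}_{\mathcal{B}}$ I follow the analogue of \cref{lem:cart_desc}. Given descent data $(P,P_S)$ on $\mathcal{B}$, with $P:\prd{n:\N}B_n\to\UU$ and $P_S:\prd{n:\N}{x:B_n}\eqv{P_n(x)}{P_{n+1}(g_n(x))}$, I set $A_n\defeq\sm{x:B_n}P_n(x)$ and $f_n\defeq\lam{(x,p)}(g_n(x),P_S(n,x,p))$, with projections $h_n\defeq\proj 1$ and the evident homotopies $H_n$. Each resulting square is then cartesian by \cref{thm:pb_fibequiv}, since its horizontal arrows are the total maps of a fiberwise equivalence.

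The main obstacle is to prove that $\mathsf{cart\usc{}desc}_{\mathcal{B}}$ is itself an equivalence. I would rewrite its codomain $\mathsf{Cart}(\mathcal{B})$ as a type of descent data, level by level. By \cref{thm:fam_proj}, each pair $(A_n,h_n)$ is encoded by a family $Q_n:B_n\to\UU$ together with an equivalence $A_n\eqvsym\sm{x:B_n}Q_n(x)$ over $B_n$. Given two such encodings $Q_n$ and $Q_{n+1}$, \cref{thm:pb_fibequiv_complete} identifies the type of triples $(f_n,H_n,p_n)$ --- a map, a homotopy witnessing commutativity, and a proof that the resulting square is a pullback --- with the type of fiberwise equivalences $\prd{x:B_n}\eqv{Q_n(x)}{Q_{n+1}(g_n(x))}$. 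Applying these rewritings simultaneously for all $n$ and using \cref{thm:choice} to commute dependent products with $\Sigma$-types yields an equivalence $\mathsf{Cart}(\mathcal{B})\eqvsym\mathsf{Desc}(\mathcal{B})$, which by inspection is a homotopy inverse to $\mathsf{cart\usc{}desc}_{\mathcal{B}}$.

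Finally, the square commutes on the nose up to homotopy. Tracing a family $P:X\to\UU$ both ways, the composite $\mathsf{cart\usc{}map}(\mathcal{B})\circ\mathsf{map\usc{}fam}_X$ yields the cartesian transformation with $n$-th type $B_n\times_X \tilde P$, whereas $\mathsf{cart\usc{}desc}_{\mathcal{B}}\circ\mathsf{desc\usc{}fam}_{\mathcal{B}}$ yields the cartesian transformation with $n$-th type $\sm{x:B_n}P(\mathsf{seq\usc{}in}(n,x))$. These two cartesian transformations are identified via the equivalence between the pullback $B_n\times_X \tilde P$ and the fiberwise total space, provided by \cref{thm:pb_fibequiv}, and the transport along $\mathsf{seq\usc{}glue}(n,x)$ is by construction the fiberwise equivalence connecting consecutive levels. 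Three edges of the square being equivalences, the 3-for-2 property forces $\mathsf{cart\usc{}map}(\mathcal{B})$ to be an equivalence as well.
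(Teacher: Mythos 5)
Your argument is correct, and it is the natural one. Note, however, that the thesis itself does not print a proof of this theorem (nor of the preceding statement that $\mathsf{desc\usc{}fam}$ is an equivalence): it defers to the treatment of type sequences as diagrams over the graph $(\N,\prec)$ and to the Lean formalization in the cited joint work, so there is no in-text argument to match yours against. What you propose is the direct transcription of the proof of the descent theorem for pushouts (\cref{thm:descent}) to sequences: the commuting square with $\mathsf{map\usc{}fam}_{B_\infty}$ (\cref{thm:fam_proj}), $\mathsf{desc\usc{}fam}$, and a sequence analogue of \cref{lem:cart_desc} built from \cref{thm:fam_proj}, \cref{thm:pb_fibequiv_complete} and \cref{thm:choice}, followed by 3-for-2. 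This is sound, and in fact the sequential case is slightly easier than the span case, since no contraction of a redundant family over a middle object (the pair $(Q,\varphi)$ in \cref{lem:cart_desc}) is needed: each $A_n$ corresponds directly to one component of the descent data. Two small points deserve a remark if you write this out in full: your reorganization of $\mathsf{Cart}(\mathcal{B})$ interleaves several applications of \cref{thm:choice} (to pull the $\Pi$ over $\N$ past the $\Sigma$'s for $A$, $h$, $f$, $H$), and the final identification of the two composites must be checked to be compatible with the maps $f_n$ and the homotopies $H_n$, not just levelwise on the types $B_n\times_{B_\infty}\tilde P\eqvsym\sm{x:B_n}P(\mathsf{seq\usc{}in}(n,x))$; both are routine and are treated at the same level of brevity in the paper's own proof of \cref{thm:descent}, so your sketch is at an acceptable level of detail. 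Your appeal to the unproven $\mathsf{desc\usc{}fam}$ theorem is legitimate since it precedes the statement, but be aware that the overall argument then inherits that gap from the text.
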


The flattening lemma for sequential colimits essentially states that sequential colimits commute with $\Sigma$. 

\begin{lem}
Consider
\begin{align*}
B & : \prd{n:\N}A_n\to\UU \\
g & : \prd{n:\N}{x:A_n}\eqv{B_n(x)}{B_{n+1}(f_n(x))}.
\end{align*}
and suppose $P:A_\infty\to\UU$ is the unique family equipped with
\begin{align*}
e & : \prd{n:\N}\eqv{B_n(x)}{P(\mathsf{seq\usc{}in}(n,x))}
\end{align*}
and homotopies $H_n(x)$ witnessing that the square
\begin{equation*}
\begin{tikzcd}[column sep=7em]
B_n(x) \arrow[r,"g_n(x)"] \arrow[d,swap,"e_n(x)"] & B_{n+1}(f_n(x)) \arrow[d,"e_{n+1}(f_n(x))"] \\
P(\mathsf{seq\usc{}in}(n,x)) \arrow[r,swap,"{\mathsf{tr}_P(\mathsf{seq\usc{}glue}(n,x))}"] & P(\mathsf{seq\usc{}in}(n+1,f_n(x)))
\end{tikzcd}
\end{equation*}
commutes. Then $\sm{t:A_\infty}P(t)$ satisfies the universal property of the sequential colimit of the type sequence
\begin{equation*}
\begin{tikzcd}
\sm{x:A_0}B_0(x) \arrow[r,"{\total[f_0]{g_0}}"] & \sm{x:A_1}B_1(x) \arrow[r,"{\total[f_1]{g_1}}"] & \sm{x:A_2}B_2(x) \arrow[r,"{\total[f_2]{g_2}}"] & \cdots.
\end{tikzcd}
\end{equation*}
\end{lem}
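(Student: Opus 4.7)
The plan is to mimic the proof of the flattening lemma for pushouts (\cref{lem:flattening}), relying on the dependent universal property of sequential colimits (\cref{thm:sequential_up}). First I would equip $\sm{t:A_\infty}P(t)$ with a cocone structure on the total sequence: for each $n:\N$, define
\begin{equation*}
\tilde{h}_n : \sm{x:A_n}B_n(x) \to \sm{t:A_\infty}P(t)
\end{equation*}
by $\tilde{h}_n(x,y)\defeq (\mathsf{seq\usc{}in}(n,x),e_n(x,y))$, and take the homotopy $\tilde{H}_n:\tilde{h}_n\htpy \tilde{h}_{n+1}\circ \total[f_n]{g_n}$ to be obtained by pairing $\mathsf{seq\usc{}glue}(n,x)$ with the identification encoded by $H_n$, after transposing transport with the equivalence $g_n$.

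Next I would verify the universal property by exhibiting, for every type $Y$, a commuting square
\begin{equation*}
\begin{tikzcd}[column sep=5em]
Y^{\sm{t:A_\infty}P(t)} \arrow[d,swap,"\mathsf{ev\usc{}pair}"] \arrow[r] & \mathsf{cocone}(Y) \arrow[d,"\mathsf{ev\usc{}pair}"] \\
\prd{t:A_\infty} Y^{P(t)} \arrow[r] & D_Q(Y)
\end{tikzcd}
\end{equation*}
where $D_Q(Y)$ is the type of dependent sections for $Q\defeq \lam{t}Y^{P(t)}$ appearing on the right-hand side of \cref{thm:sequential_up}, the type $\mathsf{cocone}(Y)$ is the type of cocones on the total sequence with vertex $Y$, the horizontal maps are the canonical cocone-comparison maps, and the vertical arrows are the equivalences induced by currying along $\Sigma$-types. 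By \cref{thm:sequential_up} applied to $Q$, the bottom map is an equivalence, and combined with the pointwise equivalences $e_n$ it encodes precisely the cocone data for the total sequence. Once one checks that the square commutes, the $3$-for-$2$ property for equivalences yields that the top map is an equivalence, which is the universal property of the sequential colimit.

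The main obstacle will be the coherence bookkeeping. The transport $\mathsf{tr}_Q(\mathsf{seq\usc{}glue}(n,x))$, for $Q\defeq \lam{t}Y^{P(t)}$, acts as precomposition with $\mathsf{tr}_P(\mathsf{seq\usc{}glue}(n,x))^{-1}$, and one must show, using the identifications packaged in $H_n$ together with the fact that $g_n$ is an equivalence, that this precomposition corresponds under $e$ to precomposition along $\total[f_n]{g_n}$. This is the sequential analogue of the cube argument hidden inside \cref{lem:flattening}; invoking the dependent universal property spares us from reconstructing $\sm{t:A_\infty}P(t)$ as an iterated higher inductive type.
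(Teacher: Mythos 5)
Your argument is correct in outline, and it supplies a proof where the thesis in fact gives none: this lemma is stated without proof, and the corresponding results in the later chapter (\cref{thm:colim_fib,thm:colim_sigma}) are justified only by appeal to the Lean formalization of \cite{DoornRijkeSojakova} and the equifibrant-replacement machinery. What you propose is the direct transcription to sequential colimits of the paper's own proof of the flattening lemma for pushouts (\cref{lem:flattening}): equip the total sequence with the evident cocone, and deduce its universal property from the dependent universal property of $A_\infty$ (\cref{thm:sequential_up} applied to $Q(t)\defeq Y^{P(t)}$) by a 3-for-2 argument, exactly as \cref{lem:flattening} uses property (iii) of \cref{thm:pushout_up}. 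Compared with the route the paper gestures at, your argument is more elementary and self-contained (it needs neither the equifibrant replacement nor univalence, only the dependent universal property and function extensionality), at the price of the coherence bookkeeping you flag. One caution on that point: the right-hand vertical map in your square is \emph{not} literally $\mathsf{ev\usc{}pair}$ --- it is currying composed with precomposition by the $e_n(x)^{-1}$, and showing it is an equivalence, and that the square commutes, is where $H_n$ and the counit homotopies $e_n(x)\circ e_n(x)^{-1}\htpy \idfunc$ enter (using that transport in $t\mapsto Y^{P(t)}$ along $\mathsf{seq\usc{}glue}(n,x)$ is precomposition by $\mathsf{tr}_P(\mathsf{seq\usc{}glue}(n,x))^{-1}$, and that precomposition by an equivalence induces equivalences on the relevant identity types). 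You correctly identify this as the remaining work; it is routine, and it is the same looseness already present in the paper's cube argument for \cref{lem:flattening}, whose vertical maps likewise secretly involve $e_A$, $e_B$, $e_S$.
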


In the following theorem we rephrase the flattening lemma in using cartesian transformations of type sequences.

\begin{thm}
Consider a commuting diagram of the form
\begin{equation*}
\begin{tikzcd}[column sep=small,row sep=small]
A_0 \arrow[rr] \arrow[dd] & & A_1 \arrow[rr] \arrow[dr] \arrow[dd] &[-.9em] &[-.9em] A_2 \arrow[dl] \arrow[dd] & & \cdots \\
& & & X \arrow[from=ulll,crossing over] \arrow[from=urrr,crossing over] \arrow[from=ur,to=urrr] \\
B_0 \arrow[rr] \arrow[drrr] & & B_1 \arrow[rr] \arrow[dr] & & B_2 \arrow[rr] \arrow[dl] & & \cdots \arrow[dlll] \\
& & & Y \arrow[from=uu,crossing over] 
\end{tikzcd}
\end{equation*}
If each of the vertical squares is a pullback square, and $Y$ is the sequential colimit of the type sequence $B_n$, then $X$ is the sequential colimit of the type sequence $A_n$. 
\end{thm}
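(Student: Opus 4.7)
The plan is to recognize this as a direct rephrasing of the flattening lemma for sequential colimits, combined with the characterization of pullback squares by fiberwise equivalences (\cref{cor:pb_fibequiv}). The given cocone induces a map $g : X \to Y$, and the family $P \defeq \fibf{g} : Y \to \UU$ classifies it in the sense that $X \eqvsym \sm{y:Y}P(y)$ over $Y$. Since $Y$ is the sequential colimit of $\mathcal{B}$, the family $P$ is equivalently given (under $\mathsf{desc\usc{}fam}$) by descent data $(P_n,\alpha_n)$, namely $P_n(b)\defeq \fib{g}{\iota_n(b)}$ with the transport equivalences along the glue.

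The flattening lemma for sequential colimits then tells us that $\sm{y:Y}P(y)$ is the sequential colimit of the sequence
\begin{equation*}
\begin{tikzcd}
\sm{b:B_0}P_0(b) \arrow[r,"{\total[g_0]{\alpha_0}}"] & \sm{b:B_1}P_1(b) \arrow[r,"{\total[g_1]{\alpha_1}}"] & \sm{b:B_2}P_2(b) \arrow[r] & \cdots,
\end{tikzcd}
\end{equation*}
and hence so is $X$. It remains to identify this flattened sequence with $\mathcal{A}$. For each $n$, the hypothesis that the vertical square at level $n$ is a pullback, together with \cref{cor:pb_fibequiv}, yields a fiberwise equivalence $\fibf{A_n \to B_n} \htpy P_n$, and by \cref{thm:pb_fibequiv} this lifts to an equivalence $A_n \eqvsym \sm{b:B_n}P_n(b)$ compatible with the projections to $B_n$.

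Finally, I need to check that these level-wise equivalences assemble into an equivalence of type sequences, i.e., that the transition maps $f_n : A_n \to A_{n+1}$ correspond under these identifications to the maps $\total[g_n]{\alpha_n}$. This is a coherence verification using the commutativity of the cube: the naturality homotopy $H_n$ witnessing $g \circ h_n \htpy h_{n+1} \circ f_n$ is precisely what is needed to match the transport along the glue path in $Y$ with the horizontal transition map, and this can be checked by inspection of the fibers. Once the equivalence of sequences is in place, the two sequential colimits agree, and $X$ is therefore the sequential colimit of $\mathcal{A}$.

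The main obstacle is the final coherence check matching the transition maps, since the flattening lemma as stated produces the sequence with maps $\total[g_n]{\alpha_n}$, and showing that $f_n$ corresponds to this under the fiberwise equivalences requires unpacking how cartesianness of the transformation $(h_n,H_n)$ translates into commutativity of the relevant squares of fibers. Everything else reduces to two prior results: the flattening lemma and the pullback-equals-fiberwise-equivalence characterization.
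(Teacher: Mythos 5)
Your proposal is correct and takes essentially the same route the paper intends: the paper states this theorem without a separate proof, presenting it as a rephrasing of the flattening lemma for sequential colimits in terms of cartesian transformations, which is exactly your combination of the flattening lemma with the pullback-versus-fiberwise-equivalence characterization, reading ``vertical squares'' (correctly) so as to include the squares over $X\to Y$. The transition-map coherence you flag is precisely the verification the paper leaves implicit (deferring to the formalized treatment in the cited joint work with van Doorn and Sojakova), so nothing is missing relative to the paper's own account.
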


\begin{cor}
Consider a commuting diagram of the form
\begin{equation*}
\begin{tikzcd}[column sep=small,row sep=small]
A_0 \arrow[rr] \arrow[dd] & & A_1 \arrow[rr] \arrow[dr] \arrow[dd] &[-.9em] &[-.9em] A_2 \arrow[dl] \arrow[dd] & & \cdots \\
& & & X \arrow[from=ulll,crossing over] \arrow[from=urrr,crossing over] \arrow[from=ur,to=urrr] \\
B_0 \arrow[rr] \arrow[drrr] & & B_1 \arrow[rr] \arrow[dr] & & B_2 \arrow[rr] \arrow[dl] & & \cdots \arrow[dlll] \\
& & & Y \arrow[from=uu,crossing over] 
\end{tikzcd}
\end{equation*}
If each of the vertical squares is a pullback square, then the square
\begin{equation*}
\begin{tikzcd}
A_\infty \arrow[r] \arrow[d] & X \arrow[d] \\
B_\infty \arrow[r] & Y
\end{tikzcd}
\end{equation*} 
is a pullback square.
\end{cor}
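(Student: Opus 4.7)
The plan is to reduce this corollary to the preceding theorem by replacing the vertex $Y$ with the genuine sequential colimit $B_\infty$ of $\mathcal{B}$. Let $\iota : B_\infty \to Y$ be the unique map induced by the cocone on $\mathcal{B}$ with vertex $Y$, and form the pullback $C \defeq B_\infty \times_Y X$ with projections $C \to B_\infty$ and $C \to X$. Since for each $n$ the composites $A_n \to X \to Y$ and $A_n \to B_n \to B_\infty \to Y$ both equal the diagonal composite $A_n \to B_n \to Y$ witnessed by the commuting hypothesis, the universal property of the pullback $C$ assembles into a cocone on $\mathcal{A}$ with vertex $C$ whose legs are the induced gap maps $A_n \to C$.

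The main step is to verify that each square
\begin{equation*}
\begin{tikzcd}
A_n \arrow[r] \arrow[d] & C \arrow[d] \\
B_n \arrow[r,swap,"\mathsf{seq\usc{}in}"] & B_\infty
\end{tikzcd}
\end{equation*}
is a pullback square. This follows from the pasting lemma for pullbacks (\cref{thm:pb_pasting}): pasting this square on the right with the defining pullback $C = B_\infty \times_Y X$ yields the outer rectangle
\begin{equation*}
\begin{tikzcd}
A_n \arrow[r] \arrow[d] & X \arrow[d] \\
B_n \arrow[r] & Y,
\end{tikzcd}
\end{equation*}
which is a pullback by hypothesis. Since the right-hand square is a pullback by construction, the pasting lemma implies that the left-hand square is a pullback as well.

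Combining this with the hypothesis that each naturality square $A_n \to A_{n+1}$ over $B_n \to B_{n+1}$ is a pullback, we obtain a cartesian transformation $\mathcal{A} \to \mathcal{B}$ together with cocones with vertices $C$ and $B_\infty$ such that \emph{every} vertical square in the resulting diagram is a pullback square. Since $B_\infty$ is by definition the sequential colimit of $\mathcal{B}$, the preceding theorem implies that $C$ is the sequential colimit of $\mathcal{A}$. By uniqueness of colimits, the canonical comparison $A_\infty \to C$ is an equivalence, and unfolding $C = B_\infty \times_Y X$ together with \cref{thm:is_pullback} yields that the square displayed in the statement is a pullback square.

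The only genuine subtlety in the argument is checking that the cocone structure on $\{A_n \to C\}$ produced by the universal property is indeed compatible with the cartesian transformation $\mathcal{A} \to \mathcal{B}$ when viewed as a diagram over $B_\infty$ in the sense required by the previous theorem; I expect this to be a routine (if diagrammatically tedious) application of the uniqueness clause of the pullback universal property, together with the coherence data of the cocones on $\mathcal{A}$ and $\mathcal{B}$.
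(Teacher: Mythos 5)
Your argument is correct and follows the route the paper intends: the corollary is meant to be deduced from the preceding theorem by pulling back along $B_\infty\to Y$, showing via the pasting lemma that each square $A_n\to B_\infty\times_Y X$ over $B_n\to B_\infty$ is a pullback, and then identifying $A_\infty$ with $B_\infty\times_Y X$ by uniqueness of sequential colimits. The coherence of the induced cocone that you flag is indeed the only bookkeeping point, and it is handled exactly as you expect (the paper itself defers such verifications to the formalization).
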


\chapter{Homotopy images}\label{chap:image}

We have observed in \cref{eg:rcoeq} that the reflexive coequalizer of the pre-kernel of a map $f:A\to X$ is the fiberwise join $\join[X]{A}{A}$, i.e.~we have a reflexive coequalizer diagram
\begin{equation*}
\begin{tikzcd}
A\times_X A \arrow[r,yshift=1ex] \arrow[r,yshift=-1ex] & A \arrow[l] \arrow[r] & \join[X]{A}{A}.
\end{tikzcd}
\end{equation*}
It can also be shown that the colimit of the 2-pre-kernel of a map $f:A\to X$ is the triple fiberwise join $\join[X]{A}{\join[X]{A}{A}}$, i.e.~we have a colimiting 
\begin{equation*}
\begin{tikzcd}
A\times_{X} A \times_{X} A \arrow[r,yshift=2ex] \arrow[r] \arrow[r,yshift=-2ex] & A \times_X A \arrow[l,yshift=1ex] \arrow[l,yshift=-1ex] \arrow[r,yshift=1ex] \arrow[r,yshift=-1ex] & A \arrow[l] \arrow[r] & \join[X]{A}{\join[X]{A}{A}},
\end{tikzcd}
\end{equation*}
although one has to take care to use the right amount of coherence data.
These results suggest that geometric realization of the Cech nerve of a map $f:A\to X$, i.e.~the homotopy image of $f$, is the sequential colimit of the type sequence
\begin{equation*}
\begin{tikzcd}
A \arrow[r] & \join[X]{A}{A} \arrow[r] & \join[X]{A}{(\join[X]{A}{A})} \arrow[r] & \cdots
\end{tikzcd}
\end{equation*}
and its sequential colimit. We do not have a way of presenting the Cech nerve of a map in type theory, due to the infinite coherence problem of presenting simplicial types in type theory. Nevertheless, we \emph{can} analyze this type sequence and its colimit in homotopy type theory, since it is constructed entirely in terms of known operations. We will show in \cref{thm:image} that for any map $f:A\to X$, the infinite fiberwise join-power $\join[X]{\cdots}{\join[X]{A}{\join[X]{A}{A}}}$ is the image of $f$. We conclude that the image of a map always exists in univalent type theory with homotopy pushouts. We note that an earlier construction of the propositional truncation in a similar setting is due to van Doorn \cite{vanDoorn2016}, and another one is due to Kraus \cite{Kraus2016}. The present construction of the image of $f$ is called the join construction implies, as we show in \cref{thm:image_small}, that the image of an essentially small type mapping into a locally small type is again essentially small. In particular, the image of a map from a small type into the universe is essentially small. This corollary should be viewed as a type theoretic replacement axiom. This fact has the important consequence that any connected component of the universe is essentially small. We also note that the join construction leads to new constructions of set quotients and of Rezk completions, see \cite{joinconstruction}, where also a construction of the $n$-truncations is given as an application of the join construction.

\section{The universal property of the image}
\begin{defn}\label{defn:image_up}
Consider a commuting triangle
\begin{equation*}
\begin{tikzcd}[column sep=small]
A \arrow[rr,"i"] \arrow[dr,swap,"f"] & & U \arrow[dl,"m"] \\
& X
\end{tikzcd}
\end{equation*}
with $I:f\htpy m\circ i$, and where $m$ is an embedding\index{embedding}.
We say that $m$ has the \define{universal property of the image of $f$}\index{universal property!of the image|textit} if the map
\begin{equation*}
(i,I)^\ast : \mathrm{hom}_X(m,m')\to\mathrm{hom}_X(f,m')
\end{equation*}
defined by $(i,I)^\ast(h,H)\defeq (h\circ i,\ct{I}{(i\cdot H)})$,
is an equivalence for every embedding $m':U'\to X$. 
\end{defn}

\begin{lem}
For any $f:A\to X$ and any embedding\index{embedding} $m:U\to X$, the type $\mathrm{hom}_X(f,m)$ is a proposition.
\end{lem}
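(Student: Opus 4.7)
The plan is to unfold the definition of $\mathrm{hom}_X(f,m)$ into a dependent product of fibers of $m$, and then use that embeddings have propositional fibers together with the fact that propositions are closed under dependent products.

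First, I would rewrite
\begin{equation*}
\mathrm{hom}_X(f,m) \jdeq \sm{h:A\to U} f\htpy m\circ h \jdeq \sm{h:A\to U}\prd{a:A} f(a)=m(h(a)).
\end{equation*}
Next, applying type theoretic choice (\cref{thm:choice}), the right hand side is equivalent to
\begin{equation*}
\prd{a:A}\sm{u:U} f(a)=m(u),
\end{equation*}
which is by definition $\prd{a:A}\fib{m}{f(a)}$ (up to inverting the path, which is an equivalence).

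Now I would invoke \cref{thm:prop_emb} to conclude that, since $m$ is an embedding, each fiber $\fib{m}{f(a)}$ is a proposition. Finally, by \cref{thm:prop_pi} (the dependent product of propositions is a proposition) the type $\prd{a:A}\fib{m}{f(a)}$ is a proposition, and hence so is $\mathrm{hom}_X(f,m)$, since being a proposition is preserved under equivalence (\cref{eg:prop_contr}).

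There is no real obstacle here; the only minor care required is bookkeeping the equivalence between $\sm{h:A\to U}\prd{a:A} f(a)=m(h(a))$ and $\prd{a:A}\fib{m}{f(a)}$, which is a direct application of choice combined with the symmetry of identity types. The proof is essentially a two-line chain of reductions once the correct lemmas are in hand.
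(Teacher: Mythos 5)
Your proof is correct, but it takes a slightly different route than the paper. The paper's proof invokes \cref{cor:fib_triangle} to identify $\mathrm{hom}_X(f,m)$ with the type $\prd{x:X}\fib{f}{x}\to\fib{m}{x}$ of fiberwise transformations indexed over the \emph{codomain} $X$, and then concludes with \cref{thm:prop_emb} and \cref{thm:prop_pi} (using along the way that a function type into a proposition is a proposition). You instead unfold the definition and apply type theoretic choice (\cref{thm:choice}) to obtain $\prd{a:A}\fib{m}{f(a)}$, indexed over the \emph{domain} $A$, and then finish with the same two facts about embeddings and dependent products. The two reductions are of course equivalent (currying over the fibers of $f$ turns one into the other), so the mathematical content is the same; your version is marginally more self-contained, needing only choice and symmetry of identity types rather than the fiberwise characterization of slice morphisms, while the paper's version reuses \cref{cor:fib_triangle}, which it needs elsewhere (e.g.\ in the join construction), and keeps the statement in the fiberwise form that recurs throughout that chapter. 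Your bookkeeping remarks (the path inversion, and invariance of being a proposition under equivalence) are exactly the points that need to be checked, and they go through.
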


\begin{proof}
From \cref{cor:fib_triangle} we obtain that the type $\mathrm{hom}_X(f,m)$ is equivalent to the type
\begin{equation*}
\prd{x:X}\fib{f}{x}\to\fib{m}{x},
\end{equation*}
so it suffices to show that this is a proposition. 
Recall from \cref{thm:prop_emb} that a map is an embedding if and only if its fibers are propositions.
Thus we see that the type $\prd{x:X}\fib{f}{x}\to\fib{m}{x}$ is a product of propositions, so it is a proposition by \cref{thm:prop_pi}.
\end{proof}

\begin{cor}\label{cor:image_up}
Consider a commuting triangle
\begin{equation*}
\begin{tikzcd}[column sep=small]
A \arrow[rr,"i"] \arrow[dr,swap,"f"] & & U \arrow[dl,"m"] \\
& X
\end{tikzcd}
\end{equation*}
with $I:f\htpy m\circ i$, and where $m$ is an embedding. Then $m$ satisfies the universal property of the image of $f$ if and only if the implication
\begin{equation*}
\mathrm{hom}_X(f,m')\to\mathrm{hom}_X(m,m')
\end{equation*}
holds for every embedding $m':U'\to X$. 
\end{cor}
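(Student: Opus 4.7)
The plan is to reduce the claim to the elementary fact that a map between propositions is an equivalence if and only if it admits a map in the converse direction. First, I would apply the preceding lemma to observe that for any embedding $m':U'\to X$, both $\hom_X(m,m')$ and $\hom_X(f,m')$ are propositions: the former by taking $m$ as the map and $m'$ as the embedding in that lemma, and the latter by applying the lemma directly to $f$ and $m'$. Consequently, the map
\[
(i,I)^\ast : \hom_X(m,m')\to\hom_X(f,m')
\]
is a function between propositions.

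Next, I would invoke \cref{lem:prop_char}: a function between two propositions is an equivalence as soon as there is a function in the converse direction, since then both propositions are inhabited and hence contractible, making any map between them an equivalence. Because the forward direction is given by $(i,I)^\ast$ itself, the map $(i,I)^\ast$ is an equivalence if and only if there is also a map $\hom_X(f,m')\to\hom_X(m,m')$. Quantifying over all embeddings $m'$ yields exactly the biconditional asserted in the corollary.

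I do not anticipate any obstacle: the argument is an immediate combination of the preceding lemma with the standard fact that logical equivalences between propositions are equivalences, so once the propositional character of both sides is in hand the corollary falls out formally.
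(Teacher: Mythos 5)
Your overall argument is the intended one: by the preceding lemma both $\mathrm{hom}_X(m,m')$ and $\mathrm{hom}_X(f,m')$ are propositions (take $m'$ as the embedding and either $m$ or $f$ as the map into $X$), so the requirement that $(i,I)^\ast$ be an equivalence reduces to a biimplication, and since the forward direction is $(i,I)^\ast$ itself, only the converse implication is at issue. However, one step of your justification is false as stated: having maps in both directions between two propositions does \emph{not} make them inhabited — both could be empty — so you cannot argue via "inhabited, hence contractible." The correct reason that a logical equivalence between propositions is an equivalence is proof irrelevance (\cref{lem:prop_char}): given $\varphi:P\to Q$ and $\psi:Q\to P$ with $P$ and $Q$ propositions, the composites $\psi\circ\varphi$ and $\varphi\circ\psi$ are homotopic to $\idfunc[P]$ and $\idfunc[Q]$ because any two elements of a proposition are equal, so $\varphi$ is invertible and hence an equivalence. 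With that repair, your reduction goes through and matches the paper's (immediate) derivation of the corollary from the lemma.
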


Recall that embeddings into the unit type are just propositions.
Therefore, the universal property of the image of the map $A\to\unit$ is a proposition $P$ satisfying the universal property of the propositional truncation:

\begin{defn}
Let $A$ be a type, and let $P$ be a proposition that comes equipped with a map $f:A\to P$. We say that $f:A\to P$ satisfies the \define{universal property of propositional truncation}\index{universal property!of propositional truncation|textit} if for every proposition $Q$, the precomposition map
\begin{equation*}
\blank\circ f:(P\to Q)\to (A\to Q)
\end{equation*}
is an equivalence.
\end{defn}

\section{The join construction}
\subsection{Step one: constructing the propositional truncation}

\begin{lem}\label{lem:extend_join_prop}
Suppose $f:A\to P$, where $A$ is any type, and $P$ is a proposition.
Then the map
\begin{equation*}
(\join{A}{B}\to P)\to (B\to P)
\end{equation*}
given by $h\mapsto h\circ \inr$ is an equivalence, for any type $B$.
\end{lem}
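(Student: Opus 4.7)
The plan is to observe that both sides of the asserted map are propositions and then argue by logical equivalence.

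First, since $P$ is a proposition, the function type $X\to P$ is a proposition for any type $X$, by \cref{thm:prop_pi}. In particular, both $\join{A}{B}\to P$ and $B\to P$ are propositions. It therefore suffices, by \cref{lem:prop_char} together with \cref{thm:contr_equiv} (a map between propositions is an equivalence if and only if it is a logical equivalence), to exhibit a map in the converse direction $(B\to P)\to(\join{A}{B}\to P)$.

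To construct such a converse map, let $g:B\to P$ be arbitrary. Recall that $\join{A}{B}$ is the pushout of the span $A\stackrel{\pi_1}{\leftarrow}A\times B\stackrel{\pi_2}{\rightarrow}B$, so by its universal property a map $\join{A}{B}\to P$ is equivalently described as a cocone with vertex $P$ on this span, i.e.~a triple $(i,j,H)$ consisting of $i:A\to P$, $j:B\to P$ and a homotopy $H:i\circ \pi_1\htpy j\circ \pi_2$. Taking $i\defeq f$ and $j\defeq g$, we obtain the required homotopy
\begin{equation*}
H\defeq \lam{(a,b)} \alpha(f(a),g(b)) : f\circ \pi_1 \htpy g\circ\pi_2,
\end{equation*}
where $\alpha:\prd{x,y:P}x=y$ is the proof of irrelevance of $P$ provided by \cref{lem:prop_char}. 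This defines the inverse, and completes the proof.

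The argument contains no real obstacle: the only essential input is that propositional function types absorb the extra factor $A$ via the universal property of the pushout. The construction does not use induction on $B$ or any auxiliary structure, and it works for \emph{any} type $A$, which will be important for the subsequent construction of the propositional truncation as an iterated join with $A$.
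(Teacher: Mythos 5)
Your proof is correct and follows essentially the same route as the paper: both observe that $\join{A}{B}\to P$ and $B\to P$ are propositions by \cref{thm:prop_pi}, so it suffices to produce a converse map, which is obtained from the cocone $(f,g,H)$ on the span $A\leftarrow A\times B\rightarrow B$ (the square commuting because $P$ is a proposition) via the universal property of the join. Your explicit homotopy $\lam{(a,b)}\alpha(f(a),g(b))$ just spells out what the paper leaves implicit.
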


\begin{proof}
Since both types are propositions by \cref{thm:prop_pi} it suffices to construct a map
\begin{equation*}
(B\to P)\to (\join{A}{B}\to P).
\end{equation*}
Let $g:B\to P$. Then the square
\begin{equation*}
\begin{tikzcd}
A\times B \arrow[r,"\proj 2"] \arrow[d,swap,"\proj 1"] & B \arrow[d,"g"] \\
A \arrow[r,swap,"f"] & P
\end{tikzcd}
\end{equation*}
commutes since $P$ is a proposition. Therefore we obtain a map $\join{A}{B}\to P$ by the universal property of the join.
\end{proof}

The idea of the construction of the propositional truncation is that if we are given a map $f:A\to P$, where $P$ is a proposition, then it extends uniquely along $\inr:A\to \join{A}{A}$ to a map $\join{A}{A}\to P$. This extension again extends uniquely along $\inr:\join{A}{A}\to \join{A}{(\join{A}{A})}$ to a map $\join{A}{(\join{A}{A})}\to P$ and so on, resulting in a diagram of the form
\begin{equation*}
\begin{tikzcd}
A \arrow[dr] \arrow[r,"\inr"] & \join{A}{A} \arrow[d,densely dotted] \arrow[r,"\inr"] & \join{A}{(\join{A}{A})} \arrow[dl,densely dotted] \arrow[r,"\inr"] & \cdots \arrow[dll,densely dotted,bend left=10] \\
& P
\end{tikzcd}
\end{equation*}

\begin{defn}
The \define{join powers} $A^{\ast n}$ of a type $X$ are defined by
\begin{align*}
A^{\ast 0} & \defeq \emptyt \\
A^{\ast 1} & \defeq A \\
A^{\ast (n+1)} & \defeq \join{A}{A^{\ast n}}.
\end{align*}
Furthermore, we define $A^{\ast\infty}$ to be the sequential colimit of the type sequence
\begin{equation*}
\begin{tikzcd}
A^{\ast 0} \arrow[r] & A^{\ast 1} \arrow[r,"\inr"] & A^{\ast 2} \arrow[r,"\inr"] & \cdots.
\end{tikzcd}
\end{equation*}
\end{defn}

Our goal is now to show that $A^{\ast\infty}$ is a proposition and satisfies the universal property of the propositional truncation.

\begin{lem}
Consider a type sequence
\begin{equation*}
\begin{tikzcd}
A_0 \arrow[r,"f_0"] & A_1 \arrow[r,"f_1"] & A_2 \arrow[r,"f_2"] & \cdots
\end{tikzcd}
\end{equation*}
with sequential colimit $A_\infty$, and let $P$ be a proposition. Then the map
\begin{equation*}
\seqin^\ast: (A_\infty\to P)\to \Big(\prd{n:\N}A_n\to P\Big)
\end{equation*}
given by $h\mapsto \lam{n}(h\circ \seqin_n)$ is an equivalence. 
\end{lem}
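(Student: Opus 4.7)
The plan is to exploit the fact that $P$ is a proposition to reduce the question to a logical equivalence between propositions. First, since $P$ is a proposition, both $A_\infty\to P$ and each $A_n\to P$ are propositions by \cref{thm:prop_pi}, and hence $\prd{n:\N}A_n\to P$ is itself a proposition by another application of that result. Consequently, $\seqin^\ast$ is an equivalence as soon as I can exhibit any map in the opposite direction.

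To produce such a map, given $h:\prd{n:\N}A_n\to P$ I would apply the universal property of the sequential colimit in the form of \cref{thm:sequential_up}: it suffices to assemble a cocone on the sequence with vertex $P$. The pointwise components of the cocone are precisely the $h_n$; for the required homotopies $H_n:h_n\htpy h_{n+1}\circ f_n$, each value $h_n(x)=h_{n+1}(f_n(x))$ lies in the proposition $P$ and is thus inhabited by proof-irrelevance (\cref{lem:prop_char}). This yields the desired map $A_\infty\to P$, completing the construction of the inverse.

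There is essentially no obstacle here. A slightly more structural presentation, which also clarifies the flavor of the statement, would factor $\seqin^\ast$ as the universal cocone map $(A_\infty\to P)\to \mathsf{cocone}(P)$ followed by the first projection $\mathsf{cocone}(P)\to \prd{n:\N}A_n\to P$; the former is an equivalence by the definition of a colimiting cocone, while the latter has fibers of the form $\prd{n:\N}h_n\htpy h_{n+1}\circ f_n$, which are products of identity types in $P$ and therefore contractible, by weak function extensionality together with the fact that identity types of propositions are contractible. Either route gives the result with no real computational work.
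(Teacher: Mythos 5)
Your proposal is correct, and in fact it contains two valid arguments, one of which is the paper's own. The ``more structural presentation'' you sketch at the end is precisely the proof given in the text: factor $\seqin^\ast$ through $\mathsf{cocone}(P)$, note that the map $(A_\infty\to P)\to\mathsf{cocone}(P)$ is an equivalence by the universal property of the sequential colimit (\cref{thm:sequential_up}), and observe that the projection $\mathsf{cocone}(P)\to\prd{n:\N}A_n\to P$ is an equivalence because its fibers, the types $\prd{n:\N}h_n\htpy h_{n+1}\circ f_n$, are products of identity types in the proposition $P$ and hence contractible by weak function extensionality (\cref{thm:funext_wkfunext}); the paper then concludes by the 3-for-2 property. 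Your primary route is a genuinely different, slightly more elementary decomposition: since $P$ is a proposition, both $(A_\infty\to P)$ and $\prd{n:\N}A_n\to P$ are propositions by \cref{thm:prop_pi}, so it suffices to exhibit a map in the converse direction, which you build by assembling a cocone whose coherence homotopies come from proof irrelevance (\cref{lem:prop_char}) and invoking the universal property. This trades the fiberwise contractibility computation for the general fact that a logical equivalence between propositions is an equivalence, a pattern the paper itself uses in the neighboring \cref{lem:extend_join_prop,lem:infjp_up}. Either route is complete; nothing is missing.
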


\begin{proof}
By the universal property of sequential colimits established in \cref{thm:sequential_up} we obtain that $\coconemap$ is an equivalence. Note that we have a commuting triangle
\begin{equation*}
\begin{tikzcd}[column sep=tiny]
\phantom{\Big(\prd{n:\N}A_n\to P\Big).} & P^{A_\infty} \arrow[dl,swap,"\coconemap"] \arrow[dr,"\seqin^\ast"] & \phantom{\cocone(P)} \\
\cocone(P) \arrow[rr,swap,"\proj 1"] & & \Big(\prd{n:\N}A_n\to P\Big)
\end{tikzcd}
\end{equation*}
Note that for any $g:\prd{n:\N}A_n\to P$ the type 
\begin{equation*}
\prd{n:\N} g_n\htpy g_{n+1}\circ f_n
\end{equation*}
is a product of contractible types, since $P$ is a proposition. Therefore it is contractible by \cref{thm:funext_wkfunext}, and it follows that the projection is an equivalence. We conclude by the 3-for-2 property of equivalences that $\seqin^\ast$ is an equivalence.
\end{proof}

\begin{lem}\label{lem:infjp_up}
Let $A$ be a type, and let $P$ be a proposition. Then the function
\begin{equation*}
\blank\circ \seqin_0: (A^{\ast\infty}\to P)\to (A\to P)
\end{equation*}
is an equivalence. 
\end{lem}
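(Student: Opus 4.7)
The plan is the following. Since $P$ is a proposition, both the domain $(A^{\ast\infty}\to P)$ and the codomain $(A\to P)$ of the asserted equivalence are themselves propositions by \cref{thm:prop_pi}. It therefore suffices to construct a map in the reverse direction $(A\to P)\to (A^{\ast\infty}\to P)$; any such logical equivalence between propositions is automatically an equivalence.

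The first step is to apply the previous lemma to the defining type sequence $A^{\ast 0}\to A^{\ast 1}\to\cdots$ of $A^{\ast\infty}$, yielding an equivalence
\[
\seqin^\ast:(A^{\ast\infty}\to P)\eqvsym \prd{n:\N}(A^{\ast n}\to P).
\]
After composing with this equivalence, it suffices to show that, given $f:A\to P$, we can construct a family of maps $h_n:A^{\ast n}\to P$ indexed by $n:\N$. I will do this by induction on $n$. For the base case, $A^{\ast 0}=\emptyt$ and we take the unique map from the empty type. For $n=1$ we set $h_1\defeq f$. For the inductive step from $n$ to $n+1$ with $n\geq 1$, observe that $A^{\ast(n+1)}=\join{A}{A^{\ast n}}$, so we can apply \cref{lem:extend_join_prop} with $B\defeq A^{\ast n}$ (using the fixed $f:A\to P$ as the required map) to obtain a unique extension $h_{n+1}$ of $h_n$ along $\inr:A^{\ast n}\to A^{\ast(n+1)}$.

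The main point to check is that the composite equivalence thus constructed identifies with the precomposition map $\blank\circ\seqin_0$ of the statement; this reduces to tracking how $\seqin^\ast$ sends $h$ to $\lam{n}(h\circ\seqin_n)$ and observing that the $A$-factor of the product is mapped back to the reindexed inclusion of $A$ into the colimit. I do not anticipate any real obstacle here: the only subtlety is the minor bookkeeping needed because of the shifted indexing $A^{\ast 0}=\emptyt$ versus $A^{\ast 1}=A$, and in any case both sides of the diagram are propositions so the agreement is automatic once we have produced maps in both directions.
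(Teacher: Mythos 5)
Your proof is correct and takes essentially the same route as the paper: both reduce the problem along the equivalence $\seqin^\ast:(A^{\ast\infty}\to P)\eqvsym \prd{n:\N}(A^{\ast n}\to P)$ from the preceding lemma, construct the family $\prd{n:\N}A^{\ast n}\to P$ by induction on $n$ using \cref{lem:extend_join_prop}, and conclude because a map between propositions admitting a converse map is an equivalence. Your explicit treatment of the shifted indexing ($A^{\ast 0}=\emptyt$, $A^{\ast 1}=A$) is, if anything, slightly more careful than the paper's.
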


\begin{proof}
We have the commuting triangle
\begin{equation*}
\begin{tikzcd}[column sep=0]
& P^{A^{\ast\infty}} \arrow[dl,swap,"\seqin^\ast"] \arrow[dr,"\blank\circ\seqin_0"] & \phantom{\Big(\prd{n:\N}A^{\ast n} \to P\Big)} \\
\Big(\prd{n:\N}A^{\ast n} \to P\Big) \arrow[rr,swap,"\lam{h}h_0"] & & P^A.
\end{tikzcd}
\end{equation*}
Therefore it suffices to show that the bottom map is an equivalence. Since this is a map between propositions, it suffices to construct a map in the converse direction. Let $f:A\to P$. We will construct a term of type
\begin{equation*}
\prd{n:\N}A^{\ast n} \to P
\end{equation*}
by induction on $n:\N$. The base case is trivial. Given a map $g:A^{\ast n}\to P$, we obtain a map $g:A^{\ast(n+1)}\to P$ by \cref{lem:extend_join_prop}.
\end{proof}

\begin{lem}\label{lem:seqcolim_contr}
Consider a type sequence
\begin{equation*}
\begin{tikzcd}
A_0 \arrow[r,"f_0"] & A_1 \arrow[r,"f_1"] & A_2 \arrow[r,"f_2"] & \cdots
\end{tikzcd}
\end{equation*}
and suppose that each $A_n$ is equipped with a base point $a_n:A_n$, and each $f_n$ is equipped with a homotopy $H_n:\mathsf{const}_{a_{n+1}}\htpy f_n$. Then the sequential colimit $A_\infty$ is contractible.\qed
\end{lem}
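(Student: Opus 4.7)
The plan is to invoke the dependent universal property of sequential colimits (\cref{thm:sequential_up}) for the family $P(t) := (c = t)$ with center of contraction $c := \mathsf{seq\usc{}in}_0(a_0)$. Transport in $P$ along a path $p : t = t'$ is right-concatenation with $p$, so the coherence required of the section data $\alpha_n(x) : c = \mathsf{seq\usc{}in}_n(x)$ takes the form
\begin{equation*}
\alpha_n(x) \cdot \mathsf{seq\usc{}glue}_n(x) \;=\; \alpha_{n+1}(f_n(x))
\end{equation*}
for each $n : \N$ and $x : A_n$.

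First I would construct, by recursion on $n$, auxiliary paths $\gamma_n : c = \mathsf{seq\usc{}in}_n(a_n)$: take $\gamma_0 := \refl{c}$ and, given $\gamma_n$, set $\gamma_{n+1} := \gamma_n \cdot \mathsf{seq\usc{}glue}_n(a_n) \cdot \ap{\mathsf{seq\usc{}in}_{n+1}}{H_n(a_n)^{-1}}$, using that $H_n(a_n)^{-1} : f_n(a_n) = a_{n+1}$. Then for each $n : \N$ and $x : A_n$ I would define
\begin{equation*}
\alpha_n(x) \;:=\; \gamma_{n+1} \cdot \ap{\mathsf{seq\usc{}in}_{n+1}}{H_n(x)} \cdot \mathsf{seq\usc{}glue}_n(x)^{-1},
\end{equation*}
which connects $c$ to $\mathsf{seq\usc{}in}_{n+1}(a_{n+1})$ via $\gamma_{n+1}$, then to $\mathsf{seq\usc{}in}_{n+1}(f_n(x))$ via $H_n(x) : a_{n+1} = f_n(x)$, and finally back to $\mathsf{seq\usc{}in}_n(x)$ by inverse glue.

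To verify the coherence I would cancel $\mathsf{seq\usc{}glue}_n(x)^{-1} \cdot \mathsf{seq\usc{}glue}_n(x)$ on the left-hand side, expand $\gamma_{n+2}$ on the right-hand side, and cancel the shared prefix $\gamma_{n+1}$, reducing the task to the identity
\begin{equation*}
\ap{\mathsf{seq\usc{}in}_{n+1}}{H_n(x)} \;=\; \mathsf{seq\usc{}glue}_{n+1}(a_{n+1}) \cdot \ap{\mathsf{seq\usc{}in}_{n+2}}{H_{n+1}(a_{n+1})^{-1} \cdot H_{n+1}(f_n(x))} \cdot \mathsf{seq\usc{}glue}_{n+1}(f_n(x))^{-1}.
\end{equation*}
Naturality of $H_{n+1} : \mathsf{const}_{a_{n+2}} \htpy f_{n+1}$ at the path $H_n(x) : a_{n+1} = f_n(x)$ in $A_{n+1}$ rewrites the middle composite as $\ap{f_{n+1}}{H_n(x)}$, and the remaining identity is exactly the naturality square of the homotopy $\mathsf{seq\usc{}glue}_{n+1} : \mathsf{seq\usc{}in}_{n+1} \htpy \mathsf{seq\usc{}in}_{n+2} \circ f_{n+1}$ at the path $H_n(x)$. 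The main obstacle is the path-algebra bookkeeping needed to line up the inverses and whiskerings so that the two naturality squares compose as claimed; there is no geometric content to unpack beyond these two routine naturality arguments.
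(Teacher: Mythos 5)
Your proof is correct: the recursive paths $\gamma_n$, the contraction $\alpha_n(x)$ routed through level $n+1$, and the reduction of the coherence condition to naturality of $H_{n+1}$ at $H_n(x)$ (with $\ap{}$ of a constant map being reflexivity) followed by naturality of $\mathsf{seq\usc{}glue}_{n+1}$ all check out. The paper states this lemma with the proof omitted, and your argument via the dependent universal property of the sequential colimit applied to the family $t\mapsto (c=t)$ is exactly the standard argument one would expect to fill that gap; there is nothing to compare beyond noting that your construction is the natural one.
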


\begin{lem}\label{lem:isprop_infjp}
The type $A^{\ast\infty}$ is a proposition for any type $A$.
\end{lem}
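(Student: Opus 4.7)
The plan is to deduce propositionality of $A^{\ast\infty}$ from contractibility in an inhabited case, using the universal property already established. First I would apply \cref{lem:prop_char} to reduce the goal $\isprop(A^{\ast\infty})$ to constructing a function $A^{\ast\infty}\to\iscontr(A^{\ast\infty})$. Since $\iscontr(A^{\ast\infty})$ is a proposition (a standard consequence of function extensionality, which we have from \cref{thm:funext_wkfunext}), I can invoke \cref{lem:infjp_up} with $P\defeq\iscontr(A^{\ast\infty})$ to reduce further to constructing a map $A\to\iscontr(A^{\ast\infty})$.

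So assume $a:A$; the task becomes showing that $A^{\ast\infty}$ is contractible. The plan is to apply \cref{lem:seqcolim_contr} to the defining sequence. Because $A^{\ast 0}\jdeq \emptyt$ admits no base point, I would first note that the sequential colimit is unchanged by dropping the initial term, so it suffices to exhibit the requisite data on the tail $(A^{\ast n})_{n\geq 1}$. Set $a_1\defeq a:A^{\ast 1}$ and, for each $n\geq 1$,
\begin{equation*}
a_{n+1}\defeq \inl(a):A\ast A^{\ast n}\jdeq A^{\ast(n+1)}.
\end{equation*}
The transition map $\inr:A^{\ast n}\to A^{\ast(n+1)}$ is then pointwise equal to $\const_{a_{n+1}}$: for any $x:A^{\ast n}$, the path $\glue(a,x):\inl(a)=\inr(x)$ supplies the component of the required homotopy $H_n:\const_{a_{n+1}}\htpy \inr$. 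Feeding this data into \cref{lem:seqcolim_contr} yields contractibility of the colimit.

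The main technical wrinkle is the harmless mismatch at index zero; shifting the sequence (or, equivalently, observing that the initial map out of $\emptyt$ can be inserted as a vacuous base case since any function from $\emptyt$ is trivially homotopic to any other) handles it. Apart from this, every step is a routine application of the lemmas already proved: \cref{lem:prop_char} to reformulate propositionality, \cref{lem:infjp_up} to use the universal property of $A^{\ast\infty}$ with respect to propositions, and \cref{lem:seqcolim_contr} to collapse an eventually-null-homotopic sequence.
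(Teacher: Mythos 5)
Your proof is correct and follows essentially the same route as the paper: reduce via \cref{lem:prop_char} and \cref{lem:infjp_up} to producing a map $A\to\iscontr(A^{\ast\infty})$, then conclude contractibility from \cref{lem:seqcolim_contr} using the $\glue$ paths as the required null-homotopies of the maps $\inr$. Your explicit treatment of the basepoint-less initial term $A^{\ast 0}\jdeq\emptyt$ by shifting the sequence is a detail the paper's proof leaves implicit, but it changes nothing essential.
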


\begin{proof}
By \cref{lem:prop_char} it suffices to show that $A^{\ast\infty}\to \iscontr(A^{\ast\infty})$, and by \cref{lem:infjp_up} it suffices to show that
\begin{equation*}
A\to \iscontr(A^{\ast\infty}),
\end{equation*}
because $\iscontr(A^{\ast\infty})$ is a proposition. 

Let $x:A$. To see that $A^{\ast\infty}$ is contractible it suffices by \cref{lem:seqcolim_contr} to show that $\inr:A^{\ast n}\to A^{\ast(n+1)}$ is homotopic to the constant function $\const_{\inl(x)}$. However, we get a homotopy $\const_{\inl(x)}\htpy \inr$ immediately from the path constructor $\glue$.  
\end{proof}

\begin{thm}
For any type $A:\UU$ there is a proposition $\brck{A}:\UU$ that comes equipped with a map $\eta:A\to \brck{A}$, and satisfies the universal property of propositional truncation.
\end{thm}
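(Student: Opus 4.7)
The plan is to define $\brck{A}\defeq A^{\ast\infty}$ and to take the unit $\eta:A\to\brck{A}$ to be the sequential inclusion $\seqin_0:A^{\ast 1}\to A^{\ast\infty}$ (modulo identifying $A^{\ast 1}$ with $A$). Since $A^{\ast\infty}$ lives in $\UU$ by the running assumption that universes are closed under pushouts and sequential colimits, this candidate is automatically in $\UU$, so the only things left to verify are that it is a proposition and that it satisfies the universal property.

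First I would invoke \cref{lem:isprop_infjp} directly to conclude that $\brck{A}\defeq A^{\ast\infty}$ is a proposition. Then, to establish the universal property, I would let $Q$ be an arbitrary proposition and consider the precomposition map
\begin{equation*}
\blank\circ\eta : (\brck{A}\to Q)\to (A\to Q).
\end{equation*}
By \cref{lem:infjp_up} applied with $P\defeq Q$, this map is an equivalence. That is exactly the universal property of the propositional truncation.

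There is no real obstacle here — the hard work has already been done in \cref{lem:extend_join_prop,lem:infjp_up,lem:seqcolim_contr,lem:isprop_infjp}, which together set up both the extension along each $\inr:A^{\ast n}\to A^{\ast(n+1)}$ and the contractibility argument showing that the infinite join power collapses to a proposition. The only bookkeeping detail worth spelling out is that $\eta$ is literally the composite $A\jdeq A^{\ast 1}\xrightarrow{\seqin_1} A^{\ast\infty}$, so that the precomposition map considered in the universal property agrees with the map $\blank\circ\seqin_0$ appearing in \cref{lem:infjp_up} (up to the canonical identification $A^{\ast 1}\eqvsym A$, under which $\seqin_0$ and $\seqin_1$ correspond). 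Thus the proof is essentially a one-line assembly of the two cited lemmas.
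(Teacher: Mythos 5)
Your proposal is correct and is essentially the paper's own proof: both define $\brck{A}\defeq A^{\ast\infty}$ with unit the sequential inclusion of $A$, obtain proposition-hood from \cref{lem:isprop_infjp}, and obtain the universal property from \cref{lem:infjp_up}. The indexing remark about $A^{\ast 1}\jdeq A$ and which $\seqin_n$ plays the role of $\eta$ is harmless bookkeeping that the paper glosses over in exactly the same way.
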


\begin{proof}
Let $A$ be a type. Then we define $\brck{A}\defeq A^{\ast\infty}$, and we define $\eta\defeq \seqin_0:A\to A^{\ast\infty}$. Then $\brck{A}$ is a proposition by \cref{lem:isprop_infjp}, and $\eta:A\to \brck{A}$ satisfies the universal property of propositional truncation by \cref{lem:infjp_up}.
\end{proof}

\subsection{Step two: constructing the image of a map}\label{sec:join_stage2}
Following Definition 7.6.3 of \cite{hottbook}, we recall that the image of a map $f:A\to X$ can be defined using the propositional truncation:
\begin{defn}
For any map $f:A\to X$ we define the \define{image}\index{image|textbf} of $f$ to be the type
\begin{equation*}
\im(f) \defeq \sm{x:X}\brck{\fib{f}{x}}
\end{equation*}
and we define the \define{image inclusion} to be the projection $\proj 1 :\im(f)\to X$. 
\end{defn}
However, the construction of the fiberwise join in \cref{defn:fib_join} suggests that we can also define the image of $f$ as the infinite join power $f^{\ast\infty}$, where we repeatedly take the fiberwise join of $f$ with itself. Our reason for defining the image in this way is twofold: 
\begin{itemize}
\item We use this construction to show that the image of a map $f:A\to B$ from an essentially small type $A$ into a locally small type $B$ is again essentially small.
\item Some interesting types, such as the real and complex projective spaces, appear in specific instances of this construction.
\end{itemize}

\begin{lem}
Consider a map $f:A\to X$, an embedding $m:U\to X$, and $h:\mathrm{hom}_X(f,m)$. Then the map
\begin{equation*}
\mathrm{hom}_X(\join{f}{g},m)\to \mathrm{hom}_X(g,m)
\end{equation*}
is an equivalence for any $g:B\to X$.
\end{lem}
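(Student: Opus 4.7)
The plan is to observe that both sides are propositions and then construct a map in the converse direction.

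First, since $m$ is an embedding, the types $\hom_X(\join{f}{g},m)$ and $\hom_X(g,m)$ are both propositions by the lemma preceding Corollary 7 in the excerpt (the one asserting that $\hom_X(f',m)$ is a proposition whenever $m$ is an embedding). Hence, by the characterization of propositions, it suffices to construct a function in the opposite direction
\begin{equation*}
\hom_X(g,m) \to \hom_X(\join{f}{g},m).
\end{equation*}

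So let $k:\hom_X(g,m)$ be given, providing $j:B\to U$ with $H_g:g\htpy m\circ j$, and recall that $h:\hom_X(f,m)$ provides $i:A\to U$ with $H_f:f\htpy m\circ i$. To apply the universal property of the pushout defining $\join[X]{A}{B}$, I will build a cocone with vertex $U$ on the span $A \leftarrow A\times_X B \rightarrow B$. The two leg maps are $i:A\to U$ and $j:B\to U$; it remains to produce a homotopy $i\circ \pi_1 \htpy j\circ \pi_2$. Given $(a,b,p):A\times_X B$ with $p:f(a)=g(b)$, we have the composite identification
\begin{equation*}
m(i(a)) \stackrel{H_f(a)^{-1}}{=} f(a) \stackrel{p}{=} g(b) \stackrel{H_g(b)}{=} m(j(b))
\end{equation*}
in $X$. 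Since $m$ is an embedding, $\apfunc{m}:(i(a)=j(b))\to (m(i(a))=m(j(b)))$ is an equivalence, so this identification lifts uniquely to an identification $i(a)=j(b)$ in $U$. This yields the required homotopy, and by the universal property of the pushout we obtain a map $\varphi:\join[X]{A}{B}\to U$.

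It remains to equip $\varphi$ with a homotopy $\join{f}{g}\htpy m\circ \varphi$. This is where the bulk of the work lies, and is the main obstacle: I will use the dependent universal property of $\join[X]{A}{B}$, or equivalently its universal property applied with target $X$. The homotopy on the image of $\inl$ is given by $H_f$ (noting that $(\join{f}{g})\circ \inl = f$ and $m\circ\varphi\circ\inl = m\circ i$), and on the image of $\inr$ by $H_g$. The glue-coherence condition reduces, after path manipulation, precisely to the compatibility chain above: on $(a,b,p)$ the identification constructed in $U$ was defined via the composite whose image under $m$ recovers the required equation, so the coherence is immediate by construction. This gives the desired element of $\hom_X(\join{f}{g},m)$ and completes the proof.
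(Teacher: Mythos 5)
Your proof is correct, but it takes a genuinely different route from the paper's. The paper argues entirely fiberwise: by \cref{cor:fib_triangle} it identifies $\mathrm{hom}_X(\join{f}{g},m)$ with $\prd{x:X}\fib{\join{f}{g}}{x}\to\fib{m}{x}$, rewrites $\fib{\join{f}{g}}{x}$ as $\join{\fib{f}{x}}{\fib{g}{x}}$ using \cref{defn:join-fiber} (which rests on the descent theorem), applies \cref{lem:extend_join_prop} to the proposition $\fib{m}{x}$, and translates back; the whole proof is a four-step chain of equivalences with no explicit path algebra. You instead observe that both sides are propositions and build the converse map directly from the universal property of the pushout $\join[X]{A}{B}$: the legs are $i$ and $j$, and the cocone homotopy is obtained by lifting $\ct{H_f(a)^{-1}}{\ct{p}{H_g(b)}}$ along the equivalence $\apfunc{m}$, after which the triangle homotopy $\join{f}{g}\htpy m\circ\varphi$ is produced by pushout induction. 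What your approach buys is independence from the fiber computation of the fiberwise join, and hence from the descent machinery: it is in effect the proof of \cref{lem:extend_join_prop} transported to the slice over $X$, with proposition-ness of the fibers of $m$ replaced by the equivalent condition (\cref{thm:prop_emb}) that $\apfunc{m}$ is an equivalence. What it costs is exactly the coherence bookkeeping you flag at the end: in the glue case one must compare $\ap{\join{f}{g}}{\glue(a,b,p)}$ and $\ap{m\circ\varphi}{\glue(a,b,p)}$ through the up-to-homotopy computation rules of the two cogap maps, and the resulting square reduces to $\ct{H_f(a)}{\bigl(\ct{H_f(a)^{-1}}{\ct{p}{H_g(b)}}\bigr)} = \ct{p}{H_g(b)}$, which holds by path algebra; so the step you call ``immediate by construction'' is genuinely fine, but it is precisely the work that the paper's fiberwise chain packages into previously proved lemmas.
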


\begin{proof}
Note that both types are propositions, so any equivalence can be used to prove the claim. Thus, we simply calculate
\begin{align*}
\mathrm{hom}_X(\join{f}{g},m) & \eqvsym \prd{x:X}\fib{\join{f}{g}}{x}\to \fib{m}{x} \\
& \eqvsym \prd{x:X}\join{\fib{f}{x}}{\fib{g}{x}}\to\fib{m}{x} \\
& \eqvsym \prd{x:X}\fib{g}{x}\to\fib{m}{x} \\
& \eqvsym \mathrm{hom}_X(g,m).
\end{align*}
The first equivalence holds by \cref{cor:fib_triangle}; the second equivalence holds by \cref{defn:join-fiber}; the third equivalence holds by \cref{lem:extend_join_prop}; the last equivalence again holds by \cref{cor:fib_triangle}.
\end{proof}

For the construction of the image of $f:A\to X$ we observe that if we are given an embedding $m:U\to X$ and a map $(i,I):\mathrm{hom}_X(f,m)$, then $(i,I)$ extends uniquely along $\inr:A\to \join[X]{A}{A}$ to a map $\mathrm{hom}_X(\join{f}{f},m)$. This extension again extends uniquely along $\inr:\join[X]{A}{A}\to \join[X]{A}{(\join[X]{A}{A})}$ to a map $\mathrm{hom}_X(\join{f}{(\join{f}{f})},m)$ and so on, resulting in a diagram of the form
\begin{equation*}
\begin{tikzcd}
A \arrow[dr] \arrow[r,"\inr"] & \join[X]{A}{A} \arrow[d,densely dotted] \arrow[r,"\inr"] & \join[X]{A}{(\join[X]{A}{A})} \arrow[dl,densely dotted] \arrow[r,"\inr"] & \cdots \arrow[dll,densely dotted,bend left=10] \\
& U
\end{tikzcd}
\end{equation*}

\begin{defn}
Suppose $f:A\to X$ is a map. Then we define the \define{fiberwise join powers} 
\begin{equation*}
f^{\ast n}\defeq A_X^{\ast n}\to X.
\end{equation*}
\end{defn}

\begin{proof}[Construction]
Note that the operation $(B,g)\mapsto (\join[X]{A}{B},\join{f}{g})$ defines an endomorphism on the type
\begin{equation*}
\sm{B:\UU}B\to X.
\end{equation*}
We also have $(\emptyt,\ind{\emptyt})$ and $(A,f)$ of this type. For $n\geq 1$ we define
\begin{align*}
A_X^{\ast (n+1)} & \defeq \join[X]{A}{A_X^{\ast n}} \\
f^{\ast (n+1)} & \defeq \join{f}{f^{\ast n}}.\qedhere
\end{align*}
\end{proof}

\begin{defn}
We define $A_X^{\ast\infty}$ to be the sequential colimit of the type sequence
\begin{equation*}
\begin{tikzcd}
A_X^{\ast 0} \arrow[r] & A_X^{\ast 1} \arrow[r,"\inr"] & A_X^{\ast 2} \arrow[r,"\inr"] & \cdots.
\end{tikzcd}
\end{equation*}
Since we have a cocone
\begin{equation*}
\begin{tikzcd}
A_X^{\ast 0} \arrow[r] \arrow[dr,swap,"f^{\ast 0}" near start] & A_X^{\ast 1} \arrow[r,"\inr"] \arrow[d,swap,"f^{\ast 1}" near start] & A_X^{\ast 2} \arrow[r,"\inr"] \arrow[dl,swap,"f^{\ast 2}" xshift=1ex] & \cdots \arrow[dll,bend left=10] \\
& X
\end{tikzcd}
\end{equation*}
we also obtain a map $f^{\ast\infty}:A_X^{\ast\infty}\to X$ by the universal property of $A_X^{\ast\infty}$. 
\end{defn}

\begin{lem}\label{lem:finfjp_up}
Let $f:A\to X$ be a map, and let $m:U\to X$ be an embedding. Then the function
\begin{equation*}
\blank\circ \seqin_0: \mathrm{hom}_X(f^{\ast\infty},m)\to \mathrm{hom}_X(f,m)
\end{equation*}
is an equivalence. 
\end{lem}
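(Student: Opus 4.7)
The plan is to mimic the proof of \cref{lem:infjp_up}, replacing the proposition $P$ with the embedding $m:U\to X$ throughout, and using the preceding lemma in place of \cref{lem:extend_join_prop}. The key observation is that since $m$ is an embedding, its fibers are propositions by \cref{thm:prop_emb}, so for \emph{any} map $g:B\to X$ the type $\mathrm{hom}_X(g,m)\simeq \prd{x:X}\fib{g}{x}\to\fib{m}{x}$ is a proposition by \cref{thm:prop_pi}. In particular both the domain and codomain of $\blank\circ\seqin_0$ are propositions, so it suffices to construct a map in the opposite direction.

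First I would factor $\blank\circ\seqin_0$ through the dependent product over $\N$ via the commuting triangle
\begin{equation*}
\begin{tikzcd}[column sep=-.5em]
& \mathrm{hom}_X(f^{\ast\infty},m) \arrow[dl,swap,"\seqin^\ast"] \arrow[dr,"\blank\circ\seqin_0"] & \phantom{\prd{n:\N}\mathrm{hom}_X(f^{\ast n},m)} \\
\prd{n:\N}\mathrm{hom}_X(f^{\ast n},m) \arrow[rr,swap,"\lam{h}h_0"] & & \mathrm{hom}_X(f,m),
\end{tikzcd}
\end{equation*}
where $\seqin^\ast$ sends $(k,K):\mathrm{hom}_X(f^{\ast\infty},m)$ to $\lam{n}(k\circ\seqin_n,\ldots)$. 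The left leg $\seqin^\ast$ is an equivalence by the universal property of the sequential colimit $A_X^{\ast\infty}$ together with the fact that, for each $n$, the compatibility homotopy between the $n$-th and $(n{+}1)$-st components of a cocone lives in the proposition $\mathrm{hom}_X(f^{\ast n},m)$ and is therefore automatic. Thus it remains to exhibit an inverse to the bottom map $\lam{h}h_0$, and this again is a map between propositions, so a construction in one direction suffices.

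Given $(i,I):\mathrm{hom}_X(f,m)$, I would construct the dependent function $\prd{n:\N}\mathrm{hom}_X(f^{\ast n},m)$ by induction on $n$. The base case $n=0$ is trivial because $A_X^{\ast 0}\jdeq\emptyt$ and the unique map extends over any square. For the inductive step, assume $h_n:\mathrm{hom}_X(f^{\ast n},m)$; since $f^{\ast(n+1)}\jdeq\join{f}{f^{\ast n}}$, the preceding lemma (applied with the fixed $(i,I):\mathrm{hom}_X(f,m)$ and $g\defeq f^{\ast n}$) gives an equivalence
\begin{equation*}
\mathrm{hom}_X(\join{f}{f^{\ast n}},m)\eqvsym \mathrm{hom}_X(f^{\ast n},m),
\end{equation*}
and its inverse applied to $h_n$ yields the desired $h_{n+1}$. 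This produces a section of $\lam{h}h_0$, and since both types are propositions, it is an inverse. Composing with $(\seqin^\ast)^{-1}$ gives the desired inverse to $\blank\circ\seqin_0$, completing the proof. The only subtlety is bookkeeping the indexing convention for $\seqin$ so that $\seqin_0$ indeed denotes the canonical inclusion $A\jdeq A_X^{\ast 1}\to A_X^{\ast\infty}$; everything else reduces to a routine adaptation of the proposition-truncation argument.
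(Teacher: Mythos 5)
Your overall strategy is the intended one: the paper leaves this lemma without proof, and the evident template is \cref{lem:infjp_up} together with the unlabeled lemma asserting that $\mathrm{hom}_X(\join{f}{g},m)\to\mathrm{hom}_X(g,m)$ is an equivalence; your induction on $n$, the observation that every type $\mathrm{hom}_X(g,m)$ is a proposition (via \cref{cor:fib_triangle}, \cref{thm:prop_emb}, \cref{thm:prop_pi}), the reduction to constructing a single converse map, and the remark about the indexing of $\seqin_0$ are all correct. The one step that, as written, is not yet a proof is the claim that $\seqin^\ast:\mathrm{hom}_X(f^{\ast\infty},m)\to\prd{n:\N}\mathrm{hom}_X(f^{\ast n},m)$ is an equivalence ``because the compatibility homotopy lives in the proposition $\mathrm{hom}_X(f^{\ast n},m)$''. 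The compatibility datum of a cocone with vertex $U$ is a homotopy $k_n\htpy k_{n+1}\circ\inr$, i.e.\ a path in $U^{A_X^{\ast n}}$, and $U$ is not a proposition; what is true is only that such a homotopy can be \emph{extracted} from an identification of the two elements $(k_n,K_n)$ and $(k_{n+1}\circ\inr,\ldots)$ of the proposition $\mathrm{hom}_X(f^{\ast n},m)$. More seriously, an element of $\mathrm{hom}_X(f^{\ast\infty},m)$ also contains a homotopy $f^{\ast\infty}\htpy m\circ k$, and reconstructing this from the $K_n$ via the dependent universal property of the sequential colimit requires coherence $2$-cells in $X$; these are not automatic, since $X$ carries no truncation hypothesis, so ``the coherence is automatic'' is exactly the point that needs an argument.

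The repair is available inside the paper's toolkit, and once made your proof goes through. Use \cref{cor:fib_triangle} to rewrite $\mathrm{hom}_X(g,m)\eqvsym\prd{x:X}\fib{g}{x}\to\fib{m}{x}$, note that each $\fib{m}{x}$ is a proposition by \cref{thm:prop_emb}, that $\fib{f^{\ast n}}{x}\eqvsym(\fib{f}{x})^{\ast n}$ by iterating \cref{defn:join-fiber}, and that $\fib{f^{\ast\infty}}{x}$ is the sequential colimit of the $\fib{f^{\ast n}}{x}$ because sequential colimits commute with $\Sigma$- and identity types (\cref{thm:colim_sigma}); then both the equivalence $\seqin^\ast$ and the lemma itself reduce pointwise in $x:X$ to the already-proved propositional situation of \cref{lem:infjp_up}, where mapping a sequential colimit into a proposition needs only the levelwise maps and no higher coherence. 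Alternatively, keep your factorization through $\prd{n:\N}\mathrm{hom}_X(f^{\ast n},m)$ but justify the assembly step by this fiberwise argument rather than by appeal to propositionality of the hom-types alone.
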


\begin{thm}\label{thm:image}
For any map $f:A\to X$, the map $f^{\ast\infty}:A_X^{\ast\infty}\to X$ is an embedding that satisfies the universal property of the image inclusion of $f$.
\end{thm}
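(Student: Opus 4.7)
The plan is to combine Lemma \ref{lem:finfjp_up} with the characterization of embeddings by propositional fibers (\cref{thm:prop_emb}). Since \cref{lem:finfjp_up} already gives the equivalence
\begin{equation*}
\blank\circ\seqin_0:\mathrm{hom}_X(f^{\ast\infty},m)\to\mathrm{hom}_X(f,m)
\end{equation*}
for every embedding $m:U\to X$, the universal property of the image from \cref{cor:image_up} will follow immediately as soon as we have shown that $f^{\ast\infty}$ itself is an embedding. Thus the whole theorem reduces to verifying that every fiber $\fib{f^{\ast\infty}}{x}$ is a proposition.

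To compute these fibers, I would first observe that the entire construction of $A_X^{\ast\infty}$ fibers over $X$: there is a cartesian transformation of type sequences from
\begin{equation*}
\begin{tikzcd}
\fib{f^{\ast 0}}{x} \arrow[r] & \fib{f^{\ast 1}}{x} \arrow[r] & \fib{f^{\ast 2}}{x} \arrow[r] & \cdots
\end{tikzcd}
\end{equation*}
into the sequence defining $A_X^{\ast\infty}$, because each $\inr:A_X^{\ast n}\to A_X^{\ast(n+1)}$ sits in a triangle over $X$. Since sequential colimits commute with pullbacks (a consequence of the descent/flattening material from this chapter), the fiber of $f^{\ast\infty}$ at $x$ is the sequential colimit of the sequence of fibers. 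By \cref{defn:join-fiber} applied iteratively, $\fib{f^{\ast n}}{x}$ is equivalent to the iterated join $\fib{f}{x}^{\ast n}$ of \cref{sec:join_stage2}, and these equivalences are compatible with the $\inr$ maps. Hence
\begin{equation*}
\fib{f^{\ast\infty}}{x}\eqvsym \fib{f}{x}^{\ast\infty},
\end{equation*}
which is a proposition by \cref{lem:isprop_infjp}. Therefore $f^{\ast\infty}$ is an embedding by \cref{thm:prop_emb}, and its universal property now follows from \cref{cor:image_up} together with \cref{lem:finfjp_up}.

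The main obstacle is the identification of $\fib{f^{\ast\infty}}{x}$ with $\fib{f}{x}^{\ast\infty}$: one must check that the cartesian squares over $X$ provided by \cref{defn:join-fiber} assemble coherently into a cartesian transformation of the full type sequence, so that the interchange of fibers with the sequential colimit is actually applicable. Everything else is bookkeeping: the base case $\fib{f^{\ast 0}}{x}\eqvsym\emptyt\eqvsym \fib{f}{x}^{\ast 0}$ is trivial, and the inductive step is exactly the content of \cref{defn:join-fiber} applied to $f$ and $f^{\ast n}$, naturally in the map $\inr$.
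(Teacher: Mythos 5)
Your argument is correct and is essentially the proof the paper intends (the thesis states this theorem without a written-out proof, but the surrounding material fixes the route): the universal property is exactly \cref{lem:finfjp_up} combined with \cref{cor:image_up}, and the embedding part is the fiberwise version of step one, identifying $\fib{f^{\ast\infty}}{x}$ with $\fib{f}{x}^{\ast\infty}$ via \cref{defn:join-fiber} together with the interchange of sequential colimits and pullbacks, so that the fiber is a proposition by \cref{lem:isprop_infjp} and \cref{thm:prop_emb} applies. You also correctly isolate the only genuine work, namely checking that the equivalences $\fib{f^{\ast n}}{x}\eqvsym\fib{f}{x}^{\ast n}$ are compatible with the $\inr$ maps so that the colimit-of-fibers computation is legitimate; this is the same coherence bookkeeping the paper's construction implicitly relies on.
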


\subsection{Step three: establishing the smallness of the image}

Recall from \cref{defn:ess_small} that a type is said to be locally small if its identity types are equivalent to small types.

\begin{lem}
Consider a commuting square
\begin{equation*}
\begin{tikzcd}
A \arrow[r] \arrow[d] & B \arrow[d] \\
C \arrow[r] & D.
\end{tikzcd}
\end{equation*}
\begin{enumerate}
\item If the square is cartesian, $B$ and $C$ are essentially small, and $D$ is locally small, then $A$ is essentially small.
\item If the square is cocartesian, and $A$, $B$, and $C$ are essentially small, then $D$ is essentially small. 
\end{enumerate}
\end{lem}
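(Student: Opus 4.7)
The plan is to exploit the closure properties of essential smallness noted earlier in the chapter: it is preserved by $\Sigma$- and $\mathsf{Id}$-types, and the universe is closed under pushouts by the running assumption. Both parts then reduce to assembling the pullback or pushout out of small pieces.

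For (i), I would first use the assumption that the square is cartesian to replace $A$ by the canonical pullback: by \cref{thm:is_pullback} we have an equivalence
\begin{equation*}
A \eqvsym B\times_D C \jdeq \sm{b:B}{c:C} f(b)=g(c),
\end{equation*}
where $f:B\to D$ and $g:C\to D$ are the two maps into $D$. Since essential smallness is preserved by equivalences, it suffices to show $B\times_D C$ is essentially small. Pick small types $B_0,C_0:\UU$ and equivalences $B\eqvsym B_0$, $C\eqvsym C_0$; transporting along these equivalences, $B\times_D C$ is equivalent to a $\Sigma$-type indexed by $b_0:B_0$, $c_0:C_0$ of the identity types $f'(b_0)=g'(c_0)$ in $D$ (for the transported maps $f',g'$). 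Local smallness of $D$ then gives small types $X(b_0,c_0):\UU$ together with equivalences $X(b_0,c_0)\eqvsym f'(b_0)=g'(c_0)$, and the double $\Sigma$-type $\sm{b_0:B_0}{c_0:C_0}X(b_0,c_0)$ lives in $\UU$ since $\UU$ is closed under $\Sigma$-types. By the type theoretic choice principle from \cref{thm:choice} and \cref{thm:fib_equiv}, these fiberwise equivalences induce an equivalence of total spaces, which combined with the preceding equivalences yields $A\eqvsym \sm{b_0:B_0}{c_0:C_0}X(b_0,c_0)$, exhibiting $A$ as essentially small.

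For (ii), the argument is even more direct. The assumption that the square is cocartesian gives $D\eqvsym B\sqcup^A C$, so once again it suffices to prove the canonical pushout is essentially small. By essential smallness of $A$, $B$, and $C$ we obtain small types $A_0,B_0,C_0:\UU$ and equivalences $A\eqvsym A_0$, $B\eqvsym B_0$, $C\eqvsym C_0$. Transporting the span $(A,f,g)$ along these equivalences produces a span $\mathcal{S}_0\jdeq (A_0,f_0,g_0)$ in $\UU$ whose pushout $B_0\sqcup^{A_0}C_0$ lies in $\UU$ by our running assumption that universes are closed under pushouts. It remains to observe that equivalent spans have equivalent pushouts: since the span equivalence induces an equivalence of cocone types $\mathsf{cocone}_{\mathcal{S}}(Y)\eqvsym \mathsf{cocone}_{\mathcal{S}_0}(Y)$ natural in $Y$, the universal properties give $B\sqcup^A C\eqvsym B_0\sqcup^{A_0}C_0$ (apply the Yoneda-style argument via \cref{thm:pushout_up}, or simply observe that the composite cocone on $\mathcal{S}$ with vertex $B_0\sqcup^{A_0}C_0$ is colimiting by the 3-for-2 property of equivalences applied to $\mathsf{cocone\usc{}map}$).

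Neither step poses a genuine difficulty, but the most delicate point is the pushout-transfer in (ii): one must be careful to record that the commuting triangles/squares produced by transporting along the equivalences $A\eqvsym A_0$, etc., are all coherent enough that the induced map on cocone types really is an equivalence. This is routine from the $3$-for-$2$ property but is the only place where the proof is more than a one-line invocation of a closure property.
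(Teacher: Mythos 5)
Your proof is correct. The paper states this lemma without proof, and your argument---replacing $A$ by the canonical pullback and using local smallness of $D$ together with closure of essential smallness under $\Sigma$- and identity types for (i), and transporting the span to small types and invoking closure of the universe under pushouts plus uniqueness of pushouts for (ii)---is precisely the intended argument the paper relies on.
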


\begin{cor}
Suppose $f:A\to X$ and $g:B\to X$ are maps from essentially small types $A$ and $B$, respectively, to a locally small type $X$. Then $A\times_X B$ is again essentially small. 
\end{cor}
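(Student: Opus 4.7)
The plan is to apply part (i) of the preceding lemma directly. Recall that the canonical pullback $A\times_X B$ fits into the cartesian square
\begin{equation*}
\begin{tikzcd}
A\times_X B \arrow[r,"\pi_2"] \arrow[d,swap,"\pi_1"] & B \arrow[d,"g"] \\
A \arrow[r,swap,"f"] & X.
\end{tikzcd}
\end{equation*}
Here the top-right vertex $B$ and the bottom-left vertex $A$ are essentially small by hypothesis, while the bottom-right vertex $X$ is locally small by hypothesis. Hence the hypotheses of part (i) of the lemma are satisfied, and we conclude that the top-left vertex $A\times_X B$ is essentially small.

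If one prefers to unpack this, the direct argument is as follows. Pick small resolutions $A\eqvsym A'$ and $B\eqvsym B'$ with $A',B':\UU$, and write $f':A'\to X$ and $g':B'\to X$ for the transported maps. Then $A\times_X B \eqvsym \sm{a':A'}{b':B'}f'(a')=g'(b')$. By local smallness of $X$, for each pair $(a',b')$ the identity type $f'(a')=g'(b')$ is essentially small; since $\mathsf{ess\usc{}small}$ is a proposition (\cref{lem:isprop_ess_small}), type-theoretic choice (\cref{thm:choice}) produces a single family $P:A'\to B'\to\UU$ together with fiberwise equivalences $P(a',b')\eqvsym (f'(a')=g'(b'))$. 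The $\Sigma$-type $\sm{a':A'}{b':B'}P(a',b')$ then lives in $\UU$ and is equivalent to $A\times_X B$.

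There is no real obstacle here; the work has already been done in the preceding lemma, which in turn relies on the standard facts that essential smallness is preserved by $\Sigma$ and that a $\Sigma$-type of identity types of a locally small type is essentially small. The corollary is really just a convenient packaging of part (i) in the language of canonical pullbacks.
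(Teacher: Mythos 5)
Your proof is correct and takes exactly the approach the paper intends: the corollary is stated as an immediate consequence of part (i) of the preceding lemma, applied to the canonical pullback square, which is precisely your first paragraph. The unpacked argument is also fine, though the appeal to the propositionality of essential smallness is not needed there, since type-theoretic choice alone already yields the family $P$.
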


\begin{lem}
Consider a type sequence
\begin{equation*}
\begin{tikzcd}
A_0 \arrow[r,"f_0"] & A_1 \arrow[r,"f_1"] & A_2 \arrow[r,"f_2"] & \cdots
\end{tikzcd}
\end{equation*}
where each $A_n$ is essentially small. Then its sequential colimit is again essentially small. 
\end{lem}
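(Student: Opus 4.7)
The plan is to transport the entire type sequence into $\UU$ along the given equivalences and use that $\UU$ is closed under sequential colimits.

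First I would unpack the hypothesis. By \cref{lem:isprop_ess_small} the type $\esssmall(A_n)$ is a proposition for each $n$, so $\prd{n:\N}\esssmall(A_n)$ is a proposition. Applying type-theoretic choice (\cref{thm:choice}) to this hypothesis yields a family $X:\N\to\UU$ together with equivalences $e_n:\eqv{A_n}{X_n}$ for each $n:\N$. Define $g_n\defeq e_{n+1}\circ f_n\circ e_n^{-1}:X_n\to X_{n+1}$, so that the squares
\begin{equation*}
\begin{tikzcd}
A_n \arrow[r,"f_n"] \arrow[d,swap,"e_n"] & A_{n+1} \arrow[d,"e_{n+1}"] \\
X_n \arrow[r,swap,"g_n"] & X_{n+1}
\end{tikzcd}
\end{equation*}
commute up to the canonical homotopy $H_n:e_{n+1}\circ f_n\htpy g_n\circ e_n$ provided by the right inverse of $e_n$.

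Next I would observe that $(X,g)$ is a type sequence wholly inside $\UU$. Since the universe is closed under pushouts by the running assumption, and since sequential colimits can be constructed as pushouts (as noted in the introduction), the sequential colimit $X_\infty$ lives in $\UU$. It remains to produce an equivalence $\eqv{A_\infty}{X_\infty}$.

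The pair $(e,H)$ is a morphism of type sequences, so by the universal property of $A_\infty$ (\cref{thm:sequential_up}) it induces a map $e_\infty:A_\infty\to X_\infty$; symmetrically, the inverses $e_n^{-1}$ together with the obvious homotopies give a map $X_\infty\to A_\infty$, and the universal property furnishes the two composite identifications with the identity. Equivalently, one can view $(e,H)$ as a cartesian transformation of type sequences since every square with equivalences on two parallel sides is cartesian by \cref{cor:pb_equiv}, and then apply the descent-style result for sequential colimits to conclude that the induced map on colimits is itself an equivalence. Either way, we obtain a term of type $\esssmall(A_\infty)$ as witnessed by $(X_\infty,e_\infty)$.

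The main obstacle is minor but worth naming: verifying that a levelwise-equivalence morphism of type sequences induces an equivalence on colimits. I would handle this by a direct two-sided universal-property argument using the inverses $e_n^{-1}$ and the canonical coherences, rather than invoking the heavier flattening-lemma machinery; the coherences all collapse because they involve only section/retraction homotopies of the $e_n$.
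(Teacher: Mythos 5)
Your proof is correct. The thesis actually states this lemma without giving a proof, so there is nothing to compare against line by line, but your route is the expected one: use type-theoretic choice (\cref{thm:choice}) to turn the pointwise smallness witnesses into a family $X:\N\to\UU$ with equivalences $e_n:\eqv{A_n}{X_n}$, transport the sequence to $(X,g)$, note that $X_\infty$ lies in $\UU$ because sequential colimits are built from pushouts (\cref{thm:rcoeq_is_pushout}) and $\UU$ is closed under pushouts, and then produce an equivalence $\eqv{A_\infty}{X_\infty}$, which witnesses $\esssmall(A_\infty)$. The only step that deserves more care than you give it is the last one. In the direct two-sided universal-property argument, the claim that ``the coherences all collapse'' glosses over the real content: to identify the composite cocone on $(A,f)$ with the colimiting cocone you must also identify the coherence homotopies lying over the identification of the cocone legs, and that $2$-cell comparison uses the half-adjoint (triangle) coherence of the $e_n$ (available via \cref{prp:equiv_precomp}), not just the section and retraction homotopies taken separately. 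Your alternative route is the cleaner one inside this thesis and sidesteps the issue: a commuting square with equivalences on two parallel sides is a pullback by \cref{cor:pb_equiv}, so $(e,H)$ is a cartesian transformation of type sequences, and the results on cartesian transformations and colimits of sequences in the chapter on reflexive coequalizers then show that $X_\infty$, equipped with the composite cocone, satisfies the universal property of the colimit of $(A,f)$; hence $(X_\infty,e_\infty)$ is the desired witness of essential smallness.
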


\begin{thm}\label{thm:image_small}
For any map $f:A\to X$ from a small type $A$ into a locally small type $X$, the image $\im(f)$ is an essentially small type.
\end{thm}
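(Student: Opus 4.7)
The plan is to apply the join construction characterization of the image (\cref{thm:image}), which identifies $\im(f)$ with the sequential colimit $A_X^{\ast\infty}$ of the fiberwise join powers. By the final lemma of this subsection, sequential colimits of essentially small types are essentially small, so it suffices to show by induction on $n:\N$ that each fiberwise join power $A_X^{\ast n}$ is essentially small.

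For the base cases, $A_X^{\ast 0}\defeq\emptyt$ is small, and $A_X^{\ast 1}\defeq A$ is essentially small by hypothesis. For the inductive step, recall that $A_X^{\ast(n+1)}\defeq\join[X]{A}{A_X^{\ast n}}$ is constructed as the pushout
\begin{equation*}
\begin{tikzcd}
A\times_X A_X^{\ast n} \arrow[r,"\pi_2"] \arrow[d,swap,"\pi_1"] & A_X^{\ast n} \arrow[d] \\
A \arrow[r] & \join[X]{A}{A_X^{\ast n}}.
\end{tikzcd}
\end{equation*}
By the corollary just preceding \cref{thm:image_small}, the pullback $A\times_X A_X^{\ast n}$ is essentially small, since $A$ and $A_X^{\ast n}$ are essentially small (using the inductive hypothesis) and $X$ is locally small. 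Then by the second clause of the lemma that cocartesian squares with essentially small corners have an essentially small vertex, the pushout $A_X^{\ast(n+1)}$ is again essentially small.

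Having established that every $A_X^{\ast n}$ is essentially small, the sequential colimit lemma yields that $A_X^{\ast\infty}$ is essentially small, and hence so is $\im(f)$ via the equivalence from \cref{thm:image}. The only genuine subtlety is the very first step of the induction: one needs $A$ itself to be essentially small \emph{and} $X$ locally small so that the fibered product with $A$ at each stage stays essentially small; without local smallness of $X$ the pullback step would fail, and the induction would not go through. The remaining steps are routine applications of the two smallness-closure lemmas just cited.
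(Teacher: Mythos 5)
Your proof is correct and follows exactly the route the paper intends: the lemmas immediately preceding \cref{thm:image_small} (closure of essential smallness under pullbacks over a locally small base, under pushouts, and under sequential colimits) are precisely the scaffolding for the induction on the fiberwise join powers $A_X^{\ast n}$, combined with the identification $\im(f)\eqvsym A_X^{\ast\infty}$ from \cref{thm:image}. No gaps; this is the same argument.
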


Recall that in set theory, the replacement axiom asserts that for any family of sets $\{X_i\}_{i\in I}$ indexed by a set $I$, there is a set $X[I]$ consisting of precisely those sets $x$ for which there exists an $i\in I$ such that $x\in X_i$. In other words: the image of a set-indexed family of sets is again a set. Without the replacement axiom, $X[I]$ would be a class. In the following corollary we establish a type-theoretic analogue of the replacement axiom: the image of a family of small types indexed by a small type is again (essentially) small.

\begin{cor}\label{cor:im_small}
For any small type family $B:A\to\UU$, where $A$ is small, the image $\im(B)$ is essentially small. We call $\im(B)$ the \define{univalent completion} of $B$. 
\end{cor}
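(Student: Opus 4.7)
The plan is to recognize this as a direct application of \cref{thm:image_small} once we verify the two smallness hypotheses on the domain and codomain of $B$ viewed as a map $B:A\to\UU$. By hypothesis $A$ is small, so it remains to check that $\UU$ is locally small. This is precisely the observation made immediately after \cref{thm:univalence}: by the univalence axiom, for any $X,Y:\UU$ the identity type $X=Y$ is equivalent to the small type $\eqv{X}{Y}$, witnessed by $\eqequiv$. Thus $\locsmall(\UU)$ holds.

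With these two facts in hand, \cref{thm:image_small} applied to the map $B:A\to\UU$ immediately yields $\esssmall(\im(B))$, which is the desired conclusion. No further argument is needed, since the construction of the image in \cref{sec:join_stage2} is by iterating the fiberwise join and taking a sequential colimit, and each of these operations was shown to preserve essential smallness under local smallness of the codomain in the preceding lemmas (closure of pullbacks, pushouts, and sequential colimits under essential smallness when the base is locally small).

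There is no real obstacle here; the only subtle point is making sure the reader sees that the codomain of the family-as-map is the universe itself and that univalence is what supplies its local smallness. I would therefore keep the proof to two sentences: first invoke univalence to conclude $\locsmall(\UU)$, then apply \cref{thm:image_small} to $B:A\to\UU$.
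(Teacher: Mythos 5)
Your proof is correct and is exactly the intended argument: the universe is locally small by univalence, so \cref{thm:image_small} applied to $B:A\to\UU$ gives the result immediately (the paper treats this corollary as a direct consequence and offers no further proof). The extra remarks about the join construction preserving smallness are harmless but redundant, since they just restate why \cref{thm:image_small} holds.
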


\chapter{Reflective subuniverses}\label{chap:reflective}

In this chapter we study reflective subuniverses. Reflective subuniverses were first introduced in section 7.7 of \cite{hottbook}, and were studied in much more detail in \cite{RijkeShulmanSpitters}. 

In \cref{sec:prop-rfsu} we establish the basic closure properties of reflective subuniverses. In particular, we show in \cref{prp:local_pb} that pullbacks of $L$-local types are again $L$-local. It follows that cartesian products of $L$-local types and identity types of $L$-local types are again $L$-local. In \cref{lem:modal-Pi} we show that any dependent product of $L$-local types is also $L$-local, regardless of whether the indexing type is $L$-local or not. We use this fact in \cref{thm:modal-pres-prod} to show that the map $\modalunit\times\modalunit:X\times Y\to LX\times LY$ is an $L$-localization, and that the $L$-localization of a proposition is again a proposition.

In \cref{sec:accessible} we study accessible reflective subuniverses.

In \cref{sec:separated} we introduce the notion of $L$-separated type, for any reflective subuniverse $L$, and we show that the subuniverse of $L$-separated types is again a reflective subuniverse. The contents of \cref{sec:separated} are joint work with Dan Christensen, Morgan Opie, and Luis Scoccola. As a corollary we obtain in \cref{thm:truncation} that the $k$-truncation can be constructed, for any $k\geq -2$. A more elementary way of obtaining this result appears in \cite{joinconstruction}.

\section{Localizations}
\label{sec:prop-rfsu}

A \define{subuniverse} is simply a subtype of the universe. Note that we do not require that subuniverses are closed under any type constructors\footnote{In particular, we will see that reflective subuniverses aren't necessarily closed under $\Sigma$.}
For any subuniverse $P:\UU\to\prop$, we write $\UU_P\defeq \sm{X:\UU}P(X)$, and we say that $X:\UU$ is a $P$-type if $X$ is in $\UU_P$, i.e.~if $P(X)$ holds.

\begin{defn}
Let $P:\UU\to\prop$ be a subuniverse, and let $X$ be a type. A \define{$P$-localization} of $X$ is a triple $(Y,l,p)$ consisting of a $P$-type $Y:\UU_P$, a map $l:X\to Y$, and a term $p$ witnessing that the map
\begin{equation*}
l^\ast : (Y\to Z)\to (X\to Z)
\end{equation*}
is an equivalence, for every $Z:\UU_P$. This property is also called the \define{universal property} of the $P$-localization of $X$.
\end{defn}

In other words, a $P$-localization of $X$ is a map $l:X\to Y$ into a $P$-type $Y$, such that every map $f:X\to Z$ into a $P$-type $Z$ extends uniquely along $l$, as indicated in the diagram
\begin{equation*}
\begin{tikzcd}
X \arrow[r,"f"] \arrow[d,swap,"l"] & Z. \\
Y \arrow[ur,densely dotted]
\end{tikzcd}
\end{equation*}

\begin{prp}\label{lem:reflective_uniqueness}
For any subuniverse $P:\UU\to \prop$ and any type $X$, the type of $P$-localizations of $X$ is a proposition.
\end{prp}

\begin{proof}
Consider $(Y,f,p)$ and $(Y',f',p')$ of the described type. Since $p$ and $p'$
are terms of a proposition, it suffices to show that $(Y,f)=(Y',f')$. In
other words, we have to find an equivalence $g:Y\to Y'$ such that $g\circ f'=f$.

By $p(Y')$, the type of
pairs $(g,h)$ consisting of a function $g:Y\to Y'$ such that $h:g\circ f=f'$ is contractible. By
$p'(Y)$, the type of pairs $(g',h')$ consisting of a function $g':Y'\to Y$
such that $h':g'\circ f'=f$ is contractible.

Now $g'\circ g$ is a function such that $g'\circ g\circ f=g'\circ f'=f$, as
is $\idfunc[Y]$. By contractibility, it follows that $g'\circ g=\idfunc[Y]$.
Similarly, $g\circ g'=\idfunc[Y']$.
\end{proof}

\begin{prp}\label{thm:subuniv-modal}
Consider a subuniverse $P:\UU\to\prop$ and a $P$-localization $l:X\to Y$. The following are equivalent:
\begin{enumerate}
\item $X$ is a $P$-type (i.e.~$X$ is in $\UU_P$).
\item The $P$-localization $l:X\to Y$ is an equivalence.
\item The map $\precomp{l}:(Y\to X)\to (X\to X)$ is an equivalence.
\item The $P$-localization $l:X\to Y$ has a retraction.
\end{enumerate}
\end{prp}

\begin{proof}
  Certainly if $l$ is an equivalence, then $X$ is in $P$ since it is equivalent to the type $Y:\UU_P$.
  Conversely, if $X$ is in $P$ then $\idfunc[X]$ has the same universal property of $l$; so by \cref{lem:reflective_uniqueness} they are equivalent and hence $l$ is an equivalence. This shows that (i) holds if and only if (ii) holds.

  It is clear that (ii) implies (iii). Furthermore, (iii) implies (iv) because the fiber of $\precomp{l}$ at $\idfunc[X]$ is contractible. In particular, there is a function $g:Y\to X$ equipped with a homotopy $g\circ l\htpy\idfunc$. In other words,$l$ has a retraction. To see that (iv) implies (ii), suppose $g$ is a retraction of $l$, i.e.\ $g\circ l = \idfunc[X]$.
  Then $l\circ g\circ l = l$, so $l\circ g$ is a factorization of $l$ through itself.
  By uniqueness of such factorizations, $l\circ g = \idfunc[Y]$.
  Thus $g$ is also a section of $l$, hence $l$ is an equivalence.
\end{proof}

\begin{cor}\label{cor:unit_local}
For any subuniverse $P$, the unit type $\unit$ has a $P$-localization if and only if $\unit$ is already a $P$-type.
\end{cor}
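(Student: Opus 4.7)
The plan is to derive this directly from \cref{thm:subuniv-modal}, using condition (iv) as the cleanest criterion. The forward direction (if $\unit$ is a $P$-type then it has a $P$-localization) is trivial: the identity map $\idfunc[\unit]:\unit\to\unit$ is a $P$-localization of $\unit$, since for every $P$-type $Z$ the map $l^\ast:(\unit\to Z)\to(\unit\to Z)$ is literally the identity function and hence an equivalence.

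For the converse, I would suppose $l:\unit\to Y$ is a $P$-localization of $\unit$. The key observation is that the unique map $r:Y\to\unit$ is automatically a retraction of $l$, because $r\circ l:\unit\to\unit$ is (definitionally or by induction on $\unit$) homotopic to $\idfunc[\unit]$. So $l$ satisfies condition (iv) of \cref{thm:subuniv-modal}, and therefore satisfies condition (i), i.e.~$\unit$ is a $P$-type.

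There is no real obstacle here; the only thing to be careful about is citing the right equivalence from \cref{thm:subuniv-modal}, since that proposition is stated for an arbitrary $P$-localization of $X$ and allows us to conclude that $X$ itself is a $P$-type. In particular I do not need to know anything about $Y$ beyond the fact that $l:\unit\to Y$ has a retraction, which comes for free from the contractibility of $\unit$.
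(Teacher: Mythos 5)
Your argument is correct and is essentially the paper's own proof: the paper's one-line justification (``$\unit$ is a retract of any pointed type'') is exactly your observation that the unique map $Y\to\unit$ retracts $l$, which combined with condition (iv) of \cref{thm:subuniv-modal} gives the result. You have simply spelled out the same retraction-plus-characterization argument in more detail, including the trivial forward direction.
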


\begin{proof}
Immediate from the fact that $\unit$ is a retract of any pointed type.
\end{proof}

The universal property of $P$-localization is by itself not sufficient to imply a dependent universal property. However, we have the following `constrained' dependent universal property.

\begin{prp}\label{theorem:generalized-induction}
Let $P$ be a subuniverse, and let $l:X\to Y$ be a $P$-localization. Furthermore, consider a type family $Z:Y\to\UU$ such that the total space $\sm{y:Y}Z(y)$ is a $P$-type.
Then the precomposition map
\[
  \precomp{l} : \Big(\prd{y:Y}Z(y)\Big) \lra \Big(\prd{x:X}Z(l(x))\Big)
\]
is an equivalence.
\end{prp}

\begin{proof}
Since $Y$ is a $P$-type and $\sm{y:Y}Z(y)$ is a $P$-type, the precomposition maps $\precomp{l}$ in the commuting square
\[
  \begin{tikzcd}
    (Y\to \sm{y:Y}Z(y)) \arrow[r,"\precomp{l}"] \arrow[d,swap,"\proj 1\circ\blank"] & (X\to \sm{y:Y}Z(y)) \arrow[d,"\proj 1\circ\blank"] \\
    (Y\to Y) \arrow[r,swap,"\precomp{l}"] & (X\to Y)
  \end{tikzcd}
\]
are equivalences. It follows that they induce an equivalence from the fiber of the left-hand map $\proj 1\circ\blank$ at $\idfunc[Y]$ to the fiber of the right-hand map $\proj 1\circ\blank$ at $l$. In other words, we have an equivalence
\[
  \precomp{l} : \Big(\prd{y:Y}Z(y)\Big) \lra \Big(\prd{x:X}Z(l(y))\Big).\qedhere
\]
\end{proof}

\begin{prp}
Let $P:\UU\to\prop$ be a subuniverse, and write $\tilde{\UU}_P \defeq\sm{X:\UU_P}X$. The projection
$\proj1:\tilde{\UU}_P \to\UU_P$ classifies the small maps whose fibers satisfy $P$.
\end{prp}

\begin{proof}
Let $f:Y\to X$ be any map into $X$. Then $\fibf{f}:X\to\UU$ factors through
$\UU_P$ if and only if all the fibers of $f$ satisfy $P$. Let us write
$P(f)$ for $\prd{x:X}P(\fib{f}{x})$. Then we see that the equivalence
$\chi$ of Theorem 4.8.3 of \cite{TheBook} restricts to an
equivalence
\begin{equation*}
\chi^P:(\sm{Y:\UU}{f:Y\to X}P(f))\to(X\to\UU_P).
\end{equation*}
Now observe that the outer square and the square on the right in the diagram
\begin{equation*}
\begin{tikzcd}[column sep=6em]
Y \arrow[d,swap,"f"] \arrow[rr,"{\lam{y}\pairr{\fib{f}{f(y)},\blank,\pairr{y,\refl{f(y)}}}}"] & & \tilde{\UU}_P \arrow[r] \arrow[d] & \tilde{\UU} \arrow[d] \\
X \arrow[rr,swap,"{\fibf{f}}"] & & \UU_P \arrow[r] & \UU
\end{tikzcd}
\end{equation*}
are pullback squares. Hence the square on the left is a pullback square.
\end{proof}

\begin{defn}
A \define{reflective subuniverse} $L$ is a subuniverse $\UU_L\to \UU$ equipped with an $L$-localization 
\begin{equation*}
\modalunit[X]:X\to LX
\end{equation*}
for every type $X:\UU$. The $L$-localization $\modalunit[X]:X\to LX$ is sometimes also called the \define{unit} of the localization. A type in $\UU_L$ is also said to be \define{local}.
\end{defn}

\begin{thm}\label{thm:subuniverse-rs}
The data of any two reflective subuniverses with the same local types are the same.
\end{thm}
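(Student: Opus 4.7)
The plan is to observe that the data of a reflective subuniverse factors neatly into two pieces: a subuniverse $P : \UU \to \prop$, and, for each type $X$, a $P$-localization of $X$. The first piece is fixed by the hypothesis (the two reflective subuniverses have the same local types, hence the same underlying subuniverse, since subtypes of $\UU$ are determined by their predicates and $\isprop$ is a proposition). So the whole content of the theorem is that the second piece is determined by the first.

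First I would unfold the definition of a reflective subuniverse as a dependent pair whose first component is a subuniverse $P:\UU\to\prop$ and whose second component is a term of type
\begin{equation*}
\prd{X:\UU}\sm{Y:\UU_P}{l:X\to Y}\mathsf{is\usc{}loc}_P(l),
\end{equation*}
where $\mathsf{is\usc{}loc}_P(l)$ abbreviates the universal property that $l$ is a $P$-localization. Since $\mathsf{is\usc{}loc}_P(l)$ is a proposition (being a product of $\isequiv$-propositions over all $Z:\UU_P$), the dependent type above is precisely $\prd{X:\UU}(\text{$P$-localizations of $X$})$.

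The key step is then to invoke \cref{lem:reflective_uniqueness}, which asserts that for any subuniverse $P$ and any $X$, the type of $P$-localizations of $X$ is a proposition. Combining this with \cref{thm:prop_pi} (propositions are closed under dependent products), the type of choices of a $P$-localization for every $X:\UU$ is itself a proposition. Hence any two such choices are equal, and the whole data of a reflective subuniverse depends only on the underlying subuniverse $P$.

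Assembling these steps: given two reflective subuniverses $L$ and $L'$ with the same local types, their underlying subuniverses coincide (as an identification in $\UU\to\prop$, via propositional extensionality / function extensionality applied pointwise), and once this identification is used to transport the localization data of $L'$ over to one indexed by $P$, the two localization data are equal by the propositionality argument above. The one mild subtlety, and the only place one has to be careful, is to formulate ``same local types'' precisely enough to yield an identification of subuniverses rather than a mere logical equivalence; this is immediate from the fact that $\isprop$ is a proposition together with function extensionality, so the argument goes through without further work.
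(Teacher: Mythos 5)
Your proposal is correct and follows the same route as the paper, whose proof is simply "Immediate from the fact that the type of localizations is a proposition" — i.e.\ an appeal to \cref{lem:reflective_uniqueness}, which is exactly your key step. The extra bookkeeping you supply (identifying the underlying subuniverses via function extensionality and propositionality of $\isprop$, then closing under dependent products via \cref{thm:prop_pi}) is just an unfolding of what the paper leaves implicit.
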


\begin{proof}
Immediate from the fact that the type of localizations is a proposition.
\end{proof}

\begin{lem}
  Any reflective subuniverse is a functor up to homotopy: given $f:A\to B$ we have an induced map $L f : L A \to L B$, preserving identities and composition up to homotopy.
  Moreover, $\modalunit$ is a natural transformation up to homotopy, i.e.\ for any $f$ we have $L f \circ \modalunit[A] = \modalunit[B] \circ f$.
\end{lem}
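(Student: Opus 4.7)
The plan is to use the universal property of the $L$-localization together with the uniqueness clause of \cref{lem:reflective_uniqueness} (more concretely, the fact that precomposition with $\modalunit[A]$ is an equivalence $(LA\to LB)\to (A\to LB)$ when $LB$ is local).

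First I would construct the map $Lf$ and the naturality homotopy simultaneously. Given $f:A\to B$, consider the composite $\modalunit[B]\circ f: A\to LB$. Since $LB$ is an $L$-type, the universal property of $\modalunit[A]:A\to LA$ asserts that
\begin{equation*}
\precomp{\modalunit[A]} : (LA\to LB)\to (A\to LB)
\end{equation*}
is an equivalence. We define $Lf:LA\to LB$ to be the (essentially unique) preimage of $\modalunit[B]\circ f$ under this equivalence, and we take the naturality homotopy $\modalunit[B]\circ f\htpy Lf\circ \modalunit[A]$ to be the witness provided by the section. This gives both the functorial action and the naturality of $\modalunit$ in one stroke.

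Next I would verify preservation of identities and composition by exploiting the injectivity of $\precomp{\modalunit[A]}$ on homotopy classes. For identities: both $L(\idfunc[A])$ and $\idfunc[LA]$ become homotopic to $\modalunit[A]$ after precomposition with $\modalunit[A]$ (the former by construction, the latter trivially). Since $\precomp{\modalunit[A]}$ is an equivalence into $(A\to LA)$, we obtain a homotopy $L(\idfunc[A])\htpy \idfunc[LA]$. For composition: given $f:A\to B$ and $g:B\to C$, both $L(g\circ f)$ and $Lg\circ Lf$ are maps $LA\to LC$, and precomposing each with $\modalunit[A]$ gives, respectively, $\modalunit[C]\circ g\circ f$ and $Lg\circ Lf\circ\modalunit[A]\htpy Lg\circ \modalunit[B]\circ f\htpy \modalunit[C]\circ g\circ f$, using the naturality homotopies for $f$ and $g$. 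Since $LC$ is local, $\precomp{\modalunit[A]}:(LA\to LC)\to(A\to LC)$ is an equivalence, so we conclude $L(g\circ f)\htpy Lg\circ Lf$.

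There is no real obstacle here; the only subtlety is that ``functoriality'' is expressed purely by the homotopies exhibited above, and we do not claim any higher coherences (which would require infrastructure for $\infty$-categories that we do not have). In particular, we are not producing a strict functor, but only the action on maps, the unit and composition laws as homotopies, and the naturality of $\modalunit$ as a homotopy, all of which are what the statement asks for.
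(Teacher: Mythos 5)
Your construction is exactly the paper's: define $Lf$ as the unique map with $Lf\circ\modalunit[A]=\modalunit[B]\circ f$ via the universal property of $\modalunit[A]$, and verify the identity and composition laws by precomposing with $\modalunit[A]$ and using that this precomposition is an equivalence. The paper's proof states this in one line and leaves the verification as "easy to check using further universal properties," which is precisely what you have carried out; the proposal is correct.
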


\begin{proof}
  Define $L f$ to be the unique function such that $L f \circ \modalunit[A] = \modalunit[B] \circ f$, using the universal property of $\modalunit[A]$.
  The rest is easy to check using further universal properties.
\end{proof}

\begin{cor}\label{cor:local_retract}
The subuniverse of $L$-local types is closed under retracts.
\end{cor}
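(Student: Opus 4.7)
The plan is to reduce the statement to the equivalence $(i)\Leftrightarrow(iv)$ of \cref{thm:subuniv-modal}, which says that a type $X$ is $L$-local if and only if its unit $\modalunit[X]:X\to LX$ admits a retraction. So given a retract diagram $X\xrightarrow{s}Y\xrightarrow{r}X$ with $r\circ s\htpy \idfunc[X]$ and $Y$ $L$-local, it suffices to produce a map $LX\to X$ that retracts $\modalunit[X]$.

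First, since $Y$ lies in $\UU_L$, the universal property of the $L$-localization $\modalunit[X]:X\to LX$ provides a unique extension $\tilde{s}:LX\to Y$ of $s$ along $\modalunit[X]$, together with a homotopy $\tilde{s}\circ\modalunit[X]\htpy s$. Then the composite $r\circ\tilde{s}:LX\to X$ satisfies
\[
(r\circ\tilde{s})\circ\modalunit[X] \htpy r\circ s \htpy \idfunc[X],
\]
exhibiting $r\circ\tilde{s}$ as a retraction of $\modalunit[X]$. Applying the implication $(iv)\Rightarrow(i)$ of \cref{thm:subuniv-modal} to the $L$-localization $\modalunit[X]$ then yields that $X$ is $L$-local, completing the argument.

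There is no real obstacle here; the entire content of the corollary is already packaged into \cref{thm:subuniv-modal}. The only step that requires care is recording the homotopies along the way (rather than mere judgemental equalities), but this is routine given the formulation of the universal property as an equivalence of function types.
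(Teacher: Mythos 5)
Your proof is correct, but it takes a different route from the paper's. You extend the section $s:X\to Y$ along the unit using the universal property of $\modalunit[X]$ (valid since $Y$ is $L$-local), compose with the retraction $r$ to obtain a retraction of $\modalunit[X]$, and then invoke the implication (iv)$\Rightarrow$(i) of \cref{thm:subuniv-modal}. The paper instead uses the functoriality of $L$ (the lemma stated immediately before the corollary): from $Lr\circ Li\htpy\idfunc$ it exhibits $\modalunit[X]:X\to LX$ as a retract of the equivalence $\modalunit[Y]:Y\to LY$ in the arrow category, and concludes that $\modalunit[X]$ is an equivalence because retracts of equivalences are equivalences. Your argument is somewhat more economical: it needs only the universal property of the unit together with the already-established criterion ``unit has a retraction implies local,'' and avoids both the functoriality lemma and the closure of equivalences under retracts. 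The paper's argument, by contrast, makes the retract structure on the naturality squares explicit, which fits naturally after the functoriality lemma and foreshadows the later uses of $Lf$. Both are complete; the only care needed in your version is, as you note, tracking the homotopies, and also noting that the extension $\tilde{s}$ need only exist (its uniqueness is not used), which the universal property certainly provides.
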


\begin{proof}
Consider an $L$-local type $Y$, and suppose that $i:X\to Y$ has a retraction $r:Y\to X$. By the functoriality of $L$ it follows that $LX$ is also a retract of $LY$, since we have $Lr\circ Li\htpy \idfunc$. Therefore we see that the $L$-localization $\modalunit :X\to LX$ is a retract of the $L$-localization $\modalunit : Y\to LY$, as indicated in the diagram
\begin{equation*}
\begin{tikzcd}
X \arrow[r,"i"] \arrow[d,swap,"\modalunit"] & Y \arrow[d,swap,"\modalunit"] \arrow[r,"r"] & X \arrow[d,"\modalunit"] \\
LX \arrow[r,swap,"Li"] & LY \arrow[r,swap,"Lr"] & LX.
\end{tikzcd}
\end{equation*}
Since $\modalunit : Y\to LY$ is an equivalence, and equivalences are closed under retracts, it follows that $\modalunit :X\to LX$ is an equivalence, hence $X$ is $L$-local.
\end{proof}

\begin{defn}
A map $f:A\to B$ is said to be an \define{$L$-equivalence} if $Lf:LA\to LB$ is an equivalence.
\end{defn}

\begin{prp}\label{lem:local_equivalence}
For a map $f : A \to B$ the following are equivalent:
\begin{enumerate}
\item $f$ is an $L$-equivalence.
\item For any $L$-local type $X$, the precomposition map
\begin{equation*}
\precomp{f} : (B \to X) \to (A \to X)
\end{equation*}
is an equivalence.
\end{enumerate}
\end{prp}

\begin{proof} 
Suppose first that $f$ is an $L$-equivalence, and let $X$ be $L$-local. Then the square
\begin{equation*}
\begin{tikzcd}
X^{LB} \arrow[r,"\precomp{Lf}"] \arrow[d,swap,"\precomp{\eta}"] & X^{LA} \arrow[d,"\precomp{\eta}"] \\
X^{B} \arrow[r,swap,"\precomp{f}"] & X^{A}
\end{tikzcd}
\end{equation*}
commutes. In this square the two vertical maps are equivalences by the universal property of localization, and the top map is an equivalence since $L f$ is an equivalence. Therefore the map $\precomp{f}:X^B\to X^A$ is an equivalence, as desired.

Conversely, assume that $\precomp{f} : X^B \to X^A$ is an equivalence for every $L$-local type $X$. By the square above it follows that $\precomp{L f}:X^{L B}\to X^{L A}$ is an equivalence for every $L$-local type $X$. The fiber of $L A^{L B}\to L A^{L A}$ at $\idfunc:L A\to L A$ is contractible, so we obtain a retraction $g$ of $L f$. To see that $g$ is also a section observe that the fiber of $L B^{L B}\to L B^{L A}$ at $L f$ is contractible. This fiber contains $(\idfunc[L B],\refl{L f})$. However, we also have an identification $p:\precomp{L f}(L f\circ g)=L f$, since
\begin{equation*}
\precomp{L f}(L f\circ g)\jdeq (L f \circ g)\circ L f\jdeq L f \circ (g\circ L f) = L f. 
\end{equation*}
Therefore $(L f\circ g,p)$ is in the fiber of $\precomp{L f}:L B^{L B}\to L B^{L A}$ at $L f$. By the contractibility of the fibers it follows that $(L f\circ g,p)=(\idfunc[L B],\refl{L f})$, so it follows that $L f\circ g=\idfunc[L B]$. In other words, $g$ is both a retraction and a section of $L f$, so $L f$ is an equivalence.
\end{proof}

\begin{cor}\label{cor:localization_lequiv}
Let $f:X\to Y$ be a map into an $L$-local type $Y$. Then the following are equivalent:
\begin{enumerate}
\item $f$ is an $L$-localization.
\item $f$ is an $L$-equivalence.
\end{enumerate}
In particular, the map $\modalunit[X]:X\to LX$ is an $L$-equivalence.
\end{cor}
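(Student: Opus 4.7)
The plan is to observe that this corollary is essentially a direct repackaging of \cref{lem:local_equivalence}, so the proof reduces to matching up definitions. Unfolding the definition of $L$-localization, a map $f:X\to Y$ with $Y$ an $L$-local type is an $L$-localization if and only if the precomposition map $f^\ast : (Y\to Z) \to (X\to Z)$ is an equivalence for every $L$-local type $Z$. But \cref{lem:local_equivalence} states exactly that this latter condition holds if and only if $f$ is an $L$-equivalence. Thus (i) and (ii) are equivalent by simple chaining of these two facts.

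For the direction (i) $\Rightarrow$ (ii), I would simply note that if $f$ is an $L$-localization, then in particular the precomposition condition holds, hence $f$ is an $L$-equivalence by \cref{lem:local_equivalence}. For (ii) $\Rightarrow$ (i), I would invoke \cref{lem:local_equivalence} in the opposite direction to obtain that $f^\ast$ is an equivalence on every $L$-local type; together with the standing hypothesis that $Y$ is $L$-local, this gives all the data of an $L$-localization.

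The final sentence is then immediate: the unit $\modalunit[X]:X\to LX$ is an $L$-localization by the very definition of a reflective subuniverse, and since $LX$ is $L$-local, the equivalence of (i) and (ii) applies to give that $\modalunit[X]$ is an $L$-equivalence. There is no real obstacle here — the work has already been done in \cref{lem:local_equivalence}; the only mild subtlety is being careful that the hypothesis ``$Y$ is $L$-local'' is used to turn the characterization of $L$-equivalences into the universal property of an $L$-localization. No new calculation or path algebra is required.
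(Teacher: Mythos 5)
Your proof is correct and follows exactly the route the paper intends: the corollary is stated without a separate proof precisely because it is the definitional unfolding of ``$L$-localization'' combined with both directions of \cref{lem:local_equivalence}, plus the observation that $\modalunit[X]:X\to LX$ is by definition an $L$-localization into the $L$-local type $LX$. Nothing is missing.
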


\begin{cor}\label{cor:localization_retract}
Any retract of an $L$-localization is again an $L$-localization.
\end{cor}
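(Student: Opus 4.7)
The plan is to unfold the hypothesis that $g : A \to B$ is a retract of an $L$-localization $f : X \to Y$ into a commuting diagram
\begin{equation*}
\begin{tikzcd}
A \arrow[d,swap,"g"] \arrow[r,"i"] & X \arrow[d,"f"] \arrow[r,"r"] & A \arrow[d,"g"] \\
B \arrow[r,swap,"{i'}"] & Y \arrow[r,swap,"{r'}"] & B
\end{tikzcd}
\end{equation*}
with $r \circ i \htpy \idfunc[A]$ and $r' \circ i' \htpy \idfunc[B]$, and then to verify the two ingredients identified by \cref{cor:localization_lequiv}: (a) that $B$ is $L$-local, and (b) that $g$ is an $L$-equivalence.

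For (a), since $f$ is an $L$-localization its codomain $Y$ is $L$-local by definition, and the above diagram exhibits $B$ as a retract of $Y$ via $i'$ and $r'$; hence $B$ is $L$-local by \cref{cor:local_retract}.

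For (b), I would apply the functor $L$ (which preserves identities and composition up to homotopy) to the retract diagram to obtain
\begin{equation*}
\begin{tikzcd}
LA \arrow[d,swap,"Lg"] \arrow[r,"Li"] & LX \arrow[d,"Lf"] \arrow[r,"Lr"] & LA \arrow[d,"Lg"] \\
LB \arrow[r,swap,"{Li'}"] & LY \arrow[r,swap,"{Lr'}"] & LB,
\end{tikzcd}
\end{equation*}
which exhibits $Lg$ as a retract of $Lf$ in the arrow category. Since $f$ is an $L$-localization into the $L$-local type $Y$, it is an $L$-equivalence by \cref{cor:localization_lequiv}, so $Lf$ is an equivalence. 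Because equivalences are closed under retracts (a standard consequence of the 3-for-2 property, already used implicitly in the proof of \cref{cor:local_retract}), $Lg$ is an equivalence, i.e.\ $g$ is an $L$-equivalence.

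Combining (a) and (b) with \cref{cor:localization_lequiv} yields that $g$ is an $L$-localization, as required. I do not expect any real obstacle here: both inputs are recorded immediately above, and the only mildly delicate point is making precise what "retract of a map" means in the arrow category, which is just the data of the commuting diagram displayed above.
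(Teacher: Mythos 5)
Your proposal is correct and follows essentially the same route as the paper's proof: show $B$ is $L$-local as a retract of $Y$ (via \cref{cor:local_retract}), observe that $Lg$ is a retract of the equivalence $Lf$ and hence an equivalence, and conclude with \cref{cor:localization_lequiv}. Your version merely spells out the arrow-category retract diagram and the functoriality of $L$ that the paper leaves implicit.
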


\begin{proof}
Suppose that $f:A\to B$ is a retract of an $L$-localization $l:X\to Y$. Then $B$ is $L$-local by the previous claim.
Moreover, $Lf$ is a retract of $Ll$, which is an equivalence, so $f$ is an $L$-equivalence. Therefore $f$ is an $L$-localization by \cref{cor:localization_lequiv}.
\end{proof}

\begin{prp}\label{thm:rsu-galois}
  Given a reflective subuniverse $L$, a type $X$ is $L$-local if and only if $(\blank \circ f) : (B\to X) \to (A\to X)$ is an equivalence, for any $L$-equivalence $f:A\to B$.
\end{prp}

\begin{proof}
  If $L f$ is an equivalence and $X$ is $L$-local, then by the universal property of $\modalunit$, we have a commutative square
  \[
  \begin{tikzcd}[column sep=large]
    X^{LB} \ar[r,"\blank\circ Lf"] & X^{LA} \\
    X^{B} \ar[r,"\blank\circ f"'] \ar[from=u,"{\blank\circ \modalunit[B]}"'] &
    X^{A} \ar[from=u,"{\blank\circ \modalunit[A]}"]
  \end{tikzcd}
  \]
  in which all but the top map are equivalences; thus so is the top map.

  Conversely, since $L\modalunit[X]$ is an equivalence, the hypothesis implies that
  $(\blank \circ \modalunit[X]) : (L X\to X) \to (X\to X)$
  is an equivalence.
  In particular, its fiber over $\idfunc[X]$ is inhabited, i.e.\ $\modalunit[X]$ has a retraction; hence $X$ is $L$-local.
\end{proof}

\begin{prp}\label{lem:Lequiv_total}
For any family
\begin{equation*}
f:\prd{x:X}P(x)\to Q(x)
\end{equation*}
of $L$-equivalences, the induced map on total spaces
\begin{equation*}
\total{f}:\Big(\sm{x:X}P(x)\Big)\to \Big(\sm{x:X} Q(x)\Big)
\end{equation*}
is an $L$-equivalence.
\end{prp}

\begin{proof}
Note that we have a commuting square
\begin{equation*}
\begin{tikzcd}[column sep=7em]
Y^{\sm{x:X}Q(x)} \arrow[d,swap,"\mathsf{ev\usc{}pair}"] \arrow[r,"\precomp{\total{f}}"] & Y^{\sm{x:X}P(x)} \arrow[d,"\mathsf{ev\usc{}pair}"] \\
\prd{x:X}Y^{Q(x)} \arrow[r,swap,"\lam{g}{x}g(x)\circ f(x)"] & \prd{x:X}Y^{P(x)}
\end{tikzcd}
\end{equation*}
in which all but the top map are known to be equivalences. Therefore the top map is an equivalence, so the claim follows by \cref{lem:local_equivalence}.
\end{proof}

\begin{cor}[Theorem 1.24 of \cite{RijkeShulmanSpitters}]\label{lem:sum_idempotent}
For any family $P:X\to\UU$, the map
\begin{equation*}
\lam{(x,y)}\eta(x,\eta(y)) : \Big(\sm{x:X}P(x)\Big)\to L\Big(\sm{x:X}LP(x)\Big)
\end{equation*}
is a localization.
\end{cor}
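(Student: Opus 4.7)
The plan is to use \cref{lem:Lequiv_total} together with \cref{cor:localization_lequiv} to recognize the given map as a composition of two $L$-equivalences landing in an $L$-local type. First I would observe that the asserted map factors as
\begin{equation*}
\begin{tikzcd}[column sep=large]
\sm{x:X}P(x) \arrow[r,"{\total{\eta_{P}}}"] & \sm{x:X}LP(x) \arrow[r,"{\modalunit}"] & L\big(\sm{x:X}LP(x)\big),
\end{tikzcd}
\end{equation*}
where the first map sends $(x,y)$ to $(x,\eta_{P(x)}(y))$ and the second is the unit of the localization at $\sm{x:X}LP(x)$.

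Next I would verify each factor is an $L$-equivalence. For each $x:X$ the map $\eta_{P(x)}:P(x)\to LP(x)$ is an $L$-equivalence by \cref{cor:localization_lequiv} (its codomain is $L$-local and it is, tautologically, the localization map). Hence by \cref{lem:Lequiv_total} the fibrewise map $\total{\eta_{P}}$ is an $L$-equivalence. The second factor $\modalunit$ is an $L$-equivalence again by \cref{cor:localization_lequiv}. Since $L$-equivalences are closed under composition (the functorial action $L$ preserves composition up to homotopy, and composites of equivalences are equivalences), the displayed composite is an $L$-equivalence.

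Finally, the codomain $L\big(\sm{x:X}LP(x)\big)$ is $L$-local by definition of the reflective subuniverse. Applying the equivalence of conditions in \cref{cor:localization_lequiv} in the other direction — an $L$-equivalence whose target is $L$-local is itself an $L$-localization — yields the conclusion. There is no real obstacle here: the only thing to check beyond the cited lemmas is that the two displayed factors indeed compose to the stated map $\lam{(x,y)}\eta(x,\eta(y))$, which is immediate from the definitions.
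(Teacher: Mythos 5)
Your argument is correct and is precisely the intended one: the paper states this as an immediate corollary of \cref{lem:Lequiv_total} (with no written proof), and your factorization through $\sm{x:X}LP(x)$, using \cref{cor:localization_lequiv} for the units, \cref{lem:Lequiv_total} for the total map, closure of $L$-equivalences under composition, and \cref{cor:localization_lequiv} again to conclude, is exactly how that corollary is meant to be derived.
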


\begin{lem}\label{lem:localization_uphtpy}
For any two maps $f,g:LX\to Y$ into an $L$-local type $Y$, the map
\begin{equation*}
(f\htpy g) \to (f\circ \modalunit\htpy g\circ\modalunit)
\end{equation*}
given by $H\mapsto H\cdot\modalunit$, is an equivalence. 
\end{lem}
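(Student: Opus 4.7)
The plan is to reduce the statement to the fact that the precomposition map $\modalunit^\ast : (LX\to Y)\to (X\to Y)$ is an equivalence, which holds by the universal property of localization since $Y$ is $L$-local. Since an equivalence is an embedding by \cref{cor:emb_equiv}, the action on paths
\begin{equation*}
\apfunc{\modalunit^\ast} : (f=g)\to (f\circ\modalunit = g\circ\modalunit)
\end{equation*}
is itself an equivalence for any $f,g:LX\to Y$. Under function extensionality, this identification-level equivalence corresponds precisely to the asserted homotopy-level map $H\mapsto H\cdot\modalunit$.

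More concretely, I would consider the square
\begin{equation*}
\begin{tikzcd}[column sep=large]
(f=g) \arrow[r,"\htpyeq"] \arrow[d,swap,"{\apfunc{\modalunit^\ast}}"] & (f\htpy g) \arrow[d,"\blank\cdot\modalunit"] \\
(f\circ\modalunit = g\circ\modalunit) \arrow[r,swap,"\htpyeq"] & (f\circ\modalunit \htpy g\circ\modalunit)
\end{tikzcd}
\end{equation*}
and verify that it commutes. By path induction it suffices to check this on $\refl{f}$: the top-right composite yields $\lam{y}\refl{f(y)}\cdot\modalunit = \lam{x}\refl{f(\modalunit(x))}$, and the left-bottom composite yields $\htpyeq(\refl{f\circ\modalunit}) = \lam{x}\refl{f(\modalunit(x))}$, which agree judgmentally. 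Now the horizontal maps are equivalences by function extensionality (\cref{thm:funext_wkfunext}), and the left vertical map is an equivalence by the preceding paragraph, so the right vertical map $H\mapsto H\cdot\modalunit$ is an equivalence by the 3-for-2 property of equivalences.

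The only genuinely non-routine step is the commutativity of the square, but this reduces immediately to a path induction. There is no substantive obstacle here; this lemma is a direct consequence of the universal property of $\modalunit$ combined with function extensionality and the fact that equivalences are embeddings.
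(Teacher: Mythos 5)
Your proof is correct and is essentially the paper's own argument: the paper uses the same commuting square relating $\apfunc{\precomp{\modalunit}}$ to the whiskering map via $\htpyeq$ on both sides, noting that the top map is an equivalence because $\precomp{\modalunit}$ is an equivalence (hence an embedding) by the universal property, and concluding by the 3-for-2 property. Your explicit path-induction check of the square's commutativity is a detail the paper leaves implicit, but otherwise the two proofs coincide.
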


\begin{proof}
The square
\begin{equation*}
\begin{tikzcd}[column sep=huge]
(f=g) \arrow[r,"\apfunc{\precomp{\modalunit}}"] \arrow[d,swap,"\mathsf{htpy\usc{}eq}"] & (f\circ \modalunit=g\circ\modalunit) \arrow[d,"\mathsf{htpy\usc{}eq}"] \\
(f\htpy g) \arrow[r,swap,"\lam{H}H\cdot\modalunit"] & (f\circ\modalunit\htpy g\circ\modalunit)
\end{tikzcd}
\end{equation*}
commutes, and all but one of the maps are known equivalences. Therefore it follows that the bottom map is an equivalence, as claimed.
\end{proof}

\begin{prp}\label{prp:local_pb}
Consider a pullback square
\begin{equation*}
\begin{tikzcd}
C \arrow[d,swap,"p"] \arrow[r,"q"] & B \arrow[d,"g"] \\
A \arrow[r,swap,"f"] & X
\end{tikzcd}
\end{equation*}
with $H:f\circ p\htpy g\circ q$. If $A$, $B$, and $X$ are $L$-local types, then so is $C$. 
\end{prp}

\begin{proof}
We have the commuting square
\begin{equation*}
\begin{tikzcd}
(LC\to C) \arrow[d,swap,"\mathsf{cone\usc{}map}"] \arrow[r,"\precomp{\modalunit}"] & (C\to C) \arrow[d,"\mathsf{cone\usc{}map}"] \\
\mathsf{cone}(LC) \arrow[r,densely dotted] & \mathsf{cone}(C)
\end{tikzcd}
\end{equation*}
where the bottom map is the equivalence given by $(\tilde{p},\tilde{q},\tilde{H})\mapsto (\tilde{p}\circ\eta,\tilde{q}\circ\eta,\tilde{H}\cdot \eta)$. This is an equivalence by the assumption that $A$, $B$, and $X$ are local, and an application of \cref{lem:localization_uphtpy}. The two vertical maps are equivalences by the assumption that $C$ is a pullback. Therefore it follows that the top map is an equivalence. By \cref{thm:subuniv-modal}(iii) this suffices to show that $C$ is $L$-local.
\end{proof}

\begin{cor}\label{cor:local_prod}
Cartesian products of $L$-local types are $L$-local.
\end{cor}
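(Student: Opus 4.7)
The plan is to realize the cartesian product $A \times B$ as a pullback of $L$-local types and then invoke \cref{prp:local_pb}. Concretely, for any types $A$ and $B$ the square
\begin{equation*}
\begin{tikzcd}
A \times B \arrow[d,swap,"\pi_1"] \arrow[r,"\pi_2"] & B \arrow[d] \\
A \arrow[r] & \unit
\end{tikzcd}
\end{equation*}
is a pullback square (this is standard; it can be verified directly, or obtained as an instance of \cref{thm:is_pullback} by computing the gap map $A \times B \to A \times_{\unit} B$ and noting it is the obvious equivalence).

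The one fact I still need is that $\unit$ is $L$-local. Since $L$ is a reflective subuniverse, the unit map $\modalunit[\unit]:\unit \to L\unit$ is by hypothesis an $L$-localization of $\unit$; hence by \cref{cor:unit_local} the type $\unit$ is itself $L$-local. (Alternatively, one may use that $\unit$ is a retract of any $L$-local inhabited type, together with \cref{cor:local_retract}.)

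Once these two ingredients are in place, the corollary is immediate: assuming $A$ and $B$ are $L$-local, the three corner types $A$, $B$, and $\unit$ of the pullback square above are all $L$-local, and \cref{prp:local_pb} then yields that $A \times B$ is $L$-local. There is no real obstacle here; the only small point to be careful about is the invocation of \cref{cor:unit_local}, since the corollary is phrased for a general subuniverse $P$, and we need to apply it specifically to $L$ where the existence of the required localization is automatic.
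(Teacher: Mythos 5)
Your proof is correct and is essentially the paper's own argument: exhibit $A\times B$ as the pullback of $A\to\unit\leftarrow B$, note $\unit$ is $L$-local by \cref{cor:unit_local} (which applies because $L$, being reflective, supplies a localization of $\unit$), and conclude by \cref{prp:local_pb}. No gaps.
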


\begin{proof}
Suppose that $X$ and $Y$ are $L$-local. Then their cartesian product is a pullback
\begin{equation*}
\begin{tikzcd}
X\times Y \arrow[d,swap,"\proj 1"] \arrow[r,"\proj 2"] & Y \arrow[d] \\
X \arrow[r] & \unit,
\end{tikzcd}
\end{equation*}
Since the unit type is $L$-local for any reflective subuniverse by \cref{cor:unit_local}, the claim follows.
\end{proof}

\begin{cor}\label{lem:rs_idstable}
If $X$ is $L$-local, then so is the identity type $x=y$ for any $x,y:X$.
\end{cor}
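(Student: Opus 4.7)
The plan is to realize the identity type $x=y$ as a pullback of three $L$-local types and then invoke \cref{prp:local_pb}. First I would observe that the unit type $\unit$ is $L$-local: since $L$ is a reflective subuniverse, every type admits an $L$-localization, and by \cref{cor:unit_local} the mere existence of an $L$-localization of $\unit$ already forces $\unit$ to be $L$-local.

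Next, for any $x,y:X$, consider the pullback square
\begin{equation*}
\begin{tikzcd}
\unit \times_X \unit \arrow[d] \arrow[r] & \unit \arrow[d,"y"] \\
\unit \arrow[r,swap,"x"] & X,
\end{tikzcd}
\end{equation*}
which is cartesian with $\unit$, $\unit$, and $X$ all $L$-local by hypothesis and the previous paragraph. Hence by \cref{prp:local_pb}, the canonical pullback $\unit \times_X \unit$ is $L$-local. Since this canonical pullback is definitionally (or at least up to equivalence) the type $\sm{u:\unit}{v:\unit}x=y$, which is equivalent to $x=y$, we conclude by transport along this equivalence (noting that $L$-locality is preserved under equivalence, e.g.\ via \cref{cor:local_retract} since equivalences exhibit retracts in both directions) that $x=y$ is $L$-local.

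There is no real obstacle here: everything reduces to the pullback-closure established in \cref{prp:local_pb} together with the observation that $\unit$ is $L$-local. The only minor point worth spelling out carefully is the equivalence between $\unit\times_X\unit$ and $x=y$, which is immediate from unfolding the definition of the canonical pullback. This gives a short proof without any further machinery.
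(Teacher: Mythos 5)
Your proof is correct and is essentially the paper's own argument: the paper exhibits $x=y$ directly as the vertex of the same pullback square over the cospan $\unit \xrightarrow{x} X \xleftarrow{y} \unit$, using \cref{cor:unit_local} for $L$-locality of $\unit$ and \cref{prp:local_pb} for closure under pullbacks. Your only extra step is passing through the canonical pullback $\unit\times_X\unit$ and an equivalence, which is harmless.
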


\begin{proof}
This follows at once from the pullback square
\begin{equation*}
\begin{tikzcd}
(x=y) \arrow[d] \arrow[r] & \unit \arrow[d,"\mathsf{const}_y"] \\
\unit \arrow[r,swap,"\mathsf{const}_x"] & X,
\end{tikzcd}
\end{equation*}
noting that the unit type is $L$-local for any reflective subuniverse by \cref{cor:unit_local}, and $X$ is $L$-local by assumption.
\end{proof}

\begin{prp}\label{lem:modal-Pi}
Given a reflective subuniverse,
if $P(x)$ is $L$-local for every $x:X$, then so is $\prd{x:X}P(x)$. In particular, the type $Y^X$ is $L$-local whenever $Y$ is $L$-local.
\end{prp}

\begin{proof}
Consider the commuting square
\begin{equation*}
\begin{tikzcd}
\Big(L\Big(\prd{y:X}P(y)\Big)\to \prd{x:X}P(x)\Big) \arrow[r,"\precomp{\modalunit}"] \arrow[d,swap,"\mathsf{swap}"] & \Big(\Big(\prd{y:X}P(y)\Big)\to \prd{x:X}P(x)\Big) \arrow[d,"\mathsf{swap}"] \\
\Big(\prd{x:X} L\Big(\prd{y:X}P(y)\Big)\to P(x)\Big) \arrow[r,"\eqvsym"] & \Big(\prd{x:X}\Big(\prd{y:X}P(y)\Big)\to P(x)\Big)
\end{tikzcd}
\end{equation*}
The vertical maps swap the order of the arguments, and are therefore equivalences. The bottom map is an equivalence by the assumption that each $P(x)$ is $L$-local. By \cref{thm:subuniv-modal}(iii) this suffices to show that $\prd{x:X}P(x)$ is $L$-local.
\end{proof}

\begin{cor}\label{cor:local_equiv}
For any two $L$-local types $X$ and $Y$, the type of equivalences $\eqv{X}{Y}$ is again $L$-local.
\end{cor}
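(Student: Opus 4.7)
The plan is to exhibit $\eqv{X}{Y}$ as equivalent to a pullback of $L$-local types, and then invoke the closure properties already established in this section.

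First I would unfold the definition $\eqv{X}{Y} \defeq \sm{f:X\to Y}\sections(f)\times\retractions(f)$ and rearrange the nested $\Sigma$-types to obtain an equivalence
\begin{equation*}
\eqv{X}{Y} \eqvsym \sm{(f,g,h):(X\to Y)\times(Y\to X)\times(Y\to X)}(f\circ g\htpy \idfunc[Y])\times(h\circ f\htpy \idfunc[X]).
\end{equation*}
Applying function extensionality turns each homotopy into an identification, and combining the pair of identifications into one identification in a product then identifies the right-hand side with the fiber at $(\idfunc[Y],\idfunc[X])$ of the map
\begin{equation*}
\mu:(X\to Y)\times(Y\to X)\times(Y\to X) \to Y^Y \times X^X,\qquad (f,g,h)\mapsto (f\circ g,\, h\circ f).
\end{equation*}
This fiber is the pullback of $\mu$ along the point $\unit \to Y^Y \times X^X$ picking out $(\idfunc[Y],\idfunc[X])$.

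Next I would verify that every vertex of this pullback is $L$-local. The function types $X\to Y$, $Y\to X$, $Y^Y$, and $X^X$ are $L$-local by \cref{lem:modal-Pi}, since $X$ and $Y$ are; cartesian products of $L$-local types are $L$-local by \cref{cor:local_prod}; and $\unit$ is $L$-local by \cref{cor:unit_local}. Then \cref{prp:local_pb} gives that the pullback itself is $L$-local. Since $\eqv{X}{Y}$ is equivalent to this pullback, and equivalences are a special case of retracts, \cref{cor:local_retract} concludes that $\eqv{X}{Y}$ is $L$-local.

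No step here poses a genuine obstacle; the only slightly delicate point is the bookkeeping in passing from $\eqv{X}{Y}$ to the pullback description, where one uses associativity/commutativity of $\Sigma$ together with function extensionality. The essential content is that $\eqv{X}{Y}$ is a limit of $L$-local types, which for a reflective subuniverse is all that is required.
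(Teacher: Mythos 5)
Your proof is correct and is essentially the paper's own argument: the paper also exhibits $\eqv{X}{Y}$ as the pullback of $(f,g,h)\mapsto (h\circ f, f\circ g) : Y^X\times X^Y\times X^Y \to X^X\times Y^Y$ along the point $(\idfunc[X],\idfunc[Y])$, and concludes by closure of $L$-local types under exponentials, products, $\unit$, and pullbacks. Your version just spells out the $\Sigma$-rearrangement and function-extensionality bookkeeping that the paper leaves implicit.
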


\begin{proof}
The type $\eqv{X}{Y}$ is equivalent to the pullback
  \[
    \begin{tikzcd}[column sep=8em]
      (\eqv{X}{Y}) \arrow[r] \arrow[d] & \unit \arrow[d,"\mathsf{const}_{(\idfunc[X],\idfunc[Y])}"] \\
      Y^X\times X^Y\times X^Y
         \arrow[r,swap,"{(f,g,h) \mapsto (hf,fg)}"] & X^X\times Y^Y.
    \end{tikzcd}
  \]
  of $L$-local types, so it is $L$-local.
\end{proof}

\begin{rmk}
Similarly it follows that $\mathsf{is\usc{}trunc}_k(X)$ is $L$-local for any $L$-local type $X$, and $\mathsf{is\usc{}trunc}_k(f)$ is $L$-local for any map $f:X\to Y$ between $L$-local types.
\end{rmk}

\begin{prp}\label{thm:modal-pres-prod}
For any two types $X$ and $Y$, the map
\begin{equation*}
\modalunit\times\modalunit : X\times Y \to LX \times LY
\end{equation*}
is an $L$-localization.
Thus $L$-localization preserves finite products, for any reflective subuniverse $L$.
\end{prp}

\begin{proof}
First we note that the product $LX\times LY$ is indeed $L$-local by \cref{cor:local_prod}. To see that $\modalunit\times\modalunit$ is an $L$-localization, consider an $L$-local type $Z$. Then we have the commuting square
\begin{equation*}
\begin{tikzcd}[column sep=9em]
(LX\times LY\to Z) \arrow[r,"\precomp{(\modalunit\times\modalunit)}"] \arrow[d,swap,"\mathsf{ev\usc{}pair}"] & (X\times Y\to Z) \arrow[d,"\mathsf{ev\usc{}pair}"] \\
(LX\to (LY\to Z)) \arrow[r,swap,"{\lam{f}{x}{y}f(\modalunit(x),\modalunit(y))}"] & (X\to (Y\to Z)).
\end{tikzcd}
\end{equation*}
The bottom map is an equivalence by the fact that $Z$ and $LY\to Z$ are both $L$-local types, and the vertical maps are equivalences too. Therefore $\modalunit\times\modalunit$ is an $L$-localization.
\end{proof}

\begin{cor}\label{lem:modal-pres-prop}
Given any reflective subuniverse, the modal operator preserves propositions.
\end{cor}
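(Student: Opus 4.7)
The plan is to characterize propositions by their diagonal map and then invoke \cref{thm:modal-pres-prod}. Recall that a type $P$ is a proposition precisely when the diagonal $\delta_P : P \to P\times P$ is an equivalence, since the fiber of $\delta_P$ at $(x,y)$ is equivalent to $x=y$ (using contractibility of singletons, as in \cref{thm:total_path}), so asking $\delta_P$ to be an equivalence amounts to asking every identity type of $P$ to be contractible.

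Given this reformulation, the strategy is straightforward. First I would assume $P$ is a proposition, so that $\delta_P : P \to P\times P$ is an equivalence. Applying the functorial action of $L$, I obtain that $L\delta_P : LP \to L(P\times P)$ is an equivalence. Next, I would use \cref{thm:modal-pres-prod} to identify $L(P\times P)$ with $LP\times LP$ via the equivalence induced by $L\pi_1$ and $L\pi_2$, giving a composite
\begin{equation*}
\begin{tikzcd}
LP \arrow[r,"L\delta_P"] & L(P\times P) \arrow[r,"\eqvsym"] & LP\times LP.
\end{tikzcd}
\end{equation*}

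The key point to verify is that this composite is (homotopic to) the diagonal $\delta_{LP}$. This follows by computing each component: since $\pi_i\circ\delta_P \htpy \idfunc_P$, functoriality yields $L\pi_i \circ L\delta_P \htpy \idfunc_{LP}$, so both projections of the composite are the identity, identifying it with $\delta_{LP}$. Because the composite of two equivalences is an equivalence, $\delta_{LP}$ is an equivalence and therefore $LP$ is a proposition.

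I do not expect any serious obstacle here; the main thing to be careful about is the identification of the composite with $\delta_{LP}$, which is just a diagram chase using the naturality of $\modalunit$ and the characterization of the product-preservation equivalence in \cref{thm:modal-pres-prod} as the pairing $\langle L\pi_1, L\pi_2\rangle$.
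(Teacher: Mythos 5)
Your proof is correct and follows essentially the same route as the paper: characterize propositions by the diagonal $P\to P\times P$ being an equivalence and then invoke \cref{thm:modal-pres-prod}; you merely spell out the details (identifying the product-preservation equivalence as $\langle L\pi_1,L\pi_2\rangle$ and checking the composite is $\delta_{LP}$) that the paper leaves implicit.
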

\begin{proof}
  A type $P$ is a proposition if and only if the diagonal $P\to P\times P$ is an equivalence.
  The result then follows from \cref{thm:modal-pres-prod}.
\end{proof}

By contrast, localizations, and even modalities, do not generally preserve $n$-types for any $n\ge 0$.
For instance, the ``shape'' modality of~\cite{shulman2015brouwer} takes the topological circle, which is a 0-type, to the homotopical circle, which is a 1-type, and the topological 2-sphere, which is also a 0-type, to the homotopical 2-sphere, which is (conjecturally) not an $n$-type for any finite $n$.

\section{The reflective subuniverse of separated types}\label{sec:separated}

\begin{defn}
Consider a subuniverse $P:\UU\to\prop$. We say that a type $X$ is \define{$P$-separated} if the identity types of $X$ are $P$-types. We write $P'$ for the subuniverse of $P$-separated types.
\end{defn}

\begin{eg}
We define $\istrunc{} : \Z_{\geq-2}\to\UU\to\UU$ by induction on $k:\Z_{\geq -2}$, taking
\begin{align*}
\istrunc{-2}(A) & \defeq \iscontr(A) \\
\istrunc{k+1}(A) & \defeq \prd{x,y:A}\istrunc{k}(\id{x}{y}).\qedhere
\end{align*}
For any type $A$, we say that $A$ is \define{$k$-truncated}, or a \define{$k$-type}, if there is a term of type $\istrunc{k}(A)$. We say that a map $f:A\to B$ is $k$-truncated if its fibers are $k$-truncated.

In other words, the subuniverse of $(n+1)$-truncated types is precisely the subuniverse of \define{$n$-separated types}, i.e. the subuniverse of types whose identity types are $n$-truncated.
\end{eg}

\begin{defn}
Let $L$ be a reflective subuniverse and let $X : \UU$ be a type. 
An \define{$L'$-localization} of a type $X$ is a localization with respect to the subuniverse of $L$-separated types.
\end{defn}

In other words, a type $X$ is $L$-separated if its diagonal $\Delta:X\to X\times X$ is classified by $\UU_L$.

\begin{eg}\label{example:truncationisseparated}
Given $n \geq -2$, the subuniverse of $(n+1)$-truncated types is precisely the subuniverse of separated types for the reflective subuniverse of $n$-truncated types.
\end{eg}

\begin{lem}
Any $L$-local type is $L$-separated.
\end{lem}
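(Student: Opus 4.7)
The plan is to invoke \cref{lem:rs_idstable} directly. That corollary states that if $X$ is $L$-local, then so is each identity type $x=y$ for $x,y:X$. Since $L$-separated means precisely that all identity types are $L$-local, there is nothing further to show.

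More explicitly, I would begin by unfolding the definition of $L$-separated: a type $X$ is $L$-separated if for every $x,y:X$ the identity type $\id{x}{y}$ is $L$-local. Given an $L$-local type $X$ and points $x,y:X$, I would then cite \cref{lem:rs_idstable} (which was proved by exhibiting $\id{x}{y}$ as the pullback of $\mathsf{const}_x,\mathsf{const}_y:\unit\to X$, and using closure of local types under pullbacks established in \cref{prp:local_pb}) to conclude that $\id{x}{y}$ is $L$-local.

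There is no substantive obstacle here; this lemma is essentially a restatement of \cref{lem:rs_idstable} in the terminology of separated types, and serves mainly to record the observation that $\UU_L \subseteq \UU_{L'}$. A short proof of one or two lines suffices.
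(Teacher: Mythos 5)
Your proposal is correct and matches the paper's proof, which is exactly the one-line appeal to \cref{lem:rs_idstable}: since $L$-separatedness means the identity types are $L$-local, that corollary gives the result immediately. Nothing further is needed.
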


\begin{proof}
Immediate by \cref{lem:rs_idstable}.
\end{proof}

\begin{lem}
Any small subtype of an $L$-separated type is again $L$-separated. In particular, any small proposition is $L$-separated.
\end{lem}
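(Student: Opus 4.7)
The plan is to reduce the identity types of the subtype to identity types in the ambient type, which are already $L$-local by hypothesis.

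First I would set up the situation: let $A$ be an $L$-separated type and let $B : A \to \UU$ be a family of propositions. I need to show that for any two elements $(a,b), (a',b') : \sm{x:A}B(x)$, the identity type $(a,b) = (a',b')$ is $L$-local. By \cref{thm:eq_sigma} this type is equivalent to
\[
  \sm{p : a = a'} \tr_B(p,b) = b'.
\]
Since $B(a')$ is a proposition, the fibers $\tr_B(p,b) = b'$ are contractible, so this $\Sigma$-type is equivalent to $a = a'$ (formally, the first projection is a fiberwise equivalence by \cref{thm:fib_equiv}). Since $A$ is $L$-separated, the identity type $a = a'$ is $L$-local, and because $L$-local types form a subuniverse they are closed under equivalence (this also follows from \cref{cor:local_retract}). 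Hence $(a,b) = (a',b')$ is $L$-local, and the subtype is $L$-separated.

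For the second assertion, any proposition $P$ has contractible identity types, and contractible types are $L$-local by \cref{cor:unit_local} together with closure of $L$-local types under equivalence. Thus $P$ is $L$-separated. (Alternatively, one observes that $P$ is a small subtype of $\unit$ via the terminal map, and $\unit$ is $L$-local, hence $L$-separated by the previous lemma; then apply the first part of the statement.)

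There is no real obstacle here — the only subtle point is that one needs the subtypes to be \emph{small}, since the statement concerns the subuniverse $\UU_{L'}$ of $L$-separated types inside $\UU$; this is automatic from the assumption that $B$ is a family in $\UU$, and is only used to ensure that the resulting $\Sigma$-type lives in $\UU$ in the first place.
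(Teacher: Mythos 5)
Your proof is correct; the paper states this lemma without any proof, and your argument---identifying $(a,b)=(a',b')$ with $\sm{p:a=a'}\tr_B(p,b)=b'$ via \cref{thm:eq_sigma}, contracting the propositional fibers to obtain an equivalence with $a=a'$, and then using that $L$-local types are closed under equivalence (e.g.\ via \cref{cor:local_retract}) together with $L$-separatedness of $A$---is exactly the routine argument the paper leaves implicit, and your treatment of the special case (propositions have contractible, hence $L$-local, identity types) is likewise fine. The only cosmetic point is that the projection $\big(\sm{p:a=a'}\tr_B(p,b)=b'\big)\to(a=a')$ is an equivalence because it is a contractible map (\cref{thm:contr_equiv}), not literally ``a fiberwise equivalence''; and your closing remark about smallness is the right one, since smallness of the subtype is what puts its identity types in $\UU$ so that membership in the subuniverse $\UU_L$ even makes sense.
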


The following lemma can be proven directly. However, it also follows once we have shown that the subuniverse of $L$-separated types is a reflective subuniverse, so we will omit the proof.

\begin{lem}
The subuniverse of $L$-separated types is closed under pullbacks, retracts, and dependent products of families of $L$-separated types.
\end{lem}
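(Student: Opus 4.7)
The plan is to reduce each closure property for $L$-separated types to the corresponding closure property for $L$-local types (which we already have from \cref{prp:local_pb}, \cref{cor:local_retract}, and \cref{lem:modal-Pi}), by using standard characterizations of identity types.

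For closure under dependent products, suppose $P(x)$ is $L$-separated for every $x:X$, and let $f,g:\prd{x:X}P(x)$. By function extensionality we have an equivalence $(f=g)\eqvsym\prd{x:X}(f(x)=g(x))$. Each factor $f(x)=g(x)$ is $L$-local by the assumption that $P(x)$ is $L$-separated. Since dependent products of $L$-local families are $L$-local by \cref{lem:modal-Pi}, the type $f=g$ is $L$-local, hence $\prd{x:X}P(x)$ is $L$-separated. The closure under cartesian products is the special case where $P$ is constant, and closure under exponentials $Y^X$ with $Y$ $L$-separated is the further special case.

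For closure under pullbacks, consider a pullback square with $A$, $B$, $X$ all $L$-separated, and let $c\defeq(a,b,p)$ and $c'\defeq(a',b',p')$ be two points of $C\defeq A\times_X B$. By \cref{thm:eq_sigma} and path algebra we obtain an equivalence
\begin{equation*}
(c=c')\eqvsym\sm{\alpha:a=a'}{\beta:b=b'}\ct{\ap{f}{\alpha}}{p'}=\ct{p}{\ap{g}{\beta}},
\end{equation*}
which exhibits $c=c'$ as a pullback of the identity types $a=a'$, $b=b'$, and $f(a)=g(b')$. All three of these are $L$-local by the hypothesis on $A$, $B$, and $X$; therefore $c=c'$ is $L$-local by \cref{prp:local_pb}, so $C$ is $L$-separated.

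For closure under retracts, suppose $X$ is a retract of an $L$-separated type $Y$, with inclusion $i:X\to Y$ and retraction $r:Y\to X$ witnessed by $H:r\circ i\htpy\idfunc[X]$. For any $x,x':X$, the map $\apfunc{i}:(x=x')\to(i(x)=i(x'))$ has a retraction given by sending $q:i(x)=i(x')$ to $\ct{H(x)^{-1}}{\ct{\ap{r}{q}}{H(x')}}$; a straightforward path-induction computation (identical to the standard proof that retracts of equivalences are equivalences, applied to action-on-paths) shows that this is indeed a retraction. Hence $x=x'$ is a retract of the $L$-local type $i(x)=i(x')$, so it is $L$-local by \cref{cor:local_retract}. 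Therefore $X$ is $L$-separated.

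The main conceptual point, and essentially the only thing to check carefully, is the pullback step: the identification of the identity type of a pullback as a pullback of identity types. Once that is in place the argument is entirely mechanical and uses only the already-established closure properties of $L$-local types together with function extensionality.
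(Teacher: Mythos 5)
Your proof is correct, but it takes a different route from the paper. The paper deliberately \emph{omits} a direct argument for this lemma: it remarks that the closure properties follow once the subuniverse of $L$-separated types is shown to be reflective (\cref{thm:Lsep}), since every reflective subuniverse is closed under pullbacks, retracts, and dependent products by \cref{prp:local_pb}, \cref{cor:local_retract}, and \cref{lem:modal-Pi} applied to $L'$. What you have written is precisely the ``direct proof'' the paper alludes to but does not spell out: you characterize the identity types of the types in question --- $\prd{x:X}P(x)$ via function extensionality, the canonical pullback via \cref{thm:eq_sigma} and the transport computation exhibiting $(c=c')$ as a pullback of $a=a'$ and $b=b'$ over $f(a)=g(b')$, and a retract via the retraction of $\apfunc{i}$ built from $H$ --- and then invoke the corresponding closure properties of $L$-local types. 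All three steps check out; the retraction formula $q\mapsto \ct{H(x)^{-1}}{\ct{\ap{r}{q}}{H(x')}}$ is indeed a retraction of $\apfunc{i}$ by path induction and the unit and inverse laws. The trade-off: your argument is elementary and self-contained, usable before (and independently of) the substantially harder reflectivity theorem, whereas the paper's route buys brevity at the cost of depending on \cref{thm:Lsep}. One small point you should make explicit in the pullback case: you prove the claim for the canonical pullback $A\times_X B$, so for an arbitrary cartesian square with vertex $C$ you should add that $C$ is equivalent to the canonical pullback and that $L$-separatedness is invariant under equivalence --- which is itself a special case of your retract argument.
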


\begin{rmk}[Move to modalities]
If $L$ is closed under dependent sums, then $L'$ is also closed under dependent sums,
    by the characterization of identity types of dependent sums~\cite[Theorem~2.7.2]{hottbook}.
    So, given that separated types form a reflective subuniverse, it will follow that if $L$ is a modality,
    then so is $L'$.
\end{rmk}

\begin{prp}\label{lemma:separatedpluslocalisseparated}
If $X$ is an $L$-separated type and $P:X\to \UU_L$ is a family of $L$-local types, then the type
$\sm{x:X}P(x)$ is $L$-separated.
\end{prp}

\begin{proof}
For any $(x,p)$ and $(y,q)$ in $\sm{x:X}P(x)$, the type $(x,p)=(y,q)$ is equivalent to the pullback
\[
  \begin{tikzcd}[column sep=huge]
    (x,p)=(y,q) \arrow[r] \arrow[d] & \unit \arrow[d,"q"] \\
    (x=y) \arrow[r,swap,"\transfib{P}{-}{p}"] & P(y)
  \end{tikzcd}
\]
of $L$-local types, so it is $L$-local. 
\end{proof}

\begin{cor}\label{proposition:inductionLseparated}
Suppose $l':X\to Y'$ is an $L'$-localization, and let $P:L'X \to \UU_L$ be a family of $L$-local types.
Then the precomposition map
\[
    \precomp{l'}: \prd{y':Y'} P(y') \simeq \prd{x : X} P(l'(x)).
\]
is an equivalence.
\end{cor}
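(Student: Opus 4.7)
The plan is to deduce this directly from \cref{theorem:generalized-induction} applied to the reflective subuniverse $L'$ of $L$-separated types. Recall that \cref{theorem:generalized-induction} asserts that for any $P$-localization $l : X \to Y$ and any type family $Z : Y \to \UU$ whose total space $\sm{y:Y} Z(y)$ is a $P$-type, the precomposition map $\precomp{l} : \prd{y:Y} Z(y) \to \prd{x:X} Z(l(x))$ is an equivalence. Taking $P$ to be the predicate ``is $L$-separated,'' it therefore suffices to show that $\sm{y':Y'} P(y')$ is $L$-separated.

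To establish this, I would first observe that $Y' \jdeq L'X$ is $L$-separated by the very definition of $L'$-localization. The hypothesis is moreover that $P(y')$ is $L$-local for every $y' : Y'$. Consequently \cref{lemma:separatedpluslocalisseparated}, which states that a $\Sigma$-type over an $L$-separated base with fibers that are $L$-local is again $L$-separated, applies directly and yields that $\sm{y':Y'} P(y')$ is $L$-separated, i.e., lies in the subuniverse $\UU_{L'}$.

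With this in hand, \cref{theorem:generalized-induction} (applied to the reflective subuniverse $L'$, with the localization $l' : X \to Y'$ and the family $P : Y' \to \UU$) immediately gives that
\[
  \precomp{l'} : \Big(\prd{y':Y'} P(y')\Big) \to \Big(\prd{x:X} P(l'(x))\Big)
\]
is an equivalence, as desired.

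There is no real obstacle here, since both ingredients are already proven in the preceding material; the only subtlety is to notice that \cref{theorem:generalized-induction} is formulated for an arbitrary subuniverse $P$ and therefore applies verbatim to the reflective subuniverse $L'$, and that \cref{lemma:separatedpluslocalisseparated} supplies exactly the total-space condition that this generalized induction principle demands.
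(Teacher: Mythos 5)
Your proof is correct and follows essentially the same route as the paper: the paper's proof simply cites \cref{theorem:generalized-induction}, implicitly relying on the immediately preceding \cref{lemma:separatedpluslocalisseparated} to ensure that the total space $\sm{y':Y'}P(y')$ is $L$-separated. You have just made that implicit step explicit.
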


\begin{proof}
This follows immediately from \cref{theorem:generalized-induction}.
\end{proof}

\begin{prp}\label{prop:UU_L-is-L-separated}
Any small subtype of the subuniverse $\UU_L$ is $L$-separated.
\end{prp}

\begin{proof}
    Note that for any two $L$-local types $A$ and $B$ we have $\eqv{(A=B)}{(\eqv{A}{B})}$ by univalence and the
    fact that being $L$-local is a mere proposition. Therefore the claim follows from \cref{cor:local_equiv}.
\end{proof}

The only thing that prevents $\UU_L$ from actually being $L$-separated is the fact that $\UU_L$ is not small. In other words, we could say that $\UU_L$ is \define{essentially $L$-separated}. Using the fact that the image of a small type into $\UU_L$ is essentially small, the condition of being essentially $L$-separated suffices to eliminate from $L'X$ into $\UU_L$.

\begin{lem}\label{lemma:extendtoUL}
Consider an $L'$-localization $l':X\to Y'$. Then any type family $P:X\to\UU_L$ of $L$-local types has a unique extension along $l'$
\begin{equation*}
\begin{tikzcd}
X \arrow[d,swap,"{l'}"] \arrow[r,"P"] & \UU_L. \\
Y' \arrow[ur,densely dotted]
\end{tikzcd}
\end{equation*}
\end{lem}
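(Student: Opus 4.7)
The main subtlety is that the target $\UU_L$ is a large subuniverse, so the universal property of $l':X\to Y'$ does not apply to it directly: it extends only maps into \emph{small} $L'$-local types. The bridge is the replacement theorem \cref{thm:image_small}, combined with \cref{prop:UU_L-is-L-separated}.

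For existence, I would factor $P$ through its image. By \cref{thm:image}, the map $P$ admits an image factorization $P = i\circ\tilde{P}$, where $\tilde{P}:X\to\im(P)$ and $i:\im(P)\to\UU_L$ is an embedding. Since $X$ is small and $\UU_L$ is locally small --- its identity types $U = V$ are equivalent, by univalence and the propositionality of being $L$-local, to the small $L$-local types $\eqv{U}{V}$ from \cref{cor:local_equiv} --- \cref{thm:image_small} shows that $\im(P)$ is essentially small. Because $i$ is an embedding, the identity types of $\im(P)$ are equivalent to identity types in $\UU_L$ between the underlying types, and are therefore $L$-local; hence the small type equivalent to $\im(P)$ is $L$-separated, in line with \cref{prop:UU_L-is-L-separated}. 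The universal property of the $L'$-localization $l'$ then produces a unique extension $\tilde{P}':Y'\to\im(P)$ of $\tilde{P}$, and the desired extension of $P$ is $P':=i\circ\tilde{P}'$.

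For uniqueness, I would show that the type of extensions is a proposition. Given two extensions $(P_1,H_1)$ and $(P_2,H_2)$ with $H_k: P_k\circ l'\htpy P$, consider the family $E:Y'\to\UU_L$ defined by $E(y)\defeq \eqv{P_1(y)}{P_2(y)}$, whose values are small $L$-local types by \cref{cor:local_equiv}. By \cref{proposition:inductionLseparated}, the precomposition map
\[
  \Big(\prd{y:Y'} E(y)\Big) \to \Big(\prd{x:X} E(l'(x))\Big)
\]
is an equivalence. The codomain is canonically inhabited: for each $x:X$ the composite $\ct{H_1(x)}{H_2(x)^{-1}}$ yields an identification $P_1(l'(x)) = P_2(l'(x))$ in $\UU_L$, which we convert into an equivalence via $\equiveq$. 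Transporting back through the equivalence gives $\prd{y:Y'}\eqv{P_1(y)}{P_2(y)}$, and univalence combined with function extensionality yields $P_1 = P_2$. A parallel application of \cref{proposition:inductionLseparated} to the family of identifications between $H_1$ and $H_2$ (after identifying $P_1$ with $P_2$) shows that this identification can be chosen compatibly with the coherence data, completing the proof that the type of extensions is contractible. The principal obstacle is exactly the size mismatch between $\UU_L$ and the hypotheses of the universal property, and it is overcome entirely by the essential smallness of the image.
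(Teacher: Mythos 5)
Your proof is correct, and the existence half is essentially the paper's own argument: factor $P$ through its image, use local smallness of $\UU_L$ (via univalence and \cref{cor:local_equiv}) together with the join construction/\cref{thm:image_small} to replace the image by a small type, observe that this small type is $L$-separated because its identity types sit inside those of $\UU_L$, extend by the universal property of $l'$, and compose with the embedding. (Strictly you should extend into the small replacement of $\im(P)$ rather than $\im(P)$ itself, but you clearly have this in mind.) Where you genuinely diverge is uniqueness. The paper argues that $l'$ is surjective by \cref{thm:separation_characterization}, so any extension $Q:Y'\to\UU_L$ of $P$ has the same image as $P$ and hence factors through the embedding $i:I\to\UU_L$; uniqueness is then inherited from the (non-dependent) universal property of the $L'$-localization applied to maps into the small $L$-separated type $I$. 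You instead compare two extensions directly: the family $y\mapsto\eqv{P_1(y)}{P_2(y)}$ is valued in $L$-local types by \cref{cor:local_equiv}, so the dependent elimination principle \cref{proposition:inductionLseparated} lets you propagate the pointwise equivalences $P_1(l'(x))\simeq P_2(l'(x))$ coming from $H_1,H_2$ to all of $Y'$, and univalence plus function extensionality then identifies $P_1$ with $P_2$; the compatibility with $H_1,H_2$ comes from the computation rule of the section of the precomposition equivalence. Your route buys independence from the surjectivity of $l'$ (and hence from the characterization theorem), at the cost of invoking the dependent universal property and univalence and of a final coherence step that you only sketch; the paper's route is shorter but leans on \cref{thm:separation_characterization} and on the image factorization a second time. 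Both arguments are sound, and your propositionality-plus-inhabitation formulation in fact makes the contractibility of the type of extensions explicit, which the paper leaves somewhat implicit.
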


\begin{proof}
    We prove the first form of the statement.
    By \cref{prop:UU_L-is-L-separated}, the identity types of $\UU_L$ are
    equivalent to small types, i.e., $\UU_L$ is a locally small type.
    By the join construction~\cite{joinconstruction}, the image of $P$
    can be taken to be a small type $I$ in $\UU$, so there is a factorization
    of $P$ into a surjection $\hat{P} : X \to I$ followed by
    an embedding $i : I \to \UU_L$:
\begin{equation*}
\begin{tikzcd}
X \arrow[d,swap,"\modalunit'"] \arrow[r,"P"] \arrow[dr,swap,"\hat{P}"] & \UU_L \\
L'X & I \arrow[u,swap,"i"] 
\end{tikzcd}
\end{equation*}

    Since the identity types of $I$ are equivalent to identity types
    of $\UU_L$, and $I$ is small, it follows that the identity types of $I$ are actually $L$-local.
    This means that $I$ is an $L$-separated type, so we can extend $\hat{P}$ to $L' X$ 
    giving us the desired extension of $P$ by composing with $i$.

    Since $X \to L'X$ is surjective (\cref{thm:separation_characterization}),
    any such extension must factor through the image $I$.
    So uniqueness follows from the universal property of $L'$-localization.
\end{proof}

Before we show that $L'$-localizations exist for any type $X$, we characterize them.
To establish our characterization of $L'$-localizations, we need the following simple lemma, that allows us to construct unique extensions.

\begin{lem}\label{lem:unique_extension}
Let $g:A\to B$ and $f:A\to C$ be maps for which we have a unique extension
\[
  \begin{tikzcd}
    \fib{g}{b} \arrow[r,"f\circ\proj 1"] \arrow[d] & C \\
    \unit \arrow[ur,densely dotted]
  \end{tikzcd}
\]
for every $b:B$.
Then $f$ extends uniquely along $g$.
\end{lem}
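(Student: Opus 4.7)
The plan is to show directly that the type of extensions
\begin{equation*}
E \;\defeq\; \sm{h:B\to C} h\circ g\htpy f
\end{equation*}
is contractible, by reducing it through a chain of equivalences to a dependent product whose factors are the (by hypothesis contractible) types of fiberwise extensions.

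The first step is a reindexing. The map $a\mapsto (g(a),a,\refl{g(a)})$ is an equivalence $A\eqvsym \sm{b:B}\fib{g}{b}$, so a homotopy $h\circ g\htpy f$, i.e.\ a dependent function $\prd{a:A} h(g(a)) = f(a)$, is equivalent to a dependent function $\prd{b:B}{(a,p):\fib{g}{b}} h(g(a)) = f(a)$. For fixed $b$, $a$, and $p:g(a)=b$, the type $h(g(a)) = f(a)$ is equivalent to $h(b) = f(a)$ by concatenation with $\ap{h}{p}^{-1}$ (equivalently, by path induction on $p$). We therefore obtain
\begin{equation*}
E \;\eqvsym\; \sm{h:B\to C}\prd{b:B}{(a,p):\fib{g}{b}} h(b) = f(a).
\end{equation*}

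Next I would apply the type-theoretic choice principle (\cref{thm:choice}) to commute the $\Sigma$ over $h:B\to C$ with the $\Pi$ over $b:B$. Concretely, the data of an $h:B\to C$ together with, for each $b:B$, a witness of $\prd{(a,p):\fib{g}{b}} h(b) = f(a)$, is the same as, for each $b:B$, a pair $(c,K)$ consisting of a $c:C$ and a witness $K:\prd{(a,p):\fib{g}{b}} c = f(a)$. This yields
\begin{equation*}
E \;\eqvsym\; \prd{b:B}\sm{c:C}\prd{(a,p):\fib{g}{b}} c = f(a).
\end{equation*}
The factor at each $b:B$ is exactly the type of extensions of $f\circ\proj 1:\fib{g}{b}\to C$ along the terminal projection $\fib{g}{b}\to\unit$, which is contractible by hypothesis. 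Weak function extensionality (\cref{thm:funext_wkfunext}) then yields that the whole product is contractible, and hence so is $E$.

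The only real subtlety is the first reindexing step, where one must check that the chain of path-inductions really identifies the global homotopy type with the product of the fiberwise extension types appearing in the hypothesis; the rest is a straightforward invocation of choice and weak function extensionality.
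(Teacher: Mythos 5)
Your proof is correct and follows essentially the same route as the paper: the paper's own argument just observes that the hypothesis says $\prd{b:B}\iscontr\big(\sm{c:C}\prd{a:A}{p:g(a)=b}f(a)=c\big)$, takes the center of contraction at each $b$ to define the extension, and asserts uniqueness from the contraction. Your chain of equivalences (reindexing along $A\eqvsym\sm{b:B}\fib{g}{b}$, type-theoretic choice, and weak function extensionality) is precisely the formal content behind that assertion, so it is the same decomposition carried out in more explicit detail.
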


\begin{proof}
By assumption we have
\[
  \prd{b:B}\iscontr\Big(\sm{c:C}\prd{a:A}{p:g(a)=b} f(a)=c\Big).
\]
The center of contraction gives us an extension
\[
  \begin{tikzcd}
    A \arrow[r,"f"] \arrow[d,swap,"g"] & C \\
    B \arrow[ur,densely dotted,swap,"\tilde{f}"]
  \end{tikzcd}
\]
and its uniqueness follows from the contraction.
\end{proof}

\begin{thm}\label{thm:separation_characterization}
Consider a map $l':X\to Y'$, where $Y'$ is assumed to be $L$-separated. Then the following are equivalent:
\begin{enumerate}
\item The map $l':X\to Y'$ is an $L'$-localization.
\item The map $l':X\to Y'$ is surjective, and for each $x,y:X$, the map
\begin{equation*}
\apfunc{l'}:(x=y)\to (l'(x)=l'(y))
\end{equation*}
is an $L$-localization. 
\end{enumerate}
\end{thm}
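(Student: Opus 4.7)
The plan is to prove the two implications separately, with $(ii)\Rightarrow(i)$ being more direct and $(i)\Rightarrow(ii)$ requiring more work.

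For $(ii)\Rightarrow(i)$, I will show that any map $f:X\to Z$ into an $L$-separated type $Z$ extends uniquely along $l'$. Applying \cref{lem:unique_extension}, this reduces to showing, for each $y:Y'$, that the type $T_y := \sm{z:Z}\prd{(x,p):\fib{l'}{y}}(f(x)=z)$ is contractible. Since contractibility is a proposition and $l'$ is surjective, I may pick $(x_0,p_0):\fib{l'}{y}$ and reduce via the equivalence $(x,p)\mapsto(x,p\cdot p_0^{-1})$ to the case $y=l'(x_0)$. Since $Z$ is $L$-separated, each $f(x)=z$ is $L$-local; the hypothesis that $\apfunc{l'}(x,x_0)$ is an $L$-localization therefore gives $(\prd{p:l'(x)=l'(x_0)}f(x)=z)\simeq(\prd{q:x=x_0}f(x)=z)$. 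Taking $\prd{x:X}$ on both sides and invoking the Yoneda lemma (\cref{lem:yoneda}) reduces the right-hand side to $f(x_0)=z$, giving $T_{l'(x_0)}\simeq \sm{z:Z}(f(x_0)=z)$, which is contractible.

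For $(i)\Rightarrow(ii)$, surjectivity of $l'$ will be proved by image factorization. Writing $l'=m\circ e$ with $m:\mathrm{im}(l')\hookrightarrow Y'$ an embedding, the image is a subtype of the $L$-separated type $Y'$ and is therefore itself $L$-separated. Applying the universal property of $l'$ to $e:X\to\mathrm{im}(l')$ produces $\bar{e}:Y'\to\mathrm{im}(l')$ with $m\circ\bar{e}\circ l'=l'$; uniqueness of the extension of $l'$ through $l'$ forces $m\circ\bar{e}=\idfunc[Y']$, so $m$ is an embedding with a section, and hence an equivalence.

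To show each $\apfunc{l'}(x_0,y_0)$ is an $L$-localization, the codomain $l'(x_0)=l'(y_0)$ being $L$-local, it suffices to verify the universal property. I will apply the dependent UP (\cref{proposition:inductionLseparated}) to the family $P(y'):=(l'(x_0)=y')\to Z$ over $Y'$, for fixed $L$-local $Z$: each $P(y')$ is $L$-local, and by \cref{lemma:separatedpluslocalisseparated} the total space is $L$-separated. Precomposition with $l'$, combined with based path induction on $(y',p)$ with basepoint $(l'(x_0),\refl{})$, yields $(\fib{l'}{l'(x_0)}\to Z)\simeq Z$ via the constant-map embedding for every $L$-local $Z$; hence $\fib{l'}{l'(x_0)}$ is $L$-connected, and by surjectivity this extends to all fibers of $l'$. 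To transfer this to $\apfunc{l'}$, I will analyze the diagonal $\Delta:X\to X\times_{Y'}X$, whose fibers at $(x_1,x_2,p)$ coincide with $\fib{\apfunc{l'}(x_1,x_2)}{p}$ after eliminating one identity component. Since the first projection $p_1:X\times_{Y'}X\to X$ has $L$-connected fibers (being a base-change of $l'$), a $3$-for-$2$ argument for $L$-equivalences applied to $p_1\circ\Delta=\idfunc[X]$ shows $\Delta$ is an $L$-equivalence; the $L$-localization property of $\apfunc{l'}$ into an $L$-local target then follows via the ``constant-extension'' lemma that any map from an $L$-connected type into an $L$-local one is uniquely constant. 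The hard part will be this final step: bridging $L$-connectedness of the fibers of $l'$ to the $L$-localization property of $\apfunc{l'}$ for a general reflective subuniverse, where one does not have the full machinery of connectedness theory available as in the case of a modality.
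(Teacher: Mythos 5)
Your direction (ii)$\Rightarrow$(i), and the surjectivity half of (i)$\Rightarrow$(ii), are essentially the paper's own arguments (reduction via \cref{lem:unique_extension}, using surjectivity to assume $y'=l'(x_0)$, then the universal property of $\apfunc{l'}$ and the Yoneda lemma; image factorization plus uniqueness of extensions for surjectivity). The genuine problem is the last step of (i)$\Rightarrow$(ii), which you yourself flag as ``the hard part'': you never bridge it, and the bridge you gesture at does not exist. From the dependent universal property you correctly get that the fibers of $l'$ are $L$-connected, hence that $\pi_1:X\times_{Y'}X\to X$ and therefore the diagonal $\Delta:X\to X\times_{Y'}X$ are $L$-equivalences. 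But $\Delta$ is (up to equivalence) the map on total spaces induced by the fiberwise maps $\apfunc{l'}(x,y):(x=y)\to(l'(x)=l'(y))$, and the implication ``the total map is an $L$-equivalence, hence each fiberwise map is an $L$-equivalence'' is false for a general reflective subuniverse, and even for a non-lex modality: only the converse holds (\cref{lem:Lequiv_total}). For instance, with $L$ the $n$-truncation, the projection of the path fibration of $\sphere{n+1}$ gives a total map $\unit\simeq\sm{a:\sphere{n+1}}(\mathsf{base}=a)\to\sphere{n+1}$ which is an $n$-equivalence, while the fiber map $\loopspace{\sphere{n+1}}\to\unit$ is not. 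Nor does your ``constant-extension'' lemma help: it applies to the fibers of $l'$, which are $L$-connected, whereas the fibers of $\apfunc{l'}$ are \emph{identity types} of those fibers, and identity types of $L$-connected types need not be $L$-connected (that is precisely a lex-ness condition).

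What the paper uses to close exactly this gap is the dependent elimination of the $L'$-localization applied to a \emph{family of $L$-local types}, which cannot be recovered from non-dependent $L$-equivalence statements: fixing $x$, one extends the family $y\mapsto L(x=y)$ along $l'$ to a family $P:Y'\to\UU_L$ (\cref{lemma:extendtoUL}), equips it with the point induced by $\eta(\refl{x})$, and then proves that $\sm{y':Y'}P(y')$ is contractible, using \cref{lemma:separatedpluslocalisseparated}, \cref{proposition:inductionLseparated} and \cref{theorem:generalized-induction} to reduce the contraction to a path induction. The fundamental theorem of identity types (\cref{thm:id_fundamental}) then identifies $P(y')$ with $l'(x)=y'$ fiberwise, and the pullback square relating $\sm{y:X}L(x=y)$ to $\sm{y':Y'}P(y')$ converts this into the statement that $\apfunc{l'}$ is an $L$-localization. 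If you want to keep your connectivity-flavoured setup, you would still have to run some version of this dependent argument; the passage through $\Delta$ being an $L$-equivalence loses exactly the fiberwise information you need.
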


\begin{proof}
First, suppose that $l':X\to Y'$ is an $L'$-localization. To see that $l'$ is surjective, we note that $\im(l')$ is $L$-separated since it is a subtype of $Y'$, so the surjective map $q:X\to\im(l')$ extends uniquely along $l'$. 
\[
  \begin{tikzcd}
    X \arrow[r,"q"] \arrow[d,swap,"l'"] & \im(l') \\
    Y' \arrow[ur,densely dotted,swap,"h"]
  \end{tikzcd}
\]
By the universal property of $L'$-localization it follows that $h$ is a section of the image inclusion $\im(l')\to Y'$. In particular, the image inclusion is both surjective and an embedding, so it must be an equivalence. It follows that $l'$ is surjective.

Next, we need to show that for each $x,y:X$, the map
\begin{equation*}
\apfunc{l'}:(x=y)\to (l'(x)=l'(y))
\end{equation*}
is an $L$-localization. Fix $x : X$.
Since $l'$ is an $L'$-localization, there is a unique extension
\begin{equation*}
\begin{tikzcd}[column sep=huge]
X \arrow[r,"y\mapsto L(x=y)"] \arrow[d,swap,"{l'}"] & \UU_L. \\
Y' \arrow[ur,densely dotted,swap,"P"]
\end{tikzcd}
\end{equation*}
The family $P$ comes equipped with a point $p_0:P(l'(x))$ that is induced by $\eta(\refl{x}):L(x=x)$. Moreover, by the fact that $P$ extends $y\mapsto L(x=y)$ we have a pullback square
\begin{equation*}
\begin{tikzcd}
\sm{y:X}L(x=y) \arrow[d] \arrow[r] & \sm{y':Y'}P(y') \arrow[d] \\
X \arrow[r,swap,"{l'}"] & Y'
\end{tikzcd}
\end{equation*}
Thus we see that our claim follows by \cref{thm:id_fundamental}, once we show that the total space of $P$ is contractible.

For the center of contraction of $\sm{y:L'X}P(y)$ we take $(l'(x),\eta(\refl{x}))$.
It remains to construct a contraction
\begin{equation*}
\prd{y':Y'}{p:P(y')}(l'(x),\eta(\refl{x}))=(y',p).
\end{equation*}
Since the fibers of $P$ are $L$-local, it follows by \cref{lemma:separatedpluslocalisseparated} that the total space of $P$ is $L$-separated. Therefore we obtain by \cref{lem:modal-Pi} that the type
\[
  \prd{p:P(y')} (l'(x),\eta(\refl{x}))=(y',p)
\]
is $L$-local for every $y':Y'$. Thus \cref{proposition:inductionLseparated} reduces the problem to
constructing a term of type
\begin{equation*}
\prd{y:X}{p:L(x=y)}(l'(x),\eta(\refl{x}))=(l'(y),p).
\end{equation*}

Furthermore, for $y : X$ we have equivalences
\begin{align*}
    \Big(\sm{p:L(x=y)}(l'(x),\eta(\refl{x}))=(l'(y),p)\Big)
 & \eqvsym \Big(\sm{p:L(x=y)}{\alpha:l'(x)=l'(y)} \trans{\alpha}{\eta(\refl{x})} = p \Big) \\
 & \eqvsym \sm{\alpha:l'(x) = l'(y)} \unit \\
 & \eqvsym l'(x) = l'(y),
\end{align*}
where the last type is clearly $L$-local.
So we can apply \cref{theorem:generalized-induction} to reduce the problem to
the problem of constructing a term of type
\begin{equation*}
    \prd{y:X}{p:x=y}(l'(x),\eta(\refl{x}))=(l'(y),\eta(p)).
\end{equation*}
This can be done by a simple application of path induction. This completes the proof that (i) implies (ii).

To show that (ii) implies (i), assume that $l'$ is surjective, and that for every $x,y:X$ the map
\begin{equation*}
\apfunc{l'}:(x=y)\to (l'(x)=l'(y))
\end{equation*}
is an $L$-localization. Our goal to show that $l'$ satisfies the universal property of $L'$-localization, so assume $f : X \to Z$ is a map into an $L$-separated type $Z$.

By \cref{lem:unique_extension}, it is enough to show that $f$ restricts to a unique constant map
on the fibers of $l'$. This means that we must show that
\[
  \sm{z:Z}\prd{x:X} (l'(x)=y') \lra (f(x)=z)
\]
is contractible for every $y':Y'$.
Since this is a mere proposition, and $l'$ is surjective, we can assume that
$y' = l'(y)$.  In other words, it suffices to show that
\[
  \sm{z:Z}\prd{x:X} (l'(x)=l'(y)) \lra (f(x)=z)
\]
is contractible for every $y:X$.

Since $Z$ is assumed to be $L$-separated and $\mathsf{ap}_{l'}$ is assumed to be an $L$-localization,
this type is equivalent to
\[
  \sm{z:Z}\prd{x:X} (x=y) \lra (f(x)=z)
\]
and it is easy to see that this is a contractible type by applying the contractibility of
the total space of the path fibration twice.
\end{proof}

Our final goal for this section is to show that $L'$ is a reflective subuniverse, i.e.~that there is an $L'$-localization for every type $X:\UU$.
We will use a `local version' of the type theoretic Yoneda Lemma.

\begin{lem}\label{lemma:local_yoneda}
For each $y:X$ and each $P:X\to \UU_L$, the map
\[
  \mathsf{ev\usc{}locrefl}:\Big(\prd{z:X}L(y=z)\to P(z)\Big) \lra P(y)
\]
given by $f\mapsto f(y,\eta(\refl{y}))$ is an equivalence.
\end{lem}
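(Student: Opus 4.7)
The plan is to factor the map $\mathsf{ev\usc{}locrefl}$ through the standard (non-local) evaluation-at-reflexivity map and then invoke the Yoneda lemma from Lemma 5.4.2 (\cref{lem:yoneda}). First I would fix $y:X$ and a family $P:X\to\UU_L$, and observe that for every $z:X$ the type $P(z)$ is $L$-local, so by the universal property of the $L$-localization $\eta:(y=z)\to L(y=z)$, precomposition with $\eta$ gives an equivalence
\[
  \precomp{\eta} : \bigl(L(y=z)\to P(z)\bigr) \eqvsym \bigl((y=z)\to P(z)\bigr).
\]

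Next I would take the dependent product of these equivalences over $z:X$. Since dependent products preserve pointwise equivalences, this yields an equivalence
\[
  \varphi : \Big(\prd{z:X} L(y=z)\to P(z)\Big) \eqvsym \Big(\prd{z:X}(y=z)\to P(z)\Big),
\]
given by $\varphi(f)\defeq \lam{z}{p} f(z,\eta(p))$. By \cref{lem:yoneda} (applied to the family $P$ and the point $y$), the map
\[
  \mathsf{ev\usc{}refl} : \Big(\prd{z:X}(y=z)\to P(z)\Big) \to P(y),\qquad g\mapsto g(y,\refl{y}),
\]
is an equivalence.

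Finally I would check that the composite $\mathsf{ev\usc{}refl}\circ \varphi$ is (judgmentally) the map $\mathsf{ev\usc{}locrefl}$ of the statement. Indeed, for any $f:\prd{z:X}L(y=z)\to P(z)$ we have
\[
  (\mathsf{ev\usc{}refl}\circ \varphi)(f) \jdeq \varphi(f)(y,\refl{y}) \jdeq f(y,\eta(\refl{y})) \jdeq \mathsf{ev\usc{}locrefl}(f).
\]
Hence $\mathsf{ev\usc{}locrefl}$ is a composite of two equivalences, and therefore itself an equivalence by the 3-for-2 property. There is no real obstacle here; the only thing to be careful about is that the universal property of $L$-localization in the form of \cref{lem:local_equivalence} applies pointwise in $z$, which is immediate from the fact that $\eta:(y=z)\to L(y=z)$ is an $L$-localization for each fixed $z$.
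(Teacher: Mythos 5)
Your proposal is correct and follows exactly the paper's own argument: the paper also factors $\mathsf{ev\usc{}locrefl}$ through $\prd{z:X}(y=z)\to P(z)$, using the universal property of $L(y=z)$ (precomposition with $\eta$, valid since each $P(z)$ is $L$-local) followed by identity elimination as in \cref{lem:yoneda}. You have merely spelled out the pointwise-to-dependent-product step and the judgmental computation of the composite, which the paper leaves implicit.
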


\begin{proof}
By the universal property of $L(y=z)$ and identity elimination,
the map in the statement can be factored as follows:
\begin{align*}
\Big(\prd{z:X}L(y=z)\to P(z)\Big) & \simeq \Big(\prd{z:X}(y=z)\to P(z)\Big) \\
& \simeq P(y).\qedhere
\end{align*}
\end{proof}

\begin{thm}\label{thm:Lsep}
For any reflective subuniverse $L$, the subuniverse of $L$-separated types is again reflective. We will write
\begin{equation*}
\modalunit':X\to L'X
\end{equation*}
for the $L'$-localization of a type $X$.
\end{thm}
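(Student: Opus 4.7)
The plan is to construct the $L'$-localization of an arbitrary type $X$ via the join construction, taking the image of a suitable ``representing'' map into an $L$-separated universe, and then apply the characterization \cref{thm:separation_characterization}.

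First I would define the map $\phi : X \to (X \to \UU_L)$ by $\phi(x) \defeq \lam{z}L(x=z)$. The codomain $X \to \UU_L$ is essentially $L$-separated and locally small: by \cref{prop:UU_L-is-L-separated} together with univalence, the identity types of $\UU_L$ are equivalent to small $L$-local types $\eqv{A}{B}$ (\cref{cor:local_equiv}), and so by function extensionality the identity types of $X \to \UU_L$ are dependent products of small $L$-local types, which are $L$-local by \cref{lem:modal-Pi}. In particular $X\to\UU_L$ is locally small, so by \cref{thm:image_small} the join construction produces an essentially small image
\[
\begin{tikzcd}
X \arrow[r,twoheadrightarrow,"l'"] & L'X \arrow[r,hookrightarrow,"m"] & X\to \UU_L.
\end{tikzcd}
\]
I would take $\modalunit'\defeq l'$ as the candidate unit. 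Since $m$ is an embedding into an $L$-separated type, its fibers (identity types) are $L$-local, hence $L'X$ inherits $L$-separatedness.

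To verify the universal property, by \cref{thm:separation_characterization} it suffices to check that $l'$ is surjective (immediate from the image factorization) and that for each $x,y:X$ the map $\apfunc{l'} : (x=y) \to (l'(x) = l'(y))$ is an $L$-localization. Since the codomain is $L$-local, by \cref{cor:localization_lequiv} I only need to exhibit an equivalence $(l'(x)=l'(y)) \simeq L(x=y)$ compatible with the canonical maps from $x=y$. This I would compute as a chain
\[
(l'(x)=l'(y)) \;\eqvsym\; (\phi(x)=\phi(y)) \;\eqvsym\; \prd{z:X}(\phi(x)(z) = \phi(y)(z))
\]
using that $m$ is an embedding and function extensionality, and then apply the local Yoneda lemma (\cref{lemma:local_yoneda}) with $P(z)\defeq L(y{=}z)$ together with univalence to identify the dependent product with $L(x=y)$.

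The main obstacle will be the last step: the identity type in $\UU_L$ is a type of equivalences $\eqv{L(x=z)}{L(y=z)}$, not of maps, so the local Yoneda lemma doesn't literally apply. I would handle this by noting that under the embedding of the type of equivalences into the type of functions, the image of $\apfunc{l'}$ corresponds exactly to the canonical map $(x{=}y) \to \prd{z:X} L(x{=}z) \to L(y{=}z)$ produced by transport along $(x{=}y)$, which by \cref{lemma:local_yoneda} factors through the $L$-localization $(x=y) \to L(x=y)$; one then checks that every map in the image is in fact an equivalence (since it comes from transport along an element of an $L$-localized path type) to conclude that the entire composite is equivalent to the unit $(x=y)\to L(x=y)$. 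After this coherence check, \cref{thm:separation_characterization} finishes the argument and \cref{thm:subuniverse-rs} guarantees uniqueness of the resulting reflective subuniverse structure.
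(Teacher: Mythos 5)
Your proposal follows essentially the same route as the paper's own proof: define $x\mapsto\lam{z}L(x{=}z)$, use local smallness of the codomain and the join construction to get a small image serving as $L'X$, and verify the unit via \cref{thm:separation_characterization} and the local Yoneda lemma. The only differences are cosmetic — you target $X\to\UU_L$ rather than $X\to\UU$, and your handling of the equivalences-versus-maps mismatch (compose with the forgetful embedding, identify the composite with the Yoneda map by reducing to $\refl{}$ via the universal property of $L(x{=}y)$, then conclude the embedding and hence the original map are equivalences) is exactly the paper's resolution.
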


\begin{proof}
Fix a type $X : \UU$. Let $\mathcal{Y}_L:X\to (X\to\UU)$ be given by
\[
\mathcal{Y}_L(x,y)\defeq L(x=y).
\]
We would like to define $L' X$ to be $\im(\mathcal{Y}_L)$, but this is a subtype of
$X \to \UU$, so it is not small (i.e., it does not live in $\UU$).
However, since $\UU$ is locally small, so is $X \to \UU$.
Thus the join construction~\cite{joinconstruction} implies that the image
is equivalent to a small type which we denote $L' X$.
This comes equipped with a surjective map
\[
\eta':X \lra L'X,
\]
which we take to be the unit of the reflective subuniverse.

To show that $\eta'$ is a localization, we apply \cref{thm:separation_characterization}.
First we show that $L' X$ is $L$-separated.  Since $\eta'$ is surjective
and being $L$-local is a proposition, it is enough to show that
$\eta'(x) = \eta'(y)$ is $L$-local for $x$ and $y$ in $X$.
Since $L' X$ embeds in $X \to \UU$, we have an equivalence between
$\eta'(x) = \eta'(y)$ and $(\lambda z . L(x = z)) = (\lambda z . L(y = z))$.
The latter is equivalent to $\prd{z:X} L(x = z) = L(y = z)$, which
is $L$-local by \cref{prop:UU_L-is-L-separated}.

It remains to show that the canonical map $L(x=y)\to (\eta'(x)=\eta'(y))$ is an equivalence.
By the above argument, combined with univalence,
the problem reduces to showing that the canonical map
\[
    L(x=y) \lra \left(\prd{z:X} L(x=z) \simeq L(y=z)\right)\]
is an equivalence.
Using symmetry of equivalences, it suffices to show that the map
\[
    L(x=y) \lra \left(\prd{z:X} L(y=z) \simeq L(x=z)\right)
\]
is an equivalence.
Moreover, since the forgetful map from equivalences to maps is an embedding,
it is enough to show that the composite map
\[
    \alpha_{x,y} : L(x=y) \lra \left(\prd{z:X} L(y=z) \to L(x=z)\right)
\]
is an equivalence.
Indeed, if $i$ is an embedding and $i \circ g$ is an equivalence,
then $i$ is surjective.  Therefore $i$ is an equivalence and hence so is $g$.

By the local Yoneda Lemma~\ref{lemma:local_yoneda}, with $P(z) \defeq L(x = z)$, there is an equivalence
\[
\beta_{x,y}: L(x=y) \lra \Big(\prd{z:X} L(y=z) \to L(x=z)\Big)
\]
which sends $p : L(x = y)$ to the unique function $f$ such that
$f(x, \eta(\refl{x})) = p$.
So it suffices to show that $\alpha_{x,y} = \beta_{x,y}$.
By the universal property of $L(x=y)$, it is enough to show that
$\alpha_{x,y} \circ \eta = \beta_{x,y} \circ \eta$ as maps
$(x=y) \to \big(\prd{z:X} L(y=z) \to L(x=z)\big)$.
Letting $x$ and $y$ vary and using path induction, we reduce the
problem to showing that $\alpha_{x,x}(\eta(\refl{x})) = \beta_{x,x}(\eta(\refl{x}))$.

Since $\alpha_{x,y}$ is defined by path induction, it is easy to see
that $\alpha_{x,x}(\eta(\refl{x}))$ is equal to $\lam{z}\idfunc[L(x=z)]$.
On the other hand, $\beta_{x,x}(\eta(\refl{x}))$ is the unique function $f$ such that
$f(x, \eta(\refl{x})) = \eta(\refl{x})$.
Therefore, this $f$ must also equal $\lam{z}\idfunc[L(x=z)]$,
showing that $\alpha_{x,x}(\eta(\refl{x})) = \beta_{x,x}(\eta(\refl{x}))$.
\end{proof}

\begin{thm}\label{thm:truncation}
For each $k\geq -2$, the subuniverse of $k$-truncated types is reflective.
\end{thm}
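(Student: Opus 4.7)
My plan is to prove this by induction on $k \geq -2$, bootstrapping from the $(-2)$-truncation and then iterating the separated-types construction from \cref{thm:Lsep}.

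For the base case $k = -2$, I would observe that the subuniverse of $(-2)$-truncated types consists (up to equivalence) of the single type $\unit$. I would then show that for any type $X$, the unique map $X \to \unit$ is a $(-2)$-localization: for any contractible $Y$, both $(\unit \to Y)$ and $(X \to Y)$ are contractible by (weak) function extensionality, so the precomposition map is an equivalence between contractible types. This establishes reflectivity of the $(-2)$-truncated types, providing the base case.

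For the inductive step, suppose that the subuniverse $L_k$ of $k$-truncated types is reflective. By \cref{example:truncationisseparated}, the subuniverse of $(k+1)$-truncated types coincides with the subuniverse $L_k'$ of $L_k$-separated types (the types whose identity types are $k$-truncated). By \cref{thm:Lsep}, for any reflective subuniverse $L$ the subuniverse of $L$-separated types is again reflective. Applying this with $L = L_k$ yields that $L_k'$, and hence the subuniverse of $(k+1)$-truncated types, is reflective. This completes the induction.

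I do not anticipate any real obstacles here since the heavy lifting has already been done in \cref{thm:Lsep}; the only step that requires a moment's care is checking the base case, and even that is immediate from the fact that a type $Y$ is contractible if and only if $X \to Y$ is contractible for every $X$. It might be worth remarking explicitly that the $(k+1)$-truncation unit $\modalunit' : X \to \trunc{k+1}{X}$ obtained from \cref{thm:separation_characterization} is surjective and induces $k$-truncation on identity types, i.e.\ $(\modalunit'(x)=\modalunit'(y)) \eqvsym \trunc{k}{x=y}$, which is exactly the expected behavior of truncation and provides a useful sanity check on the construction.
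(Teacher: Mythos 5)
Your proof is correct and matches the paper's argument: the paper also handles the base case by noting that the subuniverse of contractible types is reflective via $X\mapsto\unit$, and then concludes inductively from the identification of $(k+1)$-truncated types with the $L_k$-separated types together with \cref{thm:Lsep}. Your additional remarks (the explicit check of the base case and the behavior of the unit on identity types) are fine but not needed beyond what the paper records.
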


\begin{proof}
The subuniverse of contractible types is obviously reflective, it's localization is just the constant function $\lam{X}\unit$ mapping every type to the unit type. Since the subuniverse of $(k+1)$-truncated types is precisely the subuniverse of $k$-separated types, the claim follows inductively by \cref{thm:Lsep}.
\end{proof}

\section{\texorpdfstring{$L$}{L}-local maps}

\begin{defn}
A map $f:A\to B$ is said to be \define{$L$-local} if its fibers are $L$-local.
\end{defn}

\begin{thm}
Let $B$ be a type family over $A$. Then the following are equivalent:
\begin{enumerate}
\item For each $x:A$ the type $B(x)$ is $L$-local.
\item The projection map
\begin{equation*}
\proj 1 : \Big(\sm{x:A}B(x)\Big)\to A
\end{equation*}
is an $L$-local map.
\end{enumerate}
\end{thm}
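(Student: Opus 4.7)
The plan is to reduce the statement to \cref{eg:fib_proj}, which provides a fiberwise equivalence between the family $B$ and the fiber family of the projection $\proj 1 : \big(\sm{x:A}B(x)\big)\to A$. Since by definition $\proj 1$ is $L$-local if and only if each of its fibers is an $L$-local type, the equivalence of (i) and (ii) will follow as soon as we know that the property of being $L$-local is invariant under equivalence.

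First I would unfold the definition: $\proj 1$ is $L$-local iff for every $a:A$ the type $\fib{\proj 1}{a}$ is $L$-local. Then I invoke \cref{eg:fib_proj}, which gives an equivalence $B(a)\simeq \fib{\proj 1}{a}$ for every $a:A$, sending $b\mapsto ((a,b),\refl{a})$.

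Next, I note that being $L$-local is closed under equivalence: if $X\simeq Y$ and $X$ is $L$-local, then $Y$ is $L$-local. This is essentially a consequence of \cref{cor:local_retract}, since any equivalence exhibits its codomain as a retract of its domain (and vice versa). Alternatively, one can argue directly: if $e:X\simeq Y$ and $X$ is $L$-local, then the square
\begin{equation*}
\begin{tikzcd}
(LY\to Y) \arrow[r,"\precomp{\modalunit}"] \arrow[d,swap,"e^{-1}\circ\blank"] & (Y\to Y) \arrow[d,"e^{-1}\circ\blank"] \\
(LY\to X) \arrow[r,swap,"\precomp{\modalunit}"] & (Y\to X)
\end{tikzcd}
\end{equation*}
has its vertical maps and bottom map as equivalences (using that $X$ is $L$-local and \cref{lem:modal-Pi} to know that $LY\to X$ is itself $L$-local together with the universal property), hence the top map is an equivalence, so $Y$ satisfies condition (iii) of \cref{thm:subuniv-modal}.

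Combining these ingredients, the equivalence $B(a)\simeq\fib{\proj 1}{a}$ transports $L$-locality in both directions: $B(a)$ is $L$-local iff $\fib{\proj 1}{a}$ is $L$-local. Quantifying over $a:A$ gives the equivalence of (i) and (ii). There is no substantive obstacle here; the only minor technical point is the invariance of $L$-locality under equivalence, which is routine from the results already established.
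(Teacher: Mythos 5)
Your proposal is correct and matches the paper's own proof, which simply says the result is immediate from the equivalences $\eqv{B(x)}{\fib{\proj 1}{x}}$ of \cref{eg:fib_proj}; you have merely spelled out the routine fact that $L$-locality is invariant under equivalence (e.g.\ via \cref{cor:local_retract}), which the paper leaves implicit.
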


\begin{proof}
Immediate from the equivalences $\eqv{B(x)}{\fib{\proj 1}{x}}$.
\end{proof}

\begin{thm}\label{thm:trunc_ap}
Let $f:A\to B$ be a map. The following are equivalent:
\begin{enumerate}
\item The map $f$ is $L'$-local.
\item For each $x,y:A$, the map
\begin{equation*}
\apfunc{f} : (x=y)\to (f(x)=f(y))
\end{equation*}
is $L$-local.
\item The diagonal $\delta_f:A\to A\times_B A$ of $f$ is $L$-local. 
\end{enumerate}
\end{thm}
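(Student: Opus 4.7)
The plan is to reduce all three conditions to a single statement: for every $x,y:A$ and every $p:f(x)=f(y)$, the type $\fib{\apfunc{f}}{p}$ is $L$-local. Condition (ii) is literally this statement, so the work is to show that (i) and (iii) are each equivalent to it.

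For the equivalence of (ii) and (iii), I would compute the fiber of $\delta_f$ at an arbitrary $(x,y,p):A\times_B A$. Unfolding $\delta_f(a)\jdeq (a,a,\refl{f(a)})$ and applying \cref{thm:eq_sigma} twice to identify equalities in the pullback $A\times_B A$, followed by contracting the singleton $\sum_{a:A}(a=x)$ via \cref{thm:total_path}, gives a chain of equivalences
\[
\fib{\delta_f}{(x,y,p)} \eqvsym \sum_{a:A}{\sum_{\alpha:a=x}{\sum_{\beta:a=y}}} \ct{\apfunc{f}(\alpha)^{-1}}{\apfunc{f}(\beta)} = p \eqvsym \sum_{\beta:x=y}\apfunc{f}(\beta)=p \eqvsym \fib{\apfunc{f}}{p}.
\]
Since every element of $A\times_B A$ has the form $(x,y,p)$ with $p:f(x)=f(y)$, condition (iii) is precisely the statement that $\fib{\apfunc{f}}{p}$ is $L$-local for all such $p$, i.e., (ii).

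For the equivalence of (i) and (ii): by definition of $L'$, condition (i) says that for every $b:B$ and every $(a,p),(a',p'):\fib{f}{b}$, the identity type $(a,p)=(a',p')$ is $L$-local. By \cref{lem:id_fib}, this identity type is equivalent to $\fib{\apfunc{f}}{\ct{p}{p'^{-1}}}$. Given any $x,y:A$ and any $q:f(x)=f(y)$, taking $b\defeq f(y)$, $(a,p)\defeq (x,q)$ and $(a',p')\defeq (y,\refl{f(y)})$ yields $\ct{p}{p'^{-1}}=q$, which shows (i)$\Rightarrow$(ii). Conversely, assuming (ii), the type $\fib{\apfunc{f}}{\ct{p}{p'^{-1}}}$ is $L$-local for every choice of $(a,p),(a',p')$ in $\fib{f}{b}$, so by \cref{lem:id_fib} the fibers of $f$ are $L$-separated, establishing (i).

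The main obstacle is the computation of $\fib{\delta_f}{(x,y,p)}$: one has to unpack the identity type in $A\times_B A$ correctly (the path between the $B$-components is determined by the transports along the two $A$-paths and the original proof of commutation) and then arrange the contraction so that the remaining path-algebra actually collapses $\ct{\refl{}^{-1}}{\apfunc{f}(\beta)}=p$ to $\apfunc{f}(\beta)=p$. This is routine but finicky, and can be done either directly by iterated applications of \cref{thm:eq_sigma} and \cref{thm:total_path} or by invoking the general description of identity types of pullbacks.
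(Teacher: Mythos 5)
Your proof is correct and, for the equivalence of (i) and (ii), follows essentially the same route as the paper: both arguments rest on the identification of the identity types of $\fib{f}{b}$ with fibers of $\apfunc{f}$ (your citation of \cref{lem:id_fib}; the paper re-derives the same equivalence inline by $\Sigma$-induction), together with the observation that every $\fib{\apfunc{f}}{q}$ arises as $((x,q)=(y,\refl{f(y)}))$. Your explicit computation of $\fib{\delta_f}{(x,y,p)}\eqvsym\fib{\apfunc{f}}{p}$ for the equivalence with (iii) is a welcome addition, since the paper's proof only treats (i)$\Leftrightarrow$(ii) and leaves the diagonal condition implicit.
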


\begin{proof}
First we show that for any $s,t:\fib{f}{b}$ there is an equivalence
\begin{equation*}
\eqv{(s=t)}{\fib{\apfunc{f}}{\ct{\proj 2(s)}{\proj 2(t)^{-1}}}}
\end{equation*}
We do this by $\Sigma$-induction on $s$ and $t$, and then we calculate using basic manipulations of identifications that
\begin{align*}
(\pairr{x,p}=\pairr{y,q}) & \eqvsym \sm{r:x=y} \mathsf{tr}_{f(\blank)=b}(r,p)=q \\
& \eqvsym \sm{r:x=y} \ct{\ap{f}{r}^{-1}}{p}=q \\
& \eqvsym \sm{r:x=y} \ap{f}{r}=\ct{p}{q^{-1}} \\
& \jdeq \fib{\apfunc{f}}{\ct{p}{q^{-1}}}.
\end{align*}
By these equivalences, it follows that if $\apfunc{f}$ is $L$-local, then for each $s,t:\fib{f}{b}$ the identity type $s=t$ is an $L$-local type.

For the converse, note that we have equivalences
\begin{align*}
\fib{\apfunc{f}}{p} & \eqvsym ((x,p)=(y,\refl{f(y)})).
\end{align*}
Therefore it follows that if $f$ is $L'$-local, then the identity type $(x,p)=(y,\refl{f(y)})$ in $\fib{f}{f(y)}$ is $L$-local for any $p:f(x)=f(y)$, and therefore $\fib{\apfunc{f}}{p}$ is $L$-local. 
\end{proof}

\begin{thm}
Let $f:\prd{x:A}B(x)\to C(x)$ be a fiberwise transformation. Then the following are equivalent:
\begin{enumerate}
\item For each $x:A$ the map $f(x)$ is $L$-local.
\item The induced map 
\begin{equation*}
\total{f}:\Big(\sm{x:A}B(x)\Big)\to\Big(\sm{x:A}C(x)\Big)
\end{equation*}
is $L$-local.
\end{enumerate}
\end{thm}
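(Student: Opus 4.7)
The plan is to reduce the statement to \cref{lem:fib_total}, which already provides the essential equivalence between the fibers of a fiberwise transformation and the fibers of the induced map on total spaces. Specifically, for any $a:A$ and $c:C(a)$ that lemma gives an equivalence
\begin{equation*}
\eqv{\fib{f(a)}{c}}{\fib{\total{f}}{(a,c)}}.
\end{equation*}

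Before applying this, I would first note that the subuniverse of $L$-local types is closed under equivalences; this is an immediate consequence of \cref{cor:local_retract}, since any equivalence exhibits each of its endpoints as a retract of the other. Consequently, a map is $L$-local if and only if any family of types fiberwise equivalent to its fiber family consists of $L$-local types.

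For the direction (i) $\Rightarrow$ (ii): assume each $f(x):B(x)\to C(x)$ is $L$-local, so that $\fib{f(x)}{c}$ is $L$-local for every $x:A$ and $c:C(x)$. A general fiber of $\total{f}$ is of the form $\fib{\total{f}}{(a,c)}$ for some $a:A$ and $c:C(a)$, and by \cref{lem:fib_total} this is equivalent to $\fib{f(a)}{c}$, which is $L$-local by hypothesis. Hence $\total{f}$ is $L$-local. Conversely, for (ii) $\Rightarrow$ (i): assume $\total{f}$ is $L$-local. Given $x:A$ and $c:C(x)$, the same equivalence of \cref{lem:fib_total} exhibits $\fib{f(x)}{c}$ as equivalent to the $L$-local type $\fib{\total{f}}{(x,c)}$, so $\fib{f(x)}{c}$ is $L$-local. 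Since this holds for all $x$ and $c$, each $f(x)$ is $L$-local.

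There is no real obstacle here; the content of the theorem is entirely packaged in \cref{lem:fib_total} together with closure of $L$-locality under equivalences. The only point to be careful about is that when instantiating the equivalence in the backward direction we need every pair $(x,c)$, not just those in the image, but this is automatic because \cref{lem:fib_total} is stated for arbitrary $a:A$ and $c:C(a)$.
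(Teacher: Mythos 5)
Your proof is correct and follows the same route as the paper, which simply cites \cref{lem:fib_total}; you have merely spelled out the two-sided application of that fiber equivalence together with invariance of $L$-locality under equivalences (via \cref{cor:local_retract}). Nothing further is needed.
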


\begin{proof}
This follows directly from \cref{lem:fib_total}.
\end{proof}

\section{Quasi-left-exactness of \texorpdfstring{$L'$}{L'}-localization}\label{ss:lex}

We now explain how $L$ and $L'$ together behave similarly to a \define{lex modality},
i.e., a modality that preserves pullbacks.
Theorem~3.1 of \cite{RijkeShulmanSpitters} gives 13 equivalent characterizations of a lex modality,
and it turns out that these hold for any reflective subuniverse
if the modal operator is replaced by $L$ and $L'$ in the appropriate way.
The propositions in this section show this for parts (ix), (x), (xii) and (xi)
of Theorem~3.1, respectively.
The proofs use the dependent elimination of $L'$ in a crucial way,
but do not use the specific construction of $L'$-localization, just the existence.

Before proving the next result,
we need a lemma, which follows directly from the dependent elimination of $L'$.

\begin{lem}\label{lemma:Lequivalencetotalspaces}
Let $P:L'X\to \UU$ be a type family over $L'X$. 
Then the map
\begin{equation*}
f:\Big(\sm{x:X} P(\eta'(x))\Big)\to \Big(\sm{y:L'X}P(y)\Big)
\end{equation*}
given by $(x,p)\mapsto (\eta'(x),p)$ is an $L$-equivalence. 
\end{lem}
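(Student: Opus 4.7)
My plan is to reduce the claim to \cref{proposition:inductionLseparated} (the dependent universal property of $L'$-localization into families of $L$-local types) by currying. The key observation is that although $P$ is not assumed to be a family of $L$-local types, the families $y\mapsto P(y)\to Z$ for $L$-local $Z$ \emph{are} such families, thanks to \cref{lem:modal-Pi}.

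First I would apply \cref{lem:local_equivalence}: to show $f$ is an $L$-equivalence it suffices to show that for every $L$-local type $Z$ the precomposition map
\begin{equation*}
\precomp{f}:\Big(\Big(\sm{y:L'X}P(y)\Big)\to Z\Big)\to \Big(\Big(\sm{x:X}P(\eta'(x))\Big)\to Z\Big)
\end{equation*}
is an equivalence. Next, I would fit this map into a commuting square whose vertical sides are the currying equivalences $\mathsf{ev\usc{}pair}$ (uncurrying maps out of a $\Sigma$-type), as in the proofs of \cref{lem:Lequiv_total} and \cref{thm:modal-pres-prod}:
\begin{equation*}
\begin{tikzcd}[column sep=large]
\Big(\Big(\sm{y:L'X}P(y)\Big)\to Z\Big) \arrow[r,"\precomp{f}"] \arrow[d,swap,"\mathsf{ev\usc{}pair}"] & \Big(\Big(\sm{x:X}P(\eta'(x))\Big)\to Z\Big) \arrow[d,"\mathsf{ev\usc{}pair}"] \\
\Big(\prd{y:L'X}P(y)\to Z\Big) \arrow[r,swap,"\precomp{\eta'}"] & \Big(\prd{x:X}P(\eta'(x))\to Z\Big).
\end{tikzcd}
\end{equation*}
By the 3-for-2 property it then suffices to check that the bottom map is an equivalence.

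To handle the bottom map, I would note that the family $y\mapsto P(y)\to Z$ is a family of $L$-local types, because $Z$ is $L$-local and dependent products of families of $L$-local types are $L$-local by \cref{lem:modal-Pi}. Hence \cref{proposition:inductionLseparated}, applied to the $L'$-localization $\eta':X\to L'X$ and to this family, gives exactly the desired equivalence. There is no real obstacle here — the main thing to get right is the identification of the correct $L$-local family, and the verification that the currying square commutes is immediate from the definition of $f$ as $(x,p)\mapsto (\eta'(x),p)$.
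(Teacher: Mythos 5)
Your proposal is correct and matches the paper's own argument essentially step for step: the paper also reduces via \cref{lem:local_equivalence} to precomposition into an arbitrary $L$-local $Z$, uses the same currying square with $\mathsf{ev\usc{}pair}$ on the vertical sides, and concludes that the bottom map is an equivalence by \cref{proposition:inductionLseparated}, using that $P(y)\to Z$ is $L$-local. No gaps; nothing to add.
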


\begin{proof}
For any $L$-local type $Z$ we have the commuting square
\begin{equation*}
\begin{tikzcd}[column sep=9em]
\Big(\Big(\sm{y : L'X} P(y)\Big)\to Z\Big) \arrow[r,"{\lam{f}{(x,p)}f(\eta'(x),p)}"] \arrow[d,swap,"\mathsf{ev\usc{}pair}"] & \left(\sm{x:X} P(\eta' x)\right) \to Z \arrow[d,"\mathsf{ev\usc{}pair}"] \\
\prd{y : L'X} (P(y) \to Z) \arrow[r,swap,"{\lam{f}{x}{y}f(\eta'(x),y)}"] & \prd{x : X} (P(\eta' x) \to Z)
\end{tikzcd}
\end{equation*}
The bottom map is an equivalence by \cref{proposition:inductionLseparated}, using that $Z$ and $P(y) \to Z$ are $L$-local.
The two vertical maps are also equivalences, so it follows that the top map is an equivalence. The claim now follows from \cref{lem:local_equivalence}.
\end{proof}

\begin{prp}\label{remark:preservationpullbacks} %
Consider a commuting cube of the form
\begin{equation*}
\begin{tikzcd}
& A\times_X B \arrow[d,densely dotted] \arrow[dl] \arrow[dr] \\
A \arrow[d,swap,"\eta'"] & L'A\times_{L'X} L'B \arrow[dl] \arrow[dr] & B \arrow[dl,crossing over] \arrow[d,"\eta'"] \\
L'A \arrow[dr] & X \arrow[from=ul,crossing over] \arrow[d,swap,"\eta'"] & L'B \arrow[dl] \\
& L'X.
\end{tikzcd}
\end{equation*}
Then the map $A\times_X B\to L'A\times_{L'X} L'B$ is an $L$-equivalence.
\end{prp}

\begin{proof}
Consider the following commuting square
\begin{equation*}
\begin{tikzcd}
\sm{x:A}{y:B}f(x)=g(y) \arrow[r,densely dotted] \arrow[d] & \sm{x':L'A}{y':L'B} L'f(x')=L'g(y') \\
\sm{x:A}{y:B}\eta'(f(x))=\eta'(g(y)) \arrow[r] & \sm{x:A}{y:B}L'f(\eta'(x))=L'g(\eta'(y)) \arrow[u]
\end{tikzcd}
\end{equation*}
In this square, the downwards morphism on the left is the induced map on total spaces of the map $\apfunc{\eta'}:(f(x)=g(y))\to (\eta'(f(x))=\eta'(g(y)))$, which is an $L$-equivalence by \cref{thm:separation_characterization} and \cref{lem:Lequiv_total}. The bottom map is an equivalence, obtained from the naturality squares $\eta'\circ f\htpy L'f\circ \eta'$ and $\eta'\circ g\htpy L'g\circ \eta'$. In particular, it is an $L$-equivalence. The upwards map on the right is an $L$-equivalence by \cref{lemma:Lequivalencetotalspaces}. Therefore, the asserted map is a composite of $L$-equivalences, so it is also an $L$-equivalence.
\end{proof}

As a consequence we get a result about the preservation of certain fiber sequences.

\begin{cor}\label{corollary:preservationfibersequences2}
    Given a fiber sequence $F \hookrightarrow E \twoheadrightarrow B$, there is a map of fiber sequences
\begin{equation*}
\begin{tikzcd}
F \arrow[r,hookrightarrow] \arrow[d] & E \arrow[r,->>,"p"] \arrow[d,swap,"\eta'"] & B \arrow[d,"\eta'"] \\
F' \arrow[r,hookrightarrow] & L'E \arrow[r,->>,swap,"L'p"] & L'B
\end{tikzcd}
\end{equation*}
    in which the left vertical map is an $L$-equivalence.\qed
\end{cor}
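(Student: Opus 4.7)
The plan is to exhibit the fiber sequence on the right as a pullback, and then to use \cref{remark:preservationpullbacks} to compare it with the fiber sequence on the left.

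Recall that a fiber sequence $F \hookrightarrow E \twoheadrightarrow B$ is by definition given by a pullback square
\begin{equation*}
\begin{tikzcd}
F \arrow[r,hookrightarrow] \arrow[d] & E \arrow[d,"p"] \\
\unit \arrow[r,swap,"b_0"] & B.
\end{tikzcd}
\end{equation*}
First I would define $F'$ as the pullback of $L'p:L'E\to L'B$ along the pointing $\eta'(b_0):\unit\to L'B$, so that $F'\hookrightarrow L'E\twoheadrightarrow L'B$ is a fiber sequence by construction. The naturality square $\eta'\circ p \htpy L'p\circ \eta'$ together with the identification $\eta'\circ b_0 = \eta'(b_0)\circ \idfunc[\unit]$ fills a commuting cube whose two back faces are the defining pullback squares for $F$ and $F'$, and whose vertical edges are the units $\eta'$ (with the unit at $\unit$ being the identity). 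Thus the universal property of $F'$ yields a canonical vertical map $F\to F'$ rendering the whole cube commutative.

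Next I would apply \cref{remark:preservationpullbacks} to the pullback of $p:E\to B$ along $b_0:\unit\to B$. The proposition gives that the induced map
\begin{equation*}
F \simeq \unit\times_B E \lra L'\unit\times_{L'B} L'E
\end{equation*}
is an $L$-equivalence. Since $\unit$ is $L$-local (being contractible, it is a retract of any type and in particular $L$-local), it is $L$-separated, and so $\eta':\unit\to L'\unit$ is an equivalence. Consequently the canonical map $L'\unit\times_{L'B} L'E\to \unit\times_{L'B} L'E\jdeq F'$ is also an equivalence, and its composite with the map produced by \cref{remark:preservationpullbacks} is, up to homotopy, precisely the canonical map $F\to F'$ constructed above.

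The conclusion is immediate from the $2$-out-of-$3$ property for $L$-equivalences, which follows from the functoriality of $L$ and the $3$-for-$2$ property of equivalences. There is no real obstacle here beyond bookkeeping: the only subtlety is checking that the map $F\to F'$ arising from the universal property of the fiber really agrees (up to homotopy) with the map $\unit\times_B E\to L'\unit\times_{L'B}L'E$ from \cref{remark:preservationpullbacks} composed with the equivalence induced by $\eta':\unit\eqvsym L'\unit$, and this is a direct consequence of uniqueness of maps into a pullback.
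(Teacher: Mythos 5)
Your proposal is correct and follows exactly the route the paper intends: the corollary is stated with a \qed precisely because it is the special case of \cref{remark:preservationpullbacks} applied to the defining pullback square of the fiber sequence, using that $\unit$ is $L$-separated so $\eta':\unit\to L'\unit$ is an equivalence, and then closing with the 2-out-of-3 property for $L$-equivalences. The only nitpick is the parenthetical ``being contractible, it is a retract of any type'': the paper's \cref{cor:unit_local} argues via $\unit$ being a retract of any \emph{pointed} type (such as $L\unit$), but this does not affect the argument.
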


\section{Connected maps}
\label{sec:connected-maps}

\begin{defn}
A map $f:A\to B$ is said to be \define{$L$-connected} if $L(\fib{f}{b})$ is contractible for every $b:B$. In particular, a type $A$ is $L$-connected if and only if $LA$ is contractible.
\end{defn}

In the following proposition we characterize $L'$-connected types. We will establish a similar claim for maps in \cref{cor:L'connected_maps}

\begin{prp}\label{lem:L'connected_types}
A type $A$ is $L'$-connected if and only if $A$ is merely inhabited (i.e.~$\brck{A}$ holds), and the identity types of $A$ are $L$-connected.
\end{prp}

\begin{proof}
$A$ is $L'$-connected if and only if $A\to\unit$ is an $L'$-localization. By \cref{thm:separation_characterization} this holds if and only if $A\to\unit$ is surjective, and the maps $(x=y)\to \loopspace\unit$ are $L$-localizations --- in other words: if and only if $A$ is merely inhabited and the identity types of $A$ are $L$-connected.
\end{proof}

In the following proposition we provide two equivalent conditions to a map being $L$-connected.

\begin{prp}\label{prop:nconnected_tested_by_lv_n_dependent types}
Consider a map $f:A\to B$. The following are equivalent:
\begin{enumerate}
\item $f$ is $L$-connected.\label{item:conntest1}
\item For every family $P:B\to \UU_L$ of $L$-local types, the map 
\begin{equation*}
\lam{s} s\circ f :\Parens{\prd{b:B} P(b)}\to\Parens{\prd{a:A}P(f(a))}.
\end{equation*}
is an equivalence.\label{item:conntest2}
\item For every family $P:B\to \UU_L$ of $L$-local types, the map 
\begin{equation*}
\lam{s} s\circ f :\Parens{\prd{b:B} P(b)}\to\Parens{\prd{a:A}P(f(a))}.
\end{equation*}
has a section.\label{item:conntest3}
\end{enumerate}
\end{prp}

\begin{proof}
First suppose $f$ is $L$-connected and let $P:B\to \UU_L$. Then the map
\begin{equation*}
\lam{p}\mathsf{const}_p : P(b)\to (\fib{f}{b}\to P(b))
\end{equation*}
is an equivalence for every $b:B$, since $L(\fib{f}{b})$ is assumed to be contractible.
Therefore we obtain a commuting square
\begin{equation*}
\begin{tikzcd}[column sep=large]
\prd{b:B}P(b) \arrow[r] \arrow[d] & \prd{a:A}P(f(a)) \\
\prd{b:B}\big(\fib{f}{b}\to P(b)\big) \arrow[r,swap,"\mathsf{evpair}"] & \prd{b:B}{a:A}(f(a)=b)\to P(b) \arrow[u,swap,"\evrefl"]
\end{tikzcd}
\end{equation*}
in which three out of four maps are known equivalences. The remaining map must therefore also be an equivalence.

Thus,~\ref{item:conntest1}$\Rightarrow$\ref{item:conntest2}, and clearly~\ref{item:conntest2}$\Rightarrow$\ref{item:conntest3}.
To show~\ref{item:conntest3}$\Rightarrow$\ref{item:conntest1}, let
$P(b)\defeq L(\fib{f}b)$.
Then~\ref{item:conntest3} yields a map $c:\prd{b:B} L(\fib{f}b)$ with
$c(f(a))=\modalunit{\pairr{a,\refl{f(a)}}}$. To show that each $L(\fib{f}b)$ is contractible, we will show that $c(b)=w$ for any $b:B$ and $w:L(\fib{f}b)$.
In other words, we must show that the identity function $L(\fib{f}b) \to L(\fib{f}b)$ is equal to the constant function at $c(b)$.
By the universal property of $L(\fib{f}b)$, it suffices to show that they become equal when precomposed with $\modalunit[\fib{f}b]$, i.e.\ we may assume that $w = \modalunit\pairr{a,p}$ for some $a:A$ and $p:f(a)=b$.
But now path induction on $p$ reduces our goal to the given $c(f(a))=\modalunit{\pairr{a,\refl{f(a)}}}$.
\end{proof}

\begin{cor}\label{connectedtotruncated}
A type $A$ is $L$-connected if and only if the ``constant functions'' map
$
  B \to (A\to B)
$
is an equivalence for every modal type $B$.\qed
\end{cor}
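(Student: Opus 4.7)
The plan is to obtain this corollary as an immediate specialization of \cref{prop:nconnected_tested_by_lv_n_dependent types} to the terminal map $f:A\to\unit$. Under this specialization, condition (i) of the proposition --- that $f$ is $L$-connected --- becomes exactly the statement that the type $A$ is $L$-connected, since $\fib{f}{\ttt}\eqvsym A$ and so $L(\fib{f}{\ttt})$ is contractible iff $LA$ is contractible.

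Next, I would observe that type families $P:\unit\to\UU_L$ are equivalent to plain $L$-local types, via the equivalence $P\mapsto P(\ttt)$. Under this identification, the map in condition (ii) of the proposition,
\begin{equation*}
\lam{s}s\circ f:\Big(\prd{b:\unit}P(b)\Big)\to\Big(\prd{a:A}P(f(a))\Big),
\end{equation*}
fits into a commuting square whose vertical maps are the canonical equivalences $(\prd{b:\unit}P(b))\eqvsym P(\ttt)\jdeq B$ and $(\prd{a:A}P(f(a)))\eqvsym (A\to B)$, and whose bottom map is precisely the constant functions map $B\to (A\to B)$ sending $b$ to $\const_b$. So the proposition's condition (ii) specialized to $f:A\to\unit$ is exactly the statement that the constant functions map is an equivalence for every $L$-local type $B$.

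Putting these two observations together, the biconditional (i) $\Leftrightarrow$ (ii) from \cref{prop:nconnected_tested_by_lv_n_dependent types} becomes exactly the biconditional in the corollary. I do not expect any genuine obstacle here: the only thing worth being mildly careful about is that the proposition is stated for general (dependent) type families $P:B\to\UU_L$, while the corollary restricts to constant families --- but because the base is $\unit$, a family over $\unit$ is inherently the constant family on $P(\ttt)$, so no information is lost in the specialization and the two conditions literally coincide.
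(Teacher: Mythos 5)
Your specialization of \cref{prop:nconnected_tested_by_lv_n_dependent types} to the terminal map $A\to\unit$ is exactly how the paper intends this corollary to follow (it is stated with no further proof), and your handling of the identification of families over $\unit$ with single $L$-local types is correct. Same approach, no gaps.
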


Dually, we will prove in \cref{thm:detect-right-by-fibers} that when $L$ is a modality, if this holds for all $L$-connected $A$ then $B$ is $L$-local.

\begin{cor}
If $f:A\to B$ is an $L$-connected map into an $L$-local type $B$, then $f$ is an $L$-localization. (The converse only holds when $L$ is a modality.)
\end{cor}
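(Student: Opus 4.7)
The plan is to reduce the statement to \cref{prop:nconnected_tested_by_lv_n_dependent types} by specializing to a constant type family. Concretely, to show that $f:A\to B$ is an $L$-localization, I need to verify that for every $L$-local type $Z$, the precomposition map
\[
\precomp{f} : (B\to Z) \to (A\to Z)
\]
is an equivalence, since $B$ is already assumed $L$-local.

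First I would fix an $L$-local type $Z$ and consider the constant family $P \defeq \lam{b}Z$ over $B$. This is a family of $L$-local types. Since $f$ is assumed to be $L$-connected, \cref{prop:nconnected_tested_by_lv_n_dependent types} applied to $P$ yields that the map
\[
\lam{s}s\circ f : \Big(\prd{b:B}Z\Big) \to \Big(\prd{a:A}Z\Big)
\]
is an equivalence. But this map is definitionally the precomposition map $\precomp{f}:(B\to Z)\to (A\to Z)$, so we are done with the universal property.

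Alternatively, and perhaps more cleanly, the equivalence above combined with \cref{lem:local_equivalence} shows directly that $f$ is an $L$-equivalence, and then \cref{cor:localization_lequiv} (which says that an $L$-equivalence into an $L$-local type is an $L$-localization) finishes the argument since $B$ is $L$-local by hypothesis. There is no serious obstacle here; the only substantive input is \cref{prop:nconnected_tested_by_lv_n_dependent types}, and the rest is unwinding definitions.
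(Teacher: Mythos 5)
Your proposal is correct, and it is essentially the argument the paper intends: specializing \cref{prop:nconnected_tested_by_lv_n_dependent types} to a constant $L$-local family gives that $\precomp{f}$ is an equivalence for every $L$-local type, which (since $B$ is $L$-local) is exactly the universal property of localization, or equivalently via \cref{lem:local_equivalence} and \cref{cor:localization_lequiv}. No gaps; the second, "cleaner" route you sketch is the same chain of lemmas the paper itself uses for the neighbouring result \cref{cor:L'equivalenceisLconnected}.
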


\begin{prp}\label{cor:L'equivalenceisLconnected}
Consider a map $f:A\to B$.
\begin{enumerate}
\item If $f$ is an $L'$-equivalence, then $f$ is $L$-connected.
\item If $f$ is $L$-connected, then $f$ is an $L$-equivalence.
\end{enumerate}
\end{prp}

\begin{proof}
The first statement is a direct consequence of \cref{corollary:preservationfibersequences2}, since we get for each $b:B$ a map of fiber sequences
\begin{equation*}
\begin{tikzcd}
\fib{f}{b} \arrow[r] \arrow[d] & A \arrow[r,"f"] \arrow[d,swap,"\eta'"] & B \arrow[d,"\eta'"] \\
\fib{L'f}{\eta'b} \arrow[r] & L'A \arrow[r,swap,"L'f"] & L'B
\end{tikzcd}
\end{equation*}
in which the induced map $\fib{f}{b}\to\fib{L'f}{\eta'b}$ is an $L$-equivalence. Since $L'f$ is an equivalence, it follows that $\fib{L'f}{\eta'b}$ is contractible, so we conclude that $\fib{f}{b}$ is $L$-connected. It follows that $f$ is $L$-connected.

For the second statement, we observe that if $f$ is $L$-connected, then by \cref{prop:nconnected_tested_by_lv_n_dependent types} it induces an equivalence
\begin{equation*}
\precomp{f} : (B\to X)\to (A\to X)
\end{equation*}
for every $L$-local type $X$. Therefore we conclude by \cref{lem:local_equivalence} that $f$ is an $L$-equivalence.
\end{proof}

\begin{prp}\label{thm:rsu-compose-cancel} %
Consider a commuting triangle
\begin{equation*}
\begin{tikzcd}[column sep=tiny]
A \arrow[rr,"h"] \arrow[dr,swap,"f"] & & B \arrow[dl,"g"] \\
& X
\end{tikzcd}
\end{equation*}
\begin{enumerate}
\item If $h$ is $L$-connected, then the following are equivalent:
\begin{enumerate}
\item $f$ is $L$-connected.
\item $g$ is $L$-connected.
\end{enumerate}
\item If $f$ is $L'$-connected, then the following are equivalent:
\begin{enumerate}
\item $g$ is $L'$-connected.
\item $h$ is $L$-connected.
\end{enumerate}
In particular, $g$ is $(n+1)$-connected if and only if $h$ is $n$-connected, assuming that $f$ is $(n+1)$-connected.
\end{enumerate}
\end{prp}

\begin{proof}
For the first statement suppose that $h$ is $L$-connected.
  For any $z:C$ we have
  \begin{align*}
    L(\fib{g\circ h}{z})
    & \eqvsym
      L(\sm{p:\fib{g}{z}}\fib{h}{\proj1(p)}) \\
    & \eqvsym
      L(\sm{p:\fib{g}{z}}L(\fib{h}{\proj1(p)})) \\
    & \eqvsym
      L(\sm{p:\fib{g}{z}}\unit) \\
    & \eqvsym
      L(\fib{g}{z}).
  \end{align*}
  using the fact that $h$ is $L$-connected.
  Thus, one is contractible if and only if the other is. We conclude that $f$ is $L$-connected if and only if $g$ is.

For the second statement, suppose that $f$ is $L'$-connected.
    If $g$ is $L'$-connected, then both $g\circ h$ and $g$ are $L'$-equivalences,
    and thus $h$ is an $L'$-equivalence. Then \cref{cor:L'equivalenceisLconnected}
    implies that $h$ is $L$-connected.

    For the converse, notice that taking fibers over each $x:X$ reduces the problem
    to showing that given an $L$-connected map $h : A \to B$
    such that $A$ is $L'$-connected, it follows that $B$ is $L'$-connected.

    Since $L'A$ is contractible, it follows that $L'B$ is contractible if and only if $L'h:L'A\to L'B$ is an equivalence.
    Therefore it suffices to show that the fibers of $L'h$ are contractible. Moreover, since $\eta':B\to L'B$ is a surjective map by \cref{thm:separation_characterization}, it is enough to show that $\fib{L'h}{\eta'(b)}$ is contractible for every $b:B$.
First we observe that, $\eqv{\fib{L'h}{\eta'(b)}}{\loopspace{L'B,\eta'(b)}}$ since $L'A$ is contractible. 
    Now we observe using \cref{corollary:preservationfibersequences2} that we have for every $b:B$ a morphism of fiber sequences
\begin{equation*}
\begin{tikzcd}
\fib{h}{b} \arrow[r,hookrightarrow] \arrow[d,densely dotted] & A \arrow[d,swap,"\eta'"] \arrow[r,"h"] & B \arrow[d,"\eta'"] \\
\loopspace{L'B,\eta'(b)} \arrow[r,hookrightarrow] & L'A \arrow[r,swap,"L'h"] & L'B
\end{tikzcd}
\end{equation*}
in which the map on the left is an $L$-equivalence. However, the type $\loopspace{L'B,\eta'(b)}$ is $L$-local, so it follows that the map
\begin{equation*}
\fib{h}{b}\to \loopspace{L'B,\eta(b)}
\end{equation*}
is an $L$-localization. Now it follows by our assumption that $h$ is $L$-connected that $\loopspace{L'B,\eta(b)}$ is contractible. We conclude that $\fib{L'h}{\eta'(b)}$ is contractible, and therefore that $L'B$ is contractible.
\end{proof}

\begin{rmk}
In general it is not true that if $g$ and $g\circ h$ are $\modal$-connected then $h$ is; this is one of the equivalent characterizations of lex modalities (Theorem 3.1 of \cite{RijkeShulmanSpitters}).

The above proposition almost gives us a 3-for-2 property that combines $L$ and $L'$.
However the map $\emptyt \to \unit$ is $(-2)$-connected, and $\unit$ is $(-1)$-connected,
whereas $\emptyt$ is not $(-1)$-connected. So the remaining implication of the 3-for-2 property
does not hold. One can show the weaker result that the composite of an $L$-connected map followed by
an $L'$-connected map is $L$-connected.
\end{rmk}

\begin{rmk}
\cref{thm:separation_characterization} allows us to give a concrete description of
the extension defined in \cref{lemma:extendtoUL}.
Using an argument similar to the one used in the proof of \cref{remark:preservationpullbacks},
one can show that given an $L'$-localization $\eta' : X \to L' X$
and a map $f: Y \to X$ with $L$-local fibers, $f$ is the pullback of
the fiberwise $L$-localization of $\eta' \circ f$.
\end{rmk}

\begin{defn}
A commuting square
\begin{equation*}
\begin{tikzcd}
A\arrow[d,swap,"f"] \arrow[r,"h"] & B \arrow[d,"g"] \\
X \arrow[r,swap,"i"] & Y
\end{tikzcd}
\end{equation*}
is said to be \define{$L$-cartesian} if its gap map is $L$-connected.
\end{defn}

\begin{prp}
Consider a commuting square
\begin{equation*}
\begin{tikzcd}
A\arrow[d,swap,"f"] \arrow[r,"h"] & B \arrow[d,"g"] \\
X \arrow[r,swap,"i"] & Y.
\end{tikzcd}
\end{equation*}
\begin{enumerate}
\item Suppose that $g$ is surjective, and that the square is $L$-cartesian. Then the following are equivalent:
\begin{enumerate}
\item The map $h:A\to B$ is $L$-connected.
\item The map $i:X\to Y$ is $L$-connected.
\end{enumerate}
\item Suppose that $i:X\to Y$ is $L'$-connected. Then the following are equivalent:
\begin{enumerate}
\item The map $h:A\to B$ is $L'$-connected.
\item The square is $L$-cartesian.
\end{enumerate}
\end{enumerate}
\end{prp}

\begin{proof}
For the first statement we observe that by \cref{thm:rsu-compose-cancel} $h$ is an $L$-connected if and only if $g^\ast i$ is an $L$-connected. Since $g$ is assumed to be surjective, it follows that $g^\ast i$ is $L$-connected if and only if $i$ is $L$-connected.

For the second statement we observe that, since $i:X\to Y$ is assumed to be $L'$-connected, the map $g^\ast i:X\times_Y B\to B$. Now it follows by \cref{thm:rsu-compose-cancel} that the gap map is $L$-connected if and only if $h$ is $L'$-connected.
\end{proof}

\begin{cor}\label{cor:L'connected_maps}
A map $f:A\to B$ is a $L'$-connected if and only if $f$ is surjective and $\delta_f:A\to A\times_B A$ is a $L$-connected.
\end{cor}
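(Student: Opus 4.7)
The plan is to reduce the statement to the characterization of $L'$-connected types in \cref{lem:L'connected_types} by working fiberwise. First, I would unfold the definition: $f$ is $L'$-connected iff $\fib{f}{b}$ is $L'$-connected for every $b:B$. By \cref{lem:L'connected_types}, each fiber $\fib{f}{b}$ is $L'$-connected iff it is merely inhabited and its identity types are $L$-connected. The condition that $\fib{f}{b}$ is merely inhabited for every $b:B$ is exactly the surjectivity of $f$, which takes care of the first of the two conditions in the statement.

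The remaining task is to show that "for every $b:B$ and every $s,t:\fib{f}{b}$, the type $s=t$ is $L$-connected" is equivalent to "$\delta_f$ is $L$-connected". By \cref{lem:id_fib}, for $s=(a,p)$ and $t=(a',p')$ in $\fib{f}{b}$ we have an equivalence
\begin{equation*}
(s=t)\eqvsym \fib{\apfunc{f}}{\ct{p}{p'^{-1}}}.
\end{equation*}
As $b$ varies over $B$ and $s,t$ vary over $\fib{f}{b}$, the resulting data $((a,a'),\ct{p}{p'^{-1}})$ ranges over all of $\sm{a,a':A}f(a)=f(a')$, so the condition on identity types of fibers becomes precisely the condition that $(\apfunc{f})_{a,a'}:(a=a')\to (f(a)=f(a'))$ has $L$-connected fibers for every $a,a':A$.

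To tie this to the diagonal, I would identify $\delta_f:A\to A\times_B A$ with the total map $\total{\apfunc{f}}$. Concretely, the equivalence $A\eqvsym \sm{a,a':A}(a=a')$ sending $a\mapsto(a,a,\refl{a})$ fits in a commuting triangle
\begin{equation*}
\begin{tikzcd}[column sep=small]
A \arrow[rr,"\delta_f"] \arrow[dr,swap,"\eqvsym"] & & \sm{a,a':A}f(a)=f(a'), \\
& \sm{a,a':A}(a=a') \arrow[ur,swap,"\total{\apfunc{f}}"]
\end{tikzcd}
\end{equation*}
so $\delta_f$ and $\total{\apfunc{f}}$ have (fiberwise) equivalent fibers. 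Applying \cref{lem:fib_total} then gives $\fib{\delta_f}{(a,a',q)}\eqvsym\fib{(\apfunc{f})_{a,a'}}{q}$, so $\delta_f$ is $L$-connected iff $\apfunc{f}$ is fiberwise $L$-connected. Combining everything yields the desired biconditional.

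The only delicate point is bookkeeping: verifying that the variation of parameters works out, so that quantification over $(b,s,t)$ in \cref{lem:L'connected_types} really does translate to quantification over all $(a,a',q)$ with $q:f(a)=f(a')$, and checking that the two equivalences involving $\apfunc{f}$ match up so that the composite is manifestly as described. Once that is in place, the proof is a direct chain of equivalences.
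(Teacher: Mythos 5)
Your argument is correct, and the converse direction coincides with the paper's: both reduce to \cref{lem:L'connected_types} applied to each fiber, using the identification of identity types of $\fib{f}{b}$ with fibers of $\apfunc{f}$, equivalently of $\delta_f$ (note the right-hand map in your triangle is really the double total $\total{\total{\apfunc{f}}}$, and the equivalence $\fib{\delta_f}{(a,a',q)}\eqvsym\fib{(\apfunc{f})_{a,a'}}{q}$ is already recorded in the paper as the example following \cref{lem:fib_total}). Where you diverge is the forward direction: the paper extracts surjectivity from the observation that propositions are $L'$-local (so $\brck{X}$ and $\brck{L'X}$ agree), and obtains $L$-connectedness of $\delta_f$ from the preceding proposition on $L$-cartesian squares, applied to the square with $\idfunc[A]$ on top, which is trivially $L'$-connected. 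You instead run the same fiberwise chain in both directions, so the corollary becomes a single biconditional: $f$ is $L'$-connected iff each fiber is merely inhabited with $L$-connected identity types, iff $f$ is surjective and $\apfunc{f}$ (equivalently $\delta_f$) has $L$-connected fibers. Your route is more self-contained, needing only \cref{lem:L'connected_types}, \cref{lem:id_fib}, and \cref{lem:fib_total}, whereas the paper's forward direction exercises the $L$-cartesian-square machinery it has just developed; the content is the same, and your bookkeeping does go through, since every triple $(a,a',q)$ with $q:f(a)=f(a')$ arises from some $(b,s,t)$, e.g.\ $b\defeq f(a')$, $s\defeq(a,q)$, $t\defeq(a',\refl{f(a')})$, and $L$-connectedness is invariant under equivalence.
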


\begin{proof}
Since every proposition is $L'$-local, it follows that $\brck{X}=\brck{L'X}$ for any type $X$. In particular, if $X$ is $L'$-connected, then $\brck{X}=\unit$. From this observation it follows that if $f$ is $L'$-connected, then $f$ is surjective. Furthermore, since the identity function $\idfunc[A]:A\to A$ is obviously $L'$-connected, it follows that the square
\begin{equation*}
\begin{tikzcd}
A \arrow[d] \arrow[r] & A \arrow[d,"f"] \\
A \arrow[r,swap,"f"] & B
\end{tikzcd}
\end{equation*}
is $L$-cartesian. In other words: $\delta_f:A\to A\times_B A$ is $L$-connected.

Now suppose that $f$ is surjective and that $\delta_f$ is $L$-connected. Then the fibers of $\delta_f$ are $L$-connected. Since we have equivalences
\begin{equation*}
((x,p)=(y,q))\eqvsym \fib{\delta_f}{x,y,\ct{p}{q^{-1}}}
\end{equation*}
for any $(x,p),(y,q):\fib{f}{b}$ it follows by \cref{lem:L'connected_types} that $\fib{f}{b}$ is $L'$-connected for every $b:B$. In other words: $f$ is $L'$-connected.
\end{proof}

A commuting square is said to be a \define{quasi-pullback} if its gap map is surjective. In other words, quasi-pullback squares are the same as $(-1)$-cartesian squares.

\begin{cor}
A map $f:A\to B$ is $n$-connected if and only if the gap map of the commuting square
\begin{equation*}
\begin{tikzcd}
A \arrow[r,"f"] & B \arrow[d] \\
A^{\sphere{k}} \arrow[r,swap,"f^{\sphere{k}}"] & B^{\sphere{k}}
\end{tikzcd}
\end{equation*}
is a quasi-pullback square, for each $-1\leq k\leq n$. A map is therefore $\infty$-connected if this square is a quasi-pullback for all $k$
\end{cor}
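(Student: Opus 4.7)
The plan is to prove the main equivalence by induction on $n \geq -1$, using the previous corollary (which characterizes $(n+1)$-connectedness as surjectivity of $f$ together with $n$-connectedness of $\delta_f$) as the principal inductive tool. The $\infty$-connected case then follows immediately from the observation that a map is $\infty$-connected precisely when it is $n$-connected for every $n$.

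For the base case $n = -1$, only $k = -1$ is relevant. Since $\sphere{-1} = \emptyt$, both $A^{\sphere{-1}}$ and $B^{\sphere{-1}}$ are contractible, so the gap map of the square is equivalent to $f$ itself. Hence the square is a quasi-pullback iff $f$ is surjective, i.e., $(-1)$-connected.

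For the inductive step, suppose the result holds for $n$. By the preceding corollary applied with $L$ the $n$-truncation modality (so $L'$ is the $(n+1)$-truncation), $f$ is $(n+1)$-connected iff $f$ is surjective and $\delta_f: A \to A \times_B A$ is $n$-connected. Surjectivity of $f$ is exactly the condition that the $k = -1$ square for $f$ is a quasi-pullback. Applying the inductive hypothesis to $\delta_f$ reduces $n$-connectedness of $\delta_f$ to the $k$-th gap square of $\delta_f$ being a quasi-pullback for each $-1 \leq k \leq n$. It therefore suffices to identify the $k$-th gap square of $\delta_f$ with the $(k+1)$-th gap square of $f$, in a way that preserves surjectivity of the gap map (for instance, by exhibiting an equivalence between the two gap maps that commutes with them).

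The key ingredient for this identification is the equivalence
\begin{equation*}
X^{\sphere{k+1}} \;\eqvsym\; \sm{x,y:X} (x =_X y)^{\sphere{k}}
\end{equation*}
induced by the recurrence $\sphere{k+1} = \susp\sphere{k}$ and the universal property of the suspension. Under this equivalence, the diagonal $X \to X^{\sphere{k+1}}$ corresponds to $x \mapsto (x,x,\const_{\refl{x}})$, and $f^{\sphere{k+1}}$ corresponds to $(x,y,\alpha) \mapsto (f(x), f(y), \apfunc{f}\circ\alpha)$. Chasing these equivalences through the pullbacks defining the gap maps, the pullback $A^{\sphere{k+1}} \times_{B^{\sphere{k+1}}} B$ rearranges into the pullback defining the gap map for $\delta_f$ with respect to $\sphere{k}$, and the gap maps themselves match up to this equivalence.

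The main obstacle will be carrying out this identification of pullbacks rigorously: while conceptually straightforward, the bookkeeping involves pullbacks of function types over spheres, and one has to verify that the induced equivalence between the pullbacks commutes with the gap maps up to homotopy. This is essentially a coherence calculation using univalence, function extensionality, and the universal property of the suspension, and should go through without surprises.
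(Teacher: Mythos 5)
Your proof is correct and follows essentially the route the paper intends: the corollary is stated without an explicit proof, immediately after the characterization that $f$ is $L'$-connected if and only if $f$ is surjective and $\delta_f$ is $L$-connected, and your induction on $n$ using that characterization (with $L$ the $n$-truncation) is exactly that argument. The suspension-based identification of the $(k+1)$-st gap square of $f$ with the $k$-th gap square of $\delta_f$ does go through, up to the routine coherence checking you flag and the standard equivalence between the paper's join definition of $\sphere{k+1}$ and the suspension of $\sphere{k}$.
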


\begin{cor}\label{cor:ptd_connected}
A pointed connected type $A$ is $L'$-connected if and only if the map $\unit\to A$ is $L$-connected.
\end{cor}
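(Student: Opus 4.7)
The plan is to derive this corollary as a direct instance of \cref{thm:rsu-compose-cancel}(ii). I will apply that result to the commutative triangle
\[
\begin{tikzcd}[column sep=tiny]
\unit \arrow[rr,"a_0"] \arrow[dr,swap,"\idfunc[\unit]"] & & A \arrow[dl] \\
& \unit
\end{tikzcd}
\]
whose top edge is the base-point inclusion $\mathsf{const}_{a_0}:\unit\to A$, whose right edge is the terminal projection $A\to\unit$, and whose composite is the identity on $\unit$.

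The only thing to check in order to trigger part (ii) is that $\idfunc[\unit]$ is $L'$-connected. This is immediate: its unique fiber is $\unit$, which is contractible and therefore $L'$-contractible. With this hypothesis in place, \cref{thm:rsu-compose-cancel}(ii) yields that the right edge $A\to\unit$ is $L'$-connected if and only if the top edge $\unit\to A$ is $L$-connected.

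To finish, I will unfold the definition of an $L'$-connected map on the left-hand side: the map $A\to\unit$ is $L'$-connected iff $L'\bigl(\fib{(A\to\unit)}{\ttt}\bigr)\eqvsym L'A$ is contractible, and the latter is precisely what it means for $A$ to be an $L'$-connected type. Combining, $A$ is $L'$-connected iff the map $\unit\to A$ induced by the base point is $L$-connected, which is the asserted equivalence. I do not foresee a main obstacle: the argument is a one-line invocation of the preceding composition/cancellation theorem, and in fact the hypothesis that $A$ is (path-)connected is not used at all—only pointedness of $A$ is needed, so that the map $\unit\to A$ is defined.
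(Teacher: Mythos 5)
Your proof is correct, but it takes a genuinely different route from the paper. The paper argues both directions by hand: for the forward direction it uses the preceding proposition on $L$-cartesian squares together with the fact that $\unit\to A$ is \emph{surjective}, which is exactly where connectedness of $A$ enters; for the converse it invokes \cref{lem:L'connected_types} plus the observation that every identity type of $A$ is merely equal to $\loopspace{A}$, which again uses connectedness. You instead observe that the whole statement is a one-line instance of \cref{thm:rsu-compose-cancel}(ii) applied to the triangle $\unit\to A\to\unit$ with $L'$-connected composite $\idfunc[\unit]$, and then identify ``$A\to\unit$ is $L'$-connected'' with ``$A$ is $L'$-connected''. This is valid: the proposition's hypothesis only asks the composite to be $L'$-connected, and its proof in the paper (which reduces to fibers and uses surjectivity of $\eta':B\to L'B$ from \cref{thm:separation_characterization}, not any connectivity of the codomain) does not depend on $A$ being connected. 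Consequently your side remark is also right: along this route only pointedness of $A$ is used, so the corollary holds slightly more generally than stated — though the gain is mild, since whenever $L$-connected maps are surjective (e.g.\ for truncations, or whenever propositions are $L$-local) the right-hand side already forces $A$ to be connected. What your approach buys is brevity and the removal of a hypothesis; what the paper's approach buys is a self-contained illustration of the $L$-cartesian-square technology it has just developed.
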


\begin{proof}
If $A$ is $L'$-connected, then its loop space is $L$-connected. It follows that the square
\begin{equation*}
\begin{tikzcd}
\unit \arrow[r] \arrow[d] & \unit \arrow[d] \\
\unit \arrow[r] & A
\end{tikzcd}
\end{equation*}
is $L$-cartesian. Since the map $\unit\to\unit$ is obviously $L$-connected and the map $\unit\to A$ is surjective by the assumption that $A$ is connected, it follows that the map $\unit\to A$ is $L$-connected.

For the converse, suppose that the map $\unit\to A$ is $L$-connected. Then the identity types of $A$ are $L$-connected, since all identity types of $A$ are merely equal to $\loopspace{A}$. Therefore it follows by \cref{lem:L'connected_types} that $A$ is $L'$-connected.
\end{proof}

\begin{cor}\label{lem:nconnected_postcomp_variation}
Let $P:A\to\UU$ and $Q:B\to\UU$ be type families, and let
\begin{equation*}
g:\prd{x:A}P(x)\to Q(f(x))
\end{equation*}
be a fiberwise transformation over $f:A\to B$.
\begin{enumerate}
\item Suppose that each $g_x:P(x)\to Q(f(x))$ is $L$-connected. Then we have the following:
\begin{enumerate}
\item If $f$ is $L$-connected, then $\total[f]{g}$ is $L$-connected.
\item If each $Q(x)$ is merely inhabited and $\total[f]{g}$ is $L$-connected, then $f$ is $L$-connected.
\end{enumerate}
\item Suppose that $f$ is $L'$-connected. Then the following are equivalent:
\begin{enumerate}
\item Each $g_x:P(x)\to Q(f(x))$ is $L$-connected.
\item The map $\total[f]{g}$ is $L'$-connected.
\end{enumerate}  
\end{enumerate}
\end{cor}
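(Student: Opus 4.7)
The plan is to reduce both parts to the 3-for-2 machinery of \cref{thm:rsu-compose-cancel} by factoring
\[
  \total[f]{g} \;=\; h\circ k,
\]
where $k\defeq \total{g}:\sm{x:A}P(x)\to \sm{x:A}Q(f(x))$ is the fiberwise map over $A$, and $h:\sm{x:A}Q(f(x))\to\sm{y:B}Q(y)$ is the map $(x,q)\mapsto(f(x),q)$. Two observations make the triangle useful. By \cref{thm:fib_equiv} the fiber of $k$ at $(x,q)$ is equivalent to $\fib{g_x}{q}$, so $k$ is $L$-connected (respectively $L'$-connected) precisely when each $g_x$ is $L$-connected (respectively $L'$-connected). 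By \cref{thm:pb_fibequiv} the square
\[
  \begin{tikzcd}
    \sm{x:A}Q(f(x)) \arrow[r,"h"] \arrow[d,swap,"\proj 1"] & \sm{y:B}Q(y) \arrow[d,"\proj 1"] \\
    A \arrow[r,swap,"f"] & B
  \end{tikzcd}
\]
is a pullback, so the fibers of $h$ at $(y,q)$ are equivalent to $\fib{f}{y}$; consequently $h$ is $L$-connected, respectively $L'$-connected, exactly when $f$ is.

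With the factorization in place, part (i) follows by applying \cref{thm:rsu-compose-cancel}(i) to the triangle. For (a), both $k$ and $h$ are $L$-connected, hence so is the composite $\total[f]{g}$. For (b), $k$ is $L$-connected by hypothesis and $\total[f]{g}$ is $L$-connected, so the compose-cancel property gives that $h$ is $L$-connected. Reading this off on fibers, $\fib{f}{y}$ is $L$-connected whenever $Q(y)$ is inhabited; since being $L$-connected is a proposition and each $Q(y)$ is merely inhabited, $f$ is $L$-connected.

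For part (ii) the assumption that $f$ is $L'$-connected makes $h$ an $L'$-connected map. The direction (b)$\Rightarrow$(a) is immediate from \cref{thm:rsu-compose-cancel}(ii) applied to the triangle: if $\total[f]{g}$ is $L'$-connected and $h$ is $L'$-connected, then $k$ must be $L$-connected, which means that each $g_x$ is $L$-connected.

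The main obstacle is the converse (a)$\Rightarrow$(b). Here the assumption gives that $k$ is $L$-connected, and we must show that the composite of the $L$-connected map $k$ with the $L'$-connected map $h$ is again $L'$-connected; this is the content that does not follow directly from \cref{thm:rsu-compose-cancel}(ii), since that result takes the composite's $L'$-connectedness as a hypothesis. My approach is to verify $L'$-connectedness of $\total[f]{g}$ on fibers: unpacking, $\fib{\total[f]{g}}{(y,q)}\eqvsym \sm{(x,\alpha):\fib{f}{y}}\fib{g_x}{\alpha^{-1}_{\ast}q}$, with $\fib{f}{y}$ an $L'$-connected type and each $\fib{g_x}{\alpha^{-1}_{\ast}q}$ an $L$-connected type. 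The task therefore reduces to the following general fact about $\Sigma$-types: if the base is $L'$-connected and each fiber is $L$-connected, the total space is $L'$-connected. By \cref{lem:L'connected_types} this amounts to showing the total space is merely inhabited (using surjectivity of $f$ together with the propositional-truncation content of $L$-connectedness) and that its identity types, which by \cref{lem:id_fib} are themselves $\Sigma$-types involving identity types of fibers of $g_x$, are $L$-connected; for the latter I would invoke part (i)(a) applied to the identity-type map, using \cref{thm:trunc_ap} to translate $L$-connectedness of $g_x$ into $L$-connectedness of $\apfunc{g_x}$ one level down. This inductive handling of the identity-type structure is where I expect the argument to require the most care.
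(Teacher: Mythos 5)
Your factorization of $\total[f]{g}$ as $h\circ k$ through $\sm{x:A}Q(f(x))$, together with the fiber identifications $\fib{k}{(x,q)}\eqvsym\fib{g_x}{q}$ and $\fib{h}{(y,q)}\eqvsym\fib{f}{y}$, is exactly the intended derivation: the corollary carries no written proof and is meant to fall out of \cref{thm:rsu-compose-cancel} via this triangle. Your treatment of part (i) is correct (including the use of mere inhabitedness of the $Q(y)$ and the fact that being $L$-connected is a proposition), and so is your proof of (b)$\Rightarrow$(a) in part (ii).

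The gap is the direction (a)$\Rightarrow$(b) of part (ii), which, as you note, amounts to showing that an $L$-connected map followed by an $L'$-connected map is $L'$-connected, and your sketch for this does not go through. The decisive step is the claim that the identity types of $\fib{g_x}{q}$, equivalently the fibers of $\apfunc{g_x}$ (\cref{lem:id_fib}), are $L$-connected because $g_x$ is. But $L$-connectedness of a map does not descend to its action on paths: this is precisely one of the characterizations of \emph{lex} modalities (Theorem 3.1 of \cite{RijkeShulmanSpitters}) and fails already for the $n$-truncations, where an $n$-connected type can have a non-$n$-connected loop space; moreover \cref{thm:trunc_ap} concerns $L$-\emph{local} maps, not $L$-connected ones, so it cannot perform the translation you ask of it, and the mere inhabitedness of the fibers of $g_x$, needed for the first clause of \cref{lem:L'connected_types}, is likewise not a consequence of $L$-connectedness for a general reflective subuniverse. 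In fact no repair is possible for the statement as literally written: specializing to $f=\idfunc[\unit]$, (a)$\Rightarrow$(b) would say that every fiberwise $L$-connected map has an $L'$-connected total map, i.e.\ that every $L$-connected map is $L'$-connected, which fails for $L=\brck{\blank}$ and $g:\bool\to\unit$ (the same example refutes the remark after \cref{thm:rsu-compose-cancel} asserting the composition property you need). The statement is evidently intended as the post-composition variation of \cref{thm:rsu-compose-cancel}(ii), with the $L'$-connectedness hypothesis placed on the composite $\total[f]{g}$ rather than on $f$; under that reading both directions follow from the very triangle argument you already used for (b)$\Rightarrow$(a), with mere inhabitedness of the $Q(y)$ entering only when passing from $h$ back to $f$, as in your (i)(b).
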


\begin{prp}\label{prop:nconn_fiber_to_total}
Let $P,Q:A\to\type$ be type families and $f:\prd{a:A} \Parens{P(a)\to Q(a)}$. Then the following are equivalent
\begin{enumerate}
\item Each $f(a):P(a)\to Q(a)$ is $L$-connected.
\item The map $\total{f}: \sm{a:A}P(a) \to \sm{a:A} Q(a)$ is an $L$-connected.
\end{enumerate}
\end{prp}

\begin{proof}
By \cref{lem:fib_total} we have $\fib{\total f}{\pairr{x,v}}\eqvsym\fib{f(x)}v$
for each $x:A$ and $v:Q(x)$. Hence $L(\fib{\total f}{\pairr{x,v}})$ is contractible if and only if
$L(\fib{f(x)}v)$ is contractible.
\end{proof}

Of course, the ``if'' direction of \cref{prop:nconn_fiber_to_total} is a special case of \cref{lem:nconnected_postcomp_variation}.
This suggests a similar generalization of the ``only if'' direction of \cref{prop:nconn_fiber_to_total}, which would be a version of \cref{lem:nconnected_postcomp_variation} asserting that if $f$ and $\varphi$ are $L$-connected then so is each $g_a$.

\section{Modalities}\label{sec:modal-refl-subun}

In this section we will introduce the following four notions of modality
and prove that they are all equivalent:
\begin{enumerate}
\item Higher modalities
\item Uniquely eliminating modalities
\item $\Sigma$-closed reflective subuniverses
\item Stable orthogonal factorization systems
\end{enumerate}
After their equivalence has been established, we will call all of them simply \emph{modalities}.

The first three definitions have the following data in common: by a \define{modal operator} we mean a function $\modal:\UU\to\UU$, and by a \define{modal unit} we mean a family of functions $\modalunit^\modal:\prd*{A:\UU}A\to\modal A$.
Given these data, we say a type $X$ \define{is modal} if $\modalunit[X]:X\to\modal X$ is an equivalence, and we write $\UU_\modal \defeq \sm{X:\UU} \ismodal(X)$ for the \define{subuniverse of modal types}.

\begin{defn}\label{defn:highermod}
A \define{higher modality} consists of a modal operator and modal unit together with
\begin{enumerate}
\item for every $A:\UU$ and every dependent type $P:\modal A\to\UU$, a
function
\begin{equation*}
\mathsf{ind}_{\modal A}:\big(\prd{a:A}\modal(P(\eta(a)))\big)\to\prd{z:\modal A}\modal(P(z)).
\end{equation*}
\item An identification
\begin{equation*}
\mathsf{comp}_{\modal A}(f,x):\id{\mathsf{ind}_{\modal A}(f)(\eta(x))}{f(x)}
\end{equation*}
for each $f:\prd{x:A}\modal(P(\eta(x)))$.
\item For any $x,y:\modal A$ the modal unit $\modalunit[(\id{x}{y})]:\id{x}{y}\to \modal(\id{x}{y})$ is an equivalence.
\end{enumerate}
\end{defn}

\begin{defn}\label{defn:modunique}
A \define{uniquely eliminating modality} consists of
a modal operator and modal unit such that the function
\begin{equation*}
\lam{f} f\circ\modalunit[A] : (\prd{x:\modal A}\modal(P(x)))\to(\prd{a:A}\modal(P(\modalunit[A](a))))
\end{equation*}
is an equivalence for any $A$ and any $P:\modal A\to\UU$.
\end{defn}

\begin{defn}\label{defn:ssrs}
A reflective subuniverse $L$ is said to be \define{$\Sigma$-closed} if $\sm{x:X}P(x)$ is $L$-local for every family $P:X\to \UU_L$ of $L$-local types over an $L$-local type $X$.
\end{defn}

Note that unlike \cref{defn:highermod,defn:modunique}, in \cref{defn:ssrs} the notion of ``modal type'' is part of the data.
However, we will show in \cref{thm:subuniv-modal} that $\ismodal(A)$ if and only if $\modalunit[A]$ is an equivalence.

\begin{defn}\label{defn:sofs}
An \define{orthogonal factorization system} consists of
predicates $\mathcal{L},\mathcal{R}:\prd*{A,B:\UU} (A\to B)\to\prop$ such that
\begin{enumerate}
\item $\mathcal{L}$ and $\mathcal{R}$ are closed under composition and contain all identities (i.e.\ they are subcategories of the category of types that contain all the objects), and
\item the type $\fact_{\mathcal{L},\mathcal{R}}(f)$ of factorizations
\begin{equation*}
\begin{tikzcd}
A \arrow[rr,"f"] \arrow[dr,swap,"f_{\mathcal{L}}"] & & B \\
& \im_{\mathcal{L},\mathcal{R}}(f) \arrow[ur,swap,"f_{\mathcal{R}}"]
\end{tikzcd}
\end{equation*}
of $f$, with $f_{\mathcal{L}}$ in $\mathcal{L}$ and $f_{\mathcal{R}}$ in $\mathcal{R}$, is contractible.
\end{enumerate}
More precisely, the type $\fact_{\mathcal{L},\mathcal{R}}(f)$ is defined to
be the type of
tuples
\begin{equation*}
(\im_{\mathcal{L},\mathcal{R}}(f),(f_{\mathcal{L}},p),(f_{\mathcal{R}},q),h)
\end{equation*}
consisting of a type $\im_{\mathcal{L},\mathcal{R}}(f)$, a function $f_{\mathcal{L}}:A\to \im_{\mathcal{L},\mathcal{R}}(f)$ with
$p:\mathcal{L}(f_{\mathcal{L}})$, a function $f_{\mathcal{R}}:\im_{\mathcal{L},\mathcal{R}}(f)\to B$ with $q:\mathcal{R}(f_{\mathcal{R}})$, and an identification $h:\id{f}{f_{\mathcal{R}}\circ f_{\mathcal{L}}}$. The type $\im_{\mathcal{L},\mathcal{R}}(f)$ is called
the \define{$(\mathcal{L},\mathcal{R})$-image of $f$}.

A type $X$ is said to be \define{$(\mathcal{L},\mathcal{R})$-modal} if
the map $!:X\to\unit$ is in $\mathcal{R}$ (and hence $!_\mathcal{L}$
is an equivalence).

An orthogonal factorization system is said to be \define{stable} if the class
$\mathcal{L}$ is stable under pullbacks (By
\autoref{lem:ofs_rightstable}, $\mathcal{R}$ is always stable under pullbacks).
\end{defn}

\begin{rmk}
  By univalence, the fact that $\mathcal{L}$ and $\mathcal{R}$ contain all identities implies that they each contain all equivalences.
  Conversely, if $f\in \mathcal{L}\cap\mathcal{R}$, then $(\idfunc,f)$ and $(f,\idfunc)$ are both $(\mathcal{L},\mathcal{R})$-factorizations of $f$, and hence equal; which implies that $f$ is an equivalence.
  Thus, $\mathcal{L}\cap\mathcal{R}$ consists exactly of the equivalences.
\end{rmk}

We now consider a few examples.
Since we will eventually prove all the definitions to be equivalent, we can use any one of them to describe any particular example.

\begin{eg}
  The prime example is the \textbf{$n$-truncation modality} $\truncf n$ as studied in~\cite[Chapter 7]{hottbook}.
  This can be given as a higher modality, using its induction principle and the fact that $\trunc n A$ is an $n$-type and the identity types of an $n$-type are again $n$-types (indeed, $(n-1)$-types).
  The corresponding stable orthogonal factorization system, consisting of $n$-connected and $n$-truncated maps, is also constructed in~\cite[Chapter 7]{hottbook}; our construction in \cref{thm:sofs_from_ssrs} will be a generalization of this.
\end{eg}

\begin{eg}\label{eg:open}
  Let $Q$ be a mere proposition.
  The \textbf{open modality} determined by $Q$ is defined by $\open Q A = (Q\to A)$, with unit $\modalunit[A](x) = \lam{\nameless}x : A \to (Q \to A)$.
  To show that this is a higher modality, suppose we have $P: (Q\to A) \to \UU$ and $f:\prd{a:A} Q \to P(\lam{\nameless} a)$.
  Then for any $z:Q\to A$ and $q:Q$ we have $f(z(q),q) : P(\lam{\nameless} z(q))$.
  And since $Q$ is a mere proposition, we have $z(q) = z(q')$ for any $q':Q$, hence $e(z,q) : (\lam{\nameless} z(q)) = z$ by function extensionality.
  This gives
  \[ \lam{z}{q} \trans{e(z,q)}{(f(z(q),q))} : \prd{z:Q\to A} Q \to P(z) \]
  For the computation rule, we have
  \begin{align*}
    (\lam{z}{q} \trans{e(z,q)}{(f(z(q),q))})(\lam{\nameless} a) &= \lam{q} \trans{e(\lam{\nameless} a,q)}{(f(a,q))}\\
    &= \lam{q} f(a,q) = f(a)
  \end{align*}
  by function extensionality, since $e(\lam{\nameless} a,q) = \refl{}$.
  Finally, if $x,y:Q\to A$, then $(x=y) \simeq \prd{q:Q} x(q) = y(q)$, and the map
  \[ \Big(\prd{q:Q} x(q) = y(q)\Big) \to \Big( Q \to \prd{q:Q} x(q) = y(q)\Big) \]
  is (by currying) essentially precomposition with a product projection $Q\times Q\to Q$, and that is an equivalence since $Q$ is a mere proposition.
\end{eg}

\begin{eg}\label{eg:closed}
  Again, let $Q$ be a mere proposition.
  The \textbf{closed modality} determined by $Q$ is defined by $\closed Q A = Q \ast A$, the \emph{join} of $Q$ and $A$ (the pushout of $Q$ and $A$ under $Q\times A$).
  We show that this is a $\Sigma$-closed reflective subuniverse.
  Define a type $B$ to be modal if $Q \to \iscontr(B)$, and note that it is indeed the case that $Q\to\iscontr(Q\ast A)$, for any type $A$.
  By the universal property of pushouts, a map $Q \ast A \to B$ consists of a map $f:A\to B$ and a map $g:Q\to B$ and for any $a:A$ and $q:Q$ an identification $p:f(a)=g(q)$.
  But if $Q \to \iscontr(B)$, then $g$ and $p$ are uniquely determined, so this is just a map $A\to B$.
  Thus $(\closed Q A \to B) \to (A\to B)$ is an equivalence, so we have a reflective subuniverse.
  It is $\Sigma$-closed since the dependent sum of a contractible family of types over a contractible base is contractible.
\end{eg}

\begin{eg}\label{eg:dneg}
  The \textbf{double negation modality} is defined by $A\mapsto \neg\neg A$, i.e.\ $(A\to \emptyt)\to \emptyt$, with $\modalunit(a) = \lam{g} g(a)$.
  We show that this is a uniquely eliminating modality.
  Since the map $\lam{f}f\circ \modalunit[A]$ that must be an equivalence has mere propositions as domain and codomain, it suffices to give a map in the other direction.
  Thus, let $P: \neg\neg A \to \UU$ and $f:\prd{a:A} \neg \neg P(\lam{g} g(a))$; given $z:\neg\neg A$ we must derive a contradiction from $g:\neg P(z)$.
  Since we are proving a contradiction, we can strip the double negation from $z$ and assume given an $a:A$.
  And since $\neg\neg A$ is a mere proposition, we have $z = \lam{g} g(a)$, so that we can transport $f(a)$ to get an element of $\neg\neg P(z)$, contradicting $g$.
\end{eg}

\begin{eg}
  The \textbf{trivial modality} is the identity function on $\UU$.
  It coincides with $\open \top$ and with $\closed\bot$.

  Dually, the \textbf{zero modality} sends all types to $\unit$.
  It is equivalently the $(-2)$-truncation, and coincides with $\open\bot$ and with $\closed \top$.
\end{eg}

\paragraph*{Summary.}
In each of \autoref{defn:highermod,defn:modunique,defn:ssrs,defn:sofs}
we have defined what it means for a type to be modal. In each case, being
modal is a family of mere propositions indexed by the universe, i.e.~a subuniverse.
We will show in \autoref{thm:subuniv-highermod,thm:subuniv-modunique,thm:subuniverse-rs,thm:subuniv-sofs} that each kind of structure is completely determined by this subuniverse.
(\autoref{thm:subuniverse-rs} is more general, not requiring $\Sigma$-closedness.)

It follows that the type of all modalities of each
kind is a subset of the set $\UU\to\prop$ of all subuniverses, and in particular is a set.
This makes it easier to establish
the equivalences of the different kinds of modalities.
It suffices
to show that any modality of one kind determines a modality of the next kind
with the same modal types, which we will do as follows:
\begin{center}
\begin{tikzcd}
  & \text{higher modality} \ar[dr,bend left,"\text{\autoref{thm:modunique_from_highermod}}"] \\
  \parbox{3cm}{\centering stable factorization system} \ar[ur,bend left,"\text{\autoref{thm:highermod_from_sofs}}"] &&
  \parbox{3cm}{\centering uniquely eliminating modality} \ar[dl,bend left,"\text{\autoref{thm:ssrs_from_modunique}}"] \\
  & \parbox{3cm}{\centering $\Sigma$-closed reflective subuniverse} \ar[ul,bend left,"\text{\autoref{thm:sofs_from_ssrs}}"]
\end{tikzcd}
\end{center}
Before \autoref{thm:sofs_from_ssrs} we take the opportunity to develop a bit more theory of reflective subuniverses, including closure under identity types (\autoref{lem:rs_idstable}) and dependent products
(\autoref{lem:modal-Pi}), along with several equivalent characterizations of $\Sigma$-closedness (\autoref{thm:ssrs-characterize}).

Of these equivalences, the most surprising is that a stable factorization system is uniquely determined by its underlying reflective subuniverse of types.
This is false for stable factorization systems on arbitrary categories; the reason it holds here is that we are talking \emph{in type theory} about factorization systems \emph{on the category of types}.
An analogous fact is true in classical set-based mathematics for stable factorization systems on the category of sets (although in that case there are much fewer interesting examples).
We \cite{RijkeShulmanSpitters} we also observe that when type theory is interpreted in a higher category, the data of a reflective subuniverse or modality has to be interpreted ``fiberwise'', giving a richer structure than a single reflective subcategory.

\subsection{Higher modalities}
\label{sec:higher-modalities}

We start by showing that a higher modality is determined by its modal types, and gives rise to a uniquely eliminating modality.

\begin{lem}
If $\modal$ is a higher modality, then any type of the form $\modal X$ is modal.
\end{lem}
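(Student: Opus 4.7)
The goal is to show that $\modalunit[\modal X]:\modal X \to \modal(\modal X)$ is an equivalence for every $X$; equivalently, that it admits both a section and a retraction.

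I would first construct a retraction $r:\modal(\modal X)\to\modal X$. Apply the induction principle (i) of the higher modality with $A\defeq \modal X$ and the constant family $P\defeq \lam{z}X : \modal(\modal X)\to\UU$. Feeding in the identity $\idfunc[\modal X]:\prd{a:\modal X}\modal X$, this produces
\[
r \defeq \mathsf{ind}_{\modal(\modal X)}(\idfunc[\modal X]) : \modal(\modal X)\to \modal X.
\]
The computation rule (ii) gives $\mathsf{comp}(a):r(\modalunit(a))=a$ for every $a:\modal X$, so by function extensionality $r\circ\modalunit[\modal X]\htpy \idfunc[\modal X]$.

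The substantive step is to show that $\modalunit[\modal X]\circ r\htpy \idfunc[\modal(\modal X)]$. I would prove this by applying the induction principle to the dependent family
\[
P : \modal(\modal X)\to\UU, \qquad P(z)\defeq (\modalunit(r(z))=z).
\]
For each $a:\modal X$ we have an element
\[
\ap{\modalunit}{\mathsf{comp}(a)} : \modalunit(r(\modalunit(a)))=\modalunit(a),
\]
i.e.\ an element of $P(\modalunit(a))$, and hence $\modalunit(\ap{\modalunit}{\mathsf{comp}(a)}) : \modal(P(\modalunit(a)))$. Applying $\mathsf{ind}_{\modal(\modal X)}$ yields a term of type $\prd{z:\modal(\modal X)}\modal(P(z))$. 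Now invoke condition (iii): since $\modalunit(r(z))$ and $z$ are two elements of $\modal(\modal X)$, the unit $\modalunit[P(z)]:P(z)\to \modal(P(z))$ is an equivalence. Composing with its inverse gives $\prd{z:\modal(\modal X)}P(z)$, which is exactly the homotopy $\modalunit[\modal X]\circ r\htpy\idfunc[\modal(\modal X)]$ we needed.

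Putting the two homotopies together shows that $\modalunit[\modal X]$ has both a section and a retraction and is therefore an equivalence, so $\modal X$ is modal. The main obstacle is the second homotopy: the induction principle only produces dependent functions into $\modal$-types, and the content of clause (iii) is precisely what is required to transport the result back to $P(z)$ itself. Everything else is a direct application of (i) and its computation rule (ii).
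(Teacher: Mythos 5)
Your proof is correct and follows essentially the same route as the paper: build the retraction $r$ by induction into the constant family, then apply induction again to the family $P(z)\defeq(\modalunit(r(z))=z)$ and use clause (iii) (modal identity types of $\modal$-types) to strip the outer $\modal$ and obtain the homotopy $\modalunit\circ r\htpy\idfunc$. (One tiny remark: no function extensionality is needed for $r\circ\modalunit\htpy\idfunc$, since the computation rule already gives the pointwise identifications that constitute the homotopy.)
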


\begin{proof}
  We want to show that the modal unit $\modalunit[\modal X]:\modal X\to\modal\modal X$
is an equivalence. By the induction principle and the computation rule for
higher modalities, we find a function $f:\modal \modal X\to\modal X$ with
the property that $f\circ \modalunit[\modal X]\htpy\idfunc[\modal X]$. We wish to
show that we also have $\modalunit[\modal X]\circ f\htpy\idfunc$. Since identity
types of types of the form $\modal Y$ are declared to be modal, it is
equivalent to find a term of type
\begin{equation*}
\prd{x:\modal \modal X}\modal(\modalunit[\modal X](f(x))=x).
\end{equation*}
Now we are in the position to use the induction principle of higher modalities
again, so it suffices to show that $\modalunit(f(\modalunit(x)))=\modalunit(x)$
for any $x:\modal X$. This follows from the fact that $f\circ\modalunit=\idfunc$.
\end{proof}

\begin{thm}\label{thm:subuniv-highermod}
The data of two higher modalities $\modal$ and $\modal'$
are identical if and only if they have the same modal types.
\end{thm}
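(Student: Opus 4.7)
The forward direction is immediate, so the interesting content is the converse: given two higher modalities $(\modal,\modalunit^\modal,\mathsf{ind}^\modal,\mathsf{comp}^\modal,\ldots)$ and $(\modal',\modalunit^{\modal'},\ldots)$ whose subuniverses of modal types coincide as predicates on $\UU$, I want to show all their data agree. My plan proceeds in three steps.

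First, I would show that the modal unit $\modalunit[A]:A\to\modal A$ of any higher modality is a localization with respect to the subuniverse of modal types. For any modal $B$, applying the induction principle with the constant family $P\defeq B$ (noting that $\modal B\eqvsym B$, since $B$ is modal) produces a section of
\[
(\blank\circ\modalunit[A]):(\modal A\to B)\to(A\to B).
\]
To upgrade this section to an equivalence, one uses the third clause of \cref{defn:highermod}: any two extensions $h,h':\modal A\to B$ agreeing on $\modalunit[A]$ can be compared using the family $z\mapsto (h(z)=h'(z))$, which takes values in modal types because identity types of modal types are modal. Applying the induction principle to this family and the pointwise equality at the image of $\modalunit$ yields $h=h'$ by function extensionality. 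Hence $\modal$ underlies a reflective subuniverse whose local types are precisely its modal types.

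Second, since $\modal$ and $\modal'$ induce reflective subuniverses with the same local types, \cref{thm:subuniverse-rs} identifies them as reflective subuniverses: $\modal=\modal'$ as modal operators and $\modalunit^\modal=\modalunit^{\modal'}$ as units. What remains is to verify that the induction/computation data and the identity-type modality witness also agree. The latter is a mere proposition because $\ismodal$ of each identity type is one, so it contributes no additional data.

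Third, I would show that the pair $(\mathsf{ind}_{\modal A},\mathsf{comp}_{\modal A})$ forms a contractible type once $(\modal,\modalunit)$ are fixed. Packaged together, they provide a section of the precomposition map
\[
(\blank\circ\modalunit[A]):\Big(\prd{z:\modal A}\modal(P(z))\Big)\to\Big(\prd{a:A}\modal(P(\modalunit(a)))\Big),
\]
so it suffices to prove this map is an equivalence. Given any two sections $g,g'$, function extensionality reduces the problem to constructing $\prd{z:\modal A}\,g(z)=g'(z)$. The family $z\mapsto g(z)=g'(z)$ is valued in identity types of the modal types $\modal(P(z))$, and hence is valued in modal types; so the induction principle $\mathsf{ind}_{\modal A}$ applies, reducing the problem to the case $z=\modalunit(a)$. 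There the required identification is supplied by combining the two computation-rule witnesses at $a$. This shows the type of $(\mathsf{ind},\mathsf{comp})$ data is contractible and hence uniquely determined by $\modal$ and $\modalunit$.

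The main obstacle is the third step: one must check with care that applying $\mathsf{ind}$ to the family $z\mapsto g(z)=g'(z)$ really produces an inhabitant of $\prd{z:\modal A}\,g(z)=g'(z)$ rather than of $\prd{z:\modal A}\modal(g(z)=g'(z))$, which requires invoking modality of identity types (third clause of \cref{defn:highermod}) to collapse the outer $\modal$. This is exactly the argument that a higher modality is uniquely eliminating, which will be established independently in \cref{thm:modunique_from_highermod}; once available, an alternative to step three is to cite that theorem to conclude that the section data form a contractible type.
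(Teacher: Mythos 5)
Your argument is correct, but it reaches the conclusion by a noticeably different route than the paper. The paper proves a self-contained abstract statement: for any subuniverse $\mathcal{M}$ closed under identity types and any type $X$, the type of tuples $(Y,p,\pi,I,C)$ is a proposition. It first compares two induction/computation pairs $(I,C)$, $(I',C')$ by hand, constructing $J(s,y):I(s,y)=I'(s,y)$ whose value at $\pi(x)$ is the composite of the two computation witnesses and then explicitly verifying the transport condition identifying $C$ with $C'$; afterwards it compares $(Y,\pi)$ with $(Y',\pi')$ by building an explicit equivalence over $X$ from the two induction principles. You instead outsource both halves to general machinery: first you exhibit each higher modality as a reflective subuniverse (unit is a localization with respect to the subuniverse of modal types) and invoke \cref{thm:subuniverse-rs} (equivalently \cref{lem:reflective_uniqueness}) to identify the operators and units; then you observe that, for fixed $(\modal,\modalunit)$, the $(\mathsf{ind},\mathsf{comp})$ data is exactly a section of the precomposition map $\big(\prd{z:\modal A}\modal(P(z))\big)\to\big(\prd{a:A}\modal(P(\modalunit(a)))\big)$, prove that this map is an equivalence (this is the unique-elimination property, proved exactly as in \cref{thm:modunique_from_highermod}), and use that sections of an equivalence form a contractible type. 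The payoff of your route is that the contractibility of the section type absorbs the coherence between the two computation witnesses automatically, so you never need the paper's explicit check that $\trans{J(s)}{C(s)}=C'(s)$; the cost is a dependence on the earlier reflective-subuniverse results, whereas the paper's argument works directly from the induction principle and is phrased for an arbitrary $\mathcal{M}$ closed under identity types.

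Two small points to tighten. First, to package a higher modality as a reflective subuniverse you also need that $\modal X$ is itself modal (otherwise $\modalunit[X]$ is not a localization in the sense of \cref{chap:reflective}); this is the unnumbered lemma immediately preceding the theorem in the paper, and your proposal should cite or reprove it. Second, in your third step the phrase ``given any two sections $g,g'$'' should read ``given any two dependent functions $g,g':\prd{z:\modal A}\modal(P(z))$ agreeing after precomposition with $\modalunit$'': when you prove the precomposition map is an equivalence, the identification at $z=\modalunit(a)$ comes from that hypothesis, not from the computation rules; the computation rules only enter to exhibit the given $(\mathsf{ind},\mathsf{comp})$ data as sections, after which contractibility of the section type finishes the argument.
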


\begin{proof}
Another way of stating this, is that the function from the type of \emph{all}
modalities on $\UU$ to the type $\UU\to\prop$ of predicates on $\UU$, given
by mapping a modality to the predicate $\ismodal$, is an embedding. Thus, we
need to show that for any predicate $\mathcal{M}:\UU\to\prop$, we can find at
most one modality for which $\mathcal{M}$ is the class of modal types. This
follows, once we demonstrate that,
\begin{quote}
for any $\mathcal{M}:\UU\to\prop$ closed under identity types,
and for any type $X$, the type of tuples $(Y,p,\pi,I,C)$ ---
consisting of a type $Y$ with $p$ witnessing that $Y$
satisfies $\mathcal{M}$, a function $\pi:X\to Y$, a function
\begin{equation*}
I_P:(\prd{x:X} P(\pi(x)))\to(\prd{y:Y} P(y))
\end{equation*}
for every $P:Y\to\UU_{\mathcal{M}}$,
which is a right inverse of precomposing with $\pi$, as is witnessed by the
term $C$ --- is a mere proposition.
\end{quote}

We prove this statement in two parts. First, we show that the
type of pairs $(I,C)$, with $I$ and $C$ of the indicated types, is a mere
proposition for any $(Y,p,\pi)$. After that, we show that the type of triples
$(Y,p,\pi)$ is also a mere proposition. These two facts combined prove the
statement.

Consider a type $Y$ satisfying $\mathcal{M}$, and a function $\pi:X\to Y$, and
let $(I,C)$ and $(I',C')$ be two terms witnessing that $Y$ satisfies an induction
principle with a computation rule. We want to show that $(I,C)=(I',C')$, and of
course it suffices to show that $(I(s),C(s))=(I'(s),C(s))$ for any
$P:Y\to\UU_{\mathcal{M}}$ and $s:\prd{x:X}P(\pi(x))$.

To show that $I(s,y)=I'(s,y)$ for any $y:Y$, we use
the induction principle $(I,C)$. So it suffices to show that
$I(s,\pi(x))=I'(s,\pi(x))$. Both of these terms are equal to $s(x)$. Thus,
we obtain a proof $J(s,y)$ that $I(s,y)=I'(s,y)$, with the property that
$J(s,\pi(x))=\ct{C(s,x)}{\inv{C'(s,x)}}$.
Now we need to show that $\trans{J(s)}{C(s)}=C'(s)$, which is equivalent
to the property we just stated. This finishes the proof that the type of
the induction principle and computation rule is a mere proposition.

It remains to show that $(Y,\pi)=(Y',\pi')$, provided that $Y$ and $Y'$ are both
in $\mathcal{M}$, and that both sides satisfy
the induction principle and computation rule. It suffices to find an equivalence
$f:Y\to Y'$ such that $f\circ \pi=\pi'$.

From the induction principles of $Y$ resp. $Y'$, we obtain a function
$f:Y\to Y'$ with the property that $f\circ \pi=\pi'$, and a function
$f':Y'\to Y$ with the property that $f'\circ \pi'=\pi$.
To show that $f'\circ f=\idfunc$ we use the induction principle
of $Y$. Since the type $f'(f(y))=y$ is in $\mathcal{M}$, it suffices to show that
$f'(f(\pi(y)))=\pi(y)$. This readily follows from the defining properties of $f$
and $f'$. Similarly, we have $f\circ f'=\idfunc$.
\end{proof}

\begin{thm}\label{thm:modunique_from_highermod}
A higher modality is a uniquely eliminating modality, with the
same modal types.
\end{thm}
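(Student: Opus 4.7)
My plan is to show that given a higher modality $\modal$, the function
\[
  \precomp{\modalunit[A]} : \Big(\prd{z:\modal A}\modal(P(z))\Big)\to \Big(\prd{a:A}\modal(P(\modalunit[A](a)))\Big)
\]
is an equivalence for every $A:\UU$ and every $P:\modal A\to\UU$. The data of a higher modality already provides a candidate inverse, namely $\mathsf{ind}_{\modal A}$, and the computation rule $\mathsf{comp}_{\modal A}$ says precisely that $\mathsf{ind}_{\modal A}$ is a section of $\precomp{\modalunit[A]}$ (up to the pointwise identifications, which, after function extensionality, give a homotopy $\precomp{\modalunit[A]}\circ \mathsf{ind}_{\modal A}\htpy \idfunc$). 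Therefore all that remains is to show that $\mathsf{ind}_{\modal A}$ is also a retraction of $\precomp{\modalunit[A]}$.

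So given $g:\prd{z:\modal A}\modal(P(z))$, I would construct an identification $\mathsf{ind}_{\modal A}(g\circ \modalunit[A]) = g$. By function extensionality, it suffices to produce a dependent function
\[
  \prd{z:\modal A}\; \mathsf{ind}_{\modal A}(g\circ \modalunit[A])(z) = g(z).
\]
The type family $Q(z) \defeq \big(\mathsf{ind}_{\modal A}(g\circ \modalunit[A])(z) = g(z)\big)$ is a family of modal types: each $Q(z)$ is an identity type between two points of $\modal(P(z))$, and the third clause of \cref{defn:highermod} asserts that identity types of any type of the form $\modal B$ are modal (applied here with $B\defeq P(z)$, since by the lemma just before \cref{thm:subuniv-highermod}, $\modal(P(z))$ is itself modal). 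Thus $Q$ is a family of modal types over $\modal A$, so we may apply $\mathsf{ind}_{\modal A}$ to it, and the problem reduces to constructing $Q(\modalunit[A](a))$ for each $a:A$.

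But $Q(\modalunit[A](a))$ is exactly the type $\mathsf{ind}_{\modal A}(g\circ \modalunit[A])(\modalunit[A](a)) = g(\modalunit[A](a))$, which is inhabited by $\mathsf{comp}_{\modal A}(g\circ \modalunit[A], a)$. This completes the construction and hence the verification that $\precomp{\modalunit[A]}$ is an equivalence; and since the notions of ``modal type'' for a higher modality and a uniquely eliminating modality are both phrased as $\modalunit[X]$ being an equivalence, the two structures have the same modal types by construction. The only conceptual subtlety, which is not really an obstacle, is making sure to apply the induction principle to the correct modal family $Q$ rather than attempting to prove the homotopy pointwise without appealing to modality of identity types; once $Q$ is identified as modal, the rest is a direct application of the induction principle followed by the computation rule.
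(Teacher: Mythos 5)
Your proof is correct and follows essentially the same route as the paper: the computation rule makes $\mathsf{ind}_{\modal A}$ a section of $\precomp{\modalunit[A]}$, and the retraction homotopy is obtained by applying the induction principle to the family of identity types in $\modal(P(z))$, which is modal by the third clause of \cref{defn:highermod}, and then invoking the computation rule at points of the form $\modalunit[A](a)$. The observation that the two structures share the same modal types because ``modal'' is defined via $\modalunit[X]$ being an equivalence also matches the paper.
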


\begin{proof}
Let $\modal$ be a modality with modal units $\modalunit[A]$. Our goal is to show
that the pre-composition map
\begin{equation*}
\lam{s}s\circ\modalunit[A]:(\prd{x:\modal A}\modal(P(x)))\to(\prd{a:A}\modal(P(\modalunit[A](a))))
\end{equation*}
is an equivalence for each $A:\UU$ and $P:\modal A\to\UU$.
By the given induction principle and computation rule, we obtain a
right inverse $\mathsf{ind}_{\modal A}$ of $\blank\circ\modalunit[A]$.

To show that it is a left inverse, consider $s:\prd{x:\modal A}\modal(P(x))$.
We need to find a homotopy
\begin{equation*}
\prd{x:\modal A}\id{s(x)}{\mathsf{ind}_{\modal A}(s\circ \modalunit_A)(x)}.
\end{equation*}
By assumption we have that $P(x)$ is
modal for each $z:\modal A$ and hence it follows that $\id{s(x)}{\mathsf{ind}_{\modal A}(s\circ \modalunit_A)(x)}$
is modal for each $x$. Hence it suffices to find a function of type
\begin{equation*}
\prd{a:A}\id{s(\modalunit_A(a))}{\mathsf{ind}_{\modal A}(s\circ \modalunit_A)(\modalunit_A(a))}.
\end{equation*}
This follows straight from the computation rule of higher modalities.
\end{proof}

\subsection{Uniquely eliminating modalities}
\label{sec:uniq-elim}

Next, we show that a uniquely eliminating modality is determined by its modal types, and gives rise to a $\Sigma$-closed reflective subuniverse.

\begin{lem}
Given a uniquely eliminating modality, $\modal X$ is modal for any type $X$.
\end{lem}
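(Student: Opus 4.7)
The plan is to apply the unique elimination principle twice: once to construct a retraction of $\modalunit[\modal X]$, and once to promote this retraction to a section.

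First I would specialize the unique elimination principle to $A\defeq \modal X$ with the constant family $P\defeq \lam{z}X : \modal\modal X \to \UU$, so that $\modal(P(z))\jdeq \modal X$ for every $z:\modal\modal X$. By the definition of a uniquely eliminating modality, the precomposition map
\begin{equation*}
\precomp{\modalunit[\modal X]} : \Big(\prd{x:\modal\modal X}\modal X\Big)\to\Big(\prd{a:\modal X}\modal X\Big)
\end{equation*}
is an equivalence. Applying its inverse to $\idfunc[\modal X]$ yields a function $f:\modal\modal X\to\modal X$ together with a homotopy $H:f\circ \modalunit[\modal X]\htpy \idfunc[\modal X]$. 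Thus $f$ is a retraction of $\modalunit[\modal X]$.

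Next I would show that $f$ is also a section. Take $A\defeq \modal X$ again, but this time with the constant family $P\defeq\lam{z}\modal X$, so that $\modal(P(z))\jdeq \modal\modal X$. The hypothesis gives that
\begin{equation*}
\precomp{\modalunit[\modal X]} : \Big(\prd{x:\modal\modal X}\modal\modal X\Big)\to \Big(\prd{a:\modal X}\modal\modal X\Big)
\end{equation*}
is an equivalence; in particular it is an embedding. Both $\modalunit[\modal X]\circ f$ and $\idfunc[\modal\modal X]$ are elements of its domain, and their images under precomposition with $\modalunit[\modal X]$ are respectively $\modalunit[\modal X]\circ(f\circ\modalunit[\modal X])$ and $\modalunit[\modal X]$, which are homotopic via $\modalunit[\modal X]\cdot H$. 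Invoking function extensionality, these two composites are equal, and hence $\modalunit[\modal X]\circ f=\idfunc[\modal\modal X]$.

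Combining the two steps, $\modalunit[\modal X]$ is an equivalence, and so $\modal X$ is modal. There is no real obstacle here; the only subtlety is making sure that the two applications of unique elimination are set up with the correct choice of family $P$, so that the expressions $\modal(P(\modalunit[A](a)))$ and $\modal(P(x))$ reduce judgmentally to $\modal X$ and $\modal\modal X$ as needed to compare the relevant candidate maps.
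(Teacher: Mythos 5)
Your proof is correct and follows essentially the same route as the paper: first invert precomposition with $\modalunit[\modal X]$ at $\idfunc[\modal X]$ to get a retraction $f$, then use the uniqueness part of the elimination principle (with the constant family at $\modal X$) to identify $\modalunit[\modal X]\circ f$ with $\idfunc[\modal\modal X]$. The only cosmetic difference is that the paper phrases the second step via contractibility of the fiber of the precomposition map at $\modalunit[\modal X]$, whereas you use that this equivalence is an embedding together with function extensionality; these are the same uniqueness argument.
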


\begin{proof}
Using the elimination principle of $\modal \modal X$, we find a function
$f:\modal \modal X\to\modal X$ and an identification $f\circ\modalunit[\modal X]=\idfunc[\modal X]$.
By the uniqueness property, the type
\begin{equation*}
\sm{g:\modal \modal X\to\modal \modal X} g\circ\modalunit[\modal X]=\modalunit[\modal X]
\end{equation*}
is contractible. Since both $\idfunc[\modal \modal X]$ and $\modalunit[\modal X]\circ f$
are in this type (with suitable identifications), we find that $f$ is also the
right inverse of $\modalunit[\modal X]$. This shows that $\modalunit[\modal X]$ is an
equivalence, so $\modal X$ is modal.
\end{proof}

\begin{thm}\label{thm:subuniv-modunique}
The data of two uniquely eliminating modalities $\modal$ and $\modal'$ are equivalent if and only if both have the same modal types.
\end{thm}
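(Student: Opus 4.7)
The plan is to mimic the structure of the proof of \cref{thm:subuniv-highermod}: we reduce the theorem to showing that the forgetful map from uniquely eliminating modalities to subuniverses $\UU\to\prop$ is an embedding, and we do this by proving that, given a subuniverse $\mathcal{M}:\UU\to\prop$, the type of uniquely eliminating modality structures with $\ismodal=\mathcal{M}$ is a proposition. This splits into two parts: the unique elimination datum is propositional on top of $(\modal,\modalunit)$, and the pair $(\modal A,\modalunit_A)$ is itself uniquely determined by $\mathcal{M}$.

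For the first part, I would observe that being an equivalence is a proposition (\cref{lem:prop_char} applied to the fiber characterization, or directly). Hence unique elimination is a family of mere propositions indexed by $A:\UU$ and $P:\modal A\to\UU$, so any two proofs that a given $(\modal,\modalunit)$ is a uniquely eliminating modality are identified by function extensionality and \cref{thm:prop_pi}.

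The heart of the argument is the second part. I would first apply unique elimination to the constant family $P\defeq\lam{x}B$ for a modal type $B$; since $\modal(P(\modalunit_A(a)))\jdeq \modal B \eqvsym B$ and $\modal(P(x))\eqvsym B$, this shows that the precomposition map
\begin{equation*}
\precomp{\modalunit_A}:(\modal A\to B)\to (A\to B)
\end{equation*}
is an equivalence. Combined with the preceding lemma (that $\modal A$ is itself modal), this exhibits $\modalunit_A:A\to\modal A$ as an $\mathcal{M}$-localization of $A$ in the sense of \cref{sec:prop-rfsu}. By \cref{lem:reflective_uniqueness}, the type of $\mathcal{M}$-localizations of a given type is a proposition, so the pair $(\modal A,\modalunit_A)$ is uniquely determined by $A$ and $\mathcal{M}$. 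Applying function extensionality once more in $A:\UU$, the whole pair $(\modal,\modalunit)$ is uniquely determined, which combined with the first part completes the proof.

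The main obstacle is cosmetic rather than mathematical: packaging the two uniqueness statements so that the identifications on $(\modal A,\modalunit_A)$ provided by \cref{lem:reflective_uniqueness} cohere into a single identification of $(\modal,\modalunit)$ as dependent functions, and then checking that the propositional elimination data is transported correctly. This is handled just as in \cref{thm:subuniv-highermod}: since the entire remaining datum lies in a proposition, transport along any identification of $(\modal,\modalunit)$ produces the required identification automatically, so no explicit coherence computation is needed.
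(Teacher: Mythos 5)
Your proposal is correct, and it reaches the same reduction as the paper (the type of uniquely eliminating modality structures with a prescribed subuniverse $\mathcal{M}$ of modal types is a proposition, using that the elimination datum is a family of propositions and that $\modal X$ is always modal), but it discharges the key uniqueness step differently. The paper never mentions localizations: it works directly with the dependent elimination data $H$, $H'$ of the two given structures, extracts maps $f:Y\to Y'$ and $f':Y'\to Y$ over the units, and then reruns the two-sided-inverse argument (uniqueness of extensions forces $f'\circ f=\idfunc$ and $f\circ f'=\idfunc$) inline. You instead specialize unique elimination to constant families valued in modal types to obtain the non-dependent universal property $(\modal A\to Z)\eqvsym(A\to Z)$ for $Z:\UU_{\mathcal{M}}$, observe that together with modality of $\modal A$ this exhibits $\modalunit_A$ as an $\mathcal{M}$-localization, and then cite \cref{lem:reflective_uniqueness}; funext in $A$ and propositionality of the remaining data finish the job exactly as in \cref{thm:subuniv-highermod}. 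This is legitimate and non-circular (\cref{lem:reflective_uniqueness} is proved earlier and independently), and in fact your constant-family observation is the same one the paper uses later in \cref{thm:ssrs_from_modunique}. What your route buys is modularity and brevity, since the two-sided-inverse argument is already packaged in \cref{lem:reflective_uniqueness}; what the paper's inline version buys is self-containedness, never passing through the notion of $P$-localization. The only points to spell out in a full write-up are the small 3-for-2 step replacing $\modal B$ by $B$ along $\modalunit_B$, and the use of the fact that $\UU\to\prop$ is a set when identifying the fibers of the map to subuniverses, both of which are routine.
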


\begin{proof}
We need to show that the type of uniquely eliminating modalities
with a given class $\mathcal{M}:\UU\to\prop$ of modal types
is a mere proposition. Since the types of the form $\modal X$ are modal,
it suffices to show for any class $\mathcal{M}
:\UU\to\prop$ and any type $X$, that
\begin{quote}
the type of tuples $(Y,p,\pi,H)$ --- consisting of a type $Y$
with $p$ witnessing that $Y$ is in $\mathcal{M}$, a function $\pi:X\to Y$, and
for each $P:Y\to\UU$ a term $H_P$ witnessing that the function
\begin{equation*}
\lam{s}s\circ \pi:(\prd{y:Y}\modal(P(y)))\to(\prd{x:X}\modal(P(\pi(x))))
\end{equation*}
is an equivalence --- is a mere proposition.
\end{quote}
Let $(Y,p,\pi,H)$ and $(Y',p',\pi',H')$ be such tuples. To show that they are
equal, it suffices to show that $(Y,\pi)=(Y',\pi')$ because the other things
in the list are terms of mere propositions. Furthermore, showing that
$(Y,\pi)=(Y',\pi')$ is equivalent to finding an equivalence $f:\eqv{Y}{Y'}$ with
the property that $f\circ\pi=\pi'$. By $H$, there is such a function, and by
$H'$ there is a function $f':Y'\to Y$ such that $f'\circ\pi'=\pi$. Now the
uniqueness gives that $f'\circ f$ is the only function from $Y$ to $Y$ such
that $f'\circ f\circ\pi=\pi$ and of course $\idfunc[Y]$ is another such function.
Therefore it follows that $f'\circ f=\idfunc$, and similarly it follows that
$f\circ f'=\idfunc$.
\end{proof}

\begin{thm}\label{thm:ssrs_from_modunique}
Any uniquely eliminating modality determines a $\Sigma$-closed reflective
subuniverse with the same modal types.
\end{thm}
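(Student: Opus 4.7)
The plan is to verify three things: that the subuniverse $\UU_\modal$ of modal types is reflective, that it is $\Sigma$-closed, and that its local types coincide with the modal types of the original modality. The last point is automatic once the first is set up correctly, since for any reflective subuniverse a type is local exactly when its unit is an equivalence (\cref{thm:subuniv-modal}), which matches the definition of modal for $\modal$.

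For reflectivity, I would take $\modalunit[A]: A \to \modal A$ as the candidate localization. The preceding lemma already shows $\modal A \in \UU_\modal$, so it suffices to verify the universal property: for every modal $Z$, the precomposition map $\precomp{\modalunit[A]}: (\modal A \to Z) \to (A \to Z)$ is an equivalence. Apply the uniquely eliminating property to the constant family $P(x) \defeq Z$; since this is non-dependent, it asserts that
\[
\precomp{\modalunit[A]}: (\modal A \to \modal Z) \to (A \to \modal Z)
\]
is an equivalence. Postcomposition with the equivalence $\modalunit[Z]$ yields a commuting square
\[
\begin{tikzcd}[column sep=large]
(\modal A \to Z) \ar[r,"\precomp{\modalunit[A]}"] \ar[d,swap,"\eqvsym"] & (A \to Z) \ar[d,"\eqvsym"] \\
(\modal A \to \modal Z) \ar[r,"\precomp{\modalunit[A]}"'] & (A \to \modal Z)
\end{tikzcd}
\]
in which the vertical maps and the bottom map are equivalences, so the top map is as well.

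For $\Sigma$-closedness, suppose $X$ is modal and $P: X \to \UU_\modal$. I would show that $\modalunit: \sm{x:X} P(x) \to \modal(\sm{x:X} P(x))$ admits a retraction and then invoke \cref{thm:subuniv-modal}(iv). To build the retraction I would use the dependent elimination principle twice. First, extending $\proj 1$ along $\modalunit$ into the modal type $X$ gives $\pi_1': \modal(\sm{x:X} P(x)) \to X$ with $\pi_1' \circ \modalunit \htpy \proj 1$, via UE applied to the constant family $X$ (noting $X \simeq \modal X$). Second, applied to the family $z \mapsto P(\pi_1'(z))$ — which is modal-valued because $P$ lands in $\UU_\modal$ — the UE principle extends $\lam{(x,p)}p$ (transported along $\pi_1'(\modalunit(x,p)) = x$) to a dependent function $\pi_2': \prd{z} P(\pi_1'(z))$ with $\pi_2'(\modalunit(x,p)) = p$. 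Defining $r(z) \defeq (\pi_1'(z), \pi_2'(z))$ then gives $r \circ \modalunit \htpy \idfunc$, as required.

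The main delicacy is the second use of UE: we must ensure the coherences line up so that $\pi_2' \circ \modalunit$ really equals $\proj 2$ as an element of $\prd{s:\sm{x:X} P(x)} P(\pi_1'(\modalunit(s)))$, which requires transport along the homotopy $\pi_1' \circ \modalunit \htpy \proj 1$. That coherence is supplied by the computation rule implicit in the equivalence furnished by UE, so nothing beyond routine bookkeeping is needed; but it is the one spot where the argument would need to be written out carefully.
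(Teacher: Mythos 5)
Your proof is correct and follows essentially the same route as the paper: the universal property of $\modalunit$ is extracted from the uniquely eliminating property applied to constant families (using $Z\simeq\modal Z$ for modal $Z$), and $\Sigma$-closedness is obtained by two dependent eliminations producing the two components of a map $\modal\big(\sm{x:X}P(x)\big)\to\sm{x:X}P(x)$ inverse to the unit. The only cosmetic difference is that you stop at a retraction and invoke \cref{thm:subuniv-modal}, whereas the paper additionally verifies directly, via uniqueness of extensions along $\modalunit$, that this map is also a section.
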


\begin{proof}
It is immediate from the definition of uniquely eliminating modalities
that every map $f:A\to B$ into a modal type $B$ has a homotopy unique extension to $\modal A$
along the modal unit:
\begin{equation*}
\begin{tikzcd}
A \arrow[dr,"f"] \arrow[d,swap,"\modalunit_A"] \\ \modal A \arrow[r,densely dotted,swap,"\tilde f"] & B
\end{tikzcd}
\end{equation*}
Since the types of the form $\modal X$ are modal, we obtain a reflective subuniverse.
It remains to verify  that the type $\sm{x:\modal X}\modal(P(x))$ is modal for
any type $X$ and $P:X\to\UU$. We have the function
\begin{equation*}
\varphi\defeq\lam{m}\pairr{f(m),g(m)}:\modal(\sm{x:\modal X}\modal(P(x)))\to\sm{x:\modal X}\modal(P(x)),
\end{equation*}
where
\begin{align*}
f & \defeq \ind{\modal}(\lam{x}{u} x) & & : \modal(\sm{x:\modal X}\modal(P(x)))\to \modal X \\
g & \defeq \ind{\modal}(\lam{x}{u} u) & & : \prd{w:\modal(\sm{x:\modal X}\modal(P(x)))} \modal(P(f(w)))
\end{align*}
Our goal is to show that $\varphi$ is an inverse to the modal unit.

Note that
\begin{equation*}
\varphi(\modalunit(x,y)) \jdeq \pairr{f(\modalunit(x,y)),g(\modalunit(x,y))} \jdeq \pairr{x,y},
\end{equation*}
so we see immediately that $\varphi$ is a left inverse of $\modalunit$.

To show that $\varphi$ is a right inverse of $\modalunit$, note that the type
of functions $h$ fitting in a commuting square of the form
\begin{equation*}
\begin{tikzcd}[column sep=-3em]
\modal(\sm{x:\modal X}\modal(P(x))) \arrow[rr,densely dotted,"h"] & & \modal(\sm{x:\modal X}\modal(P(x))) \\
& \sm{x:\modal X}\modal(P(x)) \arrow[ul,"\modalunit"] \arrow[ur,swap,"\modalunit"]
\end{tikzcd}
\end{equation*}
is contractible, and it contains the identity function. Therefore, it suffices
to show that $(\modalunit\circ\varphi)\circ\modalunit=\modalunit$, but this follows
from the fact that $\varphi$ is a left inverse of the modal unit.
\end{proof}

\subsection{\texorpdfstring{$\Sigma$}{Σ}-closed reflective subuniverses}
\label{sec:ssrs}

Now we study reflective subuniverses in a bit more detail, and end by
showing that $\Sigma$-closed ones give rise to stable factorization
systems. $\Sigma$-closure is used in \autoref{thm:sofs_from_ssrs} to
show that left maps and right maps are closed under composition.

\subsubsection{\texorpdfstring{$\Sigma$}{Σ}-closed reflective subuniverses}
\label{sec:sigma-closed}

\begin{defn}\label{defn:connected}
Let $\mathcal{M}:\UU\to\prop$ be a reflective subuniverse with modal
operator $\modal$. We say
that a type $X$ is \define{$\modal$-connected} if $\modal X$ is contractible,
and we say that a function $f:X\to Y$ is \define{$\modal$-connected} if each
of its fibers is. Similarly, we say that $f$ is \define{modal} if each of its
fibers is.
\end{defn}

Note that a type $X$ is modal or $\modal$-connected just when the map $X\to\unit$ is.

\begin{eg}\label{eg:closed-connected}
  Recall from \cref{eg:open} that the open modality associated to a proposition $Q$ is defined by $\open Q(A) \defeq (Q\to A)$.
  We claim that $A$ is $\open Q$-connected if and only if $Q \to\iscontr(A)$.
  In other words, $(Q \to\iscontr(A))\eqvsym \iscontr(Q\to A)$.
  For on the one hand, if $Q\to \iscontr(A)$, then $Q\to A$; while any two $f,g:Q\to A$ can be shown equal by function extensionality, since if $Q$ then $A$ is contractible.
  But on the other hand, if $\iscontr(Q\to A)$ and $Q$, then $\eqv{(Q\to A)}{A}$, hence $\iscontr(A)$.

  Note that $Q \to\iscontr(A)$ is also the defining condition for the $\closed Q$-modal types from \cref{eg:closed}.
  That is, the $\open Q$-connected types coincide with the $\closed Q$-modal types.
\end{eg}

The following theorem combines Lemma 7.5.7 and Theorem 7.7.4 of \cite{hottbook}.

\begin{thm}\label{thm:ssrs-characterize}
Given a reflective universe with modal operator $\modal$,
the following are equivalent:
\begin{enumerate}
\item It is $\Sigma$-closed.
\item It is uniquely eliminating.
\item The modal units are $\modal$-connected.
\end{enumerate}
\end{thm}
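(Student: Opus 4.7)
The plan is to establish the equivalence of (i), (ii), and (iii) by proving (i) $\Rightarrow$ (ii), (ii) $\Rightarrow$ (i), and (ii) $\Leftrightarrow$ (iii). The implication (ii) $\Rightarrow$ (i) has already been established in \cref{thm:ssrs_from_modunique}, which shows that any uniquely eliminating modality determines a $\Sigma$-closed reflective subuniverse with the same modal types; so only the other three directions require new work.

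For (i) $\Rightarrow$ (ii), I will apply \cref{theorem:generalized-induction} to the $\modal$-localization $\modalunit[A]:A\to\modal A$ with the family $Z:\modal A\to\UU$ defined by $Z(x)\defeq\modal P(x)$. The total space $\sum_{x:\modal A}\modal P(x)$ is a $\Sigma$-type of a modal base with a family of modal types, so by $\Sigma$-closedness it is modal. This is exactly the hypothesis of \cref{theorem:generalized-induction}, whose conclusion is that precomposition with $\modalunit[A]$ induces an equivalence $(\prod_{x:\modal A}\modal P(x))\simeq(\prod_{a:A}\modal P(\modalunit(a)))$, which is the uniquely eliminating condition.

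For (ii) $\Leftrightarrow$ (iii), I will invoke \cref{prop:nconnected_tested_by_lv_n_dependent types} applied to the map $\modalunit[A]:A\to\modal A$: that proposition tells us $\modalunit[A]$ is $\modal$-connected if and only if precomposition by it is an equivalence on dependent products for every family of modal types over $\modal A$. For (ii) $\Rightarrow$ (iii), specialize the uniquely eliminating condition to a family $P:\modal A\to\UU_\modal$ of modal types; then $\modal P(x)\simeq P(x)$, so the uniquely eliminating equivalence reduces to exactly the precomposition equivalence characterizing connectedness. Conversely, for (iii) $\Rightarrow$ (ii), given an arbitrary family $P:\modal A\to\UU$, apply (iii) with the modal family $x\mapsto\modal P(x)$ to recover the precomposition equivalence that constitutes the uniquely eliminating condition.

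The main subtlety, rather than a genuine obstacle, is the systematic bookkeeping of where the modal operator is applied fiberwise: the uniquely eliminating condition concerns arbitrary families with $\modal$ inserted after the fact, while the characterization of $\modal$-connected maps concerns families of already-modal types. The bridge between the two formulations is simply that $x\mapsto\modal P(x)$ is automatically a family of modal types, and that on such families the inserted $\modal$ is redundant.
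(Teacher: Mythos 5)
Your proposal is correct, but it routes the proof through different lemmas than the paper does. The paper proves the cycle (i)$\Rightarrow$(ii)$\Rightarrow$(iii)$\Rightarrow$(i) by direct construction: for (i)$\Rightarrow$(ii) it identifies dependent functions with sections of the projection and invokes the universal property of $\modalunit$ together with $\Sigma$-closedness of $\sm{x:\modal X}P(x)$ — essentially the same argument you obtain by citing \cref{theorem:generalized-induction}, just done inline; for (ii)$\Rightarrow$(iii) it explicitly builds a section $s:\prd{x:\modal X}\modal(\fib{\modalunit}{x})$ and proves contractibility; and for (iii)$\Rightarrow$(i) it explicitly constructs a retraction $\pairr{p,q}$ of the unit on $\sm{x:X}P(x)$. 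You instead make (ii) the hub: (ii)$\Rightarrow$(i) is outsourced to \cref{thm:ssrs_from_modunique}, which is legitimate because $\Sigma$-closedness is a property of the subuniverse of modal types alone and that theorem yields a $\Sigma$-closed reflective subuniverse with the same modal types; and (ii)$\Leftrightarrow$(iii) is outsourced to \cref{prop:nconnected_tested_by_lv_n_dependent types} applied to $\modalunit[A]$, with the only new content being the two-out-of-three bookkeeping identifying the uniquely-eliminating map on a modal family $P$ with the plain precomposition map via the fiberwise equivalences $P(x)\simeq\modal P(x)$ — which you correctly flag. Neither cited result depends on \cref{thm:ssrs-characterize}, so there is no circularity. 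What your route buys is modularity: the hands-on constructions are not repeated because they already live inside the proofs of \cref{thm:ssrs_from_modunique} and \cref{prop:nconnected_tested_by_lv_n_dependent types}. What the paper's version buys is a self-contained argument within the modalities section whose explicit witnesses (the section of $\modal(\fib{\modalunit}{x})$, the inverse of the unit on $\Sigma$-types) are reused downstream, e.g.\ in \cref{thm:sofs_from_ssrs}.
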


\begin{proof}
Suppose first that $\modal$ is $\Sigma$-closed, let $X$ be a type and let
$P:\modal X\to\UU_\modal$, i.e.\ $P(x)$ is modal for each $x:\modal X$.
To show that $\modal$ is uniquely eliminating, we want
\begin{equation*}
\lam{s}s\circ\modalunit[X]:(\prd{x:\modal X}P(x))\to(\prd{x:X}P(\modalunit(x)))
\end{equation*}
to be an equivalence. Since the type $\prd{a:A}B(a)$ is equivalent to the type
of functions $f:A\to\sm{a:A}B(a)$ such that $\proj1\circ f=\idfunc[A]$, we
get the desired equivalence if the pre-composition map $\lam{j}j\circ\modalunit$
gives an equivalence from diagonal fillers of the square
\begin{equation*}
\begin{tikzcd}
X \arrow[r,densely dotted] \arrow[d,swap,"\modalunit"] & \sm{x:\modal X}P(x) \arrow[d,"\proj1"] \\
\modal X \arrow[ur,densely dotted,"j"] \arrow[r,equals] & \modal X
\end{tikzcd}
\end{equation*}
to the type of maps $X\to\sm{x:\modal X}P(x)$ such that the indicated square
commutes.  But this is true by the universal property of $\modalunit$, since $\sm{x:\modal X}P(x)$ is modal by $\Sigma$-closedness.

Now suppose that $\modal$ is uniquely eliminating.
To show that the modal units are connected, we want a term of type
\begin{equation*}
\prd{x:\modal X}\iscontr(\modal(\fib{\modalunit}{x})).
\end{equation*}
Using the dependent eliminators, it is easy to find a term
$s:\prd{x:\modal X}\modal(\fib{\modalunit}{x})$ with the property that
$s\circ\modalunit(x)=\modalunit(x,\refl{\modalunit(x)})$. Now we need to show
that
\begin{equation*}
\prd{x:\modal X}{w:\modal(\fib{\modalunit}{x})}w=s(x).
\end{equation*}
Since the type $w=s(x)$ is modal, this is equivalent to
\begin{equation*}
\prd{x:\modal X}{x':X}{p:\modalunit(x')=x} \modalunit(x',p)=s(x).
\end{equation*}
Moreover, the type $\sm{x:\modal X}\modalunit(x')=x$ is contractible, so this
is equivalent to
\begin{equation*}
\prd{x':X} \modalunit(x',\refl{\modalunit(x')})=s(\modalunit(x')),
\end{equation*}
of which we have a term by the defining property of $s$.

Finally, suppose that all the modal units are $\modal$-connected, let $X$ be modal and let $P:X\to\UU_\modal$.
To show that $\sm{x:X}P(x)$ is modal, we show that
$\modalunit:(\sm{x:X}P(x))\to\modal(\sm{x:X}P(x))$ is an equivalence.
Since $X$ is modal, we can extend $\proj 1$ along $\modalunit$ as indicated
in the diagram
\begin{equation*}
\begin{tikzcd}
\sm{x:X}P(x) \arrow[d,"\modalunit"] \arrow[dr,"\proj 1"] \\
\modal(\sm{x:X}P(x)) \arrow[r,densely dotted,swap,"p"] & X
\end{tikzcd}
\end{equation*}
The type of maps
\begin{equation*}
f:\modal(\sm{x:X}P(x))\to \sm{x:X}P(x)
\end{equation*}
such that $\proj1\circ f=p$ is equivalent to the type $\prd{z:\modal(\sm{x:X}P(x))}P(p(z))$.
Using the assumption that $\modalunit$ is connected, we calculate
\begin{align*}
\prd{z:\modal(\sm{x:X}P(x))}P(p(z))
& \eqvsym \prd{z:\modal(\sm{x:X}P(x))} \modal(\fib{\modalunit}{z})\to P(p(z)) \\
& \eqvsym \prd{z:\modal(\sm{x:X}P(x))} \fib{\modalunit}{z}\to P(p(z)) \\
& \eqvsym \prd{\pairr{x,y}:\sm{x:X}P(x)} P(x)
\end{align*}
We have the second projection $\proj 2$ of the latter type. We obtain a term
\begin{equation*}
q : \prd{z:\modal(\sm{x:X}P(x))}P(p(z))
\end{equation*}
such that $q(\eta(x,y))=y$. Therefore, we get the map $\pairr{p,q}:\modal(\sm{x:X}P(x))\to \sm{x:X}P(x)$ for which the diagram
\begin{equation*}
\begin{tikzcd}
\sm{x:X}P(x) \arrow[dr,"\modalunit"] \arrow[ddr,bend right=15,swap,"\proj 1"] \arrow[rr,"\idfunc"] & & \sm{x:X}P(x) \arrow[ddl,bend left=15,"\proj 1"] \\
& \modal(\sm{x:X}P(x)) \arrow[d,swap,"p"] \arrow[ur,densely dotted,"\pairr{p,q}"] \\
& X
\end{tikzcd}
\end{equation*}
commutes. In particular, $\pairr{p,q}$ is a left inverse of the modal unit.
To see that it is also a right inverse, note that $\modalunit\circ\pairr{p,q}\circ\modalunit=\modalunit=\idfunc\circ\modalunit$; thus $\modalunit\circ\pairr{p,q}=\idfunc$ follows by uniqueness.
Hence $\sm{x:X}P(x)$ is modal.
\end{proof}

\begin{thm}\label{thm:sofs_from_ssrs}
A $\Sigma$-closed reflective subuniverse determines a stable orthogonal factorization system with the same
modal types.
\end{thm}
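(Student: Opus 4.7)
The plan is to define $\mathcal{L}$ to be the class of $\modal$-connected maps (those whose fibers have contractible modal reflection) and $\mathcal{R}$ to be the class of $\modal$-modal maps (those whose fibers are modal). The modal types of the factorization system will then automatically coincide with the $\modal$-modal types of the subuniverse, since $X \to \unit$ lies in $\mathcal{R}$ iff $X$ itself is modal, and this is preserved by the construction.

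First I would verify the bookkeeping properties. Both $\mathcal{L}$ and $\mathcal{R}$ contain all equivalences (contractible fibers are both modal and $\modal$-connected) and are closed under composition. For $\mathcal{R}$, closure under composition uses $\Sigma$-closedness applied to the equivalence
\[
\fib{g\circ f}{d} \eqvsym \sm{p:\fib{g}{d}} \fib{f}{\proj 1(p)},
\]
which exhibits the fiber of a composite as a $\Sigma$-type of modal types over a modal base. For $\mathcal{L}$ closure under composition follows from \cref{thm:rsu-compose-cancel}(i) applied with $L=\modal$, using that the triangle $f,g\circ f,g$ has $f$ in $\mathcal{L}$ so $g\circ f$ is in $\mathcal{L}$ iff $g$ is.

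Next I would construct the factorization of an arbitrary map $f:A\to B$ as the \emph{modal image}
\[
\begin{tikzcd}[column sep=small]
A \arrow[rr,"f"] \arrow[dr,swap,"f_{\mathcal{L}}"] & & B, \\
& \im_\modal(f) \arrow[ur,swap,"f_{\mathcal{R}}"]
\end{tikzcd}
\]
where $\im_\modal(f) \defeq \sm{b:B}\modal(\fib{f}{b})$, $f_{\mathcal{R}}\defeq\proj 1$, and $f_{\mathcal{L}}\defeq\lam{a}(f(a),\modalunit(a,\refl{f(a)}))$. By \cref{eg:fib_proj}, the fiber of $f_{\mathcal{R}}$ over $b$ is $\modal(\fib{f}{b})$, which is modal by construction, so $f_{\mathcal{R}}\in\mathcal{R}$. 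For $f_{\mathcal{L}}\in\mathcal{L}$, I would compute that the fiber of $f_{\mathcal{L}}$ at $(b,u)$ is equivalent to $\sm{p:\fib{f}{b}}\modalunit(p)=u$, whose modal reflection fits into a long-exact-fiber-sequence-style computation whose contractibility is the type-theoretic statement that $\modalunit[\fib{f}{b}]$ is itself a $\modal$-connected map --- precisely condition (iii) of \cref{thm:ssrs-characterize}.

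The main obstacle is establishing the uniqueness of the factorization. I would prove this via an orthogonality lemma: given $f\in\mathcal{L}$ and $g\in\mathcal{R}$, any commuting square
\[
\begin{tikzcd}
A \arrow[r,"h"] \arrow[d,swap,"f"] & C \arrow[d,"g"] \\
B \arrow[r,swap,"k"] & D
\end{tikzcd}
\]
admits a contractible space of diagonal fillers. The proof of this lemma will use \cref{prop:nconnected_tested_by_lv_n_dependent types}: the type of diagonal fillers is a dependent product over $B$ of fibers of $g$ pulled back along $k$, which is a product of modal types; since $f$ is $\modal$-connected, precomposition with $f$ into this modal-valued family is an equivalence, and the canonical section given by $h$ on $A$-fibers extends uniquely. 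Once orthogonality is available, the contractibility of $\fact_{\mathcal{L},\mathcal{R}}(f)$ is a standard argument: any two such factorizations are connected by a unique diagonal filler which must be an equivalence since it lies in $\mathcal{L}\cap\mathcal{R}$. Stability under pullbacks is then immediate, since pullback preserves fibers, and a pullback of a map with contractible modal fibers again has contractible modal fibers. Finally the modal types of $(\mathcal{L},\mathcal{R})$ coincide with the modal types of $L$ by construction.
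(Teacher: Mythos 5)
Your proposal is correct and, up to the uniqueness step, coincides with the paper's proof: same classes ($\modal$-connected maps as $\mathcal{L}$, modal maps as $\mathcal{R}$), same closure arguments via $\Sigma$-closedness and \cref{thm:rsu-compose-cancel}, the same factorization through $\sm{b:B}\modal(\fib{f}{b})$ using \cref{thm:ssrs-characterize}(iii) for the left factor, and the same pullback-stability argument. Where you genuinely diverge is in proving contractibility of $\fact_{\mathcal{L},\mathcal{R}}(f)$: you first establish the unique lifting property $\mathcal{L}\perp\mathcal{R}$ directly from \cref{prop:nconnected_tested_by_lv_n_dependent types} (a lift against $g\in\mathcal{R}$ is a point of $\prd{b:B}\fib{g}{k(b)}$, a family of modal types, so precomposition with a connected map is an equivalence), and then run the classical argument that orthogonality forces any two factorizations to agree. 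The paper instead bypasses orthogonality entirely: it observes that any competing factorization of $\proj 1$ has the form $(\total{\gamma},\proj 1)$ and shows, by a chain of mapping-space equivalences, that each $\gamma_y$ has the universal property of $\modalunit[P(y)]$, concluding by \cref{lem:reflective_uniqueness}; the lifting property is only derived afterwards, for arbitrary orthogonal factorization systems, in \cref{lem:diagonal_fillers}. Your route is arguably more conceptual and reusable (the orthogonality lemma is exactly \cref{thm:detect-right-by-fibers}/\cref{lem:sofs_rfib} material), at the cost of two pieces of bookkeeping you should not elide: (a) passing from "the filler space is contractible" to "the identity type of $\fact_{\mathcal{L},\mathcal{R}}(f)$ is contractible" requires characterizing that identity type as the type of fillers whose underlying map is an equivalence (essentially the computation the paper does in \cref{lem:diagonal_fillers}, run in the opposite direction); and (b) your remark that the comparison filler lies in $\mathcal{L}\cap\mathcal{R}$ needs the left-cancellation property for modal maps (the fiber of $j$ is a fiber of a map between modal types, hence modal by $\Sigma$-closedness and \cref{lem:rs_idstable}), or you can avoid it by composing the two unique fillers and invoking uniqueness twice. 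With those details supplied, your argument is complete.
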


\begin{proof}
Define $\mathcal{L}$ to be the class of $\modal$-connected
maps and $\mathcal{R}$ to be the class of modal maps.
We first show that both $\mathcal{L}$ and $\mathcal{R}$ are closed under
composition. Recall that for $f:X\to Y$ and $g:Y\to Z$, one has
$\fib{g\circ f}{z}=\sm{p:\fib{g}{z}}\fib{f}{\proj1(p)}$.
Thus, by $\Sigma$-closedness, if $f$ and $g$ are both in $\mathcal{R}$ then so is $g\circ f$, so $\cR$ is closed under composition; while \cref{thm:rsu-compose-cancel} implies that $\cL$ is closed under composition.
And since the fibers of an identity map are contractible, and contractible types are both modal and $\modal$-connected, both $\mathcal{L}$ and $\mathcal{R}$ contain all identities.

To obtain a factorization system,
it remains to show that the type of
$(\mathcal{L},\mathcal{R})$-factorizations of any function $f$ is contractible.
Since $\pairr{X,f}=\pairr{\sm{y:Y}\fib{f}{y},\proj1}$, it is sufficient to
show that $\fact_{\mathcal{L},\mathcal{R}}(\proj1)$ is contractible for any
$\proj1:\sm{y:Y}P(y)\to Y$. But $\proj1$ factors as
\begin{equation*}
\begin{tikzcd}
\sm{y:Y}P(y) \arrow[r,"p_\mathcal{L}"] & \sm{y:Y}\modal(P(y)) \arrow[r,"p_\mathcal{R}"] & Y
\end{tikzcd}
\end{equation*}
where $p_\mathcal{L}\defeq\total{\modalunit[P(\blank)]}$ and $p_\mathcal{R}\defeq\proj1$.
The fibers of $p_\mathcal{R}$ are $\modal(P(\blank))$, so it follows
immediately that $p_\mathcal{R}$ is in $\mathcal{R}$.
Moreover, since
$\eqv{\fib{\total{\modalunit}}{\pairr{y,u}}}{\fib{\modalunit[P(y)]}{u}}$ and each $\modalunit$ is $\modal$-connected, it follows that $p_\mathcal{L}$ is in
$\mathcal{L}$.

Now consider any other factorization $(g,h,H)$ of $\proj1$ into
an $\cL$-map $g:(\sm{y:Y}P(y))\to I$ followed by an $\cR$-map $h:I\to Y$. Since
$I=\sm{y:Y}\fib{h}{y}$, we have a commuting square
\begin{equation*}
\begin{tikzcd}
\sm{y:Y}P(y) \arrow[r,"g"] \arrow[d,swap,"{\total{\gamma}}"]
  & I \arrow[d,"h"] \\
\sm{y:Y}\fib{h}{y} \arrow[ur,equals] \arrow[r,swap,"\proj1"] & Y
\end{tikzcd}
\end{equation*}
in which $\gamma(y,u)\defeq \pairr{g(y,u),H(y,u)}$.
It follows that $(g,h,H)=(\total{\gamma},\proj1,\nameless)$.
Thus suffices to show that there is a commuting triangle
\begin{equation*}
\begin{tikzcd}[column sep=0]
& P(y) \arrow[dl,swap,"\modalunit"] \arrow[dr,"{\gamma_y}"] \\
\modal(P(y)) \arrow[rr,equals] & & \fib{h}{y}
\end{tikzcd}
\end{equation*}
We will do this using \cref{lem:reflective_uniqueness}, by showing that $\gamma_y$ has the same universal property as $\modalunit[P(y)]$.
This follows from the following calculation:
\begin{align*}
(\fib{h}{y}\to Z) & \eqvsym ((\sm{w:\fib{h}{y}}\modal(\fib{g}{\proj1(w)}))\to Z) \\
& \eqvsym ((\sm{w:\fib{h}{y}}\fib{g}{\proj1(w)})\to Z) \\
& \eqvsym (\fib{h\circ g}{y}\to Z) \\
& \eqvsym (P(y)\to Z).
\end{align*}
which we can verify is given by precomposition with $\gamma_y$.

It remains to show that our orthogonal factorization system is stable. Consider a pullback diagram
\begin{equation*}
\begin{tikzcd}
A' \arrow[d,swap,"k"] \arrow[r,"f"] & A \arrow[d,"l"] \\
B' \arrow[r,swap,"g"] & B
\end{tikzcd}
\end{equation*}
in which $l$ is in $\mathcal{L}$. By the pasting lemma for pullbacks, it
follows that $\fib{k}{b}=\fib{l}{g(b)}$ for each $b:B'$. Thus, it follows that
$k$ is in $\mathcal{L}$.
\end{proof}

\subsection{Stable orthogonal factorization systems}

To complete \cref{sec:modal-refl-subun}, we will show that stable orthogonal factorization systems are also determined by their modal types, and give rise to higher modalities.

\subsubsection{Orthogonal factorization systems}

In classical category theory, orthogonal factorization systems are equivalently characterized by a unique lifting property.
We begin with the analogue of this in our context.

\begin{defn}
Let $(\mathcal{L},\mathcal{R})$ be an orthogonal factorization system, and
consider a commutative square
\begin{equation*}
\begin{tikzcd}
A \arrow[r,"f"] \arrow[d,swap,"l"] \ar[dr,phantom,"\scriptstyle S"] & X \arrow[d,"r"] \\
B \arrow[r,swap,"g"] & Y
\end{tikzcd}
\end{equation*}
(i.e.\ paths $S : r\circ f = g\circ l$)
for which $l$ is in $\mathcal{L}$ and $r$ is in $\mathcal{R}$. We define
$\fillers S$ to be the type of \define{diagonal fillers}
of the above diagram, i.e.~the type of tuples $(j,H_f,H_g,K)$ consisting of
$j:B\to X$, $H_f:j\circ l=f$ and $H_g:r\circ j=g$ and an equality $K : r\circ H_f = \ct S{(H_g \circ l)}$.
\end{defn}

\begin{lem}\label{lem:diagonal_fillers}
Let $(\mathcal{L},\mathcal{R})$ be an orthogonal factorization system, and
consider a commutative square
\begin{equation*}
\begin{tikzcd}
A \arrow[r,"f"] \arrow[d,swap,"l"] \ar[dr,phantom,"\scriptstyle S"] & X \arrow[d,"r"] \\
B \arrow[r,swap,"g"] & Y
\end{tikzcd}
\end{equation*}
for which $l$ is in $\mathcal{L}$ and $r$ is in $\mathcal{R}$. Then the type
$\fillers S$ of diagonal fillers is contractible.
\end{lem}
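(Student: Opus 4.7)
The plan is to exploit the fact that the composite diagonal $d \defeq r\circ f = g\circ l : A\to Y$ admits essentially unique $(\mathcal{L},\mathcal{R})$-factorizations, and to produce two such factorizations whose comparison yields the diagonal filler.

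First I would form the $(\mathcal{L},\mathcal{R})$-factorizations of $f$ and $g$ individually, writing $f = f_\mathcal{R}\circ f_\mathcal{L}$ with $f_\mathcal{L}:A\to I_f$ in $\mathcal{L}$ and $f_\mathcal{R}:I_f\to X$ in $\mathcal{R}$, and similarly $g = g_\mathcal{R}\circ g_\mathcal{L}$ with $g_\mathcal{L}:B\to I_g$ in $\mathcal{L}$ and $g_\mathcal{R}:I_g\to Y$ in $\mathcal{R}$. Using the closure of $\mathcal{L}$ and $\mathcal{R}$ under composition, the pair $(f_\mathcal{L},\,r\circ f_\mathcal{R})$ is an $(\mathcal{L},\mathcal{R})$-factorization of $r\circ f$, and the pair $(g_\mathcal{L}\circ l,\, g_\mathcal{R})$ is an $(\mathcal{L},\mathcal{R})$-factorization of $g\circ l$. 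Since $S$ identifies $r\circ f$ with $g\circ l$, both are factorizations of the single map $d$. By contractibility of $\fact_{\mathcal{L},\mathcal{R}}(d)$, this produces an equivalence $\phi:I_f\simeq I_g$ together with homotopies $\phi\circ f_\mathcal{L}\htpy g_\mathcal{L}\circ l$ and $g_\mathcal{R}\circ \phi\htpy r\circ f_\mathcal{R}$, coherent with $S$.

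I would then define the candidate filler by $j \defeq f_\mathcal{R}\circ\phi^{-1}\circ g_\mathcal{L}:B\to X$. The two required homotopies $H_f:j\circ l\htpy f$ and $H_g:r\circ j\htpy g$ are obtained by pasting: $j\circ l = f_\mathcal{R}\circ\phi^{-1}\circ g_\mathcal{L}\circ l \htpy f_\mathcal{R}\circ\phi^{-1}\circ\phi\circ f_\mathcal{L}\htpy f_\mathcal{R}\circ f_\mathcal{L} = f$, and dually for $r\circ j\htpy g$. The coherence $K$ comparing $r\cdot H_f$ with $\ct{S}{(H_g\cdot l)}$ will be extracted from the coherence of $\phi$ with the factorizations of $d$; this bookkeeping of 2-cells is where the most care is required, but since all the homotopies come from the universal property of a contractible type, the coherence is automatic.

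For contractibility, given any other filler $(j',H'_f,H'_g,K')$, I would factor $j' = j'_\mathcal{R}\circ j'_\mathcal{L}:B\to I_{j'}\to X$. Then $(j'_\mathcal{L}\circ l,\, r\circ j'_\mathcal{R})$ is yet another $(\mathcal{L},\mathcal{R})$-factorization of $d$ (using $H'_f$ and $H'_g$ to compare with $f$ and $g$, and $K'$ to obtain compatibility with $S$), and by contractibility of $\fact_{\mathcal{L},\mathcal{R}}(d)$ it is canonically equivalent to the two factorizations above. Chasing this equivalence back through the definition of $j$ yields an identification of $j'$ with $j$ compatible with all the coherences, exhibiting the proposed filler as the center of contraction of $\fillers S$. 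The main obstacle will be the higher-coherence bookkeeping in the last paragraph: one must ensure that the identifications $K$ and $K'$ match up with the single coherence datum provided by contractibility of $\fact_{\mathcal{L},\mathcal{R}}(d)$, rather than drift by higher paths. This is handled by packaging everything as a $\Sigma$-type and observing that both $(j,H_f,H_g,K)$ and $(j',H'_f,H'_g,K')$ map into the contractible type of factorizations of $d$ by the same construction.
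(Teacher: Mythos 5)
Your central idea---comparing the two $(\mathcal{L},\mathcal{R})$-factorizations $(f_\cL,\ r\circ f_\cR)$ and $(g_\cL\circ l,\ g_\cR)$ of the common composite $r\circ f = g\circ l$ inside the contractible type $\fact_{\mathcal{L},\mathcal{R}}(r\circ f)$---is exactly the key mechanism of the paper's proof, and your candidate filler $j\defeq f_\cR\circ\phi^{-1}\circ g_\cL$ is the one that implicitly appears there. The problem is how you pass from this to contractibility of $\fillers{S}$. For the center, the coherence $K$ is not ``automatic'': it has to be extracted from the third component of the identification of factorizations (the 2-cell relating the two comparison homotopies with $S$), and transporting it correctly is real work. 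More seriously, your uniqueness step is not a proof as stated: from the fact that both $(j,H_f,H_g,K)$ and $(j',H'_f,H'_g,K')$ are sent by ``the same construction'' into the contractible type $\fact_{\mathcal{L},\mathcal{R}}(r\circ f)$ you may only conclude that their \emph{images} are identified. To deduce an identification of the fillers themselves, including the identification of $K'$ with $K$ over the other paths, you would need that construction to be an embedding (or an equivalence) out of $\fillers{S}$---and establishing that is precisely the substance of the lemma, not something one can ``chase back'' for free.

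The paper avoids both issues by arguing globally rather than pointwise: it never picks a center or verifies coherences by hand. Instead it computes a chain of equivalences showing that $\fillers{S}$ itself is equivalent to the identity type between the two induced factorizations of $r\circ f$, after summing over the contractible types $\fact_{\mathcal{L},\mathcal{R}}(f)$, $\fact_{\mathcal{L},\mathcal{R}}(g)$ and, at the last step, $\fact_{\mathcal{L},\mathcal{R}}(j)$, and contracting away based path spaces; contractibility of $\fillers{S}$ is then inherited in one stroke, with all higher coherences absorbed by the equivalences. If you wish to keep your pointwise plan, the missing lemma you must supply is that the assignment sending a filler to a factorization of $r\circ f$ equipped with comparison data to the fixed factorizations of $f$ and $g$ is an equivalence; proving that amounts to redoing the paper's $\Sigma$-type computation, so the global formulation is both the shorter and the safer route.
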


\begin{proof}
By the fact that every morphism factors uniquely as a left map followed by a
right map, we may factorize $f$ and $g$ in $(\mathcal{L},\mathcal{R})$ as $H_f : f = f_\cR \circ f_\cL$ and $H_g : g = g_\cR \circ g_\cL$, obtaining the diagram
\begin{equation*}
\begin{tikzcd}
A \arrow[r,"f_{\mathcal{L}}"] \arrow[d,swap,"l"] & \im(f) \arrow[r,"f_{\mathcal{R}}"] & X \arrow[d,"r"] \\
B \arrow[r,swap,"g_{\mathcal{L}}"] & \im(g) \arrow[r,swap,"g_{\mathcal{R}}"] & Y
\end{tikzcd}
\end{equation*}
Now both $(r\circ f_{\mathcal{R}})\circ f_{\mathcal{L}}$ and
$g_{\mathcal{R}}\circ(g_{\mathcal{L}}\circ l)$ are factorizations
of the same function $r\circ f:A\to Y$.
Since $\fact_{\mathcal{L},\mathcal{R}}(r\circ f)$ is contractible, so is its identity type
\[ (\im(f), f_\cL, r\circ f_\cR, r\circ H_f) = (\im(g), g_\cL \circ l, g_\cR, \ct{S}{(H_g\circ l)}). \]
This identity type is equivalent to
\begin{multline*}
\sm{e:\im(f) \simeq \im(g)}{H_\cL : g_\cL \circ l = e\circ f_\cL}{H_\cR : r\circ f_\cR = g_\cR\circ e}\\
(\ct{(r\circ H_f)}{(H_\cR \circ f_\cL)} = \ct S{\ct{(H_g \circ l)}{(g_\cR \circ H_\cL)}})
\end{multline*}
Now since $\fact_{\cL,\cR}(f)$ and $\fact_{\cL,\cR}(g)$ are also contractible, we can sum over them to get that the following type is contractible:
\begin{multline*}
  \sm{\im(f):\UU}{f_\cL : A \to \im(f)}{f_\cR : \im(f) \to X}{H_f : f = f_\cR \circ f_\cL}\\
  \sm{\im(g):\UU}{g_\cL : B \to \im(g)}{g_\cR : \im(g) \to Y}{H_g : g = g_\cR \circ g_\cL}\\
\sm{e:\im(f) \simeq \im(g)}{H_\cL : g_\cL \circ l = e\circ f_\cL}{H_\cR : r\circ f_\cR = g_\cR\circ e}\\
(\ct{(r\circ H_f)}{(H_\cR \circ f_\cL)} = \ct S{\ct{(H_g \circ l)}{(g_\cR \circ H_\cL)}})
\end{multline*}
(omitting the hypotheses that $f_\cL,g_\cL\in\cL$ and $f_\cR,g_\cR\in\cR$).
Reassociating and removing the contractible type $\sm{\im(g):\UU}(\im(f) \simeq \im(g))$, and renaming $\im(f)$ as simply $I$, this is equivalent to
\begin{multline*}
  \sm{I:\UU}{f_\cL : A \to I}{f_\cR : I \to X}{H_f : f = f_\cR \circ f_\cL}\\
  \sm{g_\cL : B \to I}{g_\cR : I \to Y}{H_g : g = g_\cR \circ g_\cL}{H_\cL : g_\cL \circ l = f_\cL}{H_\cR : r\circ f_\cR = g_\cR}\\
(\ct{(r\circ H_f)}{(H_\cR \circ f_\cL)} = \ct S{\ct{(H_g \circ l)}{(g_\cR \circ H_\cL)}})
\end{multline*}
Removing the contractible $\sm{f_\cL : A \to I} (g_\cL \circ l = f_\cL)$ and $\sm{g_\cR : I \to Y} (r\circ f_\cR = g_\cR)$, this becomes
\begin{multline*}
  \sm{I:\UU}{f_\cR : I \to X}{g_\cL : B \to I}{H_f : f = f_\cR \circ g_\cL \circ l}{H_g : g = r\circ f_\cR \circ g_\cL}\\
(r\circ H_f = \ct S{(H_g \circ l)})
\end{multline*}
Inserting a contractible $\sm{j:B\to X} (f_\cR \circ g_\cL = j)$, and reassociating more, we get
\begin{multline*}
  \sm{j:B\to X}{I:\UU}{f_\cR : I \to X}{g_\cL : B \to I}{H_j:f_\cR \circ g_\cL = j}\\
  \sm{H_f : f = f_\cR \circ g_\cL \circ l}{H_g : g = r\circ f_\cR \circ g_\cL}
  (r\circ H_f = \ct S{(H_g \circ l)})
\end{multline*}
But now $\sm{I:\UU}{f_\cR : I \to X}{g_\cL : B \to I}{H_j:f_\cR \circ g_\cL = j}$ is just $\fact_{\cL,\cR}(j)$, hence contractible.
Removing it, we get
\begin{equation*}
  \sm{j:B\to X}{H_f : f = j \circ l}{H_g : g = r\circ j}(r\circ H_f = \ct S{(H_g \circ l)})
\end{equation*}
which is just $\fillers S$.
Therefore, this is also contractible.
\end{proof}

\begin{defn}\label{defn:orthogonal}
For any class $\mathcal{C}:\prd*{A,B:\UU}(A\to B)\to\prop$ of maps, we define
\begin{enumerate}
\item $^{\bot}\mathcal{C}$ to be the class of maps with \define{(unique) left lifting
property} with respect to all maps in $\mathcal{C}$: the mere proposition
$^\bot\mathcal{C}(l)$ asserts that for every commutative square
\begin{equation*}
\begin{tikzcd}
A \arrow[r,"f"] \arrow[d,swap,"l"] \ar[dr,phantom,"S"] & X \arrow[d,"r"] \\
B \arrow[r,swap,"g"] & Y
\end{tikzcd}
\end{equation*}
with $r$ in $\mathcal{C}$, the type $\fillers S$ of diagonal fillers is contractible.
\item $\mathcal{C}^\bot$ to be the class of maps with the dual \define{(unique) right lifting
property} with respect to all maps in $\mathcal{C}$.
\end{enumerate}
\end{defn}

\begin{lem}\label{lem:ofs_lifting}
In an orthogonal factorization system $(\mathcal{L},\mathcal{R})$, one has
$\mathcal{L}={^\bot\mathcal{R}}$ and $\mathcal{L}^\bot=\mathcal{R}$.
\end{lem}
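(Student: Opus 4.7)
The plan is to prove both equalities by showing two inclusions each. The forward inclusions $\mathcal{L}\subseteq{^\bot\mathcal{R}}$ and $\mathcal{R}\subseteq\mathcal{L}^\bot$ are essentially immediate from \cref{lem:diagonal_fillers}: if $l\in\mathcal{L}$ then by that lemma every commuting square with $l$ on the left and any $r\in\mathcal{R}$ on the right has a contractible type of fillers, which is exactly the defining property of ${^\bot\mathcal{R}}$; and dually for the other inclusion. The real content lies in the reverse inclusions. Since $\mathcal{L}={^\bot\mathcal{R}}$ and $\mathcal{R}=\mathcal{L}^\bot$ are dual statements, I will describe only the plan for $\mathcal{L}\supseteq{^\bot\mathcal{R}}$; the other is obtained by reversing arrows throughout.

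So let $l:A\to B$ be a map that has the unique left lifting property with respect to every map in $\mathcal{R}$; our goal is to exhibit $l\in\mathcal{L}$. The idea is the standard retract argument: factor $l$ using the given factorization system as $l=l_{\mathcal{R}}\circ l_{\mathcal{L}}$ with $l_{\mathcal{L}}:A\to\im(l)$ in $\mathcal{L}$ and $l_{\mathcal{R}}:\im(l)\to B$ in $\mathcal{R}$, and then show that $l_{\mathcal{R}}$ is an equivalence. Since equivalences lie in $\mathcal{L}$ (as noted in the remark after \cref{defn:sofs}) and $\mathcal{L}$ is closed under composition, this will give $l=l_{\mathcal{R}}\circ l_{\mathcal{L}}\in\mathcal{L}$.

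To show $l_{\mathcal{R}}$ is an equivalence, first consider the commuting square whose left edge is $l$ and right edge is $l_{\mathcal{R}}$, with top $l_{\mathcal{L}}$ and bottom $\idfunc[B]$; it commutes by $l=l_{\mathcal{R}}\circ l_{\mathcal{L}}$. By our hypothesis on $l$, this square has a contractible type of diagonal fillers, so we obtain $j:B\to\im(l)$ together with homotopies $j\circ l\htpy l_{\mathcal{L}}$ and $l_{\mathcal{R}}\circ j\htpy\idfunc[B]$ (plus coherence), so $j$ is a section of $l_{\mathcal{R}}$. To see $j$ is also a retraction, consider the square
\begin{equation*}
\begin{tikzcd}
A \arrow[r,"l_{\mathcal{L}}"] \arrow[d,swap,"l_{\mathcal{L}}"] & \im(l) \arrow[d,"l_{\mathcal{R}}"] \\
\im(l) \arrow[r,swap,"l_{\mathcal{R}}"] & B,
\end{tikzcd}
\end{equation*}
which commutes trivially. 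Since $l_{\mathcal{L}}\in\mathcal{L}$ and $l_{\mathcal{R}}\in\mathcal{R}$, \cref{lem:diagonal_fillers} gives that the type of diagonal fillers is contractible. But both $\idfunc[\im(l)]$ and $j\circ l_{\mathcal{R}}$ are diagonal fillers: the first trivially, and the second because $(j\circ l_{\mathcal{R}})\circ l_{\mathcal{L}}\htpy j\circ l\htpy l_{\mathcal{L}}$ and $l_{\mathcal{R}}\circ(j\circ l_{\mathcal{R}})\htpy l_{\mathcal{R}}$. By the contractibility (hence propositionality) of the type of fillers, $j\circ l_{\mathcal{R}}\htpy\idfunc[\im(l)]$, so $l_{\mathcal{R}}$ is an equivalence.

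I expect the main obstacle to be bookkeeping for the coherence data of the fillers: each ``filler'' is really a quadruple $(j,H_f,H_g,K)$ with a higher path, and when one presents elements of $\fillers{S}$ one must provide not just the map and the two triangular homotopies but also the square $K$ relating them. These coherences need to be checked when writing down $\idfunc[\im(l)]$ and $j\circ l_{\mathcal{R}}$ as fillers of the second square, but they are all available by transport of the corresponding coherence obtained from the first lifting. Dualizing the entire argument (turning the square upside-down) yields $\mathcal{R}=\mathcal{L}^\bot$, completing the proof.
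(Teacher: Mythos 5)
Your proposal is correct and follows essentially the same route as the paper: both inclusions $\mathcal{L}\subseteq{^\bot\mathcal{R}}$, $\mathcal{R}\subseteq\mathcal{L}^\bot$ come from \cref{lem:diagonal_fillers}, and the reverse inclusion uses the identical retract argument with the same two squares (first lifting $l$ against $l_{\mathcal{R}}$ to get a section $j$, then using contractibility of fillers for the $(l_{\mathcal{L}},l_{\mathcal{R}})$-square to get $j\circ l_{\mathcal{R}}\htpy\idfunc$). The only cosmetic difference is the final step: you conclude via ``equivalences are in $\mathcal{L}$ plus closure under composition,'' whereas the paper transports along the resulting equivalence to identify $(B,l)$ with $(\im(l),l_{\mathcal{L}})$; both are valid given the remark after \cref{defn:sofs}.
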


\begin{proof}
We first show that $\mathcal{L}={^\bot\mathcal{R}}$, i.e.~we show that
$\mathcal{L}(f)\leftrightarrow {^\bot\mathcal{R}}(f)$ for any map $f$. Note
that the implication $\mathcal{L}(f)\to {^\bot\mathcal{R}}(f)$ follows from
\autoref{lem:diagonal_fillers}.

Let $f:A\to B$ be a map in ${^\bot\mathcal{R}}$.
We wish to show that $\mathcal{L}(f)$. Consider the factorization
$(f_{\mathcal{L}},f_{\mathcal{R}})$ of $f$. Then the square
\begin{equation*}
\begin{tikzcd}
A \arrow[r,"f_{\mathcal{L}}"] \arrow[d,swap,"f"] & \mathsf{im}_{\mathcal{L},\mathcal{R}}(f) \arrow[d,"f_{\mathcal{R}}"] \\
B \arrow[r,swap,"\idfunc"] & B
\end{tikzcd}
\end{equation*}
commutes. Since $f$ has the left lifting property, the type of diagonal fillers
of this square is contractible. Thus we have a section $j$ of $f_{\mathcal{R}}$.
The map $j\circ f_\mathcal{R}$ is then a diagonal filler of the square
\begin{equation*}
\begin{tikzcd}
A \arrow[r,"f_{\mathcal{L}}"] \arrow[d,swap,"f_{\mathcal{L}}"] & \mathsf{im}_{\mathcal{L},\mathcal{R}}(f) \arrow[d,"f_{\mathcal{R}}"] \\
\mathsf{im}_{\mathcal{L},\mathcal{R}}(f) \arrow[r,swap,"f_{\mathcal{R}}"] & B
\end{tikzcd}
\end{equation*}
Of course, the identity map $\idfunc[\mathsf{im}_{\mathcal{L},\mathcal{R}}(f)]$
is also a diagonal filler for this square, so the fact that the type of
such diagonal fillers is contractible implies that $j\circ f_{\mathcal{R}}=\idfunc$.
Thus, $j$ and $f_\cR$ are inverse equivalences, and so the pair $(B,f)$ is equal to the pair $(\mathsf{im}_{\mathcal{L},\mathcal{R}}(f),f_\cL)$.
Hence $f$, like $f_\cL$, is in $\cL$.

Similarly, \autoref{lem:diagonal_fillers} also implies that $\mathcal{R}(f)\to \mathcal{L}^\bot(f)$
for any map $f$, while we can prove $\mathcal{L}^\bot(f)\to\mathcal{R}(f)$ analogously to ${^\bot\mathcal{R}}(f)\to\mathcal{L}(f)$.
\end{proof}

\begin{cor}\label{lem:sofs_req}
The data of two orthogonal factorization systems $(\mathcal{L},\mathcal{R})$ and
$(\mathcal{L}',\mathcal{R}')$ are identical if and only if
$\mathcal{R}=\mathcal{R}'$.
\end{cor}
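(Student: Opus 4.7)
The plan is to reduce the claim to two observations: first, that the full structure of an orthogonal factorization system is propositional above the choice of the two predicates $\mathcal{L}$ and $\mathcal{R}$, and second, that by \cref{lem:ofs_lifting}, $\mathcal{L}$ is definable purely in terms of $\mathcal{R}$ via the left lifting property.

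First I would unfold the data of an orthogonal factorization system as a tuple consisting of (i) the two predicates $\mathcal{L}, \mathcal{R} : \prd*{A,B:\UU}(A\to B)\to\prop$, (ii) witnesses that each of $\mathcal{L}$ and $\mathcal{R}$ is closed under composition and contains all identities, and (iii) for each $f$, a term of type $\iscontr(\fact_{\mathcal{L},\mathcal{R}}(f))$. By function extensionality, equality at the level of predicates in (i) is equivalent to pointwise logical equivalence. The witnesses in (ii) and (iii) all live in mere propositions: being contractible is a proposition, and the closure conditions are iterated dependent products of propositions, hence propositions by \cref{thm:prop_pi}. Consequently, once the predicate parts are known to agree, equality of the entire OFS data follows automatically without additional coherence choices.

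The remaining content is therefore to show that $\mathcal{R} = \mathcal{R}'$ implies $\mathcal{L} = \mathcal{L}'$. Here I would apply \cref{lem:ofs_lifting} to each of the two factorization systems, obtaining
\begin{equation*}
\mathcal{L} = {^\bot\mathcal{R}} \qquad\text{and}\qquad \mathcal{L}' = {^\bot\mathcal{R}'}.
\end{equation*}
The predicate ${^\bot\mathcal{R}}$ is defined in \cref{defn:orthogonal} solely in terms of $\mathcal{R}$ (as a proposition about contractibility of diagonal filler types, quantified over commutative squares with right leg in $\mathcal{R}$). Hence from $\mathcal{R} = \mathcal{R}'$ one reads off directly that ${^\bot\mathcal{R}} = {^\bot\mathcal{R}'}$, and therefore $\mathcal{L} = \mathcal{L}'$, completing the proof.

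The main potential obstacle is bookkeeping rather than mathematical: one has to be careful that ``the data of an OFS'' is parsed as the full tuple in \cref{defn:sofs} and not just the pair $(\mathcal{L}, \mathcal{R})$, so that the reduction to equality of predicates genuinely requires the propositionality arguments in the first paragraph. Since \cref{lem:ofs_lifting} is already established, once this bookkeeping is in place the argument is essentially immediate.
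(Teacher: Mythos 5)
Your argument is correct and is essentially the paper's own proof: apply \cref{lem:ofs_lifting} to deduce $\mathcal{L}={^\bot\mathcal{R}}={^\bot\mathcal{R}'}=\mathcal{L}'$ from $\mathcal{R}=\mathcal{R}'$, and observe that the remaining data of a factorization system is a mere proposition. Your more explicit bookkeeping of the propositional components simply spells out what the paper leaves implicit.
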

\begin{proof}
  ``Only if'' is obvious.
  Conversely, if $\mathcal{R}=\mathcal{R}'$, then by \cref{lem:ofs_lifting} we have $\cL = \cL'$, and the remaining data of an orthogonal factorization system is a mere proposition.
\end{proof}

\begin{lem}\label{lem:ofs_rightstable}
Let $(\mathcal{L},\mathcal{R})$ be an orthogonal factorization system. Then
the class $\mathcal{R}$ is stable under pullbacks.
\end{lem}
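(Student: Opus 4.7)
The plan is to use the characterization $\mathcal{R} = \mathcal{L}^\bot$ established in \cref{lem:ofs_lifting}. It suffices to show that if $r' : A' \to B'$ is a pullback of some $r : A \to B$ in $\mathcal{R}$, then $r'$ has the unique right lifting property with respect to every map $l \in \mathcal{L}$.

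First I would fix a commuting square of the form
\begin{equation*}
\begin{tikzcd}
X \arrow[r,"f"] \arrow[d,swap,"l"] \ar[dr,phantom,"S"] & A' \arrow[d,"{r'}"] \\
Y \arrow[r,swap,"h"] & B'
\end{tikzcd}
\end{equation*}
with $l \in \mathcal{L}$. Pasting this square with the given pullback square on the right (with top map $q:A'\to A$ and homotopy $K:g\circ r' \htpy r\circ q$) yields an outer commuting square with $l$ on the left and $r$ on the right. Since $r \in \mathcal{R} = \mathcal{L}^\bot$, the type of diagonal fillers of this pasted outer square is contractible.

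Next I would use the universal property of the pullback to transport fillers back. A diagonal filler of the pasted outer square consists of a map $j : Y \to A$ together with homotopies $q\circ f \htpy j \circ l$ and $r \circ j \htpy g \circ h$ (and the appropriate coherence with $K$ and $S$). By the universal property of $A'$ as a pullback of $r$ along $g$, the type of pairs $(j : Y \to A, \alpha : r\circ j \htpy g\circ h)$ is equivalent to the type of maps $Y \to A'$ equipped with suitable homotopies to $q$ and to $h$ after post-composition with $r'$. Unfolding the remaining homotopy and coherence data on both sides, this upgrades to an equivalence between the type of diagonal fillers of the pasted outer square and the type of diagonal fillers of the original square $S$. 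Since the former is contractible, so is the latter. This shows $r' \in \mathcal{L}^\bot = \mathcal{R}$, as required.

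The main obstacle will be the bookkeeping in the second step: carefully matching up the filler data (a map, two homotopies, and a $2$-dimensional coherence) on each side of the universal-property equivalence, so that the gadget produced by pulling back is genuinely a filler of $S$ and not merely a filler of the outer square. This is essentially a routine application of the pasting lemma for pullbacks (\cref{thm:pb_pasting}) applied to function-spaces and homotopy-spaces indexed by $Y$, but it requires writing out the filler types as iterated $\Sigma$-types and recognizing them, level by level, as pullback fibers. Once that is done, no further content is needed: the argument is purely the orthogonality characterization combined with the universal property of pullbacks.
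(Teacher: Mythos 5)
Your argument is correct, but it takes a genuinely different route from the paper. You prove directly that any pullback of a map with the unique right lifting property against $\mathcal{L}$ again has that lifting property (by pasting the lifting square onto the pullback square and transporting fillers along the universal property of the pullback), and then invoke $\mathcal{L}^\bot=\mathcal{R}$ from \cref{lem:ofs_lifting}; since that lemma is proved before the statement in question, there is no circularity, and the bookkeeping you flag—matching the map, the two homotopies, and the coherence $2$-cell across the equivalence of filler types—is exactly the point where \cref{thm:pullback_up} and the mapping-space formulation of the pullback property do the work. The paper instead argues by a retract-style trick: it factors the pulled-back map $k$ as $k_{\mathcal{R}}\circ k_{\mathcal{L}}$, uses the universal property of the pullback to produce a retraction $j$ of $k_{\mathcal{L}}$ with $k\circ j=k_{\mathcal{R}}$, and then uses contractibility of the type of diagonal fillers (\cref{lem:diagonal_fillers}) for the single square with $k_{\mathcal{L}}$ on the left and $k_{\mathcal{R}}$ on the right to conclude $k_{\mathcal{L}}\circ j=\idfunc$, so $k_{\mathcal{L}}$ is an equivalence and $k$ lies in $\mathcal{R}$. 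Your approach buys more generality—it shows that any class defined by a right lifting property is pullback-stable, with no appeal to the existence of factorizations—at the cost of verifying the equivalence between the two filler types for pasted squares; the paper's approach uses the factorization essentially but only ever needs the uniqueness of a filler for one concrete square, so it avoids that coherence-heavy comparison.
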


\begin{proof}
Consider a pullback diagram
\begin{equation*}
\begin{tikzcd}
A \arrow[d,swap,"k"] \arrow[r,"g"] & X \arrow[d,"h"] \\
B \arrow[r,swap,"f"] & Y
\end{tikzcd}
\end{equation*}
where $h:X\to Y$ is assumed to be in $\mathcal{R}$, and let $k=k_{\mathcal{R}}\circ k_\mathcal{L}$ be a factorization of $h$.
Then the outer rectangle in the diagram
\begin{equation*}
\begin{tikzcd}
A \arrow[r,equals] \arrow[d,swap,"k_{\mathcal{L}}"] & A \arrow[d,swap,"k"] \arrow[r,"g"] & X \arrow[d,"h"] \\
\im_{\mathcal{L},\mathcal{R}}(k) \arrow[r,swap,"k_{\mathcal{R}}"] & B \arrow[r,swap,"f"] & Y
\end{tikzcd}
\end{equation*}
commutes, so by the universal property of pullbacks we obtain a unique map $j:\im_{\mathcal{L},\mathcal{R}}(k)\to A$ such that $j\circ k_{\mathcal{L}}=\idfunc$ and $k\circ j=k_{\mathcal{R}}$.
It suffices to show that $k_{\mathcal{L}}$ is an equivalence, and since we already have that $j\circ k_{\mathcal{L}}=\idfunc$ we only need to show that $k_{\mathcal{L}}\circ j=\idfunc$.

We do this using the contractibility of the type of diagonal fillers. Consider the square
\begin{equation*}
\begin{tikzcd}
A \arrow[r,"k_{\mathcal{L}}"] \arrow[d,swap,"k_{\mathcal{L}}"] & \im_{\mathcal{L},\mathcal{R}}(k) \arrow[d,"k_{\mathcal{R}}"] \\
\im_{\mathcal{L},\mathcal{R}}(k) \arrow[r,swap,"k_{\mathcal{R}}"] & B,
\end{tikzcd}
\end{equation*}
for which $\idfunc:\im_{\mathcal{L},\mathcal{R}}(k)\to \im_{\mathcal{L},\mathcal{R}}(k)$ (with the trivial homotopies) is a diagonal filler. However, we also have the homotopies $k_{\mathcal{L}}\circ j\circ k_{\mathcal{L}} \htpy k_{\mathcal{L}}$ and $k_{\mathcal{R}}\circ k_{\mathcal{L}}\circ j\htpy k\circ j\htpy k_{\mathcal{R}}$. This shows that we have a second diagonal filler, of which the underlying map is $k_{\mathcal{L}}\circ j$. Since the type of diagonal fillers is contractible, it follows that $k_{\mathcal{L}}\circ j=\idfunc$, as desired.
\end{proof}

\subsubsection{Stable orthogonal factorization systems}

\begin{lem}\label{lem:fill_compute}
Given $l,r,f,g$ and a homotopy $S : r \circ f = g  \circ l$, consider as $b:B$ varies all the diagrams of the form
\begin{equation*}
\begin{tikzcd}
\fib{l}{b} \arrow[r,"i_b"] \arrow[d,"!"'] & A \arrow[d,swap,"l"] \arrow[r,"f"] \ar[dr,phantom,"S"] & X \arrow[d,"r"] \\
\unit \arrow[r,swap,"b"] & B \arrow[r,swap,"g"] & Y
\end{tikzcd}
\end{equation*}
and write $S_b : r \circ (f \circ i_b) = (g\circ b) \circ \mathord !$ for the induced commutative square.
(Note that the square on the left commutes judgmentally.)
Then the map
\begin{equation*}
\fillers{S} \to \prd{b:B}\fillers{S_b},
\end{equation*}
defined by precomposition with $b$, is an equivalence.
\end{lem}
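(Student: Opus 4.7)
The plan is to exhibit the map as the composite of a short chain of elementary equivalences: function extensionality, the fiber decomposition $A\eqvsym\sm{b:B}\fib{l}{b}$, type-theoretic choice (\cref{thm:choice}), and the equivalence $(\unit\to X)\eqvsym X$. The content of the lemma is essentially that all the data defining a filler of $S$ can be transported and reassembled fiberwise over $B$, in such a way that the reorganization agrees with the precomposition-with-$b$ map.

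First I would unfold $\fillers{S}$ using function extensionality. The defining type becomes the $\Sigma$-type of quadruples $(j,H_f,H_g,K)$ with $j:B\to X$, pointwise homotopies $H_f:\prd{a:A}j(l(a))=f(a)$ and $H_g:\prd{b:B}r(j(b))=g(b)$, and a pointwise coherence $K:\prd{a:A} r\cdot H_f(a)=\ct{S(a)}{H_g(l(a))}$ (modulo the usual massaging of whiskerings by function extensionality). In the same way, $\fillers{S_b}$ unfolds, for each $b:B$, to the type of quadruples $(x_b,H_{f,b},H_{g,b},K_b)$ with $x_b:X$, $H_{f,b}:\prd{(a,p):\fib{l}{b}}x_b=f(a)$, $H_{g,b}:r(x_b)=g(b)$, and a corresponding pointwise coherence $K_b$.

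Next I would reindex the quantifiers over $A$ using the equivalence $A\eqvsym\sm{b:B}\fib{l}{b}$, which rewrites $\prd{a:A}P(a)$ as $\prd{b:B}\prd{(a,p):\fib{l}{b}}P(a)$. Applying this to the data of $H_f$ and to $K$, and swapping the outer $\prd{b:B}$ with the other quantifications using type-theoretic choice, all four pieces of data $j,H_f,H_g,K$ become grouped fiberwise over $b:B$. Inside the resulting $\prd{b:B}(\cdots)$, the $b$-th component is a $\Sigma$-type whose components are, respectively, a point $j(b):X$ (identifying $B\to X$ with $\prd{b:B}X$, hence fiberwise with $\unit\to X$), a homotopy $\prd{(a,p):\fib{l}{b}}j(b)=f(a)$ obtained from $H_f(a)$ by composition with $\apfunc{j}(p^{-1})$, a path $r(j(b))=g(b)$ provided by $H_g(b)$, and the coherence $K$ specialized at $(a,p)$. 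This is precisely the data of $\fillers{S_b}$, and chasing the definitions shows that the composite of all these rearrangements is the map defined by precomposition with $b$ in the statement.

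The main obstacle will be the coherence $K$: one has to verify that under the reindexing and currying, the pointwise instance of $K$ at $(a,p)\in\fib{l}{b}$ matches (up to canonical path-algebraic manipulation) the coherence $K_b$ that appears in $\fillers{S_b}$. This amounts to an identification between two pasting composites of $\apfunc{j}(p)$, $H_f(a)$, $S(a)$, and $H_g(b)$, provable by path induction on $p$ once the naturality square of $H_g$ along $p$ is used to slide an $\apfunc{g}(p)$ past $H_g$. Aside from this bookkeeping, all the remaining steps are instances of the standard $\Pi$/$\Sigma$-manipulations already developed in \cref{chap:univalent}, and assembling them yields the claimed equivalence.
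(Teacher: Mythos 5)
Your route is correct in outline but genuinely different from the paper's. You fix $S$ and attack the filler type directly: function extensionality, the reindexing $\prd{a:A}P(a)\eqvsym\prd{b:B}\prd*{(a,p):\fib{l}{b}}P(a)$, type-theoretic choice (\cref{thm:choice}), and then a hands-on matching of the coherence datum $K$ with the fiberwise coherences $K_b$, using naturality of $H_g$ and path induction on $p$. The paper never works at fixed $S$: since a fiberwise transformation is an equivalence exactly when its total map is (\cref{thm:fib_equiv}), it suffices to consider the induced map on total spaces over $S:(r\circ f=g\circ l)$, and there the pair $(S,K)$ --- and, on the codomain side, the family of pairs $(S_b,K_b)$ --- sits inside a contractible based path space and can simply be contracted away, so both sides collapse to $\sm{j:B\to X}(j\circ l=f)\times(r\circ j=g)$ with no two-dimensional path algebra left to track. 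That trick buys exactly the step you flag as your main obstacle: in your version the coherence bookkeeping is real work. Note in particular that after the choice step the coherence specialized at $(a,p)$ in the $b$-indexed group still refers to $H_g(l(a))$ rather than to $H_g(b)$, so the naturality/path-induction rewriting must be woven into (or precede) the regrouping rather than tacked on afterwards, and at the end you must still check that the composite of all your rearrangements is the precomposition map of the statement, not merely that the two types are equivalent. None of this is an error --- the ingredients you name are the right ones and the argument can be completed --- but it is noticeably heavier than the paper's contraction argument, which never has to write down the coherence comparison at all.
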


\begin{proof}
It suffices to show that the map on total spaces
\begin{equation}
  \Big(\sm{S:r\circ f = g\circ l} \fillers{S}\Big) \to \Big( \sm{S:r\circ f = g\circ l} \prd{b:B}\fillers{S_b}\Big)\label{eq:fill-total}
\end{equation}
is an equivalence.
The domain of~\eqref{eq:fill-total} can be computed as
\begin{align*}
  &\hspace{-1cm}\sm{S:r\circ f = g\circ l}{j:B\to X}{H_f :j\circ l=f}{H_g:r\circ j=g} \ct{(r\circ H_f)}{(H_g\circ l)^{-1}} = S\\
  &\eqvsym \sm{j:B\to X}(j\circ l=f)\times (r\circ j=g)
\end{align*}
by contracting a based path space.
On the other hand, note that
\begin{align*}
  (r\circ f = g\circ l)
  &\eqvsym
  \prd{a:A} r(f(a)) = g(l(a))\\
  &\eqvsym
  \prd{a:A}{b:B}{l(a)=b} r(f(a)) = g(l(a))\\
  &\eqvsym
  \prd{b:B}{u:\fib l b} r(f(i_b(a))) = g(l(i_b(a)))\\
  &\eqvsym
  \prd{b:B}{u:\fib l b} r(f(i_b(a))) = g(b)\\
  &\eqvsym
  \prd{b:B} (r \circ (f \circ i_b) = (g\circ b) \circ \mathord !)
\end{align*}
That is, to give $S$ is the same as to give each $S_b$.
Thus the codomain of~\eqref{eq:fill-total} can be computed as
\begin{align*}
  &\hspace{-1cm}\sm{S:r\circ f = g\circ l} \prd{b:B}\fillers{S_b}\\
  &\eqvsym \sm{S:\prd{b:B} (r \circ (f \circ i_b) = (g\circ b) \circ \mathord !)} \prd{b:B}\fillers{S_b}\\
  &\eqvsym \prd{b:B}\sm{S_b:r \circ (f \circ i_b) = (g\circ b) \circ \mathord !} \fillers{S_b}\\
  &\eqvsym \prd{b:B}\sm{j_b:\unit\to X}(j_b=f\circ i_b)\times (r\circ j_b=g(b))
\end{align*}
using the same argument as above for $S$.
Now we can compute
\begin{align*}
&\hspace{-1cm}\prd{b:B}\sm{j_b:\unit\to X}(j_b=f\circ i_b)\times(r\circ j_b=g\circ b) \\
& \eqvsym
\prd{b:B}\sm{j_b:X}(\lam{x}j_b=f\circ i_b)\times(r(j_b)=g(b)) \\
& \eqvsym
\sm{j:B\to X}\prd{b:B}(\lam{\nameless}j(b)=f\circ i_b)\times(r(j(b))=g(b)) \\
& \eqvsym
\sm{j:B\to X}(\prd{b:B}\lam{\nameless}j(b)=f\circ i_b)\times(\prd{b:B}r(j(b))=g(b)) \\
& \eqvsym
\sm{j:B\to X}(\prd{b:B}\lam{\nameless}j(b)=f\circ i_b)\times(r\circ j=g) \\
& \eqvsym
\sm{j:B\to X}(\prd{b:B}\prd{a:A}{p:l(a)=b}j(b)=f(a))\times(r\circ j=g) \\
& \eqvsym
\sm{j:B\to X}(\prd{a:A}j(l(a))=f(a))\times(r\circ j=g) \\
& \eqvsym
\sm{j:B\to X}(j\circ l=f)\times(r\circ j=g)
\end{align*}
which is what we computed as the domain of~\eqref{eq:fill-total} above.
\end{proof}

\begin{cor}
In any orthogonal factorization system
$(\mathcal{L},\mathcal{R})$, if
$l:A\to B$ is a map such that $\fib{l}{b} \to \unit$ is in $\cL$ for each $b:B$, then also $l$ itself is in $\cL$.
\end{cor}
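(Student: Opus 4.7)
The plan is to use the lifting-property characterization of $\cL$ established in \cref{lem:ofs_lifting}: we have $\cL = {}^\bot\cR$, so it suffices to show that $l$ has the unique left lifting property against every $r\in\cR$. Concretely, given any commuting square
\begin{equation*}
\begin{tikzcd}
A \arrow[r,"f"] \arrow[d,swap,"l"] \ar[dr,phantom,"S"] & X \arrow[d,"r"] \\
B \arrow[r,swap,"g"] & Y
\end{tikzcd}
\end{equation*}
with $r\in\cR$ and $S:r\circ f=g\circ l$, I need to show that $\fillers{S}$ is contractible.

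The key step is to reduce this to a fiberwise lifting problem using \cref{lem:fill_compute}, which gives an equivalence
\begin{equation*}
\fillers{S} \eqvsym \prd{b:B}\fillers{S_b},
\end{equation*}
where each $S_b$ is the induced square
\begin{equation*}
\begin{tikzcd}
\fib{l}{b} \arrow[r,"f\circ i_b"] \arrow[d,swap,"{!}"] \ar[dr,phantom,"S_b"] & X \arrow[d,"r"] \\
\unit \arrow[r,swap,"g(b)"] & Y.
\end{tikzcd}
\end{equation*}
By hypothesis, the map $\mathord!:\fib{l}{b}\to\unit$ lies in $\cL$ for every $b:B$, and $r\in\cR$ by assumption. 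Therefore \cref{lem:diagonal_fillers} yields that each $\fillers{S_b}$ is contractible.

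Since a product of contractible types is contractible (by \cref{thm:funext_wkfunext}), it follows that $\prd{b:B}\fillers{S_b}$ is contractible, and hence so is $\fillers{S}$. As the square was arbitrary with $r\in\cR$, we conclude $l\in {}^\bot\cR=\cL$ by \cref{lem:ofs_lifting}. There is no real obstacle here; the only thing to check is that the hypothesis of \cref{lem:fill_compute} on the shape of the induced squares $S_b$ matches exactly the squares to which we apply \cref{lem:diagonal_fillers}, which it does by construction.
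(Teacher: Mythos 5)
Your proof is correct and follows essentially the same route as the paper: reduce via \cref{lem:ofs_lifting} to contractibility of $\fillers{S}$ against each $r\in\cR$, use \cref{lem:fill_compute} to identify $\fillers{S}$ with $\prd{b:B}\fillers{S_b}$, and conclude since each $\fillers{S_b}$ is contractible (the paper cites \cref{lem:ofs_lifting} where you cite \cref{lem:diagonal_fillers}, but that is an immaterial difference) and a product of contractible types is contractible.
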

\begin{proof}
  By \cref{lem:ofs_lifting}, $l$ is in $\cL$ iff $\fillers S$ is contractible for each $r\in\cR$ and $S$ as in \cref{lem:fill_compute}, while similarly $\fib{l}{b} \to \unit$ is in $\cL$ iff $\fillers {S_b}$ is contractible.
  But the product of contractible types is contractible.
\end{proof}

\begin{cor}\label{thm:detect-right-by-fibers}
  In any stable orthogonal factorization system, if $l\perp r$ for all maps $l\in\cL$ of the form $l:A\to \unit$, then $r\in\cR$.
  In particular, for any modality $\modal$, if $X\to (A\to X)$ is an equivalence for all $\modal$-connected types $A$, then $X$ is modal.
\end{cor}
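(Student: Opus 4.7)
The plan is to verify the hypothesis characterizing $r \in \cR$ via \cref{lem:ofs_lifting}: I must show $l \perp r$ for every $l \in \cL$, not merely for those with codomain $\unit$. Fix therefore an arbitrary $l : A \to B$ in $\cL$, a square $S : r \circ f = g \circ l$ with $f : A \to X$ and $g : B \to Y$, and aim to prove that $\fillers S$ is contractible.

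The first key move is to invoke \cref{lem:fill_compute}, which supplies an equivalence
\begin{equation*}
\fillers S \;\simeq\; \prd{b:B}\fillers{S_b},
\end{equation*}
where $S_b$ is the induced fiberwise square
\begin{equation*}
\begin{tikzcd}
\fib{l}{b} \arrow[r,"f\circ i_b"] \arrow[d] & X \arrow[d,"r"] \\
\unit \arrow[r,swap,"g(b)"] & Y.
\end{tikzcd}
\end{equation*}
Since a dependent product of contractible types is contractible, it suffices to prove $\fillers{S_b}$ is contractible for every $b : B$.

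The second key move is to invoke stability. The projection $l_b : \fib{l}{b} \to \unit$ is the pullback of $l$ along $b : \unit \to B$, so by stability $l_b \in \cL$; moreover its codomain is $\unit$. The hypothesis therefore applies to $l_b$ and yields $l_b \perp r$, which is by definition the contractibility of $\fillers{S_b}$. This proves $l \perp r$ for all $l \in \cL$, and hence $r \in \cR$ by \cref{lem:ofs_lifting}.

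For the modality specialization, I would instantiate this at the stable orthogonal factorization system from \cref{thm:sofs_from_ssrs}, where $\cL$ is the class of $\modal$-connected maps and $\cR$ the class of $\modal$-modal maps, and take $r$ to be the terminal projection $X \to \unit$; then $r \in \cR$ means exactly that its unique fiber $X$ is modal. A map $l : A \to \unit$ lies in $\cL$ precisely when $A$ is $\modal$-connected, and for such $l$ the type of fillers of a square with $l$ on the left and $X \to \unit$ on the right unfolds to the fiber over $f : A \to X$ of the constant-function map $X \to (A \to X)$. Contractibility of all these fibers is exactly the assumption that $X \to (A \to X)$ is an equivalence, so the general statement applies and gives that $X$ is modal. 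The main (and really minor) obstacle is the bookkeeping of the fiberwise squares produced by \cref{lem:fill_compute}; all mathematical content is carried by that lemma together with stability.
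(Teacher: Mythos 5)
Your proof is correct and follows essentially the same route as the paper: reduce $\fillers S$ to $\prd{b:B}\fillers{S_b}$ via \cref{lem:fill_compute}, use stability to get each $\fib{l}{b}\to\unit$ in $\cL$, apply the hypothesis fiberwise, and for the modality case identify the filler types of squares into $X\to\unit$ with the fibers of the constant-functions map $X\to(A\to X)$. No gaps.
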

\begin{proof}
  By \cref{lem:fill_compute}, for any $l\in\cL$ and commutative square $S$ from $l$ to $r$, we have $\fillers{S} \eqvsym \prd{b:B}\fillers{S_b}$.
  Since $(\cL,\cR)$ is stable, each map $\mathord{!}_b:\fib{l}{b}\to \unit$ is also in $\cL$, so that $\mathord{!}_b\perp r$ by assumption.
  Thus $\fillers{S_b}$ is contractible for all $b$, hence so is $\fillers{S}$.

  For the second statement, the type $f:A\to X$ is equivalent to the type of commutative squares
  \[
  \begin{tikzcd}
    A \ar[r,"f"] \ar[d] & X \ar[d] \\ \unit\ar[r] & \unit
  \end{tikzcd}
  \]
  and the type of fillers for such a square is equivalent to the type of $x:X$ such that $f(a) = x$ for all $a:A$, i.e.\ the fiber of $X\to (A\to X)$ over $f$.
  Thus, the assumption ensures that all such types of fillers are contractible, i.e.\ $l\perp r$ for all $\modal$-connected maps of the form $l:A\to \unit$, so the first statement applies.
\end{proof}

\begin{lem}\label{lem:sofs_rfib}
Let $(\mathcal{L},\mathcal{R})$ be a stable orthogonal factorization system.
Then a map $r:X\to Y$ is in $\mathcal{R}$ if and only if $\fib{r}{y}$
is $(\mathcal{L},\mathcal{R})$-modal for each $y:Y$.
\end{lem}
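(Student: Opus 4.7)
The plan is to handle the two directions separately, and to reduce the non-trivial direction to a statement about right orthogonality against maps with unit codomain.

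The forward direction is immediate from the stability hypothesis: for each $y:Y$, the fiber sits in the defining pullback square
\[
\begin{tikzcd}
\fib{r}{y} \arrow[r] \arrow[d] & X \arrow[d,"r"] \\
\unit \arrow[r,"y"'] & Y,
\end{tikzcd}
\]
and since $\mathcal{R}$ is stable under pullbacks by \cref{lem:ofs_rightstable}, the map $\fib{r}{y}\to \unit$ lies in $\mathcal{R}$, which is precisely what it means for $\fib{r}{y}$ to be $(\mathcal{L},\mathcal{R})$-modal.

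For the converse, assume that each $\fib{r}{y}\to\unit$ is in $\mathcal{R}$, and apply \cref{thm:detect-right-by-fibers}: it suffices to verify $l\perp r$ for every $l:A\to\unit$ in $\mathcal{L}$. Given such an $l$ and a commutative square
\[
\begin{tikzcd}
A \arrow[r,"f"] \arrow[d,"l"'] & X \arrow[d,"r"] \\
\unit \arrow[r,"y"'] & Y,
\end{tikzcd}
\]
the homotopy $r\circ f\htpy \const_y$ lets me factor $f$ uniquely through $\fib{r}{y}$ as a map $\tilde f:A\to\fib{r}{y}$. I will then argue that the filler type of the outer square is equivalent to the filler type of the reduced square
\[
\begin{tikzcd}
A \arrow[r,"\tilde f"] \arrow[d,"l"'] & \fib{r}{y} \arrow[d] \\
\unit \arrow[r] & \unit.
\end{tikzcd}
\]
Since $\fib{r}{y}\to\unit$ is in $\mathcal{R}$ by hypothesis and $l\in\mathcal{L}$, this reduced filler type is contractible by \cref{lem:ofs_lifting}. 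Hence $l\perp r$, as required.

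The main step that will require care is the reduction of fillers along the pullback: this is a general pasting-of-pullbacks fact, but in this type-theoretic setup it deserves to be spelled out, either by a direct computation with $\Sigma$-types and path manipulations analogous to the calculation in \cref{lem:fill_compute}, or by packaging it as a small ancillary lemma saying that precomposing the right leg of a square with a pullback projection induces an equivalence on filler types. Once that bookkeeping is done, everything else is a straightforward invocation of the previously established lifting characterizations and the corollary \cref{thm:detect-right-by-fibers}.
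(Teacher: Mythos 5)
Your proof is correct and follows essentially the same route as the paper: the forward direction is the same appeal to \cref{lem:ofs_rightstable}, and your converse---reducing via \cref{thm:detect-right-by-fibers} to squares over $\unit\to Y$ and then passing fillers through the pullback $\fib{r}{y}$---is exactly the paper's argument, since that corollary merely packages \cref{lem:fill_compute} together with stability of $\mathcal{L}$, which the paper invokes directly. The pullback-reduction step you flag for care is asserted at the same level of detail in the paper's own proof, so spelling it out as a small ancillary lemma would only make the original more explicit.
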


\begin{proof}
The class of right maps is stable under pullbacks by \autoref{lem:ofs_rightstable},
so it suffices to show that any map with modal fibers is in $\mathcal{R}$.

Let $r:X\to Y$ be a map with modal fibers. Our goal is to show that
$r$ is in $\mathcal{R}$. By \autoref{lem:ofs_lifting} it suffices to show that
$r$ has the right lifting property with respect to the left maps.
Consider a diagram of the form
\begin{equation*}
\begin{tikzcd}
A \arrow[d,swap,"l"] \arrow[r,"f"] & X \arrow[d,"r"] \\
B \arrow[r,swap,"g"] & Y
\end{tikzcd}
\end{equation*}
in which $l$ is a map in $\mathcal{L}$.
We wish to show that the type of diagonal fillers is contractible.
By \autoref{lem:fill_compute}, the type of diagonal fillers of the above diagram
is equivalent to the dependent product of the types of fillers of
\begin{equation*}
\begin{tikzcd}
\fib{l}{b} \arrow[d] \arrow[r,"f\circ i_b"] & X \arrow[d,"r"] \\
\unit \arrow[r,swap,"g(b)"] & Y
\end{tikzcd}
\end{equation*}
indexed by $b:B$. Thus, it suffices that the type of diagonal fillers for this
square is contractible for each $b:B$. Since any filler factors uniquely through
the pullback $\unit\times_Y X$, which is $\fib{r}{g(b)}$, the type of diagonal
fillers of the above square is equivalent to the type of diagonal fillers of the
square
\begin{equation*}
\begin{tikzcd}
\fib{l}{b} \arrow[d] \arrow[r,densely dotted] & \fib{r}{g(b)} \arrow[d] \\
\unit \arrow[r,equals] & \unit
\end{tikzcd}
\end{equation*}
where the dotted map, is the unique map into the pullback $\fib{r}{g(b)}$. In
this square, the left map is in $\mathcal{L}$ because $\mathcal{L}$ is assumed
to be stable under pullbacks, and the right map is in $\mathcal{R}$ by assumption,
so the type of diagonal fillers is contractible.
\end{proof}

\begin{thm}\label{thm:subuniv-sofs}
Any two stable orthogonal factorization systems with the same modal types are
equal.
\end{thm}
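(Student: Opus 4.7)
The plan is to reduce the statement to a comparison of the right classes using \cref{lem:sofs_req}, and then apply \cref{lem:sofs_rfib} to characterize the right class purely in terms of the modal types.

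First I would invoke \cref{lem:sofs_req}, which tells us that the identity type $(\mathcal{L},\mathcal{R}) = (\mathcal{L}',\mathcal{R}')$ of orthogonal factorization systems is equivalent to the identity $\mathcal{R} = \mathcal{R}'$ of right classes. So it suffices to show that for every map $r : X \to Y$, we have $\mathcal{R}(r) \leftrightarrow \mathcal{R}'(r)$; since both are mere propositions, this logical equivalence will yield the desired identification by propositional extensionality (function extensionality applied fiberwise, as both $\mathcal{R}$ and $\mathcal{R}'$ are families of propositions indexed by maps).

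Next, I would apply \cref{lem:sofs_rfib} to both stable factorization systems. The assumption of stability is essential here: it tells us that $r \in \mathcal{R}$ if and only if every fiber $\fib{r}{y}$ is $(\mathcal{L},\mathcal{R})$-modal, and likewise $r \in \mathcal{R}'$ if and only if every fiber $\fib{r}{y}$ is $(\mathcal{L}',\mathcal{R}')$-modal. Since by hypothesis the subuniverses of modal types for the two factorization systems coincide, the two pointwise conditions on the fibers are the same. Therefore $\mathcal{R}(r) \leftrightarrow \mathcal{R}'(r)$, as required.

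There is no real obstacle: once the two preceding lemmas are available, the argument is essentially a one-line combination. The only subtlety worth flagging is that the bijection ``$\mathcal{R}$ determines $\mathcal{L}$'' in \cref{lem:sofs_req} uses the lifting characterization $\mathcal{L} = {^\bot\mathcal{R}}$ from \cref{lem:ofs_lifting}, and the critical use of stability of the orthogonal factorization system is already packaged into \cref{lem:sofs_rfib}, so nothing new about pullback-stability needs to be re-derived here.
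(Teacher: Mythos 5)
Your proposal is correct and follows exactly the paper's own argument: reduce to the right classes via \cref{lem:sofs_req} and then use \cref{lem:sofs_rfib} to see that, by stability, the right class is determined by the modal types. The extra remarks about propositional extensionality and where stability enters are accurate but only spell out what the paper's two-line proof leaves implicit.
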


\begin{proof}
By \autoref{lem:sofs_req} it follows that any orthogonal factorization system
is completely determined by the class of right maps.
By \autoref{lem:sofs_rfib} it follows that in a stable orthogonal factorization
system, the class of right maps is completely determined by the modal types.
\end{proof}

\begin{thm}\label{thm:highermod_from_sofs}
Any stable orthogonal factorization system determines a higher modality with
the same modal types.
\end{thm}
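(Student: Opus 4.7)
The plan is to define the modal operator and unit directly from the $(\mathcal{L},\mathcal{R})$-factorization, then use the lifting property \cref{lem:ofs_lifting} together with the characterization of $\mathcal{R}$-maps by modal fibers from \cref{lem:sofs_rfib} to obtain the dependent induction principle, the computation rule, and closure of the modal subuniverse under identity types. For each type $A$, I take $\modal A\defeq\im_{\mathcal{L},\mathcal{R}}(!)$ together with $\modalunit[A]:A\to\modal A$ the $\mathcal{L}$-factor of $!:A\to\unit$. The type $\modal A$ is $(\mathcal{L},\mathcal{R})$-modal by construction; conversely, if $X$ is $(\mathcal{L},\mathcal{R})$-modal then $(\idfunc[X],!)$ is itself an $(\mathcal{L},\mathcal{R})$-factorization of $!:X\to\unit$, so by uniqueness of factorizations $\modalunit[X]$ is an equivalence. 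Hence the two candidate notions of ``modal type''---being $(\mathcal{L},\mathcal{R})$-modal and having an invertible modal unit---coincide.

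For the dependent induction principle, fix $P:\modal A\to\UU$ and $f:\prd{a:A}\modal(P(\modalunit[A](a)))$. By \cref{lem:sofs_rfib}, the projection $\proj 1:\big(\sm{z:\modal A}\modal(P(z))\big)\to\modal A$ lies in $\mathcal{R}$, because each fiber $\modal(P(z))$ is modal. The pair $\tilde f\defeq\lam{a}(\modalunit[A](a),f(a))$ fits into the commuting square
\[
\begin{tikzcd}
A \arrow[r,"\tilde f"] \arrow[d,swap,"\modalunit[A]"] & \sm{z:\modal A}\modal(P(z)) \arrow[d,"\proj 1"] \\
\modal A \arrow[r,swap,"\idfunc"] & \modal A,
\end{tikzcd}
\]
whose type of diagonal fillers is contractible by \cref{lem:ofs_lifting}. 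Its centre is a map $j:\modal A\to\sm{z:\modal A}\modal(P(z))$ with $\proj 1\circ j\htpy\idfunc[\modal A]$ and $j\circ\modalunit[A]\htpy\tilde f$, so I set $\mathsf{ind}_{\modal A}(f)\defeq\proj 2\circ j$; the computation rule $\mathsf{ind}_{\modal A}(f,\modalunit[A](x))=f(x)$ then falls out of the top-triangle homotopy supplied by the filler.

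It remains to verify that identity types of $\modal A$ are themselves modal. By \cref{lem:sofs_rfib} this reduces to showing that for any $(\mathcal{L},\mathcal{R})$-modal type $X$, the diagonal $\delta_X:X\to X\times X$ lies in $\mathcal{R}$, since its fibers are precisely the identity types of $X$. By \cref{lem:ofs_lifting} it suffices to verify that $\delta_X$ is right orthogonal to every $l\in\mathcal{L}$: given a commuting square with top $f:A\to X$ and bottom $(g_1,g_2):B\to X\times X$, commutativity yields $g_1\circ l\htpy f\htpy g_2\circ l$, so that both $g_1$ and $g_2$ appear as fillers of the auxiliary square from $l$ into the $\mathcal{R}$-map $!:X\to\unit$, which has contractible fillers; hence $g_1=g_2$, producing the unique diagonal $h\defeq g_1$. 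I expect the main obstacle to be checking that the full quadruple $(h,H_f,H_g,K)$ assembles into the centre of a contractible type of fillers---not merely existence and equality of the underlying map $h$, but the coherence datum $K$ as well---which requires carefully tracking how contractibility of fillers against $!:X\to\unit$ yields compatible homotopy data for both components of $\delta_X$ and the coherence between them.
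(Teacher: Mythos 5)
Your construction of $\modal A$, $\modalunit[A]$, and the induction principle is essentially the paper's argument: the same factorization of $A\to\unit$, the same square with $\proj 1:\big(\sm{z:\modal A}\modal(P(z))\big)\to\modal A$ (an $\mathcal{R}$-map by \cref{lem:sofs_rfib}) against the $\mathcal{L}$-map $\modalunit[A]$, and the same appeal to contractibility of fillers (which is \cref{lem:diagonal_fillers}; \cref{lem:ofs_lifting} is the converse characterization). Two small repairs are needed there: $\proj 2\circ j$ does not literally have the type $\prd{z:\modal A}\modal(P(z))$ --- you must transport along the bottom homotopy $H(z):\proj 1(j(z))=z$ --- and the computation rule then does \emph{not} fall out of the top-triangle homotopy alone: you need the filler's coherence datum to identify $H(\modalunit[A](x))$ with $\proj 1$ applied to the top homotopy before the two transports cancel. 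The paper sidesteps both points by computing the whole filler type and showing it is equivalent to the fiber of precomposition by $\modalunit[A]$, which gives unique elimination directly.

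The genuine gap is in the third clause of \cref{defn:highermod}. Your plan is to show $\delta_X\in\mathcal{R}$ for modal $X$ by orthogonality against each $l\in\mathcal{L}$, but what you actually establish is existence and uniqueness of the \emph{underlying map} $h$ of a filler (via the contractible filler type against $!:X\to\unit$). For $\delta_X\in\mathcal{R}$ you need contractibility of the full type of quadruples $(h,H_f,H_g,K)$, and in this setting that is precisely the hard part: knowing the map is unique does not give you the homotopies and the coherence $K$, and you flag this yourself without resolving it. It can be done (it amounts to a left-cancellation property of $\mathcal{R}$, proved by a careful comparison of filler types), but the paper avoids it entirely: once unique elimination is in hand, the modal types form a reflective subuniverse, reflective subuniverses are closed under pullbacks (\cref{prp:local_pb}), and hence identity types of modal types are modal by \cref{lem:rs_idstable} --- no further lifting computation is required. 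Routing your clause (iii) through that observation closes the gap with no new coherence bookkeeping.
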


\begin{proof}
For every type $X$ we have the $(\cL,\cR)$-factorization $X\to\modal X\to\unit$ of the
unique map $X\to\unit$. This determines the modal unit
$\modalunit:X\to\modal X$ which is in $\mathcal{L}$, and the
unique map $\modal X\to\unit$ is in $\mathcal{R}$, i.e.\ $\modal X$ is $(\cL,\cR)$-modal.

To show the induction principle, let $P:\modal X\to\UU$ and $f:\prd{x:X} \modal(P(\eta(x)))$.
Then we have a (judgmentally) commutative square
\begin{equation*}
\begin{tikzcd}
X \arrow[r,"f"] \arrow[d,swap,"\modalunit"] & \sm{x:\modal X}\modal(P(x)) \arrow[d,"\proj1"] \\
\modal X \arrow[r,equals] & \modal X.
\end{tikzcd}
\end{equation*}
Note that by \autoref{lem:sofs_rfib},
the projection $\proj1:(\sm{x:\modal X}\modal(P(x)))\to\modal X$ is in $\mathcal{R}$
because its fibers are modal. Also, the modal unit
$\modalunit:X\to\modal X$ is in $\mathcal{L}$.
Thus, by \cref{defn:orthogonal}, the type of fillers of this square is contractible.
Such a filler consists of a function $s$ and homotopies filling the two triangles
\begin{equation*}
\begin{tikzcd}
X \arrow[r,"f"] \arrow[d,swap,"\modalunit"] & \sm{x:\modal X}\modal(P(x)) \arrow[d,"\proj1"] \\
\modal X \arrow[r,equals] \arrow[ur,densely dotted] & \modal X
\end{tikzcd}
\end{equation*}
whose composite is reflexivity, i.e.\ the type
\begin{multline*}
\sm{s:\modal X \to \sm{x:\modal X}\modal(P(x))}{H:\prd{x:\modal X} \proj1(s(x))=x}{K:\prd{x:X} s(\modalunit(x))=f(x)}\\
\prd{x:X} \proj1(K(x)) = H(\modalunit(x)).
\end{multline*}
If we decompose $s$, $f$, and $K$ by their components, we get
\begin{multline*}
\sm{s_1:\modal X \to \modal X}{s_2:\prd{x:\modal X} \modal(P(s_1(x)))}{H:\prd{x:\modal X} s_1(x)=x}\\
\sm{K_1:\prd{x:X} s_1(\modalunit(x))=f_1(x)}{K_2 :\prd{x:X} s_2(\modalunit(x)) =_{K_1(x)} f_2(x)}\\
\prd{x:X} K_1(x) = H(\modalunit(x)).
\end{multline*}
Now we can contract $s_1$ and $H$, and also $K_1$ with the final unnamed homotopy, to get
\begin{equation*}
\sm{s_2:\prd{x:\modal X} \modal(P(x))}  \prd{x:X} s_2(\modalunit(x)) =_{K_1(x)} f_2(x).
\end{equation*}
But this is just the type of extensions of $f$ along $\modalunit$, i.e.\ the fiber of precomposition by $\modalunit$.
Thus, precomposition by $\modalunit$ is an equivalence, so in fact that we have a uniquely eliminating modality.
By \cref{lem:rs_idstable}, the identity types of $\modal X$ are modal, so we have a higher modality as well.
\end{proof}

\section{Accessible reflective subuniverses}\label{sec:accessible}

\begin{defn}
Given a family $f:\prd{i:I}A_i\to B_i$ of maps, a type $X$ is said to be \define{$f$-local} if the precomposition map
\begin{equation*}
\precomp{f_i}:(B_i\to X)\to (A_i\to X)
\end{equation*}
is an equivalence, for each $i:I$. The family $f$ is said to be a \define{presentation} of a reflective subuniverse $L$ if the subuniverses of $f$-local types and $L$-local types coincide. A reflective subuniverse is said to be \define{accessible} if there exists a presentation for it. 
\end{defn}

In \cite{RijkeShulmanSpitters} it is shown that the subuniverse of $f$-local types is always a reflective subuniverse, provided that sufficiently many higher inductive types are available. However, it is not clear whether their construction is possible in our current setting, where the only higher inductive types that are assumed to exist are homotopy pushouts. In this section we will establish general properties of accessible reflective subuniverses. We will show in \cref{chap:compact} that for any family $f$ of maps between \emph{compact} types, the subuniverse of $f$-local types is indeed reflective. 

\begin{rmk}
Note that being accessible is structure; different families can present the same reflective subuniverse or modality.
As a trivial example, note that localizing at the empty
type, and localizing at the type family on $\bool$ defined by
$\bfalse\mapsto \emptyt$ and $\btrue\mapsto \unit$ both map all types to contractible types.

However, we are usually only interested in properties of presentations insofar as they determine properties of subuniverses.
For instance, by \cref{thm:acc-modal}, a reflective subuniverse is a modality exactly when it has a presentation in which each $C(a)=\unit$.
\end{rmk}

\begin{eg}\label{thm:trunc-acc}
The trivial modality $\truncf{(-2)}$ is presented by $\emptyt$, while the propositional truncation modality $\truncf{(-1)}$ is presented by $\bool$.  More generally, the
$n$-truncation modality $\truncf{n}$ is presented by the $(n+1)$-sphere $\Sn^{n+1}$.
\end{eg}

\begin{eg}\label{thm:open-acc}
For every mere proposition $P$, the open modality $\open P (X) \defeq (P\to X)$ from \cref{eg:open} is 
presented by the singleton type family $P$.
To see this, note that $\modalunit[X] : X \to (P\to X)$ is the same as the map in the definition of locality, so that $X$ is modal for the open modality on $P$ if and only if it is $P$-local.
(If $P$ is not a mere proposition, however, then $X\mapsto (P\to X)$ is not a modality, and in particular does not coincide with localization at $P$.)
\end{eg}

\begin{eg}\label{thm:closed-acc}
  The closed modality $\closed P$ from \cref{eg:closed} associated to a mere proposition $P$ is presented by the type family $\lam{x} \emptyt : P \to \UU$.
  For by definition, $A$ is null for this family if and only if for any $p:P$ the map $A \to (\emptyt \to A)$ is an equivalence.
  But $\emptyt \to P$ is contractible, so this says that $P\to\iscontr(A)$, which was the definition of $\closed P$-modal types from \cref{eg:closed}.
\end{eg}

\begin{lem}\label{lemma:characterizationsigmaflocal}
    Let $f:\prd{i:I}A_i\to B_i$ be a family of maps. Denote the family consisting of the suspensions
    of the functions by $\susp{f} : \prd{i:I} \susp{A_i} \to \susp{B_i}$.
    A type $X$ is $\suspsym f$-local if and only if for every $x,y : X$, the type
    $x =_X y$ is $f$-local.
    In other words, $L_{\susp{f}} = (L_{f})'$.
\end{lem}

\begin{proof}
    By the induction principle for suspension and naturality, we obtain for each $i : I$ a commutative square
\[
  \begin{tikzcd}
    (\susp{B_i} \to X) \arrow[r,"\simeq"] \arrow[d] & \left( \sm{x,y:X} (B_i \to x = y) \right) \arrow[d] \\
    (\susp{A_i} \to X) \arrow[r,"\simeq"] & \left( \sm{x,y:X} (A_i \to x = y) \right)
  \end{tikzcd}
\]
in which the horizontal maps are equivalences.
So $X$ is $\suspsym f$-local if and only if the right vertical map is an equivalence
for every $i : I$, if and only if for each $x,y : X$, the type $x = y$ is $f_i$-local
for every $i : I$.
\end{proof}

A general localization is only a reflective subuniverse, but there is a convenient sufficient condition for it to be a modality: if each $C(a)=\unit$.
A localization modality of this sort is called \emph{nullification}.

\begin{thm}\label{thm:nullification_modality}
  If $F:\prd{a:A} B(a) \to C(a)$ is such that each $C(a)=\unit$, then localization at $F$ is a modality, called \define{nullification at $B$}.
\end{thm}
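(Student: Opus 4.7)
The plan is to verify that the subuniverse of $F$-local types is closed under dependent sums, after which the conclusion follows from the equivalence between $\Sigma$-closed reflective subuniverses and modalities established in \cref{sec:modal-refl-subun} (in particular \cref{thm:sofs_from_ssrs} and the web of implications preceding it). Since the reflective subuniverse $L_F$ is assumed to exist (say, by \cite[Theorem 2.16]{RijkeShulmanSpitters}), the only thing to check is $\Sigma$-closure.

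First I would unpack what $F$-locality means under the hypothesis $C(a) = \unit$. The precomposition map $\precomp{F_a} : (\unit \to X) \to (B(a)\to X)$ factors through the standard equivalence $\unit\to X \eqvsym X$, so $X$ is $F$-local if and only if, for every $a:A$, the constant-function map
\[
  c_a^X : X \lra (B(a) \to X),\qquad x\mapsto \lam{b} x
\]
is an equivalence. Equivalently, every map $B(a)\to X$ is uniquely constant.

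Now suppose $X$ is $F$-local and $P : X \to \UU_{L_F}$ is a family of $F$-local types. Fix $a:A$; I must show that $c_a^{\sm{x:X}P(x)}$ is an equivalence. To construct its inverse, given $f : B(a) \to \sm{x:X}P(x)$ I would first apply $(c_a^X)^{-1}$ to $\proj 1\circ f$, obtaining a unique $x_0:X$ with a homotopy $H : \proj 1\circ f \htpy \mathsf{const}_{x_0}$; then transport pointwise along $H$ to turn the family $\proj 2\circ f$ of terms of $P(\proj 1(f(b)))$ into a map $g : B(a)\to P(x_0)$; finally apply $(c_a^{P(x_0)})^{-1}$, which exists by $F$-locality of $P(x_0)$, to extract a unique $y_0 : P(x_0)$. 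Set the inverse to send $f$ to $(x_0,y_0)$.

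The main technical point to grind through will be checking that this really is a two-sided inverse to $c_a^{\sm{x:X}P(x)}$. One direction is immediate from the computation rules of $(c_a^X)^{-1}$ and $(c_a^{P(x_0)})^{-1}$ on constant functions. For the other, I would use \cref{thm:eq_sigma} together with function extensionality to reduce $f = \mathsf{const}_{(x_0,y_0)}$ to a pointwise identification of pairs, then assemble the required identifications from $H(b)$ in the first coordinate and from the uniqueness of $y_0$ in the second coordinate modulo a transport coherence. No real obstacle is expected, just bookkeeping with transport in $\Sigma$-types; once it is done, $\sm{x:X}P(x)$ is $F$-local and hence $L_F$ is $\Sigma$-closed, completing the proof.
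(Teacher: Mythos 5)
Your proposal is correct and follows essentially the same route as the paper: reduce to $\Sigma$-closure of the $B$-null types and verify it using the characterization of nullification via the constant-maps equivalence $X\eqvsym (B(a)\to X)$. The paper's own proof dispatches the $\Sigma$-closure step more slickly, computing $(B(a)\to\sm{x:X}P(x))\eqvsym\sm{g:B(a)\to X}\prd{b:B(a)}P(g(b))\eqvsym\sm{x:X}(B(a)\to P(x))\eqvsym\sm{x:X}P(x)$ by type-theoretic choice and contracting along the two nullness equivalences, which avoids the explicit quasi-inverse construction and the transport bookkeeping you defer.
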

\begin{proof}
  It suffices to show that for any $B:A\to\UU$, the $B$-null types are $\Sigma$-closed.
  Thus, let $X:\UU$ and $Y:X\to \UU$ be such that $X$ and each $Y(x)$ are $B$-null.
  Then
  \begin{align*}
    (B\to \sm{x:X} Y(x))
    &\eqvsym \sm{g:B\to X} \prd{b:B} Y(g(b)) \\
    &\eqvsym \sm{x:X} B \to Y(x) \\
    &\eqvsym \sm{x:X} Y(x)
  \end{align*}
  with the inverse equivalence being given by constant maps.
  Thus, $\sm{x:X} Y(x)$ is $B$-null.
\end{proof}

Of course, it might happen that $\localization{F}$ is a modality even if $F$ doesn't satisfy the condition of \cref{thm:nullification_modality}.
For instance, if $B:A\to \UU$ has a section $s:\prd{a:A} B(a)$, then localizing at the family $s' : \prd{a:A} \unit \to B(a)$ is equivalent to nullifying at $B$, since in a section-retraction pair the section is an equivalence if and only if the retraction is.
However, we can say the following.

\begin{lem}\label{thm:acc-modal}
  If $F:\prd{a:A} B(a)\to C(a)$ is such that $\localization{F}$ is a modality, then there exists a family $E:D\to \UU$ such that $\localization{F}$ coincides with nullification at $E$.
\end{lem}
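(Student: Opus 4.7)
The plan is to take $D \defeq \sm{a:A} C(a)$ and $E(a,c) \defeq \fib{F_a}{c}$, and to show that a type $Y$ is $F$-local if and only if it is $E$-null; by \cref{thm:subuniverse-rs} this will suffice to identify $\localization{F}$ with nullification at $E$.

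First I would establish the easy direction, that every $E$-null type is $F$-local---this needs no modality hypothesis. Using the canonical equivalence $B(a) \simeq \sm{c:C(a)} \fib{F_a}{c}$ together with the $\Pi$-$\Sigma$ swap, we get $(B(a) \to Y) \simeq \prd{c:C(a)} (\fib{F_a}{c} \to Y)$, and under this identification the precomposition map $\precomp{F_a}$ becomes homotopic to the dependent product $\prd{c: C(a)} \kappa_c$, where $\kappa_c : Y \to Y^{\fib{F_a}{c}}$ is the constant-function inclusion. If $Y$ is $E$-null then each $\kappa_c$ is an equivalence, hence so is their product, and hence so is $\precomp{F_a}$; i.e., $Y$ is $F$-local.

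Conversely, suppose $\localization{F}$ is a modality and $Y$ is $F$-local. The crucial step is to show that $F_a$ is $\localization{F}$-connected, not merely an $\localization{F}$-equivalence. Using the $(\localization{F}\text{-connected}, \localization{F}\text{-modal})$ stable orthogonal factorization system afforded by any modality, factor $F_a = r \circ l$ with $l$ connected and $r$ modal. By \cref{cor:L'equivalenceisLconnected}(ii) the connected map $l$ is an $\localization{F}$-equivalence; $F_a$ is one by construction; so by 2-out-of-3 $r$ is an $\localization{F}$-equivalence. A standard fact about modalities is that an $L$-modal $L$-equivalence is an equivalence, so $r$ is an equivalence and $F_a \htpy l$ is $\localization{F}$-connected. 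Each $\fib{F_a}{c}$ is therefore $\localization{F}$-connected, and \cref{connectedtotruncated} applied to the $\localization{F}$-modal (= $F$-local) type $Y$ gives that $\kappa_c$ is an equivalence, i.e., $Y$ is null at $E(a,c)$.

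The hard part is precisely the step showing that $F_a$ is $\localization{F}$-connected rather than just an $\localization{F}$-equivalence: this is where the modality hypothesis really enters, via the $(L\text{-connected}, L\text{-modal})$ factorization system together with the auxiliary fact that an $L$-modal $L$-equivalence is an equivalence. The auxiliary fact is standard---one may observe that the unit-naturality square of any $L$-modal map is cartesian, so that \cref{cor:pb_equiv} applies once $Lr$ is known to be an equivalence, or more slickly appeal to the reflective factorization system $(L\text{-equivalences}, L\text{-modal})$ of \cref{chap:equifibrant} and the general principle that any map in the intersection of the two classes of an orthogonal factorization system is an equivalence---but it is what makes the lemma truly require the modality assumption rather than merely reflectivity.
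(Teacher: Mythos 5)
Your easy direction is fine, but the hard direction rests on a false claim, and in fact your choice of $E$ cannot work. The ``standard fact'' that an $L$-modal $L$-equivalence is an equivalence is not true: for the propositional truncation modality, the point inclusion $\unit\to\bool$ has propositional fibers (so it is a modal map) and induces an equivalence $\brck{\unit}\simeq\brck{\bool}$ between contractible types (so it is an $L$-equivalence), yet it is not an equivalence. Both of your proposed justifications break down at the same point: the unit-naturality square of a modal map is \emph{not} in general cartesian --- that is precisely the distinction between modal maps and $\modal$-\'etale maps emphasized at the start of \cref{chap:equifibrant}, and again $\unit\to\bool$ is a counterexample, since the pullback of $\brck{\unit}\to\brck{\bool}\leftarrow\bool$ is $\bool$, not $\unit$ --- and the reflective factorization system of a modality has as its right class the $\modal$-\'etale maps, not the modal maps (\cref{lem:rfs_factor,lem:rfs_orthogonal}), so your map $r$ need not lie in the intersection of its two classes.

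Consequently the ``crucial step'' is false: when $\localization{F}$ is a modality the maps $F_a$ need not be $\modal$-connected, and nullification at their fibers need not recover $\localization{F}$. Concretely, let $F$ be the single map $\unit\to\bool$. A type $X$ is $F$-local exactly when the projection $X\times X\to X$ is an equivalence, i.e.\ exactly when $X$ is a proposition (\cref{lem:prop_char}), so $\localization{F}$ is the propositional truncation, a modality. But $\fib{F}{\bfalse}\simeq\emptyt$, so your family $E$ contains $\emptyt$, and a type is $E$-null only if it is contractible; hence $\emptyt$ (or any noncontractible proposition) is $F$-local but not $E$-null. The paper avoids this trap by taking $E$ to consist not of the fibers of $F_a$ but of the fibers of the modal \emph{units} $\modalunit[B(a)]$ and $\modalunit[C(a)]$, which really are $\modal$-connected because units of a modality are; the direction ``$E$-null implies $F$-local'' is then recovered not from any connectedness of $F(a)$, but from the naturality square relating $F(a)$ to $\modal(F(a))$ together with the fact that $\modal(F(a))$ is an equivalence, using 2-out-of-3 for locality against the two units.
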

\begin{proof}
  Write $\modal\defeq\localization{F}$ and $\modalunit$ for its modal unit.
  Define $D = \sm{a:A} (\modal (B(a)) + \modal(C(a)))$, and $E:D\to \UU$ by
  \begin{align*}
    E(a,\inl(b)) &\defeq \fib{\modalunit[B(a)]}{b}\\
    E(a,\inr(c)) &\defeq \fib{\modalunit[C(a)]}{c}.
  \end{align*}
  Then since $\modalunit$ is $\modal$-connected, each $E(d)$ is $\modal$-connected, and hence every $F$-local type is $E$-null.

  On the other hand, suppose $X$ is an $E$-null type.
  Each $\modalunit[B(a)]$ and $\modalunit[C(a)]$ is $\localization{E}$-connected, since their fibers are $\localization{E}$-connected (by definition); thus $X$ is also $\modalunit[B(a)]$-local and $\modalunit[C(a)]$-local.
  But we have the following commutative square:
  \[
  \begin{tikzcd}[column sep=large]
    B(a) \ar[r,"{\modalunit[B(a)]}"] \ar[d,"F(a)"'] & \modal(B(a)) \ar[d,"{\modal(F(a))}"]\\
    C(a) \ar[r,"{\modalunit[C(a)]}"'] & \modal(C(a))
  \end{tikzcd}
  \]
  and ${\modal(F(a))}$ is an equivalence; thus $X$ is also $F(a)$-local.
  So the $F$-local types coincide with the $E$-null types.
\end{proof}

This shows that the following definition of accessible modality is consistent with our terminology of accessible reflective subuniverse.

\begin{defn}\label{defn:accessible}
A modality $\modal$ on $\UU$ is said to be \define{accessible} if it is the nullification at a family of types in $\UU$, indexed by a type in $\UU$.
A \define{presentation} of a modality $\modal$ consists of a family of types $B: A\to\UU$, where $A:\UU$, such that the subuniverse of modal types coincides with the subuniverse of $B$-null types.
\end{defn}

\begin{egs}
  Our characterizations of the truncation and open and closed modalities in \cref{thm:trunc-acc,thm:open-acc,thm:closed-acc} made no reference to the ambient universe.
\end{egs}

\begin{eg}
  By contrast, the double-negation modality $\neg\neg$ \emph{is} defined in a polymorphic way on all universes, but in general there seems no reason for it to be accessible on any of them.
  However, if propositional resizing holds, then it is the nullification at $\bool$ together with all propositions $P$ such that $\neg\neg P$ holds, and hence accessible.

  Whether or not any inaccessible modalities remain after imposing propositional resizing may depend on large-cardinal principles.
  It is shown in~\cite{css:large-cardinal} that this is the case for the analogous question about reflective sub-$(\infty,1)$-categories of the $(\infty,1)$-category of $\infty$-groupoids.
\end{eg}

\begin{rmk}\label{rmk:extend-oops}
  It is tempting to think that \emph{any} reflective subuniverse $\modal$ on $\UU$ could be extended to an accessible one on $\UU'$ by localizing at the family of \emph{all} functions in $\UU$ that are inverted by $\modal$ (or nullifying at the family of all $\modal$-connected types in $\UU$, in the case of modalities), which is a $\UU'$-small family though not a $\UU$-small one.
  This does produce an accessible reflective subuniverse $\modal'$ of $\UU'$ such that the $\modal'$-modal types in $\UU$ coincide with the $\modal$-modal ones, but there seems no reason why the modal \emph{operators} $\modal'$ and $\modal$ should agree on types in $\UU$.
\end{rmk}

\chapter{The equifibrant replacement operation}\label{chap:equifibrant}

We begin this chapter with a generalization of the descent theorem for reflexive coequalizers \cref{thm:rcoeq_cartesian}: the \emph{modal descent theorem}.
Any modality $\modal$ gives rise to a class of maps $f:A\to B$ satisfying the condition, due to Wellen \cite{WellenPhD}, that the naturality square
\begin{equation*}
\begin{tikzcd}
A \arrow[r,"f"] \arrow[d,swap,"\eta"] & B \arrow[d,"\eta"] \\
\modal A \arrow[r,swap,"\modal f"] & \modal B
\end{tikzcd}
\end{equation*}
is a pullback square. Following \cite{WellenPhD} we call such maps $\modal$-\'etale maps. The modal descent theorem asserts that a $\modal$-\'etale map into $A$ is equivalently described as a modal map into $\modal A$. Every $\modal$-\'etale map is certainly modal, but the condition of being $\modal$-\'etale is slightly stronger than the condition of being modal. 

The difference between the notions of $\modal$-\'etale maps and modal maps becomes perhaps most visible when we look at the left orthogonal classes of the $\modal$-\'etale maps and modal maps. The $\modal$-connected maps are left orthogonal to the modal maps, whereas a map is left orthogonal to the $\modal$-\'etale maps if and only if it is a $\modal$-equivalence (i.e.~a map $f$ such that $\modal f$ is an equivalence). In the case of the $n$-truncation there is a clear difference: a map $f:X\to Y$ is an $n$-equivalence if and only if it induces an isomorphism of homotopy groups $\pi_i(X)\to \pi_i(Y)$ for any $i\leq n$, whereas an $n$-connected map is an $n$-equivalence satisfying the further condition that $\pi_{n+1}(X)\to\pi_{n+1}(Y)$ is surjective.

In \cref{thm:modal_rofs} we show that the $\modal$-equivalences and the $\modal$-\'etale maps form an orthogonal factorization system. We call this factorization system the \emph{reflective} factorization system of a modality. The reflective factorization system is not stable, so it does not form a new modality. What does follow is that for the unique factorization $f=f_r\circ f_l$ of $f:A\to X$ as a $\modal$-equivalence $f_l$ followed by a $\modal$-\'etale map $f_r$, the canonical map
\begin{equation*}
\mathrm{hom}_X(f_r,e)\to\mathrm{hom}_X(f,e)
\end{equation*}
is an equivalence for any $\modal$-\'etale map $e$ into $X$. 

We then proceed to apply these ideas to the case of reflexive graphs and the reflexive coequalizer. The class of $\Delta$-\'etale maps is introduces as the class of morphisms $f:\mathsf{rGph}(\mathcal{B},\mathcal{A})$ of reflexive graphs satisfying the condition that the square
\begin{equation*}
\begin{tikzcd}[column sep=huge]
\mathcal{B} \arrow[r,"f"] \arrow[d,swap,"\mathsf{constr}"] & \mathcal{A} \arrow[d,"\mathsf{constr}"] \\
\Delta(\mathsf{rcoeq}(\mathcal{B})) \arrow[r,swap,"\Delta(\mathsf{rcoeq}(f))"] & \Delta(\mathsf{rcoeq}(\mathcal{A}))
\end{tikzcd}
\end{equation*}
is a pullback square, and we show that the class of $\Delta$-\'etale morphisms is precisely the class of fibrations of reflexive graphs, as defined in \cref{defn:graph_fibration}. 

In the case of sequential colimits, another way of obtaining a cartesian transformation from an arbitrary one is to take the sequential colimit fiberwise. We establish this result in \cref{thm:colim_fib}. It has many important consequences. First of all, we show in \cref{thm:colim_id} that sequential colimits commute with identity types. Second, we show in \cref{thm:colim_fiberseq} that the sequential colimit of a fiber sequence is again a fiber sequence. Third, we show in \cref{thm:colim_hlevel} that truncation levels are closed under sequential colimits, and that sequential colimits commute with truncations. Finally, we show in \cref{thm:colim_hgroup} that sequential colimits commute with homotopy groups. 

\section{Modal descent}\label{sec:modal_descent}
\subsection{\texorpdfstring{$\modal$}{○}-\'etale maps}

\begin{defn}[Definition 4.4.1 of \cite{WellenPhD}]
We say that a map $f:A\to B$ is \define{$\modal$-\'etale} if the square
\begin{equation*}
\begin{tikzcd}
A \arrow[r,"\eta"] \arrow[d,swap,"f"] & \modal A \arrow[d,"\modal f"] \\
B \arrow[r,swap,"\eta"] & \modal B
\end{tikzcd}
\end{equation*}
is a pullback square. We will write
\begin{equation*}
\isetale(f)\defeq\ispullback(\eta,f,\natunit_\modal(f)).
\end{equation*}
\end{defn}

It is immediate from the definition that any equivalence is $\modal$-\'etale, and that the $\modal$-\'etale maps are closed under composition, and that every equivalence is $\modal$-\'etale.

\begin{eg}\label{eg:etale_prop}
We claim that a map $f:A\to B$ is $\brck{\blank}$-\'etale if and only if $A\to \isequiv(f)$. Examples of maps that satisfy this condition include equivalences, maps between propositions, and any map of the form $\emptyt\to B$.

To see that if $f:A\to B$ is $\modal$-\'etale, then $A\to\isequiv(f)$, consider the pullback square
\begin{equation*}
\begin{tikzcd}
A \arrow[r] \arrow[d,swap,"f"] & \brck{A} \arrow[d,"\brck{f}"] \\
B \arrow[r] & \brck{B},
\end{tikzcd}
\end{equation*}
and let $a:A$. Then both $\brck{A}$ and $\brck{B}$ are contractible, so $\brck{f}:\brck{A}\to\brck{B}$ is an equivalence. Since equivalences are stable under pullback it follows that $f$ is an equivalence.

Now suppose that $A\to \isequiv(f)$. Since $\isequiv(f)$ is a proposition, we also have $\brck{A}\to\isequiv(f)$. To see that the gap map
\begin{equation*}
A \to B\times_{\brck{B}}\brck{A}
\end{equation*}
is an equivalence, we will show that its fibers are contractible. Let $b:B$, $x:\brck{A}$ and $p:\bproj{b}=\brck{f}(x)$. Since $\brck{A}\to\isequiv(f)$, it follows that $f$ is an equivalence. Then $\brck{f}$ is also an equivalence, from which it follows that the naturality square is a pullback square. We conclude that the fibers of the gap map are contractible. 
\end{eg}

\begin{lem}\label{lem:etale_modal}
Any map between $\modal$-modal types is $\modal$-\'etale.
\end{lem}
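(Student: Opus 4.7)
The plan is to reduce the claim to a direct application of \cref{cor:pb_equiv}, which asserts that in a commuting square one of whose vertical maps is an equivalence, the square is a pullback if and only if the other vertical map is an equivalence. The key observation is that if $A$ and $B$ are $\modal$-modal, then by definition the units $\eta_A:A\to\modal A$ and $\eta_B:B\to\modal B$ are both equivalences.

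Given a map $f:A\to B$ between $\modal$-modal types, the naturality square
\begin{equation*}
\begin{tikzcd}
A \arrow[r,"\eta_A"] \arrow[d,swap,"f"] & \modal A \arrow[d,"\modal f"] \\
B \arrow[r,swap,"\eta_B"] & \modal B
\end{tikzcd}
\end{equation*}
is a pullback square if and only if its transpose
\begin{equation*}
\begin{tikzcd}
A \arrow[r,"f"] \arrow[d,swap,"\eta_A"] & B \arrow[d,"\eta_B"] \\
\modal A \arrow[r,swap,"\modal f"] & \modal B
\end{tikzcd}
\end{equation*}
is a pullback square, since the two squares represent the same cone (after identifying both corners through the commutativity homotopy). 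In the transposed square the right vertical map $\eta_B$ is an equivalence, so \cref{cor:pb_equiv} applies and tells us that the square is a pullback square if and only if the left vertical map $\eta_A$ is an equivalence. Since $A$ is $\modal$-modal, $\eta_A$ is indeed an equivalence, and the result follows.

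I expect no real obstacle: the whole argument is bookkeeping once the orientation of the square is chosen compatibly with the hypothesis of \cref{cor:pb_equiv}. The only minor care needed is to verify that transposing the naturality square (swapping the role of the vertical and horizontal maps) genuinely preserves the property of being a pullback square, which is immediate from the symmetric definition of a pullback as the vertex of a universal cone.
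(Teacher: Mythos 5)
Your proof is correct and is essentially the paper's argument: the paper simply observes that the two unit maps in the naturality square are equivalences and concludes it is a pullback, which is exactly the fact you justify slightly more explicitly by transposing the square and invoking \cref{cor:pb_equiv}. The transposition step is harmless, since being a pullback is manifestly symmetric in the two legs of the cospan.
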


\begin{proof}
Suppose $f:X\to Y$ is a map between $\modal$-modal types. Then the top and bottom maps in the square
\begin{equation*}
\begin{tikzcd}
X \arrow[r] \arrow[d] & \modal X \arrow[d] \\
Y \arrow[r] & \modal Y
\end{tikzcd}
\end{equation*}
are equivalences. Therefore this square is a pullback square, so $f$ is $\modal$-\'etale.
\end{proof}

For the following lemma, recall that for a modality all propositions are modal if and only if all units $\eta : X\to \modal X$ are surjective. 

\begin{lem}\label{lem:etale_char}
Let $\modal$ be a modality for which all propositions are modal, and consider a map $f:A\to B$. The following are equivalent:
\begin{enumerate}
\item $f$ is $\modal$-\'etale.
\item The commuting square
\begin{equation*}
\begin{tikzcd}[column sep=large]
A\times_{\modal A} A \arrow[d,swap,"\pi_1"] \arrow[r,"{f\times_{\modal f} f}"] & B\times_{\modal B} B \arrow[d,"\pi_1"] \\
A \arrow[r,swap,"f"] & B
\end{tikzcd}
\end{equation*}
is a pullback square.
\end{enumerate}
\end{lem}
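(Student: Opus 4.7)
The plan is to reformulate both (i) and (ii) as fiberwise equivalence conditions via \cref{cor:pb_fibequiv}, observe that (ii) is the restriction of (i) along the unit $\eta_A$, and then use surjectivity of the units to promote (ii) back to (i).

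First, I would apply \cref{cor:pb_fibequiv} horizontally to the naturality square defining $\modal$-\'etaleness: this shows that (i) is equivalent to the statement that for every $x:\modal A$ the induced fiberwise map
\[
\varphi_x : \fib{\eta}{x}\longrightarrow \fib{\eta}{\modal f(x)}
\]
is an equivalence. Applying \cref{cor:pb_fibequiv} vertically to the square in (ii) shows that (ii) is equivalent to the statement that for every $a:A$ the induced map $\fib{\pi_1}{a}\to\fib{\pi_1}{f(a)}$ (where these projections are from $A\times_{\modal A}A$ and $B\times_{\modal B}B$ respectively) is an equivalence.

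Next, I would identify the fibers of those kernel-pair projections with fibers of $\eta$. Since $A\times_{\modal A}A$ is by definition the pullback of $\eta$ along itself, unfolding the pullback and inverting the identification gives a natural equivalence $\fib{\pi_1}{a}\simeq\fib{\eta}{\eta(a)}$, and by naturality $\fib{\pi_1}{f(a)}\simeq\fib{\eta}{\eta(f(a))}=\fib{\eta}{\modal f(\eta(a))}$. A short path calculation shows that under these identifications the map induced by $f\times_{\modal f}f$ is precisely $\varphi_{\eta(a)}$, so that (ii) asserts exactly that $\varphi_{\eta(a)}$ is an equivalence for every $a:A$.

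Finally, the implication (i)$\Rightarrow$(ii) is an immediate restriction. For (ii)$\Rightarrow$(i), fix $x:\modal A$; the type $\isequiv(\varphi_x)$ is a proposition, hence modal by hypothesis, and the fact recalled in the paragraph preceding the lemma gives that $\eta:A\to\modal A$ is surjective, so $\brck{\fib{\eta}{x}}$ holds. Since the goal is propositional we may assume $x=\eta(a)$ for some $a:A$, and then (ii) supplies the needed equivalence. The main technical obstacle is the bookkeeping in the identification step---tracing through the naturality path and the action $\apfunc{\modal f}$ on path data to confirm that the square-induced map really does coincide with $\varphi_{\eta(a)}$---but this amounts to a routine path-algebra calculation rather than a conceptual difficulty.
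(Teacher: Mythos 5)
Your proof is correct and follows essentially the same route as the paper: the substantive direction (ii)$\Rightarrow$(i) is exactly the paper's argument, identifying the fibers of the kernel-pair projections with fibers of the units and then using that modality of all propositions makes $\eta$ surjective to extend the fiberwise equivalence from the points $\eta(a)$ to all of $\modal A$. The only cosmetic difference is that the paper proves (i)$\Rightarrow$(ii) by pasting pullbacks in a commuting cube, whereas you obtain it by restricting the same fiberwise characterization along $\eta$; both amount to the same fiber computation.
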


\begin{rmk}
In the special case of $(-1)$-truncation, the characterization of \cref{lem:etale_char} asserts that a map $f:A\to B$ is $(-1)$-\'etale if and only if the square
\begin{equation*}
\begin{tikzcd}
A\times A \arrow[d,swap,"\pi_1"] \arrow[r,"{f\times f}"] & B\times B \arrow[d,"\pi_1"] \\
A \arrow[r,swap,"f"] & B
\end{tikzcd}
\end{equation*}
is a pullback square.
\end{rmk}

\begin{proof}
Suppose first that $f$ is $\modal$-\'etale, and consider the commuting cube
\begin{equation*}
\begin{tikzcd}
& A\times_{\modal A} A \arrow[dl] \arrow[d] \arrow[dr] \\
A \arrow[d] & B\times_{\modal B} B \arrow[dl] \arrow[dr] & A \arrow[dl,crossing over] \arrow[d] \\
B \arrow[dr] & \modal A \arrow[from=ul,crossing over] \arrow[d] & B \arrow[dl] \\
& \modal B
\end{tikzcd}
\end{equation*}
Since the top, bottom, and both front squares are pullback squares, it follows that both back squares are pullback. This proves that (i) implies (ii).

Now suppose that (ii) holds. Then the map
\begin{equation*}
\fib{\modalunit}{\modalunit(a)}\to \fib{\modalunit}{\modalunit(f(a))}
\end{equation*}
is an equivalence for every $a:A$. Since all propositions are assumed to be modal, it follows that
\begin{equation*}
\fib{\modalunit}{t}\to \fib{\modalunit}{\modal f(t)}
\end{equation*}
is an equivalence for every $t:\modal A$. Thus it follows that the square
\begin{equation*}
\begin{tikzcd}
A \arrow[d,swap,"\modalunit"] \arrow[r] & B \arrow[d,"\modalunit"] \\
\modal A \arrow[r] & \modal B
\end{tikzcd}
\end{equation*}
is a pullback square.
\end{proof}

\begin{cor}
If $f:A\to B$ is $\modal$-\'etale, then the square
\begin{equation*}
\begin{tikzcd}[column sep=large]
A \arrow[d,swap,"\delta_{\modalunit}"] \arrow[r,"f"] & B \arrow[d,"\delta_{\modalunit}"] \\
A\times_{\modal A} A \arrow[r,swap,"f\times_{\modal f}f"] & B\times_{\modal B} B
\end{tikzcd}
\end{equation*}
is a pullback square.
\end{cor}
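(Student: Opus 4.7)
The plan is to apply the pasting lemma for pullbacks \cref{thm:pb_pasting}. I would stack the target square on top of the square
\[
\begin{tikzcd}[column sep=large]
A\times_{\modal A} A \arrow[d,swap,"\pi_1"] \arrow[r,"{f\times_{\modal f} f}"] & B\times_{\modal B} B \arrow[d,"\pi_1"] \\
A \arrow[r,swap,"f"] & B,
\end{tikzcd}
\]
obtaining an outer rectangle whose vertical composites are $\pi_1\circ\delta_{\modalunit}\htpy\idfunc$ on both sides. The outer rectangle therefore collapses to the identity-cornered square with $f$ along the top and bottom, and is trivially a pullback square.

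It then remains only to verify that the bottom square of this stack is a pullback. This is exactly condition (ii) of \cref{lem:etale_char}, and crucially the implication (i)$\Rightarrow$(ii) of that lemma is proved by the cube argument alone and does not use the global hypothesis that all propositions are $\modal$-modal. Concretely, I would form the cube
\[
\begin{tikzcd}
& A\times_{\modal A} A \arrow[dl] \arrow[d] \arrow[dr] \\
A \arrow[d,swap,"f"] & B\times_{\modal B} B \arrow[dl] \arrow[dr] & A \arrow[dl,crossing over] \arrow[d,"f"] \\
B \arrow[dr] & \modal A \arrow[from=ul,crossing over] \arrow[d,swap,"\modal f"] & B \arrow[dl] \\
& \modal B,
\end{tikzcd}
\]
whose top and bottom faces exhibit $A\times_{\modal A} A$ and $B\times_{\modal B} B$ as pullbacks and whose two front faces are the $\modal$-\'etale naturality square for $f$; two applications of \cref{thm:pb_pasting} then force the back faces to be pullbacks, and the left back face is precisely our bottom square.

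Having confirmed both the outer rectangle and the bottom square as pullbacks, one final application of \cref{thm:pb_pasting} yields that the top square---the square originally in question---is a pullback. The only point of subtlety worth flagging is the observation about which direction of \cref{lem:etale_char} requires the propositional hypothesis, ensuring the corollary holds for any modality $\modal$ and not just those for which propositions are modal; beyond this bookkeeping, the argument is a clean double invocation of pullback pasting, and I do not anticipate any serious obstacle.
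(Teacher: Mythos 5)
Your proof is correct and is essentially the paper's own argument: stack the given square on top of the square with the projections $\pi_1$, note that the bottom square is a pullback by \cref{lem:etale_char} and the outer rectangle is a pullback because the vertical composites are homotopic to identities, and conclude by \cref{thm:pb_pasting}. Your additional remark that only the implication (i)$\Rightarrow$(ii) of \cref{lem:etale_char} is needed, and that this direction is the cube argument which does not use the hypothesis that propositions are $\modal$-modal, is accurate and a worthwhile clarification, since the corollary as stated carries no such hypothesis.
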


\begin{proof}
Consider the diagram
\begin{equation*}
\begin{tikzcd}[column sep=large]
A \arrow[d,swap,"\delta_{\modalunit}"] \arrow[r,"f"] & B \arrow[d,"\delta_{\modalunit}"] \\
A\times_{\modal A} A \arrow[d,swap,"\pi_1"] \arrow[r,"{f\times_{\modal f} f}"] & B\times_{\modal B} B \arrow[d,"\pi_1"] \\
A \arrow[r,"f"] & B
\end{tikzcd}
\end{equation*}
The bottom square is a pullback square by \cref{lem:etale_char}, and the outer rectangle is a pullback since both vertical composites are homotopic to the respective identity functions. Therefore the top square is a pullback.
\end{proof}

\begin{thm}
A map $f:A\to B$ is $0$-\'etale if and only if for each $a:A$ the restriction
\begin{equation*}
\begin{tikzcd}
\sm{x:A}\brck{a=x} \arrow[d,swap,"\proj 1"] \arrow[r,densely dotted,"f"] & \sm{y:B}\brck{f(a)=y} \arrow[d,"\proj 1"] \\
A \arrow[r,swap,"f"] & B
\end{tikzcd}
\end{equation*}
of $f$ to the connected component at $a$ of $A$ is an equivalence. 
\end{thm}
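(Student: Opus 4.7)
The plan is to apply \cref{cor:pb_fibequiv} to the transposed naturality square
\begin{equation*}
\begin{tikzcd}
A \arrow[r,"f"] \arrow[d,swap,"\eta"] & B \arrow[d,"\eta"] \\
\trunc{0}{A} \arrow[r,swap,"{\trunc{0}{f}}"] & \trunc{0}{B},
\end{tikzcd}
\end{equation*}
which is a pullback if and only if the original $0$-\'etale square is. By \cref{cor:pb_fibequiv}, this holds if and only if the induced fiberwise transformation
\begin{equation*}
\prd{t:\trunc{0}{A}} \fib{\eta}{t} \to \fib{\eta}{\trunc{0}{f}(t)}
\end{equation*}
is a fiberwise equivalence. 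Since $\eta : A\to\trunc{0}{A}$ is surjective (as the unit of any separated-type reflection, by \cref{thm:separation_characterization}) and being a fiberwise equivalence is a mere proposition, this quantification reduces to checking the condition on fibers over points of the form $\eta(a)$ with $a:A$.

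The key step is then the identification $\fib{\eta}{\eta(a)}\eqvsym\sm{x:A}\brck{a=x}$. By \cref{example:truncationisseparated}, the $0$-truncation coincides with the $L'$-reflection for $L\defeq\truncf{-1}$. Applying \cref{thm:separation_characterization}, the map $\apfunc{\eta}:(a=x)\to(\eta(a)=\eta(x))$ is an $L$-localization, i.e.~a propositional truncation of $a=x$. Since $\trunc{0}{A}$ is a set, its identity types are already propositions, so this localization map is an equivalence, yielding $(\eta(a)=\eta(x))\eqvsym\brck{a=x}$. Summing over $x:A$ gives $\fib{\eta}{\eta(a)}\eqvsym\sm{x:A}\brck{a=x}$, and analogously $\fib{\eta}{\eta(f(a))}\eqvsym\sm{y:B}\brck{f(a)=y}$.

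Under these identifications, and using the naturality homotopy $\eta\circ f\htpy \trunc{0}{f}\circ\eta$, the induced map on fibers over $\eta(a)$ sends $(x,p)$ with $p:\brck{a=x}$ to $(f(x),q)$, where $q:\brck{f(a)=f(x)}$ is obtained via the propositional truncation of $\apfunc{f}$; this coincides with the restricted map of $f$ displayed in the statement. Combining these observations yields the asserted equivalence. The main obstacle is the identification of fibers of $\eta$ with connected components; fortunately, it is not a bespoke encode-decode argument but a direct consequence of the theory of separated subuniverses (\cref{thm:separation_characterization}), after which the remainder of the proof is a routine assembly of \cref{cor:pb_fibequiv} with surjectivity of the unit.
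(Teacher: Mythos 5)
Your proof is correct, but it follows a different decomposition than the paper's. The paper first applies \cref{lem:etale_char} to replace the defining naturality square by the square relating $A\times_{\trunc{0}{A}}A$ to $B\times_{\trunc{0}{B}}B$ over $f$, then uses the equivalence $\eqv{(\tproj{0}{a}=\tproj{0}{x})}{\brck{a=x}}$ together with the fiberwise pullback criterion. You instead apply \cref{cor:pb_fibequiv} directly to the (transposed) naturality square, reduce the resulting fiberwise condition over all of $\trunc{0}{A}$ to fibers over points of the form $\eta(a)$ using surjectivity of the unit and propositionality of $\isequiv$, and then identify $\fib{\eta}{\eta(a)}$ with $\sm{x:A}\brck{a=x}$ via \cref{thm:separation_characterization}. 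In effect you inline the content of \cref{lem:etale_char}: the surjectivity reduction is exactly the mechanism used there (where ``all propositions are modal'' plays the role of surjectivity of the units). What your route buys is a self-contained argument from the general theory of separated subuniverses, including a justification of the identity-type computation that the paper simply cites; the paper's route is shorter because that work is already packaged in \cref{lem:etale_char}. Two small points to tidy: transposing the square is harmless because the universal property of pullbacks is symmetric in the two legs of the cospan, but it deserves a sentence; and the clause ``so this localization map is an equivalence'' is wrong if it refers to $\apfunc{\eta}:(a=x)\to(\eta(a)=\eta(x))$, which is a localization but in general not an equivalence. What you actually need, and what follows from uniqueness of localizations (\cref{lem:reflective_uniqueness}), is that the canonical comparison map $\brck{a=x}\to(\eta(a)=\eta(x))$ is an equivalence; this gives the identification you use, and note that the final comparison of the induced fiber map with the displayed restriction of $f$ is automatic, since fiberwise maps between the propositional families $\brck{a=x}$ and $\brck{f(a)=f(x)}$ form a proposition.
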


\begin{proof}
By \cref{lem:etale_char} and the fact that $\eqv{(\tproj{0}{a}=\tproj{0}{x})}{\brck{a=x}}$, it follows that $f$ is $0$-\'etale if and only if the square
\begin{equation*}
\begin{tikzcd}[column sep=huge]
\sm{a,x:A}\brck{a=x} \arrow[d,swap,"\pi_1"] \arrow[r,"\total{\brck{\apfunc{f}}}"] & \sm{b,y:B}\brck{b=y} \arrow[d,"\pi_1"] \\
A \arrow[r,swap,"f"] & B
\end{tikzcd}
\end{equation*}
is a pullback square. Furthermore, this square is a pullback if and only if the induced map
\begin{equation*}
\Big(\sm{x:A}\brck{a=x}\Big)\to\Big(\sm{y:B}\brck{f(a)=y}\Big)
\end{equation*}
is an equivalence, for each $a:A$.
\end{proof}

\subsection{Modal descent}

The following theorem can be seen as a `modal flattening lemma', since it is analogous to \cref{thm:rcoeq_cartesian}.

\begin{thm}\label{thm:etale_flattening}
Consider a pullback square
\begin{equation*}
\begin{tikzcd}
E' \arrow[d,swap,"{p'}"] \arrow[r,"g"] & E \arrow[d,"p"] \\
B' \arrow[r,swap,"f"] & B
\end{tikzcd}
\end{equation*}
with $H:f\circ p'\htpy p\circ g$, where $E$ and $B$ are modal types. Then the square
\begin{equation*}
\begin{tikzcd}
\modal E' \arrow[r,"\tilde{g}"] \arrow[d,swap,"{\modal p'}"] & E \arrow[d,"p"] \\
\modal B \arrow[r,swap,"\tilde{f}"] & B
\end{tikzcd}
\end{equation*}
is a pullback square, where $\tilde{f}$ and $\tilde{g}$ are the unique extensions of $f$ and $g$ along the modal units of $B'$ and $E'$, respectively.
\end{thm}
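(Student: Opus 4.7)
The plan is to identify the gap map from $\modal E'$ into the pullback $Q\defeq \modal B'\times_B E$ of the cospan $\modal B'\xrightarrow{\tilde f} B \xleftarrow{p} E$ and show that it is an equivalence. First I would verify that the square commutes: since $B$ is $\modal$-modal, any two maps $\modal E'\to B$ are equal if and only if they agree after precomposition with $\eta_{E'}$, and both $p\circ \tilde g$ and $\tilde f\circ \modal p'$ agree with $p\circ g = f\circ p'$ after such precomposition, by the defining homotopies of $\tilde g$, $\tilde f$, and $H$.

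Because $\modal B'$, $B$, and $E$ are all $\modal$-modal, the pullback $Q$ is $\modal$-modal by \cref{prp:local_pb}, and $\modal E'$ is $\modal$-modal by construction. Thus to prove that the induced gap map $\varphi:\modal E'\to Q$ is an equivalence, it suffices by \cref{lem:local_equivalence} together with \cref{thm:subuniv-modal} to show that $\varphi$ is a $\modal$-equivalence. Since $\eta_{E'}$ is itself a $\modal$-equivalence, the 2-out-of-3 property applied to the composite $\varphi\circ \eta_{E'}:E'\to Q$ reduces the problem further to showing that this composite is a $\modal$-equivalence.

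The composite $\varphi\circ \eta_{E'}$ factors as
\[
E' \xrightarrow{\gap} B'\times_B E \xrightarrow{\psi} \modal B'\times_B E,
\]
where $\gap$ is the gap map of the original pullback square, hence an equivalence by hypothesis, and $\psi$ is the map induced on pullbacks by $\eta_{B'}:B'\to \modal B'$. By the pasting property of pullbacks, $\psi$ is itself a pullback of $\eta_{B'}$ along $\pi_1:\modal B'\times_B E\to \modal B'$. Since $\modal$ is a modality, \cref{thm:ssrs-characterize} gives that $\eta_{B'}$ is $\modal$-connected, and \cref{thm:sofs_from_ssrs} tells us that the $\modal$-connected maps form the left class of a stable orthogonal factorization system, so they are closed under pullback. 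Hence $\psi$ is $\modal$-connected, and in particular a $\modal$-equivalence by \cref{cor:L'equivalenceisLconnected}. Combined with the equivalence $\gap$, this shows $\varphi\circ \eta_{E'}$ is a $\modal$-equivalence, as required.

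The main obstacle I anticipate is the step where $\Sigma$-closedness of $\modal$ is used: it is needed both to ensure that $Q$ is $\modal$-modal and, crucially, to conclude that $\eta_{B'}$ is $\modal$-connected so that its pullback $\psi$ is again $\modal$-connected; this is exactly the place where the argument would fail for a mere reflective subuniverse. The rest is routine manipulation of pullbacks together with the 2-out-of-3 property for $\modal$-equivalences.
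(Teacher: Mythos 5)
Your proof is correct and follows essentially the same route as the paper's: both arguments pass through the auxiliary pullback $\modal B'\times_{B} E$ and use the pasting lemma together with pullback-stability of $\modal$-connected maps to see that the comparison map out of $E'$ (equivalently out of $B'\times_B E$) is a pullback of $\modalunit[B']$, hence $\modal$-connected. The only divergence is the concluding step: the paper finishes by uniqueness of the ($\modal$-connected, $\modal$-modal) factorization of $g$, whereas you finish by the 3-for-2 property of $\modal$-equivalences plus the observation that a $\modal$-equivalence between modal types is an equivalence; both closings are sound.
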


\begin{proof}
Consider the diagram
\begin{equation*}
\begin{tikzcd}
E' \arrow[r,"{\mathsf{gap}(p',g,H)}"] \arrow[d,swap,"{p'}"] &[2.5em] \modal B'\times_{B} E \arrow[r,"\pi_2"] \arrow[d,swap,"\pi_1"] & E \arrow[d,"p"] \\
B' \arrow[r,swap,"\modalunit"] & \modal B' \arrow[r,swap,"\tilde{f}"] & B
\end{tikzcd}
\end{equation*}
In this diagram, the square on the right is a pullback by definition, and the outer rectangle is a pullback by assumption, so the square on the left is also a pullback. Therefore the gap map $E'\to \modal B'\times_B E$ is $\modal$-connected. Moreover, since the modal types are closed under pullbacks it follows that $\modal B'\times_B E$ is modal, and therefore it follows that $\pi_2:\modal B'\times_B E\to E$ is a modal map. Therefore the composite
\begin{equation*}
\begin{tikzcd}
E' \arrow[r,"{\mathsf{gap}(p',g,H)}"] &[2.5em] \modal B'\times_{B} E \arrow[r,"\pi_2"] & E 
\end{tikzcd}
\end{equation*}
factors $g$ as a $\modal$-connected map followed by a $\modal$-modal map. Of course, another such factorization is the composite $g\htpy \tilde{g}\circ\modalunit$. Since factorizations are unique, the claim follows.
\end{proof}

Using modal flattening we establish partial left exactness of the modality.

\begin{cor}\label{cor:etale_lex}
Consider a pullback square
\begin{equation*}
\begin{tikzcd}
A' \arrow[d,swap,"{f'}"] \arrow[r] & A \arrow[d,"f"] \\
B' \arrow[r] & B,
\end{tikzcd}
\end{equation*}
where $f$ is assumed to be $\modal$-\'etale. Then the square
\begin{equation*}
\begin{tikzcd}
\modal A' \arrow[d,swap,"{\modal f'}"] \arrow[r] & \modal A \arrow[d,"\modal f"] \\
\modal B' \arrow[r] & \modal B,
\end{tikzcd}
\end{equation*}
is again a pullback square.
\end{cor}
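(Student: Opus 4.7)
The plan is to reduce the statement to the modal flattening lemma (\cref{thm:etale_flattening}) by pasting the given pullback square with the defining pullback square of the hypothesis that $f$ is $\modal$-\'etale. Write $g:A'\to A$ and $h:B'\to B$ for the unlabelled edges of the given square, and consider the horizontal composite
\begin{equation*}
\begin{tikzcd}
A' \arrow[d,swap,"{f'}"] \arrow[r,"g"] & A \arrow[d,"f"] \arrow[r,"\modalunit"] & \modal A \arrow[d,"\modal f"] \\
B' \arrow[r,swap,"h"] & B \arrow[r,swap,"\modalunit"] & \modal B.
\end{tikzcd}
\end{equation*}
The left square is a pullback by hypothesis, and the right square is a pullback by the assumption that $f$ is $\modal$-\'etale. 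Hence, by the pasting property of pullbacks (\cref{thm:pb_pasting}), the outer rectangle is a pullback square.

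Now I would apply the modal flattening lemma to this outer rectangle, noting that its bottom-right and top-right vertices $\modal B$ and $\modal A$ are modal. The conclusion of \cref{thm:etale_flattening} gives that the square
\begin{equation*}
\begin{tikzcd}
\modal A' \arrow[d,swap,"{\modal f'}"] \arrow[r,"{\widetilde{\modalunit\circ g}}"] & \modal A \arrow[d,"\modal f"] \\
\modal B' \arrow[r,swap,"{\widetilde{\modalunit\circ h}}"] & \modal B
\end{tikzcd}
\end{equation*}
is a pullback square, where the horizontal maps are the unique extensions of $\modalunit\circ g$ and $\modalunit\circ h$ along the modal units of $A'$ and $B'$, respectively.

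It remains only to identify these unique extensions with $\modal g$ and $\modal h$. By naturality, the squares $\modal g\circ\modalunit\htpy \modalunit\circ g$ and $\modal h\circ\modalunit\htpy \modalunit\circ h$ commute, so $\modal g$ and $\modal h$ are extensions of $\modalunit\circ g$ and $\modalunit\circ h$ respectively; by the universal property of the modal unit, they must be equal to $\widetilde{\modalunit\circ g}$ and $\widetilde{\modalunit\circ h}$. This yields exactly the desired pullback square. No step here is really the obstacle; the whole point is that once the modal flattening lemma is established, this corollary reduces to a straightforward pasting argument together with the uniqueness clause of the universal property of the modal unit.
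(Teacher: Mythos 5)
Your proposal is correct and is essentially the paper's own proof: the paper likewise pastes the given pullback with the étale naturality square of $f$ to get a pullback rectangle with modal codomains, and then applies the modal flattening lemma (\cref{thm:etale_flattening}) to that rectangle. The only difference is that you spell out the final identification of the unique extensions with $\modal g$ and $\modal h$, which the paper leaves implicit.
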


\begin{proof}
Since $f$ is assumed to be $\modal$-\'etale, the square on the right in the diagram
\begin{equation*}
\begin{tikzcd}
A' \arrow[r] \arrow[d,swap,"{f'}"] & A \arrow[r] \arrow[d,swap,"f"] & \modal A \arrow[d,"\modal f"] \\
B' \arrow[r] & B \arrow[r] & \modal B
\end{tikzcd}
\end{equation*}
is a pullback square. Therefore the outer rectangle is a pullback square by the pullback pasting lemma. Now the claim follows from modal flattening \cref{thm:etale_flattening}, using the outer rectangle.
\end{proof}

\begin{cor}\label{cor:etale_pb}
Consider a pullback square
\begin{equation*}
\begin{tikzcd}
E' \arrow[d,swap,"{p'}"] \arrow[r,"g"] & E \arrow[d,"p"] \\
B' \arrow[r,swap,"f"] & B
\end{tikzcd}
\end{equation*} 
and suppose that $p:E\to B$ is $\modal$-\'etale. Then $p':E'\to B'$ is $\modal$-\'etale.
\end{cor}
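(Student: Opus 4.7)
The plan is to build a commuting cube out of the given pullback square together with its image under $\modal$, and then apply the pullback pasting lemma (\cref{thm:pb_pasting}) twice: once to see that a certain outer rectangle is a pullback, and once more to deduce that the naturality square for $p'$ is a pullback.

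First, I would apply \cref{cor:etale_lex} to the given pullback square. Since $p$ is assumed to be $\modal$-\'etale, this yields that the induced square
\begin{equation*}
\begin{tikzcd}
\modal E' \arrow[d,swap,"{\modal p'}"] \arrow[r,"\modal g"] & \modal E \arrow[d,"\modal p"] \\
\modal B' \arrow[r,swap,"\modal f"] & \modal B
\end{tikzcd}
\end{equation*}
is a pullback square. Combining this with the original pullback square and the naturality squares for $\modalunit$ (applied to $f$, $g$, $p$ and $p'$), I obtain a commuting cube whose front face is the given pullback, whose back face is the pullback just produced, whose right face is the $\modal$-\'etale square witnessing $p$ is $\modal$-\'etale, and whose left face is exactly the naturality square
\begin{equation*}
\begin{tikzcd}
E' \arrow[r,"\eta"] \arrow[d,swap,"p'"] & \modal E' \arrow[d,"\modal p'"] \\
B' \arrow[r,swap,"\eta"] & \modal B'
\end{tikzcd}
\end{equation*}
that we wish to show is a pullback.

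Next I would read off the outer rectangle obtained by composing the front and right faces horizontally:
\begin{equation*}
\begin{tikzcd}
E' \arrow[r,"g"] \arrow[d,swap,"p'"] & E \arrow[r,"\eta"] \arrow[d,"p"] & \modal E \arrow[d,"\modal p"] \\
B' \arrow[r,swap,"f"] & B \arrow[r,swap,"\eta"] & \modal B.
\end{tikzcd}
\end{equation*}
Both inner squares are pullbacks (the left by hypothesis, the right because $p$ is $\modal$-\'etale), so by \cref{thm:pb_pasting} the outer rectangle is a pullback. By the commutativity of the cube, this same outer rectangle is homotopic to the composite of the left and back faces:
\begin{equation*}
\begin{tikzcd}
E' \arrow[r,"\eta"] \arrow[d,swap,"p'"] & \modal E' \arrow[r,"\modal g"] \arrow[d,"\modal p'"] & \modal E \arrow[d,"\modal p"] \\
B' \arrow[r,swap,"\eta"] & \modal B' \arrow[r,swap,"\modal f"] & \modal B.
\end{tikzcd}
\end{equation*}
The right-hand square here is a pullback (by the first step), and the outer rectangle is a pullback (by the previous paragraph), so one more application of \cref{thm:pb_pasting} gives that the left-hand square is a pullback. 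This is precisely the statement that $p'$ is $\modal$-\'etale.

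The only real obstacle is bookkeeping: one has to check that the two decompositions of the outer $E' \to \modal E$ rectangle really are the \emph{same} commuting square (up to a coherent homotopy), which boils down to the naturality of $\modalunit$ applied to the square $p \circ g \htpy f \circ p'$. This is routine given the cubes built earlier in the chapter, and modulo this coherence the argument reduces to two invocations of the pasting lemma together with \cref{cor:etale_lex}.
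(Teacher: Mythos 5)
Your proposal is correct and follows essentially the same route as the paper: it applies \cref{cor:etale_lex} to get that the square of $\modal$-images is a pullback, assembles the same commuting cube from the given square, that $\modal$-image square, and the naturality squares of $\modalunit$, and concludes by the pasting property that the naturality square of $p'$ is a pullback. Your two explicit rectangle pastings are just an unfolding of the cube form of the pasting lemma that the paper invokes directly.
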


\begin{proof}
Consider the commuting cube
\begin{equation*}
\begin{tikzcd}
& E' \arrow[dr] \arrow[d] \arrow[dl] \\
\modal E' \arrow[d] & B' \arrow[dl] \arrow[dr] & E \arrow[dl,crossing over] \arrow[d] \\
\modal B' \arrow[dr] & \modal E \arrow[d] \arrow[from=ul,crossing over] & B \arrow[dl] \\
& \modal B. & \phantom{\modal E'}
\end{tikzcd}
\end{equation*}
The vertical squares on the back right and front right are pullback squares by assumption.
Then it follows from \cref{cor:etale_lex} that the vertical square on the front left is a pullback square.
Therefore the square on the back left is a pullback square by the pullback pasting property.
\end{proof}

\begin{defn}
Let $X$ be a type. We will define an operation
\begin{equation*}
\etmap:\Big(\sm{A:\UU_\modal}A\to\modal X\Big)\to\Big(\sm{Y:\UU}{g:Y\to X}\isetale(g)\Big)
\end{equation*}
\end{defn}

\begin{proof}[Construction]
Given a map $f:A\to \modal X$ we take the pullback
\begin{equation*}
\begin{tikzcd}
X\times_{\modal X}A \arrow[d,swap,"\pi_1"] \arrow[r,"\pi_2"] & A \arrow[d,"f"] \\
X \arrow[r,swap,"\modalunit"] & \modal X.
\end{tikzcd}
\end{equation*}
Then the map $\pi_1:X\times_{\modal X}A\to X$ is $\modal$-\'etale by \cref{lem:etale_modal,cor:etale_pb}.
\end{proof}

The following is a descent theorem for $\modal$-\'etale maps.

\begin{thm}[Modal descent]\label{thm:modal_descent}
For any modality $\modal$, and any type $X$, the operation
\begin{equation*}
\etmap:\Big(\sm{A:\UU_\modal}A\to\modal X\Big)\to\Big(\sm{Y:\UU}{g:Y\to X}\isetale(g)\Big)
\end{equation*}
is an equivalence.
\end{thm}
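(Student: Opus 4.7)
The plan is to exhibit an explicit inverse $\psi : \sm{Y:\UU}{g:Y\to X}\isetale(g) \to \sm{A:\UU_\modal} A\to\modal X$, namely $\psi(Y,g,e) \defeq (\modal Y, \modal g)$, and then verify both composites are homotopic to the identity.

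For the composite $\etmap\circ\psi\htpy\idfunc$, given an $\modal$-\'etale map $g:Y\to X$ we have $\etmap(\psi(Y,g,e)) = (X\times_{\modal X}\modal Y, \pi_1)$. The assumption $\isetale(g)$ says exactly that the naturality square
\begin{equation*}
\begin{tikzcd}
Y \arrow[r,"\modalunit"] \arrow[d,swap,"g"] & \modal Y \arrow[d,"\modal g"] \\
X \arrow[r,swap,"\modalunit"] & \modal X
\end{tikzcd}
\end{equation*}
is a pullback, so the gap map $Y\to X\times_{\modal X}\modal Y$ is an equivalence over $X$. Under this equivalence $\pi_1$ corresponds to $g$, so the pair is equal to $(Y,g)$; the $\isetale$-data matches since $\isetale(\blank)$ is a proposition.

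For the composite $\psi\circ\etmap\htpy\idfunc$, given $(A,f)$ with $A$ modal we have $\etmap(A,f) = (X\times_{\modal X}A,\pi_1)$, so $\psi(\etmap(A,f)) = (\modal(X\times_{\modal X}A),\modal\pi_1)$. Here I would use that $\pi_2:X\times_{\modal X}A\to A$ is the pullback of $\modalunit:X\to\modal X$ along $f$. Since $\modalunit$ is $\modal$-connected (\cref{thm:ssrs-characterize}) and modalities form a \emph{stable} factorization system, $\pi_2$ is again $\modal$-connected. Therefore $\modal\pi_2 : \modal(X\times_{\modal X}A)\to\modal A$ is an equivalence, and composing with the inverse of $\modalunit[A]:A\eqvsym\modal A$ gives an equivalence $\varphi:\modal(X\times_{\modal X}A)\eqvsym A$. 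To see this equivalence respects the maps to $\modal X$, apply $\modal$ to the pullback square to get $\modalunit\circ\pi_1\htpy f\circ\pi_2$, and then use naturality of $\modalunit$ in the form $\modal\pi_1\circ\modalunit\htpy\modalunit\circ\pi_1$ to conclude $\modal\pi_1\htpy f\circ\varphi$.

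The only real obstacle is the identification of the $\modal$-localization of a pullback in $\psi\circ\etmap$. The clean way to carry this out is via the modal flattening lemma (\cref{thm:etale_flattening}) applied to the pullback square defining $X\times_{\modal X}A$, whose bottom right types $A$ and $\modal X$ are modal: it yields directly that
\begin{equation*}
\begin{tikzcd}
\modal(X\times_{\modal X}A) \arrow[r,"\widetilde{\pi_1}"] \arrow[d,swap,"\modal\pi_2"] & X \arrow[d,"\modalunit"] \\
\modal A \arrow[r,swap,"\tilde f"] & \modal X
\end{tikzcd}
\end{equation*}
is a pullback; combined with $\modal A\eqvsym A$ this both produces the equivalence $\varphi$ and exhibits its compatibility with the maps to $\modal X$ in one stroke, completing the proof.
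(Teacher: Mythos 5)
Your overall strategy coincides with the paper's: the candidate inverse $\psi(Y,g,e)\defeq(\modal Y,\modal g)$, the composite $\etmap\circ\psi$ handled by the defining pullback property of $\modal$-\'etale maps, and the composite $\psi\circ\etmap$ handled by an equivalence $\eqv{\modal(X\times_{\modal X}A)}{A}$ over $\modal X$. Your first, hands-on argument for the latter is correct, and it is essentially an unfolding of the paper's one-liner: $\pi_2$ is a pullback of the $\modal$-connected unit, hence $\modal$-connected by stability, so $f\circ\pi_2$ and $\modal\pi_1\circ\modalunit$ are two factorizations of the same map $X\times_{\modal X}A\to\modal X$ into a $\modal$-connected map followed by a modal map, and uniqueness of such factorizations produces $\varphi$ together with the commuting triangle in one step. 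Two small points about your version: the homotopy $\modalunit\circ\pi_1\htpy f\circ\pi_2$ is just the commutativity of the defining square (nothing is gained by "applying $\modal$"), and to pass from $\modal\pi_1\circ\modalunit\htpy f\circ\varphi\circ\modalunit$ to $\modal\pi_1\htpy f\circ\varphi$ you must cancel the precomposition with $\modalunit$ using the universal property of the unit into the modal type $\modal X$ (\cref{lem:localization_uphtpy}); you gesture at this but do not state it.

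The genuine error is in your preferred "clean" finish: you apply \cref{thm:etale_flattening} with the wrong orientation. The flattening lemma keeps the modal column of the given pullback fixed and modalizes the other column; applied to the square defining $X\times_{\modal X}A$, whose modal column is $A$ and $\modal X$, it says that the square with top map $\tilde{\pi}_2:\modal(X\times_{\modal X}A)\to A$, vertical maps $\modal\pi_1$ and $f$ into $\modal X$, and bottom map the extension of $\modalunit[X]$ along itself (homotopic to $\idfunc[\modal X]$) is a pullback; since its bottom map is an equivalence, $\tilde{\pi}_2$ is an equivalence, and the commutativity of this square is exactly the triangle you need. So the lemma does complete the proof, but not with the square you drew: there the non-modal type $X$ occupies the position the lemma requires to be modal, the map $\widetilde{\pi_1}:\modal(X\times_{\modal X}A)\to X$ (an extension of $\pi_1$ along the unit into the non-modal codomain $X$) does not exist in general, and if your square were a pullback it would force $X\times_{\modal X}A$ to be modal. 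Concretely, take $\modal\defeq\trunc{0}{\blank}$, $X\defeq\sphere{1}$, $A\defeq\bool$, and $f$ the unique map $\bool\to\unit\eqvsym\trunc{0}{\sphere{1}}$: then $X\times_{\modal X}A\eqvsym\sphere{1}\times\bool$ is not a set, and any factorization of $\pi_1:\sphere{1}\times\bool\to\sphere{1}$ through $\trunc{0}{\sphere{1}\times\bool}\eqvsym\bool$ would make $\idfunc[\sphere{1}]$ homotopic to a constant map.
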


\begin{proof}
If $g:Y\to X$ is $\modal$-\'etale, then the square
\begin{equation*}
\begin{tikzcd}
Y \arrow[d,swap,"g"] \arrow[r,"\modalunit"] & \modal Y \arrow[d,"\modal g"] \\
X \arrow[r,swap,"\modalunit"] & \modal X
\end{tikzcd}
\end{equation*}
is a pullback square. Therefore $g:Y\to X$ is in the fiber of $\etmap$ at $\modal g : \modal Y\to\modal X$. 

It remains to show that for any map $f:A\to\modal X$ with modal domain, there is an equivalence $\eqv{A}{\modal (X\times_{\modal X} A)}$ such that the triangle
\begin{equation*}
\begin{tikzcd}[column sep=tiny]
A \arrow[dr,swap,"f"] \arrow[rr,"\eqvsym"] & & \modal (X\times_{\modal X} A) \arrow[dl,"\modal(\etmap(f))"] \\
\phantom{\modal (X\times_{\modal X} A)} & \modal X
\end{tikzcd}
\end{equation*}
commutes. To see this, note that both $f\circ \pi_2$ and $\modal(\etmap(f))\circ \modalunit$ factor the same map as a $\modal$-connected map followed by a modal map, so the claim follows from uniqueness of factorizations.
\end{proof}

\begin{cor}
Suppose $P:X\to\UU_\modal$ is a family of modal types such that the projection map $\proj 1:\big(\sm{x:X}P(x)\big)\to X$ is $\modal$-\'etale. Then $P$ has a unique extension
\begin{equation*}
\begin{tikzcd}
X \arrow[d,swap,"\modalunit"] \arrow[r,"P"] & \UU_\modal. \\
\modal X \arrow[ur,densely dotted,swap,"\tilde{P}"] 
\end{tikzcd}
\end{equation*}
It follows that the square commuting square
\begin{equation*}
\begin{tikzcd}
\sm{x:X}P(x) \arrow[d,swap,"\proj 1"] \arrow[r] & \sm{t:\modal X}\tilde{P}(t) \arrow[d,"\proj 1"] \\
X \arrow[r,swap,"\modalunit"] & \modal X
\end{tikzcd}
\end{equation*}
is a pullback square. In particular the top map is $\modal$-connected, so this square is in fact a $\modal$-naturality square.
\end{cor}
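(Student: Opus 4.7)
The plan is to derive the statement directly from the modal descent theorem \cref{thm:modal_descent} applied to the map $\proj 1 : \big(\sm{x:X}P(x)\big)\to X$, which is $\modal$-\'etale by hypothesis. Inspecting the proof of \cref{thm:modal_descent}, one sees that the inverse of $\etmap$ sends a $\modal$-\'etale map $g:Y\to X$ to the pair $(\modal Y,\modal g)$. Applied here, this produces the map $\modal\proj 1 : \modal\big(\sm{x:X}P(x)\big)\to\modal X$. I will then define $\tilde{P}(t)\defeq \fib{\modal\proj 1}{t}$; each such fiber is a pullback of modal types and hence modal, so $\tilde{P}$ lands in $\UU_\modal$.

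Next, to see that $\tilde{P}$ extends $P$ along $\modalunit$, I invoke the hypothesis that $\proj 1$ is $\modal$-\'etale, which by definition says that the $\modal$-naturality square
\[
\begin{tikzcd}
\sm{x:X}P(x) \arrow[d,swap,"\proj 1"] \arrow[r,"\modalunit"] & \modal\big(\sm{x:X}P(x)\big) \arrow[d,"\modal\proj 1"] \\
X \arrow[r,swap,"\modalunit"] & \modal X
\end{tikzcd}
\]
is a pullback. Taking fibers over $x:X$ on the bottom left and correspondingly over $\modalunit(x)$ on the bottom right yields $P(x)\eqvsym \tilde{P}(\modalunit(x))$, which is the sought extension. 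For uniqueness, if $\tilde{Q}:\modal X\to\UU_\modal$ is another extension, then $\proj 1 : \sm{t:\modal X}\tilde{Q}(t)\to\modal X$ is a map between modal types, hence $\modal$-\'etale by \cref{lem:etale_modal}; pulling it back along $\modalunit$ recovers (up to equivalence) $\proj 1:\sm{x:X}P(x)\to X$, so the pair $(\sm{t:\modal X}\tilde{Q}(t),\proj 1)$ sits in the fiber of $\etmap$ at $\modal\proj 1$, which is contractible. By \cref{thm:fam_proj} this forces $\tilde{Q}=\tilde{P}$ as families in $\UU_\modal$.

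For the final assertion, observe that \cref{thm:fam_proj} gives $\sm{t:\modal X}\tilde{P}(t)\eqvsym \modal\big(\sm{x:X}P(x)\big)$, because $\tilde{P}$ is by construction the fiber family of $\modal\proj 1$. Under this equivalence the top horizontal map of the displayed square is identified with $\modalunit$, so the square is literally the $\modal$-naturality square for $\proj 1$. Since $\proj 1$ is $\modal$-\'etale, this square is a pullback, and the top map, being a unit of $\modal$, is $\modal$-connected.

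The main delicacy will be bookkeeping in the uniqueness argument: one must be careful that the classification statement of \cref{thm:modal_descent} characterizes $\modal$-\'etale maps over $X$ by \emph{modal} types mapping to $\modal X$, and correspondingly that extensions are required to take values in $\UU_\modal$; without the modality hypothesis on the fibers, both the construction and the uniqueness could fail. Once this is in hand, the rest is a direct unwinding of \cref{thm:modal_descent} and \cref{thm:fam_proj}.
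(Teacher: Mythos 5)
Your proposal is correct and is essentially the argument the paper intends: the corollary is stated as an immediate consequence of \cref{thm:modal_descent}, and your unwinding --- defining $\tilde{P}$ as the fibers of $\modal \proj 1$, using the $\modal$-\'etale pullback square together with \cref{cor:pb_fibequiv} and \cref{eg:fib_proj} to identify $P(x)\eqvsym\tilde{P}(\modalunit(x))$, and using contractibility of the fibers of $\etmap$ (with \cref{thm:fam_proj} restricted to modal fibers, which is where $\Sigma$-closedness of the modality enters) for uniqueness --- is exactly that derivation. The only blemish is terminological: in the uniqueness step the relevant fiber of $\etmap$ is the one over the \'etale map $\proj 1:\big(\sm{x:X}P(x)\big)\to X$ itself, a point of the codomain of $\etmap$, rather than ``at $\modal\proj 1$''; this mirrors the paper's own loose phrasing in the proof of \cref{thm:modal_descent} and does not affect the argument.
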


We conclude with a slightly more economic rephrasing of \cref{thm:etale_flattening}, which is an easy corollary of the results in this section.

\begin{thm}
Consider a commuting square
\begin{equation*}
\begin{tikzcd}
E' \arrow[d,swap,"{p'}"] \arrow[r,"g"] & E \arrow[d,"p"] \\
B' \arrow[r,swap,"f"] & B
\end{tikzcd}
\end{equation*}
with $H:f\circ p'\htpy p\circ g$, where $E$ and $B$ are modal types, where $p':E'\to B'$ is $\modal$-\'etale. Then the following are equivalent:
\begin{enumerate}
\item The square is a pullback square.
\item The square
\begin{equation*}
\begin{tikzcd}
\modal E' \arrow[r,"\tilde{g}"] \arrow[d,swap,"{\modal p'}"] & E \arrow[d,"p"] \\
\modal B \arrow[r,swap,"\tilde{f}"] & B
\end{tikzcd}
\end{equation*}
is a pullback square, where $\tilde{f}$ and $\tilde{g}$ are the unique extensions of $f$ and $g$ along the modal units of $B'$ and $E'$, respectively.
\end{enumerate}
\end{thm}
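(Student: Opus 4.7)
The plan is to package the result as a short corollary of two earlier facts: \cref{thm:etale_flattening} and the pullback pasting lemma \cref{thm:pb_pasting}. Before doing anything else I would note that the object $\modal B$ in the bottom-left of the square in (ii) must be a typo for $\modal B'$, since $\modal p'$ goes from $\modal E'$ to $\modal B'$, and the unique extension $\tilde f$ of $f:B'\to B$ along $\modalunit[B']$ naturally has type $\modal B'\to B$; I will proceed with this correction in place.

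For the direction (i) $\Rightarrow$ (ii), I would simply invoke \cref{thm:etale_flattening}: that theorem asserts exactly that if the square in (i) is a pullback and $E$ and $B$ are modal, then the square in (ii) is a pullback. The $\modal$-\'etale hypothesis on $p'$ is not even needed for this direction; it is the second direction that uses it.

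For (ii) $\Rightarrow$ (i), the plan is to paste the naturality square of the modal unit at $p'$ onto the left of the square in (ii) to obtain
\begin{equation*}
\begin{tikzcd}
E' \arrow[r,"\modalunit"] \arrow[d,swap,"p'"] & \modal E' \arrow[r,"\tilde g"] \arrow[d,"\modal p'"] & E \arrow[d,"p"] \\
B' \arrow[r,swap,"\modalunit"] & \modal B' \arrow[r,swap,"\tilde f"] & B.
\end{tikzcd}
\end{equation*}
The left square is a pullback precisely because $p'$ is $\modal$-\'etale, and the right square is a pullback by the hypothesis (ii). By \cref{thm:pb_pasting}, the composite of two pullback squares is a pullback, so the outer rectangle is a pullback. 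But the outer rectangle is (homotopic to) the original square in (i), since $\tilde g\circ \modalunit \htpy g$ and $\tilde f\circ\modalunit\htpy f$ by the defining property of the unique extensions. Hence (i) holds.

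There is no genuine obstacle here: the theorem is essentially a restatement of \cref{thm:etale_flattening} in the stronger, biconditional form one gets once $p'$ is assumed $\modal$-\'etale, so that the ``missing'' comparison between $E'$ and $\modal E'$ is already under control. The only mild care needed is in unwinding the homotopies identifying the outer rectangle with the original square, which is routine from the universal property of the modal unit.
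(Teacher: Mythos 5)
Your proof is correct and follows exactly the route the paper intends: the theorem is stated there without its own proof as ``a slightly more economic rephrasing of'' \cref{thm:etale_flattening}, and your two directions---\cref{thm:etale_flattening} for (i)$\Rightarrow$(ii), and pasting the $\modal$-\'etale naturality square of $p'$ onto the square in (ii) via \cref{thm:pb_pasting} together with $\tilde f\circ\modalunit\htpy f$, $\tilde g\circ\modalunit\htpy g$ for (ii)$\Rightarrow$(i)---are precisely that easy corollary. Your reading of $\modal B$ as a typo for $\modal B'$ is also correct.
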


\subsection{The reflective factorization system of a modality}

In this subsection we investigate the reflective factorization system associated to a modality, of which the right class is the class of $\modal$-\'etale maps. The left class is the class of \emph{$\modal$-equivalences}.
\begin{defn}
We say that a map $f:A\to B$ is an \define{$\modal$-equivalence} if $L f:\modal A\to \modal B$ is an equivalence.
\end{defn}

\begin{rmk}
The difference between the notions of $\modal$-equivalences and $\modal$-connected maps is best explained by an example. In the case of $n$-truncation, the $n$-equivalences are precisely the maps that induce isomorphisms on the first $n$ homotopy groups. The $n$-connected maps are the maps that induce isomorphisms on the first $n$ homotopy groups, and moreover induce an epimorphism on the $(n+1)$-st homotopy group. 

We also note that the $n$-equivalences are not stable under pullbacks, whereas the $n$-connected maps are. Consider for instance the pullback square
\begin{equation*}
\begin{tikzcd}
\loopspace {\sphere{n+1}} \arrow[r] \arrow[d] & \unit \arrow[d] \\
\unit\arrow[r] & \sphere{n+1}
\end{tikzcd}
\end{equation*}
Here the map on the right is an $n$-equivalence, since $\sphere{n+1}$ is $n$-connected. However, the map on the left is not an $n$-equivalence, since the $n$-th homotopy group of $\loopspace{\sphere{n+1}}$ is not trivial: it is the $(n+1)$-st homotopy group of $\sphere{n+1}$, which is $\Z$.
\end{rmk}

\begin{defn}
The \define{reflective factorization system} associated to a modality $\modal$ consists of the $\modal$-equivalences as the left class, and the $\modal$-\'etale maps as the right class.
\end{defn}

Our goal in this section is to show that the reflective factorization system associated to a modality is an orthogonal factorization system.

\begin{lem}\label{lem:3for2_mequiv}
The $\modal$-equivalences satisfy the 3-for-2 property: given a commuting triangle
\begin{equation*}
\begin{tikzcd}[column sep=tiny]
A \arrow[rr,"h"] \arrow[dr,swap,"f"] & & B \arrow[dl,"g"] \\
& C,
\end{tikzcd}
\end{equation*}
if any two of $f$, $g$, and $h$ are $\modal$-equivalences, then so is the third.
\end{lem}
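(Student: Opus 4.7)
The plan is to reduce the statement directly to the 3-for-2 property for ordinary equivalences, which was recorded in the list of basic facts about equivalences in Chapter~\ref{chap:univalent} (the item labeled ``The 3-for-2 property for equivalences''). By definition, a map $k$ is a $\modal$-equivalence precisely when $\modal k$ is an equivalence, so the content of the lemma is simply this standard fact transported along the functor $\modal$.

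Concretely, the given commuting triangle $f \htpy g \circ h$ yields, by the functoriality-up-to-homotopy of $\modal$ together with its preservation of composition up to homotopy (established in the lemma asserting that $\modal$ is a functor up to homotopy preserving identities and composition, and that $\modalunit$ is natural up to homotopy), a new commuting triangle
\begin{equation*}
\begin{tikzcd}[column sep=tiny]
\modal A \arrow[rr,"\modal h"] \arrow[dr,swap,"\modal f"] & & \modal B \arrow[dl,"\modal g"] \\
& \modal C.
\end{tikzcd}
\end{equation*}
First I would verify in one line that this triangle commutes, using the composition-preservation homotopy $\modal(g\circ h)\htpy \modal g\circ\modal h$ together with the fact that applying $\modal$ to the original filling homotopy $f\htpy g\circ h$ gives $\modal f\htpy \modal(g\circ h)$, then concatenating.

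Next, I invoke the 3-for-2 property for equivalences on this triangle of ordinary maps between ordinary types: if any two of $\modal f$, $\modal g$, $\modal h$ are equivalences, then so is the third. Unfolding the definition of $\modal$-equivalence translates this back into the statement that if any two of $f,g,h$ are $\modal$-equivalences, then so is the third.

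I do not expect any significant obstacle; the only thing one has to be slightly careful about is that $\modal$ is not a strict functor, but the composition homotopy is precisely the data recorded in the functoriality lemma, and the 3-for-2 property only needs the triangle to commute up to homotopy, which is exactly what we have.
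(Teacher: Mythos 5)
Your proposal is correct and is essentially the paper's own argument: the paper's proof also applies $\modal$ to the commuting triangle (using its functoriality up to homotopy) and then invokes the 3-for-2 property of equivalences. Your extra care about the composition-preservation homotopy is a faithful spelling-out of what the paper leaves implicit.
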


\begin{proof}
Apply $\modal$ to the commuting triangle, and use the 3-for-2 property of equivalences.
\end{proof}

\begin{lem}\label{lem:modal_equivalence}
For a map $f : A \to B$ the following are equivalent:
\begin{enumerate}
\item $f$ is an $\modal$-equivalence.
\item For any modal type $X$, the precomposition map
\begin{equation*}
\precomp{f} : (B \to X) \to (A \to X)
\end{equation*}
is an equivalence.
\end{enumerate}
\end{lem}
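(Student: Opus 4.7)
The plan is to observe that this statement is essentially the specialization of \cref{lem:local_equivalence} to the case where the reflective subuniverse is a modality. Since a modality is in particular a reflective subuniverse (via its $\Sigma$-closed reflective subuniverse structure, cf.~\cref{thm:subuniv-modunique} and \cref{thm:sofs_from_ssrs}), and its modal types coincide with the local types, the notion of ``$\modal$-equivalence'' here is the same as ``$L$-equivalence'' for the associated reflective subuniverse $L$. Thus the lemma follows directly from \cref{lem:local_equivalence}.

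For completeness, I would nevertheless write out the argument, which proceeds via the commutative square
\begin{equation*}
\begin{tikzcd}
X^{\modal B} \arrow[r,"\precomp{\modal f}"] \arrow[d,swap,"\precomp{\modalunit}"] & X^{\modal A} \arrow[d,"\precomp{\modalunit}"] \\
X^{B} \arrow[r,swap,"\precomp{f}"] & X^{A}.
\end{tikzcd}
\end{equation*}
For the direction (i) $\Rightarrow$ (ii), assume $f$ is a $\modal$-equivalence and $X$ is modal. Then the two vertical maps are equivalences by the universal property of the modal units (since $X$ is modal), and the top map is an equivalence by assumption; hence by the 3-for-2 property the bottom map $\precomp{f}$ is an equivalence.

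For the converse (ii) $\Rightarrow$ (i), since $\modal A$ and $\modal B$ are modal, applying (ii) with $X \defeq \modal A$ and $X \defeq \modal B$ together with the above square shows that $\precomp{\modal f} : (\modal B \to X) \to (\modal A \to X)$ is an equivalence for $X \in \{\modal A, \modal B\}$. From the $X \defeq \modal A$ case, the fiber of $\precomp{\modal f}$ at $\idfunc[\modal A]$ is contractible, yielding a retraction $g : \modal B \to \modal A$ with $g \circ \modal f = \idfunc[\modal A]$. From the $X \defeq \modal B$ case, we observe that both $\idfunc[\modal B]$ and $\modal f \circ g$ lie in the fiber of $\precomp{\modal f}$ at $\modal f$ (the latter via $\modal f \circ g \circ \modal f = \modal f$), so by contractibility they are equal; hence $\modal f \circ g = \idfunc[\modal B]$. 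Therefore $\modal f$ is an equivalence, i.e.~$f$ is a $\modal$-equivalence.

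I do not anticipate any real obstacle here: the argument is a direct rerun of the proof of \cref{lem:local_equivalence}, and all the inputs (the naturality square, the universal property of the modal unit, and the 3-for-2 property) are already available.
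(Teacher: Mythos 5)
Your proposal is correct and is essentially the paper's own proof: the paper reruns the argument of \cref{lem:local_equivalence} verbatim for the modality, using the same naturality square with precomposition by the units, 3-for-2 for the forward direction, and the contractible fibers of $\precomp{\modal f}$ at $\idfunc[\modal A]$ and at $\modal f$ to extract a two-sided inverse for the converse. Your additional remark that the lemma is just the specialization of \cref{lem:local_equivalence} to the underlying reflective subuniverse of the modality is also accurate.
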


\begin{proof} 
Suppose first that $f$ is an $\modal$-equivalence, and let $X$ be $\modal$-modal. Then the square
\begin{equation*}
\begin{tikzcd}
X^B \arrow[r,"\precomp{f}"] \arrow[d,swap,"\precomp{\eta}"] & X^A \arrow[d,"\precomp{\eta}"] \\
X^{\modal B} \arrow[r,swap,"\precomp{\modal f}"] & X^{\modal A}
\end{tikzcd}
\end{equation*}
commutes. In this square the two vertical maps are equivalences by the universal property of modalization, and the bottom map is an equivalence since $\modal f$ is an equivalence. Therefore the map $\precomp{f}:X^B\to X^A$ is an equivalence, as desired.

Conversely, assume that $\precomp{f} : X^B \to X^A$ is an equivalence for every $\modal$-modal type $X$. By the square above it follows that $\precomp{\modal f}:X^{\modal B}\to X^{\modal A}$ is an equivalence for every $\modal$-modal type $X$. The fiber of $\modal A^{\modal B}\to \modal A^{\modal A}$ at $\idfunc:\modal A\to \modal A$ is contractible, so we obtain a retraction $g$ of $\modal f$. To see that $g$ is also a section observe that the fiber of $\modal B^{\modal B}\to \modal B^{\modal A}$ at $\modal f$ is contractible. This fiber contains $(\idfunc[\modal B],\refl{\modal f})$. However, we also have an identification $p:\precomp{\modal f}(\modal f\circ g)=\modal f$, since
\begin{equation*}
\precomp{\modal f}(\modal f\circ g)\jdeq (\modal f \circ g)\circ \modal f\jdeq \modal f \circ (g\circ \modal f) = \modal f. 
\end{equation*}
Therefore $(\modal f\circ g,p)$ is in the fiber of $\precomp{\modal f}:\modal B^{\modal B}\to \modal B^{\modal A}$ at $\modal f$. By the contractibility of the fibers it follows that $(\modal f\circ g,p)=(\idfunc[\modal B],\refl{\modal f})$, so it follows that $\modal f\circ g=\idfunc[\modal B]$. In other words, $g$ is both a retraction and a section of $\modal f$, so $\modal f$ is an equivalence.
\end{proof}

\begin{cor}\label{cor:mequiv_mconn}
Every $\modal$-connected map is a $\modal$-equivalence.
\end{cor}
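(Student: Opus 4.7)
The plan is to reduce the claim, via \cref{lem:modal_equivalence}, to showing that for every modal type $X$, the precomposition map
\[
\precomp{f}:(B\to X)\to (A\to X)
\]
is an equivalence. Once we have this, \cref{lem:modal_equivalence} immediately gives that $f$ is a $\modal$-equivalence. (This is of course entirely analogous to the corresponding statement \cref{cor:L'equivalenceisLconnected}(ii) for reflective subuniverses, and in fact \cref{cor:L'equivalenceisLconnected} could be cited directly, since every modality is in particular a reflective subuniverse; but writing the argument out makes the dependencies clearer.)

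To produce the required equivalence, I would apply \cref{prop:nconnected_tested_by_lv_n_dependent types} (which is stated for arbitrary reflective subuniverses and so applies to any modality) to the constant family $P\defeq\lam{b}X$ over $B$. Since $X$ is modal and $f$ is $\modal$-connected by hypothesis, that proposition yields that the map
\[
\Parens{\prd{b:B}X}\to \Parens{\prd{a:A}X}
\]
given by $\lam{s}s\circ f$ is an equivalence, which is precisely the map $\precomp{f}:(B\to X)\to (A\to X)$ after the obvious identification of non-dependent function types with dependent ones over a constant family.

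There is essentially no obstacle: the hard content has already been established in the chapter on reflective subuniverses, namely the characterization of connectedness via lifting against local dependent products (\cref{prop:nconnected_tested_by_lv_n_dependent types}) and the characterization of $L$-equivalences via precomposition into local types (\cref{lem:modal_equivalence} in the modal setting, \cref{lem:local_equivalence} in the general one). The present corollary simply glues these two facts together by specializing to a constant family.
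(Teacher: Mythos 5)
Your proposal is correct and matches the paper's intended argument: the paper proves exactly this implication for an arbitrary reflective subuniverse in \cref{cor:L'equivalenceisLconnected}(2), by applying \cref{prop:nconnected_tested_by_lv_n_dependent types} to a constant family of local types and then invoking \cref{lem:local_equivalence}, and the modal corollary is just that argument specialized (via \cref{lem:modal_equivalence}) to a modality. No gaps.
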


\begin{defn}
Let $f:A\to B$ be a map. We define
\begin{align*}
[A]_f & \defeq B \times_{\modal B}\modal A
\intertext{and we define the maps}
\mathsf{et}(f) & : [A]_f \to B \\
\bar{\eta} : A \to [A]_f
\end{align*}
by the universal property of pullbacks, as indicated in the following diagram
\begin{equation*}
\begin{tikzcd}
A \arrow[ddr,bend right=15,swap,"f"] \arrow[drr,bend left=15,"\modalunit"] \arrow[dr,"\bar{\eta}" description] \\
& {[A]_f} \arrow[d,"\mathsf{et}(f)"] \arrow[r,"\pi_2"] & \modal A \arrow[d,"\modal f"] \\
& B \arrow[r,swap,"\modalunit"] & \modal B.
\end{tikzcd}
\end{equation*}
\end{defn}

\begin{lem}\label{lem:rfs_factor}
For every map $f:A\to B$, the map $\bar{\eta}:A\to[A]$ is a $\modal$-equivalence, and the map $\mathsf{et}(f)$ is $\modal$-\'etale.
\end{lem}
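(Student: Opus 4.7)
The plan is to treat the two parts of the statement separately, starting with the easier one. For the $\modal$-\'etaleness of $\mathsf{et}(f)$, I would use that by construction $\mathsf{et}(f):[A]_f\to B$ sits in the pullback square
\begin{equation*}
\begin{tikzcd}
{[A]_f} \arrow[d,swap,"\mathsf{et}(f)"] \arrow[r,"\pi_2"] & \modal A \arrow[d,"\modal f"] \\
B \arrow[r,swap,"\modalunit"] & \modal B.
\end{tikzcd}
\end{equation*}
Both $\modal A$ and $\modal B$ are modal, so \cref{lem:etale_modal} tells us that $\modal f$ is $\modal$-\'etale. Then \cref{cor:etale_pb} (stability of $\modal$-\'etale maps under pullback) immediately gives that $\mathsf{et}(f)$ is $\modal$-\'etale.

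For the claim that $\bar\eta:A\to[A]_f$ is a $\modal$-equivalence, the idea is to use the 3-for-2 property of $\modal$-equivalences (\cref{lem:3for2_mequiv}) applied to the triangle with composite $\pi_2\circ\bar\eta\htpy\modalunit[A]$. The composite $\modalunit[A]:A\to\modal A$ is a $\modal$-equivalence, since its modalization $\modal\modalunit[A]:\modal A\to \modal\modal A$ is an equivalence (the target is modal). Thus it suffices to show that $\pi_2:[A]_f\to\modal A$ is a $\modal$-equivalence.

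To see that, I would observe that $\pi_2$ is obtained by pulling back $\modalunit[B]:B\to\modal B$ along $\modal f$. Since $\modalunit[B]$ is $\modal$-connected by construction of the modality as a stable orthogonal factorization system, and the class of $\modal$-connected maps is the left class of that stable factorization system and is therefore pullback-stable, $\pi_2$ is itself $\modal$-connected. Then \cref{cor:mequiv_mconn} tells us that $\pi_2$ is a $\modal$-equivalence, so 3-for-2 concludes the argument.

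The main obstacle is really only bookkeeping: verifying that the defining square of $[A]_f$ is legitimately a pullback (so that $\pi_2$ is indeed a pullback of $\modalunit[B]$) and carefully recording the homotopies $\pi_2\circ\bar\eta\htpy\modalunit[A]$ and $\mathsf{et}(f)\circ\bar\eta\htpy f$ that come out of the universal property. No substantially new idea is needed beyond the descent-style results already established in this section.
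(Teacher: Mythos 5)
Your argument is correct and follows essentially the same route as the paper: $\mathsf{et}(f)$ is étale because it is a pullback of the map $\modal f$ between modal types, and $\bar\eta$ is a $\modal$-equivalence by 3-for-2, since $\pi_2$ is a pullback of the $\modal$-connected unit $\modalunit[B]$ (hence $\modal$-connected, hence a $\modal$-equivalence) and $\modalunit[A]$ is a $\modal$-equivalence. The only cosmetic difference is that the paper deduces that $\modalunit[A]$ is a $\modal$-equivalence from its $\modal$-connectedness rather than from $\modal\modalunit[A]$ being an equivalence, which changes nothing of substance.
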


\begin{proof}
The map $\mathsf{et}(f)$ is a pullback of a map between modal types, so it is $\modal$-\'etale by \cref{cor:etale_pb}. Furthermore, the map $\pi_2:[A] \to \modal A$ is a pullback of a $\modal$-connected map, so it is $\modal$-connected. It follows from \cref{cor:mequiv_mconn} that it is a $\modal$-equivalence. Since the modal unit $\modalunit :A\to\modal A$ is also $\modal$-connected, and therefore a $\modal$-equivalence, we obtain by the 3-for-2 property of $\modal$-equivalences established in \cref{lem:3for2_mequiv} that the gap map is also a $\modal$-equivalence.
\end{proof}

\begin{lem}\label{lem:rfs_orthogonal}
The class of $\modal$-equivalences is left orthogonal to the class of $\modal$-\'etale maps.
\end{lem}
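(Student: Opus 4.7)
The plan is to use modal descent together with the characterization of $\modal$-equivalences as maps inverted by precomposition into modal types. The main ingredients are \cref{lem:modal_equivalence}, \cref{lem:rs_idstable}, and the defining pullback square of a $\modal$-\'etale map.

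First I would fix a commuting square with a $\modal$-equivalence $l:A\to B$ on the left and a $\modal$-\'etale map $r:X\to Y$ on the right, and rewrite the type of fillers. Since $r$ is $\modal$-\'etale, the gap map exhibits $X$ as the pullback $Y\times_{\modal Y}\modal X$, with $r$ identified with $\pi_1$ and the map to $\modal X$ identified with $\modalunit$. By functoriality of $\fillers{(\blank)}$ under equivalences of squares, the type of fillers of the original square is equivalent to the type of fillers of the square whose right-hand vertical map is this pullback projection $\pi_1 : Y\times_{\modal Y}\modal X\to Y$.

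Second, I would unfold the universal property of this pullback to decompose the type of fillers. By the universal property of $Y\times_{\modal Y}\modal X$, a map $B\to Y\times_{\modal Y}\modal X$ consists of a pair of maps to $Y$ and $\modal X$ together with a compatibility homotopy; requiring the $Y$-component to be $g$ and the restriction along $l$ to agree with $f$, the type of fillers reduces to the type of pairs $(k,\alpha)$ with $k:B\to \modal X$ satisfying $k\circ l = \modalunit\circ f$, and $\alpha$ a homotopy $\modalunit\circ g\htpy \modal r\circ k$ whose restriction $\alpha\cdot l$ is forced to equal the naturality homotopy of $\modalunit$ applied to $f$ (modulo the given commutation data of the original square).

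Finally, I would apply \cref{lem:modal_equivalence} twice. Since $l$ is a $\modal$-equivalence and $\modal X$ is modal, the precomposition map $\precomp{l}:(B\to \modal X)\to (A\to \modal X)$ is an equivalence, and so $k$ is uniquely determined by $k\circ l = \modalunit\circ f$. Having fixed $k$, the type of admissible $\alpha$ is a fiber of $\precomp{l}$ acting on a path space $(\modalunit\circ g = \modal r\circ k)$ between maps $B\to\modal Y$; since the identity types of the modal type $\modal Y$ are again modal by \cref{lem:rs_idstable} and \cref{lem:modal-Pi}, this precomposition on the path space is again an equivalence, and $\alpha$ is uniquely determined by its restriction along $l$. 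Thus the type of fillers is contractible.

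The main obstacle will be bookkeeping the coherence data: one must verify that the identifications extracted from the two applications of \cref{lem:modal_equivalence} assemble correctly with the naturality homotopy $\modalunit\circ r\htpy \modal r\circ \modalunit$ so that the forced condition on $\alpha\cdot l$ is actually satisfiable. This amounts to the mundane but fiddly observation that the naturality homotopy at $f$, transported across $r\circ f = g\circ l$, gives precisely the restriction that the modal unit forces on $\alpha$; once this is in place, contractibility follows immediately from the two equivalences above.
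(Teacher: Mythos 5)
Your proposal is correct, but it runs along a different presentation than the paper's. The paper proves orthogonality ``globally'': it takes the exponential formulation (the square of precomposition maps $X^B\to X^A$, $Y^B\to Y^A$ must be a pullback) and assembles a commuting cube whose top and bottom faces are the $\modal$-\'etale naturality square of $r$ exponentiated by $B$ and by $A$ (pullbacks, since exponentiation preserves pullbacks), and whose front-left face consists of the maps $\precomp{l}:(\modal X)^B\to(\modal X)^A$ and $\precomp{l}:(\modal Y)^B\to(\modal Y)^A$, which are equivalences by \cref{lem:modal_equivalence}; pasting of pullbacks then yields the desired face. Your argument is the fiberwise version of the same two inputs: you fix one lifting square, replace $X$ by $Y\times_{\modal Y}\modal X$ using the \'etale condition, and contract the remaining data $(k,\alpha)$ by two applications of the precomposition equivalence into modal types. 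What the paper's cube buys is the complete absence of coherence bookkeeping, which you correctly identify as the main burden of your route; what your route buys is an explicit description of the (unique) filler and of where each piece of data goes, which can be useful when one actually needs the lift rather than just its existence.

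One local repair is needed in your last step. The contractibility of the type of admissible $\alpha$ does not follow merely from the identity types of $\modal Y$ being modal (\cref{lem:rs_idstable}, \cref{lem:modal-Pi}): precomposition along a mere $\modal$-equivalence into a \emph{dependent} family of modal types is not an equivalence in general --- that is exactly the difference between $\modal$-equivalences and $\modal$-connected maps. The correct justification is that the restriction map on the path space $(\modalunit\circ g=\modal r\circ k)\to(\modalunit\circ g\circ l=\modal r\circ k\circ l)$ is, up to function extensionality, the action on paths of the equivalence $\precomp{l}:(B\to\modal Y)\to(A\to\modal Y)$ from \cref{lem:modal_equivalence}, hence an equivalence since equivalences are embeddings (\cref{cor:emb_equiv}); its fibers are therefore contractible, which is what you need. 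With this substitution, and the coherence verification you already flag, your proof goes through.
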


\begin{proof}
We have to show that for every $\modal$-equivalence $i:A\to B$, and every $\modal$-\'etale map $f:X\to Y$, the square
\begin{equation*}
\begin{tikzcd}
X^B \arrow[r] \arrow[d] & Y^B \arrow[d] \\
X^A \arrow[r] & Y^A
\end{tikzcd}
\end{equation*}
is a pullback square. Consider the commuting cube
\begin{equation*}
\begin{tikzcd}
& X^B \arrow[dl] \arrow[d] \arrow[dr] \\
(\modal X)^B \arrow[d] & X^A \arrow[dl] \arrow[dr] & Y^B \arrow[d] \arrow[dl,crossing over] \\
(\modal X)^A \arrow[dr] & (\modal Y)^B \arrow[from=ul,crossing over] \arrow[d] & Y^A \arrow[dl] \\
& (\modal Y)^A & \phantom{(\modal X)^B}
\end{tikzcd}
\end{equation*}
In this cube the top and bottom squares are pullback by the assumption that $f$ is $\modal$-\'etale and the fact that exponents of pullback squares are again pullback squares. Furthermore, the square in the front left is pullback, because the two vertical maps are equivalences by the assumption that $i:A\to B$ is a $\modal$-equivalence. Therefore we conclude that the square in the back right is also a pullback square, as desired.
\end{proof}

\begin{cor}
For any map $f:X\to Y$, the type of factorizations into a $\modal$-connected map followed by a $\modal$-\'etale map is contractible.
\end{cor}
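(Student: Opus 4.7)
The plan is to deduce the corollary from the two preceding lemmas by the standard argument for orthogonal factorization systems, once one observes (via \cref{cor:mequiv_mconn}) that every $\modal$-connected map is a $\modal$-equivalence, so the intended factorization is the reflective one built from \cref{lem:rfs_factor,lem:rfs_orthogonal}. Existence of such a factorization is given outright by \cref{lem:rfs_factor}: we have $f \htpy \mathsf{et}(f) \circ \bar\eta$, in which $\bar\eta$ is a $\modal$-equivalence and $\mathsf{et}(f)$ is $\modal$-\'etale.

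For uniqueness, I would suppose we are given two factorizations $(I_k, l_k, r_k, H_k)$ of $f$ with $l_k$ a $\modal$-equivalence and $r_k$ a $\modal$-\'etale map, for $k=1,2$, and construct a unique identification between them. The key step is to apply \cref{lem:rfs_orthogonal} to the commuting square
\[
\begin{tikzcd}
X \arrow[r,"l_2"] \arrow[d,swap,"l_1"] & I_2 \arrow[d,"r_2"] \\
I_1 \arrow[r,swap,"r_1"] & Y
\end{tikzcd}
\]
with outer homotopy $\ct{H_1^{-1}}{H_2}$. Because $l_1$ is a $\modal$-equivalence and $r_2$ is $\modal$-\'etale, \cref{lem:rfs_orthogonal} gives a pullback square of mapping types, and from the induced contractibility of diagonal fillers I would extract a unique $j:I_1\to I_2$ together with homotopies $j\circ l_1\htpy l_2$ and $r_2\circ j\htpy r_1$ filling the triangles coherently. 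Swapping the roles of the two factorizations yields a unique $j':I_2\to I_1$ with matching data. Applying the same uniqueness of fillers to the squares $r_1\circ l_1\htpy r_1\circ l_1$ and $r_2\circ l_2\htpy r_2\circ l_2$, both $j'\circ j$ and $\idfunc[I_1]$ are diagonal fillers of the first, so they must agree; symmetrically $j\circ j'\htpy\idfunc[I_2]$. Hence $j$ is an equivalence of factorizations.

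The principal technical obstacle is not in the conceptual argument but in the bookkeeping: the type of factorizations is a dependent sum over the middle object, the two maps, their classes, and a witness homotopy, so producing an identification in this $\Sigma$-type requires transporting the homotopy data $H_k$ along the equivalence $j$ and verifying the induced coherence 2-cell. This is exactly the kind of computation that was organized in \cref{thm:pullback_up} and the proof of \cref{lem:pb_3for2}, and I would structure it the same way, first reducing to the canonical $\mathsf{et}(f)\circ\bar\eta$ and then showing any other factorization is uniquely equivalent to it. Combining existence with this uniqueness step, the type of factorizations is inhabited and a proposition, hence contractible.
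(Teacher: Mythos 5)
Your argument is exactly the proof the paper intends but leaves implicit: the corollary is meant to follow immediately from \cref{lem:rfs_factor} (existence of the factorization $f\htpy \mathsf{et}(f)\circ\bar{\eta}$) together with \cref{lem:rfs_orthogonal}, and your uniqueness step --- extracting the unique diagonal filler $j$ of the square formed by two factorizations, using uniqueness of fillers again to see that $j'\circ j$ and $j\circ j'$ are identities, and then packaging the data as an identification in the $\Sigma$-type of factorizations (the bookkeeping you point to via \cref{thm:pullback_up} and \cref{lem:pb_3for2}) --- is the standard way of making ``existence plus orthogonality implies contractibility'' precise.

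One caveat concerns the statement rather than your argument. What you actually prove is contractibility of the type of factorizations into a $\modal$-\emph{equivalence} followed by a $\modal$-\'etale map, which is the left/right pair of the reflective factorization system and is surely what is meant. Read literally with ``$\modal$-connected'', the corollary is false: by your own uniqueness argument any such factorization would have to coincide with the reflective one, whose left factor $\bar{\eta}$ need not be $\modal$-connected. For example, take $\modal$ to be $n$-truncation and $f:\unit\to\sphere{n+1}$ the inclusion of a point; the reflective factorization is $\unit\to\sphere{n+1}\to\sphere{n+1}$ with second map an equivalence, and $\unit\to\sphere{n+1}$ is an $n$-equivalence but not $n$-connected, so the type of ($n$-connected, $n$-\'etale) factorizations of $f$ is empty. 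Consequently your opening appeal to \cref{cor:mequiv_mconn} only shows that the literally stated type is a proposition; it cannot supply the missing existence, and nothing can. So your proof is correct for the intended (corrected) reading, and that reading is the one the paper's surrounding lemmas support, but the first paragraph's reduction of the stated version to the reflective one should be flagged as a correction of the statement rather than a consequence of \cref{cor:mequiv_mconn}.
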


The class of $\modal$-\'etale morphisms into a given type $A$, thought of as objects of the slice category $\UU/A$, form a reflective subuniverse in the following sense.

\begin{thm}\label{thm:et_up}
Let $f:A\to X$ be a map. Then the pre-composition function
\begin{equation*}
\mathrm{hom}_X(\mathsf{et}(f),e)\to \mathrm{hom}_X(f,e)
\end{equation*}
is an equivalence for every \'etale map $e:B\to X$. 
\end{thm}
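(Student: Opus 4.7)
The plan is to reduce this statement directly to the orthogonality lemma \cref{lem:rfs_orthogonal}. Recall that by construction the map $\bar\eta: A \to [A]_f$ is a $\modal$-equivalence (by \cref{lem:rfs_factor}) and satisfies $f \htpy \mathsf{et}(f)\circ\bar\eta$. So the factorization of $f$ into a $\modal$-equivalence followed by a $\modal$-\'etale map is already in hand; what remains is to translate orthogonality into a statement about $\mathrm{hom}_X(\blank,\blank)$.

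First I would rewrite the mapping spaces as homotopy fibers. By function extensionality, $\mathrm{hom}_X(f,e) \eqvsym \fib{e\circ\blank}{f}$, where $e\circ\blank : B^A \to X^A$ denotes post-composition by $e$; and similarly $\mathrm{hom}_X(\mathsf{et}(f),e) \eqvsym \fib{e\circ\blank}{\mathsf{et}(f)}$, this time using $B^{[A]_f} \to X^{[A]_f}$. Under these identifications, the precomposition map in the statement corresponds to the map of fibers induced by pre-composition with $\bar\eta$, which sits in the commuting square
\begin{equation*}
\begin{tikzcd}[column sep=large]
B^{[A]_f} \arrow[r,"e\circ\blank"] \arrow[d,swap,"\blank\circ\bar\eta"] & X^{[A]_f} \arrow[d,"\blank\circ\bar\eta"] \\
B^A \arrow[r,swap,"e\circ\blank"] & X^A.
\end{tikzcd}
\end{equation*}

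Now I would invoke \cref{lem:rfs_orthogonal} with the $\modal$-equivalence $\bar\eta : A\to [A]_f$ and the $\modal$-\'etale map $e : B \to X$ to conclude that this square is a pullback square. Taking horizontal fibers over the corresponding points $\mathsf{et}(f) \in X^{[A]_f}$ and $\mathsf{et}(f)\circ\bar\eta = f \in X^A$ (using \cref{cor:pb_fibequiv}, applied to the square viewed with horizontal maps as ``vertical''), we obtain an equivalence
\begin{equation*}
\fib{e\circ\blank}{\mathsf{et}(f)} \eqvsym \fib{e\circ\blank}{f},
\end{equation*}
which is by construction the pre-composition by $\bar\eta$ map. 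Tracing through the identifications of fibers with mapping spaces gives precisely the asserted equivalence.

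The only genuine bookkeeping step is verifying that the map on horizontal fibers induced by the pullback square really is pre-composition by $\bar\eta$ under the $\fib{e\circ\blank}{\blank}\eqvsym \mathrm{hom}_X(\blank,e)$ identification; this is routine since both amount to sending $(k,K)$ to $(k\circ\bar\eta, K\cdot\bar\eta)$ suitably reindexed along the homotopy $f\htpy\mathsf{et}(f)\circ\bar\eta$. There is no real obstacle here: the heavy lifting has already been done by \cref{lem:rfs_orthogonal}, so this proof is essentially just unpacking definitions.
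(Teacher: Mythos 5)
Your proposal is correct and is essentially the paper's own argument: both apply the orthogonality of $\modal$-equivalences against $\modal$-\'etale maps (\cref{lem:rfs_factor,lem:rfs_orthogonal}) to the exponential square formed by $\blank\circ\bar\eta$ and $e\circ\blank$, pass to fibers via \cref{cor:pb_fibequiv}, and identify those fibers with the types $\mathrm{hom}_X(\mathsf{et}(f),e)$ and $\mathrm{hom}_X(f,e)$. The only cosmetic difference is that the paper draws the square transposed and fibers the post-composition maps over each $i:[A]\to X$, while you fiber over the specific points $\mathsf{et}(f)$ and $f$; the content is the same.
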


\begin{proof}
Let $e:B\to X$ be a $\modal$-\'etale map. Then the square
\begin{equation*}
\begin{tikzcd}
B^{[A]} \arrow[r,"\blank\circ\eta_f"] \arrow[d,swap,"e\circ\blank"] & B^A \arrow[d,"e\circ\blank"] \\
X^{[A]} \arrow[r,swap,"\blank\circ\eta_f"] & X^A
\end{tikzcd}
\end{equation*}
is a pullback square by \cref{lem:rfs_factor,lem:rfs_orthogonal}. Therefore we have a fiberwise equivalence
\begin{equation*}
\prd{i:[A]\to X} \fib{e\circ\blank}{i}\to \fib{e\circ\blank}{i\circ\eta_f}
\end{equation*}
by \cref{cor:pb_fibequiv}. Now the claim follows, since we have a commuting square
\begin{equation*}
\begin{tikzcd}
\fib{e\circ\blank}{i} \arrow[r] \arrow[d,swap,"\eqvsym"] & \fib{e\circ\blank}{i\circ\eta_f} \arrow[d,"\eqvsym"] \\
\mathrm{hom}_X(\mathsf{et}(f),e) \arrow[r] & \mathrm{hom}_X(f,e)
\end{tikzcd}
\end{equation*}
with equivalences on both sides, for each $i:[A]\to X$.
\end{proof}
 
\section{The reflective factorization system for the reflexive coequalizer}

\subsection{\texorpdfstring{$\Delta$}{Δ}-\'etale maps}

\begin{defn}
Let $f:\mathsf{rGph}(\mathcal{A},\mathcal{B})$ be a morphism of reflexive graphs. We say that $f$ is \define{$\Delta$-\'etale} if the square
\begin{equation*}
\begin{tikzcd}
\mathcal{A} \arrow[d,swap,"f"] \arrow[r] & \Delta(\rcoeq(\mathcal{A})) \arrow[d,"\Delta(\rcoeq(f))"] \\
\mathcal{B} \arrow[r] & \Delta(\rcoeq(\mathcal{B}))
\end{tikzcd}
\end{equation*}
is a pullback square of reflexive graphs. We write $\mathsf{is\usc{}etale}_\Delta(f)$ for the proposition that $f$ is $\Delta$-\'etale, and we also write $\mathcal{R}^\Delta$ for the class of $\Delta$-\'etale morphisms of reflexive graphs.
\end{defn}

\begin{thm}\label{thm:etale_fibration}
Consider a morphism $f:\mathsf{rGph}(\mathcal{A},\mathcal{B})$ of reflexive graphs. The following are equivalent:
\begin{enumerate}
\item The morphism $f$ is a fibration in the sense of \cref{defn:graph_fibration}.
\item The morphism $f$ is $\Delta$-\'etale.
\end{enumerate}
\end{thm}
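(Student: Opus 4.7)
The plan is to prove the two implications separately, with both directions following rather directly from earlier results in the chapter.

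For the implication (ii)$\Rightarrow$(i), I would argue as follows. Suppose $f$ is $\Delta$-étale, so that the naturality square
\begin{equation*}
\begin{tikzcd}
\mathcal{A} \arrow[d,swap,"f"] \arrow[r,"\mathsf{constr}"] & \Delta(\mathsf{rcoeq}(\mathcal{A})) \arrow[d,"\Delta(\mathsf{rcoeq}(f))"] \\
\mathcal{B} \arrow[r,swap,"\mathsf{constr}"] & \Delta(\mathsf{rcoeq}(\mathcal{B}))
\end{tikzcd}
\end{equation*}
is a pullback square of reflexive graphs. By \cref{prp:fibration_discrete}, the morphism $\Delta(\mathsf{rcoeq}(f))$ is a fibration, being the image under $\Delta$ of a map of types. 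By \cref{thm:fibration_pullback}, fibrations are stable under pullback; since $f$ is the pullback of the fibration $\Delta(\mathsf{rcoeq}(f))$, it is itself a fibration.

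For the converse (i)$\Rightarrow$(ii), assume $f$ is a fibration. I would apply \cref{thm:rcoeq_cartesian} to the naturality square above, taking $X \defeq \mathsf{rcoeq}(\mathcal{B})$ and $Y \defeq \mathsf{rcoeq}(\mathcal{A})$. That theorem asserts that, under the hypothesis that $f$ is a fibration, the naturality square is a pullback of reflexive graphs if and only if the induced square
\begin{equation*}
\begin{tikzcd}
\mathsf{rcoeq}(\mathcal{A}) \arrow[d,swap,"\mathsf{rcoeq}(f)"] \arrow[r,equals] & \mathsf{rcoeq}(\mathcal{A}) \arrow[d,"\mathsf{rcoeq}(f)"] \\
\mathsf{rcoeq}(\mathcal{B}) \arrow[r,equals] & \mathsf{rcoeq}(\mathcal{B})
\end{tikzcd}
\end{equation*}
is a pullback square of types. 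The horizontal maps here are identities (since applying $\mathsf{rcoeq}$ to $\mathsf{constr}:\mathcal{A}\to\Delta(\mathsf{rcoeq}(\mathcal{A}))$ yields the identity on $\mathsf{rcoeq}(\mathcal{A})$, as $\mathsf{rcoeq}\circ\Delta$ is the identity up to canonical equivalence by \cref{eg:rcoeq}). Therefore the square above is trivially cartesian, so the naturality square is a pullback and $f$ is $\Delta$-étale.

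The main obstacle, such as it is, lies in the bookkeeping for the forward direction: one must verify that the composite $\mathsf{rcoeq}(\mathsf{constr}):\mathsf{rcoeq}(\mathcal{A})\to\mathsf{rcoeq}(\Delta(\mathsf{rcoeq}(\mathcal{A})))\simeq\mathsf{rcoeq}(\mathcal{A})$ is indeed the identity, which is the content of the observation in \cref{eg:rcoeq} that the reflexive coequalizer of a discrete graph is the underlying type and that $\mathsf{constr}$ acts as the equivalence exhibiting this fact. Once that is in place, both directions reduce to direct invocations of previously established results, and no further work is required.
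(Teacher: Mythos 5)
Your proof is correct, and it takes a more modular route than the paper's. The paper proves the equivalence by unfolding both conditions into pullback conditions on the underlying vertex and edge squares: it draws a single prism-shaped diagram relating $\pts{\tilde A},\edg{\tilde A},\pts{\tilde B},\edg{\tilde B}$ to $\rcoeq(\mathcal{A})$ and $\rcoeq(\mathcal{B})$, obtains (ii)$\Rightarrow$(i) by pasting of pullback squares inside that diagram, and disposes of (i)$\Rightarrow$(ii) with a one-sentence appeal to $\rcoeq(f)$ being ``the unique map such that the naturality squares are pullback squares.'' You instead never unfold into components: for (ii)$\Rightarrow$(i) you combine \cref{prp:fibration_discrete} with the pullback-stability of fibrations (\cref{thm:fibration_pullback}) — which is exactly the route sketched in the introduction to the reflexive-coequalizer chapter — and for (i)$\Rightarrow$(ii) you invoke \cref{thm:rcoeq_cartesian} and observe that the induced square of types has identity horizontals, hence is trivially cartesian. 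This second direction is arguably a cleaner and fully justified version of what the paper compresses into one sentence, and there is no circularity, since \cref{prp:fibration_discrete}, \cref{thm:fibration_pullback}, and \cref{thm:rcoeq_cartesian} are all established earlier and independently of the present theorem. Two small points deserve explicit mention: the horizontal maps in the type-level square of \cref{thm:rcoeq_cartesian} are the extensions, via the universal property of the reflexive coequalizer, of the graph morphisms $\mathsf{constr}$ along $\mathsf{constr}$ itself, and hence are the identities (this is the precise content behind your appeal to \cref{eg:rcoeq}); and a commuting square whose horizontal maps are equivalences is always a pullback (e.g.\ by \cref{cor:pb_equiv} together with the 3-for-2 property applied to the gap map), which is what makes the colimit square ``trivially cartesian.'' With those remarks spelled out, your argument is complete.
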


\begin{proof}
Consider the commuting diagram
\begin{equation*}
\begin{tikzcd}
& & \pts{\tilde A} \arrow[d] \arrow[dr] \\
& \edg{\tilde A} \arrow[dl] \arrow[d] \arrow[ur] & \pts{\tilde B} \arrow[dl] \arrow[dr] & \rcoeq(\mathcal{A}) \arrow[d] \\
\pts{\tilde A} \arrow[d] & \edg{\tilde B} \arrow[dl] \arrow[dr] \arrow[ur] & \rcoeq(\mathcal{A}) \arrow[from=ul,crossing over] \arrow[dl,crossing over] \arrow[d] \arrow[ur,crossing over] & \rcoeq(\mathcal{B}) \\
\pts{\tilde B} \arrow[dr] & \rcoeq(\mathcal{A}) \arrow[d] \arrow[from=ul,crossing over] & \rcoeq(\mathcal{B}) \arrow[dl] \arrow[ur] \\
\phantom{\rcoeq(\mathcal{B})} & \rcoeq(\mathcal{B})
\end{tikzcd}
\end{equation*}
If $f$ is $\Delta$-\'etale, then the three parallel vertical squares are pullback squares, hence so are the two squares on the back left side. This shows that (ii) implies (i). 

Now suppose that $f$ is a fibration, or equivalently, that $f$ is cartesian. Then the map $\rcoeq(f)$ is the unique map such that the naturality squares are pullback squares. In particular, $f$ is a $\Delta$-\'etale map.
\end{proof}

\begin{cor}
Let $\mathcal{B}$ be a family of reflexive graphs over $\mathcal{A}$. The following are equivalent:
\begin{enumerate}
\item The family $\mathcal{B}$ is equifibered.
\item The morphism $\proj 1 : \mathsf{rGph}(\msm{\mathcal{A}}{\mathcal{B}},\mathcal{A})$ is $\Delta$-\'etale.
\end{enumerate}
\end{cor}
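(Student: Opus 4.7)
The plan is to reduce this corollary to \cref{thm:etale_fibration}, which already identifies $\Delta$-\'etale morphisms with fibrations (equivalently, cartesian morphisms) of reflexive graphs. So it suffices to show that equifiberedness of $\mathcal{B}$ over $\mathcal{A}$ is equivalent to the projection $\proj 1 : \mathsf{rGph}(\msm{\mathcal{A}}{\mathcal{B}},\mathcal{A})$ being cartesian.

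First I would unfold the definition of cartesianness for $\proj 1$. The relevant naturality squares to check are
\begin{equation*}
\begin{tikzcd}[column sep=large]
\sm{(i,x),(j,y):\pts{\msm{\mathcal{A}}{\mathcal{B}}}}\edg{\msm{\mathcal{A}}{\mathcal{B}}}((i,x),(j,y)) \arrow[d] \arrow[r,"\pi_k"] & \sm{i:\pts{A}}\pts{B}(i) \arrow[d] \\
\sm{i,j:\pts{A}}\edg{A}(i,j) \arrow[r,swap,"\pi_k"] & \pts{A}
\end{tikzcd}
\end{equation*}
for $k=1,2$. By the definition of $\msm{\mathcal{A}}{\mathcal{B}}$, the top-left type is equivalent to $\sm{i,j:\pts{A}}{e:\edg{A}(i,j)}{x:\pts{B}(i)}{y:\pts{B}(j)}\edg{B}(e,x,y)$, and under this identification the two squares become the two canonical squares with top map $(i,j,e,x,y,r)\mapsto(i,x)$ and $(i,j,e,x,y,r)\mapsto(j,y)$ respectively. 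By \cref{thm:pb_fibequiv}, each of these is a pullback square if and only if the induced fiberwise map on fibers is a fiberwise equivalence.

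The key step is then to compute those fiberwise maps explicitly. For the $k=1$ square, the induced map on the fiber over $(i,j,e)$ is $\lam{x}\sm{y:\pts{B}(j)}\edg{B}(e,x,y)\to \pts{B}(i)$ via the first projection -- in other words, $(x,y,r) \mapsto x$, whose fibers at $x$ are $\sm{y:\pts{B}(j)}\edg{B}(e,x,y)$. This is contractible for all $x$ precisely when the relation $\edg{B}(e)$ is functional in the first variable. Dually, for $k=2$ the condition becomes functionality in the second variable. Together these two conditions state exactly that $\edg{B}(e):\pts{B}(i)\to\pts{B}(j)\to\UU$ is a correspondence, i.e.\ induces an equivalence $\pts{B}(i)\eqvsym\pts{B}(j)$ -- which is precisely the equifiberedness condition on $\mathcal{B}$.

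There is no real obstacle here; the only bookkeeping step is to verify that functionality in both variables amounts to the existence of the fiberwise equivalences $\edg{E}(e):\eqv{\pts{E}(i)}{\pts{E}(j)}$ appearing in the original equifiberedness definition, which follows from the standard equivalence between functional relations and functions together with the fact that a relation is a correspondence iff both it and its opposite are functional. Combining this with \cref{thm:etale_fibration} yields the claimed equivalence.
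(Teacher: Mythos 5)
Your proposal is correct and follows essentially the same route the paper intends: the corollary is meant to be immediate from \cref{thm:etale_fibration} (together with \cref{prp:fib_cart}) once one identifies equifiberedness of $\mathcal{B}$ with cartesianness of $\proj 1$, and your fiberwise computation via \cref{thm:pb_fibequiv} -- pullback iff the fibers $\sm{y}\edg{B}(e,x,y)$, resp.\ $\sm{x}\edg{B}(e,x,y)$, are contractible, i.e.\ each edge relation is a correspondence -- is exactly the argument the paper uses when showing that equifibered families give cartesian projections. No gaps.
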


The following proposition is analogous to \cref{cor:etale_lex}.

\begin{prp}
Consider a pullback square
\begin{equation*}
\begin{tikzcd}
\mathcal{A}' \arrow[r] \arrow[d,swap,"{f'}"] & \mathcal{A} \arrow[d,"f"] \\
\mathcal{B}' \arrow[r] & \mathcal{B}
\end{tikzcd}
\end{equation*}
of reflexive graphs, and suppose that $f$ is a fibration. Then the square
\begin{equation*}
\begin{tikzcd}
\rcoeq(\mathcal{A}') \arrow[r] \arrow[d,swap,"\rcoeq({f'})"] & \rcoeq(\mathcal{A}) \arrow[d,"\rcoeq(f)"] \\
\rcoeq(\mathcal{B}') \arrow[r] & \rcoeq(\mathcal{B})
\end{tikzcd}
\end{equation*}
is again a pullback square.
\end{prp}

\begin{proof}
Consider the diagram
\begin{equation*}
\begin{tikzcd}
\mathcal{A}' \arrow[r] \arrow[d,swap,"{f'}"] & \mathcal{A} \arrow[d,"f"] \arrow[r,"\mathsf{constr}"] &[1ex] \Delta(\rcoeq(\mathcal{A})) \arrow[d,"\Delta(\rcoeq(f))"] \\
\mathcal{B}' \arrow[r] & \mathcal{B} \arrow[r,"\mathsf{constr}"'] & \Delta(\rcoeq(\mathcal{B}))
\end{tikzcd}
\end{equation*}
of reflexive graphs. Since $f$ is assumed to be a fibration, we obtain by \cref{thm:etale_fibration} that the square on the right is a pullback square. Furthermore, since the left square is a pullback square by assumption, it follows that the outer rectangle is again a pullback square. Hence the assertion follows from \cref{thm:rcoeq_cartesian}.
\end{proof}

The following proposition is analogous to \cref{cor:etale_pb}.

\begin{prp}
Consider a pullback square
\begin{equation*}
\begin{tikzcd}
\mathcal{A}' \arrow[r] \arrow[d,swap,"{f'}"] & \mathcal{A} \arrow[d,"f"] \\
\mathcal{B}' \arrow[r] & \mathcal{B}
\end{tikzcd}
\end{equation*}
of reflexive graphs, and suppose that $f$ is a fibration. Then $f'$ is a fibration.
\end{prp}

\begin{proof}
Consider the cube
\begin{equation*}
\begin{tikzcd}
& \mathcal{A}' \arrow[dr] \arrow[d] \arrow[dl] \\
\Delta(\rcoeq(\mathcal{A}')) \arrow[d] & \mathcal{B}' \arrow[dl] \arrow[dr] & \mathcal{A} \arrow[dl,crossing over] \arrow[d] \\
\Delta(\rcoeq(\mathcal{B}')) \arrow[dr] & \Delta(\rcoeq(\mathcal{A})) \arrow[d] \arrow[from=ul,crossing over] & \mathcal{B} \arrow[dl] \\
& \Delta(\rcoeq(\mathcal{B})). & \phantom{\Delta(\rcoeq(\mathcal{A}'))}
\end{tikzcd}
\end{equation*}
Then the two squares in the front and the square in the back right are pullback squares, so it follows that the square in the back left is a pullback square.
\end{proof}

\subsection{The reflective factorization system of discrete graphs}

\begin{defn}
Let $f:\mathsf{rGph}(\mathcal{B},\mathcal{A})$ be a morphism of reflexive graphs. We say that $f$ is a \define{$\Delta$-equivalence} if the map
\begin{equation*}
\begin{tikzcd}
\rcoeq(f):\rcoeq(\mathcal{B})\to \rcoeq(\mathcal{A})
\end{tikzcd}
\end{equation*}
is an equivalence. We also write $\mathcal{L}^\Delta$ for the class of $\Delta$-equivalences of reflexive graphs.
\end{defn}

\begin{lem}
The $\Delta$-equivalences satisfy the 3-for-2 property.\hfill$\square$
\end{lem}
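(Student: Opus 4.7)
The plan is to reduce the 3-for-2 property for $\Delta$-equivalences to the 3-for-2 property for equivalences of types, via the functoriality (up to homotopy) of the reflexive coequalizer operation.

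First, I would establish that the operation $\rcoeq$ behaves functorially up to homotopy. Given a composable pair of morphisms $h:\mathsf{rGph}(\mathcal{A},\mathcal{B})$ and $g:\mathsf{rGph}(\mathcal{B},\mathcal{C})$, the map $\rcoeq(g)\circ\rcoeq(h):\rcoeq(\mathcal{A})\to\rcoeq(\mathcal{C})$ satisfies the defining property of $\rcoeq(g\circ h)$: it fits in a commuting square with the constructor morphisms $\mathsf{constr}:\mathsf{rGph}(\mathcal{A},\Delta(\rcoeq(\mathcal{A})))$ and $\mathsf{constr}:\mathsf{rGph}(\mathcal{C},\Delta(\rcoeq(\mathcal{C})))$. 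By the uniqueness clause in the universal property of the reflexive coequalizer (and the fact that $\Delta$ is fully faithful as established in \cref{thm:emb_disc}), we obtain an identification $\rcoeq(g\circ h)=\rcoeq(g)\circ\rcoeq(h)$. Similarly, $\rcoeq(\idfunc[\mathcal{A}])=\idfunc[\rcoeq(\mathcal{A})]$, and any homotopy of reflexive graph morphisms $f\htpy g\circ h$ gives rise to an identification $\rcoeq(f)=\rcoeq(g)\circ\rcoeq(h)$ by the same uniqueness argument.

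Now, given a commuting triangle of reflexive graph morphisms
\begin{equation*}
\begin{tikzcd}[column sep=tiny]
\mathcal{A} \arrow[rr,"h"] \arrow[dr,swap,"f"] & & \mathcal{B} \arrow[dl,"g"] \\
& \mathcal{C}
\end{tikzcd}
\end{equation*}
with homotopy $H:f\htpy g\circ h$, applying $\rcoeq$ yields a commuting triangle
\begin{equation*}
\begin{tikzcd}[column sep=tiny]
\rcoeq(\mathcal{A}) \arrow[rr,"\rcoeq(h)"] \arrow[dr,swap,"\rcoeq(f)"] & & \rcoeq(\mathcal{B}) \arrow[dl,"\rcoeq(g)"] \\
& \rcoeq(\mathcal{C})
\end{tikzcd}
\end{equation*}
of types. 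By the 3-for-2 property of equivalences of types, if any two of the maps $\rcoeq(f)$, $\rcoeq(g)$, $\rcoeq(h)$ are equivalences, then so is the third. Unwinding the definition of $\Delta$-equivalence, this is precisely the 3-for-2 property for the class $\mathcal{L}^\Delta$.

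The only real step of substance is the functoriality of $\rcoeq$ up to identification; I expect this to be a routine consequence of the universal property combined with the fact that $\Delta$ is fully faithful, though one has to be slightly careful because composition of reflexive graph morphisms preserves reflexivity only up to higher identification.
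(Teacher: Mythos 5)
Your proposal is correct and matches the intended argument: the paper leaves this lemma without proof precisely because it is the reflexive-graph analogue of \cref{lem:3for2_mequiv}, whose proof is ``apply the functor to the commuting triangle and use the 3-for-2 property of equivalences,'' which is exactly what you do after supplying the (routine) functoriality of $\rcoeq$ via its universal property and the full faithfulness of $\Delta$.
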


\begin{prp}
Let $f:\mathsf{rGph}(\mathcal{B},\mathcal{A})$ be a morphism of reflexive graphs. The following are equivalent:
\begin{enumerate}
\item $f$ is a $\Delta$-equivalence.
\item For any type $X$, the map
\begin{equation*}
\mathsf{rGph}(\mathcal{A},\Delta(X))\to\mathsf{rGph}(\mathcal{B},\Delta(X))
\end{equation*}
is an equivalence.
\end{enumerate}
\end{prp}

\begin{proof}
We have a commuting square
\begin{equation*}
\begin{tikzcd}
\mathsf{rGph}(\mathsf{rcoeq}(\mathcal{A}),X) \arrow[r,"\blank\circ \mathsf{rcoeq}(f)"] \arrow[d] & \mathsf{rGph}(\mathsf{rcoeq}(\mathcal{B}),X) \arrow[d] \\
\mathsf{rGph}(\mathcal{A},\Delta(X)) \arrow[r,"\blank\circ f"] & \mathsf{rGph}(\mathcal{B},\Delta(X))
\end{tikzcd}
\end{equation*}
in which the two vertical maps are equivalences. Therefore it follows that if $\mathsf{rcoeq}(f)$ is an equivalence, then so is the bottom map in the square. Conversely, if the bottom map in the square is an equivalence for every type $X$, then $\blank\circ\mathsf{rcoeq}(f)$ is an equivalence for any type $X$, which implies by \cref{prp:equiv_precomp} that $\mathsf{rcoeq}(f)$ is an equivalence.
\end{proof}

\begin{defn}
Let $f:\mathsf{rGph}(\mathcal{B},\mathcal{A})$ be a morphism of reflexive graphs. We define the morphisms
\begin{align*}
\mathsf{et}_\Delta(f) & : \mathsf{rGph}(\mathcal{B}^{\eqvsym},\mathcal{A}) \\
\mathsf{\eta}_f & : \mathsf{rGph}(\mathcal{B},\mathcal{B}^{\eqvsym})
\end{align*}
by the universal property of pullbacks, as indicated in the following diagram
\begin{equation*}
\begin{tikzcd}
\mathcal{B} \arrow[ddr,bend right=15] \arrow[drr,bend left=15] \arrow[dr,densely dotted,"\eta_f" near end] \\
& \mathcal{B}^{\eqvsym} \arrow[dr,phantom,"\lrcorner" {very near start,xshift=-1ex}] \arrow[d,"\mathsf{et}_\Delta(f)"] \arrow[r] & \Delta(\rcoeq(\mathcal{B})) \arrow[d] \\[1ex]
& \mathcal{A} \arrow[r,swap,"\mathsf{constr}"] & \Delta(\rcoeq(\mathcal{A}))
\end{tikzcd}
\end{equation*}
The morphism $\mathsf{et}_\Delta(f)$ is called the \define{\'etale factor} of $f$, and the morphism $\eta_f$ is called the \define{unit} of the \'etale factor.
\end{defn}

\begin{lem}\label{lem:D_equiv_pullback}
For any morphism $f:\mathsf{rGph}(\mathcal{B},\mathcal{A})$, the unit $\eta_f$ of the \'etale factor of $f$ is a $\Delta$-equivalence. 
\end{lem}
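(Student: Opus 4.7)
The strategy is to apply the descent theorem for reflexive coequalizers \cref{thm:rcoeq_cartesian} to the defining pullback square of $\mathcal{B}^{\eqvsym}$, and then deduce that $\mathsf{rcoeq}(\eta_f)$ is an equivalence via the $3$-for-$2$ property.

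First, I would verify the hypothesis of the descent theorem, namely that the morphism $\mathsf{et}_\Delta(f):\mathcal{B}^{\eqvsym}\to\mathcal{A}$ is a fibration. The right-hand vertical map of the defining square is $\Delta(\mathsf{rcoeq}(f))$, which is a fibration by \cref{prp:fibration_discrete} since it lies in the image of $\Delta$. Since fibrations are closed under pullback by \cref{thm:fibration_pullback}, the map $\mathsf{et}_\Delta(f)$ is also a fibration.

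With the hypothesis verified, applying \cref{thm:rcoeq_cartesian} to the defining pullback square yields a pullback square of types
\begin{equation*}
\begin{tikzcd}[column sep=large]
\mathsf{rcoeq}(\mathcal{B}^{\eqvsym}) \arrow[d,"\mathsf{rcoeq}(\mathsf{et}_\Delta(f))"'] \arrow[r,"\mathsf{rcoeq}(p)"] & \mathsf{rcoeq}(\Delta(\mathsf{rcoeq}(\mathcal{B}))) \arrow[d,"\mathsf{rcoeq}(\Delta(\mathsf{rcoeq}(f)))"] \\
\mathsf{rcoeq}(\mathcal{A}) \arrow[r,swap,"\mathsf{rcoeq}(\mathsf{constr}_\mathcal{A})"] & \mathsf{rcoeq}(\Delta(\mathsf{rcoeq}(\mathcal{A}))),
\end{tikzcd}
\end{equation*}
where $p:\mathcal{B}^{\eqvsym}\to\Delta(\mathsf{rcoeq}(\mathcal{B}))$ is the top map of the defining square. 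Both horizontal maps are equivalences: for any reflexive graph $\mathcal{C}$, the morphism $\mathsf{constr}_\mathcal{C}$ already targets a discrete graph, so by uniqueness of reflexive coequalizers up to equivalence (see \cref{eg:rcoeq}), $\mathsf{rcoeq}(\mathsf{constr}_\mathcal{C})$ is identified with the canonical comparison map $\mathsf{rcoeq}(\mathcal{C})\to\mathsf{rcoeq}(\Delta(\mathsf{rcoeq}(\mathcal{C})))$ arising from the fact that $\mathsf{rcoeq}\circ\Delta\simeq\idfunc$. Transposing the square so that these horizontal equivalences become vertical, \cref{cor:pb_equiv} then implies that $\mathsf{rcoeq}(p)$ is also an equivalence.

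Finally, by the defining commutativity $p\circ\eta_f\simeq\mathsf{constr}_\mathcal{B}$, applying $\mathsf{rcoeq}$ yields a homotopy $\mathsf{rcoeq}(p)\circ\mathsf{rcoeq}(\eta_f)\simeq\mathsf{rcoeq}(\mathsf{constr}_\mathcal{B})$. Both $\mathsf{rcoeq}(p)$ and $\mathsf{rcoeq}(\mathsf{constr}_\mathcal{B})$ are equivalences, so the $3$-for-$2$ property forces $\mathsf{rcoeq}(\eta_f)$ to be an equivalence, which is exactly the assertion that $\eta_f$ is a $\Delta$-equivalence. The only mild subtlety I anticipate is justifying that $\mathsf{rcoeq}(\mathsf{constr})$ is an equivalence: although morally this is an instance of the triangle identity for the adjunction $\mathsf{rcoeq}\dashv\Delta$, we cannot invoke formal $\infty$-categorical adjunction arguments here, so this has to be read off directly from the universal property of $\mathsf{rcoeq}$ together with the fact that $\mathsf{constr}_\mathcal{C}$ already factors through a discrete graph.
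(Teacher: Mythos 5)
Your proposal is correct and follows essentially the same route as the paper: apply \cref{thm:rcoeq_cartesian} to the defining pullback square of $\mathcal{B}^{\eqvsym}$, use that $\mathsf{rcoeq}(\mathsf{constr})$ is an equivalence to deduce via the resulting pullback of types that $\mathsf{rcoeq}(p)$ is an equivalence, and conclude by 3-for-2 from $p\circ\eta_f\htpy\mathsf{constr}_{\mathcal{B}}$. You are in fact slightly more careful than the paper, which leaves the fibration hypothesis (your appeal to \cref{prp:fibration_discrete} and \cref{thm:fibration_pullback}) and the ``$\mathsf{rcoeq}(\mathsf{constr})$ is an equivalence'' point implicit.
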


\begin{proof}
By \cref{thm:rcoeq_cartesian}, it follows that the square
\begin{equation*}
\begin{tikzcd}
\rcoeq(\mathcal{B}^{\eqvsym}) \arrow[d] \arrow[r] & \rcoeq(\Delta(\rcoeq(\mathcal{B}))) \arrow[d] \\
\rcoeq(\mathcal{A}) \arrow[r] & \rcoeq(\Delta(\rcoeq(\mathcal{A})))
\end{tikzcd}
\end{equation*}
is a pullback square. Since the bottom map is an equivalence of reflexive graphs, it follows that the top map is an equivalence of reflexive graphs. In other words, we have shown that the map
\begin{equation*}
\begin{tikzcd}
\mathcal{B}^{\eqvsym} \arrow[r] & \Delta(\rcoeq(\mathcal{B}))
\end{tikzcd}
\end{equation*}
is a $\Delta$-equivalence. Of course, the morphism $\mathsf{constr}:\mathcal{B}\to\Delta(\rcoeq(\mathcal{B}))$ is also a $\Delta$-equivalence, so the claim follows by the 3-for-2 property of $\Delta$-equivalences.
\end{proof}

\begin{thm}\label{thm:modal_rofs}
The pair $(\mathcal{L}^\Delta,\mathcal{R}^\Delta)$ forms an orthogonal factorization system of reflexive graphs.
\end{thm}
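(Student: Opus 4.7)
The plan is to follow the same three-step strategy that was used to prove \cref{thm:modal_rofs} for an arbitrary modality, adapted to the present setting. We must verify: (i) closure of $\mathcal{L}^\Delta$ and $\mathcal{R}^\Delta$ under composition and the inclusion of identities; (ii) the existence of an $(\mathcal{L}^\Delta,\mathcal{R}^\Delta)$-factorization for every morphism; and (iii) orthogonality of $\mathcal{L}^\Delta$ with respect to $\mathcal{R}^\Delta$. Once (i)--(iii) are in place, uniqueness of factorizations follows from orthogonality by a standard retract-and-diagonal-filler argument (entirely analogous to the one used in \cref{lem:ofs_lifting}).

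For (i), closure of $\mathcal{L}^\Delta$ under composition and the inclusion of identities is immediate from functoriality of $\mathsf{rcoeq}$ and the 3-for-2 property of $\Delta$-equivalences already recorded in the excerpt. Closure of $\mathcal{R}^\Delta$ follows from the pullback pasting lemma \cref{thm:pb_pasting}, applied in the category of reflexive graphs to vertices, edges and reflexivity separately; identities give pullback squares trivially.

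For (ii), I take the canonical factorization $f = \mathsf{et}_\Delta(f)\circ\eta_f$ defined just before the theorem. The unit $\eta_f$ is a $\Delta$-equivalence by \cref{lem:D_equiv_pullback}. The \'etale factor $\mathsf{et}_\Delta(f)$ is obtained as a pullback of $\Delta(\mathsf{rcoeq}(f))$, which is a morphism between discrete graphs and therefore a fibration by \cref{prp:fibration_discrete}. Fibrations are stable under pullback by \cref{thm:fibration_pullback}, so $\mathsf{et}_\Delta(f)$ is a fibration, and fibrations coincide with $\Delta$-\'etale maps by \cref{thm:etale_fibration}; hence $\mathsf{et}_\Delta(f)\in\mathcal{R}^\Delta$.

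The main obstacle is (iii): showing orthogonality. Given a $\Delta$-equivalence $i:\mathsf{rGph}(\mathcal{A},\mathcal{B})$ and a $\Delta$-\'etale morphism $f:\mathsf{rGph}(\mathcal{X},\mathcal{Y})$, I want the square obtained by applying $\mathsf{rGph}(\blank,\mathcal{X})\to\mathsf{rGph}(\blank,\mathcal{Y})$ at $i$ to be a pullback. I will mirror the proof of \cref{lem:rfs_orthogonal} using the commuting cube
\begin{equation*}
\begin{tikzcd}[column sep=small]
& \mathsf{rGph}(\mathcal{B},\mathcal{X}) \arrow[dl] \arrow[d] \arrow[dr] \\
\mathsf{rGph}(\mathcal{B},\Delta\mathsf{rcoeq}\mathcal{X}) \arrow[d] & \mathsf{rGph}(\mathcal{A},\mathcal{X}) \arrow[dl] \arrow[dr] & \mathsf{rGph}(\mathcal{B},\mathcal{Y}) \arrow[dl,crossing over] \arrow[d] \\
\mathsf{rGph}(\mathcal{A},\Delta\mathsf{rcoeq}\mathcal{X}) \arrow[dr] & \mathsf{rGph}(\mathcal{B},\Delta\mathsf{rcoeq}\mathcal{Y}) \arrow[from=ul,crossing over] \arrow[d] & \mathsf{rGph}(\mathcal{A},\mathcal{Y}) \arrow[dl] \\
& \mathsf{rGph}(\mathcal{A},\Delta\mathsf{rcoeq}\mathcal{Y}).
\end{tikzcd}
\end{equation*}
The top and bottom squares are pullbacks because $\mathsf{rGph}(\mathcal{C},\blank)$ preserves pullbacks of reflexive graphs and $f$ is $\Delta$-\'etale. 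The front-left square is a pullback because its two horizontal maps are equivalences: by the adjunction $\mathsf{rcoeq}\dashv\Delta$ from \cref{thm:emb_disc} we have $\mathsf{rGph}(\mathcal{C},\Delta X)\simeq(\mathsf{rcoeq}(\mathcal{C})\to X)$ naturally in $\mathcal{C}$, and precomposing with the equivalence $\mathsf{rcoeq}(i)$ yields the horizontal maps, which are therefore equivalences. By the cube lemma for pullbacks (the ``back-right from the other three sides'' direction, which is just two applications of the pasting property \cref{thm:pb_pasting}), the back-right square is also a pullback, as required.
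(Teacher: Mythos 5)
Your proof is correct and takes essentially the same route as the paper: the canonical factorization $f=\mathsf{et}_\Delta(f)\circ\eta_f$ (justified via \cref{lem:D_equiv_pullback} together with the fact that pullbacks of morphisms between discrete graphs are fibrations, hence $\Delta$-\'etale), followed by the same commuting cube of hom-types to establish that $\Delta$-equivalences are left orthogonal to $\Delta$-\'etale morphisms. The only differences are cosmetic: you re-derive the fact that precomposition with a $\Delta$-equivalence into a discrete graph is an equivalence from the universal property of $\mathsf{rcoeq}$ (the citation for this should be the defining universal property of the reflexive coequalizer rather than \cref{thm:emb_disc}), where the paper invokes its explicit characterization of $\Delta$-equivalences, and you spell out the closure-under-composition checks that the paper leaves implicit.
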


\begin{proof}
Since any pullback of a morphism between discrete reflexive graphs is a fibration of reflexive graphs, it follows from \cref{lem:D_equiv_pullback} that every morphism of reflexive graphs factors as a $\Delta$-equivalence followed by a $\Delta$-\'etale map. Therefore it remains to show that the class of $\Delta$-equivalences is left orthogonal to the class of $\Delta$-\'etale maps.

We have to show that for every $\Delta$-equivalence $i:\mathcal{A}\to\mathcal{B}$, and every $\Delta$-\'etale morphism $f:\mathcal{X}\to\mathcal{Y}$, the square
\begin{equation*}
\begin{tikzcd}
\mathcal{X}^{\mathcal{B}} \arrow[r] \arrow[d] & \mathcal{Y}^{\mathcal{B}} \arrow[d] \\
\mathcal{X}^{\mathcal{A}} \arrow[r] & \mathcal{Y}^{\mathcal{A}}
\end{tikzcd}
\end{equation*}
is a pullback square. Consider the commuting cube
\begin{equation*}
\begin{tikzcd}
& \mathcal{X}^{\mathcal{B}} \arrow[dl] \arrow[d] \arrow[dr] \\
\Delta(\rcoeq(\mathcal{X}))^{\mathcal{B}} \arrow[d] & \mathcal{X}^{\mathcal{A}} \arrow[dl] \arrow[dr] & \mathcal{Y}^{\mathcal{B}} \arrow[d] \arrow[dl,crossing over] \\
\Delta(\rcoeq(\mathcal{X}))^{\mathcal{A}} \arrow[dr] & \Delta(\rcoeq(\mathcal{Y}))^{\mathcal{B}} \arrow[from=ul,crossing over] \arrow[d] & \mathcal{Y}^{\mathcal{A}} \arrow[dl] \\
& \Delta(\rcoeq(\mathcal{Y}))^{\mathcal{A}} & \phantom{\Delta(\rcoeq(\mathcal{X}))^{\mathcal{B}}}
\end{tikzcd}
\end{equation*}
In this cube the top and bottom squares are pullback by the assumption that $f$ is $\Delta$-\'etale and the fact that exponents of pullback squares are again pullback squares. Furthermore, the square in the front left is pullback, because the two vertical maps are equivalences by the assumption that $i:\mathcal{A}\to\mathcal{B}$ is a $\Delta$-equivalence. Therefore we conclude that the square in the back right is also a pullback square, as desired.
\end{proof}

\begin{thm}\label{thm:rgph_et_up}
For any morphism $f:\mathsf{rGph}(\mathcal{B},\mathcal{A})$, the pre-composition map
\begin{equation*}
\mathsf{rGph}_{\mathcal{A}}(\mathsf{et}_\Delta(f),e)\to \mathrm{hom}_{\mathcal{A}}(f,e)
\end{equation*}
is an equivalence for every $\Delta$-\'etale map $e:\mathcal{C}\to\mathcal{A}$.
\end{thm}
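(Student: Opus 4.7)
The plan is to mimic the proof of \cref{thm:et_up} verbatim, replacing the reflective factorization system of a modality with the reflective factorization system $(\mathcal{L}^\Delta,\mathcal{R}^\Delta)$ of the discrete functor constructed in \cref{thm:modal_rofs}. The only non-formal inputs we need are the fact that the unit $\eta_f:\mathcal{B}\to\mathcal{B}^\eqvsym$ of the \'etale factor is a $\Delta$-equivalence (\cref{lem:D_equiv_pullback}), the fact that the target $e:\mathcal{C}\to\mathcal{A}$ is $\Delta$-\'etale (by hypothesis), and the orthogonality of these two classes in the exponential form proved inside \cref{thm:modal_rofs}.

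First I would apply the orthogonality part of \cref{thm:modal_rofs} to the $\Delta$-equivalence $\eta_f:\mathcal{B}\to\mathcal{B}^\eqvsym$ and the $\Delta$-\'etale morphism $e:\mathcal{C}\to\mathcal{A}$ to conclude that the square
\begin{equation*}
\begin{tikzcd}[column sep=large]
\mathcal{C}^{\mathcal{B}^\eqvsym} \arrow[r,"\blank\circ\eta_f"] \arrow[d,swap,"e\circ\blank"] & \mathcal{C}^{\mathcal{B}} \arrow[d,"e\circ\blank"] \\
\mathcal{A}^{\mathcal{B}^\eqvsym} \arrow[r,swap,"\blank\circ\eta_f"] & \mathcal{A}^{\mathcal{B}}
\end{tikzcd}
\end{equation*}
of reflexive graph morphisms is a pullback square. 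By \cref{cor:pb_fibequiv} this produces a fiberwise equivalence
\begin{equation*}
\prd{i:\mathsf{rGph}(\mathcal{B}^\eqvsym,\mathcal{A})} \fib{e\circ\blank}{i}\to\fib{e\circ\blank}{i\circ\eta_f}.
\end{equation*}

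Next I would identify the fibers of $e\circ\blank$ with the desired hom-over types. Specifically, for any $i:\mathsf{rGph}(\mathcal{B}^\eqvsym,\mathcal{A})$ we have an equivalence $\fib{e\circ\blank}{i}\eqvsym \mathsf{rGph}_\mathcal{A}(i,e)$, and likewise for any $j:\mathsf{rGph}(\mathcal{B},\mathcal{A})$ an equivalence $\fib{e\circ\blank}{j}\eqvsym\mathrm{hom}_\mathcal{A}(j,e)$; these equivalences commute with precomposition by $\eta_f$ up to the identification $\mathsf{et}_\Delta(f)\circ\eta_f=f$ that is part of the construction of the \'etale factor. Specializing the fiberwise equivalence at $i\defeq \mathsf{et}_\Delta(f)$ and chasing the resulting commuting square
\begin{equation*}
\begin{tikzcd}
\fib{e\circ\blank}{\mathsf{et}_\Delta(f)} \arrow[r] \arrow[d,"\eqvsym"'] & \fib{e\circ\blank}{f} \arrow[d,"\eqvsym"] \\
\mathsf{rGph}_\mathcal{A}(\mathsf{et}_\Delta(f),e) \arrow[r] & \mathrm{hom}_\mathcal{A}(f,e)
\end{tikzcd}
\end{equation*}
then yields the equivalence claimed in the statement.

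The main obstacle I anticipate is purely bookkeeping, namely verifying that the identifications between fibers of post-composition and hom-over types are natural in the precomposition by $\eta_f$, and in particular that the bottom horizontal map of the last square is indeed the precomposition map asserted by the theorem. This is the same bookkeeping that makes the proof of \cref{thm:et_up} work, and no further properties of reflexive graphs beyond those already used in the proof of \cref{thm:modal_rofs} (exponentials, pullbacks, and the fact that taking exponentials preserves pullbacks) are required.
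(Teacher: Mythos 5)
Your proposal is exactly the argument the paper intends: its own proof of this theorem is simply ``analogous to \cref{thm:et_up}'', and you have carried out that analogy with the correct ingredients, namely \cref{lem:D_equiv_pullback} for the unit being a $\Delta$-equivalence, the orthogonality established in \cref{thm:modal_rofs}, and the passage from the resulting pullback square to a fiberwise equivalence identified with the hom-over types. The remaining bookkeeping you flag (identifying fibers of postcomposition with morphisms over $\mathcal{A}$, compatibly with precomposition by $\eta_f$) is the same as in \cref{thm:et_up} and poses no obstacle.
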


\begin{proof}
Analogous to \cref{thm:et_up}.
\end{proof}

\subsection{Equifibrant replacement}\label{sec:equifibrant_replacement}

\begin{defn}
Let $\mathcal{B}$ and $\mathcal{C}$ be families of reflexive graphs over $\mathcal{A}$. We define the type
\begin{equation*}
\mathsf{rGph}_{\mathcal{A}}(\mathcal{B},\mathcal{C})
\end{equation*}
of morphisms of reflexive graphs \define{over} $\mathcal{A}$ to consist of triples $(\pts{f},\edg{f},\rfx{f})$ consisting of
\begin{align*}
\pts{f}(x) & : \pts{B}(x)\to \pts{C}(x) \\
\edg{f}(e) & : \prd{u:\pts{B}(x)}{v:\pts{B}(y)} \edg{B}(e,u,v)\to \edg{C}(e,\pts{f}(u),\pts{f}(v)) \\
\rfx{f}(x) & : \prd{u:\pts{B}(x)} \edg{f}(\rfx{\mathcal{B}}(x,u))=\rfx{\mathcal{C}}(x,\pts{f}(u))
\end{align*}
\end{defn}

\begin{lem}
Let $\mathcal{B}$ and $\mathcal{C}$ be families of reflexive graphs over $\mathcal{A}$. Then there is an equivalence
\begin{equation*}
\eqv{\mathsf{rGph}_{\mathcal{A}}(\mathcal{B},\mathcal{C})}{\sm{f:\mathsf{rGph}(\msm{\mathcal{A}}{\mathcal{B}},\msm{\mathcal{A}}{\mathcal{B}})} \proj 1 = \proj 1 \circ f}.
\end{equation*}
\end{lem}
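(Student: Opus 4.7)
The plan is to produce the equivalence by unfolding both sides as iterated $\Sigma$-types of data and then transforming one description into the other via a chain of standard equivalences, using function extensionality, the characterization of identity types in $\Sigma$-types (\cref{thm:eq_sigma}), and type-theoretic choice (\cref{thm:choice}). The codomain of the asserted equivalence should be read as morphisms $\mathsf{rGph}(\msm{\mathcal{A}}{\mathcal{B}}, \msm{\mathcal{A}}{\mathcal{C}})$ over $\mathcal{A}$; on the left, a morphism in $\mathsf{rGph}_{\mathcal{A}}(\mathcal{B},\mathcal{C})$ is a triple $(\pts{f},\edg{f},\rfx{f})$ as defined, while on the right a morphism equipped with $\proj 1 = \proj 1 \circ g$ unfolds into a triple $(\pts{g},\edg{g},\rfx{g})$ together with a pair of coherences saying that $\pts g$ commutes strictly with the vertex projection and that $\edg g$ commutes with the edge projection.

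First, I would handle the vertex component. Commutativity $\proj 1\circ \pts g \htpy \proj 1$ together with function extensionality and \cref{thm:choice} identifies the type of pairs $(\pts g, H)$ with the type of dependent functions $\pts f:\prd{i:\pts A}\pts B(i)\to\pts C(i)$, with $\pts g$ recovered as $\total{\pts f}$. The same argument applied to the edge projection turns the edge component $\edg g$, together with its commutation coherence, into the family $\edg f$ in the expected form. Each of these two steps is a contracting move of the kind made precise by \cref{lem:coh_red}, since the pair of a coherence constraint and a piece of data is equivalent to the datum that makes the constraint hold strictly.

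Next, I would compare the reflexivity witnesses. The reflexivity in $\msm{\mathcal{A}}{\mathcal{B}}$ is itself a pair built from $\rfx{\mathcal{A}}$ and $\rfx{\mathcal{B}}$, so unfolding $\rfx g:\edg g(\rfx{\msm{\mathcal A}{\mathcal B}}) = \rfx{\msm{\mathcal A}{\mathcal C}}$ via \cref{thm:eq_sigma} splits it into two components: the first is automatic once we have rewritten $\edg g$ in terms of $\edg f$ (it reduces to $\refl{\rfx{\mathcal A}(i)}$), and the second is exactly the datum $\rfx f$ in the definition of $\mathsf{rGph}_{\mathcal{A}}(\mathcal{B},\mathcal{C})$. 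Composing the equivalences from the previous step with this one, and verifying by inspection that the resulting map sends $(\pts f,\edg f,\rfx f)$ to the triple $(\total{\pts f},\total{\edg f},\ldots)$ paired with the canonical reflexivity homotopy, completes the construction.

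The main obstacle is not any single step — each equivalence in the chain is routine — but the bookkeeping needed to align the coherence data, especially where $\rfx g$ and the commutation of $\edg g$ with the edge projection both project onto the same $\rfx{\mathcal A}$-component. Keeping careful track of these iterated $\Sigma$-unfoldings, and verifying that the induced equivalence really is natural in the evident sense, is what makes the proof slightly longer than its ``obviously true'' content would suggest.
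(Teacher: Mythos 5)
The paper states this lemma without proof, so there is no official argument to compare against; your chain of $\Sigma$-unfoldings, funext, \cref{thm:choice}, and singleton contractions is certainly the intended routine argument, and you are right to read the codomain as $\sm{f:\mathsf{rGph}(\msm{\mathcal{A}}{\mathcal{B}},\msm{\mathcal{A}}{\mathcal{C}})}\proj 1 = \proj 1\circ f$ (the repeated $\mathcal{B}$ is a typo). However, your accounting of the coherence data has a genuine gap. The identification $\proj 1 = \proj 1\circ f$ lives in $\mathsf{rGph}(\msm{\mathcal{A}}{\mathcal{B}},\mathcal{A})$, which is a $\Sigma$-type with \emph{three} components, so by \cref{thm:eq_sigma} it unfolds into three coherences, not the "pair" you list: one for the vertex parts, one (dependent) for the edge parts, and one more relating $\rfx{(\proj 1)}$ — which is $\refl{\rfx{\mathcal{A}}(i)}$ — to $\rfx{(\proj 1\circ f)}$, i.e.\ to $\apfunc{\proj 1}$ applied to $\rfx{f}$.

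This third coherence is precisely what your last step silently uses. After contracting the vertex and edge data as you describe (so that $\edg{f}(q,e)$ has first component $q$ on the nose), the $\edg{A}$-component of $\rfx{f}(i,x)$ obtained from \cref{thm:eq_sigma} is an arbitrary loop $\beta(i,x):\rfx{\mathcal{A}}(i)=\rfx{\mathcal{A}}(i)$; it does not "reduce to $\refl{\rfx{\mathcal{A}}(i)}$" automatically, and the edge-commutation coherence evaluated at the reflexivity edge does not kill it either. Without the reflexivity-coherence the right-hand side carries this extra loop for every vertex of $\msm{\mathcal{A}}{\mathcal{B}}$, and the two sides are genuinely inequivalent (take $\edg{A}(i,i)$ with noncontractible loop space at $\rfx{\mathcal{A}}(i)$ and trivial families $\mathcal{B},\mathcal{C}$). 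With the third coherence restored, the pair consisting of $\beta(i,x)$ and the identification of $\refl{\rfx{\mathcal{A}}(i)}$ with it is a based path space, hence contractible (as in \cref{thm:total_path}, or by the pattern of \cref{lem:coh_red}), and what remains is exactly the datum $\rfx{f}$ of $\mathsf{rGph}_{\mathcal{A}}(\mathcal{B},\mathcal{C})$; the rest of your outline then goes through as written.
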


Note that, given a reflexive graph $\mathcal{A}$, any family $B:\mathsf{rcoeq}(\mathcal{A})\to\UU$ determines an equifibered family $\mathsf{equifib\usc{}fam}(B)$ over $\mathcal{A}$ given by
\begin{align*}
\pts{\mathsf{equifib\usc{}fam}(B)}(x) & \defeq B(\mathsf{constr}_0(x)) \\
\edg{\mathsf{equifib\usc{}fam}(B)}(e) & \defeq \mathsf{tr}_B(\mathsf{constr}_1(e)) \\
\rfx{\mathsf{equifib\usc{}fam}(B)}(x,y) & \defeq \mathsf{htpy\usc{}eq}(\mathsf{ap}_{\mathsf{tr}_B}(\rfx{\mathsf{constr}}(x)),y).
\end{align*}
In other words, $\mathsf{equifib\usc{}fam}$ is an operation that takes a family $B:\mathsf{rcoeq}(\mathcal{A})\to\UU$ to an equifibered family over $\mathcal{A}$. Moreover, it is not hard to see that
\begin{equation*}
\mathsf{equifib\usc{}fam} : (\mathsf{rcoeq}(\mathcal{A})\to\UU)\to\mathsf{equifib}(\mathcal{A})
\end{equation*}
is in fact an equivalence, where $\mathsf{equifib}(\mathcal{A})$ is the type of all (small) equifibered families over $\mathcal{A}$. In the following definition we observe that any family over $\mathcal{A}$ induces an equifibered family over $\mathcal{A}$. 

\begin{defn}
Let $\mathcal{B}$ be a family of reflexive graphs over $\mathcal{A}$. We define the \define{equifibrant replacement} $\mathsf{EqF}(\mathcal{B})$ of $\mathcal{B}$ by
\begin{equation*}
\mathsf{EqF}(\mathcal{B}) \defeq \mathsf{equifib\usc{}fam}(\fibf{\rcoeq(\proj 1)}),
\end{equation*}
and we define the morphism $\eta_{\mathcal{B}}:\mathsf{rGph}_{\mathcal{A}}(\mathcal{B},\mathsf{EqF}(\mathcal{B}))$. 
\end{defn}

\begin{thm}\label{thm:eqf_initial}
Consider a family $\mathcal{B}$ of reflexive graphs over $\mathcal{A}$, and let $\mathcal{E}$ be an equifibered family over $\mathcal{A}$. Then the pre-composition operation
\begin{equation*}
\mathsf{rGph}_{\mathcal{A}}(\mathsf{EqF}(\mathcal{B}),\mathcal{E})\to \mathsf{rGph}_{\mathcal{A}}(\mathcal{B},\mathcal{E})
\end{equation*}
is an equivalence for every equifibered family $\mathcal{E}$ over $\mathcal{A}$.
\end{thm}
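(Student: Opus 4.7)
The plan is to identify $\mathsf{EqF}(\mathcal{B})$ with the \'etale factor of the canonical projection $\proj 1 : \msm{\mathcal{A}}{\mathcal{B}} \to \mathcal{A}$, and then to invoke \cref{thm:rgph_et_up}. The key conceptual point is that the operation of passing to the total graph $\mathcal{B} \mapsto (\msm{\mathcal{A}}{\mathcal{B}} \to \mathcal{A})$ should present families of reflexive graphs over $\mathcal{A}$ as morphisms into $\mathcal{A}$, and under this presentation equifibered families correspond to $\Delta$-\'etale morphisms (by \cref{prp:fib_cart} together with \cref{thm:etale_fibration}).

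First I would establish that the total graph construction yields an equivalence between the type of families of reflexive graphs over $\mathcal{A}$ and the type $\sm{\mathcal{B}':\mathsf{rGph}}\mathsf{rGph}(\mathcal{B}',\mathcal{A})$ of morphisms into $\mathcal{A}$, and moreover that it sends morphisms over $\mathcal{A}$ of families to morphisms over $\mathcal{A}$ of the associated maps, i.e.~there is a natural equivalence
\begin{equation*}
\mathsf{rGph}_{\mathcal{A}}(\mathcal{B},\mathcal{C}) \eqvsym \mathrm{hom}_{\mathcal{A}}(\proj 1^{\mathcal{B}},\proj 1^{\mathcal{C}}).
\end{equation*}
Under this correspondence, the equifibered families correspond exactly to the cartesian morphisms into $\mathcal{A}$, which by \cref{thm:etale_fibration} coincide with the $\Delta$-\'etale morphisms.

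Next I would unfold $\mathsf{EqF}(\mathcal{B}) \defeq \mathsf{equifib\usc{}fam}(\fibf{\rcoeq(\proj 1)})$. By construction of $\mathsf{equifib\usc{}fam}$, the total graph $\msm{\mathcal{A}}{\mathsf{EqF}(\mathcal{B})}$ is (equivalent to) the pullback of the discrete morphism $\Delta(\rcoeq(\proj 1))$ along $\mathsf{constr}:\mathcal{A}\to\Delta(\rcoeq(\mathcal{A}))$. But this is precisely the defining pullback of the \'etale factor $\mathsf{et}_\Delta(\proj 1) : \mathcal{B}^{\eqvsym}\to\mathcal{A}$, and the unit $\eta_{\mathcal{B}}:\mathsf{rGph}_{\mathcal{A}}(\mathcal{B},\mathsf{EqF}(\mathcal{B}))$ corresponds under the above equivalence to the unit $\eta_{\proj 1}:\msm{\mathcal{A}}{\mathcal{B}}\to\mathcal{B}^{\eqvsym}$ of the \'etale factorization.

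Finally, for any equifibered family $\mathcal{E}$ over $\mathcal{A}$ the projection $\proj 1^{\mathcal{E}}$ is $\Delta$-\'etale, so \cref{thm:rgph_et_up} tells us that pre-composition with $\eta_{\proj 1}$ induces an equivalence
\begin{equation*}
\mathsf{rGph}_{\mathcal{A}}(\mathsf{et}_\Delta(\proj 1),\proj 1^{\mathcal{E}}) \eqvsym \mathrm{hom}_{\mathcal{A}}(\proj 1,\proj 1^{\mathcal{E}}).
\end{equation*}
Transporting along the correspondences from the first two steps, this is exactly the assertion that pre-composition with $\eta_{\mathcal{B}}$ is an equivalence. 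The main obstacle I expect is bookkeeping: verifying carefully that the equivalence between families over $\mathcal{A}$ and morphisms into $\mathcal{A}$ lifts to one between morphisms-over-$\mathcal{A}$ with all the right coherence, and that under this lift $\eta_{\mathcal{B}}$ is identified with $\eta_{\proj 1}$. Once these identifications are in place the conclusion follows immediately.
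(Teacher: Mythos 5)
Your proposal is correct and follows essentially the same route as the paper's proof: the paper likewise observes that $\msm{\mathcal{A}}{\mathsf{EqF}(\mathcal{B})}\to\mathcal{A}$ sits in a pullback square over $\Delta(\mathsf{rcoeq}(\proj 1))$ along $\mathsf{constr}$, identifies it as the \'etale factor of $\proj 1:\msm{\mathcal{A}}{\mathcal{B}}\to\mathcal{A}$, and then invokes \cref{thm:rgph_et_up}. The bookkeeping you flag (translating between $\mathsf{rGph}_{\mathcal{A}}$ of families and morphisms over $\mathcal{A}$ of the associated projections, with equifibered families corresponding to $\Delta$-\'etale maps) is exactly what the paper leaves implicit via the lemma preceding the theorem, so your write-up just makes the same argument more explicit.
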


\begin{proof}
First we observe that the square
\begin{equation*}
\begin{tikzcd}
\msm{\mathcal{A}}{\mathsf{EqF}(\mathcal{B})} \arrow[r] \arrow[d,swap,"p"] & \Delta(\mathsf{rcoeq}(\msm{\mathcal{A}}{\mathcal{B}})) \arrow[d,"\mathsf{rcoeq}(\proj 1)"] \\
\mathcal{A} \arrow[r,swap,"\mathsf{constr}"] & \Delta(\mathsf{rcoeq}(\mathcal{A}))
\end{tikzcd}
\end{equation*}
is a pullback square of reflexive graphs. Thus, the map $p$ is the \'etale factor of the morphism $\proj 1 : \msm{\mathcal{A}}{\mathcal{B}}\to \mathcal{A}$. Now the claim follows by \cref{thm:rgph_et_up}.
\end{proof}

\begin{rmk}
By the universal property of the equifibrant replacement it follows that the family $\pts{\mathsf{EqF}(\mathcal{B})}$ of vertices of the equifibrant replacement of $\mathcal{B}$ can be seen as a `recursive' higher inductive family of types. It comes equipped with
\begin{align*}
\zeta & : \prd{x,y:\pts{A}} \edg{A}(x,y)\to \eqv{\pts{\mathsf{EqF}(\mathcal{B})}(x)}{\pts{\mathsf{EqF}(\mathcal{B})}(y)} \\
\epsilon & : \prd{x:\pts{A}} \zeta(\rfx{\mathcal{A}}(x))\htpy \idfunc[\pts{\mathsf{EqF}(\mathcal{B})}(x)] \\
\pts{\eta} & : \prd{x:\pts{A}} \pts{B}(x) \to \pts{\mathsf{EqF}(\mathcal{B})}(x) \\
\edg{\eta} & : \prd{x,y:\pts{A}}{e:\edg{A}(x,y)}{u:\pts{B}(x)}{v:\pts{B}(y)} \edg{B}(e,u,v) \to \zeta(e,u)=v \\
\rfx{\zeta} & : \prd{x:\pts{A}}{u:\pts{B}(x)} \edg{\eta}(\rfx{\mathcal{A}}(x),\rfx{\mathcal{B}}(u))= \epsilon(x,u).
\end{align*}
In other words, the equifibrant replacement is the homotopy initial family of types over $\pts{A}$ with equivalences over the edges of $\mathcal{A}$ (coherent with reflexivity) and a morphism $\eta : \mathsf{rGph}_{\mathcal{A}}(\mathcal{B},\mathsf{EqF}(\mathcal{B}))$.
\end{rmk}

\subsection{Identity types of reflexive coequalizers}

\begin{defn}
Consider a reflexive graph $\mathcal{A}$ with a base point $a:A_0$. We define the \define{universal $\Delta$-bundle} $\mathcal{E}(a)$ over $\mathcal{A}$ at $a$ to be the equifibered family over $\mathcal{A}$ corresponding to the equifibrant replacement of the morphism $a:\mathsf{rGph}(\unit,\mathcal{A})$ corresponding to $a:A_0$. 
\end{defn}

\begin{prp}
Consider a reflexive graph $\mathcal{A}$ with a base point $a:A_0$. Then there are equivalences
\begin{equation*}
\pts{\mathcal{E}(a)}(x) \eqvsym (\mathsf{constr}_0(a)=\mathsf{constr}_0(x))
\end{equation*}
In particular, we have an equivalence $\eqv{\pts{\mathcal{E}(a)}(a)}{\loopspace{\rcoeq(\mathcal{A})}}$. 
\end{prp}

\begin{proof}
Immediate from the definition of the equifibrant replacement.
\end{proof}

In the following theorem we establish the universal property of the identity type of $\rcoeq(\mathcal{A})$ as the initial reflexive relation $R$ on $\pts{A}$ with certain extra structure. 

\begin{thm}
Consider a reflexive graph $\mathcal{A}$, and let
\begin{align*}
R & : A_0 \to A_0 \to\UU, \\
\rho & : \prd{x:A_0}R(x,x)
\end{align*}
be a reflexive relation on $A_0$ equipped with a `composition' operation
\begin{equation*}
\mu:\prd{a,x,y:\pts{A}} \edg{A}(x,y) \to (\eqv{R(a,x)}{R(a,y)}).
\end{equation*}
satisfying the unit law
\begin{align*}
\mathsf{left\usc{}unit}_\mu & : \prd{a,x:\pts{A}}{r:R(a,x)} \mu(\rfx{\mathcal{A}}(x),r)=r.
\end{align*}
Then there is a unique extension
\begin{equation*}
\begin{tikzcd}
\mathcal{A} \arrow[r,"\bar{\mu}"] \arrow[d] & (A_0,R,\rho) \\
k(\mathsf{constr}_0) \arrow[ur,densely dotted]
\end{tikzcd}
\end{equation*}
of the morphism $\bar{\mu}:\mathsf{rGph}(\mathcal{A},(A_0,R,\rho))$ consisting of
\begin{align*}
\pts{\bar{\mu}} & \defeq \mathsf{constr}_0 \\
\edg{\bar{\mu}}(e) & \defeq \mu(e,\rho(x)) \\
\rfx{\bar{\mu}}(x) & \defeq \mathsf{left\usc{}unit}_\mu(\rho(x)),
\end{align*}
where $k(\mathsf{constr}_0)$ is the pre-kernel of the function $\mathsf{constr}_0:A_0\to\rcoeq(\mathcal{A})$.
\end{thm}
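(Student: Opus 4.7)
The plan is to apply the universal property of the equifibrant replacement, \cref{thm:eqf_initial}, parametrically in a base point $a:\pts{A}$. Fix $a:\pts{A}$: the family $R(a,\blank):\pts{A}\to\UU$, together with the equivalences $\mu(a,\blank,\blank)$ and the unit law $\mathsf{left\usc{}unit}_\mu$, endows $R(a,\blank)$ with the structure of an equifibered family over $\mathcal{A}$. The element $\rho(a):R(a,a)$ then packages into a morphism $\mathcal{B}_a\to R(a,\blank)$ of reflexive graphs over $\mathcal{A}$, where $\mathcal{B}_a$ denotes the family over $\mathcal{A}$ associated to the pointed inclusion $a:\mathsf{rGph}(\unit,\mathcal{A})$.

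By \cref{thm:eqf_initial} I would then obtain a unique morphism $\mathcal{E}(a)\to R(a,\blank)$ of equifibered families over $\mathcal{A}$, where $\mathcal{E}(a)=\mathsf{EqF}(\mathcal{B}_a)$ is the universal $\Delta$-bundle at $a$. Unpacking this via the equivalence $\pts{\mathcal{E}(a)}(x)\eqvsym(\mathsf{constr}_0(a)=\mathsf{constr}_0(x))$ yields maps
\begin{equation*}
(\mathsf{constr}_0(a)=\mathsf{constr}_0(x))\to R(a,x)
\end{equation*}
for each $x:\pts{A}$. Letting $a$ vary, these maps assemble into the edge part of the required morphism $k(\mathsf{constr}_0)\to (A_0,R,\rho)$, with vertex part $\idfunc[A_0]$.

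The main obstacle is to verify that this extension restricts along the canonical morphism $\mathcal{A}\to k(\mathsf{constr}_0)$ back to $\bar{\mu}$: concretely, that the edge part sends $\edg{\mathsf{constr}}(e)$ to $\mu(e,\rho(x))$ for every $e:\edg{A}(x,y)$, and that reflexivity is preserved so as to recover $\rfx{\bar{\mu}}(x)$. Both checks amount to unpacking how the morphism $\mathcal{E}(a)\to R(a,\blank)$ produced by \cref{thm:eqf_initial} acts on the canonical basepoint and on the reflexive edges of $\mathcal{E}(a)$; the key ingredient is the unit law $\mathsf{left\usc{}unit}_\mu$, which ensures that the equifibered action of reflexive edges on $R(a,\blank)$ is compatible with the identity, and hence that the transported basepoint agrees with $\mu(e,\rho(x))$. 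Uniqueness of the full extension is then immediate from the uniqueness clause of \cref{thm:eqf_initial} applied pointwise in $a$.
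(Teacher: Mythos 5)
Your proposal is correct and follows essentially the same route as the paper: for each $a:\pts{A}$ the data $(R(a,\blank),\mu,\mathsf{left\usc{}unit}_\mu)$ is packaged as an equifibered family over $\mathcal{A}$, the point $\rho(a)$ gives a morphism from the family associated to $a:\unit\to\mathcal{A}$, and \cref{thm:eqf_initial} applied to its equifibrant replacement $\mathcal{E}(a)$ yields, after the identification $\pts{\mathcal{E}(a)}(x)\eqvsym(\mathsf{constr}_0(a)=\mathsf{constr}_0(x))$, the unique extension along $\mathcal{A}\to k(\mathsf{constr}_0)$. The only difference is that you spell out the repackaging and the check that the restriction recovers $\bar{\mu}$, which the paper compresses into a single ``equivalently''.
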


\begin{proof}
The data $(R,\mu,\mathsf{left\usc{}unit}_\mu)$ equivalently gives for every $a:A$ an equifibered family $\mathcal{B}(a)$ over $\mathcal{A}$ consisting of
\begin{align*}
\pts{\mathcal{B}(a)}(x) & \defeq R(a,x) \\
\edg{\mathcal{B}(a)}(e) & \defeq \mu(e) \\
\rfx{\mathcal{B}(a)}(x,r) & \defeq \mathsf{left\usc{}unit}_\mu(r),
\end{align*}
and the reflexivity term $\rho$ provides for every $a:\pts{A}$ a term of type $\pts{\mathcal{B}(a)}$. Thus we have by \cref{thm:eqf_initial} for every $a:\pts{A}$ a unique morphism from the equifibrant replacement of $a:\unit\to \mathcal{A}$ to $\mathcal{B}$ over $\mathcal{A}$. Equivalently, we have a unique extension of $\bar{\mu}$ along $\mathcal{A}\to k(\mathsf{constr}_0)$ as asserted.
\end{proof}

\begin{cor}
The loop space of the suspension $\susp X$ of a pointed type $X$ is the initial pointed type $Y$ equipped with a pointed map 
\begin{equation*}
X\to_\ast (\eqv{Y}{Y}).
\end{equation*}
In particular, the loop space of the $(n+1)$-sphere is the initial pointed type $Y$ equipped with a pointed map $\sphere{n}\to (\eqv{Y}{Y})$, or equivalently, an $(n+1)$-loop $\loopspace[n+1]{\mathrm{BAut}(Y)}$. Even more in particular, the loop space of the $2$-sphere is the initial pointed type $Y$ equipped with a homotopy $\idfunc[Y]\htpy\idfunc[Y]$. 
\end{cor}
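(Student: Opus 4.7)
The plan is to specialize the preceding theorem to the reflexive graph $SX$ from \cref{eg:rcoeq}, whose reflexive coequalizer is $\susp X$. Recall that $\pts{SX}\defeq\unit$, $\edg{SX}(\ttt,\ttt)\defeq X$, and $\rfx{SX}(\ttt)\defeq x_0$, and that $\mathsf{rcoeq}(SX)\eqvsym \susp X$ in such a way that the canonical morphism $\mathsf{constr}:SX\to\Delta(\susp X)$ sends $\ttt$ to the north pole on vertices and, on edges, provides a map $\mathsf{constr}_1:X\to\loopspace{\susp X}$ with $\mathsf{constr}_1(x_0)=\refl{\mathrm{north}}$ via $\rfx{\mathsf{constr}}$.

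Applied with $\mathcal{A}\defeq SX$, the previous theorem unpacks as follows. A reflexive relation $(R,\rho)$ on $\pts{SX}=\unit$ is equivalently a pointed type $(Y,y_0)$. The composition operation $\mu$ reduces to a map $\mu:X\to\eqv{Y}{Y}$, and the unit law $\mathsf{left\usc{}unit}_\mu$ becomes a pointwise identification $\mu(x_0,r)=r$, which by function extensionality is equivalent to an identification $\mu(x_0)=\idfunc[Y]$. Thus the datum of the theorem is precisely a pointed map $X\to_\ast\eqv{Y}{Y}$, where $\eqv{Y}{Y}$ is pointed at $\idfunc[Y]$. On the other hand, the pre-kernel $k(\mathsf{constr}_0)$ has vertex type $\unit$, edge type $\loopspace{\susp X}$, and reflexivity $\refl{\mathrm{north}}$, so a morphism of reflexive graphs $k(\mathsf{constr}_0)\to (\unit,Y,y_0)$ is the same as a map $\loopspace{\susp X}\to Y$ sending $\refl{\mathrm{north}}\mapsto y_0$, i.e.\ a pointed map.

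Applying the theorem therefore yields, for each pointed map $\mu:X\to_\ast\eqv{Y}{Y}$, a unique pointed map $f:\loopspace{\susp X}\to Y$ extending the canonical morphism $SX\to k(\mathsf{constr}_0)$; the compatibility condition along this morphism says exactly that $f(\mathsf{constr}_1(x))=\mu(x)(y_0)$ for every $x:X$. This is the universal property asserting that $\loopspace{\susp X}$ is initial among pointed types equipped with a pointed map $X\to_\ast\eqv{Y}{Y}$. The main technical point is reconciling the unit law of the previous theorem (a pointwise identification of functions) with the pointedness condition on $\mu$ (a single identification of functions); both directions of this correspondence are obtained via function extensionality.

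The corollaries about the $(n+1)$-sphere and the $2$-sphere are then immediate specializations. Taking $X\defeq \sphere{n}$, a pointed map $\sphere{n}\to_\ast\eqv{Y}{Y}$ is, by the iterated loop-suspension adjunction, an $n$-fold iterated loop in $\eqv{Y}{Y}$ at $\idfunc[Y]$, equivalently an element of $\loopspace[n+1]{\mathrm{BAut}(Y)}$ since $\loopspace{\mathrm{BAut}(Y)}\eqvsym\eqv{Y}{Y}$. In the case $n=1$, such a loop is a term of type $\idfunc[Y]=\idfunc[Y]$, which function extensionality identifies with a homotopy $\idfunc[Y]\htpy\idfunc[Y]$.
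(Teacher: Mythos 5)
Your proposal is correct and is exactly the argument the paper intends: the corollary is stated without proof because it is precisely the preceding theorem specialized to the one-vertex reflexive graph $SX$ (whose reflexive coequalizer is $\susp X$), with the unit law translated into pointedness of $\mu$ via function extensionality and the pre-kernel of $\mathsf{constr}_0$ identified with $\loopspace{\susp X}$. Your handling of the sphere cases via the loop--suspension adjunction and $\loopspace{\mathrm{BAut}(Y)}\eqvsym(\eqv{Y}{Y})$ likewise matches the intended reading.
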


\begin{cor}
The type $\Z$ if integers is the initial pointed type equipped with an automorphism.
\end{cor}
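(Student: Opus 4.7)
The plan is to specialize the previous corollary to the case $X = \sphere{0}$ (the booleans $\bool$ pointed at $\bfalse$, say), and then to simplify the resulting universal property. The previous corollary states that $\loopspace{\susp X}$ is initial among pointed types $Y$ equipped with a pointed map $X \to_\ast (\eqv{Y}{Y})$. So the first step is to analyze what such data amounts to when $X = \sphere{0}$.

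Next, I would observe that a pointed map $\sphere{0} \to_\ast (\eqv{Y}{Y})$ is equivalent to the datum of a single automorphism of $Y$. Indeed, by the universal property of $\bool$ as the coproduct $\unit + \unit$, a function $f : \bool \to \eqv{Y}{Y}$ is equivalent to a pair $(f(\bfalse), f(\btrue))$ of automorphisms, and the condition that $f$ be pointed forces $f(\bfalse) = \idfunc[Y]$; hence the remaining data is just $f(\btrue) : \eqv{Y}{Y}$. This equivalence is evidently natural in $Y$, so it transports the universal property of the previous corollary to: $\loopspace{\susp \sphere{0}}$ is initial among pointed types equipped with an automorphism.

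For the third step I would identify $\susp \sphere{0}$ with $\sphere{1}$. Following the definitions given in the excerpt, we have $\sphere{1} \defeq \join{\bool}{\sphere{0}} = \join{\bool}{\bool}$, and $\join{\bool}{X} \eqvsym \susp X$ for any type $X$ (since pushing out $\bool \times X$ against $\bool$ on one side collapses each copy of $X$ to a point). Therefore $\loopspace{\susp\sphere{0}} \eqvsym \loopspace{\sphere{1}}$, and the previous step tells us this type is initial among pointed types with an automorphism.

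The final step, and the main obstacle, is to identify $\loopspace{\sphere{1}}$ with $\Z$. This identification is a standard and nontrivial result in synthetic homotopy theory (essentially the computation of $\pi_1(\sphere{1})$), and the type $\Z$ of integers has not even been introduced formally in the excerpt. I would appeal to this external result, citing the relevant theorem (as in \cite{hottbook}), to conclude that $\Z \eqvsym \loopspace{\sphere{1}}$ as pointed types, and hence $\Z$ inherits the universal property. The universal property transports along the pointed equivalence because the property is invariant under equivalence in the category of pointed types.
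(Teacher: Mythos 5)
Your reduction is essentially the only sensible reading of this corollary, and since the thesis states it with no proof at all (and never formally introduces $\Z$), there is nothing to compare against step by step. Steps 1--3 are exactly the evident specialization: the ``in particular'' clause of the preceding corollary with $n=0$ already asserts that $\loopspace{\sphere{1}}$ is the initial pointed type equipped with a pointed map $\sphere{0}\to_\ast(\eqv{Y}{Y})$, and your observation that such a pointed map is precisely one automorphism (pointedness forces the value at the base point to be $\idfunc[Y]$) is correct; the detour through $\susp\sphere{0}\eqvsym\sphere{1}$ is fine but not needed. Two caveats about step 4. First, initiality does not transport along a mere pointed equivalence: the objects in play are pointed types \emph{equipped with an automorphism}, so you must check that the equivalence $\eqv{\Z}{\loopspace{\sphere{1}}}$ sends $0$ to $\refl{}$ \emph{and} intertwines the successor with concatenation by the generating loop, which is the automorphism $\loopspace{\sphere{1}}$ inherits from the preceding theorem. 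The standard equivalence $k\mapsto\mathsf{loop}^k$ does satisfy this, but your closing sentence (``invariant under equivalence in the category of pointed types'') is too weak as stated. Second, you are right that $\eqv{\Z}{\loopspace{\sphere{1}}}$ is a genuinely external input here; citing it is legitimate, but it is worth being explicit that the thesis itself supplies no proof of it.

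There is also a route that avoids importing the computation of $\loopspace{\sphere{1}}$ altogether, and which is arguably closer to the point of the chapter: prove directly, by induction on $\Z$, that $(\Z,0,\suc)$ is initial among pointed types with an automorphism --- given $(Y,y_0,e)$, define $f:\Z\to Y$ by sending $0$ to $y_0$ and letting the successor act as $e$ (and its formal inverse as $e^{-1}$), and show by a further induction that the type of structure-preserving maps is contractible; this is the well-known universal property of the integers. Combined with the preceding corollary, initial objects being unique, this yields $\eqv{\loopspace{\sphere{1}}}{\Z}$ as an \emph{output} of the chapter's machinery rather than an input, which is presumably the intended payoff of placing the statement here. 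Your argument is logically sound, but it makes the corollary depend on the very computation that this characterization of $\Z$ can be used to reprove.
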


\section{Equifibrant replacement for other homotopy colimits}

\subsection{Equifibrant replacement for diagrams over graphs}

Recall that the colimit of a diagram $\mathcal{D}$ over a reflexive graph $\mathcal{A}$ is simply the reflexive coequalizer of the reflexive graph $\msm{\mathcal{A}}{\mathcal{D}}$.

\begin{defn}
Let $\tau:\mathcal{D}'\to\mathcal{D}$ be a natural transformation of diagrams over a reflexive graph $\mathcal{A}$. 
\begin{enumerate}
\item We say that $\tau$ is a \define{weak equivalence} if it induces an equivalence
\begin{equation*}
\tfcolim(\tau):\tfcolim(\mathcal{D}')\to\tfcolim(\mathcal{D}).
\end{equation*}
We write $\mathcal{W}$ for the class of weak equivalences.
\item We say that $\tau$ is \define{\'etale} if the square
\begin{equation*}
\begin{tikzcd}
\msm{\mathcal{A}}{\mathcal{D}'} \arrow[d,swap,"\tau"] \arrow[r] & \Delta(\tfcolim(\mathcal{D}')) \arrow[d,"\tfcolim(\tau)"] \\
\msm{\mathcal{A}}{\mathcal{D}} \arrow[r] & \Delta(\tfcolim(\mathcal{D}))
\end{tikzcd}
\end{equation*}
is a pullback square of reflexive graphs. We write $\mathcal{R}$ for the class of \'etale maps.
\end{enumerate}
\end{defn}

\begin{thm}
A natural transformation $\tau:\mathcal{D}'\to\mathcal{D}$ is \'etale if and only if it is cartesian.
\end{thm}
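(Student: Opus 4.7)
The plan is to chain together three results already established in the text to reduce this statement to a tautology. The key observation is that the colimit $\tfcolim(\mathcal{D})$ of a diagram $\mathcal{D}$ over $\mathcal{A}$ is, by definition (or rather by the construction recalled just above), the reflexive coequalizer $\mathsf{rcoeq}(\msm{\mathcal{A}}{\mathcal{D}})$ of the total reflexive graph. Consequently the square appearing in the definition of an étale natural transformation is, up to relabelling, exactly the $\Delta$-étale square for the induced morphism of reflexive graphs
\begin{equation*}
\total{\tau} : \msm{\mathcal{A}}{\mathcal{D}'} \to \msm{\mathcal{A}}{\mathcal{D}}.
\end{equation*}
So ``$\tau$ is étale'' means precisely ``$\total{\tau}$ is $\Delta$-étale'' in the sense of the previous section.

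First I would invoke \cref{prp:nattrans_cartesian}, which states that $\tau$ is a cartesian natural transformation of diagrams if and only if the induced morphism $\total{\tau}$ is a cartesian morphism of reflexive graphs. Next, \cref{prp:fib_cart} identifies the cartesian morphisms of reflexive graphs with the fibrations of reflexive graphs. Finally, \cref{thm:etale_fibration} identifies the fibrations of reflexive graphs with the $\Delta$-étale morphisms. Composing these three equivalences yields
\begin{equation*}
\tau \text{ cartesian} \iff \total{\tau} \text{ cartesian} \iff \total{\tau} \text{ a fibration} \iff \total{\tau}\ \Delta\text{-étale} \iff \tau \text{ étale},
\end{equation*}
which is the desired biconditional.

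There is no real obstacle here beyond being careful that the square witnessing $\Delta$-étaleness of $\total{\tau}$ is genuinely the same square (as a commuting square of reflexive graphs, with the same homotopy filler) that appears in the definition of étaleness for $\tau$. This amounts to unfolding the definition of $\tfcolim(\mathcal{D})$ as $\mathsf{rcoeq}(\msm{\mathcal{A}}{\mathcal{D}})$ and checking that the canonical constructor morphisms agree on both sides; this is immediate from the way $\tfcolim$ was set up. Everything else is a direct appeal to the theorems already in hand.
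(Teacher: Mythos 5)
Your proof is correct and is essentially the paper's own argument: the paper's proof is a one-line appeal to \cref{thm:etale_fibration} and \cref{prp:nattrans_cartesian} (with the fibration/cartesian identification of \cref{prp:fib_cart} implicit), which is exactly the chain you spell out after unfolding $\tfcolim(\mathcal{D})$ as $\mathsf{rcoeq}(\msm{\mathcal{A}}{\mathcal{D}})$. Your version just makes the definitional unfolding and the intermediate step through fibrations explicit.
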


\begin{proof}
This follows directly by \cref{thm:etale_fibration,prp:nattrans_cartesian}.
\end{proof}

\begin{thm}
The pair $(\mathcal{W},\mathcal{R})$ forms an orthogonal factorization system. 
\end{thm}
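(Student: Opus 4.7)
The plan is to mirror the proof of \cref{thm:modal_rofs} and transfer it along the correspondence between diagrams over $\mathcal{A}$ and left fibrations into $\mathcal{A}$. Given any natural transformation $\tau : \mathcal{D}' \to \mathcal{D}$ of diagrams over $\mathcal{A}$, I would view it as the morphism $\total{\tau} : \msm{\mathcal{A}}{\mathcal{D}'} \to \msm{\mathcal{A}}{\mathcal{D}}$ of reflexive graphs, and apply \cref{thm:modal_rofs} to obtain a factorization
\begin{equation*}
\begin{tikzcd}
\msm{\mathcal{A}}{\mathcal{D}'} \arrow[r,"\eta"] & \mathcal{E} \arrow[r,"{\mathsf{et}_\Delta(\total\tau)}"] & \msm{\mathcal{A}}{\mathcal{D}}
\end{tikzcd}
\end{equation*}
with $\eta$ a $\Delta$-equivalence and the second morphism $\Delta$-\'etale. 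By construction, $\mathcal{E}$ fits in the pullback square
\begin{equation*}
\begin{tikzcd}
\mathcal{E} \arrow[d] \arrow[r] & \Delta(\tfcolim(\mathcal{D}')) \arrow[d] \\
\msm{\mathcal{A}}{\mathcal{D}} \arrow[r] & \Delta(\tfcolim(\mathcal{D})),
\end{tikzcd}
\end{equation*}
so the factorization automatically lives over $\mathcal{A}$.

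Next I would show that $\mathcal{E}$ is equivalent to the total graph of a diagram $\mathcal{D}^\eqvsym$ over $\mathcal{A}$. This reduces to verifying that the composite $\mathcal{E} \to \msm{\mathcal{A}}{\mathcal{D}} \to \mathcal{A}$ is a left fibration. The second morphism is a left fibration by construction of the total graph of a diagram. The first morphism is a pullback of $\Delta(\tfcolim(\tau))$, which is a (two-sided) fibration by \cref{prp:fibration_discrete}, and hence by pullback stability (\cref{thm:fibration_pullback}) $\mathcal{E} \to \msm{\mathcal{A}}{\mathcal{D}}$ is itself a fibration. Since left fibrations are closed under composition, we obtain the required left-fibration structure, and so $\mathcal{E}$ corresponds to a diagram $\mathcal{D}^\eqvsym$. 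By \cref{prp:nattrans_cartesian} the cartesianness of $\mathsf{et}_\Delta(\total\tau)$ implies that the induced natural transformation $\mathcal{D}^\eqvsym \to \mathcal{D}$ is \'etale, while the $\Delta$-equivalence property of $\eta$ is, by definition, the statement that $\mathcal{D}' \to \mathcal{D}^\eqvsym$ is a weak equivalence.

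Finally, for orthogonality I would argue analogously to the proof of \cref{thm:modal_rofs}. Given a weak equivalence $\tau$ and an \'etale transformation $\sigma$ in the category of diagrams over $\mathcal{A}$, the total-graph operation is fully faithful onto its image (the left fibrations over $\mathcal{A}$), so the type of fillers of a square from $\tau$ to $\sigma$ among natural transformations is equivalent to the type of fillers of the corresponding square in reflexive graphs over $\mathcal{A}$. The latter is contractible by the orthogonality half of \cref{thm:modal_rofs}. The main obstacle is the verification in the second paragraph that the middle object produced by the reflexive-graph factorization genuinely arises from a diagram over $\mathcal{A}$; once this left-fibration bookkeeping is settled via the closure properties of fibrations, the remaining assertions follow formally from \cref{thm:modal_rofs} and \cref{prp:nattrans_cartesian}.
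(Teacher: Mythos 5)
Your proposal is correct and follows essentially the same route as the paper: the paper's own proof is precisely the one-line observation that the statement follows from \cref{thm:modal_rofs}, i.e.\ by transferring the $(\mathcal{L}^\Delta,\mathcal{R}^\Delta)$ factorization system along the total-graph correspondence between diagrams over $\mathcal{A}$ and left fibrations into $\mathcal{A}$. You simply make explicit the bookkeeping the paper leaves implicit (that the middle object of the factorization is again a left fibration over $\mathcal{A}$, and that orthogonality transfers along the fully faithful total-graph operation), which is consistent with how the paper argues.
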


\begin{proof}
This follows from \cref{thm:modal_rofs}. 
\end{proof}

\section{Equifibrant replacement for sequential colimits}\label{sec:seqcolim_eqf}%
\sectionmark{Sequential colimits}

\begin{defn}
Let $\mathcal{B}$ be a sequential family over $\mathcal{A}$, and consider $x:A_n$. We write $\mathcal{B}_n[x]$ for the type sequence
\begin{equation*}
\begin{tikzcd}
B_n(x) \arrow[r] & B_{n+1}(f_n(x)) \arrow[r] & B_{n+2}(f_{n+1}(f_n(x))) \arrow[r] & \cdots.
\end{tikzcd}
\end{equation*}
\end{defn}

\begin{defn}
Let $\mathcal{B}$ be a sequential family over $\mathcal{A}$. We will define an equifibered sequential family $\square\mathcal{B}$ over $\mathcal{A}$ equipped with a morphism 
\begin{equation*}
\mathrm{hom}_{\mathcal{A}}(\mathcal{B},\square\mathcal{B})
\end{equation*}
\end{defn}

\begin{proof}[Construction]
We define
\begin{equation*}
\square B : \prd{n:\N}A_n\to\UU
\end{equation*}
by $\square B_n(x)\defeq \tfcolim \mathcal{B}_n[x]$. Next, we have to construct a fiberwise equivalence
\begin{equation*}
g_\infty : \prd{n:\N}{x:A_n} \square B_n(x) \eqvsym \square B_{n+1}(f_n(x))
\end{equation*}
Note that we have the natural transformation
\begin{equation*}
\begin{tikzcd}[column sep=small]
B_n(x) \arrow[d] \arrow[r] & \cdots \arrow[r] & B_{k+n}(f^k(x)) \arrow[r] \arrow[d] & B_{(k+1)+n}(f^{k+1}(x)) \arrow[r] \arrow[d] & \cdots \\
B_{n+1}(x) \arrow[r] & \cdots \arrow[r] & B_{k+(n+1)}(f^k(f(x))) \arrow[r] & B_{(k+1)+(n+1)}(f^{k+1}(f(x))) \arrow[r]
& \cdots
\end{tikzcd}
\end{equation*}
Now we observe that there are identifications
\begin{equation*}
(k+(n+1),f^k(f(x)))=((k+1)+n,f^{k+1}(x)),
\end{equation*}
which we may use to construct equivalences going up diagonally in each naturality square. An induction argument reveals that each square of the form
\begin{equation*}
\begin{tikzcd}
B_{k+(n+1)}(f^k(f(x))) \arrow[r] \arrow[d] & B_{(k+1)+(n+1)}(f^{k+1}(f(x))) \arrow[d] \\
B_{(k+1)+n}(f^{k+1}(x)) \arrow[r] & B_{(k+2)+n}(f^{k+2}(x))
\end{tikzcd}
\end{equation*}
commutes, from which we obtain the desired equivalence.\footnote{A formalization of this argument appears in \url{https://github.com/cmu-phil/Spectral/blob/master/colimit/seq_colim.hlean}}
\end{proof}

\begin{thm}\label{thm:colim_fib}
The equifibered family $\square\mathcal{B}$ over $\mathcal{A}$ satisfies the universal property that
\begin{equation*}
\mathrm{hom}_{\mathcal{A}}(\square\mathcal{B},\mathcal{E})\to\mathrm{hom}_{\mathcal{A}}(\mathcal{B},\mathcal{E})
\end{equation*}
is an equivalence for any equifibered family $\mathcal{E}$ over $\mathcal{A}$. 
\end{thm}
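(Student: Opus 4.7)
The plan is to follow the blueprint of \cref{thm:eqf_initial} from the reflexive-graph setting: exhibit the projection $\pi_1 : \msm{\mathcal{A}}{\square\mathcal{B}} \to \mathcal{A}$ as the \'etale factor of $\pi_1 : \msm{\mathcal{A}}{\mathcal{B}} \to \mathcal{A}$ in the reflective factorization system for type sequences, and then conclude from the universal property of that factor.

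First I would show that the square
\begin{equation*}
\begin{tikzcd}[column sep=large]
\msm{\mathcal{A}}{\square\mathcal{B}} \arrow[r] \arrow[d,swap,"\pi_1"] & \Delta(\tfcolim(\msm{\mathcal{A}}{\mathcal{B}})) \arrow[d,"\Delta(\tfcolim(\pi_1))"] \\
\mathcal{A} \arrow[r,swap,"\seqin"] & \Delta(\tfcolim(\mathcal{A}))
\end{tikzcd}
\end{equation*}
is a pullback square of type sequences. By computing pullbacks degreewise (the sequential analogue of \cref{lem:pb_total}), this reduces to the identification $\square B_n(x)\jdeq \tfcolim \mathcal{B}_n[x]\eqvsym \fib{\tfcolim(\pi_1)}{\seqin_n(x)}$ for every $n:\N$ and $x:A_n$. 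That identification is an instance of the flattening lemma for sequential colimits already proven in this chapter: the colimit $\tfcolim(\msm{\mathcal{A}}{\mathcal{B}})$ is the total space of a unique equifibered family $Q$ over $A_\infty$, and the value of $Q$ at $\seqin_n(x)$ is precisely the sequential colimit of the telescope of fibers $\mathcal{B}_n[x]$. One must also check that the structure equivalences $g_\infty$ built into the construction of $\square\mathcal{B}$ agree with the canonical transition equivalences of $Q$, but since both arise from the same index shift this is routine.

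With the pullback square established, the morphism $\pi_1 : \msm{\mathcal{A}}{\square\mathcal{B}} \to \mathcal{A}$ is \'etale by definition, and the unit $\eta : \mathcal{B} \to \square\mathcal{B}$ over $\mathcal{A}$ factors $\pi_1 : \msm{\mathcal{A}}{\mathcal{B}} \to \mathcal{A}$ through it. Since $\eta$ induces an equivalence on sequential colimits, it is a $\Delta$-equivalence of sequences, and so this is the (essentially unique) factorization in the reflective factorization system. The universal property asserted by the theorem is then just the universal property of the \'etale factor, i.e.\ the sequential analogue of \cref{thm:rgph_et_up}.

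The main obstacle is transferring the entire theory of the reflective factorization system from reflexive graphs (developed in \cref{sec:descent_rcoeq} and \cref{thm:modal_rofs}) to type sequences; the paper has already committed to running these arguments in parallel as a blueprint, so this is largely bookkeeping. The only genuinely novel content is the flattening identification of the fibers of $\tfcolim(\pi_1)$ with the telescope colimits $\tfcolim \mathcal{B}_n[x]$, which is precisely what links the concrete construction of $\square\mathcal{B}$ to the abstract \'etale factorization.
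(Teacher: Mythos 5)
Your overall strategy is plausible and in the spirit of the chapter's ``blueprint'' philosophy: exhibit $\pi_1:\msm{\mathcal{A}}{\square\mathcal{B}}\to\mathcal{A}$ as the \'etale factor of $\pi_1:\msm{\mathcal{A}}{\mathcal{B}}\to\mathcal{A}$ and quote the sequential analogue of \cref{thm:rgph_et_up}. (For comparison: the dissertation gives no informal proof of \cref{thm:colim_fib} at all; it defers to the formalization in \cite{DoornRijkeSojakova}.) The transfer of the orthogonal-factorization machinery to sequences is indeed mostly bookkeeping, granted the descent statements for sequential colimits stated earlier.

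The genuine gap is the step you call ``an instance of the flattening lemma for sequential colimits already proven in this chapter,'' namely the identification $\tfcolim\,\mathcal{B}_n[x]\eqvsym\fib{\tfcolim(\pi_1)}{\seqin_n(x)}$. The flattening lemma available in the text applies only to \emph{equifibered} families: it starts from descent data (equivalently a family over $A_\infty$) and identifies the total space over $A_\infty$ with the colimit of the total spaces. For the arbitrary, non-equifibered family $\mathcal{B}$ there is no family over $A_\infty$ to flatten until the theorem is already known; computing the fiber of $\tfcolim(\pi_1)$ over $\seqin_n(x)$ amounts to computing fibers/identity types in a sequential colimit, which is exactly what \cref{thm:colim_sigma} and \cref{thm:colim_id} provide --- and those are derived in this chapter as corollaries of \cref{thm:colim_fib}, so invoking them here is circular. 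What actually has to be supplied at this point is the nontrivial inductive coherence argument of \cite{DoornRijkeSojakova} showing that the colimit of the telescope of fibers is the fiber of the colimit; this is the heart of the theorem, not a routine consequence of earlier results. Likewise, the compatibility of the structure equivalences $g_\infty$ (which are built from the shift equivalence and transport along $(k+1)+n=(k+n)+1$) with the canonical transition equivalences is precisely the kind of transport bookkeeping the text itself flags as delicate in the formalization, and cannot be waved off as routine without doing it.
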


\begin{proof}
This fact is formalized in \cite{DoornRijkeSojakova}. An `informalized' proof is work in progress.
\end{proof}

\begin{cor}\label{thm:colim_sigma}
Let $\mathcal{B}$ be a sequential family over $\mathcal{A}$. Then we have a commuting triangle
\begin{equation*}
\begin{tikzcd}[column sep=tiny]
\mathsf{colim}\Big(\msm{\mathcal{A}}{\mathcal{B}}\Big) \arrow[rr] \arrow[dr,swap,"\mathsf{colim}(\proj 1)"] & & \sm{x:A_\infty} B_\infty(x) \arrow[dl,"\proj 1"] \\
\phantom{\sm{x:A_\infty} B_\infty(x)} & A_\infty & \phantom{\mathsf{colim}\Big(\msm{\mathcal{A}}{\mathcal{B}}\Big)}
\end{tikzcd}
\end{equation*}
in which the top map is an equivalence.
\end{cor}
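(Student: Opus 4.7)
The plan is to combine the universal property of the equifibrant replacement $\square\mathcal{B}$ given by \cref{thm:colim_fib} with the flattening lemma for sequential colimits. Note first that ``$B_\infty$'' in the statement is the family on $A_\infty$ obtained from the equifibered family $\square\mathcal{B}$ via descent for sequential colimits: the unique $B_\infty : A_\infty \to \UU$ equipped with equivalences $B_\infty(\mathsf{seq\usc{}in}(n,x)) \simeq \square B_n(x)$ compatibly with the transition data. Its existence is immediate from the descent theorem for sequential colimits proved earlier in this chapter.

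First I would factor the projection $\proj 1 : \msm{\mathcal{A}}{\mathcal{B}} \to \mathcal{A}$ through the equifibrant replacement as
\begin{equation*}
\msm{\mathcal{A}}{\mathcal{B}} \xrightarrow{\total{\eta}} \msm{\mathcal{A}}{\square\mathcal{B}} \xrightarrow{\proj 1} \mathcal{A}
\end{equation*}
and apply the sequential analogue of the flattening lemma (cf.~\cref{lem:flattening} for pushouts) to the equifibered family $\square\mathcal{B}$. Since $\square\mathcal{B}$ corresponds via descent to $B_\infty$, flattening produces an equivalence
\begin{equation*}
\mathsf{colim}\Big(\msm{\mathcal{A}}{\square\mathcal{B}}\Big) \simeq \sm{t:A_\infty} B_\infty(t)
\end{equation*}
over $A_\infty$. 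This step is essentially bookkeeping, parallel to the reflexive-coequalizer case already treated in \cref{sec:equifibrant_replacement}.

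The main obstacle is the second step: showing that $\total{\eta}$ induces an equivalence $\mathsf{colim}(\msm{\mathcal{A}}{\mathcal{B}}) \to \mathsf{colim}(\msm{\mathcal{A}}{\square\mathcal{B}})$ on sequential colimits. The fiberwise component $B_n(x) \to \square B_n(x) = \mathsf{colim}_k B_{n+k}(f^k(x))$ is only the first-stage inclusion into a telescope and is almost never an equivalence, so the argument cannot proceed pointwise. Instead, one argues by a cofinality/interchange of two nested sequential colimits: any representative $(n,x,[b])$ of an element of $\mathsf{colim}_n \sm{x:A_n}\square B_n(x)$ with $b:B_{n+k}(f^k(x))$ is, via the outer transitions, equal to $(n+k, f^k(x), b)$, which already lies in the image of $\total{\eta}$. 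Formally, I would verify that $\mathsf{colim}(\msm{\mathcal{A}}{\mathcal{B}})$ equipped with the evident cocone satisfies the universal property of $\mathsf{colim}(\msm{\mathcal{A}}{\square\mathcal{B}})$, exploiting that maps out of a sequential colimit into a type $X$ are dependent-product compatible sequences of maps. A fully detailed account of this diagonal argument is given in the formalization \cite{DoornRijkeSojakova}.

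Composing the two equivalences yields the required equivalence $\mathsf{colim}(\msm{\mathcal{A}}{\mathcal{B}}) \simeq \sm{t:A_\infty} B_\infty(t)$; commutativity of the triangle is automatic since every equivalence in sight is constructed so as to commute with the first projection $\proj 1$, and the original projection $\mathsf{colim}(\proj 1)$ factors through both steps.
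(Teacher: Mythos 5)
Your argument is correct, but it is organized differently from the thesis's proof. The thesis never introduces the comparison map on colimits that you work so hard to invert: it observes that $\square\mathcal{B}$ satisfies the universal property of the equifibrant replacement (\cref{thm:colim_fib}), hence coincides with $\mathsf{EqF}(\mathcal{B})$, which by construction is the equifibered family of fibers of $\mathsf{colim}(\proj 1)$; consequently the square exhibiting $\msm{\mathcal{A}}{\square\mathcal{B}}$ over $\mathcal{A}$ mapping to $\Delta(\mathsf{colim}(\msm{\mathcal{A}}{\mathcal{B}}))$ over $\Delta(\mathsf{colim}(\mathcal{A}))$ is a levelwise pullback, and the descent/flattening theorem for sequential colimits immediately identifies $\mathsf{colim}(\msm{\mathcal{A}}{\mathcal{B}})$ with $\sm{x:A_\infty}B_\infty(x)$ over $A_\infty$. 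Your route instead keeps the factorization through $\total{\eta}$ and proves directly that $\mathsf{colim}(\total{\eta})$ is an equivalence by the interchange/diagonal (cofinality) argument for the bi-indexed sequence $(n,k)\mapsto\sm{x:A_n}B_{n+k}(f^k(x))$, and then flattens $\square\mathcal{B}$. That cofinality claim is true, and your identification of it as the genuine obstacle is accurate, but note that it is exactly the content the thesis has packaged into \cref{thm:colim_fib} and deferred to the formalization \cite{DoornRijkeSojakova}; so in effect you are re-deriving the cited input rather than using it, which means your proof's hard coherence work (transition maps of $\square\mathcal{B}$ built from the paths $(k+(n+1))=((k+1)+n)$ and the shift equivalences, plus the homotopies making the diagonal cocone genuinely cocone-compatible) remains a sketch. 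If you want a proof at the level of rigor of the chapter, you can get your step on $\mathsf{colim}(\total{\eta})$ for free from \cref{thm:colim_fib}: the universal property characterizes $\msm{\mathcal{A}}{\square\mathcal{B}}$ as the equifibrant replacement of $\msm{\mathcal{A}}{\mathcal{B}}$ over $\mathcal{A}$, i.e.\ identifies it with the pullback of the colimiting cocone, and then the descent theorem does the rest --- which is the thesis's argument. What your route buys is an explicit, elementary picture of why the two-stage colimit collapses to the diagonal, at the cost of redoing the coherence bookkeeping; what the thesis's route buys is brevity and a clean separation between the one formalized input and soft universal-property reasoning.
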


\begin{proof}
Since $\square\mathcal{B}$ and $\mathsf{EqF}\mathcal{B}$ both satisfy the universal property of the equifibrant replacement of $\mathcal{B}$, they are the same sequences over $\mathcal{A}$. It follows that the square
\begin{equation*}
\begin{tikzcd}
\msm{\mathcal{A}}{\square\mathcal{B}} \arrow[r] \arrow[d,swap,"\proj 1"] & \Delta(\mathsf{colim}(\msm{\mathcal{A}}{\mathcal{B}})) \arrow[d,"\Delta(\mathsf{colim}(\mathcal{A}))"] \\
\mathcal{A} \arrow[r,swap,"\mathsf{seq\usc{}in}"] & \Delta(\mathsf{colim}(\mathcal{A}))
\end{tikzcd}
\end{equation*}
is a pullback square of type sequences. We conclude that the top arrow is a colimiting cocone, so the result follows.
\end{proof}

\begin{cor}\label{thm:colim_id}
Consider a type sequence
\begin{equation*}
\begin{tikzcd}
A_0 \arrow[r] & A_1 \arrow[r] & A_2 \arrow[r] & \cdots.
\end{tikzcd}
\end{equation*}
Then the canonical map
\begin{equation*}
\mathsf{colim}(f^n(x)= f^n(y)) \to (\mathsf{seq\usc{}in}_0(x)=\mathsf{seq\usc{}in}_0(y))
\end{equation*}
is an equivalence for every $x,y:A_0$.
\end{cor}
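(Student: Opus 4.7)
Fix $x : A_0$. The plan is to apply the fundamental theorem of identity types (\cref{thm:id_fundamental}) to a carefully chosen family over $A_\infty$, and then use the fact that sequential colimits commute with $\Sigma$-types (\cref{thm:colim_sigma}).

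First, I define a sequential family $\mathcal{B}$ of types over $\mathcal{A}$ by
\begin{equation*}
B_n(z) \defeq (f^n(x) = z) \qquad (z : A_n),
\end{equation*}
with transition maps given by $\mathsf{ap}_{f_n} : (f^n(x) = z) \to (f^{n+1}(x) = f_n(z))$. By \cref{thm:colim_sigma} we have an equivalence
\begin{equation*}
\mathsf{colim}\Big(\sm{z:A_n} B_n(z)\Big) \eqvsym \sm{z:A_\infty} B_\infty(z),
\end{equation*}
over $A_\infty$, where $B_\infty$ is the unique family over $A_\infty$ such that the fibers of the projection $\msm{\mathcal{A}}{\mathcal{B}} \to \mathcal{A}$ colimit fiberwise to $B_\infty$. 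Concretely, $B_\infty(\mathsf{seq\usc{}in}_0(y))$ is the sequential colimit of the sequence $B_n(f^n(y)) = (f^n(x) = f^n(y))$, which is exactly the domain in the statement.

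Next I observe that each total space $\sm{z:A_n}(f^n(x) = z)$ is contractible (it is a based path space, \cref{thm:total_path}); hence its sequential colimit is contractible as well. It follows that $\sm{z:A_\infty} B_\infty(z)$ is contractible. Since $B_\infty$ is moreover equipped with the base point $\mathsf{seq\usc{}in}_0(\refl{x}) : B_\infty(\mathsf{seq\usc{}in}_0(x))$ (coming from $\refl{x}$ at level $0$), the fundamental theorem of identity types gives a fiberwise equivalence
\begin{equation*}
\prd{z:A_\infty} (\mathsf{seq\usc{}in}_0(x) = z) \eqvsym B_\infty(z).
\end{equation*}
Instantiating at $z \defeq \mathsf{seq\usc{}in}_0(y)$ yields the desired equivalence.

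The main thing left to check, and the step I expect to require the most care, is that the equivalence produced in this way agrees with the canonical map of the statement — the one that sends a path $p : f^n(x) = f^n(y)$, as a term at stage $n$ of the colimit, to
\begin{equation*}
\ct{\mathsf{seq\usc{}glue}^n(x)}{\ct{\ap{\mathsf{seq\usc{}in}_n}{p}}{\mathsf{seq\usc{}glue}^n(y)^{-1}}}.
\end{equation*}
This will follow by inspection: the map produced by the fundamental theorem is determined by its value on $\refl{x}$ at stage $0$, which lands on $\refl{\mathsf{seq\usc{}in}_0(x)}$, and the naturality squares for the sequence are precisely the glue identifications $\mathsf{seq\usc{}glue}$. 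A path-induction argument at each stage then shows the two maps to coincide up to homotopy, using the compatibility between $\mathsf{ap}_{f_n}$ and concatenation with $\mathsf{seq\usc{}glue}$.
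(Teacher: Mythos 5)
Your proof is correct and is essentially the argument the paper intends (the corollary is left as an immediate consequence of \cref{thm:colim_sigma}): flatten the family of based path spaces $z\mapsto (f^n(x)=z)$, observe that the total spaces and hence their colimit are contractible, and invoke the fundamental theorem of identity types. The comparison step you flag at the end can be streamlined: either apply \cref{thm:fib_equiv} directly to the canonical fiberwise transformation (its total map is a map between contractible types, so it is automatically a fiberwise equivalence), or compose the canonical map with your equivalence and check the composite is the identity by a single path induction at the base point, which avoids any coherence check against the glue constructors.
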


\begin{thm}\label{thm:seq_colim_pb}
Consider a sequence
\begin{equation*}
\begin{tikzcd}[column sep=small,row sep=small]
& C_0 \arrow[dl] \arrow[dd] \arrow[rr] & & C_1 \arrow[dl] \arrow[dd] \arrow[rr] & & C_2 \arrow[dl] \arrow[dd] \arrow[rr] & & \cdots \\
A_0 \arrow[dd] \arrow[rr,crossing over] & & A_1 \arrow[rr,crossing over] & & A_2 \arrow[rr,crossing over] & & \cdots \\
& B_0 \arrow[dl] \arrow[rr] & & B_1 \arrow[dl] \arrow[rr] & & B_2 \arrow[dl] \arrow[rr] & & \cdots \\
X_0 \arrow[rr] & \phantom{B_2} & X_1 \arrow[rr] \arrow[from=uu,crossing over] & \phantom{B_2} & X_2 \arrow[rr] \arrow[from=uu,crossing over] & \phantom{B_2} & \cdots
\end{tikzcd}
\end{equation*}
of pullback squares. Then the sequential colimit
\begin{equation*}
\begin{tikzcd}
C_\infty \arrow[r] \arrow[d] & B_\infty \arrow[d] \\
A_\infty \arrow[r] & X_\infty
\end{tikzcd}
\end{equation*}
is again a pullback square.
\end{thm}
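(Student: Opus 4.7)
The plan is to reduce to showing that the induced map on fibers is a fiberwise equivalence, and then identify the fibers of the colimit maps with sequential colimits of fibers at finite stages, using that colimits commute with $\Sigma$ and identity types.

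First I would apply \cref{cor:pb_fibequiv}: the square in the conclusion is a pullback if and only if for every $a:A_\infty$ the induced map
\[
\phi_a : \mathrm{fib}_{C_\infty \to A_\infty}(a) \to \mathrm{fib}_{B_\infty \to X_\infty}(f_\infty(a))
\]
is an equivalence. Since being an equivalence is a mere proposition, I may use the induction principle for the sequential colimit $A_\infty$ to reduce to the case $a=\mathsf{seq\usc{}in}_n(a_n)$ with $a_n:A_n$. Reindexing each of the four sequences so that the initial index is $n$ does not alter the colimits, so without loss of generality $a=\mathsf{seq\usc{}in}_0(a_0)$ for some $a_0:A_0$; set $a_k$ and $x_k:=f_k(a_k)$ inductively along the sequence maps.

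Next I would compute the fiber of $B_\infty \to X_\infty$ over $\mathsf{seq\usc{}in}_0(x_0)$ as a sequential colimit. Writing
\[
\mathrm{fib}_{g_\infty}(\mathsf{seq\usc{}in}_0(x_0)) \;\jdeq\; \sm{b:B_\infty}\, g_\infty(b)=\mathsf{seq\usc{}in}_0(x_0),
\]
\cref{thm:colim_sigma} applied to the family $b\mapsto (g_\infty(b)=\mathsf{seq\usc{}in}_0(x_0))$ over the $B$-sequence, together with \cref{thm:colim_id} applied to the identity type $\mathsf{seq\usc{}in}_k(g_k(b))=\mathsf{seq\usc{}in}_k(x_k)$ (using the naturality that identifies $\mathsf{seq\usc{}in}_0(x_0)$ with $\mathsf{seq\usc{}in}_k(x_k)$), yield an equivalence
\[
\mathrm{fib}_{g_\infty}(\mathsf{seq\usc{}in}_0(x_0)) \;\eqvsym\; \mathsf{colim}_k\, \mathrm{fib}_{g_k}(x_k).
\]
The identical argument gives $\mathrm{fib}_{C_\infty\to A_\infty}(\mathsf{seq\usc{}in}_0(a_0)) \eqvsym \mathsf{colim}_k\, \mathrm{fib}_{C_k\to A_k}(a_k)$.

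Finally, by hypothesis each finite-stage square is a pullback, so by \cref{cor:pb_fibequiv} the induced maps $\mathrm{fib}_{C_k\to A_k}(a_k)\to \mathrm{fib}_{g_k}(x_k)$ are equivalences, naturally in the sequence maps. The induced map of sequential colimits is therefore an equivalence, and (tracing through the identifications above) this is exactly $\phi_a$, completing the argument.

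The main obstacle is the second step, namely the identification of $\mathrm{fib}_{g_\infty}(\mathsf{seq\usc{}in}_0(x_0))$ with $\mathsf{colim}_k\,\mathrm{fib}_{g_k}(x_k)$. The combination of \cref{thm:colim_sigma} and \cref{thm:colim_id} must be set up carefully, since the endpoint of the identity type varies along the sequence and the family over $B_\infty$ to which we wish to apply $\Sigma$-commutation is not manifestly in the form of an equifibered family over the constant-$\unit$ sequence — one must use the equivalences $\mathsf{seq\usc{}in}_0(x_0)=\mathsf{seq\usc{}in}_k(x_k)$ to translate it into that form before invoking the commutation results.
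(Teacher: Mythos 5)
Your strategy is a genuinely different route from the paper's. The paper's proof is a one-liner: since sequential colimits commute with $\Sigma$-types (\cref{thm:colim_sigma}) and with identity types (\cref{thm:colim_id}), they commute with pullbacks — each $C_n$ is equivalent to the canonical pullback $\sm{a:A_n}{b:B_n}\,f_n(a)=g_n(b)$, which is a $\Sigma$-type of identity types, so commuting the colimit through both constructors identifies $C_\infty$ with $A_\infty\times_{X_\infty}B_\infty$ over the colimit square. Your fiberwise route (reduce via \cref{cor:pb_fibequiv}, induct over $A_\infty$ using that $\isequiv$ is a proposition, then identify fibers over the colimit with colimits of fibers) can be made to work, but it is longer and concentrates all the difficulty in the lemma you yourself flag, and the derivation you sketch for that lemma does not go through as stated.

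Concretely: applying \cref{thm:colim_sigma} to the family $b\mapsto (g_\infty(b)=\seqin_0(x_0))$ restricted along the $B$-sequence identifies $\fib{g_\infty}{\seqin_0(x_0)}$ with $\mathsf{colim}_k\,\sm{b:B_k}\big(\seqin_k(g_k(b))=\seqin_k(x_k)\big)$, i.e.\ with a colimit of fibers of the composites $\seqin_k\circ g_k$, whose identity types live in $X_\infty$ — not with $\mathsf{colim}_k\,\fib{g_k}{x_k}$. Expanding those identity types with \cref{thm:colim_id} introduces a second sequential colimit inside each $\Sigma$ (of identity types in the $X_{k+m}$), and after commuting $\Sigma$ past it you are left with fibers of composites of $g_{k+m}$ with iterated $B$-sequence maps, nested under a double colimit; turning this into $\mathsf{colim}_k\,\fib{g_k}{x_k}$ needs a diagonalization argument you have not supplied. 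The clean way to get your key lemma from the paper's toolkit is instead \cref{thm:colim_fib} together with \cref{thm:colim_sigma}: take the sequential family $x\mapsto\fib{g_n}{x}$ over the $X$-sequence (whose total spaces are the $B_n$ by \cref{eg:fib_proj}); its equifibrant replacement has exactly the values $\mathsf{colim}_m\,\fib{g_{n+m}}{f^m(x)}$, and the commuting triangle in \cref{thm:colim_sigma} identifies $g_\infty$ with the projection of the corresponding family over $X_\infty$, giving $\fib{g_\infty}{\seqin_n(x)}\eqvsym\mathsf{colim}_m\,\fib{g_{n+m}}{f^m(x)}$ directly — this also removes the need for your reindexing step, whose justification (shift-invariance of the colimit) is not among the paper's stated results. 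Finally, your closing claim that the resulting equivalence of colimits ``is exactly $\phi_a$'' is not automatic: you must check that the fiber identifications commute with the maps induced by the pullback squares and with the comparison to the colimit square, a coherence the paper's proof sidesteps entirely by arguing at the level of the whole square.
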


\begin{proof}
Since sequential colimits commute with $\Sigma$ and identity types, they commute with pullbacks.
\end{proof}

\begin{cor}\label{thm:colim_fiberseq}
Consider a sequence of fiber sequences
\begin{equation*}
\begin{tikzcd}
F_0 \arrow[d] \arrow[r] & F_1 \arrow[d] \arrow[r] & F_2 \arrow[d] \arrow[r] & \cdots \\
E_0 \arrow[d] \arrow[r] & E_1 \arrow[d] \arrow[r] & E_2 \arrow[d] \arrow[r] & \cdots \\
B_0 \arrow[r] & B_1 \arrow[r] & B_2 \arrow[r] & \cdots
\end{tikzcd}
\end{equation*}
in which all maps and homotopies are assumed to be pointed. Then the colimit
\begin{equation*}
\begin{tikzcd}
F_\infty \hookrightarrow E_\infty \twoheadrightarrow B_\infty
\end{tikzcd}
\end{equation*}
is again a fiber sequence.
\end{cor}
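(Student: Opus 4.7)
The plan is to realize the given data as a sequence of pullback squares, apply \cref{thm:seq_colim_pb}, and then identify the sequential colimit of the constant sequence on $\unit$ with $\unit$ itself.

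First, I would unpack the definition of a fiber sequence: each row $F_n \hookrightarrow E_n \twoheadrightarrow B_n$ gives a pullback square
\begin{equation*}
\begin{tikzcd}
F_n \arrow[r,"i_n"] \arrow[d] & E_n \arrow[d,"p_n"] \\
\unit \arrow[r,swap,"\const_{b_{0,n}}"] & B_n.
\end{tikzcd}
\end{equation*}
Since all maps and homotopies in the tower are pointed, these squares assemble into a sequence of pullback squares, where the left vertical maps are all the terminal projections (so the left face of the resulting three-dimensional diagram is the constant sequence on $\unit$) and the maps $\unit \to B_n$ are determined by the base points, which are preserved by the transition maps $B_n \to B_{n+1}$. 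Thus the coherence data required by \cref{thm:seq_colim_pb} is furnished by the pointedness of each map and homotopy in the three towers.

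Next, I would apply \cref{thm:seq_colim_pb} to this sequence of pullback squares to conclude that the induced square
\begin{equation*}
\begin{tikzcd}
F_\infty \arrow[r] \arrow[d] & E_\infty \arrow[d] \\
\unit_\infty \arrow[r] & B_\infty
\end{tikzcd}
\end{equation*}
is again a pullback. To finish, I would show that $\unit_\infty$ is contractible. Since the colimiting cocone $\mathsf{seq\usc{}in}_n : \unit \to \unit_\infty$ is a cocone with all transition maps equal to the identity, we may either invoke \cref{lem:seqcolim_contr} directly (taking each base point to be $\ttt$ and each transition-fixing homotopy to be reflexivity), or note that the constant family on $\unit$ is already a cocone exhibiting $\unit$ as a sequential colimit. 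Either way, the bottom-left corner of the pullback square is equivalent to $\unit$, and the bottom map becomes $\const_{b_{0,\infty}}$ where $b_{0,\infty} \defeq \mathsf{seq\usc{}in}_0(b_{0,0}) : B_\infty$. This exhibits $F_\infty$ as the fiber of $E_\infty \to B_\infty$ over $b_{0,\infty}$, completing the fiber sequence.

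The main obstacle will be the bookkeeping of pointed structure: one must check that the pointed homotopies in the original data actually produce a coherent sequence of pullback squares in the unpointed sense, rather than merely a sequence up to pointed homotopy. Once this is settled, the argument is a direct combination of \cref{thm:seq_colim_pb} with the contractibility of $\unit_\infty$, and the resulting square automatically inherits the pointed structure needed to be a fiber sequence in the sense of the definition given earlier.
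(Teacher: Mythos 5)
Your proposal is correct and follows exactly the route the paper intends: the corollary is stated as an immediate consequence of \cref{thm:seq_colim_pb}, applied to the sequence of pullback squares over the constant sequence on $\unit$, whose colimit is contractible. The pointedness bookkeeping you flag is precisely what supplies the naturality data for that sequence of squares, so no further idea is needed.
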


\begin{cor}
Consider a type sequence
\begin{equation*}
\begin{tikzcd}
A_0 \arrow[r] & A_1 \arrow[r] & A_2 \arrow[r] & \cdots
\end{tikzcd}
\end{equation*}
of pointed types (and pointed maps between them). Then the canonical map
\begin{equation*}
\mathsf{colim}(\loopspace{A_{n}}) \to \loopspace{A_{\infty}}
\end{equation*}
is an equivalence.
\end{cor}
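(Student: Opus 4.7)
The plan is to derive this as an immediate consequence of \cref{thm:colim_id}, which asserts that sequential colimits commute with identity types based at a point of $A_0$. Concretely, I take $x = y = \ast_0$, the basepoint of $A_0$. Since each map $f_n$ is pointed, a straightforward induction on $n$ produces identifications $p_n : f^n(\ast_0) = \ast_n$, so that $(f^n(\ast_0) = f^n(\ast_0))$ is equivalent, via conjugation by $p_n$, to $\loopspace{A_n} \jdeq (\ast_n = \ast_n)$. Moreover, these equivalences commute (up to homotopy) with the transition maps of both sequences, because the pointedness data of $f_n$ gives an identification $f_n(\ast_n) = \ast_{n+1}$ that is compatible with $p_n$ and $p_{n+1}$. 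Hence $\mathsf{colim}_n(f^n(\ast_0) = f^n(\ast_0)) \eqvsym \mathsf{colim}_n \loopspace{A_n}$.

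On the other side, we have the identification $\mathsf{seq\usc{}in}_0(\ast_0) = \ast_\infty$ which gives $(\mathsf{seq\usc{}in}_0(\ast_0) = \mathsf{seq\usc{}in}_0(\ast_0)) \eqvsym \loopspace{A_\infty}$. Combining these with the equivalence from \cref{thm:colim_id} yields an equivalence $\mathsf{colim}_n \loopspace{A_n} \eqvsym \loopspace{A_\infty}$, and one verifies that the composite is homotopic to the canonical comparison map (the one induced from the cocone on loop spaces by functoriality of $\loopspacesym$).

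An alternative proof route proceeds through \cref{thm:colim_fiberseq}: the constant sequence of fiber sequences $\loopspace{A_n} \hookrightarrow \unit \twoheadrightarrow A_n$ has colimit a fiber sequence $\mathsf{colim}_n \loopspace{A_n} \hookrightarrow \unit \twoheadrightarrow A_\infty$, whose fiber identifies $\mathsf{colim}_n \loopspace{A_n}$ with $\loopspace{A_\infty}$. Both routes ultimately rely on the same ingredient, namely the compatibility of sequential colimits with identity types.

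The main (and essentially only) obstacle is bookkeeping: verifying that the equivalences obtained from the pointedness data genuinely assemble into a compatible natural transformation of the two type sequences involved, so that the induced map on colimits agrees with the canonical comparison map. This is a coherence check rather than a conceptual difficulty, and is entirely manageable because the pointed structure of the sequence is finite data at each stage.
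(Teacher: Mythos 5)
Your proposal is correct, and your ``alternative route'' is in fact the paper's own derivation: the statement is placed as an unproved corollary immediately after \cref{thm:colim_fiberseq}, so the intended argument is exactly to apply that result to the constant sequence of fiber sequences $\loopspace{A_n}\hookrightarrow\unit\twoheadrightarrow A_n$, note that $\mathsf{colim}(\unit)$ is contractible, and read off $\mathsf{colim}(\loopspace{A_n})\eqvsym\loopspace{A_\infty}$ as the fiber of $\unit\to A_\infty$. Your primary route through \cref{thm:colim_id} is a legitimate variant, but it carries the extra bookkeeping you flag yourself: \cref{thm:colim_id} is stated for points of $A_0$, so you must conjugate by the pointedness identifications $p_n : f^n(\ast_0)=\ast_n$ and verify that these conjugation equivalences form a natural transformation of type sequences compatible with $\loopspace{f_n}$, and that the induced map on colimits is the canonical one; the fiber-sequence route packages all of this coherence into \cref{thm:colim_fiberseq} (itself a consequence of \cref{thm:seq_colim_pb}), which is why the paper treats the statement as immediate. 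Either way the essential ingredient is the same --- commutation of sequential colimits with identity types/pullbacks --- so there is no gap, only a trade-off between doing the basepoint coherence by hand and invoking the already-established fiber-sequence form of it.
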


\begin{prp}\label{thm:colim_hlevel}
Consider a type sequence
\begin{equation*}
\begin{tikzcd}
A_0 \arrow[r] & A_1 \arrow[r] & A_2 \arrow[r] & \cdots
\end{tikzcd}
\end{equation*}
If each $A_n$ is $k$-truncated, then so is the sequential colimit $A_\infty$.
\end{prp}

\begin{proof}
We prove the claim by induction on $k\geq -2$. The base case is trivial, and the inductive step follows since sequential colimits commute with identity types.
\end{proof}

\begin{thm}
Consider a type sequence
\begin{equation*}
\begin{tikzcd}
A_0 \arrow[r] & A_1 \arrow[r] & A_2 \arrow[r] & \cdots
\end{tikzcd}
\end{equation*}
Then the canonical map
\begin{equation*}
\mathsf{colim}_n \trunc{k}{A_n} \to \trunc{k}{A_\infty}
\end{equation*}
is an equivalence. 
\end{thm}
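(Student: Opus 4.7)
The plan is to show that $\mathsf{colim}_n \trunc{k}{A_n}$ itself satisfies the universal property of the $k$-truncation of $A_\infty$, and then invoke the uniqueness of localizations (\cref{lem:reflective_uniqueness}) to conclude that the comparison map is an equivalence.

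First I would check that $\mathsf{colim}_n \trunc{k}{A_n}$ is $k$-truncated: this is an immediate application of \cref{thm:colim_hlevel}, since each $\trunc{k}{A_n}$ is by construction $k$-truncated. Next, I would construct the map $A_\infty \to \mathsf{colim}_n \trunc{k}{A_n}$ as the map induced by the universal property of $A_\infty$ from the evident cocone $A_n \to \trunc{k}{A_n} \to \mathsf{colim}_m \trunc{k}{A_m}$ on the original sequence.

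The main step is to show that this induced map $A_\infty \to \mathsf{colim}_n \trunc{k}{A_n}$ satisfies the universal property of the $k$-truncation, i.e.\ that for every $k$-truncated type $Z$ the precomposition map
\begin{equation*}
(\mathsf{colim}_n \trunc{k}{A_n} \to Z) \to (A_\infty \to Z)
\end{equation*}
is an equivalence. The key calculation here is the chain of equivalences
\begin{align*}
(\mathsf{colim}_n \trunc{k}{A_n} \to Z) & \eqvsym \mathsf{lim}_n\,(\trunc{k}{A_n} \to Z) \\
& \eqvsym \mathsf{lim}_n\,(A_n \to Z) \\
& \eqvsym (A_\infty \to Z),
\end{align*}
where the first and third equivalences use the universal property of sequential colimits (see the proof of \cref{lemma:orthogonalcomposition}), and the middle equivalence is obtained by postcomposing with the levelwise equivalence $(\trunc{k}{A_n} \to Z) \eqvsym (A_n \to Z)$ coming from the universal property of $\trunc{k}{\blank}$ and the assumption that $Z$ is $k$-truncated. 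A small diagrammatic check is required to see that the resulting composite equivalence is indeed precomposition with the map constructed above.

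Once these two facts are in place, both $A_\infty \to \trunc{k}{A_\infty}$ and $A_\infty \to \mathsf{colim}_n \trunc{k}{A_n}$ are $k$-localizations of $A_\infty$, so \cref{lem:reflective_uniqueness} yields a unique equivalence between them compatible with the structure maps, and this equivalence is exactly the canonical comparison. I do not expect any genuine obstacle; the only mildly fiddly point is verifying that the composite of equivalences in the displayed chain is, on the nose, the precomposition map induced by the canonical cocone, but this is a routine cocone–limit manipulation.
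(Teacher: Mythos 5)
Your proof is correct. The paper in fact states this theorem without a written proof (the sequential-colimit results are deferred to the formalization in \cite{DoornRijkeSojakova}), and your argument is exactly the natural one given the surrounding machinery: $k$-truncatedness of $\mathsf{colim}_n\trunc{k}{A_n}$ from \cref{thm:colim_hlevel}, the universal property of sequential colimits (\cref{thm:sequential_up} — note that the lemma you cite for this lives in material not included in the compiled text), and uniqueness of localizations (\cref{lem:reflective_uniqueness}). The one step you compress is the middle equivalence: a cocone has both map components and homotopy components, and the homotopy components are handled by the fact that homotopies between maps out of $\trunc{k}{A_n}$ into a $k$-type are determined by restriction along the unit, which is \cref{lem:localization_uphtpy} applied to the reflective subuniverse of $k$-types (\cref{thm:truncation}); with that observation the remaining identification of the composite with precomposition by the canonical map is indeed routine.
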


\begin{thm}\label{thm:colim_hgroup}
Consider a type sequence
\begin{equation*}
\begin{tikzcd}
A_0 \arrow[r] & A_1 \arrow[r] & A_2 \arrow[r] & \cdots
\end{tikzcd}
\end{equation*}
Then the canonical map
\begin{equation*}
\mathsf{colim}_n(\pi_k(A_n))\to \pi_k(A_\infty)
\end{equation*}
is a group isomorphism, for any $k\geq 1$.
\end{thm}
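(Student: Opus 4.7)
The plan is to decompose $\pi_k(A_\infty)$ using the standard definition $\pi_k(X) \defeq \trunc{0}{\loopspace[k]{X}}$ and then apply two previously established commutation results: the fact that sequential colimits commute with loop spaces (a consequence of \cref{thm:colim_id} applied iteratively, or of \cref{thm:colim_fiberseq}) and the fact that sequential colimits commute with $k$-truncation. Assuming throughout that the sequence is pointed with pointed connecting maps (which is required to even form the sequence of loop spaces or homotopy groups), both commutation results apply.

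Concretely, I would first iterate the corollary that $\mathsf{colim}(\loopspace{A_n}) \eqvsym \loopspace{A_\infty}$, obtaining an equivalence
\begin{equation*}
\mathsf{colim}_n \loopspace[k]{A_n} \eqvsym \loopspace[k]{A_\infty}
\end{equation*}
by induction on $k \geq 1$. Then I would apply the preceding theorem, that $\mathsf{colim}_n \trunc{0}{X_n} \eqvsym \trunc{0}{\mathsf{colim}_n X_n}$ for any type sequence $X_n$, to the sequence $X_n \defeq \loopspace[k]{A_n}$. Composing these two equivalences yields
\begin{equation*}
\mathsf{colim}_n \pi_k(A_n) \jdeq \mathsf{colim}_n \trunc{0}{\loopspace[k]{A_n}} \eqvsym \trunc{0}{\mathsf{colim}_n \loopspace[k]{A_n}} \eqvsym \trunc{0}{\loopspace[k]{A_\infty}} \jdeq \pi_k(A_\infty),
\end{equation*}
and a tracing-through of base points shows this composite coincides with the canonical map of the statement.

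The remaining work is to check that this equivalence is a group homomorphism. For $k\geq 1$, the group operation on $\pi_k(A_n)$ is induced by path concatenation on $\loopspace[k]{A_n}$, and the transition maps $\pi_k(A_n)\to\pi_k(A_{n+1})$ are group homomorphisms because the connecting maps are pointed. The group operation on $\mathsf{colim}_n \pi_k(A_n)$ is then inherited by using that binary operations pass through sequential colimits (via \cref{thm:colim_sigma}, since a binary operation on a sequence of groups is a natural transformation of sequences). The first equivalence above respects concatenation because each $\loopspace[k]{A_n}\to\loopspace[k]{A_\infty}$ preserves concatenation, and the second equivalence respects it because the truncation of an H-space structure is computed pointwise.

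The main obstacle I anticipate is bookkeeping around base points and the coherence of the group structure through the two equivalences — in particular, verifying that the H-space multiplication on $\mathsf{colim}_n \loopspace[k]{A_n}$ induced by the fiberwise multiplications agrees, under the equivalence with $\loopspace[k]{A_\infty}$, with ordinary path concatenation. This is essentially a naturality argument, but in the absence of a developed theory of H-space morphisms between sequential colimits it requires some care. The abelian group structure for $k\geq 2$ then follows automatically from the Eckmann--Hilton argument applied pointwise.
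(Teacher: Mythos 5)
Your argument is correct and is exactly the route the paper intends: the theorem is stated right after the results that sequential colimits commute with loop spaces and with $k$-truncation, and (as the introduction indicates) the isomorphism on $\pi_k$ is meant to follow by composing those two equivalences, with the group structure handled by the compatibility of the multiplication with colimits (via \cref{thm:colim_sigma}), just as you describe. Your added care about pointedness of the sequence and about the canonical map agreeing with the composite is a reasonable filling-in of details the paper leaves implicit.
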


\chapter{Compact types}\label{chap:compact}

In this chapter we introduce the notion of compact type, and we show that the compact types are closed under finite coproducts, $\Sigma$-types, finite products, pushouts, reflexive coequalizers, and retracts. In particular it follows that the spheres are compact.

In \cref{sec:localization_compact} we show that if $P$ and $Q$ are families of compact types, then the subuniverse of $F$-local types is reflective. Of course, one might impose stronger assumptions on the type theory so that the subuniverse of $F$-local types is reflective for any family $F$ of maps (see for instance \cite{RijkeShulmanSpitters}). The setting of this dissertation is that of univalent type theory in which the universes are closed under homotopy pushouts, so we have to find a different way to define localizations.

The idea is to approximate the $F$-localization of a type $X$ by iterated `quasi $F$-local extensions' of $X$, by which we mean a map $l:X\to Y$ equipped with a diagonal filler for the square
\begin{equation*}
\begin{tikzcd}
X^{Q_i} \arrow[r,"l\circ\blank"] \arrow[d,swap,"\blank\circ F_i"] & Y^{Q_i} \arrow[d,"\blank\circ F_i"] \\
X^{P_i} \arrow[r,swap,"l\circ \blank"] & Y^{P_i},
\end{tikzcd}
\end{equation*}
for each $i:I$. We show in \cref{prp:colim_local} that, provided that $F$ is a family of maps between compact types, the sequential colimit of type sequence
\begin{equation*}
\begin{tikzcd}
X_0 \arrow[r,"l_0"] & X_1 \arrow[r,"l_1"] & X_2 \arrow[r,"l_2"] & \cdots
\end{tikzcd}
\end{equation*}
is $F$-local if each $l_n$ has the structure of a quasi $F$-local extension. We show that there is an initial quasi $F$-local extension $l:X\to QL_F X$, for every type $X$, and in \cref{thm:localization} we show that the sequential colimit $X\to QL_F^\infty$ is an $F$-localization. We establish in \cref{thm:localization} that if $F$ is a family of maps between compact types, then the subuniverse of $F$-local types is a reflective subuniverse. Furthermore, if $A$ is a family of compact types, then the subuniverse of $A$-null types is a modality.

Dan Christensen pointed out that this construction of $F$-localization already appears in section 1.B of \cite{Farjoun}, where arbitrary localizations are constructed by iterating the initial quasi $F$-local extension transfinitely many times. We stick to the case $\omega$ since larger ordinals are not yet as well understood in homotopy type theory. A minor note is that in Theorem B.5 Dror Farjoun only establishes that the transfinite colimit $QL_F^\kappa X$ localizes the homotopy groups. Our proofs are therefore different than Dror Farjoun's.

\section{Compact types}

\begin{defn}
Consider a type sequence $\mathcal{A}\jdeq (A,f)$, and let $X$ be a type. We define the type sequence $\mathcal{A}^X$ to consist of
\begin{equation*}
\begin{tikzcd}
A_0^X \arrow[r,"f_0\circ \blank"] & A_1^X \arrow[r,"f_1\circ\blank"] & A_2^X \arrow[r,"f_2\circ\blank"] & \cdots. 
\end{tikzcd}
\end{equation*}
Furthermore, given a natural transformation $\tau:\mathsf{Seq}(\mathcal{A},\mathcal{B})$ of type sequences, we define the natural transformation $\tau^X$ to consist of
\begin{equation*}
\begin{tikzcd}[column sep=large]
A_0^X \arrow[r,"f_0\circ \blank"] \arrow[d,swap,"\tau_0\circ\blank"] & A_1^X \arrow[r,"f_1\circ\blank"] \arrow[d,swap,"\tau_1\circ\blank"] & A_2^X \arrow[r,"f_2\circ\blank"] \arrow[d,swap,"\tau_2\circ\blank"] & \cdots \\
B_0^X \arrow[r,swap,"g_0\circ \blank"] & B_1^X \arrow[r,swap,"g_1\circ\blank"] & B_2^X \arrow[r,swap,"g_2\circ\blank"] & \cdots
\end{tikzcd}
\end{equation*}
where the naturality squares commute by whiskering the naturality squares of $\tau$. 
\end{defn}

In particular, if $\tau:\mathsf{Seq}(\mathcal{A},\Delta Y)$ is a cocone with vertex $Y$, then $\tau^X :\mathsf{Seq}(\mathcal{A}^X,(\Delta Y)^X)$ is a cocone with vertex $Y^X$, since $(\Delta Y)^X \jdeq \Delta (Y^X)$.  

\begin{defn}
We call a type $X$ \define{(sequentially) compact} if for any type sequence
\begin{equation*}
\begin{tikzcd}
A_0 \arrow[r] & A_1 \arrow[r] & A_2 \arrow[r] & \cdots
\end{tikzcd}
\end{equation*}
the map
\begin{equation*}
\mathsf{seq\usc{}ind}(\mathsf{constr}(\mathcal{A})^X)
  : \tfcolim(X\to A_n)\to(X\to A_\infty)
\end{equation*}
is an equivalence.
\end{defn}

\begin{eg}
The empty type is compact, simply because $X^\emptyt$ is contractible for any type $X$, and a sequential colimit of contractible types is contractible.
\end{eg}

\begin{eg}
The unit type is compact because $\lam{x}\mathsf{const}_x:X\to X^\unit$ is an equivalence for any type $X$. Thus we obtain a natural equivalence
\begin{equation*}
\begin{tikzcd}
A_0 \arrow[d] \arrow[r] & A_1 \arrow[d] \arrow[r] & A_2 \arrow[d] \arrow[r] & \cdots \\
A_0^\unit \arrow[r] & A_1^\unit \arrow[r] & A_2^\unit \arrow[r] & \cdots.
\end{tikzcd}
\end{equation*}
Thus we obtain a commuting triangle
\begin{equation*}
\begin{tikzcd}[column sep=0]
 & A_\infty \arrow[dl] \arrow[dr] & \phantom{\mathsf{colim}(A_n^\unit)} \\
\mathsf{colim}(A_n^\unit) \arrow[rr] & & A_\infty^\unit.
\end{tikzcd}
\end{equation*}
where two out of three maps are equivalences. Therefore the unit type is compact by the 3-for-2 property.
\end{eg}

We will show in the remainder of this section that the compact types are closed under finite coproducts, $\Sigma$-types (and therefore also finite products), pushouts, reflexive coequalizers, and retracts. In the following example we note that compact types are not closed under pullbacks, and are not closed under exponentiation either.

\begin{rmk}
The compact types are not closed under identity types, because $\Z$ is not compact
and $\eqv{\Z}{\Omega(\Sn^1)}$. To see that $\Z$ is not compact, consider
the sequence $A_n\defeq\nat_{-n}$, where $\nat_{-n}$ is a version of the natural
numbers of starting at $-n$ and $A_n\to A_{n+1}$ is the inclusion map. The colimit
of $A_n$ is $\Z$ and the image of the map $\tfcolim(\Z\to A_n)\to (\Z\to\Z)$ is
the set of maps that are bounded from below. Thus, it is not surjective.

Also, the compact types are not closed under exponentiation. We have
the equivalence $\eqv{(\Sn^1\to\Sn^1)}{\Sn^1\times\Z}$, which is not
compact.
\end{rmk}

\begin{lem}
Finite coproducts of compact types are compact.
\end{lem}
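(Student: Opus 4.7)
The plan is to reduce finite coproducts to the empty type and binary coproducts, and then handle the binary case using the universal property of coproducts together with the fact that sequential colimits commute with products. Since the empty type was already shown to be compact, an easy induction on $n$ shows it suffices to prove that $X+Y$ is compact whenever $X$ and $Y$ are.

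So fix a type sequence $\mathcal{A}\jdeq (A,f)$ with colimit $A_\infty$, and consider the canonical map
\begin{equation*}
\tfcolim_n(X+Y\to A_n)\to (X+Y\to A_\infty).
\end{equation*}
First I would use the universal property of coproducts, which gives for every type $Z$ a natural equivalence $(X+Y\to Z)\eqvsym (X\to Z)\times (Y\to Z)$. Applied fiberwise, this produces a natural equivalence between the type sequence $\mathcal{A}^{X+Y}$ and the product sequence $\mathcal{A}^X\times \mathcal{A}^Y$, and also an equivalence between the codomains $(X+Y\to A_\infty)$ and $(X\to A_\infty)\times (Y\to A_\infty)$.

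Next I would use that sequential colimits commute with binary products, which is a special case of \cref{thm:seq_colim_pb} (products are pullbacks over $\unit$) together with compactness of $\unit$; so
\begin{equation*}
\tfcolim_n\big(A_n^X\times A_n^Y\big) \eqvsym \tfcolim_n(A_n^X)\times \tfcolim_n(A_n^Y).
\end{equation*}
Chaining these equivalences with compactness of $X$ and $Y$, which gives $\tfcolim_n(A_n^X)\eqvsym A_\infty^X$ and similarly for $Y$, yields an equivalence $\tfcolim_n(A_n^{X+Y})\eqvsym A_\infty^{X+Y}$.

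The main obstacle — which is really only a bookkeeping issue — is checking that this composite equivalence is homotopic to the canonical map induced by $\mathsf{constr}(\mathcal{A})^{X+Y}$. To verify this, I would assemble a commuting diagram in which the vertical maps are the equivalences above and the horizontal maps are the relevant canonical comparison maps; naturality of the coproduct universal property and of the product-colimit interchange makes each face of the diagram commute. The 3-for-2 property of equivalences then transfers the equivalence across the diagram, completing the proof.
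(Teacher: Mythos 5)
Your proposal is correct and follows essentially the same route as the paper: reduce to the empty type plus binary coproducts, then use the coproduct universal property $(X+Y\to Z)\eqvsym (X\to Z)\times(Y\to Z)$, the commutation of sequential colimits with products (pullbacks over $\unit$), compactness of $X$ and $Y$, and the 3-for-2 property applied to a commuting diagram. The paper's proof is just a terser presentation of exactly this diagram chase.
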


\begin{proof}
We have already seen that the empty type is compact. 
Thus it remains to show that compact types are closed under disjoint
sums. 

Suppose that $X$ and $Y$ are compact. Then we have a commuting diagram of the form
\begin{equation*}
\begin{tikzcd}
\mathsf{colim}(\mathcal{A}^{X+Y}) \arrow[r] \arrow[d] & \mathsf{colim}(\mathcal{A}^X\times \mathcal{A}^Y) \arrow[r] & \mathsf{colim}(\mathcal{A}^X)\times\mathsf{colim}(\mathcal{A}^Y) \arrow[d] \\
A_{\infty}^{X+Y} \arrow[rr] & & A_\infty^X \times A_\infty^Y
\end{tikzcd}
\end{equation*}
where all but one of the maps are equivalences. Thus the remaining map is an equivalence too.
\end{proof}

\begin{defn}
Let $\mathcal{A}:X\to \mathsf{Seq}$ be an indexed type sequence. Then we define the type sequence $\prd{x:X}\mathcal{A}(x)$ to consist of
\begin{equation*}
\begin{tikzcd}[column sep=7em]
\prd{x:X}A_0(x) \arrow[r,"{\lam{h}{x}f_0(x,h(x))}"] & \prd{x:X}A_1(x) \arrow[r,"{\lam{h}{x}f_1(x,h(x))}"] & \cdots
\end{tikzcd}
\end{equation*}
\end{defn}

\begin{prp}
Let $\mathcal{A}:X\to\mathsf{Seq}$ be an indexed type sequence, indexed by a compact type $X$.
Then the canonical map
\begin{equation*}
\tfcolim\Big(\prd{x:X}A_n(x)\Big) \to \Big(\prd{x:X} A_\infty(x)\Big)
\end{equation*}
is an equivalence.
\end{prp}

\begin{proof}
Consider the commuting diagram
\begin{equation*}
\begin{tikzcd}
\mathsf{colim}\Big(\prd{x:X}\mathcal{A}(x)\Big) \arrow[d] \arrow[r] & \prd{x:X}A_\infty(x) \arrow[d] \arrow[r] & \unit \arrow[d,"\mathsf{const}_{\idfunc[X]}"] \\
\mathsf{colim}\Big(X\to\sm{x:X}A_n(x)\Big) \arrow[r] & \Big(X\to \sm{x:X}A_\infty(x)\Big) \arrow[r,swap,"\proj 1\circ\blank"] & (X\to X).
\end{tikzcd}
\end{equation*}
The square on the right and the outer rectangle are pullback squares. Hence the square on the left is a pullback square. Now observe that the type $\sm{x:X}A_\infty(X)$ is the sequential colimit of the type sequence $\sm{x:X}A_n(x)$, by \cref{thm:colim_sigma}. Therefore it follows by the assumption that $X$ is compact, that the bottom map in the left square is an equivalence. We conclude that the top map in the left square is an equivalence, proving the claim.
\end{proof}

\begin{cor}
Suppose $X$ is compact, and $Y(x)$ is compact for each $x:X$. Then the total space $\sm{x:X}Y(x)$ is also compact.
\end{cor}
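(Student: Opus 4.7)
The plan is to chain together three equivalences using currying and the two hypotheses in turn. Let $\mathcal{A} = (A,f)$ be an arbitrary type sequence. The goal is to show that the canonical map
\begin{equation*}
\tfcolim_n\Big(\big(\sm{x:X}Y(x)\big)\to A_n\Big) \to \Big(\big(\sm{x:X}Y(x)\big)\to A_\infty\Big)
\end{equation*}
is an equivalence. By currying (\cref{thm:choice}), there is for each $n$ a natural equivalence
\begin{equation*}
\Big(\big(\sm{x:X}Y(x)\big)\to A_n\Big) \eqvsym \prd{x:X}(Y(x)\to A_n),
\end{equation*}
and similarly after replacing $A_n$ by $A_\infty$. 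These equivalences fit together into a natural equivalence of type sequences, so they induce an equivalence on sequential colimits by \cref{lem:seq_colim_functor}.

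First I would apply the previous proposition to the $X$-indexed type sequence $\mathcal{B}(x)\defeq (Y(x)\to A_n,f_n\circ\blank)$, which is permitted because $X$ is compact by hypothesis. This yields an equivalence
\begin{equation*}
\tfcolim_n\Big(\prd{x:X}(Y(x)\to A_n)\Big) \eqvsym \prd{x:X}\tfcolim_n(Y(x)\to A_n).
\end{equation*}
Second, since each $Y(x)$ is compact, the fiberwise canonical map $\tfcolim_n(Y(x)\to A_n)\to (Y(x)\to A_\infty)$ is an equivalence, so post-composition with it in the dependent product gives an equivalence
\begin{equation*}
\prd{x:X}\tfcolim_n(Y(x)\to A_n) \eqvsym \prd{x:X}(Y(x)\to A_\infty).
\end{equation*}
Composing these three equivalences with the two uncurrying equivalences gives the desired equivalence. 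The only thing to check is that the resulting composite agrees (up to homotopy) with the canonical comparison map, which follows because both maps are determined by their restrictions to the point constructors $\iota_n$ of the sequential colimit, where they agree judgmentally by the explicit formulas for currying and for the action of \cref{lem:seq_colim_functor} on $\iota_n$.

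The only potentially fiddly step is bookkeeping the naturality squares: one has to check that the squares $(f_n\circ\blank)\circ\varphi = \varphi\circ(f_n\circ\blank)$ involved in applying \cref{lem:seq_colim_functor} commute judgmentally, where $\varphi$ denotes currying. But this is immediate from the definitions, since currying is given pointwise and commutes on the nose with post-composition. Hence the composite is indeed the canonical map and the claim follows.
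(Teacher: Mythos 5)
Your argument is correct and is essentially the paper's own proof: the paper assembles the same chain — currying on both sides, the preceding proposition (compactness of $X$ lets $\tfcolim$ commute with $\prd{x:X}$), and fiberwise compactness of each $Y(x)$ — into a commuting pentagon whose remaining edge is the canonical map, and concludes by composition of equivalences exactly as you do. Your extra care that the composite agrees with the canonical comparison map is the implicit commutativity of that pentagon (and note the currying step is the $\evpair$ equivalence rather than \cref{thm:choice}), so nothing of substance differs.
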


\begin{proof}
Let $\mathcal{A}$ be a type sequence. Then we have the following commuting pentagon
\begin{equation*}
\begin{tikzcd}[column sep=-5em]
& \mathsf{colim}\Big(\mathcal{A}^{\sm{x:X}Y(x)}\Big) \arrow[rr] \arrow[dl] & & A_\infty^{\sm{x:X}Y(x)} \arrow[dr] \\
\mathsf{colim}\Big(\prd{x:X}\mathcal{A}^{Y(x)}\Big) \arrow[drr] & & & & \prd{x:X} A_\infty^{Y(x)} \\
& & \prd{x:X}\mathsf{colim}\Big(\mathcal{A}^{Y(x)}\Big) \arrow[urr] & \phantom{\mathsf{colim}\Big(\mathcal{A}^{\sm{x:X}Y(x)}\Big)} & \phantom{\mathsf{colim}\Big(\prd{x:X}\mathcal{A}^{Y(x)}\Big)}
\end{tikzcd}
\end{equation*}
in which all but the top map are known to be equivalences. Hence it follows that the top map is an equivalence, which shows that $\sm{x:X}Y(x)$ is compact.
\end{proof}

\begin{cor}
Compact types are closed under finite products.
\end{cor}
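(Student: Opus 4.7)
The plan is to derive this as an immediate consequence of the preceding corollary together with the already-established fact that the unit type is compact. The key observation is that for types $X$ and $Y$, the binary product $X \times Y$ is judgmentally equal (or at least equivalent) to the $\Sigma$-type $\sm{x:X}Y$, i.e.\ a dependent sum over $X$ whose fibers are the constant family $\lam{x}Y$.

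First I would handle the base case: the empty product is the unit type $\unit$, which has already been shown to be compact. Next, for the binary case, suppose $X$ and $Y$ are compact. Then the constant family $\lam{x}Y : X \to \UU$ is a family of compact types indexed by the compact type $X$, so by the previous corollary the total space $\sm{x:X}Y$ is compact. Since $\eqv{(X \times Y)}{\sm{x:X}Y}$ and compactness is preserved under equivalences (this is clear from the defining property, since replacing $X$ by an equivalent type $X'$ gives a commuting triangle in which precomposition by the equivalence gives equivalences $A_n^X \eqvsym A_n^{X'}$ and $A_\infty^X \eqvsym A_\infty^{X'}$, compatible with the colimit comparison map), it follows that $X \times Y$ is compact.

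Finally, for arbitrary finite products, one proceeds by induction on $n : \N$: a product of $n+1$ compact types $X_0, \ldots, X_n$ is equivalent to $X_0 \times (X_1 \times \cdots \times X_n)$, and by the inductive hypothesis the second factor is compact, reducing to the binary case.

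There is no real obstacle here; the entire content of the corollary is bookkeeping, since all the work was done in establishing closure under $\Sigma$-types over compact bases. The only thing one might want to verify explicitly, if being careful, is the auxiliary lemma that compactness is invariant under equivalence, but this is straightforward from the 3-for-2 property applied to the obvious triangle relating $\mathsf{colim}(\mathcal{A}^X)$, $\mathsf{colim}(\mathcal{A}^{X'})$, and $A_\infty^X \eqvsym A_\infty^{X'}$.
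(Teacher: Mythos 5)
Your proof is correct and follows exactly the route the paper intends: the corollary is an immediate consequence of the preceding corollary on $\Sigma$-types (viewing $X\times Y$ as $\sm{x:X}Y$ for the constant family) together with the compactness of the unit type, with the invariance of compactness under equivalence being the routine 3-for-2 check you describe. The paper leaves the proof implicit, and your write-up supplies precisely that argument.
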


\begin{prp}\label{prp:pushout_compact}
Compact types are closed under pushouts.
\end{prp}

\begin{proof}
Consider a span $\mathcal{S}\jdeq (S,f,g)$ from $X$ to $Y$, where $X$, $Y$, and $S$ are assumed to be compact, and let $\mathcal{A}$ be a type sequence.
Then we have the commuting pentagon
\begin{equation*}
\begin{tikzcd}[column sep=-5em]
& \mathsf{colim}\Big(\mathcal{A}^{X\sqcup^{S} Y}\Big) \arrow[rr] \arrow[dl] &[-2em] &[-2em] A_\infty^{X\sqcup^S Y} \arrow[dr] \\
\mathsf{colim}\Big(\mathcal{A}^X\times_{\mathcal{A}^S}\mathcal{A}^Y\Big) \arrow[drr] & & & & A_\infty^X \times_{A_\infty^S} A_\infty^Y \\
& & \mathsf{colim}(\mathcal{A}^X)\times_{\mathsf{colim}(\mathcal{A}^S)}\mathsf{colim}(\mathcal{A}^Y) \arrow[urr] & \phantom{\mathsf{colim}\Big(\mathcal{A}^{X\sqcup^{S} Y}\Big)} & \phantom{\mathsf{colim}\Big(\mathcal{A}^X\times_{\mathcal{A}^S}\mathcal{A}^Y\Big)}
\end{tikzcd}
\end{equation*}
In this diagram, the downwards maps from the top left and from the top right are equivalences by the universal property of pushouts. The downwards map to the left side to the bottom is an equivalence since sequential colimits preserve pullbacks by \cref{thm:seq_colim_pb}. The map from the bottom to the right side is an equivalence by the assumption that $X$, $Y$, and $S$ are all compact types. Therefore we conclude that the top map is an equivalence, which proves the claim.
\end{proof}

\begin{cor}
Compact types are closed under joins, suspensions, and wedges. Furthermore, the cofiber of a map between compact types is again compact. In particular, all the spheres and the finite dimensional real and complex projective spaces are compact, and the smash product of compact pointed types is again compact.
\end{cor}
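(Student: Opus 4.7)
The plan is to derive each closure property as an instance of \cref{prp:pushout_compact}, invoking the previously established closure of compact types under finite coproducts and finite products, plus the compactness of $\unit$, $\emptyt$, and $\bool$.

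First I would handle joins, suspensions, wedges, and cofibers directly. The join $\join{X}{Y}$ is by definition the pushout of the span $X \leftarrow X\times Y \rightarrow Y$; if $X$ and $Y$ are compact, then $X\times Y$ is compact by closure under finite products, and then \cref{prp:pushout_compact} gives compactness of $\join{X}{Y}$. The suspension $\susp X$ is the pushout of $\unit \leftarrow X \rightarrow \unit$ and the cofiber $\mathsf{cof}_f$ of $f:A\to B$ is the pushout of $\unit \leftarrow A \rightarrow B$, so both inherit compactness from $X$ (respectively from $A$ and $B$) together with compactness of $\unit$. For the wedge $A\vee B$, note that $\bool \simeq \unit+\unit$ is compact by closure under finite coproducts, $A+B$ is compact by the same, and $A\vee B$ is the pushout of $\unit\leftarrow\bool\rightarrow A+B$.

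Next I would deduce that every $\sphere{n}$ is compact by induction on $n\geq -1$, using $\sphere{-1}\jdeq\emptyt$ (compact) and $\sphere{n+1}\jdeq\join{\bool}{\sphere{n}}$ (compact by closure under joins, since $\bool$ is compact). For smash products $A\wedge B$ of compact pointed types, the wedge inclusion $\mathsf{wedge\usc{}in}:A\vee B\to A\times B$ is a map between compact types (by the closure results just established for $A\vee B$ and for $A\times B$), and $A\wedge B$ is its cofiber, hence compact.

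Finally, the finite-dimensional real and complex projective spaces $\rprojective{n}$ and $\cprojective{n}$ can be built inductively as iterated pushouts attaching cells along maps $\sphere{k}\to\rprojective{n-1}$ and $\sphere{2n-1}\to\cprojective{n-1}$; since all spheres and the base cases $\rprojective{0}=\cprojective{0}=\unit$ are compact, closure under pushouts propagates compactness through the induction. The only mildly non-routine point is articulating this cell-attachment description inside the present formalism, but each attaching step is a single pushout of the form considered in the introduction to \cref{chap:image}, so no new ingredients are required beyond \cref{prp:pushout_compact} and compactness of the spheres.
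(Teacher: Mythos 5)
Your proposal is correct and is exactly the argument the paper intends: the corollary is left without proof because each construction (join, suspension, wedge, cofiber, sphere, smash, and the cell-attachment presentations of $\rprojective{n}$ and $\cprojective{n}$) is a pushout of compact types, so everything follows from \cref{prp:pushout_compact} together with the already-established closure under finite coproducts and products and the compactness of $\emptyt$, $\unit$, and $\bool$. Your inductive treatment of the spheres and the iterated cell attachments for the finite projective spaces matches the intended reading, so there is nothing to add.
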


\begin{defn}
A reflexive graph $\mathcal{A}$ is said to be compact its types of vertices and (the total space of) edges are compact.
\end{defn}

\begin{cor}
The reflexive coequalizer of a compact reflexive graph is again compact.
\end{cor}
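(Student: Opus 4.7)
The plan is to invoke the characterization of reflexive coequalizers as pushouts, combined with the closure of compact types under pushouts, which has just been established.

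First, I would unfold the definition: a reflexive graph $\mathcal{A}$ being compact means that the type of vertices $\pts{A}$ and the total space of edges $\sm{i,j:\pts{A}}\edg{A}(i,j)$ are both (sequentially) compact types. Then I would appeal to \cref{thm:rcoeq_is_pushout}, which identifies $\mathsf{rcoeq}(\mathcal{A})$ with the pushout
\begin{equation*}
\begin{tikzcd}
\sm{x,y:\pts{A}}\edg{A}(x,y) \arrow[r,"\pi_2"] \arrow[d,swap,"\pi_1"] & \pts{A} \arrow[d] \\
\pts{A} \arrow[r] & \mathsf{rcoeq}(\mathcal{A}).
\end{tikzcd}
\end{equation*}
The three corner types of this span are compact by the assumption that $\mathcal{A}$ is a compact reflexive graph.

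Finally I would apply \cref{prp:pushout_compact}, which states that compact types are closed under pushouts, to conclude that $\mathsf{rcoeq}(\mathcal{A})$ is compact. Since every step is a direct citation of a previously established result, there is no real obstacle: the entire argument is essentially a one-line assembly of \cref{thm:rcoeq_is_pushout} and \cref{prp:pushout_compact}.
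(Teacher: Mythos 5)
Your proposal is correct and is exactly the argument the paper gives: the paper's proof cites precisely \cref{thm:rcoeq_is_pushout} (the reflexive coequalizer as the pushout of the span $\pts{A}\leftarrow\sm{x,y:\pts{A}}\edg{A}(x,y)\rightarrow\pts{A}$) together with \cref{prp:pushout_compact}. Your additional remark that the three corners of the span are compact by the definition of a compact reflexive graph is the only unstated detail, and it is handled exactly as you describe.
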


\begin{proof}
This follows immediately from \cref{prp:pushout_compact,thm:rcoeq_is_pushout}.
\end{proof}

\begin{prp}
Compact types are closed under retracts.
\end{prp}

\begin{proof}
Suppose that $Y$ is a retract of a compact type $X$, i.e.~we have $i:Y\to X$ and $r:X\to Y$ such that $r\circ i=\mathrm{id}_Y$. 
Note that for any type $Z$, the type $Z^Y$ is a retract of $Z^X$, since we have the section-retraction pair
\begin{equation*}
\begin{tikzcd}
Z^Y \arrow[r,"\blank\circ r"] & Z^X \arrow[r,"\blank\circ i"] & Z^Y.
\end{tikzcd}
\end{equation*}
It follows that for any type sequence $\mathcal{A}$ we have a section-retraction pair
\begin{equation*}
\begin{tikzcd}
\tfcolim(\mathcal{A}^Y) \arrow[r] \arrow[d] & \tfcolim(\mathcal{A}^X) \arrow[r] \arrow[d] & \tfcolim(\mathcal{A}^Y) \arrow[d]\\
A_\infty^Y \arrow[r] & A_\infty^X \arrow[r] & A_\infty^Y
\end{tikzcd}
\end{equation*}
of morphisms. Note that the downward morphism in the middle is an equivalence by the assumption that $X$ is compact. Recall from Theorem 4.7.4 of \cite{hottbook} that a retract of an equivalence is again an equivalence, so the claim follows.
\end{proof}

\section{Localizing at maps between compact types}\label{sec:localization_compact}
In this section, we assume to have $P,Q:I\to\UU$ and a family of maps $F:\prd{i:I} P(i)\to Q(i)$.

Recall type of diagonal fillers for a commuting square
\begin{equation*}
\begin{tikzcd}
A \arrow[d,swap,"f"] \arrow[r,"i"] & B \arrow[d,"g"] \\
X \arrow[r,swap,"j"] & Y
\end{tikzcd}
\end{equation*}
with $H:j\circ f\htpy g\circ i$ is defined to be the fiber of the gap map
\begin{equation*}
\mathsf{gap}:B^X \to B^A\times_{Y^A} Y^X
\end{equation*}
at the point $(i,j,\mathsf{eq\usc{}htpy}(H))$.

\begin{defn}
A \define{quasi $F$-local extension} of $X$ consists of a map $l:X\to Y$ equipped with a diagonal filler for the commuting square
\begin{equation*}
\begin{tikzcd}
X^{Q_i} \arrow[r,"l\circ\blank"] \arrow[d,swap,"\blank\circ F_i"] & Y^{Q_i} \arrow[d,"\blank\circ F_i"] \\
X^{P_i} \arrow[r,swap,"l\circ \blank"] & Y^{P_i},
\end{tikzcd}
\end{equation*}
for each $i:I$.
\end{defn}

\begin{lem}\label{lem:qflocal_local}
For any map $l:X\to Y$ into an $F$-local type $Y$, the type of diagonal fillers for the commuting square
\begin{equation*}
\begin{tikzcd}
X^{Q_i} \arrow[r,"l\circ\blank"] \arrow[d,swap,"\blank\circ F_i"] & Y^{Q_i} \arrow[d,"\blank\circ F_i"] \\
X^{P_i} \arrow[r,swap,"l\circ \blank"] & Y^{P_i},
\end{tikzcd}
\end{equation*}
is contractible. In particular, any such map is a quasi $F$-local extension.
\end{lem}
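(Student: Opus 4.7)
The plan is to reduce the question to the observation that the type of diagonal fillers of a commuting square in which one side is an equivalence is contractible. Specifically, the $F$-locality of $Y$ tells us that for each $i:I$ the right vertical map $\blank\circ F_i : Y^{Q_i}\to Y^{P_i}$ is an equivalence. So I only need to prove the following general fact: given any commuting square
\begin{equation*}
\begin{tikzcd}
A \arrow[r,"k"] \arrow[d,swap,"f"] \ar[dr,phantom,"\scriptstyle S"] & B \arrow[d,"g"] \\
X \arrow[r,swap,"h"] & Y
\end{tikzcd}
\end{equation*}
with $S : g\circ k = h\circ f$, if $g$ is an equivalence then the type $\fillers S$ is contractible.

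To prove this general fact, I would unfold the definition: a filler consists of $j:X\to B$, together with $H_f:j\circ f = k$, $H_g:g\circ j = h$, and a coherence $M$ between $\ct{(g\cdot H_f)}{S}$ and $\ct{(H_g\cdot f)}{\refl{}}$. Since $g$ is an equivalence, the map $g\circ\blank : B^X\to Y^X$ is itself an equivalence. Thus the type $\sm{j:X\to B}(g\circ j = h)$ is equivalent to the contractible fiber of $g\circ\blank$ over $h$, which pins down $j$ together with $H_g$ uniquely (up to identification). With $j$ and $H_g$ fixed, the composite of $g$ with the putative homotopy $H_f:j\circ f = k$ is forced by $S$ and $H_g$, and since $g\circ\blank$ is again an equivalence on the relevant function spaces, the pair $(H_f,M)$ is uniquely determined as well. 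Concretely, one can organise this as a composition of equivalences, contracting based path-spaces step by step, in direct analogy with the argument used in \cref{lem:diagonal_fillers}.

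The hardest part will be just the bookkeeping of the coherence datum $M$ — the underlying map $j$ of the filler is obviously $g^{-1}\circ h$ and the triangles commute up to homotopy, but to show that the \emph{type} of fillers (as specified by the fiber of the gap map) is contractible, one has to check that the coherence contributes no extra data. This is entirely formal, and the cleanest way to present it is as in the proof of \cref{lem:diagonal_fillers}: express $\fillers S$ as an iterated $\Sigma$-type and repeatedly contract based path-spaces until one is left with $\sm{j:X\to B}(g\circ j = h)$, which is contractible since $g\circ\blank$ is an equivalence. The ``in particular'' clause is then immediate: a contractible type is inhabited, so $l$ carries the structure of a quasi $F$-local extension.
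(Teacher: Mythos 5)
Your proposal is correct and is essentially the paper's argument: the paper likewise observes that $F$-locality makes $\blank\circ F_i : Y^{Q_i}\to Y^{P_i}$ an equivalence and then concludes by the fact that equivalences are right orthogonal to every map, i.e.\ exactly the general contractibility-of-fillers fact you state. The only difference is that the paper cites this orthogonality fact without proof, while you sketch its (standard and correct) proof by contracting based path spaces, or equivalently by noting that the gap map $B^X\to B^A\times_{Y^A}Y^X$ is an equivalence by 3-for-2 when $g$ is one.
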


\begin{proof}
If $Y$ is $F$-local, then the map $F_i^\ast : Y^{Q_i}\to Y^{P_i}$ is an equivalence, for any $i:I$. Since equivalences are right orthogonal to any map, the claim follows.
\end{proof}

\begin{lem}\label{lem:equiv_colim_diagfiller}
Suppose $h:\mathsf{Seq}(\mathcal{A},\mathcal{B})$ is a natural transformation of type sequences equipped with the structure of a diagonal filler for each naturality square
\begin{equation*}
\begin{tikzcd}
A_n \arrow[r,"f_n"] \arrow[d,swap,"h_n"] & A_{n+1} \arrow[d,"h_{n+1}"] \\
B_n \arrow[r,swap,"g_n"] \arrow[ur,densely dotted] & B_{n+1}.
\end{tikzcd}
\end{equation*}
Then the map $h_\infty : A_\infty\to B_\infty$ is an equivalence. 
\end{lem}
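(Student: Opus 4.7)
The plan is to construct an explicit inverse $h_\infty^{-1} : B_\infty \to A_\infty$ using the chosen diagonal fillers. By unfolding the definition of diagonal filler, each naturality square comes equipped with a map $k_n : B_n \to A_{n+1}$ together with homotopies $H_n : k_n \circ h_n \htpy f_n$, $K_n : h_{n+1} \circ k_n \htpy g_n$, and a coherence relating $r\cdot H_n$ to the obvious combination involving $K_n$ and the naturality homotopy of $h$. Geometrically, the $k_n$'s form ``downward diagonals'' interleaving the $h_n$'s in the two rows, and the idea is simply to define $h_\infty^{-1}$ on the $n$-th stage of $\mathcal{B}$ as ``follow the diagonal down and then include''.

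Concretely, I would define $h_\infty^{-1} : B_\infty \to A_\infty$ by the universal property of $B_\infty$. On the point constructor I set
\[
  h_\infty^{-1}(\mathsf{seq\usc{}in}_n(b)) \defeq \mathsf{seq\usc{}in}_{n+1}(k_n(b)),
\]
and for the path constructor $\kappa^{B,g}_n(b)$ one needs an identification $\mathsf{seq\usc{}in}_{n+2}(k_{n+1}(g_n(b))) = \mathsf{seq\usc{}in}_{n+1}(k_n(b))$. This is obtained from the chain
\[
  k_{n+1}\circ g_n \htpy k_{n+1}\circ h_{n+1}\circ k_n \htpy f_{n+1}\circ k_n,
\]
built from $K_n^{-1}$ whiskered with $k_{n+1}$ and $H_{n+1}$ whiskered with $k_n$, followed by the $\kappa^{A,f}_{n+1}$ glue. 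This gives a well-defined $h_\infty^{-1}$.

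Next I would verify the two homotopies $h_\infty^{-1}\circ h_\infty \htpy \idfunc$ and $h_\infty \circ h_\infty^{-1}\htpy \idfunc$ by induction on $A_\infty$ and $B_\infty$ respectively. On point constructors the homotopies come directly from the glue, namely
\[
  h_\infty^{-1}(h_\infty(\mathsf{seq\usc{}in}_n(a))) = \mathsf{seq\usc{}in}_{n+1}(k_n(h_n(a))) \stackrel{H_n}{=} \mathsf{seq\usc{}in}_{n+1}(f_n(a)) \stackrel{\kappa^{A,f}_n}{=} \mathsf{seq\usc{}in}_n(a),
\]
and dually using $K_n$ and $\kappa^{B,g}_n$ for the other composite. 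These definitions suggest the correct values on the path constructors, and one uses the induction principle of the sequential colimit.

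The main obstacle is assembling the higher coherences on the path constructors of $A_\infty$ and $B_\infty$: one must verify that the chosen identifications on the $\kappa$-constructors are compatible, and this uses precisely the last piece of the diagonal-filler data, namely the homotopy between $H_n\cdot (\text{something})$ and $K_n\cdot(\text{something})$ via the naturality square of $h$. This is essentially a diagram chase in paths, entirely analogous to the standard ``telescope'' argument that interleaved sequences $A_0 \to B_0 \to A_1 \to B_1 \to \cdots$ have the same colimit as either of the original sequences; in fact an equivalent, slightly more conceptual plan would be to form this interleaved type sequence $\mathcal{C}$ and observe, using \cref{lem:colim_shift_k} together with cofinality of the even- and odd-indexed subsequences (which are naturally equivalent to $\mathcal{A}$ and $\mathcal{B}$ respectively via $H_n$ and $K_n$), that both $A_\infty$ and $B_\infty$ are the colimit of $\mathcal{C}$, and that the map induced on $A_\infty$ by the even-to-all inclusion agrees up to homotopy with $h_\infty$.
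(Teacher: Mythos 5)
Your construction is sound, but it takes a genuinely different route from the paper's proof. You build an explicit inverse by induction on $B_\infty$, sending $\mathsf{seq\usc{}in}_n(b)$ to $\mathsf{seq\usc{}in}_{n+1}(k_n(b))$ and handling the glue constructor with $K_n^{-1}$, $H_{n+1}$ and $\kappa^{A}_{n+1}$, and then check both round trips by colimit induction; the price is exactly the step you defer, namely the dependent paths over the $\kappa$-constructors of $A_\infty$ and $B_\infty$, where the filler coherence relating $H_n$, $K_n$ and the naturality homotopy of $h$ must be combined with the computation rules for $\ap{h_\infty}{\blank}$ and $\ap{h_\infty^{-1}}{\blank}$ on glue --- this is real work, not mere bookkeeping, and it is precisely what the paper's argument is designed to avoid. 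The paper instead packages the fillers into a natural transformation $j\colon\mathcal{B}\to\mathsf{shift}(\mathcal{A})$, uses the homotopies and the coherence datum once to verify the equalities of \emph{natural transformations} $j\circ h=f$ and $\mathsf{shift}(h)\circ j=g$, and then concludes at the level of colimits by functoriality, the equivalence $A_\infty\eqvsym\tfcolim(\mathsf{shift}(\mathcal{A}))$, and the 6-for-2 (two-out-of-six) property of equivalences, so it never performs an induction over the colimit's path constructors beyond what functoriality already provides. Your alternative interleaving plan is closer in spirit to the paper's, but note a caveat: the shift lemma you cite only compares a sequence with its own shift; it does not by itself yield cofinality of the even- and odd-indexed subsequences of the interleaved sequence $A_0\to B_0\to A_1\to B_1\to\cdots$, so that route would require an additional cofinality lemma not established in the paper, whereas the two-out-of-six argument needs nothing beyond what is already available.
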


\begin{proof}
Let us write $j_n:B_n\to A_{n+1}$ for the diagonal fillers, which come equipped with homotopies
\begin{align*}
J_n & : f_n\htpy j_n\circ h_n \\
K_n & : h_{n+1}\circ j_n \htpy g_n \\
L_n & : \ct{(h_{n+1}\cdot J_n)}{(K_n\cdot h_n)} \htpy H_n
\end{align*}
Note that $j$ is a natural transformation from $\mathcal{B}$ to the shifted sequence $\mathsf{shift}(\mathcal{A})$, with the homotopies $\ct{(f_{n+1}\cdot K_n)}{(J_{n+1}\cdot g_n)}: f_{n+1}\circ j_n \htpy j_{n+1}\circ g_n$ filling the naturality squares.

We claim that the composite $j\circ h$ is just the natural transformation $f:\mathcal{A}\to \mathsf{shift}(\mathcal{A})$. Indeed, we have the homotopy $J_n : j_n\circ h_n\htpy f_n$. Moreover, the type of $j_{n+1}\cdot L_n$
\begin{equation*}
\ct{(j_{n+1}\cdot h_{n+1}\cdot J_n)}{(j_{n+1}\cdot K_n\cdot h_n)} \htpy j_{n+1}\cdot H_n
\end{equation*}
is equivalent to the type of homotopies
\begin{align*}
& \ct{(J_{n+1}\cdot f_n)}{(j_{n+1}\cdot h_{n+1} \cdot J_n)}{(j_{n+1}\cdot K_n\cdot h_n)}{(j_{n+1}\cdot H_n)} \\
& \qquad\qquad \htpy \ct{\mathsf{htpy\usc{}refl}_{(f_{n+1}\circ f_n)}}{(J_{n+1}\cdot f_n)},
\end{align*}
filling the diagram
\begin{equation*}
\begin{tikzcd}
A_n \arrow[rr,"f_n"] \arrow[dr,"h_n"] \arrow[dd,swap,"f_n"] &[-1em] & A_{n+1} \arrow[dd,swap,"f_{n+1}" very near start] \arrow[dr,"h_{n+1}"] &[-1em] \\
& B_n \arrow[dl,swap,"j_n"] \arrow[rr,crossing over,"g_n" near start] & & B_{n+1} \arrow[dl,"j_{n+1}"] \\[1em]
A_{n+1} \arrow[urrr,crossing over,"h_{n+1}"] \arrow[rr,swap,"f_{n+1}"] & \phantom{A_{n+2}} & A_{n+2} & \phantom{A_{n+2}}
\end{tikzcd}
\end{equation*}
Therefore we obtain the coherence of the naturality squares needed to conclude that $f=j\circ h$ as natural transformations.

Likewise, the composite $\mathsf{shift}(h) \circ j$ is just the natural transformation $g:\mathcal{B}\to \mathsf{shift}(\mathcal{B})$. Therefore we obtain a commuting diagram
\begin{equation*}
\begin{tikzcd}[column sep=huge,row sep=large]
\mathsf{colim}(\mathcal{A}) \arrow[d,swap,"\mathsf{colim}(h)"] \arrow[r,"\mathsf{colim}(f)"] & \mathsf{colim}(\mathsf{shift}(\mathcal{A})) \arrow[d,"\mathsf{colim}(\mathsf{shift}(h))"] \\
\mathsf{colim}(\mathcal{B}) \arrow[r,swap,"\mathsf{colim}(g)"] \arrow[ur,"\mathsf{colim}(j)"] & \mathsf{colim}(\mathsf{shift}(\mathcal{B}))
\end{tikzcd}
\end{equation*}
Since both $\tfcolim(f)$ and $\tfcolim(g)$ are equivalences, it follows by the 6-for-2 property of equivalences (see Exercise 4.5 of \cite{hottbook}), that the remaining maps are equivalences. In particular $\tfcolim(h)$ is an equivalence. 
\end{proof}

\begin{prp}\label{prp:colim_local}
Suppose $F$ is a family of maps between compact types, and let
\begin{equation*}
\begin{tikzcd}
X_0 \arrow[r,"l"] & X_1 \arrow[r,"l"] & X_2 \arrow[r,"l"] & \cdots
\end{tikzcd}
\end{equation*}
be a sequence of types in which each $X_{n+1}$ is a quasi $F$-local extension of $X_n$. Then the sequential colimit $X_\infty$ is $F$-local.
\end{prp}

\begin{proof}
We have a natural transformation $F_i^\ast : \mathsf{Seq}(\mathcal{X}^{Q_i},\mathcal{X}^{P_i})$ for which each naturality square comes equipped with a diagonal filler
\begin{equation*}
\begin{tikzcd}
X_0^{Q_i} \arrow[d,swap,"F_i^\ast"] \arrow[r] & X_1^{Q_i} \arrow[d,swap,"F_i^\ast"] \arrow[r] & X_2^{Q_i} \arrow[d,swap,"F_i^\ast"] \arrow[r] & \cdots \\
X_0^{P_i} \arrow[r] \arrow[ur,densely dotted] & X_1^{P_i} \arrow[r] \arrow[ur,densely dotted] & X_2^{P_i} \arrow[r] \arrow[ur,densely dotted] & \cdots
\end{tikzcd}
\end{equation*}
by the assumption that each $l_n:X_n\to X_{n+1}$ is a quasi $F$-local extension. Therefore $\mathsf{colim}(F_i^\ast)$ is an equivalence. Note that we have the commuting square
\begin{equation*}
\begin{tikzcd}
\mathsf{colim}(X_n^{Q_i}) \arrow[r] \arrow[d,swap,"\mathsf{colim}(F_i^\ast)"] & X_{\infty}^{Q_i} \arrow[d,"F_i^\ast"] \\
\mathsf{colim}(X_n^{P_i}) \arrow[r] & X_\infty^{P_i}
\end{tikzcd}
\end{equation*}
in which the top and bottom maps are equivalences by the assumption that $P_i$ and $Q_i$ are compact. Therefore we conclude that $F_i^\ast : X_\infty^{Q_i}\to X_\infty^{P_i}$ is an equivalence. It follows that $X_\infty$ is $F$-local.
\end{proof}

Our goal is now to construct the initial quasi $F$-local extension of a type $X$, and use it to show that the subuniverse of $F$-local types is reflective. For any $i:I$, we have the maps
\begin{align*}
F_i : P_i \to Q_i \\
F_i^\ast : X^{Q_i} \to X^{P_i}.
\end{align*}
In the following characterization of quasi $F$-local extensions we use the pushout-product $F_i^\ast\mathbin{\square} F_i$, which is defined as follows
\begin{equation*}
\begin{tikzcd}
X^{Q_i}\times P_i \arrow[r] \arrow[d] & X^{Q_i}\times Q_i \arrow[ddr,bend left=15] \arrow[d] \\
X^{P_i}\times P_i \arrow[drr,bend right=15] \arrow[r] & \big(X^{P_i} \times P_i\big)\sqcup^{\big(X^{Q_i}\times P_i\big)}\big(X^{Q_i}\times Q_i\big) \arrow[dr,densely dotted,swap,"F_i^\ast\mathbin{\square} F_i"] \\
& & X^{P_i}\times Q_i
\end{tikzcd}
\end{equation*}
by the universal property of pushouts. Also note that the outer square in the diagram
\begin{equation*}
\begin{tikzcd}
X^{Q_i}\times P_i \arrow[r] \arrow[d] & X^{Q_i}\times Q_i \arrow[ddr,bend left=15,"\mathsf{ev}"] \arrow[d] &[2em] \\
X^{P_i}\times P_i \arrow[drr,bend right=15,swap,"\mathsf{ev}"] \arrow[r] & \big(X^{P_i} \times P_i\big)\sqcup^{\big(X^{Q_i}\times P_i\big)}\big(X^{Q_i}\times Q_i\big) \arrow[dr,densely dotted,swap,"\varepsilon_i"] \\
& & X
\end{tikzcd}
\end{equation*}
commutes, so we get a map $\varepsilon_i$ as indicated.

\begin{lem}\label{lem:qflocal_characterize}
For any type $l:X\to Y$, the type of $I$-indexed families of diagonal fillers for the squares
\begin{equation*}
\begin{tikzcd}
X^{Q_i} \arrow[r,"l\circ\blank"] \arrow[d,swap,"\blank\circ F_i"] & Y^{Q_i} \arrow[d,"\blank\circ F_i"] \\
X^{P_i} \arrow[r,swap,"l\circ \blank"] & Y^{P_i}
\end{tikzcd}
\end{equation*}
is equivalent to the type of morphisms $j: \Big(\sm{i:I}X^{P_i}\times Q_i\Big)\to Y$ equipped with a homotopy witnessing that the square
\begin{equation*}
\begin{tikzcd}[column sep=6em]
\sm{i:I}\Big(\big(X^{P_i} \times P_i\big)\sqcup^{\big(X^{Q_i}\times P_i\big)}\big(X^{Q_i}\times Q_i\big)\Big) \arrow[r,"\total{\lam{i}F_i^\ast\mathbin{\square} F_i}"] \arrow[d,swap,"\lam{(i,t)}\varepsilon_i(t)"] & \sm{i:I} X^{P_i}\times Q_i \arrow[d] \\
X \arrow[r] & Y
\end{tikzcd}
\end{equation*}
commutes.
\end{lem}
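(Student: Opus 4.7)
The plan is to show the equivalence by a chain of applications of: currying, type-theoretic choice (\cref{thm:choice}), and the universal property of pushouts. The key observation is that a diagonal filler for the square on display breaks up, under currying, into a cocone on the span defining the pushout-product source, and bundling these over $i:I$ converts to the displayed square of maps out of a $\Sigma$-type.

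First I would unfold the definition: an $I$-indexed family of diagonal fillers consists of, for each $i:I$, a map $j_i : X^{P_i} \to Y^{Q_i}$ together with homotopies $H^{\mathrm{lo}}_i : j_i \circ (\blank\circ F_i) \htpy l\circ\blank$ and $H^{\mathrm{up}}_i : (\blank\circ F_i)\circ j_i \htpy l\circ\blank$, plus a coherence filling a square relating them with the given homotopy of the original square (which in this case is built from the judgmental equality $l\circ \phi\circ F_i \jdeq l\circ\phi\circ F_i$). By $\lambda$-abstraction and the adjunction between products and exponentials, specifying $j_i$ is equivalent to specifying a map $\tilde\jmath_i : X^{P_i}\times Q_i \to Y$. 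Under this equivalence, the homotopy $H^{\mathrm{up}}_i$ translates to a homotopy between the composites
\begin{equation*}
\begin{tikzcd}[column sep=large]
X^{P_i}\times P_i \arrow[r,"\idfunc\times F_i"] \arrow[dr,swap,"\mathsf{ev}"] & X^{P_i}\times Q_i \arrow[d,"\tilde\jmath_i"] \\
& Y
\end{tikzcd}
\end{equation*}
and $l\circ \mathsf{ev}$, while $H^{\mathrm{lo}}_i$ translates to a homotopy between the composites
\begin{equation*}
\begin{tikzcd}[column sep=large]
X^{Q_i}\times Q_i \arrow[r,"{(\blank\circ F_i)\times\idfunc}"] \arrow[dr,swap,"\mathsf{ev}"] & X^{P_i}\times Q_i \arrow[d,"\tilde\jmath_i"] \\
& Y
\end{tikzcd}
\end{equation*}
and $l\circ\mathsf{ev}$. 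The coherence between $H^{\mathrm{lo}}_i$ and $H^{\mathrm{up}}_i$ over $X^{Q_i}\times P_i$ translates to the homotopy of homotopies needed to fill the cube produced by the pushout defining the source of $F_i^\ast\mathbin{\square} F_i$.

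Next I would invoke the dependent universal property of the pushout (\cref{thm:pushout_up}), which equivalently packages $(\tilde\jmath_i \circ (\idfunc\times F_i),\tilde\jmath_i\circ ((\blank\circ F_i)\times\idfunc),H^{\mathrm{lo}}_i,H^{\mathrm{up}}_i,\text{coherence})$ as a single map and homotopy out of the pushout $(X^{P_i}\times P_i)\sqcup^{X^{Q_i}\times P_i}(X^{Q_i}\times Q_i)$, with the map being $\tilde\jmath_i\circ (F_i^\ast\mathbin{\square} F_i)$ and the homotopy being between $\tilde\jmath_i\circ (F_i^\ast\mathbin{\square} F_i)$ and $l\circ \varepsilon_i$ (using the defining description of $\varepsilon_i$ as the unique extension of the two evaluations). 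This produces, for each $i:I$, a commuting square in which the top map is $F_i^\ast\mathbin{\square} F_i$ and the bottom map is $l$.

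Finally, I would apply \cref{thm:choice} in two places to bundle the data uniformly over $i:I$: the family $i\mapsto \tilde\jmath_i$ corresponds under currying to a single map $j:(\sum_{i:I}X^{P_i}\times Q_i)\to Y$, and the $I$-indexed family of homotopies filling the squares corresponds to a single homotopy filling the displayed square in the statement. Composing all these equivalences yields the claimed equivalence. The main obstacle will be the bookkeeping of the coherence data: checking that the coherence filling the cube (from the diagonal filler) matches the coherence datum demanded by the universal property of the pushout, and that nothing is lost when subsequently bundling over $i:I$. Once this coherence matching is verified, the rest is a straightforward assembly of known equivalences.
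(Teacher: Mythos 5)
Your proposal is correct and takes essentially the same route as the paper's proof: unfold the diagonal-filler data, curry via the exponential adjunction into data on $X^{P_i}\times Q_i$, match the two triangle homotopies and their coherence with the point- and path-constructor data demanded by the dependent universal property of the pushout defining $F_i^\ast\mathbin{\square}F_i$ (the paper's triples $(K,L,M)$), and bundle over $i:I$ at the end. The paper does the same, merely stating the reduction to a single $i$ first and phrasing the filler as a fiber of the gap map, so nothing further is needed.
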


\begin{proof}
It suffices to show that for each $i:I$, the type of diagonal fillers in the first description is equivalent to the type of maps $j:X^{P_i}\times Q_i\to Y$ equipped with a homotopy witnessing that the square
\begin{equation*}
\begin{tikzcd}[column sep=huge]
\Big(\big(X^{P_i} \times P_i\big)\sqcup^{\big(X^{Q_i}\times P_i\big)}\big(X^{Q_i}\times Q_i\big)\Big) \arrow[r,"F_i^\ast\mathbin{\square} F_i"] \arrow[d,swap,"\varepsilon_i"] & X^{P_i}\times Q_i \arrow[d] \\
X \arrow[r] & Y
\end{tikzcd}
\end{equation*}
commutes. The type $X^{P_i}\to Y^{Q_i}$ is equivalent to the type $X^{P_i}\times Q_i\to Y$ by adjointness. 

By the dependent universal property of pushouts established in \cref{thm:pushout_up}, the type $l\circ \varepsilon_i \htpy j\circ (F_i^\ast\mathbin{\square} F_i)$ is equivalent to the type of triples $(K,L,M)$ consisting of
\begin{align*}
K & : \prd{f:P_i\to X}{p:P_i} l(f(p))=j(f,F_i(p)) \\ 
L & : \prd{g:Q_i\to X}{q:Q_i} l(g(q))=j(g\circ F_i,q) \\
M & : \prd{g:Q_i\to X}{p:P_i} K(g\circ F_i,p)=L(g,F_i(p)).
\end{align*}
The type of such quadruples $(j,K,L,M)$ is also equivalent to the fiber of the gap map
\begin{equation*}
(X^{P_i}\to Y^{Q_i})\to \Big(X^{Q_i}\to Y^{Q_i}\Big)\times_{\big(X^{Q_i}\to Y^{P_i}\big)}\Big(X^{P_i}\to Y^{P_i}\Big)
\end{equation*}
at $(l^{Q_i},l^{P_i},\mathsf{htpy\usc{}refl})$. 
\end{proof}

\begin{defn}
For any type $X$, we define the initial quasi $F$-local extension $l:X\to QL_FX$ of $X$ to be the pushout
\begin{equation*}
\begin{tikzcd}[column sep=large]
\sm{i:I}\Big(\big(X^{P_i} \times P_i\big)\sqcup^{\big(X^{Q_i}\times P_i\big)}\big(X^{Q_i}\times Q_i\big)\Big) \arrow[r,"F_i^\ast\mathbin{\square} F_i"] \arrow[d,swap,"\lam{(i,t)}\varepsilon_i(t)"] & \sm{i:I} X^{P_i}\times Q_i \arrow[d] \\
X \arrow[r,swap,"l"] & QL_FX
\end{tikzcd}
\end{equation*}
\end{defn}

\begin{rmk}
The initial quasi $F$-local extension of a type $X$ is initial by the universal property of the pushout and the characterization of \cref{lem:qflocal_characterize}.
\end{rmk}

\begin{eg}
Consider the family of maps consisting just of the terminal projection $\bool\to\unit$. We sketch an argument that the quasi $\bool$-local extension of $X$ is equivalent to $\join{X}{X}$, i.e.~that we have a pushout square
\begin{equation*}
\begin{tikzcd}
(X^2\times \bool)\sqcup^{X\times \bool} X \arrow[r] \arrow[d] & X^2 \arrow[d] \\
X \arrow[r] & \join{X}{X}
\end{tikzcd}
\end{equation*}
To see this, we note first that the pushouts
\begin{equation*}
\begin{tikzcd}
S \arrow[d] \arrow[r] & B \arrow[d] & S+S \arrow[d] \arrow[r] & S \arrow[d] \\
A \arrow[r] & A \sqcup^S B & A+B \arrow[r] & (A+B)\sqcup^{(S+S)} S
\end{tikzcd}
\end{equation*}
are equivalent, for any span $A \leftarrow S \rightarrow B$. Therefore it follows that the pushouts
\begin{equation*}
\begin{tikzcd}
X \arrow[d] \arrow[r] & X^2 \arrow[d] & X\times \bool \arrow[d] \arrow[r] & X \arrow[d] \\
X^2 \arrow[r] & X^2 \sqcup^X X^2 & X^2\times \bool \arrow[r] & (X^2\times\bool)\sqcup^{(X\times\bool)} X
\end{tikzcd}
\end{equation*}
are equivalent. Therefore it suffices to compute the pushout
\begin{equation*}
\begin{tikzcd}
X^2 \sqcup^X X^2 \arrow[r,"\nabla_{\delta_X}"] \arrow[d,swap,"{[\pi_1,\pi_2,\mathsf{htpy\usc{}refl}_{\idfunc[X]}]}"] & X^2 \arrow[d] \\
X \arrow[r] & X\sqcup^{(X^2 \sqcup^X X^2)}X^2.
\end{tikzcd}
\end{equation*}
Now we observe that for any cube
\begin{equation*}
\begin{tikzcd}
& A_{111} \arrow[dl] \arrow[d] \arrow[dr] \\
A_{110} \arrow[d] & A_{101} \arrow[dl] \arrow[dr] & A_{011} \arrow[d] \\
A_{100} \arrow[dr] & A_{010} \arrow[d] \arrow[from=ul,crossing over] \arrow[from=ur,crossing over] & A_{001} \arrow[dl] \\
& A_{000}
\end{tikzcd}
\end{equation*}
the following are equivalent:
\begin{enumerate}
\item The cube is cocartesian.
\item The the square
\begin{equation*}
\begin{tikzcd}
A_{011}\sqcup^{A_{111}} A_{101} \arrow[r] \arrow[d] & A_{001} \arrow[d] \\
A_{010}\sqcup^{A_{110}} A_{100} \arrow[r] & A_{000}
\end{tikzcd}
\end{equation*}
is cocartesian.
\end{enumerate}
We cite \cite{Munson} for this observation, and the analogous result in homotopy type theory is work in progress. Using the above equivalence, it follows that the pushout $X^2\sqcup ^X X^2$ is the colimit of the diagram
\begin{equation*}
\begin{tikzcd}
& X \arrow[dl] \arrow[d] \arrow[dr] \\
X \arrow[d] & X^2 \arrow[dl,swap,"\pi_1" very near start] \arrow[dr] & X^2 \arrow[dl,crossing over,swap,"\pi_2" very near start] \arrow[d] \\
X & X \arrow[from=ul,crossing over] & X^2
\end{tikzcd}
\end{equation*}
Furthermore, by the same equivalence we see that another way of computing this colimit is as the following pushout
\begin{equation*}
\begin{tikzcd}
X^2 \arrow[r,"\pi_2"] \arrow[d,swap,"\pi_1"] & X \arrow[d] \\
X \arrow[r] & X\sqcup^{X^2} X,
\end{tikzcd}
\end{equation*}
which is the join $\join{X}{X}$.
\end{eg}

\begin{prp}
If $X$ is $F$-local, then $l:X\to QL_F X$ is an equivalence.
\end{prp}

\begin{proof}
If $X$ is $F$-local, then each $F_i^\ast$ is an equivalence. It follows that the pushout-product $F_i^\ast\mathbin{\square} F_i$ is an equivalence. Since pushouts of equivalences are equivalences, the claim follows.
\end{proof}

\begin{thm}\label{thm:localization}
For any type $X$ and any family $F:\prd{a:A}P(a)\to Q(a)$ of maps between
compact types, the subuniverse of $F$-local types is reflective.
\end{thm}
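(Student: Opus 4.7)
The plan is to iterate the initial quasi $F$-local extension construction $QL_F$ to produce, for any type $X$, a type sequence
\begin{equation*}
\begin{tikzcd}
X \arrow[r,"l_0"] & QL_F X \arrow[r,"l_1"] & QL_F^2 X \arrow[r,"l_2"] & \cdots
\end{tikzcd}
\end{equation*}
in which each $l_n$ is the canonical map from $QL_F^n X$ into its initial quasi $F$-local extension. Define $L_F X$ to be the sequential colimit of this sequence, and let $\eta_X : X \to L_F X$ be the induced map into the colimit. Since $F$ is a family of maps between compact types, \cref{prp:colim_local} immediately implies that $L_F X$ is $F$-local.

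It remains to show that $\eta_X : X \to L_F X$ satisfies the universal property of the $F$-localization of $X$, i.e.\ that for every $F$-local type $Y$ the precomposition map
\begin{equation*}
\eta_X^\ast : (L_F X \to Y) \to (X \to Y)
\end{equation*}
is an equivalence. To construct an inverse, suppose we are given $f : X \to Y$ with $Y$ being $F$-local. By \cref{lem:qflocal_local}, the type of diagonal fillers witnessing that $f$ is a quasi $F$-local extension is contractible, hence $f$ has a canonical extension along $l_0 : X \to QL_F X$ by the universal property of the initial quasi $F$-local extension. Iterating, we obtain maps $f_n : QL_F^n X \to Y$ together with homotopies $f_n \htpy f_{n+1} \circ l_n$ that cohere uniquely. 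By the universal property of the sequential colimit (\cref{thm:sequential_up}), this data assembles into a map $\bar f : L_F X \to Y$ with $\bar f \circ \eta_X \htpy f$.

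The main technical point will be verifying that this assignment $f \mapsto \bar f$ is genuinely inverse to $\eta_X^\ast$, which amounts to showing that the extensions $f_n$ are unique. The key obstacle is propagating the uniqueness across the whole sequence: one must check both that the extension from $QL_F^n X$ to $QL_F^{n+1} X$ is homotopy-unique (which follows from the initiality of $QL_F$ together with \cref{lem:qflocal_local} applied to the $F$-local codomain $Y$), and that the induced homotopies $f_n \htpy f_{n+1} \circ l_n$ fit together coherently to produce a cocone. Once this is established, both composites $\bar f \circ \eta_X = f$ and $\overline{g \circ \eta_X} = g$ follow from the uniqueness of the extensions, so $\eta_X^\ast$ is an equivalence. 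Hence the subuniverse of $F$-local types is reflective with reflector $L_F$ and unit $\eta$.
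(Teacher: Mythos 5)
Your construction of the reflector is exactly the paper's: iterate $QL_F$, take the sequential colimit, and invoke \cref{prp:colim_local} to see that the colimit is $F$-local. The gap is in the universal property, where your argument stops precisely at the point that needs a proof. You build candidate extensions $f_n$ stage by stage and then write that ``the main technical point will be verifying'' that these extensions are homotopy-unique and that the homotopies $f_n\htpy f_{n+1}\circ l_n$ cohere into a cocone; neither verification is carried out. Appealing to ``the initiality of $QL_F$'' does not close this, because the initiality of $QL_F X$ among quasi $F$-local extensions is only stated informally (as a remark) and making it precise is essentially the missing step itself. Moreover, the shape of your argument --- choose extensions and connecting homotopies pointwise, then check they assemble --- forces you to control the coherence of this tower of homotopies by hand, which is exactly the delicate part in a homotopy-coherent setting.

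The way to close the gap (and the route the paper takes) is to prove the stronger, cleaner statement that for every $F$-local type $Y$ and every stage $Z\defeq QL_F^nX$, the precomposition map $l^\ast : (QL_F Z\to Y)\to (Z\to Y)$ is an equivalence, rather than merely that extensions exist and are unique up to homotopy. One factors $l^\ast$ through the type of cocones on the pushout span defining $QL_F Z$: the map $(QL_F Z\to Y)\to\mathsf{cocone}(Y)$ is an equivalence by the universal property of the pushout, and the projection $\mathsf{cocone}(Y)\to(Z\to Y)$ has as its fiber over $g:Z\to Y$ precisely the type of diagonal fillers exhibiting $g$ as a quasi $F$-local extension (\cref{lem:qflocal_characterize}), which is contractible because $Y$ is $F$-local (\cref{lem:qflocal_local}); the 3-for-2 property then gives that $l^\ast$ is an equivalence. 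Once each $l^\ast$ is an equivalence, a map $X\to Y$ extends to a cocone on the whole sequence through a contractible type of choices, so $\eta^\ast:(L_FX\to Y)\to(X\to Y)$ is an equivalence; the cocone coherence you worry about is absorbed into this equivalence of mapping types instead of being checked by hand. If you replace your ``main technical point'' with this argument, your proof is complete and coincides with the paper's.
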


\begin{proof}
We define the localization $\eta:X\to LX$ as the sequential colimit
\begin{equation*}
\begin{tikzcd}
X \arrow[r] & QL_F X \arrow[r] & QL_F^2 X \arrow[r] & \cdots
\end{tikzcd}
\end{equation*}
Since each $l:QL_F^n X\to QL_F^{n+1}X$ has the structure of a quasi $F$-local extension, it follows by \cref{prp:colim_local} that $LX$ is $F$-local. It remains to show that for any $F$-local type $Y$, the precomposition map
\begin{equation*}
\eta^\ast : (LX\to Y)\to (X\to Y)
\end{equation*}
is an equivalence.

We first show that the map
\begin{equation*}
l^\ast : (QL_F X\to Y)\to (X\to Y)
\end{equation*}
is an equivalence. To see this, note that we have a commuting triangle
\begin{equation*}
\begin{tikzcd}[column sep=tiny]
& \mathsf{cocone}(Y) \arrow[dr,"\pi_1"] & \phantom{(QL_F X\to Y)} \\
(QL_F X\to Y) \arrow[rr] \arrow[ur,"\mathsf{cocone\usc{}map}"] & & (X\to Y)
\end{tikzcd}
\end{equation*}
We note that by \cref{lem:qflocal_characterize}, the fiber of $\pi_1:\mathsf{cocone}(Y)\to (X\to Y)$ is equivalent to the type of diagonal fillers of the square
\begin{equation*}
\begin{tikzcd}
X^{Q_i} \arrow[r,"l\circ\blank"] \arrow[d,swap,"\blank\circ F_i"] & Y^{Q_i} \arrow[d,"\blank\circ F_i"] \\
X^{P_i} \arrow[r,swap,"l\circ \blank"] & Y^{P_i},
\end{tikzcd}
\end{equation*}
This type of diagonal fillers is contractible by \cref{lem:qflocal_local}, since $Y$ is assumed to be $F$-local. In other words, the map $\pi_1:\mathsf{cocone}(Y)\to (X\to Y)$ is an equivalence. Since $\mathsf{cocone\usc{}map}:(QL_F X\to Y)\to \mathsf{cocone}(Y)$ is an equivalence by the universal property of $QL_F X$, we conclude by the 3-for-2 property that the map
\begin{equation*}
l^\ast : (QL_F X\to Y)\to (X\to Y)
\end{equation*}
is an equivalence.

We conclude that any map $g:X\to Y$ extends uniquely to a cocone on the type sequence
\begin{equation*}
\begin{tikzcd}
X \arrow[r] & QL_F X \arrow[r] & QL_F^2 X \arrow[r] & \cdots,
\end{tikzcd}
\end{equation*}
and therefore it follows that $\eta^\ast:(LX\to Y)\to (X\to Y)$ is an equivalence.
\end{proof}

\begin{cor}
For any family $N:I\to \UU$ of compact types, the subuniverse of $N$-null types is a modality.
\end{cor}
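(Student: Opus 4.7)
The plan is to combine two ingredients: apply \cref{thm:localization} to obtain reflectivity, and then verify $\Sigma$-closure directly by a standard computation using nullity.

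First, I would observe that the subuniverse of $N$-null types is, by definition, the subuniverse of $F$-local types for the family $F:\prd{i:I} N_i\to\unit$ of terminal projections. Each $N_i$ is compact by hypothesis, and $\unit$ is compact (established earlier in this chapter). Therefore $F$ is a family of maps between compact types, so \cref{thm:localization} applies to give that the subuniverse of $N$-null types is a reflective subuniverse, equipped with a localization $\modalunit:X\to LX$ for every type $X$.

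It remains to show that this reflective subuniverse is $\Sigma$-closed, since modalities were characterized as $\Sigma$-closed reflective subuniverses. Suppose $X$ is $N$-null and $P:X\to\UU$ is a family of $N$-null types. Fix $i:I$; we must show that the constant-function map $\sm{x:X}P(x)\to \big(N_i\to\sm{x:X}P(x)\big)$ is an equivalence. By type-theoretic choice (\cref{thm:choice}) we have an equivalence
\begin{equation*}
\Big(N_i\to\sm{x:X}P(x)\Big)\eqvsym \sm{g:N_i\to X}\prd{n:N_i}P(g(n)).
\end{equation*}
Since $X$ is $N$-null, the map $X\to(N_i\to X)$ sending $x$ to $\const_x$ is an equivalence, so we can replace $g:N_i\to X$ by a pair $(x,\alpha)$ with $x:X$ and $\alpha$ identifying $g$ with $\const_x$; transporting along $\alpha$ turns $\prd{n:N_i}P(g(n))$ into $\prd{n:N_i}P(x)$, and since $P(x)$ is $N$-null the latter type is equivalent to $P(x)$ via the constant-function equivalence. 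Chaining these equivalences gives $\big(N_i\to\sm{x:X}P(x)\big)\eqvsym\sm{x:X}P(x)$, and an inspection of the composite shows that the resulting inverse is precisely the constant-function map. Hence $\sm{x:X}P(x)$ is $N$-null.

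The main thing to be careful about is the second step, where one has to verify that the composite of equivalences really does invert the constant-function map rather than some other map. This is a routine verification, analogous to the proof (mentioned earlier) that the open-modality reflective subuniverse is $\Sigma$-closed, but it is the only place where any real work happens; the reflectivity is immediate from \cref{thm:localization} once one recognizes $N$-nullity as $F$-locality.
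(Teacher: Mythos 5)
Your proof is correct and takes essentially the same route as the paper's (implicit) argument: reflectivity comes from \cref{thm:localization} applied to the family of terminal projections $N_i\to\unit$, whose domains and codomain are compact, and the modality structure comes from exactly the $\Sigma$-closedness computation you give, which is the standard proof that nullification is a modality (a $\Sigma$-closed reflective subuniverse). The one delicate point—that the composite of your equivalences is indeed the constant-functions map—is the same check the paper's argument relies on, and you have flagged and handled it appropriately.
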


\backmatter

\printbibliography

\end{document}